\DeclareMathOperator{\gr}{gr}
\DeclareMathOperator{\Gr}{Gr}
\DeclareMathOperator{\Hom}{Hom}
\DeclareMathOperator{\Ext}{Ext}
\DeclareMathOperator{\End}{End}
\DeclareMathOperator{\Supp}{Supp}
\DeclareMathOperator{\Aut}{Aut}
\DeclareMathOperator{\Out}{Out}
\DeclareMathOperator{\Ann}{Ann}
\DeclareMathOperator{\ann}{ann}
\DeclareMathOperator{\Sat}{Sat}
\DeclareMathOperator{\Loc}{Loc}
\DeclareMathOperator{\id}{id}
\DeclareMathOperator{\gldim}{gld}
\DeclareMathOperator{\im}{Im}
\DeclareMathOperator{\coker}{coker}
\DeclareMathOperator{\coh}{coh}
\DeclareMathOperator{\Ch}{Ch}
\DeclareMathOperator{\Spec}{Spec}
\DeclareMathOperator{\Sym}{Sym}
\DeclareMathOperator{\Gal}{Gal}
\DeclareMathOperator{\tr}{tr}
\DeclareMathOperator{\GL}{GL}
\DeclareMathOperator{\SL}{SL}
\begin{document}
\newtheorem{MainThm}{Theorem}
\renewcommand{\theMainThm}{\Alph{MainThm}}
\theoremstyle{definition}
\newtheorem*{defn}{Definition}
\newtheorem*{examp}{Example}
\newtheorem*{examps}{Examples}
\theoremstyle{remark}
\newtheorem*{rmk}{Remark}
\newtheorem*{rmks}{Remarks}
\theoremstyle{plain}
\newtheorem*{lem}{Lemma}
\newtheorem*{prop}{Proposition}
\newtheorem*{thm}{Theorem}
\newtheorem*{example}{Example}
\newtheorem*{examples}{Examples}
\newtheorem*{cor}{Corollary}
\newtheorem*{conj}{Conjecture}
\newtheorem*{hyp}{Hypothesis}
\newtheorem*{thrm}{Theorem}
\newtheorem*{quest}{Question}
\theoremstyle{remark}

\newcommand{\Grass}{\mathcal{G}}
\newcommand{\Zp}{{\mathbb{Z}_p}}
\newcommand{\Qp}{{\mathbb{Q}_p}}
\newcommand{\Fp}{{\mathbb{F}_p}}
\newcommand{\Fl}{\mathcal{B}}
\newcommand{\bFl}{\Fl_k}
\newcommand{\cFl}{\widehat{\Fl}}
\newcommand{\Ex}[1]{\widetilde{#1}}
\newcommand{\bM}{\overline{\mathcal{M}}}
\newcommand{\F}{\mathcal{F}}
\newcommand{\M}{\mathcal{M}}
\newcommand{\A}{\mathcal{A}}
\newcommand{\C}{\mathcal{C}}
\newcommand{\hM}{\widehat{\M}}
\newcommand{\hA}{\widehat{\A}}
\newcommand{\hAK}{\widehat{\A_K}}
\newcommand{\N}{\mathcal{N}}
\newcommand{\D}{\mathcal{D}}
\newcommand{\T}{\mathcal{T}}
\newcommand{\dnl}{\D^\lambda_n}
\newcommand{\dnt}{\Ex{\D}_n}
\newcommand{\h}[1]{\widehat{#1}}
\newcommand{\hK}[1]{\h{#1_K}}
\newcommand{\hc}[1]{\widehat{\mathscr{#1}}}
\newcommand{\hn}[1]{\h{#1_n}}
\newcommand{\hnK}[1]{\h{#1_{n,K}}}
\newcommand{\hdnl}{\widehat{\dnl}}
\newcommand{\hdnt}{\widehat{\dnt}}
\newcommand{\msD}{\mathscr{D}}
\newcommand{\sDa}{\mathscr{D}_a}
\newcommand{\hD}{\widehat{D}}
\newcommand{\hdnlK}{\widehat{\D^\lambda_{n,K}}}
\newcommand{\hdntK}{\widehat{\Ex{\D}_{n,K}}}
\newcommand{\sdnl}{\mathscr{D}^\lambda}
\newcommand{\sdnlK}{\mathscr{D}^\lambda_K}
\newcommand{\dlK}{\D^\lambda_K}
\newcommand{\unl}{\mathcal{U}^\lambda_n}
\newcommand{\uhn}{\U{h}_n}
\newcommand{\unG}{U(\fr{g})_n^{\mb{G}}}
\newcommand{\hunl}{\widehat{\mathcal{U}^\lambda_n}}
\newcommand{\hunlK}{\widehat{\mathcal{U}^\lambda_{n,K}}}
\newcommand{\hunt}{\widehat{\U{g}_n}}
\newcommand{\hunG}{\widehat{U(\fr{g})_n^{\mb{G}}}}
\newcommand{\GhuntK}{\mathcal{U}_n \ast H_n}
\newcommand{\hugnK}{\widehat{\U{g}_{n,K}}}
\newcommand{\ugK}{U(\fr{g}_K)}
\newcommand{\hugGnK}{\widehat{\U{g}^{\mb{G}}_{n,K}}}
\newcommand{\huhnK}{\widehat{\U{h}_{n,K}}}
\newcommand{\hutnK}{\widehat{\U{t}_{n,K}}}
\newcommand{\huhn}{\h{\uhn}}
\newcommand{\bscD}{\overline{\hdnl}}
\renewcommand{\O}{\mathcal{O}}
\newcommand{\GK}{\mathrm{GKdim}}
\newcommand{\invlim}{\lim\limits_{\longleftarrow}}
\newcommand{\od}{\overline{\deg}}
\newcommand{\md}[1]{#1-\mathbf{mod}}
\newcommand{\Dhatn}{\widehat{\widetilde{\D}_n}}
\newcommand{\injdim}{\mathrm{inj.dim}}
\newcommand{\lmod}[1]{\mathrm{Pmod}(#1)}
\newcommand{\rmod}[1]{\mathrm{Pmod-} #1}
\newcommand{\fr}[1]{\mathfrak{{#1}}}
\newcommand{\frk}[1]{\mathfrak{{#1}}_k}
\newcommand{\mb}[1]{\mathbf{{#1}}}
\newcommand{\U}[1]{U(\fr{#1})}
\newcommand{\Un}{\U{g}_n}
\newcommand{\Sy}[1]{S(\fr{#1})}
\newcommand{\UG}{(\Ui{g}{G})_n}
\newcommand{\Uk}[1]{U(\fr{#1}_k)}
\newcommand{\Syk}[1]{S(\fr{#1}_k)}
\newcommand{\Ui}[2]{\U{#1}^{\mb{#2}}}
\newcommand{\Si}[2]{\Sy{#1}^{\mb{#2}}}
\newcommand{\Uik}[2]{\Uk{#1}^{\mb{#2}_k}}
\newcommand{\Sik}[2]{\Syk{#1}^{\mb{#2}_k}}
\newcommand{\Env}[1]{R_\omega[{#1}]}
\newcommand{\AEn}[1]{K_\omega\langle{#1}\rangle_n}
\newcommand{\tocong}{\stackrel{\cong}{\longrightarrow}}
\newcommand{\st}{\mid}
\newcommand{\be}{\begin{enumerate}[{(}a{)}]}
\newcommand{\ee}{\end{enumerate}}
\let\le=\leqslant  \let\leq=\leqslant
\let\ge=\geqslant  \let\geq=\geqslant\title{On irreducible representations of compact $p$-adic analytic groups}
\author{Konstantin Ardakov}
\address{Queen Mary University of London, London E1 4NS}
\author{Simon Wadsley}
\address{Homerton College, Cambridge, CB2 8PQ}
\begin{abstract} We prove that the canonical dimension of a coadmissible representation of a semisimple $p$-adic Lie group in a $p$-adic Banach space is either zero or at least half the dimension of a non-zero coadjoint orbit. To do this we establish analogues for $p$-adically completed enveloping algebras of Bernstein's inequality for modules over Weyl algebras, the Beilinson-Bernstein localisation theorem and Quillen's Lemma about the endomorphism ring of a simple module over an enveloping algebra.
\end{abstract}
\setcounter{tocdepth}{1}
\maketitle
\tableofcontents

\section{Introduction}

\subsection{Coadmissible $KG$-modules}
Let $p$ be a prime, let $G$ be a compact $p$-adic analytic group, and let $K$ be a finite extension of $\Qp$. Continuous representations of $G$ in $K$-Banach spaces are of interest in many parts of modern arithmetic geometry and the Langlands programme. Following Schneider and Teitelbaum \cite{ST}, we only consider the \emph{coadmissible} representations of $G$, which by definition are finitely generated modules over the completed group ring $KG$ of $G$ with coefficients in $K$, defined below in $\S \ref{CongKern}$. These group rings, under the name of Iwasawa algebras, play a central role in non-commutative Iwasawa theory --- see, for example, \cite{CFKSV} for more details.

The category $\mathcal{M}$ of coadmissible $KG$-modules is abelian and each $M \in \mathcal{M}$ has a \emph{canonical dimension} $d(M)$, which gives rise to a natural dimension filtration
\[ \mathcal{M} = \mathcal{M}_d \supset \mathcal{M}_{d-1} \supset \cdots \supset \mathcal{M}_1 \supset \mathcal{M}_0\]
by Serre subcategories, where $d = \dim G$ is the dimension of the $p$-adic analytic group $G$ and $M \in \mathcal{M}_i$ if and only if $d(M) \leq i$. For example, $d(M) < d$ if and only if $M$ is a torsion $KG$-module, and $d(M) = 0$ if and only if $M$ is finite dimensional as a $K$-vector space. So in this precise sense, $d(M)$ measures the `size' of the underlying vector space of the representation $M$.

\subsection{Semisimple groups}
The structure of these module categories is the most intricate when the Lie algebra of the group $G$ is semisimple, so we focus on this case. Here is our main result.

\begin{MainThm}\label{main}  Let $G$ be a compact $p$-adic analytic group whose Lie algebra is split semisimple. Let $p$ be a very good prime for $G$ and let $G_{\mathbb{C}}$ be a complex semisimple algebraic group with the same root system as $G$. Let $r$ be half the smallest possible dimension of a non-zero coadjoint $G_{\mathbb{C}}$-orbit. Then any coadmissible $KG$-module $M$ which is infinite dimensional over $K$ satisfies $d(M) \ge r$.
\end{MainThm}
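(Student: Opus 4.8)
The plan is to reduce to the case where $M$ is simple and then to chain together the three announced analogues: a $p$-adic Quillen's Lemma produces a central character for $M$, a $p$-adic Beilinson--Bernstein theorem turns $M$ into a coherent module over a sheaf of twisted differential operators on the flag variety, and a $p$-adic Bernstein inequality controls its characteristic variety, which via the moment map forces $d(M)$ to be at least half the dimension of a non-zero nilpotent coadjoint orbit. For the reduction: since the dimension of a coadmissible module is the maximum of the dimensions of its simple subquotients, and since passing to an open uniform subgroup changes neither $d(M)$ nor whether $M$ is infinite-dimensional, we may assume $G$ is uniform and $M$ is simple. Then $KG$ may be treated on the same footing as a $p$-adically completed enveloping algebra $\widehat{\U g}$, and $d(M)=\dim\Supp\gr M$, where $\Supp\gr M$ is a closed conic subvariety of $\fr g^\ast$.

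Since $M$ is simple, the $p$-adic analogue of Quillen's Lemma gives that $\End_{KG}(M)$ is a division ring algebraic over $K$; after a finite extension of $K$, the centre of $\widehat{\U g}$ --- which, by a Harish-Chandra-type isomorphism, contains a completion of $\Sym(\fr h)^W$ --- therefore acts on $M$ through a central character $\lambda$. Thus $M$ is a coadmissible module over the central reduction $\widehat{\U g^\lambda}$, whose associated graded ring is, by Kostant's theorem, a completion of the coordinate ring of the nilpotent cone $\N\subseteq\fr g^\ast$; hence $\Supp\gr M\subseteq\N$, and $M$ is infinite-dimensional over $K$ precisely when $\Supp\gr M\ne\{0\}$. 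Now apply the $p$-adic Beilinson--Bernstein localisation theorem --- an equivalence of categories for $\lambda$ regular, with the general case reduced to this one by translation --- to present $M$ as the global sections of a non-zero coherent $\widehat{\msD^\lambda}$-module on the flag variety $\Fl$.

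Locally on $\Fl$ this $\widehat{\msD^\lambda}$-module is a module over a $p$-adically completed Weyl algebra, so the $p$-adic Bernstein inequality for Weyl algebras shows that every irreducible component of its characteristic variety $\Lambda\subseteq T^\ast\Fl$ has dimension at least $\dim\Fl$. Suppose $M$ is infinite-dimensional, so $\Supp\gr M\ne\{0\}$; then $\Lambda$ has an irreducible component $\Lambda'$ not contained in the zero section $\mu^{-1}(0)$ of the moment map $\mu\colon T^\ast\Fl\to\N$, and the dense nilpotent orbit $\mathcal O$ in $\overline{\mu(\Lambda')}$ is non-zero. Since the fibre of $\mu$ over a point of $\mathcal O$ is the corresponding Springer fibre, of dimension $\dim\Fl-\tfrac12\dim\mathcal O$, the fibre-dimension inequality applied to $\mu\colon\Lambda'\to\overline{\mu(\Lambda')}$ (whose fibres over $\mathcal O$ have dimension at most $\dim\Fl-\tfrac12\dim\mathcal O$) yields
\[ d(M)\;\ge\;\dim\overline{\mu(\Lambda')}\;\ge\;\dim\Lambda'-\bigl(\dim\Fl-\tfrac12\dim\mathcal O\bigr)\;\ge\;\tfrac12\dim\mathcal O . \]
Finally, because $p$ is a very good prime and $G$ has the same root system as $G_{\mathbb C}$, the nilpotent cone of $\fr g$ and the dimensions of its orbits depend only on the root datum and coincide with those of $G_{\mathbb C}$; hence $\dim\mathcal O\ge 2r$, and so $d(M)\ge r$.

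The main obstacle is the $p$-adic Bernstein inequality. Gabber's proof of the involutivity of characteristic varieties uses the Poisson bracket and a delicate symbol calculus with no direct $p$-adic counterpart, so the inequality has to be obtained indirectly: transport the problem through the Beilinson--Bernstein equivalence to $\widehat{\msD}$-modules on $\Fl$, localise to the $p$-adically completed Weyl algebra, and control characteristic varieties through reduction modulo $p$. The other substantial inputs --- the $p$-adic Quillen's Lemma (and hence the existence of central characters), the localisation theorem in the coadmissible setting, and the handling of singular infinitesimal characters $\lambda$, where localisation is only exact up to translation --- also each demand separate work.
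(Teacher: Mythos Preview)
Your overall architecture is correct and matches the paper's: Quillen, then Beilinson--Bernstein, then Bernstein's inequality, then the Springer-fibre dimension estimate. The endgame in your third paragraph is essentially the paper's Theorem \ref{LowerBound}. But there are two genuine gaps in your reduction steps, and they are precisely where the real work lies.

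First, the sentence ``$KG$ may be treated on the same footing as a $p$-adically completed enveloping algebra $\widehat{U(\fr g)}$'' hides the central difficulty. The natural map $KG\to\widehat{U(\fr g)_K}$ is \emph{not} faithfully flat: there are non-zero $KG$-modules killed by this base change, so canonical dimension is not automatically preserved. The paper resolves this by working with the whole family of deformations $\widehat{U(\fr g)_{n,K}}$ (each a microlocalisation of $KG^{p^n}$) and proving a Schneider--Teitelbaum type result: for any finitely generated $p$-torsion-free $RG$-module, base change to the crossed product $\widehat{U(\fr g)_{n,K}}\ast (G/G^{p^n})$ is non-zero for $n\gg 0$. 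One then applies this not to $M$ but to $\Ext^{j(M)}_{KG}(M,KG)$, which is exactly what is needed to compare $d_{KG}(M)$ with $d_{\widehat{U(\fr g)_{n,K}}}$ of the base-changed module. Without this mechanism you have no way to transfer the problem to a completed enveloping algebra while controlling dimension.

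Second, your reduction ``since the dimension of a coadmissible module is the maximum of the dimensions of its simple subquotients'' does not go through: finitely generated $KG$-modules (or $\widehat{U(\fr g)_{n,K}}$-modules) need not have composition series, and there is no \emph{a priori} reason a simple subquotient attains $d(M)$. The paper never reduces to simple modules. Instead, over $A=\widehat{U(\fr g)_{n,K}}$ it proves (Proposition in \S\ref{CentralChar}) that any finitely generated $A$-module with $d\ge 1$ admits a \emph{$Z$-locally finite} quotient still with $d\ge 1$; this is done by microlocalising at a suitable $f\in\fr g_k$, taking a simple quotient of the microlocalised module, applying Quillen there, and pulling back. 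Only after that does one pass (by a finite base change and a Weyl-group twist) to a module with a $\rho$-dominant $K$-rational central character, to which Beilinson--Bernstein applies. Your use of Quillen to get a central character directly on a simple $M$ is the right idea, but it has to be deployed in this more careful way.
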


The invariant $r$ depends only on the root system of $G$ and is well-known in representation theory; we recall the exact values that it takes in $\S \ref{MinNilp}$. Thus, coadmissible $KG$-modules are either finite dimensional over $K$ or rather `large'. It is easy to see that in type $A$ the lower bound is attained by a module induced from a closed subgroup. We do not know if the bound is best possible in general.

Theorem \ref{main} was inspired by an analogous result \cite{Smith} of S.P. Smith for the universal enveloping algebra of a complex semisimple Lie algebra. His proof does not adapt to our context because it depends on the fact that the canonical dimension function for enveloping algebras is just the Gelfand-Kirillov dimension and is therefore particularly well-behaved. More precisely: if $M$ is a finitely generated module over the enveloping algebra of such a Lie algebra $\fr{g}$ and $N\subseteq M$ is a finitely generated module over the enveloping algebra of a subalgebra $\fr{h}$ of $\fr{g}$, then $d(N)\leq d(M)$. Gelfand-Kirillov dimension is not available for modules over Iwasawa algebras, and in fact the analogous property for the canonical dimension function fails for both Iwasawa algebras and the completed enveloping algebras of the next section.

\subsection{Completed enveloping algebras}
One way to understand the finer structure of modules over $KG$ is to study the connection between Iwasawa algebras and certain completed enveloping algebras, which are much closer in spirit to objects found in traditional representation theory. This connection was originally discovered by Lazard in his seminal 1965 paper \cite{Laz1965}; we will explain it by means of an example.

Let $R$ be the ring of integers of $K$ and suppose that $K/\Qp$ is unramified for simplicity. Take $G = \ker \left(\SL_2(\Zp) \to \SL_2(\Fp)\right)$ to be the first congruence kernel of $\SL_2(\Zp)$; then the Iwasawa algebra $RG$ can be identified with a non-commutative formal power series ring $R[[F,H,E]]$ in three variables and $KG$ is just $RG \otimes_RK$. Now let $\fr{g}$ be the $R$-Lie algebra $\fr{sl}_2(R) = R f \oplus R h \oplus R e$, and consider the $p$-adic completion
\[ \widehat{\mathcal{U}_K} := \invlim \left(\frac{ U(\fr{g})}{ p^a U(\fr{g})}\right) \otimes_R K \]
of the usual enveloping algebra $U(\fr{g}_K)$ of $\fr{g}_K = \fr{sl}_2(K)$. Lazard observed that it is possible to obtain $\widehat{\mathcal{U}_K}$ as a completion of $KG$ with respect to an intrinsically defined norm; see $\S \ref{MicIwa}$ for more details. The immediate advantage of replacing $KG$ by this completion is that $\widehat{\mathcal{U}_K}$ is a much more accessible object: its topological generators satisfy standard relations such as $[e,f] = h$, whereas the commutation relations between $E$ and $F$ are more intricate.

\subsection{Distribution algebras}
Let $G$ be a uniform pro-$p$ group. Lazard defined a $\Zp$-Lie algebra $L_G$ associated to $G$; this turns out to be free of rank $d = \dim G$ over $\Zp$ and satisfies $[L_G, L_G] \leq pL_G$. Letting $\fr{g} = \frac{1}{p}L_G \otimes_{\Zp} R$, the completed enveloping algebra $\widehat{\mathcal{U}_K}$ of $\fr{g}$ is defined in the same way, and we show in Theorem \ref{MicIwaGrp} that it can also be obtained as a particular algebraic microlocalisation of the Iwasawa algebra $RG$. General theorems now imply that the natural map $KG \to \widehat{\mathcal{U}_K}$ is flat. The main problem with passing to this microlocalisation of $KG$ however is that the map is not \emph{faithfully} flat: there are non-zero $KG$-modules $M$ with $\widehat{\mathcal{U}_K} \otimes_{KG} M = 0$.

In a series of papers including \cite{ST1,ST2,ST,ST3} Schneider and Teitelbaum study a class of rings that they call the \emph{distribution algebras} $D(G,K)$ of a $p$-adic analytic group $G$. From an algebraic viewpoint, these rings can be defined as the projective limit of a sequence of Noetherian algebras $D_w = D_{\sqrt[w]{1/p}}(G,K)$; here $w$ can be any real number $\geq 1$ and $D_w$ is the completion of $KG$ with respect to the degree function $\deg_w$ which is characterised by the property that its value on $p$ is $w$ and its value on each of the standard topological generators of $RG$ is $1$. Schneider and Teitelbaum prove in \cite[Theorem 4.11]{ST} that the natural map $KG \to D(G,K)$ \emph{is} faithfully flat, so $D(G,K)$ is in some sense better than $\widehat{\mathcal{U}_K} = D_1$. Unfortunately, $D(G,K)$ is almost never Noetherian; we finesse this difficulty by never passing to the projective limit $D(G,K)$. As we explain in Theorem \ref{MainTech}, the essense of the proof of \cite[Theorem 4.11]{ST} is that for any given non-zero finitely generated $KG$-module $M$, the base-changed $D_w$-module $D_w\otimes_{KG}M$ is non-zero for sufficiently large $w$.

\subsection{Fr\'echet-Stein algebras}
We focus on the algebras $D_w$ where the parameter $w$ is an integral power of $p$, $w = p^n$ say. The advantage of doing so is that $D_{p^n}$ is more understandable algebraically: it is closely related to a certain crossed product
\[ \widehat{\mathcal{U}_{n,K}}\ast G/G^{p^n},\]
where $\widehat{\mathcal{U}_{n,K}}$ is the subalgebra of $\widehat{\mathcal{U}_K}$ obtained by completing the Iwasawa algebra $KG^{p^n}$ of the open subgroup $G^{p^n}$ of $G$ with respect to its intrinsic norm; see Proposition \ref{CrossProd} and also \cite{Frommer} for more details. We suspect that in fact $D_{p^n}$ is isomorphic to this crossed product, but we will not need to prove this: our Corollary \ref{MainTech} essentially implies that to prove Theorem \ref{main} above it is enough to prove the corresponding result (Theorem \ref{LowerBound}) for each of the algebras $\widehat{\mathcal{U}_{n,K}}$.

In a recent preprint \cite{Schmidt2} Schmidt has studied the projective limit of our $\widehat{\mathcal{U}_{n,K}}$, the so-called Arens--Michael envelope of the enveloping algebra $U(\fr{g}_K)$. Both the distribution algebras $D(G,K)$ and the Arens-Michael envelopes studied by Schmidt are examples of what Schneider and Teitelbaum call Fr\'echet-Stein algebras. Our work on the structure of completed enveloping algebras in this paper may be viewed as an in-depth study of local data for these Fr\'echet-Stein structures.

\subsection{Non-commutative affinoid algebras}\label{NCAA}
It turns out that as a $K$-vector space, $\widehat{\mathcal{U}_K}$ is the set of restricted power series in a free generating set for $\fr{g}$: suppose that $u_1,\ldots,u_d$ is free generating set for $\fr{g}$ as an $R$-module; then
\[ \widehat{\mathcal{U}_K} = \left\{ \sum_{\mathbf{\alpha}\in\mathbb{N}^d} \lambda_{\alpha}\mathbf{u}^{\alpha}  : \quad \lambda_{\alpha} \in K \quad\mbox{and}\quad \lambda_{\alpha} \longrightarrow 0 \quad\mbox{as}\quad |\alpha| \to \infty\right\}\]
where $\mathbf{u}^{\alpha}$ denotes the product $u_1^{\alpha_1}\cdots u_d^{\alpha_d}$.  In this way $\widehat{\mathcal{U}_K}$ can be identified as a $K$-Banach space with the \emph{Tate algebra} $K \langle u_1,\ldots u_d\rangle$ in $d$ commuting variables. Tate algebras form the basis of rigid, or non-archimedean, analysis; one views this particular Tate algebra as the algebra of rigid analytic functions on the unit ball $\fr{g}^\ast = \Hom_R(\fr{g}, R)$ of the $K$-vector space $\fr{g}_K^\ast=\Hom_K(\fr{g}_K, K)$. Thus we view $\widehat{\mathcal{U}_K}$ as a rigid analytic quantization of $\fr{g}^\ast$, analogous to the usual way of viewing $U(\fr{g}_K)$ as an algebraic quantization of $\fr{g}^\ast_K$.

The subalgebras $\widehat{\mathcal{U}_{n,K}}$ of $\widehat{\mathcal{U}_K}$ mentioned above have the following form:
 \[ \widehat{\mathcal{U}_{n,K}} = \left\{ \sum_{\mathbf{\alpha}\in\mathbb{N}^d} \lambda_{\alpha}\mathbf{u}^\alpha  : \quad \lambda_{\alpha} \in K \quad\mbox{and}\quad p^{-n|\alpha|}\lambda_{\alpha} \longrightarrow 0 \quad\mbox{as}\quad |\alpha| \to \infty\right\}\] and can therefore be identified with the Tate algebras $K \langle p^nu_1, \ldots, p^nu_d \rangle$ as $K$-Banach spaces. We may therefore view them as rigid analytic quantizations of larger and larger closed balls $\fr{g}^\ast \subset p^{-1} \fr{g}^\ast \subset p^{-2} \fr{g}^\ast \subset \cdots$ in $\fr{g}_K^\ast$ with respect to the $p$-adic topology. Thus the collection $\{\h{\mathcal{U}_{n,K}} : n \in \mathbb{N}\}$ together forms a quantization of the rigid analytification $(\fr{g}^\ast_K)^{an}$ of the $K$-variety $\fr{g}^\ast_K$.

The gluing of these closed balls should correspond algebraically to taking the projective limit of the affinoid algebras and thus we recover the Arens-Michael envelopes of $U(\fr{g}_K)$ studied by Schmidt.

\subsection{Tate-Weyl algebras}\label{RigAnQ}
It is well-known in non-commutative algebra that the Weyl algebras $\mathbb{C}[x_1,\ldots, x_m, \partial_1, \ldots, \partial_m]$ are more tractable objects than universal enveloping algebras $U(\fr{g}_{\mathbb{C}})$, and frequently influence their structure. In our setting we can form the standard $p$-adic completion
\[\h{\mathcal{A}_K} = \left\{ \sum_{\alpha,\beta \in \mathbb{N}^m} \lambda_{\alpha\beta} x^\alpha \partial^\beta  : \quad \lambda_{\alpha\beta} \in K \quad\mbox{and}\quad \lambda_{\alpha\beta} \longrightarrow 0 \quad\mbox{as}\quad |\alpha| + |\beta| \to \infty\right\}\]
of the Weyl algebra $K[x_1,\ldots, x_m, \partial_1,\ldots, \partial_m]$ with coefficients in $K$, and consider analogous subalgebras
\[\h{\mathcal{A}_{n,K}} = \left\{ \sum_{\alpha,\beta \in \mathbb{N}^m} \lambda_{\alpha\beta} x^\alpha \partial^\beta  \in \h{\mathcal{A}_K}: \quad p^{-n|\beta|}\lambda_{\alpha\beta} \longrightarrow 0 \quad\mbox{as}\quad |\alpha| + |\beta| \to \infty\right\}\]
enjoying certain stronger convergence properties. We tentatively call these the \emph{Tate-Weyl algebras} and view $\h{\mathcal{A}_{n,K}}$ as a quantization of the rational domain
\[Y_n := \left\{ (\xi_1,\ldots,\xi_m, \zeta_1,\ldots,\zeta_m) \in K^{2m} : |\xi_i|\leq 1\quad\mbox{and}\quad |\zeta_i| \leq p^n \quad\mbox{for all} \quad i\right\}.\]
More generally, let $X$ be a smooth $R$-scheme locally of finite type with generic fibre $X_K$, and let $X_0 \subseteq X_K^{an}$ be the unit ball inside the rigid analytification $X_K^{an}$ of $X_K$. Let $(T^\ast X)_0 \subseteq (T^\ast X_K)^{an}$ be the corresponding cotangent bundles and consider the rigid analytic variety
%; $X_0$ is really defined by the $p$-adic completion $\widehat{ \mathcal{O}_X }$ of the structure sheaf $\O_X$ of $X$).
\[ Y_n := X_0 \times_{X_K^{an}} p^{-n} (T^\ast X)_0.\]
By patching together appropriate microlocalisations of Tate-Weyl algebras, we obtain a completed deformation $\widehat{\D_{n,K}}$ of the sheaf $\D$ of crystalline differential operators on $X$; this is a sheaf on $X$ which is now only supported on the special fibre $X_k$. Just as in $\S \ref{NCAA}$ above we view $\widehat{\D_{n,K}}$ as a quantization of the rigid analytic variety $Y_n$ and then the collection $\{\widehat{\D_{n,K}} : n \in \mathbb{N}\}$ together forms a quantization of $Y := X_0 \times_{X_K^{an}} (T^\ast X_K)^{an} = \cup Y_n$; see \S\ref{Defs} for more details. This is broadly analogous to the usual way of viewing the sheaf of differential operators $\D_K$ on $X_K$ as an algebraic quantization of $T^\ast X_K$.

\subsection{Characteristic varieties}
It is possible to associate to each coherent sheaf $\M$ of $\widehat{\D_{n,K}}$-modules a characteristic variety $\Ch(\M)$. Because of the nature of these rigid analytic quantizations we have to be a little careful in the definition of $\Ch(\M)$; since any reasonable filtration of $\h{\D_{n,K}}$ has an associated graded ring in characterstic $p$, $\Ch(\M)$ is effectively forced to be an algebraic subset of the special fibre $T^\ast X_k$ of the cotangent bundle although ideally it should be a subset of $Y$. This causes us a number of problems. For example Gabber's Theorem on the integrability of the characteristic variety  \cite{Gabber} can be used to give another proof of Smith's original theorem, but we cannot use this result because it is heavily dependent on characteristic zero methods and does not directly apply to our completed enveloping algebras.

\subsection{Arithmetic $\D$-modules}\label{BernArithD} The sheaf $\widehat{\D_{0,K}}$, and in particular the Tate-Weyl algebra $\h{\A_K}$, was studied before from a slightly different viewpoint by Berthelot in \cite{Berth}. In fact, $\widehat{\D_{0,K}}$ is essentially Berthelot's sheaf $\widehat{\mathscr{D}}^{(0)}_{\mathcal{X}, \mathbb{Q}}$ of arithmetic differential operators of level zero on the formal neighbourhood $\mathcal{X}$ of the special fibre $X_k$ in $X$; the only difference between our approaches is that we view $\widehat{\D_{n,K}}$ as a sheaf on $X$ supported on $X_k$ for simplicity, and do not mention formal neighbourhoods. Our second main result is a $p$-adic analogue of the classical Bernstein Inequality for algebraic $\D$-modules.

\begin{MainThm}\label{BernIneqIntro} Let $\h{\A_{n,K}}$ be as in $\S\ref{RigAnQ}$ and suppose that $M$ is a finitely generated non-zero $\h{\A_{n,K}}$-module. Then
\[\dim \Ch(M) \geq m.\]
\end{MainThm}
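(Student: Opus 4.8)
The plan is to imitate the classical proof of Bernstein's inequality via the Bernstein filtration, adapting it to the $p$-adic completed setting. Recall that $\h{\A_{n,K}}$ is a $p$-adic completion of the Weyl algebra $K[x_1,\ldots,x_m,\partial_1,\ldots,\partial_m]$ with the convergence condition $p^{-n|\beta|}\lambda_{\alpha\beta}\to 0$. The key structural fact I would exploit first is that $\h{\A_{n,K}}$ carries a natural $R$-lattice $\h{\A_{n}}$ (the elements with $\lambda_{\alpha\beta}\in R$ satisfying the stated convergence) whose mod-$p$ reduction $\gr \h{\A_n}$, or more precisely the associated graded with respect to a suitable order filtration, is a commutative polynomial ring in $2m$ variables over $\Fp$ (or a completion thereof). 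Thus $\Ch(M)$ is defined as the support of the associated graded of a good lattice/filtration on $M$, sitting inside $T^\ast X_k \cong \mathbb{A}^{2m}_k$, and the statement to prove is that this characteristic variety has dimension at least $m$, i.e. the Gelfand--Kirillov dimension of the graded module is at least $m$.

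The key steps, in order, would be: \emph{(1)} Choose a finitely many generators of $M$ over $\h{\A_{n,K}}$ and build a good filtration; reduce mod $p$ to obtain a finitely generated graded module $\gr M$ over the commutative ring $\gr\h{\A_n}$ supported on $\Ch(M)$. \emph{(2)} Establish the Bernstein-type inequality at the level of Hilbert functions: if $\Gamma_i$ denotes the $i$-th piece of a good (Bernstein-type) filtration of $M$, then the bracket pairing $[\partial_j,x_j]=1$ forces $\dim_K \Gamma_i$ to grow at least like a polynomial of degree $m$ in $i$. The classical argument is that the map $\Gamma_i \to \Hom_K(\Gamma_i,\Gamma_{2i})$ given by $a\mapsto (b\mapsto \text{commutator expression})$ is injective once $M\neq 0$, which gives $\dim\Gamma_{2i}\geq \sqrt{\dim\Gamma_i}$ type estimates and hence, iterated, $\dim\Gamma_i\gtrsim i^m$; this translates to $\GK(\gr M)\geq m$, hence $\dim\Ch(M)\geq m$. \emph{(3)} The subtlety over $K$ rather than $\mathbb{C}$ is that one must make sure the filtration pieces are finite-dimensional $K$-vector spaces and that the non-vanishing of $M$ survives reduction mod $p$; here one uses that $M$ has a good $\h{\A_n}$-lattice $M_0$ (finitely generated over the Noetherian ring $\h{\A_n}$) with $M_0/pM_0 \neq 0$ by Nakayama, and then the Bernstein argument can be run over $\Fp$ for the graded module $M_0/pM_0$ over $\gr\h{\A_n}\otimes_R k$.

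The main obstacle I anticipate is twofold. First, one must verify that the ``associated graded'' of $\h{\A_{n,K}}$ with respect to an order filtration that sees all $2m$ variables is genuinely a polynomial ring of Krull dimension $2m$ over a field — the $p$-adic completion and the asymmetric convergence condition ($p^{-n|\beta|}$ weighting only the $\partial$'s) mean the naive Bernstein filtration must be replaced by one mixing the $p$-adic and order filtrations, and checking that $\gr$ behaves well (is Noetherian, commutative, of the expected dimension, and that $\Ch(M)$ computed from any two good filtrations agrees) is the technical heart. Second, the commutator injectivity step must be re-examined: over a field of characteristic $p$ the identity $[\partial^p, x] = 0$ means $\gr\h{\A_n}\otimes k$ really is commutative, so the Bernstein trick cannot be run after reduction mod $p$ directly — instead one must run the filtration-growth estimate \emph{in characteristic zero} on $M$ itself (using that $[\partial_j,x_j]=1$ holds in $\h{\A_{n,K}}$) to bound $\dim_K\Gamma_i$ from below, and only afterwards relate that growth rate to $\dim\Ch(M)$ via the comparison of Hilbert polynomials of $\Gamma_i$ and of $M_0/pM_0$. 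Making this comparison precise — reconciling a characteristic-zero Hilbert function estimate with a characteristic-$p$ dimension computation — is where the argument genuinely departs from both the classical Weyl algebra case and from Berthelot's framework, and is the step I would expect to require the most care.
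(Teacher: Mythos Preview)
Your proposal takes a route quite different from the paper's, and the obstacle you flag in your final paragraph is a genuine gap, not just a technicality. The classical Bernstein argument needs filtration pieces $\Gamma_i$ that are finite-dimensional over $K$ and on which the commutator trick $[\partial_j,x_j]=1$ can be exploited. But $M$ is finitely generated only over the \emph{completed} algebra $\h{\A_{n,K}}$, which has no Bernstein-type positive filtration with finite-dimensional $K$-pieces: elements are convergent power series, so ``total degree $\le i$'' does not cut out a finite-dimensional subspace. The only place finite-dimensional pieces naturally appear is on the slice $\gr_0 M$ in characteristic $p$, where, as you correctly observe, the commutator trick collapses. You propose to bridge this by comparing a characteristic-zero growth estimate with the characteristic-$p$ support dimension, but you give no mechanism for this comparison, and I do not see one that does not amount to a new argument.

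The paper avoids Hilbert functions entirely. It first proves that the affinoid Weyl algebra $K_\omega\langle V\rangle_n$ is \emph{simple}: for $v\in V$ the derivation $d_v=[v,-]/\pi^n$ preserves the $R$-lattice, and crucially so do the divided powers $d_v^m/m!$, so that $\Gr(d_v^m/m!)$ acts as the $m$-th divided power of $\partial_v$ on $\Sym V_k$; since no nonzero ideal of $\Sym V_k$ is stable under all such operators, simplicity follows. Then, supposing $\dim\Ch(M)<m$, generic Noether normalisation on $\Gr(M)$ shows that the set of $W\in\Grass_m(V_k)$ over which $\Gr(M)$ is module-finite is open and nonempty; since $\perp$ is a homeomorphism of the Grassmannian, one can choose $W$ with both $W_k$ and $(W^\perp)_k$ in this set. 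Then $M$ is finitely generated over both $K_\omega\langle W\rangle_n$ and $K_\omega\langle W^\perp\rangle_n$, and since these two subalgebras commute, $\Ann_{K_\omega\langle W\rangle_n}(M)$ is a finite intersection of nonzero left ideals in a Noetherian domain, hence nonzero; this forces $\Ann_{K_\omega\langle V\rangle_n}(M)\neq 0$, contradicting simplicity. The passage from $K_\omega\langle V\rangle_n$ to $\h{\A_{n,K}}$ with its order filtration is then handled by an explicit isomorphism when $n$ is even and a ramified base change when $n$ is odd.
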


Berthelot has proved a version of the Bernstein Inequality for $F - \mathscr{D}^\dag_{\mathcal{X}, \mathbb{Q}}$-modules in \cite[Th\'eor\`eme 5.3.4]{Berth2}; our Theorem \ref{BernIneqIntro} can be viewed as a generalization of his result in the case when $X = \mathbb{A}^m_R$. We do not consider the arithmetic differential operators of higher level $\widehat{\mathscr{D}}^{(\ell)}_{\mathcal{X}, \mathbb{Q}}$ in this paper; our sheaves $\widehat{\D_{n,K}}$ should be viewed as a deformation of the level zero arithmetic differential operators $\widehat{\mathscr{D}}^{(0)}_{\mathcal{X}, \mathbb{Q}}$.

Close to the end of the preparation of this paper we discovered than Caro has removed the Frobenius condition from Berthelot's result in \cite{Caro}. His proof is rather different to ours. Like Berthelot, Caro does not consider our algebras $\h{\A_{n,K}}$ for $n>0$.

\subsection{Beilinson-Bernstein localisation}
Let $\mb{G}$ be a split semisimple algebraic group over $R$ with $R$-Lie algebra $\fr{g}$ and let $X$ be its flag-scheme $\mb{G} / \mb{B}$. We show in $\S \ref{BB}$ that the analogue of the Beilinson-Bernstein Localisation Theorem \cite{BB} holds in our setting.

 \begin{MainThm}\label{BBIntro} Suppose that $\lambda$ is a dominant regular weight. There is an equivalence of abelian categories between finitely generated $\h{\mathcal{U}_{n,K}}$-modules with central character corresponding to $\lambda$, and coherent sheaves of modules over the sheaf $\h{\D^\lambda_{n,K}}$ of completed deformed twisted crystalline differential operators on $X$. \end{MainThm}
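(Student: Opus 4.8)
The plan is to transcribe the two halves of the classical Beilinson--Bernstein argument into the $p$-adically completed setting. Write $\Loc(N) = \hdnlK \otimes_{\hunlK} N$ for the localisation functor and $\Gamma = \Gamma(X,-)$ for its right adjoint, where $X = \mb{G}/\mb{B}$ and $\hunlK$ is the quotient of $\h{\mathcal{U}_{n,K}}$ by the ideal generated by the kernel of the central character $\chi_\lambda$; the claim is that these are quasi-inverse equivalences. Throughout I work with an integral form $\hdnl$, a sheaf of $p$-adically complete $R$-algebras on $X$ supported on the special fibre with $\hdnlK = \hdnl \otimes_R K$, and it is convenient to build $\hdnl$ by specialising the universal sheaf $\hdnt$ at $\lambda$ only at the end.

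\textbf{Step 1: global sections and higher cohomology of the structure sheaf.} First I would show $R\Gamma(X,\hdnl) = \hunl$ concentrated in degree zero and then invert $p$. Since $\hdnl = \varprojlim_a \dnl/p^a\dnl$ and $X$ is Noetherian, a $\varprojlim^1$/Mittag--Leffler argument (the transition maps are surjective) reduces this to computing $H^\ast(X,\dnl/p^a\dnl)$. Filtering by order of differential operator, the associated graded is the mod-$p^a$ reduction of $\gr\dnl = \Sym_{\O_X}\T_X = \pi_\ast\O_{T^\ast X}$, where $\pi\colon T^\ast X \to X$; because the moment map $T^\ast X \to \N$ is a rational resolution of the nilpotent cone $\N$ (affine over $R$ and with rational singularities since $p$ is very good), $H^{>0}(X,\gr\dnl) = H^{>0}(T^\ast X,\O) = H^{>0}(\N,\O) = 0$ and $H^0(X,\gr\dnl)$ is the coordinate ring of $\N$, and since $\gr\dnl$ is $R$-flat the same holds mod $p^a$. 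The long exact sequences along the order filtration and a filtered colimit then give $H^{>0}(X,\dnl/p^a) = 0$ and $H^0(X,\dnl/p^a) = \hunl/p^a$; taking $\varprojlim_a$ and $\otimes_R K$ yields $\Gamma(X,\hdnlK) = \hunlK$ and $H^{>0}(X,\hdnlK) = 0$. Note dominance of $\lambda$ is not needed here, as the order filtration on $\dnl$ has $\lambda$-independent associated graded.

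\textbf{Step 2: the equivalence.} Next I would establish $\D$-affinity and the adjunction isomorphisms. First, exactness of $\Gamma$ on coherent modules: show $H^{>0}(X,\M) = 0$ for every coherent $\hdnlK$-module $\M$. This is where dominance of $\lambda$ enters, via ampleness of $\O(\lambda)$ on $X$ --- descend $\M$ to a coherent $\hdnl$-module, equip it with a good filtration, twist by high powers of $\O(\lambda)$ (identifying $\O(\mu)\otimes_{\O_X}\hdnl$-modules with $\widehat{\D^{\lambda+\mu}_n}$-modules), and apply Serre vanishing on $X$, with the proviso that the vanishing threshold be chosen uniformly in $a$ so that the vanishing survives $\varprojlim_a$ and $\otimes_R K$. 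With exactness of $\Gamma$ in hand, $\Loc$ is right exact and, by Step 1, $\Gamma\circ\Loc \tocong \mathrm{id}$ (resolve $N$ by finite free modules and commute $\Gamma$ past the tensor product using the cohomology vanishing). Finally, the counit $\Loc\,\Gamma(\M)\to\M$: surjectivity is global generation of $\M$ (again ampleness of $\O(\lambda)$ applied to a good filtration), so the kernel $\mathcal K$ has $\Gamma(X,\mathcal K)=0$ by exactness, whence $\mathcal K = 0$ provided $\Gamma$ is faithful. Faithfulness is exactly where regularity of $\lambda$ is used: a nonzero coherent $\hdnlK$-module has nonzero global sections, which one obtains by reducing to the classical assertion on $X_K$ (the characteristic variety of a nonzero module must meet the zero section after the regular dominant twist) or via Verma-module theory on the flag variety. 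Combining, $\Gamma$ and $\Loc$ are quasi-inverse equivalences between coherent $\hdnlK$-modules and finitely generated $\h{\mathcal{U}_{n,K}}$-modules with central character $\chi_\lambda$.

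\textbf{Main obstacle.} The heart of the matter is making the cohomological vanishing \emph{integral and uniform} in the truncation level. The reduction mod $p$ of $\hdnl$ lives in characteristic $p$, where crystalline differential operators carry a large Frobenius centre and naive localisation on the flag variety fails, so the equivalence cannot possibly hold before inverting $p$; one must therefore show that the relevant higher cohomology --- of $\hdnl$ itself in Step 1 and of the $\O(\lambda)$-twisted good-filtration pieces in Step 2 --- is $p$-power torsion with a bound independent of $a$, so that it is annihilated on passing to $\varprojlim_a$ and $\otimes_R K$. This forces a careful, uniform analysis of the order (and $p^n$-adic) filtrations on $\hdnl$ and of the Grothendieck--Springer geometry over $R$, and is the place where the very-good-prime hypothesis does real work; granting this, the rest is a faithful transcription of the classical Beilinson--Bernstein machinery.
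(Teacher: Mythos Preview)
Your overall architecture—set up the $(\Loc,\Gamma)$ adjunction, compute global sections, prove acyclicity and global generation—matches the paper, and your Step~1 is close to what happens in \S\ref{GlobSec}: the paper works with the double filtration and reduces to the mod-$p$ statement from \cite[Proposition 3.4.1]{BMR1}, which is essentially your Springer-resolution vanishing over $k$.

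The divergence is in Step~2, and here there is a genuine gap. Your sketch—twist $\M$ by a large power of an ample bundle, apply Serre vanishing uniformly in $a$, then identify $\O(\mu)\otimes\hdnl$-modules with $\widehat{\D^{\lambda+\mu}_n}$-modules—only yields $H^{>0}(X, \M(s)) = 0$ for large $s$, not $H^{>0}(X, \M) = 0$; the twist equivalence moves you to a different central character and does not by itself transport acyclicity back. The paper's route is different and sidesteps the obstacle you flag. First (Theorem \ref{Gen}) it shows the Serre twists $\{\hdnlK(s)\}$ generate $\coh(\hdnlK)$. Then, rather than proving new integral or mod-$p^a$ vanishing, it proves a bridge result (\S\ref{UnComplete}): if $\M$ is a coherent $\dnl$-module whose generic fibre $\M_K$ is $\Gamma$-acyclic (respectively globally generated) over the characteristic-zero flag variety $\Fl_K$, then so is $\hM_K$. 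The proof uses Proposition \ref{CohCom}—projectivity of the moment map makes each $H^i(\Fl,\M)$ finitely generated over $\Un$—to deduce that the higher cohomology is bounded $\pi$-torsion, after which a Mittag--Leffler argument gives $H^i(\Fl, \hM) \cong H^i(\Fl,\M)$. Classical Beilinson--Bernstein over $K$ now supplies acyclicity and generation for each $\dlK(s)$, hence for each $\hdnlK(s)$; a dimension-shift along a resolution by twists (Corollary \ref{BBVanishing}) handles general $\M$, and the equivalence is assembled in \S\ref{BBThm}.

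So the ``main obstacle'' you identify is real, but the paper does not resolve it by a uniform analysis of filtrations in mixed characteristic; it imports the needed vanishing wholesale from characteristic zero via the finite-generation-plus-completion bridge. The very-good-prime hypothesis enters only in the global-sections computation (\S\ref{RingsofI}--\S\ref{GlobSec}), not in the acyclicity or generation steps.
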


Part of this result is essentially due to Noot-Huyghe in \cite{Noot2}; she established the equivalence of categories between coherent sheaves of $\h{\D^\lambda_{0,K}}$-modules and finitely generated modules for the ring of global sections of $\h{\D^\lambda_{0,K}}$. In many places our proof of this part of Theorem \ref{BBIntro} follows hers although the presentation is sometimes a little different. However she does not explicitly compute the ring of global secitons in her paper. We do so following the ideas in \cite{BMR1}.

We also prove that the rigid analytic quantization construction sketched above in $\S \ref{RigAnQ}$ is compatible with Beilinson-Bernstein localisation; this means that just as in the classical case of complex enveloping algebras one can pull back the characteristic variety of a $\h{\mathcal{U}_{n,K}}$-module from $\fr{g}_k^\ast$ to $T^\ast X_k$ along the Grothendieck-Springer map and study the corresponding $\h{\D^\lambda_{n,K}}$-module instead.

After Theorem \ref{BernIneqIntro}, this effectively reduces the proof of Theorem \ref{main} to the following analogue of Quillen's Lemma \cite{Quillen} for classical enveloping algebras:

\begin{MainThm}\label{QuillenIntro}
Let $M$ be a simple $\h{\mathcal{U}_{n,K}}$-module, let $Z$ be the centre of $\h{\mathcal{U}_{n,K}}$, and suppose that $n > 0$. Then $M$ is $Z$-locally finite.
\end{MainThm}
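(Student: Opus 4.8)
The plan is to imitate the classical proof of Quillen's Lemma, which reduces the statement to a statement about algebras of bounded Gelfand--Kirillov dimension, but with the dimension-theoretic input replaced by the Bernstein inequality of Theorem \ref{BernIneqIntro}. The key point is that $\h{\mathcal{U}_{n,K}}$ is a noetherian, almost commutative-type algebra: its associated graded ring with respect to the norm filtration is a commutative ring of finite Krull dimension over $k$, so one has a well-behaved dimension function $d$ on finitely generated modules, additive on short exact sequences, with $d(\h{\mathcal{U}_{n,K}}) = 2\dim \fr{g}$ when $\fr{g}$ is semisimple. (More precisely, via the Beilinson--Bernstein compatibility mentioned after Theorem \ref{BBIntro} one transports a module with central character to a coherent $\h{\D^\lambda_{n,K}}$-module and measures $\dim \Ch$; the inequality $\dim \Ch(M) \geq \dim X$ there is exactly the substitute for Bernstein's inequality for simple modules.) The essential technical fact I need is a \emph{Quillen-type lemma on hom-spaces}: if $M$ is a finitely generated $\h{\mathcal{U}_{n,K}}$-module, then $\End_{\h{\mathcal{U}_{n,K}}}(M)$, and more generally $\Hom$ between two such modules, is again finitely generated — in fact of bounded dimension — over the centre $Z$. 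This holds because $\h{\mathcal{U}_{n,K}}$ is a finitely generated module over a central polynomial-type subalgebra once $n > 0$; indeed the whole point of working with $n > 0$ rather than $n = 0$ is that the Harish-Chandra-type centre becomes large enough that $\h{\mathcal{U}_{n,K}}$ is module-finite over (a completion of) it, or at least the relevant graded statement holds.

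First I would set up the dimension function $d(\cdot) = \dim \Ch(\cdot)$ on finitely generated $\h{\mathcal{U}_{n,K}}$-modules and record its basic properties: finiteness, additivity on exact sequences, and the bound $d(\h{\mathcal{U}_{n,K}}) = 2\dim\fr{g}$, together with Theorem \ref{BernIneqIntro} (transported along Beilinson--Bernstein) giving $d(M) \geq \dim\fr{g}$ for any nonzero finitely generated $M$ with a central character. Next, let $M$ be a simple module and $E = \End_{\h{\mathcal{U}_{n,K}}}(M)$; by a version of Dixmier's argument $E$ is a division ring. The goal is to show $E$ is finite-dimensional over its centre, or at least that $M$ is locally finite over $Z$, equivalently that every $z \in Z$ acts on $M$ with $\dim_K K[z]\cdot m < \infty$ for all $m$. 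Suppose not: then some $z \in Z$ is \emph{transcendental} in its action, so the rational function field $K(z)$ embeds in $E$, and $M$ becomes a module over the localisation $\h{\mathcal{U}_{n,K}} \otimes_{K[z]} K(z)$. The standard trick is then a counting argument: one produces inside $\h{\mathcal{U}_{n,K}}$ or inside $E$ a free $K[z]$-submodule of infinite rank whose image in $M$ forces $d(M)$ to exceed $2\dim\fr{g}$, contradicting additivity and the bound on $d(\h{\mathcal{U}_{n,K}})$.

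To make the counting argument work I would use that $\h{\mathcal{U}_{n,K}}$ is a \emph{filtered} ring whose Rees/graded ring is commutative noetherian of dimension $2\dim\fr{g}$; then GK-dimension-style estimates apply to the graded picture, and one lifts. Concretely: if $z$ acts on $M$ non-algebraically, pick $0 \neq m \in M$; because $M$ is simple and finitely generated, $M = \h{\mathcal{U}_{n,K}} \cdot m$, and by a good-filtration argument the subspaces $(\h{\mathcal{U}_{n,K}})_{\leq j}\cdot m$ grow like a polynomial of degree $d(M)$ in $j$. On the other hand, the elements $m, zm, z^2m, \dots$ are $K$-linearly independent, and since $z \in Z$ lies in some filtration level $\leq c$ we get $z^i m \in (\h{\mathcal{U}_{n,K}})_{\leq ci + j_0}\cdot m$, which on its own is harmless. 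The real leverage comes from combining $z$ with a transcendence basis of $\gr\h{\mathcal{U}_{n,K}}$ over $\gr Z$: because $\h{\mathcal{U}_{n,K}}$ has dimension $2\dim\fr{g}$ and $\gr Z$ has dimension $\dim\fr{g}$ (the semisimple case), $\h{\mathcal{U}_{n,K}}$ is, up to the central subalgebra, ``dimension $\dim\fr{g}$ over $Z$''; tensoring up to $K(z)$ we would get a nonzero module of dimension $< \dim\fr{g}$ over the localised algebra, violating the transported Bernstein inequality. Making this last dimension bookkeeping precise — in particular checking that localising at the transcendental central element drops $d$ by the expected amount and that the Bernstein inequality survives the localisation — is, I expect, the main obstacle: the completed/affinoid nature of the algebras means one cannot blindly cite Gelfand--Kirillov additivity, and one has to argue at the level of the mod-$p$ associated graded ring throughout, then transfer back. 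The hypothesis $n > 0$ should enter precisely here, ensuring $\gr\h{\mathcal{U}_{n,K}}$ really is module-finite (or close to it) over $\gr Z$, which fails at $n = 0$ where the centre is too small.
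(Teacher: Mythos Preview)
Your proposal rests on several false premises and does not lead to a proof. First, the numerics are off: $d(\hugnK) = \dim\fr{g}$, not $2\dim\fr{g}$, and $\Gr(Z) \cong \Sik{g}{G}$ is a polynomial ring in $l = \rk\fr{g}$ variables, not $\dim\fr{g}$ variables. More seriously, $\hugnK$ is \emph{not} module-finite over $Z$ for any $n$, nor is the graded statement true: $\Syk{g}$ has relative dimension $\dim\fr{g} - l = 2\dim\Fl > 0$ over $\Sik{g}{G}$. So the central subalgebra is far too small for the counting argument you outline, and the sentence ``tensoring up to $K(z)$ we would get a nonzero module of dimension $< \dim\fr{g}$ over the localised algebra, violating the transported Bernstein inequality'' has no content. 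There is also a circularity: transporting a module along Beilinson--Bernstein requires first producing a central character, which is exactly what you are trying to prove. Finally, the role of the hypothesis $n > 0$ is not what you say: it is \emph{not} that the centre becomes large, but rather that the slice $\gr_0\hugnK \cong \Un/\pi\Un$ becomes \emph{commutative} (since $[\pi^n x, \pi^n y] = \pi^{2n}[x,y] \in \pi\Un$ when $n>0$), and Gorenstein, being a polynomial ring.

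The paper's argument is completely different and does not use Bernstein's inequality or Beilinson--Bernstein at all. One works with a simple endomorphism $\varphi$ of $M$ (any nonzero element of $Z$ acting on $M$ will do, by Schur), and studies the field $K(\varphi)$ it generates. The goal is to find an $F_0A$-lattice $N$ in $M$ whose \emph{lattice preserver} $B = \{\theta \in K(\varphi) : \theta N \subseteq N\}$ is a discrete valuation ring; once this is done, the residue field of $B$ acts faithfully on the finitely generated $\gr_0 A$-module $N/\tau N$, and the Generic Flatness Lemma together with the Nullstellensatz forces this residue field to be finite over $k$, whence $K(\varphi)$ is finite over $K$. Producing such a lattice is the hard part: one first microlocalises at the minimal primes of $\gr_0 M$ to obtain an Artinian situation, then uses a Zorn's Lemma argument (controlled by the finite length of $L/\pi L$ for lattices $L$ in the microlocalisation, via Lemma~\ref{Latt}) to find a \emph{maximal} lattice preserver, and a Fitting-type decomposition shows any maximal lattice preserver is a DVR. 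Gabber's Maximality Principle is then invoked to transport the microlocal lattice back to a genuine $F_0A$-lattice in $M$; this step needs $\gr_0 A$ Gorenstein, which again uses $n>0$. Finally, since $Z$ is a Tate algebra, every maximal ideal has finite codimension, so $Z$-local finiteness follows.
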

Theorems \ref{BernIneqIntro} and \ref{QuillenIntro} and the computation of global sections in Theorem \ref{BBIntro} may be viewed as the main technical contributions of this paper. We do not believe that the restriction on $n$ in Theorem \ref{QuillenIntro} is really necessary.
\subsection{Future directions of research}

This work raises a number of possibilities for future avenues of study.  First it suggests that further study of the representation theory of the completed enveloping algebras $\h{\mathcal{U}_{n,K}}$ might be fruitful for better understanding the representation theory of Iwasawa algebras. Although current knowledge suggests that there are very few prime ideals in the Iwasawa algebras $KG$, there are plenty in $\h{\mathcal{U}_{n,K}}$ and a classification of primitive ideals in these latter algebras looks to be possible and may well influence the structure of coadmissible $KG$-modules. Similarly, attempting to define a version of the BGG category $\mathcal{O}$ for completed enveloping algebras is likely to have important consequences if successful.

Although we have only used our techniques to study the canonical dimension function it seems plausible that they might also be useful in attempting to better understand other invariants such as the Euler characteristic of \cite{CoaSuj1999}.

It also seems worth further pursuing the study of $\D$-modules on rigid analytic spaces. In this paper we only really deal with spaces that are locally polydiscs; it would be interesting to attempt to develop a more general theory.

\subsection{Structure of this paper}

In sections 2 and 3 we recall some standard results from non-commutative algebra and define almost commutative affinoid $K$-algebras. In section 4 we develop the theory of crystalline differential operators on a homogeneous space defined over an arbitrary commutative ring; we suspect this is well-known to experts but we could not find a good single reference in this generality so we include it for the sake of those from another field. In sections 5 and 6 we set up the language for and then prove Theorem \ref{BBIntro}. We also explain here how the characteristic variety of a module over a completed enveloping algebra behaves under localisation. In section 7 we prove Theorem \ref{BernIneqIntro}, and in section 8 we prove Theorem \ref{QuillenIntro}. In section 9 we apply all that has gone before along with a study of the fibres of the Grothendieck--Springer resolution to give a lower bound on the canonical dimension of $\h{\mathcal{U}_{n,K}}$-modules which are infinite dimensional over $K$. In section 10 we explain the relationship between the Iwasawa algebras $KG$ and the completed enveloping algebras culminating in the proof of Theorem \ref{main}. Finally in section 11 we study $KG$-modules that are finite dimensional over $K$; we essentially give a complete classification of them. See Prasad's appendix in \cite{ST1} for parallel results for distribution algebras.

\subsection{Acknowledgements}
We would like to thank Ian Grojnowski for giving us the idea of using the Beilinson-Bernstein localisation theorem to study Iwasawa algebras. The first author and Grojnowski have been working, in parallel with this paper, on another geometric approach to Iwasawa algebras. This work, which has been ongoing for several years, has been very influential in the creation of several parts of this paper and should appear soon.

The idea to obtain Theorem \ref{main} as a corollary of the Beilinson-Bernstein localisation theorem and the work of Schneider and Teitelbaum was conceived during the final workshop of the `Non-Abelian Fundamental Groups in Arithmetic Geometry` programme at the Isaac Newton Institute in Cambridge. We are very grateful to the Institute for providing excellent working conditions.

We are also very grateful to Peter Schneider for his detailed comments on an earlier version of this paper. Finally, we heartily thank the referee for reading this paper so thoroughly and providing many useful comments and suggestions.

The first author was partially supported by an Early Career Fellowship from the Leverhulme Trust. The second author was funded by EPSRC grant EP/C527348/1 for a part of this research.

\section{Background}
Our convention regarding left and right modules is as follows. The term module means \emph{left} module, unless explicitly specified otherwise. Noetherian rings are left and right Noetherian, and other ring-theoretic adjectives such as Artinian are used in a similar way.

\subsection{Filtered rings and modules}
Let $\Lambda$ be either $\mathbb{Z}$ or $\mathbb{R}$. A \emph{$\Lambda$-filtration} $F_\bullet A$ on a ring $A$ is a set $\{F_\lambda A|\lambda\in \Lambda\}$ of additive subgroups of $A$ such that
\begin{itemize}
\item $1\in F_0A$;
\item $F_\lambda A\subset F_\mu A$ whenever $\lambda < \mu$,
\item $F_\lambda A\cdot F_\mu A\subset F_{\lambda + \mu}A$ for all $\lambda,\mu \in\Lambda$.
\end{itemize}
The filtration on $A$ is said to be \emph{separated} if $\bigcap_{\lambda \in \Lambda}F_\lambda A=\{0\}$, and it is said to be  \emph{exhaustive} if $\bigcup_{\lambda \in\Lambda}F_\lambda A=A$. Our filtrations will always be exhaustive. Note also that the second condition says that our filtrations are always \emph{increasing}.

 Given a filtration $F_\bullet A$ of $A$ we may make $A$ into a topological ring by letting the $F_\lambda A$ be a fundamental system of neighbourhoods of $0$. When $\Lambda=\mathbb{Z}$ , we say the filtration is \emph{complete} if any Cauchy sequence in $A$ converges to a unique limit.

In a similar way, given a $\Lambda$-filtered ring $F_\bullet A$ and an $A$-module $M$, a \emph{filtration} of $M$ is a set $\{F_\lambda M|\lambda \in\Lambda\}$ of additive subgroups of $M$ such that
\begin{itemize} \item $F_\lambda M\subset F_\mu M$ whenever $\lambda < \mu$, and
\item $F_\lambda A \cdot F_\mu M \subset F_{\lambda + \mu}M$ for all $\lambda,\mu\in \Lambda$.
\end{itemize}
Again, the filtration of $M$ is said to be \emph{separated} if $\bigcap_{\lambda\in\Lambda}F_\lambda M=\{0\}$ and the filtration of $M$ is said to be \emph{exhaustive} if $\bigcup_{\lambda \in \Lambda}F_\lambda M=M$.

\subsection{Degree functions}\label{DegFun}
An exhaustive $\Lambda$-filtration can equivalently be given by a \emph{degree function}. This is a function $\deg : A \to \Lambda \cup \{\infty\}$ such that
\begin{itemize}
\item $\deg(1) = 0$,
\item $\deg(0) = \infty$,
\item $\deg(x + y) \geq \min(\deg(x), \deg(y))$, and
\item $\deg(xy) \geq \deg(x) + \deg(y)$
\end{itemize}
for all $x,y \in A$. If $F_\bullet A$ is a $\Lambda$-filtration on $A$ then
\[\deg(x) :=  \sup \{\lambda \in \Lambda : x \in F_{-\lambda} A\}\]
is a degree function, and conversely, if $\deg : A \to \Lambda \cup \{\infty\}$ is a degree function then
\[F_\lambda A := \{x \in A \st \deg(x) \geq -\lambda\}\]
defines an exhaustive $\Lambda$-filtration on $A$.

Typically when we're dealing with a \emph{positive} $\Lambda$-filtration (one where $F_\lambda A = 0$ for all $\lambda <0$), we will use the language of filtrations, and when we're dealing with a \emph{negative} $\Lambda$-filtration (one where $F_\lambda A = A$ for all $\lambda \geq 0$) we will use the language of degree functions. This explains the minus signs in the above definitions.

\subsection{Associated graded rings and modules} Let $A$ be a $\Lambda$-filtered ring. Define
\[F_{\lambda_-}A := \bigcup_{\mu < \lambda} F_\mu A\]
and note that if $\Lambda = \mathbb{Z}$ then $F_{n_-}A = F_{n-1}A$ for all $n \in \mathbb{Z}$. We can now form two related $\Lambda$-graded rings: the \emph{associated graded ring}
\[ \gr A = \bigoplus_{\lambda \in \Lambda} F_\lambda A/F_{\lambda_-}A\]
and the \emph{Rees ring}
\[ \widetilde{A}= \bigoplus_{\lambda \in \Lambda} F_\lambda A\hspace{1mm} t^\lambda \subseteq A[\Lambda]\]
 which we view as a subring of the group ring $A[\Lambda]$ of the abelian group $\Lambda$; here $A[\Lambda]$ is the free $A$-module on the set of symbols $\{t^\lambda : \lambda \in\Lambda\}$ which is a set-theoretic copy of $\Lambda$. We denote the $\lambda$-th homogeneous piece of $\gr A$ by $\gr_\lambda A$. For $a\in A$ but not in $\bigcap_{\lambda} F_\lambda A$, we'll also write $\gr a$ for the principal symbol of $a$ in $\gr A$.

Given a filtered $F_\bullet A$ module $F_\bullet M$, we similarly define
\[F_{\lambda_-}M := \bigcup_{\mu < \lambda} F_\mu M\]
and then the \emph{associated graded module} $\gr M$ of $M$ is
\[ \gr M = \bigoplus_{\lambda\in \Lambda} F_\lambda M/F_{\lambda_-}M.\]
Clearly $\gr M$ is naturally a graded $\gr A$-module.

We say that a $\mathbb{Z}$-filtration on a ring $A$ is \emph{Zariskian} if the Rees ring $\widetilde{A}$ is Noetherian and $F_{-1}A$ is contained in the Jacobson radical of $F_0A$. In particular, a $\mathbb{Z}$-filtration is Zariskian whenever the filtration on $A$ is complete and the associated graded ring $\gr A$ is Noetherian --- this follows from \cite[Proposition II.2.2.1]{LVO}.

\subsection{Microlocalisation}\label{Micro}

We recall some basic results in the theory of algebraic microlocalisation.

Suppose that $A$ is a Zariskian filtered ring, $T$ is an Ore set in $\gr A$ consisting of homogeneous elements and $M$ is a finitely generated $A$-module with a good filtration.

\begin{lem} Let  $S:=\{ s\in A\mid \gr s\in T\}$. Then
\be
\item $S$ is an Ore set in $A$.
\item There is a natural Zariskian filtration on $A_S$ such that $\gr A_S\cong (\gr A)_T$.
\item There is a good filtration on $M_S$ as an $A_S$-module such that $\gr M_S\cong (\gr M)_T$.
\ee
\end{lem}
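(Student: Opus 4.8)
The plan is to build everything from the classical Ore-set construction for filtered rings, checking that all constructions respect the filtrations. For part (a), the key point is that $S = \{s \in A \mid \gr s \in T\}$ is multiplicatively closed because $T$ is (using that $\gr(st) = \gr(s)\gr(t)$ when there is no cancellation of leading symbols, which holds precisely because $\gr A$ has no zero divisors among the relevant homogeneous elements — more carefully, $T$ consisting of homogeneous non-zero-divisors or at least of an Ore set means $\gr(s)\gr(t) \in T$, hence is nonzero, so $\gr(st) = \gr(s)\gr(t)$ and $st \in S$). To verify the left Ore condition for $S$ in $A$: given $a \in A$, $s \in S$, I need $a' \in A$, $s' \in S$ with $s'a = a's$. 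Working with the filtration, I would lift the Ore condition for $T$ in $\gr A$ degree by degree and use completeness of the filtration (Zariskian, hence in particular I can pass to the completion or use a successive-approximation argument) to assemble an actual solution in $A$ out of its symbol. The reversibility/torsion condition (if $sa = 0$ for some $s \in S$ then $as' = 0$ for some $s' \in S$) is handled similarly by reduction to $\gr A$, where it follows from $T$ being Ore. I expect this successive-approximation step — promoting an identity in $\gr A$ to an identity in $A$ — to be the main technical obstacle, and it is where the Zariskian (completeness + Noetherian) hypothesis is essential.

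For part (b), once $S$ is an Ore set I can form the localisation $A_S$. I put a filtration on $A_S$ by declaring $F_\lambda A_S := \sum_{\mu} F_\mu A \cdot (\text{images of } F_{\lambda-\mu}(\text{something})) $; more precisely, since every element of $A_S$ has the form $s^{-1}a$, I set $\deg(s^{-1}a) := \deg(a) - \deg(s)$ and check this is well-defined (independent of the representation $s^{-1}a = t^{-1}b$), using part (a) and the fact that $\gr s$ is a non-zero-divisor in $(\gr A)_T$. This gives a degree function, hence an exhaustive filtration on $A_S$. The natural map $\gr A \to \gr A_S$ sends $T$ into units, hence factors through $(\gr A)_T$; the resulting map $(\gr A)_T \to \gr A_S$ is then shown to be an isomorphism by checking it is bijective on each graded piece — surjectivity is clear from the description of elements of $A_S$, and injectivity follows again from the torsion part of the Ore condition in $\gr A$. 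Finally, to see the filtration on $A_S$ is Zariskian: $F_{-1}A_S$ lies in the Jacobson radical of $F_0 A_S$ because $\gr_{<0}A_S$ is nilpotent modulo... — actually the cleanest route is to observe $\gr A_S \cong (\gr A)_T$ is Noetherian (localisation of a Noetherian graded ring at a homogeneous Ore set) and the filtration is complete (Cauchy sequences converge: this needs a short argument lifting convergence from $\gr A_S$, or one appeals directly to \cite[Proposition II.2.2.1]{LVO} as quoted in the excerpt), so the Rees ring is Noetherian and the Jacobson radical condition follows.

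For part (c), I take a good filtration $F_\bullet M$ on $M$ (exists by hypothesis) and define $F_\lambda M_S := $ the image of $\sum_\mu F_{\lambda+\nu} M \otimes (\text{degree } -\nu \text{ part})$ — concretely, every element of $M_S = A_S \otimes_A M$ is $s^{-1} \otimes m$ and I set $\deg(s^{-1}\otimes m) := \deg_M(m) - \deg(s)$, again checking well-definedness via the Ore condition. This is an $A_S$-module filtration. The natural surjection $(\gr A)_T \otimes_{\gr A} \gr M \to \gr M_S$ is shown to be an isomorphism exactly as in (b), using that $(\gr M)_T \cong (\gr A)_T \otimes_{\gr A} \gr M$ and checking bijectivity degree by degree; here the torsion statement needed is that an element of $\gr M$ killed by some element of $T$ maps to zero in $(\gr M)_T$, which is the definition of localisation. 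Goodness of the resulting filtration on $M_S$ follows since $\gr M_S \cong (\gr M)_T$ is a finitely generated $(\gr A)_T \cong \gr A_S$-module (finite generation is inherited under localisation), and a filtration whose associated graded module is finitely generated over $\gr A_S$ is good. Throughout, the recurring subtlety — and the one point I would write out with care — is the well-definedness of these degree functions on the localised objects, since that is precisely where one must invoke the Ore condition established in part (a) together with the injectivity of multiplication by $\gr s$ on $(\gr M)_T$.
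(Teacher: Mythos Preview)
The paper's own proof is a bare citation to Li's work on algebraic microlocalisation, so your proposal is attempting to reconstruct those arguments rather than offering an alternative. The overall shape of your sketch --- lift the Ore condition from $\gr A$ to $A$, define a degree function on fractions in $A_S$ and $M_S$, and read off the associated graded --- is exactly the standard route and matches what Li does.

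There is, however, a recurring technical gap: you conflate \emph{Zariskian} with \emph{complete}. The paper (following \cite{LVO}) defines Zariskian to mean that the Rees ring is Noetherian and $F_{-1}A$ lies in the Jacobson radical of $F_0A$; completeness is a strictly stronger hypothesis. This bites you twice. In (a), your ``successive approximation'' argument produces a sequence of better and better approximate solutions to the Ore problem, but without completeness there is no reason this sequence converges in $A$; the correct argument instead exploits the Noetherianity of the Rees ring (an Artin--Rees type statement) to find an exact solution after finitely many steps. In (b), you try to deduce that $A_S$ is Zariskian via ``complete $+$ Noetherian graded $\Rightarrow$ Zariskian'', but $A_S$ is typically \emph{not} complete --- indeed, the microlocalisation $Q_T(A)$ is by definition the completion of $A_S$, and would be redundant if $A_S$ were already complete. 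One must instead check the Jacobson radical condition for $F_0A_S$ directly (it inherits from $F_0A$ via localisation) and show the Rees ring of $A_S$ is a localisation of the Rees ring of $A$, hence Noetherian.

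Your treatment of (c) and your identification of the well-definedness of the degree on fractions as the key subtlety are both fine.
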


\begin{proof}
(a) follows from \cite[Corollary 2.2]{Li}. (b) follows from \cite[Proposition 2.8 and Proposition 2.3]{Li}. (c) follows from \cite[Corollary 2.5(1,2) and Proposition 2.6(1)]{Li}
\end{proof}

\begin{defn} Given the notation above we define the \emph{microlocalisation} $Q_T(A)$ of $A$ at $T$ to be the completion of the induced Zariskian filtration on $A_S$. Similarly, we define the \emph{microlocalisation} $Q_T(M)$ of $M$ at $T$ to be the completion of the induced good filtration on $M_S$.
\end{defn}

\begin{cor} With the notation above $Q_T(A)$ is a flat $A$-module and $Q_T(A)\otimes_A M\cong Q_T(M)$. Moreover $Q_T(M)=0$ if and only if if $M_S=0$.
\end{cor}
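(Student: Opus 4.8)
The plan is to factor the map $A \to Q_T(A)$ as the composite $A \to A_S \to \widehat{A_S} = Q_T(A)$ and to establish flatness and the base-change isomorphism one step at a time, using the Lemma above together with the standard theory of completions of Zariskian filtered rings.

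For flatness: the Ore localisation map $A \to A_S$ is flat, and by part (b) of the Lemma $A_S$ carries a Zariskian filtration, so the completion map $A_S \to \widehat{A_S}$ is faithfully flat by the general theory of Zariskian filtrations \cite[Chapter II]{LVO}. Composing, $Q_T(A)$ is flat over $A$. (It need not be faithfully flat over $A$, since $A \to A_S$ is not.)

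For the isomorphism: localisation gives $A_S \otimes_A M \cong M_S$, and by part (c) of the Lemma the resulting filtration on $M_S$ is a good filtration over the Noetherian Zariskian ring $A_S$. For a finitely generated module with a good filtration over a Zariskian filtered ring, tensoring with the completion of the ring recovers the completion of the module \cite{LVO}, so $\widehat{A_S} \otimes_{A_S} M_S \cong \widehat{M_S} = Q_T(M)$. Chaining the isomorphisms,
\[
Q_T(A) \otimes_A M \;\cong\; \widehat{A_S} \otimes_{A_S} (A_S \otimes_A M) \;\cong\; \widehat{A_S} \otimes_{A_S} M_S \;\cong\; Q_T(M).
\]
Finally, since $Q_T(A) = \widehat{A_S}$ is faithfully flat over $A_S$, we get $Q_T(M) = \widehat{A_S} \otimes_{A_S} M_S = 0$ if and only if $M_S = 0$; alternatively, good filtrations over Zariskian rings are separated, so $M_S$ embeds into $\widehat{M_S} = Q_T(M)$ and the implication $Q_T(M)=0 \Rightarrow M_S=0$ follows, the reverse implication being trivial.

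The argument is essentially bookkeeping built on the Lemma and on \cite{LVO}; the one place needing a little care is matching the good-filtration hypothesis of part (c) to the precise input required for the identity $\widehat{A_S}\otimes_{A_S}M_S \cong \widehat{M_S}$, and noting that the relevant rings are Noetherian because $\gr A_S \cong (\gr A)_T$ is a localisation of the Noetherian ring $\gr A$, the latter being a quotient of the Rees ring $\widetilde{A}$, which is Noetherian by the Zariskian hypothesis.
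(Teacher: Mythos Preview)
Your proof is correct and follows essentially the same route as the paper's; the paper simply cites the relevant results from Li's paper on algebraic microlocalisation (flatness from \cite[Corollary 2.4 and Corollary 2.7(1)]{Li}, the base-change isomorphism from \cite[Corollary 2.7(2)]{Li}, and the vanishing criterion from \cite[Corollary 2.5(3)]{Li}), whereas you unpack the argument via the factorisation $A \to A_S \to \widehat{A_S}$ and the general theory of Zariskian filtrations from \cite{LVO}. The content is the same.
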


\begin{proof}
The flatness follows from \cite[Corollary 2.4 and Corollary 2.7(1)]{Li}. The isomorphism $Q_T(A)\otimes_A M\cong Q_T(M)$ is \cite[Corollary 2.7(2)]{Li}. The last part follows from \cite[Corollary 2.5(3)]{Li}
\end{proof}\subsection{Auslander--Gorenstein rings and dimension functions} \label{AG}

\begin{defn} Let $A$ be a Noetherian ring.
\be
\item We say that a finitely generated (left or right) $A$-module $M$ satisfies \emph{Auslander's condition} if for every $i\geq 0$ and every submodule $N$ of $\Ext^i_A(M,A)$ we have $\Ext^j_A(N,A)=0$ for all $j<i$.
\item We say that $A$ is \emph{Auslander--Gorenstein} if the left and right self-injective dimension of $A$ is finite and every finitely generated (left or right) $A$-module satisfies Auslander's condition.
\item We say that $A$ is \emph{Auslander regular} if it is Auslander-Gorenstein and has finite (left and right) global dimension.
\ee
\end{defn}

\begin{defn} If $A$ is an Auslander--Gorenstein ring, and $M$ is a finitely generated $A$-module then the \emph{grade} of $M$ is given by \[ j_A(M):=\inf\{ j\mid \Ext^j_A(M,A)\neq 0\} \] and the \emph{canonical dimension} of $M$ is given by
\[ d_A(M):=\injdim_A A-j_A(M).\] We say an $A$-module $M$ is \emph{pure} if every finitely generated non-zero submodule of $M$ has the same canonical dimension. We say a finitely generated $A$-module $M$ is \emph{critical} if every proper quotient of $M$ has strictly smaller canonical dimension.
\end{defn}

\begin{defn} Let $A$ be a Noetherian ring. An \emph{exact dimension function} is an assignment, to each finitely generated $A$-module $M$, a value $\delta(M) \in \mathbb{Z} \cup \{-\infty\}$ satisfying the following conditions:
\begin{enumerate}[{(}i{)}]
\item $\delta(0) = -\infty$,
\item $\delta(M) = \max\{\delta(M'), \delta(M'')\}$ whenever $0 \to M' \to M \to M'' \to 0$ is a short exact sequence of finitely generated $A$-modules, and
\item $\delta(M) < \delta(A/P)$ whenever $P$ is a prime ideal of $A$ and $M$ is a torsion $A/P$-module.
\end{enumerate}
If in addition $\delta$ satisfies the following condition:
\begin{enumerate}[{(}iv{)}]
\item for any finitely generated $A$-module $M$, there is an integer $n$ such that whenever $M = M_0 \supseteq M_1 \supseteq M_2 \supseteq \cdots$ is a descending chain of $A$-submodules of $M$, we have $\delta(M_i / M_{i+1}) < \delta(M)$ for all $i \geq n$,
\end{enumerate}
then we say that $\delta$ is \emph{finitely partitive}.
\end{defn} 

\begin{prop} Let $A$ be a ring.
\be
\item If $A$ is Auslander--Gorenstein then $d_A$ is a finitely partitive exact dimension function.
\item If $A$ has a Zariskian filtration such that the associated graded ring $\gr A$ is Auslander--Gorenstein, then $A$ is Auslander--Gorenstein. Moreover if $\gr A$ is Auslander regular then $A$ is Auslander regular.
\ee
\end{prop}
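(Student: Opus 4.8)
These are both standard facts about Auslander--Gorenstein rings and about filtered rings; the plan is to reduce each one to homological inputs from the literature. Throughout write $d_A(M) = \injdim_A A - j_A(M)$ with $j_A(M) = \inf\{j : \Ext^j_A(M,A) \neq 0\}$, so that (a) becomes a set of statements about the grade $j_A$.

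For (a): condition (i) is trivial since $j_A(0) = \infty$. Condition (ii) amounts to $j_A(M) = \min\{j_A(M'), j_A(M'')\}$ for a short exact sequence $0 \to M' \to M \to M'' \to 0$: the bound ``$\geq$'' is read straight off the long exact sequence in $\Ext^\bullet_A(-,A)$, while the reverse bound is the one place Auslander's condition is really used --- one invokes the biduality identity $j_A(\Ext^{j_A(N)}_A(N,A)) = j_A(N)$, valid over Auslander--Gorenstein rings, to rule out cancellation of the $\Ext$-groups of $M'$ and $M''$ in the minimal degree. For (iii) I would use that $A/P$ is a \emph{pure} $A$-module and that $\Ext^{j_A(A/P)}_A(A/P,A)$ is torsion-free over the prime Noetherian ring $A/P$; a cyclic submodule of an $A/P$-torsion module $M$ is a quotient of $(A/P)/(A/P)c$ for a regular element $c \in A/P$, and applying $\Ext^{j_A(A/P)}_A(-,A)$ to $0 \to A/P \xrightarrow{\cdot c} A/P \to (A/P)/(A/P)c \to 0$ then shows $j_A((A/P)/(A/P)c) > j_A(A/P)$; exactness (condition (ii)) propagates this to all of $M$. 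For finite partitivity (iv), Noetherianity of $E := \Ext^{j_A(M)}_A(M,A)$ does the work: the kernels of the maps $E \to \Ext^{j_A(M)}_A(M_i,A)$ attached to a descending chain $M = M_0 \supseteq M_1 \supseteq \cdots$ form an ascending chain of submodules of $E$, and a reduced-rank count on $E$ over $A/\Ann(E)$ bounds the number of factors $M_i/M_{i+1}$ of maximal dimension. All of these are standard; I would cite \cite{LVO} together with the Björk--Levasseur circle of results.

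For (b): a Zariskian filtration forces the Rees ring, and hence $A$ and $\gr A$, to be Noetherian, so $\Ext$ need only be controlled on finitely generated modules. The engine is the comparison spectral sequence for a good filtration on a finitely generated module $M$: $\Ext^i_A(M,A)$ acquires a filtration whose associated graded is a subquotient of $\Ext^i_{\gr A}(\gr M, \gr A)$, and over a Zariskian ring this filtration --- and the subspace filtration on any finitely generated submodule --- is separated. Finiteness of injective dimension is then immediate, $\injdim_A A \leq \injdim_{\gr A} \gr A < \infty$, tested on cyclic modules. For Auslander's condition, take $M$ finitely generated and $N \subseteq \Ext^i_A(M,A)$ with the subspace filtration; then $\gr N$ is a subquotient of $\Ext^i_{\gr A}(\gr M, \gr A)$, and since $\gr A$ is Auslander--Gorenstein, part (a) applied to $\gr A$ shows $j_{\gr A}$ is exact, so every subquotient of that $\Ext$-module has grade $\geq i$; hence $\Ext^j_{\gr A}(\gr N, \gr A) = 0$ for $j < i$, the comparison forces $\gr \Ext^j_A(N,A) = 0$, and separatedness gives $\Ext^j_A(N,A) = 0$. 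Adding $\gldim A \leq \gldim \gr A$, proved by lifting a filtered projective resolution, upgrades this to the Auslander-regular statement.

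The main obstacle in neither part is a single deep theorem but the bookkeeping: in (b) one must check carefully that every filtration produced is good and separated, so that ``$\gr$ vanishes'' genuinely implies ``the module vanishes'', and set up the filtered-$\Ext$ comparison (and its one-sided variants) correctly; in (a) the delicate inputs are the purity and biduality properties of Auslander--Gorenstein rings underlying conditions (ii)--(iv). I expect all of this to be available in \cite{LVO} (Chapter III for the filtered case) together with the Björk--Levasseur results, so the proof is in essence an assembly of those facts with the filtration hypotheses verified.
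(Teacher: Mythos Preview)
Your proposal is correct, and you have essentially reconstructed the standard arguments: the paper's own proof is a bare citation --- part (a) to \cite[Proposition 4.5]{Lev91} and part (b) to \cite[Theorem 3.9]{Bj89} --- and your sketch tracks the proofs found there (biduality and purity for the exactness and torsion conditions, Noetherianity of the top $\Ext$-group for finite partitivity, and the filtered $\Ext$ comparison/spectral sequence for the Zariskian transfer). So there is no genuinely different route here; you simply unpacked what the paper chose to black-box.
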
 
\begin{proof}

(a)   is \cite[Proposition 4.5]{Lev91} and (b) is \cite[Theorem 3.9]{Bj89}.
\end{proof}

\subsection{Base change}\label{basechange1}

Suppose that $K$ is a field and $K'$ is a finite algebraic field extension of $K$.

\begin{lem} If $A$ is an Auslander--Gorenstein $K$-algebra then $A':=K'\otimes_K A$ is Auslander--Gorenstein. Moreover if $M$ is a finitely generated $A$-module then \[ d_A(M)=d_{A'}(K'\otimes_K M).\] Similarly, if $N$ is a finitely generated $A'$-module then $N$ is a finitely generated $A$-module by restriction and $d_A(N)=d_{A'}(N)$.
\end{lem}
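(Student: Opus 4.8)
The plan is to exploit that $A' = K' \otimes_K A$ is free of finite rank $d := [K':K]$ as a module over $A$ on both sides. First I would note the cheap consequences: $A'$ is Noetherian, $A'$ is $A$-projective (hence flat), and $A'$ restricted to $A$ is isomorphic to $A^{\oplus d}$. The functor of restriction of scalars from finitely generated $A'$-modules to finitely generated $A$-modules is exact and has as left adjoint the induction functor $A' \otimes_A -$ and as right adjoint the coinduction functor $\Hom_A(A', -)$; since $A'$ is $A$-free, both of these adjoints are themselves exact.

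The crux of the argument is the Frobenius-type identity $\Hom_A(A', A) \cong A'$, an isomorphism of $(A', A)$-bimodules. By restricting a homomorphism to $K' \otimes 1$ this reduces to the assertion that $\Hom_K(K', K) \cong K'$ as $K'$-modules, which holds for \emph{every} finite field extension (no separability required): $\Hom_K(K', K)$ is a nonzero $K'$-module of $K$-dimension $[K':K]$, and since $K'$ is a field this forces it to be free of rank one. Granting this, resolving $A$ by injective left $A$-modules $I^\bullet$ and applying the exact functor $\Hom_A(A', -)$ produces an injective resolution of $\Hom_A(A', A) \cong A'$ over $A'$; combined with the coinduction adjunction this yields, for every $A'$-module $L$, a natural isomorphism
\[ \Ext^i_{A'}(L, A') \;\cong\; \Ext^i_A(L, A), \]
where on the right $L$ is regarded as an $A$-module, and this isomorphism is compatible with the evident right-module structures on both sides.

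From here everything is formal. Since injective dimension over a Noetherian ring may be computed using finitely generated modules, the isomorphism above (applied to finitely generated $A'$-modules $L$, noting $\Ext^i_A(L,A) = 0$ for $i > \injdim_A A$) gives $\injdim_{A'} A' \le \injdim_A A$; applying it instead with $L = A' \otimes_A W$ for finitely generated $A$-modules $W$, so that $\Ext^i_A(A' \otimes_A W, A) \cong \Ext^i_A(W, A)^{\oplus d}$, gives the reverse inequality, and the same works on the right, so $\injdim_{A'} A' = \injdim_A A < \infty$. For Auslander's condition, let $M'$ be a finitely generated left $A'$-module, $i \geq 0$, and $N'$ a right $A'$-submodule of $\Ext^i_{A'}(M', A') \cong \Ext^i_A(M', A)$; then $N'$ is a right $A$-submodule of $\Ext^i_A(M', A)$ and $M'$ is finitely generated over $A$, so Auslander's condition for $A$ forces $\Ext^j_A(N', A) = 0$, hence $\Ext^j_{A'}(N', A') = 0$, for all $j < i$. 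The case of right $A'$-modules is symmetric, so $A'$ is Auslander--Gorenstein. Finally, the same Ext isomorphisms give $j_{A'}(K' \otimes_K M) = j_A(M)$ (using $\Ext^i_A(K' \otimes_K M, A) \cong \Ext^i_A(M,A)^{\oplus d}$) and $j_{A'}(N) = j_A(N)$ for finitely generated $A'$-modules $N$; combining these with the equality of injective dimensions yields $d_{A'}(K' \otimes_K M) = d_A(M)$ and $d_{A'}(N) = d_A(N)$.

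I expect the main obstacle to be the Frobenius identity $\Hom_A(A', A) \cong A'$ together with the bookkeeping needed to check that all the Ext isomorphisms above genuinely respect the correct one-sided module structures --- this compatibility is precisely what allows a right $A'$-submodule of $\Ext^i_{A'}(M', A')$ to be transported to a right $A$-submodule of $\Ext^i_A(M', A)$, which is the only non-formal input in the verification of Auslander's condition. Everything else should follow mechanically once the (co)induction adjunctions and that identity are in hand.
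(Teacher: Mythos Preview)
Your proof is correct and rests on the same mechanism as the paper's: the extension $A \subset A'$ is a Frobenius extension, and this yields the Ext comparison $\Ext^i_{A'}(L,A') \cong \Ext^i_A(L,A)$ from which everything follows. The paper obtains this by citation --- it invokes \cite{BellFarn} to see that $A \subset A'$ is an $\id$-Frobenius extension (first for simple extensions $K'/K$, then by transitivity) and \cite{ArdBro2007} for the fact that faithfully flat Frobenius extensions preserve the Auslander--Gorenstein property and self-injective dimension --- whereas you unpack these references. Your direct argument that $\Hom_K(K',K)\cong K'$ as $K'$-modules (a one-line dimension count over the field $K'$) is in fact cleaner than the paper's route through simple extensions plus transitivity, and it works uniformly for all finite extensions. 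The remaining Ext computations and the verification of Auslander's condition are then identical in spirit; your caution about bimodule compatibilities is well placed but, as you note, the only thing actually needed is that the Ext isomorphism respects the right $A$-action, which is immediate from the adjunction.
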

\begin{proof}

First, \cite[$\S$5.4]{ArdBro2007} gives that faithfully flat Frobenius extensions of Auslander--Gorenstein rings are Auslander--Gorenstein with the same self-injective dimension. Next, \cite[Example C]{BellFarn} gives that if $K'$ is a simple algebraic extension of $K$ then $A'$ is a $\id$-Frobenius extension of $A$. Also, \cite[Proposition 1.3]{BellFarn} gives that an $\id$-Frobenius extension of an $\id$-Frobenius extension is an $\id$-Frobenius extension and so the first part follows.

Now, if $M$ is a finitely generated $A$-module, we have isomorphisms \[ \Ext^j_{A'}(K'\otimes_K M,A')\cong K'\otimes_K \Ext^j_A(M,A)\] for each $j\ge 0$. This implies that $j_A(M)=j_{A'}(K'\otimes_K M)$, because $K'$ is faithfully flat over $K$.  Thus we obtain the second part.

Finally, if $N$ is a finitely generated $A'$-module, we have isomorphisms \[\Ext^j_A(N,A)\cong \Ext^j_{A'}(N,A')\] for each $j\ge 0$ and the result follows.
\end{proof}

\begin{prop} Suppose that $A$ and $B$ are Auslander--Gorenstein rings such that $\injdim_A A=\injdim_B B$ and $B$ is a flat $A$-module. If $M$ is a finitely generated $A$-module and  $\Ext^{j_A(M)}_A(M,A)\otimes_A B\neq 0$ then $d_A(M)=d_B(B\otimes_A M)$.

In particular if $B$ is a faithfully flat $A$-module then $d_A(M)=d_B(B\otimes_A M)$ for every finitely generated $A$-module $M$.
\end{prop}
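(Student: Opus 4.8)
The plan is to reduce the statement to a flat base-change isomorphism for $\Ext$ groups and then compare grades. Since $A$ is Noetherian and $M$ is finitely generated, choose a resolution $P_\bullet \to M$ of $M$ by finitely generated projective left $A$-modules. First I would observe that, $B$ being flat over $A$, the complex $B \otimes_A P_\bullet$ is a resolution of $B \otimes_A M$ by finitely generated projective left $B$-modules. Secondly, for any finitely generated projective left $A$-module $P$ there is a natural isomorphism of right $B$-modules $\Hom_A(P,A) \otimes_A B \cong \Hom_B(B \otimes_A P, B)$; this is immediate for $P = A$, hence holds for $P = A^n$, and then for an arbitrary finitely generated projective $P$ by passing to a direct summand. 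Applying this degreewise to $P_\bullet$ and using once more that $- \otimes_A B$ is exact, so that it commutes with cohomology, one obtains natural isomorphisms
\[ \Ext^j_A(M,A) \otimes_A B \;\cong\; \Ext^j_B(B \otimes_A M, B) \qquad \text{for every } j \geq 0. \]

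Granting this, the rest is bookkeeping. If $M = 0$ both canonical dimensions are $-\infty$, so assume $M \neq 0$. By definition $j_A(M) = \inf\{ j : \Ext^j_A(M,A) \neq 0 \}$, which is finite and at most $\injdim_A A$; the displayed isomorphism forces $\Ext^j_B(B \otimes_A M, B) = 0$ for all $j < j_A(M)$, so $j_B(B \otimes_A M) \geq j_A(M)$, while the hypothesis $\Ext^{j_A(M)}_A(M,A) \otimes_A B \neq 0$ says precisely that $\Ext^{j_A(M)}_B(B \otimes_A M, B) \neq 0$. Hence $j_B(B \otimes_A M) = j_A(M)$ (and in particular $B \otimes_A M \neq 0$). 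Since $A$ and $B$ are Auslander--Gorenstein we may write $d_A(M) = \injdim_A A - j_A(M)$ and $d_B(B \otimes_A M) = \injdim_B B - j_B(B \otimes_A M)$, and the assumption $\injdim_A A = \injdim_B B$ now gives $d_A(M) = d_B(B \otimes_A M)$.

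For the final assertion, suppose $B$ is faithfully flat over $A$ and let $M$ be an arbitrary finitely generated $A$-module, which we may take to be non-zero; then $j_A(M)$ is finite and $\Ext^{j_A(M)}_A(M,A)$ is a non-zero right $A$-module, so $\Ext^{j_A(M)}_A(M,A) \otimes_A B \neq 0$ by faithful flatness, and the hypothesis of the first part is automatically satisfied. The only genuine content is the $\Ext$ base-change isomorphism, and the step I expect to need the most care is keeping track of on which side each instance of flatness of $B$ is being used: exactness of $- \otimes_A B$ on the complex $P_\bullet$ of left modules (to resolve $B \otimes_A M$), and exactness of $- \otimes_A B$ on the complex $\Hom_A(P_\bullet, A)$ of right modules (to move $\otimes_A B$ inside the cohomology). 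Everything else is formal.
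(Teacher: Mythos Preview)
Your proposal is correct and follows exactly the same approach as the paper: establish the flat base-change isomorphism $\Ext^i_B(B\otimes_A M,B)\cong \Ext^i_A(M,A)\otimes_A B$, deduce $j_A(M)=j_B(B\otimes_A M)$ from the hypothesis, and conclude using the equality of self-injective dimensions. The paper simply asserts the $\Ext$ isomorphism without justification, whereas you spell out the standard projective-resolution argument; your closing remark about tracking which side of flatness is used at each step is a fair point of care that the paper glosses over.
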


\begin{proof}
It suffices to prove that $j_A(M)=j_B(B\otimes_A M)$ under these conditions. But $\Ext^i_B(B\otimes_A M,B)\cong \Ext^i_A(M,A)\otimes_AB$ for all $i\geq 0$ since $B$ is flat over $A$. Thus $j_A(M)=j_B(B\otimes_A M)$ is equivalent to $\Ext^{j_A(M)}_A(M,A)\otimes_A B\neq 0$ and the first part follows.

The second part is a trivial consequence of the first.
\end{proof}

\subsection{Lattices}\label{Latt}
We will several times require the following very useful result.

\begin{lem} Let $A$ be a Noetherian ring and let $\pi \in A$ be a central element. Suppose $A$ is $\pi$-adically complete. Let $M$ be a finitely generated $A$-module such that $M/\pi M$ has finite length. Let $M_0 \supseteq M_1 \supseteq M_2 \supseteq \cdots $ be a descending chain of $A$-submodules of $M$ such that $M_n \nsubseteq \pi M$ for all $n$. Then $\bigcap M_n \nsubseteq \pi M$.
\end{lem}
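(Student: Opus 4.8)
The plan is to use $\pi$-adic completeness through a successive-approximation argument. First I would record the relevant topological facts about $M$. Since $\pi$ is central and $A$ is Noetherian and $\pi$-adically complete, the element $\pi$ lies in the Jacobson radical of $A$ (the geometric series for $1-\pi a$ converges), so by the Krull intersection theorem every finitely generated $A$-module is $\pi$-adically separated; an Artin--Rees argument then shows such a module is $\pi$-adically complete and that the $\pi$-adic topology on any finitely generated submodule is induced from the ambient one. Consequently each $M_n$ is closed in $M$, and since each $M/\pi^jM$ is discrete in the quotient topology, every submodule of $M$ containing some $\pi^jM$ is closed. Finally, because $M/\pi M$ has finite length and each $\pi^iM/\pi^{i+1}M$ is a quotient of $M/\pi M$ (via multiplication by $\pi^i$), the module $M/\pi^jM$ has finite length for every $j\geq 1$.

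Next, the key construction. For each $j\geq 1$ the descending chain $(M_n+\pi^jM)_{n\geq 0}$ lives in the finite-length module $M/\pi^jM$, hence stabilises; choose $N_j$ with $M_n+\pi^jM=M_{N_j}+\pi^jM$ for all $n\geq N_j$, and, by replacing $N_j$ with $\max(N_1,\dots,N_j)$, arrange $N_1\leq N_2\leq\cdots$. Put $m_j:=M_{N_j}+\pi^jM$. Using $N_{j+1}\geq N_j$ and stabilisation modulo $\pi^j$ one gets $m_{j+1}\subseteq m_j$ and, more precisely, $m_{j+1}+\pi^jM=m_j$; also $m_j\supseteq M_{N_j}\nsubseteq\pi M$. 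I would then build a $\pi$-adically Cauchy sequence: pick $x_1\in M_{N_1}\setminus\pi M\subseteq m_1$, and inductively, using $m_j=m_{j+1}+\pi^jM$, write $x_j=x_{j+1}+\pi^jb_j$ with $x_{j+1}\in m_{j+1}$, so that $x_{j+1}-x_j\in\pi^jM$. Its limit $x$ lies in every (closed) $m_j$, and $x-x_1\in\pi M$ forces $x\notin\pi M$. It then remains to check $x\in\bigcap_nM_n$: fixing $n$, either some $N_j\geq n$, in which case $m_{j'}\subseteq M_n+\pi^{j'}M$ for all $j'\geq j$, so $x\in\bigcap_{j'}(M_n+\pi^{j'}M)=M_n$ as $M_n$ is closed; or $N_j\leq n$ for all $j$, in which case $m_j=M_n+\pi^jM$ for every $j$ and again $x\in M_n$.

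The one genuine subtlety — the point where completeness is unavoidable — is that the image of $\bigcap_nM_n$ in $M/\pi M$ could a priori be strictly smaller than the (nonzero) intersection of the images of the $M_n$. This is a Mittag--Leffler type phenomenon, and the successive approximation above is exactly the device that rules it out; the hypothesis that $M_n\nsubseteq\pi M$ for every $n$ is what keeps the starting term $x_1$, and hence the limit, outside $\pi M$. A minor bookkeeping issue is the final case distinction, according to whether the stabilisation indices $N_j$ tend to infinity or stay bounded; in the bounded case the original chain $(M_n)$ is itself eventually constant (since the $M_n$ are closed), which makes that case trivial but must still be addressed.
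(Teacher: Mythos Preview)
Your argument is correct. Both proofs begin with the same observations: $M$ is $\pi$-adically complete and separated, each $M_n$ is closed, and $M/\pi^jM$ has finite length for every $j$. The divergence is in how completeness is exploited. The paper recognises $M$ as a \emph{pseudo-compact} module in the sense of Gabriel and invokes a general structural result (\cite[Proposition IV.3.11]{Gabriel}) to obtain directly the equality $\pi M + \bigcap_n M_n = \bigcap_n(\pi M + M_n)$; since the right-hand chain stabilises in the finite-length module $M/\pi M$, the conclusion follows in one line. You instead carry out an explicit successive-approximation: stabilise the chain modulo each $\pi^jM$, thread the stable values together into a Cauchy sequence, and check that its limit lies in every $M_n$ but not in $\pi M$. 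Your route is longer but entirely self-contained, avoiding the pseudo-compact machinery; the paper's route is a two-line appeal to a black box. In effect you have reproved, by hand and in this special case, exactly the instance of Gabriel's proposition that the paper quotes.
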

\begin{proof} First, note that the $\pi$-adic filtration on $M$ and on each $M / M_n$ is complete and separated by \cite[\S 3.2.3(v)]{Berth}, so each $M_n$ is closed in the topology defined by the $\pi$-adic filtration. Let $a \geq 1$ be an integer; then $M / \pi^a M$ is a finite extension of quotients of the finite length module $M/ \pi M$ and so has finite length. It follows that $M$ is \emph{pseudo-compact} in the sense of \cite[\S IV.3]{Gabriel}. Now we may apply \cite[Proposition IV.3.11]{Gabriel} to deduce that $\pi M + \bigcap M_n = \bigcap_n(\pi M + M_n)$. But $M / \pi M$ has finite length, so there exists $r$ such that $\pi M + M_n = \pi M + M_r$ for all $n \geq r$. Hence $\pi M + \bigcap M_n = \pi M + M_r > \pi M$ and the result follows.
\end{proof}

Let $R$ be a discrete valuation ring with uniformizer $\pi$, residue field $k$ and field of fractions $K$.

\begin{defn}
Let $V$ be a $K$-vector space. We say that an $R$-submodule $L$ of $V$ is a \emph{$R$-lattice} if $V = K \cdot L$ and $\bigcap_{a = 0}^\infty \pi^a L = 0$.
\end{defn}
Equivalently, the \emph{$\pi$-adic filtration} on $V$ given by $F_iV = \pi^{-i} L$ is exhaustive and separated. We call the $k$-vector space $\gr_0 V = L / \pi L$ the \emph{slice} of $V$. Since our vector spaces will frequently be infinite dimensional over $K$, in general the lattice $L$ will not be finitely generated as an $R$-module.

\begin{prop} Suppose that the discrete valuation ring $R$ is complete. Then every $R$-lattice $N$ in a finite dimensional $K$-vector space $V$ is finitely generated over $R$.
\end{prop}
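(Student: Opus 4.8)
The plan is to build a finitely generated $R$-module containing $N$ and then invoke noetherianity. Since $V = K\cdot N$ and $d := \dim_K V < \infty$, one can choose $v_1,\dots,v_d\in N$ forming a $K$-basis of $V$; set $N_0 := Rv_1\oplus\cdots\oplus Rv_d\subseteq N$, a free $R$-module of rank $d$. It then suffices to show $N\subseteq \pi^{-m}N_0$ for some $m\ge 0$: indeed $R$ is noetherian, $\pi^{-m}N_0$ is finitely generated over $R$, and submodules of finitely generated modules over a noetherian ring are finitely generated. Equivalently, one must show that the torsion $R$-module $N/N_0$ is annihilated by a power of $\pi$.

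I would argue by induction on $d=\dim_K V$, the case $d=0$ being trivial. The induction rests on the rank-one case, which is elementary: every $R$-submodule of $K$ is either $0$, or of the form $\pi^{\ell}R$ with $\ell\in\mathbb{Z}$, or all of $K$, and the last possibility violates $\bigcap_a\pi^a(-)=0$; hence any lattice in a one-dimensional $K$-space is of the form $\pi^{-c}R\cdot(\text{basis vector})$ and is finitely generated. For the inductive step, fix $0\ne v_1\in N$, put $V':=Kv_1$, and let $\bar N$ be the image of $N$ in $\bar V:=V/V'$. One checks readily that $N\cap V'$ is a lattice in $V'$ (so $N\cap V'=\pi^{-c}Rv_1$ for some $c\ge 0$ by the rank-one case) and that $K\cdot\bar N=\bar V$. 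The key claim is that $\bar N$ is again separated, i.e. $\bigcap_{a\ge 0}\pi^a\bar N=0$; granting this, $\bar N$ is a lattice in $\bar V$, hence finitely generated by the inductive hypothesis, and the short exact sequence $0\to N\cap V'\to N\to \bar N\to 0$ then exhibits $N$ as an extension of finitely generated $R$-modules, hence finitely generated.

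To prove the key claim — the only point where completeness of $R$ is used — set $T:=\bigcap_{a\ge 0}\pi^a\bar N$. A short argument shows $T$ is divisible (if $\bar x=\pi^a\bar z_a$ for all $a$ then also $\pi^{-1}\bar x\in T$); being an $R$-submodule of a $K$-vector space on which multiplication by $\pi$ is bijective, and using that the field of fractions of a DVR is $R[\pi^{-1}]=K$, it follows that $T$ is a $K$-subspace of $\bar V$. Suppose $T\ne 0$ and choose $0\ne\bar w\in T$ with a lift $w\in V\setminus V'$. For each $a\ge 0$ write $w=\pi^a z_a+u_a$ with $z_a\in N$ and $u_a\in V'$. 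Comparing the expressions for consecutive $a$ gives $u_{a+1}-u_a=\pi^a(z_a-\pi z_{a+1})\in \pi^aN\cap V'=\pi^a(N\cap V')=\pi^{a-c}Rv_1$ for $a\ge c$, so $(u_a)_{a\ge c}$ is a Cauchy sequence in the $\pi$-adically complete $R$-module $Rv_1$ and converges to some $u_\infty\in V'$. Feeding this back in, $w-u_\infty\in\pi^{a-c}N$ for every $a\ge c$, whence $w-u_\infty\in\bigcap_m\pi^mN=0$ by separatedness of $N$; thus $w=u_\infty\in V'$, contradicting the choice of $w$. Hence $T=0$, as required.

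The main obstacle is precisely this last step: a quotient of a separated module need not be separated, so the separatedness of $N$ cannot formally be transferred to $\bar N$. Overcoming this requires genuinely combining the completeness of $R$ (to obtain convergence of the auxiliary sequence $(u_a)$) with the separatedness of $N$ itself, in order to rule out a nonzero divisible part in $\bar N$. Everything else — the choice of $N_0$, the reduction to boundedness of $N/N_0$, the rank-one computation, and the handling of the short exact sequence — is routine.
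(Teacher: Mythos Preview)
Your argument is correct, modulo one harmless slip: the $u_a$ lie in $V'=Kv_1$, not a priori in $Rv_1$, so strictly speaking the Cauchy sequence lives in the coset $u_c+Rv_1$; since this is a translate of the complete module $Rv_1$, the limit $u_\infty\in V'$ still exists and the rest goes through unchanged.

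Your route, however, is genuinely different from the paper's. The paper does not induct on $\dim_K V$. Instead it fixes a free lattice $M\subseteq V$ spanned by a basis, considers the descending chain $M_i=M\cap\pi^iN$ inside $M$, observes that $\bigcap_iM_i=0$ because $N$ is separated, and then invokes the preceding Lemma (a pseudo-compactness statement, ultimately appealing to Gabriel) to conclude that $M_t\subseteq\pi M$ for some $t$; unwinding this yields $N\subseteq\pi^{-t+1}M$. So the paper packages the use of completeness into that abstract lemma about descending chains in complete modules with finite-length slice, whereas you use completeness concretely by producing a convergent sequence. Your approach is more self-contained and elementary (no pseudo-compactness, no external reference), while the paper's is shorter once the Lemma is in hand and avoids both the induction and the somewhat delicate verification that separatedness passes to the quotient $\bar N$.
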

\begin{proof} Let $M$ be the $R$-submodule of $V$ generated by a basis of $V$. Then $M$ is finitely generated over $R$ and $M / \pi M$ has finite length since $R/\pi R = k$ is a field. Consider the descending chain of $R$-submodules $M_i = M \cap \pi^i N$ of $M$ for $i\ge 0$. Since $N$ is an $R$-lattice in $V$, $\bigcap M_i = 0$ so $M_t \subseteq \pi M$ for some $t\geq 0$ by the Lemma. Thus $N \cap \pi^{-i} M \subseteq N\cap \pi^{-i + 1}M$ for all $i \geq t$, whence inductively $N \cap \pi^{-i}M \subseteq N\cap \pi^{-t + 1}M$ for all $i\geq t$. Since $M$ is an $R$-lattice in $V$,
\[N = \bigcup_{i\geq t} N\cap \pi^{-i} M \subseteq \pi^{-t+1}M\]
so $N$ is finitely generated $R$-module because $\pi^{-t+1}M$ is a Noetherian $R$-module.
\end{proof}

\section{Almost commutative affinoid algebras}

We develop the basic theory of almost commutative affinoid algebras in this section. Unless explicitly stated otherwise, $R$ will denote a complete discrete valuation ring with uniformizer $\pi$, residue field $k$ and field of fractions $K$. We do not make any assumptions on the characteristics of $k$ or $K$.

\subsection{Doubly filtered rings}\label{DoubleFilt}

\begin{defn} Let $A$ be a $K$-algebra. We say that $A$ is \emph{doubly filtered} if it has an $R$-subalgebra $F_0A$ which is an $R$-lattice in $A$ and if the slice $\gr_0 A$ of $A$ is a $\mathbb{Z}$-filtered ring. We say that $A$ is a \emph{complete doubly filtered $K$-algebra} if $F_0A$ is complete with respect to its $\pi$-adic filtration, and the filtration on $\gr_0 A$ is also complete. A \emph{morphism} of doubly filtered $K$-algebras is a $K$-linear ring homomorphism $\varphi : A \to B$ which preserves the lattices in $A$ and $B$ and which induces a filtered $k$-linear homomorphism $\gr_0 \varphi : \gr_0 A \to \gr_0 B$ between the slices.
\end{defn}

\begin{lem} Let $A$ be a doubly filtered $K$-algebra. Then the associated graded ring of $A$ with respect to its $\pi$-adic filtration is isomorphic to the Laurent polynomial ring in one variable over the slice $\gr_0 A$ of $A$:
\[\gr A \cong (\gr_0 A)[s, s^{-1}].\]
\end{lem}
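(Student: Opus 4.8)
The plan is to write down an explicit isomorphism of graded $k$-algebras. The key observation is that, since $A$ is a $K$-algebra, the uniformizer $\pi \in R \subseteq F_0A$ is a \emph{unit} of $A$, and the principal symbol $s := \gr(\pi^{-1})$ of its inverse $\pi^{-1}$ should play the role of the Laurent variable. First I would unwind the definitions: $F_0A$ is an $R$-subalgebra of $A$ which is also an $R$-lattice, the $\pi$-adic filtration is $F_iA = \pi^{-i}F_0A$ for $i \in \mathbb{Z}$, so that $\gr_iA = \pi^{-i}F_0A / \pi^{-i+1}F_0A$, with $\gr_0A = F_0A/\pi F_0A$ the slice, and $\gr_0A$ is a $k$-subalgebra of $\gr A$ in the obvious way.

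The only step that is not pure bookkeeping — and hence the one I would be most careful about — is the claim that $\pi^{-1} \notin F_0A$, equivalently $\pi \notin F_{-2}A$. This is where the lattice condition enters: if $\pi^{-1}$ lay in $F_0A$ then, $F_0A$ being a subring, every negative power $\pi^{-a}$ ($a \geq 0$) would lie in $F_0A$, i.e. $1 \in \pi^aF_0A$ for all $a \geq 0$, contradicting $\bigcap_{a\geq 0}\pi^aF_0A = 0$ (we may assume $A \neq 0$, the other case being trivial). Granting this, for each $i \in \mathbb{Z}$ one gets $\pi^{-i} \in F_iA \setminus F_{i-1}A$, so $\deg(\pi^{-i}) = i$; in particular the degree is additive along powers of $\pi$, so $\gr(\pi^{-i})\gr(\pi^{-j}) = \gr(\pi^{-i-j})$, and $s\cdot\gr(\pi) = \gr(1) = 1$ shows $s = \gr(\pi^{-1})$ is a unit of $\gr A$ sitting in degree $1$. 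The same degree computation, applied to a homogeneous representative $x \in F_iA \setminus F_{i-1}A$, shows $\pi^{-1}x$ lies in $F_{i+1}A$ but not in $F_iA$, so $\gr(\pi^{-1})\gr(x) = \gr(\pi^{-1}x) = \gr(x\pi^{-1}) = \gr(x)\gr(\pi^{-1})$, using that $\pi^{-1}$ is central in $A$; thus $s$ is central in $\gr A$.

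Finally I would assemble the map. Since $\gr_0A$ is a subring of $\gr A$ lying in degree $0$ and $s$ is a central unit of degree $1$, the universal property of the Laurent polynomial ring provides a homomorphism of graded $k$-algebras $\phi\colon (\gr_0A)[s,s^{-1}] \to \gr A$ which is the identity on $\gr_0A$ and sends $s$ to $s$. On the degree-$i$ component, $\phi$ is the map $\gr_0A \to \gr_iA$ carrying $a + \pi F_0A$ to $\gr(\pi^{-i})\gr(a) = \pi^{-i}a + \pi^{-i+1}F_0A$, and this has the evident two-sided inverse $x + \pi^{-i+1}F_0A \mapsto \pi^ix + \pi F_0A$ (well defined because $\pi^i(\pi^{-i+1}F_0A) = \pi F_0A$). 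Hence $\phi$ is bijective in every degree, so it is the desired isomorphism. In summary, the whole argument reduces to the lattice-theoretic fact that $\pi^{-1} \notin F_0A$, together with routine manipulation of the $\pi$-adic filtration.
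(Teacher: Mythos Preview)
Your proof is correct. The paper states this lemma without proof, treating it as an elementary observation; your argument fills in exactly the routine verification one would expect, with the only substantive point being that the lattice condition $\bigcap_{a\geq 0}\pi^aF_0A = 0$ forces $\pi^{-1}\notin F_0A$, which you handle cleanly.
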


\noindent We will always denote the associated graded ring of the slice of $A$ by
\[\Gr A := \gr (\gr_0 A).\]
Note that $A \mapsto \Gr(A)$ is a functor from the category of doubly filtered $K$-algebras to the category of graded $k$-algebras.

\subsection{Good double filtrations}\label{GoodDouble}
Let $A$ be a doubly filtered $K$-algebra, and let $M$ be an $A$-module. A \emph{double filtration} on $M$ consists of an $R$-lattice $F_0M$ in $M$ which is an $F_0A$-submodule, and a $\mathbb{Z}$-filtration $F_\bullet \gr_0M$ on $\gr_0 M$ compatible with the filtration on $\gr_0A$. We call
\[\Gr(M) := \gr(\gr_0 M)\]
the \emph{associated graded module} of $M$ with respect to this double filtration. The double filtration on $M$ is said to be \emph{good} if
\begin{itemize}
\item the filtration on $\gr_0 M$ is separated, and
\item $\Gr(M)$ is a finitely generated $\Gr(A)$-module.
\end{itemize}
When $A$ is a complete doubly filtered $K$-algebra such that $\Gr(A)$ is Noetherian, it follows from \cite[Theorem I.5.7]{LVO} that this is equivalent to the filtration on $\gr_0 M$ being good in the sense of \cite[\S I.5.1]{LVO}: the Rees module of $\gr_0 M$ is finitely generated over the Rees ring of $\gr_0 A$. The following elementary result will be very useful in the future.

\begin{lem} Let $A$ be complete doubly filtered $K$-algebra and let $M$ be a doubly filtered $A$-module.
\be
\item If the double filtration on $M$ is good, then $M$ is a finitely generated $A$-module.
\item If $\Gr(A)$ is Noetherian then so are $A$ and $F_0A$.
\ee \end{lem}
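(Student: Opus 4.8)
The plan is to reduce both parts to one classical fact about filtered modules — call it the \emph{lifting lemma}: if $\phi\colon L\to N$ is a filtered homomorphism of modules over a filtered ring (with exhaustive filtrations), $L$ is complete, $N$ is separated, and $\gr\phi$ is surjective, then $\phi$ is surjective. This is the usual successive‑approximation argument (cf.\ \cite{LVO}). I would apply it twice: once for the $\mathbb Z$‑filtration on the slice $\gr_0 A$, and once for the $\pi$‑adic filtration.

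For part (a): since the double filtration on $M$ is good, $\Gr(M)=\gr(\gr_0 M)$ is finitely generated over $\Gr(A)=\gr(\gr_0 A)$. Lift homogeneous generators to elements $\bar m_1,\dots,\bar m_r\in\gr_0 M$ and form the resulting filtered map from a suitable finite direct sum of shifts of $\gr_0 A$ to $\gr_0 M$; its associated graded map is surjective by construction, its source is complete because the filtration on $\gr_0 A$ is complete ($A$ being complete doubly filtered), and its target is separated because the double filtration is good. So the lifting lemma makes it surjective, i.e.\ $\gr_0 M$ is finitely generated over $\gr_0 A$. Now lift a finite generating set of $\gr_0 M$ to $m_1,\dots,m_r\in F_0 M$. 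Since $M$ is a $K$-vector space, multiplication by $\pi$ is injective, so with respect to the $\pi$-adic filtrations $\gr A\cong(\gr_0 A)[s,s^{-1}]$ and compatibly $\gr M\cong(\gr_0 M)[s,s^{-1}]$; hence the principal symbols of the $m_i$ generate $\gr M$ over $\gr A$. The corresponding filtered map onto $M$ from a finite free $A$-module with its $\pi$-adic filtration thus has surjective associated graded map, its source is $\pi$-adically complete because $F_0 A$ is, and $M$ is $\pi$-adically separated because $F_0 M$ is an $R$-lattice in $M$; the lifting lemma again gives $M=\sum_i A m_i$.

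For part (b): first, $\gr_0 A$ is a $\mathbb Z$-filtered ring whose filtration is complete and whose associated graded ring $\Gr(A)$ is Noetherian by hypothesis, so its filtration is Zariskian and $\gr_0 A$ is Noetherian by the standard results on complete filtered rings with Noetherian associated graded (cf.\ \cite{LVO}). Next, $F_0 A$ is $\pi$-adically complete and $\pi$ is a central non-zero-divisor on it, so the associated graded ring of $F_0 A$ for the $\pi$-adic filtration is the polynomial ring $(\gr_0 A)[s]$, which is Noetherian by the Hilbert basis theorem; hence $F_0 A$ is Noetherian. Finally $A=F_0 A[\pi^{-1}]$ is a central localisation of $F_0 A$ — every element of $A=K\cdot F_0 A$ has the form $\pi^{-n}x$ with $x\in F_0 A$, since $F_0 A$ is an $R$-lattice in $A$ — so $A$ is Noetherian as well.

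I expect no deep obstacle here; the statements are short once the filtrations are arranged correctly, and the real care is organisational. In part (a) each invocation of the lifting lemma must be supplied with the correct completeness input (completeness of the slice filtration, and $\pi$-adic completeness of $F_0 A$, both from ``$A$ complete doubly filtered'') and the correct separatedness input (separatedness of the slice filtration from ``good'', and $\pi$-adic separatedness of $M$ from ``$R$-lattice''), and one must keep the $\mathbb Z$-grading of the slice distinct from the $\pi$-adic grading so that the single lemma applies at both levels. In part (b) the only genuine computation is the identification $\gr_\pi(F_0 A)\cong(\gr_0 A)[s]$ together with the cited structure theory.
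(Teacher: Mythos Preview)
Your argument for part (a) is correct and matches the paper's: both apply the lifting lemma (the paper cites it as \cite[Theorem I.5.7]{LVO}) twice, once at the level of the slice and once at the $\pi$-adic level, with exactly the completeness and separatedness inputs you identify.

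For part (b) your proof is correct but takes a different route from the paper. You argue directly up the chain: $\Gr(A)$ Noetherian $\Rightarrow$ $\gr_0 A$ Noetherian (complete filtration with Noetherian graded) $\Rightarrow$ $F_0A$ Noetherian (complete $\pi$-adic filtration with graded $(\gr_0 A)[s]$) $\Rightarrow$ $A=F_0A[\pi^{-1}]$ Noetherian. The paper instead bootstraps from part (a): given a left ideal $I\le A$, equip it with the induced double filtration (so $F_0I=I\cap F_0A$, and $\gr_0 I\hookrightarrow\gr_0 A$ carries the subspace filtration); then $\Gr(I)$ is a left ideal of the Noetherian ring $\Gr(A)$, hence finitely generated, so the double filtration on $I$ is good and part (a) gives that $I$ is finitely generated. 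The same trick handles $F_0A$. Your approach is the standard stepwise one and makes the intermediate Noetherianity of $\gr_0 A$ explicit; the paper's is slicker and avoids computing $\gr_\pi(F_0A)$, at the cost of checking that the induced double filtration on an ideal really is a double filtration (which uses that $\pi$ is a unit in $A$ to get $F_0I\cap\pi F_0A=\pi F_0I$).
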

\begin{proof} (a) The $\pi$-adic filtration $F_iM := \pi^{-i}F_0M$ on $M$ is separated because $F_0M$ is a lattice in $M$, and the given filtration $F_\bullet \gr_0M$ on $\gr_0M$ is separated by assumption. In view of Lemma \ref{DoubleFilt}, the result now follows by applying \cite[Theorem I.5.7]{LVO} twice.

(b) It is enough to show that $A$ is left Noetherian. Let $I$ be a left ideal of $A$. The double filtration on $A$ induces a double filtration on $I$; in this way, $\gr_0 I$ is a left ideal in $\gr_0 A$, the filtration on $\gr_0 I$ is separated and $\Gr(I)$ is a left ideal of $\Gr(A)$. Hence the double filtration on $I$ is good so $I$ is finitely generated. A similar argument shows that $F_0A$ is also Noetherian.
\end{proof}

Whenever $L$ is an $R$-module, $L_K$ will denote the $K$-vector space $K \otimes_R L$.

\begin{prop} Let $A$ be a complete doubly filtered $K$-algebra such that $\Gr(A)$ is Noetherian.
\be \item Every finitely generated $\pi$-torsion-free $F_0A$-module $L$ is an $R$-lattice in $L_K$.
\item Every finitely generated $A$-module $M$ has at least one good double filtration.
\ee \end{prop}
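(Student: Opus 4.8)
The plan is to handle (a) first and then deduce (b) from it. For part (a), let $L$ be a finitely generated $\pi$-torsion-free $F_0A$-module. The condition $L_K = K \otimes_R L$ automatically gives $V = K \cdot L$ where $V = L_K$, so the only thing to check is that $\bigcap_{a \geq 0} \pi^a L = 0$. Since $L$ is $\pi$-torsion-free, the natural map $L \to L_K$ is injective, and $\bigcap_a \pi^a L$ is precisely the kernel of $L \to \invlim (L / \pi^a L)$. The key point is that $F_0A$ is $\pi$-adically complete by hypothesis (it is a complete doubly filtered $K$-algebra), and $\Gr(A) = \gr(\gr_0 A)$ is Noetherian, so by Lemma \ref{GoodDouble}(b) the ring $F_0A$ is Noetherian. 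Thus $L$ is a finitely generated module over a $\pi$-adically complete Noetherian ring, and the Artin--Rees lemma applies: the $\pi$-adic filtration on $L$ is separated, i.e. $\bigcap_a \pi^a L = 0$. Hence $L$ is an $R$-lattice in $L_K$.

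For part (b), let $M$ be a finitely generated $A$-module, with generators $m_1, \ldots, m_t$. I would set $F_0 M := \sum_{i=1}^t (F_0 A) \cdot m_i$, a finitely generated $F_0A$-submodule of $M$. To show $F_0M$ is an $R$-lattice in $M$, note first that $K \cdot F_0M = M$ because $M = A \cdot F_0 M$ and $A = K \cdot F_0 A$ (as $F_0A$ is a lattice in $A$). For the separatedness $\bigcap_a \pi^a F_0 M = 0$: if $F_0M$ happens to be $\pi$-torsion-free we are done by part (a); in general one replaces $F_0M$ by $F_0M / (\text{$\pi$-torsion})$, or observes that the $\pi$-torsion submodule of the finitely generated module $F_0M$ over the Noetherian ring $F_0A$ is killed by a fixed power of $\pi$ and so does not affect the intersection $\bigcap_a \pi^a F_0M$ — again the separatedness follows from Artin--Rees over the complete Noetherian ring $F_0A$. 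This equips $M$ with a $\pi$-adic filtration $F_i M = \pi^{-i} F_0 M$ and a slice $\gr_0 M = F_0M / \pi F_0M$, which is a finitely generated $\gr_0 A$-module.

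It remains to put a good $\mathbb{Z}$-filtration on $\gr_0 M$. Since $\gr_0 M$ is finitely generated over the filtered ring $\gr_0 A$ (whose filtration is complete with Noetherian associated graded $\Gr(A)$), pick finitely many generators and filter $\gr_0 M$ by the standard construction: $F_\lambda \gr_0 M := \sum_j F_{\lambda - d_j} \gr_0 A \cdot \overline{n}_j$ for suitable integer shifts $d_j$. By \cite[Theorem I.5.7]{LVO} applied to the complete filtered Noetherian ring $\gr_0 A$, this filtration is separated and its Rees module is finitely generated over the Rees ring of $\gr_0 A$, equivalently $\Gr(M) = \gr(\gr_0 M)$ is a finitely generated $\Gr(A)$-module. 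Both bullet points in the definition of a good double filtration are then satisfied, so the constructed double filtration on $M$ is good. I expect the only mildly delicate point to be the handling of $\pi$-torsion in $F_0M$ in part (b) — everything else is a routine application of completeness, the Noetherian hypothesis on $\Gr(A)$ via Lemma \ref{GoodDouble}, and the standard filtered-module machinery of \cite{LVO}.
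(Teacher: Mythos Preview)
Your proposal is correct and follows essentially the same approach as the paper: part (a) via Nakayama/Krull intersection over the $\pi$-adically complete Noetherian ring $F_0A$, and part (b) by the standard ``generate by images of generators'' construction at both levels of the double filtration, with separatedness coming from completeness of $\gr_0 A$ and Noetherianity of $\Gr(A)$.

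One simplification: your worry about $\pi$-torsion in $F_0M$ in part (b) is unnecessary. Since $M$ is an $A$-module it is in particular a $K$-vector space, so any $R$-submodule of $M$ --- in particular $F_0M = \sum_i (F_0A)\cdot m_i$ --- is automatically $\pi$-torsion-free. Thus part (a) applies directly to $F_0M$, which is exactly what the paper does.
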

\begin{proof} Note that $\Gr(A)$ Noetherian implies that $F_0A$ is Noetherian, by part (b) of the Lemma.

(a) $N := \bigcap_{j=0}^\infty \pi^j L$ is a finitely generated $F_0A$-submodule of $L$ since $F_0A$ is Noetherian. Moreover $N = \pi N$. Because $F_0A$ is $\pi$-adically complete, $\pi$ is in the Jacobson radical of $F_0A$ and hence $N = 0$ by Nakayama's Lemma.  Since $L$ is $\pi$-torsion-free, we can identify it with its image inside $L_K$ and hence $L$ is an $R$-lattice in $L_K$.

(b) Let $m_1,\ldots,m_\ell$ be an $A$-generating set for $M$ and let $F_0M = \sum_{i=1}^\ell F_0A.m_i$. Then $F_0M$ is an $R$-lattice in $M$ by (a) and $\gr_0M = F_0M / \pi F_0M = \sum_{i=1}^\ell \gr_0 A.\bar{m_i}$ is a finitely generated $\gr_0 A$-module. Now setting $F_j \gr_0 M := \sum_{i=1}^\ell F_j \gr_0 A .\bar{m_i}$ defines a filtration $F_\bullet \gr_0 M$ on $\gr_0 M$ such that $\gr(\gr_0 M)$ is finitely generated over $\Gr(A)$. The filtration on $\gr_0A$ is complete and $\Gr(A)$ is Noetherian by assumption, so $F_\bullet \gr_0 M$ is separated by \cite[Proposition II.2.2.1 and Theorem I.4.14]{LVO}.
\end{proof}

\subsection{Characteristic varieties}\label{CharVarAff}
\begin{defn} Let $A$ be a complete doubly filtered $K$-algebra such that $\Gr(A)$ is commutative and Noetherian, and let $M$ be a finitely generated $A$-module. Choose a good double filtration $(F_0M, F_\bullet \gr_0 M)$ on $M$. The \emph{characteristic variety} of $M$ is the Zariski closed subset
\[\Ch(M) := \Supp(\Gr(M)) \subseteq \Spec(\Gr(A))\]
of the prime spectrum of the commutative Noetherian $k$-algebra $\Gr(A)$.
\end{defn}

Of course the dimension of $\Ch(M)$ will always be equal to the Krull dimension of the $\Gr(A)$-module $\Gr(M)$. 

\begin{prop} The characteristic variety $\Ch(M)$ does not depend on the choice of good double filtration on $M$. Moreover if $0\to L\to M\to N\to 0$ is a short exact sequence of $A$-modules then $\Ch(M) =\Ch(L)\cup \Ch(N).$
\end{prop}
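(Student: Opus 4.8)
The statement asserts that the characteristic variety $\Ch(M)$ is independent of the choice of good double filtration, and that it behaves additively on short exact sequences. Both are standard facts for good filtrations on modules over Noetherian almost commutative rings; the only subtlety is that we are working with a \emph{double} filtration, so we must reduce everything to the associated graded ring $\Gr(A) = \gr(\gr_0 A)$, which is commutative and Noetherian by hypothesis. The key observation is that $\Ch(M) = \Supp_{\Gr(A)}(\Gr(M))$ depends only on the $\Gr(A)$-module $\Gr(M)$ up to the operations that do not change the support, namely up to subquotients with the same radical annihilator; so it suffices to compare the associated graded modules arising from two different good double filtrations, and to compute the one arising from a short exact sequence.

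\emph{Independence of the filtration.} First I would recall that by Lemma \ref{GoodDouble} any two good double filtrations $F_0M$, $F_0'M$ on $M$ are commensurable as lattices, and then (passing to slices) the two induced good filtrations $F_\bullet \gr_0 M$, $F_\bullet' \gr_0 M$ on $\gr_0 M$ are also commensurable in the usual sense: there is an integer $c$ with $F_{j-c}\gr_0 M \subseteq F_j'\gr_0 M \subseteq F_{j+c}\gr_0 M$ for all $j$. The standard argument — filtering the identity map of $\gr_0 M$ by shifts and taking associated graded modules — then produces, for each shift, a $\Gr(A)$-module map between $\Gr(M)$ computed from one filtration and $\Gr(M)$ computed from the other, whose kernel and cokernel are annihilated by powers of the degree-shift element $s$ of $\Gr(A)$ (or rather, are killed after inverting the homogeneous generator). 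Since $\Supp$ over a Noetherian ring is insensitive to such maps — a $\Gr(A)$-module and a subquotient with the same radical annihilator have the same support, and here $\Gr(M)$ and $\Gr'(M)$ each surject/inject up to $s$-torsion onto the other — we conclude $\Supp \Gr(M) = \Supp \Gr'(M)$. (Technically one works with the Rees module / a Rees-module presentation to make the comparison map honest; this is exactly the classical argument of \cite[Ch.~I]{LVO} transplanted to the slice.)

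\emph{Behaviour on short exact sequences.} Given $0 \to L \to M \to N \to 0$, I would equip $M$ with a good double filtration, then give $L$ the subspace double filtration and $N$ the quotient double filtration; by part (b) of Lemma \ref{GoodDouble} (separatedness of induced filtrations on submodules, plus Noetherianity of $\Gr(A)$) these are again good. Taking slices first and then associated gradeds, and using exactness of $\gr$ for good filtrations over a Noetherian filtered ring, one obtains a short exact sequence of $\Gr(A)$-modules $0 \to \Gr(L) \to \Gr(M) \to \Gr(N) \to 0$. Then $\Supp$ is additive on short exact sequences of finitely generated modules over a Noetherian ring, giving $\Ch(M) = \Ch(L) \cup \Ch(N)$.

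\emph{Main obstacle.} The genuinely delicate point is \textbf{not} the final support computation but the two-step passage through the double filtration: one must check that the induced ($\pi$-adic lattice, then slice) filtrations on submodules and quotients are again \emph{good} in the doubly filtered sense, and that the comparison argument for commensurable filtrations survives the two gradings. Concretely the worry is whether $\gr$ of an induced filtration on a submodule of $\gr_0 M$ really is a submodule of $\Gr(M)$ — this requires the filtration on $\gr_0 A$ to be complete and $\Gr(A)$ Noetherian, which is exactly the hypothesis, so the resolution is to invoke \cite[Theorem I.5.7]{LVO} (as already done in the proof of Lemma \ref{GoodDouble}) to know induced filtrations are good, and then run the classical $\Ch$-independence argument one level down on the slice. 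I expect the write-up to consist of: (1) reduce to a statement about $\gr_0 M$ as a filtered $\gr_0 A$-module; (2) quote the classical result that characteristic varieties over Noetherian almost commutative rings are filtration-independent and additive on short exact sequences; (3) note the hypotheses of the Proposition are precisely what is needed to apply it.
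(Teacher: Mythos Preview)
Your treatment of the short exact sequence part is essentially the paper's argument: pick compatible lattices, reduce mod $\pi$ (using $\pi$-torsion-freeness of the quotient lattice to keep exactness), then take subspace/quotient filtrations on the slices and read off the exact sequence of $\Gr$'s.

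The independence argument, however, has a real gap. A good double filtration is a pair $(F_0M,\,F_\bullet\gr_0 M)$, and varying it means you may change \emph{both} the lattice and the filtration on the slice. Your argument silently assumes the two filtrations live on the \emph{same} slice: you write ``the two induced good filtrations $F_\bullet \gr_0 M$, $F_\bullet'\gr_0 M$ on $\gr_0 M$ are commensurable'', but if $F_0M \neq F_0'M$ then $\gr_0 M = F_0M/\pi F_0M$ and $\gr_0'M = F_0'M/\pi F_0'M$ are genuinely different $\gr_0 A$-modules, and there is no identity map between them to filter by shifts. Commensurability of the two $R$-lattices in $M$ does not produce commensurable filtrations on a common slice.

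The paper separates the two variations. First fix the lattice $F_0M$: then any two good filtrations on $\gr_0 M$ give the same support by the classical result you cite (\cite[III, Lemma 4.1.9]{LVO}). Second, allow the lattice to vary: here the paper observes (citing \cite[Lemma D.3.3]{HTT}) that $\Supp(\Gr M)$ depends only on the class of $\gr_0 M$ in the Grothendieck semigroup of $\gr_0 A$-modules, and then invokes \cite[Proposition 1.1.2]{Ginz} to say that two lattices in $M$ give slices with the \emph{same} Grothendieck class. This last step is exactly what is missing from your outline, and it is not a commensurability-of-filtrations argument at all---it is a statement about composition factors of $L/\pi L$ being independent of the lattice $L$.
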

\begin{proof} Let us first fix the lattice $F_0M$ in $M$. Then it is well-known \cite[Chapter III, Lemma 4.1.9]{LVO} that the characteristic support  $\Supp(\gr \gr_0 M)$ does not depend on the choice of good filtration on $\gr_0 M$, and also that $\Supp(\gr \gr_0 M)$ only depends on the class of $\gr_0 M$ in the Grothendieck semigroup of $\gr_0 A$-modules \cite[Lemma D.3.3]{HTT}. On the other hand this class does not depend on the choice of lattice $F_0 M$ inside $M$ by \cite[Proposition 1.1.2]{Ginz} for example.

Now given a short exact sequence as in the statement, it is straightforward to find $F_0A$ lattices $F_0L$ and $F_0N$ in $L$ and $N$ respectively so that there is a short exact sequence $0\to F_0L\to F_0 M\to F_0N\to 0$. Reducing this mod $\pi$ gives a short exact sequence $0\to \gr_0 L\to \gr_0 M\to \gr_0 N\to 0$ since $F_0 N$ is flat over $R$. 

Now giving $\gr_0 L$ and $\gr_0 N$ the subspace and quotient filtrations from some good filtration on $\gr_0 M$ defines good double filtrations on $L$ and $N$ such that $0\to \Gr(L)\to \Gr(M)\to \Gr(N)\to 0$ is exact and then the result is clear.
\end{proof}

\begin{thm} Suppose that $A$ is a complete doubly filtered $K$-algebra such that $\Gr(A)$ is commutative and regular; then $A$ is Auslander regular. If in addition every simple $\Gr(A)$-module $N$ has $d_{\Gr(A)}(N)=0$ then
\[\dim \Ch(M)+j_A(M)=\dim \Gr(A)\]
for all finitely generated $A$-modules $M$. 

Moreover if $M$ is a pure $A$-module then every irreducible component of $\Ch(M)$ has the same dimension. 
\end{thm}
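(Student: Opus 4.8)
The plan is to deduce everything from the Auslander-regularity of $A$ together with the numerical formula $\dim \Ch(M) + j_A(M) = \dim \Gr(A)$, both of which I establish first. For Auslander regularity: since $\Gr(A) = \gr(\gr_0 A)$ is commutative regular Noetherian, it is Auslander regular (commutative regular rings are Auslander regular — e.g. via the fact that localisations at primes are regular local, hence Gorenstein of finite global dimension, and the Auslander condition for commutative Noetherian rings follows from the grade/codimension identities). By Lemma~\ref{DoubleFilt}, $\gr A \cong (\gr_0 A)[s,s^{-1}]$; applying the graded-to-filtered transfer for the $\mathbb{Z}$-filtration on $\gr_0 A$ and then for the $\pi$-adic filtration on $A$, Proposition~(b) of \S\ref{AG} lifts Auslander regularity from $\Gr(A)$ to $\gr_0 A$ to $A$, provided the relevant filtrations are Zariskian. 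Completeness of $F_0A$ and of the filtration on $\gr_0 A$, together with Noetherianity of $\Gr(A)$, gives this via \cite[Proposition II.2.2.1]{LVO} as recorded in \S\ref{AG}. So $A$ is Auslander regular and $d_A$ is a finitely partitive exact dimension function by Proposition~(a) of \S\ref{AG}.

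Next I would prove the dimension identity. Pick a good double filtration on $M$, so $\Gr(M)$ is a finitely generated $\Gr(A)$-module and $\Ch(M) = \Supp(\Gr(M))$. The standard spectral-sequence argument for filtered rings (as in \cite{LVO}, using that the Rees ring is Noetherian) gives, for each $i$, that $\gr \Ext^i_A(M,A)$ is a subquotient of $\Ext^i_{\gr A}(\gr M, \gr A)$; combined with the analogous statement for the two layers of filtration this yields a subquotient relation between $\Gr \Ext^i_A(M,A)$ and $\Ext^i_{\Gr(A)}(\Gr(M), \Gr(A))$. Hence $j_A(M) \geq j_{\Gr(A)}(\Gr(M))$, and the Auslander condition together with the hypothesis that every simple $\Gr(A)$-module has canonical dimension $0$ (equivalently $\Gr(A)$ is Cohen-Macaulay with its grade function equal to the codimension) forces equality and gives $\dim \Supp \Gr(M) + j_{\Gr(A)}(\Gr(M)) = \dim \Gr(A)$. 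Since $\dim \Ch(M) = \dim \Supp \Gr(M)$ by definition, the formula follows. I expect this transfer of the grade — getting $j_A(M) = j_{\Gr(A)}(\Gr(M))$ rather than just an inequality, and controlling the behaviour through \emph{both} gradings simultaneously — to be the main technical obstacle; the hypothesis on simple $\Gr(A)$-modules is exactly what is needed to convert the grade of $\Gr(M)$ into a genuine codimension.

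For the final assertion, suppose $M$ is pure, and let $j = j_A(M)$, so $d_A(M) = \injdim_A A - j$ and every non-zero finitely generated submodule of $M$ has this same canonical dimension, equivalently the same grade $j$. Let $Z$ be an irreducible component of $\Ch(M) = \Supp(\Gr(M))$ with generic point the prime $\mathfrak{p}$ of $\Gr(A)$, and suppose for contradiction that $\dim Z < \dim \Ch(M)$, i.e. $\operatorname{ht}\mathfrak{p} > j$. The submodule $N$ of $\Gr(M)$ of elements annihilated by a power of $\mathfrak{p}$-away-from-the-other-components (concretely, choosing a good double filtration, take the $\Gr(A)$-submodule of $\Gr(M)$ supported precisely on $Z$; it is non-zero since $\mathfrak p$ is a minimal prime of the support) has $\Supp = Z$, hence $j_{\Gr(A)}(N) = \operatorname{ht}\mathfrak{p} > j$. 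Now I would lift $N$ to a subquotient of $M$: by general theory of good filtrations there is a finitely generated submodule $M' \subseteq M$ (or a subquotient) whose associated graded module has $N$ as a quotient, so that $\dim \Ch(M') \le \dim Z < \dim \Ch(M)$ while purity forces $\dim \Ch(M') = \dim \Ch(M)$ via the numerical formula and $j_A(M') = j$ — a contradiction. Here I would be careful to realise $Z$ as $\Ch$ of an honest subquotient of $M$; the cleanest route is to use that $M$ has a submodule $M''$ with $\Ch(M'') = Z$ (take the sum of all critical submodules whose characteristic variety lies in $Z$, or use the exactness of $\Ch$ in short exact sequences from the Proposition above together with a filtration of $M$ by critical subquotients), and then purity of $M$ applied to $M''$ gives $\dim Z = \dim \Ch(M'') = d_A(M'') \cdot(\text{const}) = \dim\Ch(M)$ as required. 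I expect the bookkeeping identifying a subquotient realising a single component — rather than merely bounding dimensions — to be the delicate point of this last paragraph.
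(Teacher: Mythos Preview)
Your treatment of Auslander regularity and of the dimension formula is essentially the paper's approach: lift Auslander regularity through the two Zariskian filtrations via Proposition~\ref{AG}(b), and identify $j_A(M)$ with $j_{\Gr(A)}(\Gr(M))$ (the paper cites \cite[Theorem III.2.5.2]{LVO} for this equality directly, and \cite[Theorem 1.3]{SZ} for $\dim\Supp + j = \dim\Gr(A)$; your spectral-sequence sketch is the content of the former reference, though note that the hypothesis on simple $\Gr(A)$-modules is used only in the latter step, not to force the grade equality).

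The gap is in the last part. Your proposed argument requires, for an irreducible component $Z$ of $\Ch(M)$, a nonzero submodule $M''\subseteq M$ with $\Ch(M'')\subseteq Z$; you suggest obtaining it by summing critical submodules or by lifting the $Z$-torsion part of $\Gr(M)$. Neither of these is justified: there is no general mechanism to lift a prescribed submodule of $\Gr(M)$ to a submodule of $M$ with the same (or smaller) characteristic variety, since good filtrations do not behave well under passing to arbitrary graded submodules. The paper avoids this entirely by invoking a theorem of Bj\"ork \cite[Theorem 3.8]{Bj89}: if $M$ is pure over an Auslander--Gorenstein Zariskian-filtered ring, then $M$ admits a good filtration whose associated graded is again pure. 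Applying this twice (once for the $\pi$-adic filtration on $M$, once for the filtration on $\gr_0 M$) produces a good double filtration with $\Gr(M)$ pure over the commutative regular ring $\Gr(A)$, and for such a module the support is equidimensional. This is the missing idea: rather than lifting components down from $\Gr(M)$ to $M$, one pushes purity up from $M$ to $\Gr(M)$.
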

\begin{proof}
The first part follows by applying Proposition \ref{AG}(b) twice. Now
\[\dim \Ch(M) + j_{\Gr(A)}(\Gr(M)) = \dim \Gr(A)\]
by \cite[Theorem 1.3]{SZ} and $j_{\Gr(A)}(\Gr(M))=j_A(M)$ by \cite[Theorem III.2.5.2]{LVO}. 

Finally if $M$ is pure then by applying \cite[Theorem 3.8]{Bj89} twice we may find a good double filtration of $M$ such that $\Gr(M)$ is pure and the result follows.
\end{proof}

Notice in particular that the second part of the theorem applies whenever $\Gr(A)$ is a polynomial ring over $k$. 

\subsection{Almost commutative algebras}\label{ExAC}
We now generalise the more-or-less standard theory of almost commutative algebras over a field, see \cite[\S 8.4]{MCR}. Let $R$ be a commutative Noetherian base ring.
\begin{defn}
Let $A$ be a positively $\mathbb{Z}$-filtered $R$-algebra with $F_0A$ an $R$-subalgebra of $A$. We say that $A$ is \emph{almost commutative} if $\gr A$ is a finitely generated commutative $R$-algebra.
A \emph{morphism} of almost commutative $R$-algebras is an $R$-linear filtered ring homomorphism.
\end{defn}

It follows from \cite[Proposition II.2.2.1]{LVO} that almost commutative $R$-algebras are always Noetherian. Moreover every factor ring of an almost commutative ring is again almost commutative.

\begin{examps}
\be
\item Let $X$ be an affine $R$-scheme of finite type. Then there exists a presentation $R[X_1,\ldots, X_m] \twoheadrightarrow \O(X)$ which endows $\O(X)$ with a positive filtration induced from the natural degree filtration on $R[X_1,\ldots, X_m]$. This gives $\O(X)$ the structure of an almost commutative $R$-algebra.

\item Let $X$ be a smooth affine $R$-scheme of finite type. Then the ring of crystalline differential operators $\D(X)$ is generated by $\O(X)$ and $\T(X)$ --- see $\S\ref{cryst}$ for a precise definition. The associated graded ring with respect to the filtration by order of differential operators
\[\gr \D(X) \cong \Sym_{\O(X)} \T(X) \cong \O(\T^\ast (X))\]
is commutative and $\T(X)$ is a finitely generated $\O(X)$-module. Therefore $\D(X)$ is an almost commutative $R$-algebra.

\item If $\fr{g}$ is an $R$-Lie algebra which is free of finite rank as an $R$-module, then the universal enveloping algebra $U(\fr{g})$ is an almost commutative $R$-algebra with respect to the usual Poincar\'e-Birkhoff-Witt-filtration.

\item \label{defTateWeyl} Let $V$ be a free $R$-module of finite rank equipped with an alternating $R$-bilinear form $\omega\colon V\times V\to R$. The \emph{enveloping algebra} $R_\omega[V]$ of $(V,\omega)$ is the quotient of the tensor $R$-algebra $\bigoplus_{i=0}^\infty V^{\otimes i}$ on $V$ by the relations
\[ vw - wv = \omega(v,w) \quad\mbox{for all}\quad v,w \in V.\]
Consider the Lie algebra $\fr{h}_\omega := V \oplus Rz$ with Lie bracket determined by the rules $[v,w] = \omega(v,w)z$ and $[v,z] = 0$ for all $v,w \in \fr{h}_\omega$. Then $R_\omega[V]$ is isomorphic to the factor ring of $U(\fr{h}_\omega)$ by the ideal generated by $z - 1$. Since $\gr U(\fr{h}_\omega)$ is isomorphic to the polynomial algebra $\Sym_R(V)[z]$ by the Poincar\'e-Birkhoff-Witt Theorem, we see that $\gr R_\omega[V] \cong \Sym_R(V)$ when we equip $R_\omega[V]$ with the natural positive filtration with $R$ in degree $0$ and $V$ in degree $1$. Thus $R_\omega[V]$ is an almost commutative $R$-algebra.
\ee
\end{examps}

\subsection{Deformations}\label{Defs}
We now return to assuming that $R$ is a complete discrete valuation ring with field of fractions $K$ and uniformizer $\pi$.
\begin{defn} Let $A$ be a positively $\mathbb{Z}$-filtered $R$-algebra with $F_0A$ an $R$-subalgebra of $A$. We call $A$ a \emph{deformable $R$-algebra} if $\gr A$ is a flat $R$-module.
A \emph{morphism} of deformable $R$-algebras is an $R$-linear filtered ring homomorphism.
\end{defn}

All the almost commutative $R$-algebras appearing in the above examples, with the possible exception of (a), are deformable.

\begin{defn} Let $A$ be a deformable $R$-algebra and let $n$ be a non-negative integer. The \emph{$n$-th deformation} of $A$ is the following $R$-submodule of $A$:
\[A_n := \sum_{i\geq 0} \pi^{in} F_iA.\]
\end{defn}
There is a unique ring homomorphism $A[t] \to A_K$ which extends the inclusion $A \hookrightarrow A_K$ and sends $t$ to $\pi^n$. Since $A_n$ is the image of the Rees ring $\Ex{A} = \bigoplus_{m=0}^\infty  t^m F_mA$ under this homomorphism, $A_n$ is actually an $R$-subalgebra of $A$. It can in fact be shown that $A_n$ is isomorphic to the factor ring $\Ex{A}/\langle t - \pi^n\rangle.$

This definition is clearly functorial and in this way we obtain an endofunctor $A \mapsto A_n$ of the category of deformable $R$-algebras. Of course the deformation $A_n$ does not depend on the choice of uniformizer $\pi$.

\begin{lem} Let $A$ be a deformable $R$-algebra. Then $A_n$ is also a deformable $R$-algebra for all $n \geq 0$ and there is a natural isomorphism $\gr A \to \gr A_n$.
\end{lem}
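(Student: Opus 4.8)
The plan is to realise $A_n$ as a quotient of the Rees ring $\widetilde{A} = \bigoplus_{m\geq 0} F_mA\, t^m \subseteq A[t]$ and to read off $\gr A_n$ from that presentation. Recall that $t$ is a central homogeneous non-zero-divisor of $\widetilde{A}$ of degree $1$, and that there are standard graded-ring identifications $\widetilde{A}/t\widetilde{A} \cong \gr A$ and $\widetilde{A}/(t-1)\widetilde{A} \cong A$. By the discussion preceding the statement, $A_n$ is the image of $\widetilde{A}$ under the substitution $A[t]\to A_K$, $t\mapsto \pi^n$; I would equip $A_n$ with the positive $\mathbb{Z}$-filtration $F_iA_n := \sum_{m=0}^{i}\pi^{mn}F_mA$, which is precisely the image of $\bigoplus_{m\leq i}F_mA\,t^m$. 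A routine check shows that $F_0A_n = F_0A$ is an $R$-subalgebra and that $F_\bullet A_n$ is exhaustive, increasing and multiplicative, so $A_n$ is a positively filtered $R$-algebra; everything then reduces to identifying $\gr A_n$.

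Next I would isolate the single point at which deformability is used. Since $R$ is a discrete valuation ring, $\gr A$ being $R$-flat is equivalent to $\gr A$ being $R$-torsion-free, and hence both $A$ and every graded piece $\gr_iA = F_iA/F_{i-1}A$ are $R$-torsion-free. Using torsion-freeness of the $\gr_iA$ I would show that the kernel of the surjection $\widetilde{A}\to A_n$ is exactly the ideal $(t-\pi^n)\widetilde{A}$: given a relation $\sum_{m\leq N}\pi^{mn}a_m = 0$ with $a_m\in F_mA$, reduction modulo $F_{N-1}A$ gives $\pi^{Nn}\bar a_N = 0$ in $\gr_NA$, forcing $a_N\in F_{N-1}A$; one may then subtract a suitable multiple of $t-\pi^n$ to lower $N$ and induct. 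Thus $A_n \cong \widetilde{A}/(t-\pi^n)\widetilde{A}$ as filtered $R$-algebras.

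With this presentation in hand, I would establish the following general fact: if $B$ is a $\mathbb{Z}_{\geq 0}$-graded ring, $\theta\in B_1$ is a central non-zero-divisor, and $u\in B_0$ is arbitrary, then the filtration on $C := B/(\theta-u)B$ by the images of $B_{\leq i} := \bigoplus_{j\leq i}B_j$ satisfies $\gr_iC \cong B_i/\theta B_{i-1}$ for all $i$, and these assemble into a graded-ring isomorphism $\gr C \cong B/\theta B$. The point is that, because $\theta$ is a non-zero-divisor, any element of $B_{\leq i}\cap\big(B_{\leq i-1}+(\theta-u)B\big)$ may be rewritten with its $(\theta-u)$-coefficient lying in $B_{\leq i-1}$, so its degree-$i$ component lies in $\theta B_{i-1}$; hence the evident surjection $B_i\to\gr_iC$ has kernel exactly $\theta B_{i-1}$. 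Applying this with $B=\widetilde{A}$, $\theta=t$, $u=\pi^n$, together with $\widetilde{A}/t\widetilde{A}\cong\gr A$, yields $\gr A_n\cong\gr A$. The whole construction is manifestly functorial — a filtered $R$-algebra map $A\to B$ induces $A_n\to B_n$ and compatible maps on Rees rings and on associated graded rings — so this isomorphism is natural; and since $\gr A$ is $R$-flat, so is $\gr A_n$, whence $A_n$ is a deformable $R$-algebra.

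I expect the main obstacle to be the identification $\ker(\widetilde{A}\to A_n) = (t-\pi^n)\widetilde{A}$, equivalently the bookkeeping that controls how the summands $\pi^{mn}F_mA$ overlap inside $A_n$. This is precisely the step that fails without the deformability hypothesis: if some $\gr_iA$ had $\pi$-torsion, the associated graded of $A_n$ would acquire extra torsion and fail to match $\gr A$. The remaining steps are formal manipulations with the graded structure of $\widetilde{A}$.
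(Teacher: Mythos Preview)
Your proof is correct and complete, but it takes a more abstract route than the paper. The paper works directly: it equips $A_n$ with the \emph{subspace} filtration $F_mA_n := F_mA\cap A_n$, uses flatness of $\gr A$ to show $F_mA_n = \sum_{i=0}^m \pi^{in}F_iA$, and then writes down the explicit $R$-linear map $\gr_mA \to \gr_mA_n$, $x+F_{m-1}A \mapsto \pi^{mn}x + F_{m-1}A_n$, checking by hand that it is a bijection and extends to a ring isomorphism. You instead realise $A_n$ as $\widetilde{A}/(t-\pi^n)\widetilde{A}$ and invoke a general principle about $\gr\bigl(B/(\theta-u)B\bigr)\cong B/\theta B$ for a central homogeneous non-zero-divisor $\theta\in B_1$. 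The place where deformability enters is the same in both arguments --- torsion-freeness of each $\gr_iA$ is exactly what identifies the kernel of $\widetilde{A}\to A_n$ in your version, and what identifies the subspace filtration with the sum filtration in the paper's version. Your packaging is cleaner and makes the naturality transparent, at the cost of proving the auxiliary Rees-ring lemma; the paper's version is shorter and entirely explicit, and has the minor advantage that its filtration on $A_n$ is visibly the subspace filtration inherited from $A$.
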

\begin{proof} It is clear that $A_n$ is an $R$-lattice in $A_K$. Give $A_n$ the subspace filtration $F_mA_n := F_mA \cap A_n$. Because $\gr A$ is flat over $R$, we have
\[F_mA_n = \sum_{i=0}^m \pi^{in} F_iA.\]
Define an $R$-linear map $F_m A / F_{m-1} A \to F_m A_n / F_{m-1}A_n$ by the formula
\[x + F_{m-1}A \mapsto \pi^{mn} x + F_{m-1}A_n.\]
Since $\gr A$ is flat over $R$, this map is an injection. It is straightforward to verify that it is actually a bijection and that it extends to a ring isomorphism between $\gr A$ and $\gr A_n$.

Finally if $f\colon A\to B$ is a morphism of deformable $R$-algebras the diagram  \[ \xymatrix{ \gr A  \ar[d]_{\gr f} \ar[r]^\cong & \gr A_n \ar[d]^{\gr f_n} \\ \gr B\ar[r]^\cong & \gr B_n} \] commutes. 
\end{proof}

\subsection{Deformations and tensor products}\label{DefTens}
Equipping the polynomial algebra $R[x_1,\ldots, x_\ell]$ with the natural degree filtration gives a first example of a deformable $R$-algebra. It is easy to see that its $n$-th deformation is simply $R[\pi^n x_1, \ldots, \pi^n x_\ell]$.

\begin{prop} Let $A$ be a deformable $R$-algebra. Then $A\otimes_R R[x_1,\ldots,x_\ell]$ is a deformable $R$-algebra when equipped with the tensor filtration. Moreover \[ (A\otimes_R R[x_1,\ldots,x_\ell])_n=A_n\otimes (R[x_1,\ldots,x_\ell])_n.\] 
\end{prop}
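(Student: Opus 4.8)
The plan is to verify both assertions by working with the explicit description of the $n$-th deformation functor together with the flatness hypotheses, reducing everything to the case of the polynomial algebra, which was already computed. First I would equip $B := A \otimes_R R[x_1,\ldots,x_\ell]$ with the tensor filtration $F_m B = \sum_{i+j = m} F_i A \otimes F_j R[x_1,\ldots,x_\ell]$, where $F_j R[x_1,\ldots,x_\ell]$ is the span of monomials of total degree $\leq j$. Since $R[x_1,\ldots,x_\ell]$ is $R$-free with an $R$-free associated graded, and $\gr A$ is flat over $R$ by hypothesis, one has a canonical isomorphism $\gr B \cong \gr A \otimes_R \gr R[x_1,\ldots,x_\ell] = \gr A \otimes_R R[x_1,\ldots,x_\ell]$ (the point being that tensoring a filtered module with an $R$-free module commutes with passing to associated graded). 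This is a flat $R$-module, being the tensor product over $R$ of a flat module with a free module, so $B$ is a deformable $R$-algebra.

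For the formula for $(B)_n$, I would use the identity $F_m A_n = \sum_{i=0}^m \pi^{in} F_i A$ established in the previous lemma (valid because $\gr A$ is $R$-flat), and the analogous identity for $R[x_1,\ldots,x_\ell]$, namely $F_m (R[x_1,\ldots,x_\ell])_n = \sum_{j=0}^m \pi^{jn} F_j R[x_1,\ldots,x_\ell]$, i.e. $(R[x_1,\ldots,x_\ell])_n = R[\pi^n x_1,\ldots,\pi^n x_\ell]$. By definition $(B)_n = \sum_{m \geq 0} \pi^{mn} F_m B = \sum_{m \geq 0} \pi^{mn} \sum_{i+j=m} F_i A \otimes F_j R[x_1,\ldots,x_\ell] = \sum_{i,j \geq 0} \pi^{(i+j)n}\, F_i A \otimes F_j R[x_1,\ldots,x_\ell]$. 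Distributing the scalar $\pi^{(i+j)n} = \pi^{in}\pi^{jn}$ across the tensor product (legitimate since these are $R$-submodules of the $K$-vector space $B_K = A_K \otimes_K K[x_1,\ldots,x_\ell]$), this equals $\left(\sum_{i \geq 0} \pi^{in} F_i A\right) \otimes_R \left(\sum_{j \geq 0} \pi^{jn} F_j R[x_1,\ldots,x_\ell]\right) = A_n \otimes_R (R[x_1,\ldots,x_\ell])_n$, as claimed. A small care point here is to check that the sum of the $\pi^{in} F_i A \otimes \pi^{jn} F_j R[x_1,\ldots,x_\ell]$ inside $B_K$ really is the image of the algebraic tensor product $A_n \otimes_R (R[x_1,\ldots,x_\ell])_n$; this is immediate since $(R[x_1,\ldots,x_\ell])_n$ is $R$-free and $A_n$ is $R$-torsion-free, so no collapsing occurs.

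The main obstacle I anticipate is purely bookkeeping: one must be scrupulous that the tensor filtration's associated graded is computed correctly, i.e. that the natural map $\gr_i A \otimes_R \gr_j R[x_1,\ldots,x_\ell] \to \gr_m B$ (summed over $i+j=m$) is an isomorphism. This relies on $R[x_1,\ldots,x_\ell]$ and its graded pieces being $R$-free, so that the short exact sequences defining the filtration stay exact after tensoring with $F_i A$; the flatness of $\gr A$ is what guarantees $F_m A_n = \sum_{i=0}^m \pi^{in} F_i A$ has no unexpected torsion relations. Once these two structural facts are in hand, the displayed equality follows by the direct manipulation above, and functoriality of the whole construction is clear from functoriality of $A \mapsto A_n$ and of the tensor product. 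No genuinely hard step is expected; the content is in organizing the identifications so that the $\pi^n$-rescalings on the two tensor factors combine correctly.
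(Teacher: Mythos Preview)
Your proof is correct and arrives at the result by essentially the same mechanism as the paper, though organized differently. The paper reduces by induction to the case $\ell = 1$, identifies $A \otimes_R R[x]$ with $A[x]$, and then constructs the comparison map $A_n \otimes_R R[x]_n \to A[x]_n$ via functoriality of $(-)_n$ applied to the inclusions $A \hookrightarrow A[x]$ and $R[x] \hookrightarrow A[x]$ together with the universal property of the tensor product; injectivity and surjectivity are then checked separately, the latter by observing that elements $\pi^{in} a_i \otimes \pi^{jn} x^j$ span $A[x]_n$. You instead work with all $\ell$ variables simultaneously and compute both sides directly as $R$-submodules of $B_K$, expanding the definition of $(B)_n$ and regrouping the double sum. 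Your computation is exactly the unwinding of the paper's surjectivity argument, and your care point about the abstract tensor product $A_n \otimes_R (R[x_\bullet])_n$ injecting into $B$ corresponds to the paper's injectivity check. One small remark: you invoke the identity $F_m A_n = \sum_{i=0}^m \pi^{in} F_i A$ from the preceding lemma, but your actual computation only uses the definition $A_n = \sum_{i \geq 0} \pi^{in} F_i A$, so that reference is not strictly needed.
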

\begin{proof} By induction on the number of variables it suffices to consider the case $\ell=1$ and $x_1=x$. We may then identify $A\otimes_R R[x]$ with $A[x]$.

Now $\gr (A[x])\cong (\gr A)[x]$ where $x$ has degree one in the right hand side. Thus $\gr (A[x])$ is flat over $R$ and so deformable. 

By functoriality of $(-)_n$, the natural inclusions $A\to A[x]$ and $R[x]\to A[x]$ of deformable $R$-algebras induce inclusions $A_n\to A[x]_n$ and $R[x]_n\to A[x]_n$. Thus the universal property of the tensor product yields a map from $A_n\otimes_R R[x]_n$ to $A[x]_n$. By considering the identification $A\otimes_R R[x]$ with $A[x]$ we see that this map is an inclusion and so it suffices to see that that it is surjective. But if $a_i\in F_iA$ then $\pi^{in}a_i\otimes \pi^{jn}x^j$ is an element of $A_n\otimes_R R[x]_n$ and the images of these elements span $A[x]_n$.
\end{proof}

Peter Schneider has sent us a proof that the functors $A\mapsto A_n$ preserve finite coproducts --- that is tensor products over $R$ --- in the category of deformable $R$-algebras. However, we will not need the full strength of this result in this work. 

\subsection{$\pi$-adic completions}\label{GrComp}
We will now exhibit a functorial way of producing complete doubly filtered $K$-algebras.

\begin{defn} Let $A$ be a deformable $R$-algebra. The \emph{$\pi$-adic completion} of $A$ is
\[ \h{A} = \invlim A/\pi^a A.\]
This is an $R$-lattice in the $K$-algebra
\[\hK{A} := \widehat{A}\otimes_RK.\]
\end{defn}

\begin{lem} Let $A$ be a deformable $R$-algebra. Then $\hK{A}$ is a complete doubly filtered $K$-algebra, and is a natural isomorphism
\[\Gr(\hK{A}) = \gr(\h{A} / \pi \h{A}) \cong \gr A / \pi \gr A.\]
\end{lem}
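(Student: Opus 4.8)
The plan is to unwind the definitions and build the isomorphism in stages, exploiting the flatness of $\gr A$ over $R$ that is built into the notion of a deformable $R$-algebra. First I would check that $\hK{A}$ is a complete doubly filtered $K$-algebra: the lattice $F_0\hK{A} = \h{A}$ is an $R$-subalgebra of $\hK{A}$ which is $\pi$-adically complete by construction (an inverse limit of the $A/\pi^aA$), and $\hK{A} = \h{A}\otimes_R K$, so $\h{A}$ is a lattice in $\hK{A}$. The slice is $\gr_0\hK{A} = \h{A}/\pi\h{A}$, and I would identify this with $A/\pi A$ using that $\h{A}/\pi^a\h{A} \cong A/\pi^aA$ for all $a$ (a standard fact about $\pi$-adic completions of $\pi$-torsion-free... but here $A$ need not be $\pi$-torsion-free, so more care is needed: one uses $\h{A}/\pi\h{A} = \varprojlim_a (A/\pi^aA)/\pi(A/\pi^aA) = \varprojlim_a A/\pi A = A/\pi A$ since the transition maps are eventually the identity). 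The filtration on the slice $A/\pi A$ is the quotient of the positive $\mathbb{Z}$-filtration $F_\bullet A$, namely $F_i(A/\pi A) = (F_iA + \pi A)/\pi A$; this is a complete $\mathbb{Z}$-filtration because it is positive and exhaustive (every element of $A/\pi A$ lies in some $F_i$, and $F_{-1} = 0$), so $\hK{A}$ is indeed a complete doubly filtered $K$-algebra.

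Next, for the graded identification, I would compute both sides of $\Gr(\hK{A}) = \gr(\gr_0\hK{A})$. By the previous paragraph $\gr_0\hK{A} \cong A/\pi A$ as a filtered ring, so $\Gr(\hK{A}) = \gr(A/\pi A)$, where the filtration on $A/\pi A$ is the quotient filtration $F_i(A/\pi A) = (F_iA + \pi A)/\pi A$. So it suffices to produce a natural isomorphism $\gr(A/\pi A) \cong (\gr A)/\pi(\gr A) = \gr A \otimes_R k$. The natural surjection $A \to A/\pi A$ is filtered, hence induces a graded ring homomorphism $\gr A \to \gr(A/\pi A)$, which is surjective. Its kernel in degree $i$ is the image in $\gr_i A = F_iA/F_{i-1}A$ of those elements $x \in F_iA$ with $x \in F_{i-1}A + \pi A$, i.e. $x \in F_{i-1}A + (\pi A \cap F_iA)$. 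The point is that $\pi A \cap F_iA = \pi F_iA$: this is exactly where flatness of $\gr A$ over $R$ enters, since $\gr A$ being $R$-flat (equivalently $\pi$-torsion-free, as $R$ is a DVR) implies each $F_iA$ is $\pi$-torsion-free and the short exact sequences $0 \to F_{i-1}A \to F_iA \to \gr_iA \to 0$ remain exact after tensoring with $R/\pi^aR$, giving $\pi A \cap F_iA = \pi F_iA$ by a short induction on $i$. Hence the kernel of $\gr_iA \to \gr_i(A/\pi A)$ is the image of $\pi F_iA$, which is $\pi\,\gr_iA$, and therefore $\gr(A/\pi A) \cong (\gr A)/\pi(\gr A)$.

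Finally I would record naturality: a morphism $f\colon A \to B$ of deformable $R$-algebras induces $\hK{f}\colon \hK{A}\to\hK{B}$, and the construction of the isomorphism $\Gr(\hK{A})\cong \gr A\otimes_R k$ is visibly compatible with $f$ because every step (passing to $A/\pi A$, taking associated graded, identifying kernels) is functorial in filtered $R$-algebras; so the square relating $\Gr(\hK{f})$ and $\gr f \otimes_R k$ commutes.

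The main obstacle is the identity $\pi A \cap F_iA = \pi F_iA$: this is the only place the deformability hypothesis is actually used in an essential way, and getting it right requires the observation that $R$-flatness of $\gr A$ forces each $F_iA$ to be $R$-flat and the filtration steps to split after reduction mod $\pi^a$. Everything else — checking that $\hK{A}$ is complete doubly filtered, that the slice is $A/\pi A$, and that the construction is functorial — is routine unwinding of definitions, though one should be slightly careful about the $\pi$-torsion in $A$ itself (as opposed to in $\gr A$) when computing $\h{A}/\pi^a\h{A}$.
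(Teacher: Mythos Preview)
Your approach is the same as the paper's: identify the slice $\gr_0\hK{A}$ with $A/\pi A$ carrying the quotient filtration, then show $\gr(A/\pi A)\cong(\gr A)/\pi(\gr A)$. Your treatment of the second step via $\pi A\cap F_iA=\pi F_iA$ is correct and more explicit than the paper, which simply invokes ``$\pi$ is a central regular element in $A$ of degree zero''.

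There is, however, a small slip in your discussion of $\h{A}/\pi\h{A}$. You write that ``$A$ need not be $\pi$-torsion-free'', but in fact it must be: if $\pi x=0$ with $x\neq 0$, choose the least $i$ with $x\in F_iA$; then the image $\bar x\in\gr_iA$ is nonzero and satisfies $\pi\bar x=0$, contradicting the flatness of $\gr A$ over the DVR $R$. So deformability already forces $A$ to be $\pi$-torsion-free, and this is precisely what the paper is using when it calls $\pi$ a regular element. Your proposed workaround, the equality $\h{A}/\pi\h{A}=\varprojlim_a\bigl((A/\pi^aA)/\pi(A/\pi^aA)\bigr)$, is not justified as written (cokernels do not commute with inverse limits without further argument). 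Once you know $A$ is $\pi$-torsion-free, the identification $\h{A}/\pi\h{A}\cong A/\pi A$ is the standard one for completions, and your later remark that $\pi A\cap F_iA=\pi F_iA$ is the only essential use of deformability should be amended: it is used here too, to make $\pi$ regular in $A$.
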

\begin{proof} We see that $\h{A}$ is an $R$-lattice in $\hK{A}$ and that $\gr_0 \hK{A} \cong A / \pi A$. The filtration on $A$ induces a filtration on $A/\pi A$ and
\[ \Gr(\hK{A}) = \gr \gr_0 \hK{A} \cong \gr (A/\pi A) \cong \gr A / \pi \gr A\]
since $\pi$ is a central regular element in $A$ of degree zero. The filtration on $A/\pi A$ is complete because the filtration on $A$ is positive by assumption.\end{proof}
Thus we have a countable family of functors $A \mapsto \hnK{A}$ from deformable $R$-algebras to complete doubly filtered $K$-algebras.

\begin{cor} For each $n\geq 0$  there is a natural isomorphism
\[\Gr(\hnK{A}) \to \gr A / \pi \gr A.\]
\end{cor}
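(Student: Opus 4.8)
The plan is to reduce the statement for $\hnK{A}$ to the case $n=0$ already established in the preceding Lemma, by using the fact that the $n$-th deformation functor commutes with $\pi$-adic completion in a suitable sense. First I would observe that the $\pi$-adic completion $\h{A_n}$ of the deformable $R$-algebra $A_n$ is an $R$-lattice in $\hnK{A}$; indeed, unwinding the definitions in $\S\ref{RigAnQ}$ and $\S\ref{GrComp}$, the Banach space $\hnK{A}$ is by construction the $K$-algebra $\h{A_n}\otimes_R K$, so that $\hnK{A} = \hK{(A_n)}$. This is the crucial identification: applying the $\pi$-adic completion functor of $\S\ref{GrComp}$ to the deformable $R$-algebra $A_n$ (which is deformable by Lemma \ref{Defs}) yields precisely $\hnK{A}$.

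Granting this, the previous Lemma applied to the deformable $R$-algebra $A_n$ in place of $A$ gives a natural isomorphism
\[\Gr(\hnK{A}) = \Gr(\hK{(A_n)}) \cong \gr A_n / \pi \gr A_n.\]
Now I would invoke Lemma \ref{Defs}, which supplies a natural isomorphism of graded $R$-algebras $\gr A \tocong \gr A_n$. Since this isomorphism is $R$-linear and $\pi$ is central of degree zero on both sides, it descends to a natural isomorphism $\gr A / \pi \gr A \tocong \gr A_n / \pi \gr A_n$. Composing the two displayed isomorphisms gives the desired natural isomorphism $\Gr(\hnK{A}) \to \gr A / \pi \gr A$, and naturality in $A$ follows from the naturality clauses in both Lemma \ref{Defs} and the preceding Lemma together with the commuting square at the end of the proof of Lemma \ref{Defs}.

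The only point requiring care — and the main obstacle, such as it is — is the bookkeeping behind the identification $\hnK{A} = \hK{(A_n)}$: one must check that the subspace filtration $F_m A_n = \sum_{i=0}^m \pi^{in} F_i A$ computed in the proof of Lemma \ref{Defs} is exactly the filtration whose $\pi$-adic completion produces the convergence condition $p^{-n|\alpha|}\lambda_\alpha \to 0$ describing $\hnK{A}$ in $\S\ref{NCAA}$ and $\S\ref{RigAnQ}$. This is a direct comparison of two descriptions of the same $R$-lattice and involves no real difficulty beyond tracking the scaling factor $\pi^n$ through the Rees construction; once it is in place, everything else is formal functoriality.
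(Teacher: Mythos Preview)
Your proof is correct and follows exactly the paper's approach: apply Lemma~\ref{GrComp} to $A_n$ and then Lemma~\ref{Defs}. The only superfluous part is your final paragraph: the identification $\hnK{A} = \hK{(A_n)}$ is literally the definition given in \S\ref{GrComp} (the functor $A \mapsto \hnK{A}$ is the composite of $A \mapsto A_n$ with $(-) \mapsto \hK{(-)}$), so there is nothing to reconcile with the informal descriptions in \S\ref{NCAA} and \S\ref{RigAnQ}.
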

\begin{proof} Apply Lemmas \ref{Defs} and \ref{GrComp}.
\end{proof}

\subsection{Almost commutative affinoid algebras}\label{acsa}
\begin{defn} Let $A$ be a complete doubly filtered $K$-algebra. We say that $A$ is an \emph{almost commutative affinoid $K$-algebra} if its slice is an almost commutative $k$-algebra.
\end{defn}

Almost commutative affinoid $K$-algebras are always Noetherian by \cite[Proposition II.2.2.1]{LVO}.

\begin{prop} Let $A$ be an almost commutative deformable $R$-algebra. Then $\hnK{A}$ is an almost commutative affinoid $K$-algebra for all $n\geq 0$.
\end{prop}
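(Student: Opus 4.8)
The plan is to assemble the statement directly from the structural results of \S\ref{Defs} and \S\ref{GrComp}, essentially by unwinding definitions. By construction $\hnK{A} = \widehat{A_n}\otimes_R K$, where $A_n$ is the $n$-th deformation of $A$. Lemma \ref{Defs} tells us that $A_n$ is again a deformable $R$-algebra and that there is a natural graded isomorphism $\gr A \cong \gr A_n$; since $A$ is almost commutative, $\gr A$ is a finitely generated commutative $R$-algebra, hence so is $\gr A_n$, and therefore $A_n$ is itself an almost commutative deformable $R$-algebra. (In this way one could reduce at once to the case $n = 0$, but it is just as quick to apply the completion machinery to $A_n$ directly.)

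Next I would apply Lemma \ref{GrComp} to $A_n$: this shows that $\hnK{A}$ is a complete doubly filtered $K$-algebra, that its lattice is $\widehat{A_n}$, and that its slice is
\[ \gr_0 \hnK{A} \;\cong\; \widehat{A_n}/\pi\widehat{A_n} \;\cong\; A_n/\pi A_n, \]
equipped with the filtration induced from the positive filtration $F_\bullet A_n$. It then remains only to check that this slice is an almost commutative $k$-algebra in the sense of \S\ref{ExAC}. It is positively $\mathbb{Z}$-filtered because $F_\bullet A_n$ is positive; its degree-zero part is the image of the $R$-subalgebra $F_0 A_n = F_0 A$, hence a $k$-subalgebra of $A_n/\pi A_n$; and, since $\pi$ is a central regular element of degree zero in the $R$-flat ring $\gr A_n$, the Corollary in \S\ref{GrComp} identifies
\[ \gr(\gr_0 \hnK{A}) \;=\; \Gr(\hnK{A}) \;\cong\; \gr A_n/\pi\gr A_n \;\cong\; \gr A/\pi\gr A, \]
which is a finitely generated commutative $k$-algebra because $\gr A$ is a finitely generated commutative $R$-algebra. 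Thus the slice of $\hnK{A}$ is almost commutative over $k$, which is precisely the condition in \S\ref{acsa} for $\hnK{A}$ to be an almost commutative affinoid $K$-algebra.

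There is no genuine obstacle here: the statement is a bookkeeping consequence of earlier results. The one point where care is needed, and where the deformability hypothesis on $A$ (flatness of $\gr A$ over $R$) is used essentially, is in the identification of the associated graded of the slice with $\gr A/\pi\gr A$ — that is, in ensuring that forming the associated graded ring commutes with reduction modulo $\pi$. This has already been handled in Lemma \ref{GrComp} and its Corollary, so nothing new is required.
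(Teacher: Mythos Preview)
Your proposal is correct and follows essentially the same approach as the paper: both arguments invoke Lemma~\ref{GrComp} to see that $\hnK{A}$ is a complete doubly filtered $K$-algebra with positively filtered slice, and then use Corollary~\ref{GrComp} to identify $\Gr(\hnK{A}) \cong \gr A/\pi\gr A$, which is a finitely generated commutative $k$-algebra. Your version is somewhat more explicit about the intermediate role of $A_n$ and about where deformability (flatness of $\gr A$) is actually used, but the logical content is the same.
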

\begin{proof} $\hnK{A}$ is a complete doubly filtered $K$-algebra by Lemma \ref{GrComp} and the filtration on its slice $\gr_0 \hnK{A}$ is positive by construction. Since $\Gr(\hnK{A}) \cong \gr A / \pi \gr A$ by Corollary \ref{GrComp}, $\Gr(\hnK{A})$ is a finitely generated commutative $k$-algebra and hence $\gr_0 A$ is an almost commutative $k$-algebra.
\end{proof}
We thus obtain a family of almost commutative affinoid $K$-algebras $\hnK{A}$, whenever we have an almost commutative deformable $R$-algebra $A$; see $\S \ref{ExAC}$ for a list of examples. Note that in Example \ref{ExAC}(a), the completion $\widehat{\O(X)_{n,K}}$ of the commutative $R$-algebra $\O(X)$ is an affinoid $K$-algebra in the sense of \cite{BGR}, and can be viewed as the ring of rigid analytic functions on an affinoid variety $X_{n,K}$. This justifies our terminology.

We note in passing that Soibelman's quantum affinoid algebras $K\{T\}_{q,r}$ appearing in \cite{Soi1} and \cite{Soi2} are examples of almost commutative affinoid $K$-algebras not of the form $\widehat{A_{n,K}}$ for some almost commutative $R$-algebra $A$, provided that $|q - 1| < 1$.

\subsection{Base change}\label{basechange2}
Let $v$ be the normalised discrete valuation on $K$, and let $K'$ be a field extension of $K$ which is complete with respect to a normalised discrete valuation $v'$. Recall that $K'/K$ is said to be \emph{finitely ramified} if $v'|_K=ev$ for some integer $e>0$; in this case $e := v'(\pi)$ is called the \emph{ramification index} of $K'/K$. Let $R'$ be the valuation ring of $v'$ and let $\pi'$ be a uniformizer of $R'$; then the ideal $\pi R'$ is generated by $\pi'^e$.

\begin{lem} Let $A$ be a deformable $R$-algebra, and $K'$ be a complete finitely ramified field extension of $K$.
\be \item There is a natural filtration on $A':=R'\otimes_R A$ such that $A'$ is a deformable $R'$-algebra.
\item $R'\otimes_R A_n = (A')_{en}$.
\item If $[K':K] < \infty$ then $K' \otimes_K \widehat{A_{n,K}}$ is isomorphic to $\widehat{A'_{en,K'}}$ as a complete doubly filtered $K'$-algebra.
\ee\end{lem}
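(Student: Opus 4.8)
The plan is to treat the three parts in order, using the Rees-ring description of the deformation functor and the fact that $R' \otimes_R -$ is exact on flat $R$-modules.

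For part (a), I would equip $A' = R' \otimes_R A$ with the tensor filtration $F_m A' := R' \otimes_R F_m A$. Since $\gr A$ is $R$-flat, $\gr A' \cong R' \otimes_R \gr A$ is $R'$-flat, so $A'$ is a deformable $R'$-algebra; this is essentially immediate from the definition and the exactness of $R' \otimes_R -$ on the (flat) pieces $F_m A / F_{m-1} A$.

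For part (b), recall from $\S\ref{Defs}$ that $A_n$ is the image of the Rees ring $\Ex{A} = \bigoplus_m t^m F_m A$ under the map $A[t] \to A_K$ sending $t \mapsto \pi^n$; equivalently $A_n = \sum_i \pi^{in} F_i A$, and since $\gr A$ is $R$-flat we have $F_m A_n = \sum_{i=0}^m \pi^{in} F_i A$ as in Lemma \ref{Defs}. Now apply $R' \otimes_R -$: using $R'$-flatness of the relevant pieces and the fact that $\pi R' = (\pi'^e)$, the submodule $\sum_i (\pi')^{i(en)} F_i A'$ of $A'_{K'}$ is exactly $R' \otimes_R \sum_i \pi^{in} F_i A$. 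The left-hand side is by definition $(A')_{en}$ (with respect to the uniformizer $\pi'$, and this does not depend on the choice of uniformizer), and the right-hand side is $R' \otimes_R A_n$. I would spell this out at the level of Rees rings: $\Ex{A'} = R' \otimes_R \Ex{A}$, and the base-changed specialisation map $A'[t] \to A'_{K'}$ sending $t \mapsto (\pi')^{e}$ factors the specialisation $t \mapsto \pi^n$ appropriately, so that $(A')_{en} = \Ex{A'}/\langle t - (\pi')^e\rangle^{(n)}$ matches $R' \otimes_R (\Ex{A}/\langle t - \pi^n\rangle)$.

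For part (c), assume $[K':K] < \infty$. Then $R'$ is a finitely generated $R$-module, so $R' \otimes_R -$ commutes with the inverse limit defining $\pi$-adic completion: $\widehat{(A')_{en}} \cong R' \otimes_R \widehat{A_n}$ using part (b), because $(A')_{en}$ is $\pi'$-adically complete iff $A_n$ is $\pi$-adically complete (as $\pi' $ and $\pi$ define the same topology up to the factor $e$) and the transition maps are surjective with the right kernels. Tensoring with $K'$ over $R'$ gives $\widehat{A'_{en,K'}} \cong K' \otimes_{R'} R' \otimes_R \widehat{A_n} \cong K' \otimes_R \widehat{A_n} \cong K' \otimes_K (K \otimes_R \widehat{A_n}) = K' \otimes_K \widehat{A_{n,K}}$. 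Finally one checks this is an isomorphism of complete doubly filtered $K'$-algebras: the lattice $F_0 = \widehat{(A')_{en}}$ is carried to $R' \otimes_R \widehat{A_n}$, and on slices one gets $\Gr(\widehat{A'_{en,K'}}) \cong \gr A' / \pi' \gr A' \cong k' \otimes_k (\gr A / \pi \gr A) \cong k' \otimes_k \Gr(\widehat{A_{n,K}})$ via Corollary \ref{GrComp}, compatibly with the gradings; the induced filtered $k'$-algebra map on slices is the evident base change, which is a filtered isomorphism.

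The main obstacle is bookkeeping rather than conceptual: one must be careful that the various identifications of $n$-th deformations, Rees rings, and $\pi$-adic completions are compatible with base change precisely because $\gr A$ is $R$-flat (so that all the $\Tor$ terms that could obstruct commuting $R' \otimes_R -$ past the filtration steps, the sums $\sum_i \pi^{in} F_i A$, and the completion vanish), and that $\pi R' = (\pi'^e)$ forces the index shift $n \mapsto en$. I would present part (b) in full and be terse about (a) and the completion step in (c), which are routine once (b) is in hand.
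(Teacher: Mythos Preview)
Your proposal is correct and follows essentially the same approach as the paper. The paper's proof is more terse: for (b) it simply writes $R' \otimes_R A_n = \sum_{i\geq 0}\pi^{in}R'\otimes_R F_iA = \sum_{i\geq 0}\pi'^{ien}F_iA' = (A')_{en}$ directly, without the Rees-ring detour you propose, and for (c) it just invokes that $R'$ is finitely generated over $R$ to commute completion with $R'\otimes_R -$, omitting the explicit slice verification you include.
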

\begin{proof}\hfill

(a) We define $F_iA'=R'\otimes_R F_iA$ for all $i$. Then $\gr A'$ is isomorphic to $R'\otimes_R \gr A$ and is therefore a flat $R'$-module.

(b) Since $\pi R' = \pi'^e R'$, we have
\[R' \otimes_R A_n = \sum_{i\geq 0}\pi^{in}R'\otimes_R F_iA = \sum_{i\geq 0}\pi'^{ien}F_iA' = (A')_{en}.\]

(c) Because $R'$ is a finitely generated $R$-module, $R' \otimes_R \widehat{A_n}$ is isomorphic to $\widehat{R' \otimes_R A_n}$. Therefore
\[K' \otimes_K \widehat{A_{n,K}} = K' \otimes_{R'} (R' \otimes_R \widehat{A_n}) \cong K' \otimes_{R'} \widehat{(A')_{en}} = \widehat{(A')_{en,K'}}\]
by part (b).
\end{proof}
Note that if $K'$ is an infinite extension of $K$ then $K' \otimes_K \widehat{A_{n,K}}$ will not be $\pi'$-adically complete, in general.
\begin{prop} Suppose that $A$ is an almost commutative affinoid $K$-algebra, $M$ is a finitely generated $A$-module and $K'$ is a complete, finitely ramified field extension of $K$. If $A'$ is the almost commutative affinoid $K'$-algebra obtained by completing $K'\otimes_K A$ then \[ \dim \Ch(A'\otimes_A M)=\dim \Ch(M).\]
\end{prop}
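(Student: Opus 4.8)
The plan is to reduce the statement to the base change proposition in $\S\ref{basechange1}$ applied to the graded rings, using the compatibility of characteristic varieties with flat base change of the associated graded data. First I would fix a good double filtration $(F_0M, F_\bullet \gr_0 M)$ on $M$, so that $\Gr(M)$ is a finitely generated $\Gr(A)$-module with $\Ch(M) = \Supp_{\Gr(A)}(\Gr(M))$. The key observation is that passing from $K$ to $K'$ commutes with all the constructions involved: by Lemma \ref{basechange2}, $A' = \widehat{(A'_0)_{e\cdot 0, K'}}$ in the notation there (where $A_0$ is a deformable $R$-algebra presenting the slice), and by Corollary \ref{GrComp} we have $\Gr(A') \cong \Gr(A) \otimes_k k'$ where $k' = R'/\pi' R'$. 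Similarly, choosing $F_0M$ to come from a finitely generated $F_0A$-lattice and base-changing to $R'$, one gets $\Gr(A' \otimes_A M) \cong \Gr(M) \otimes_k k'$ as a module over $\Gr(A') \cong \Gr(A) \otimes_k k'$.

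Granting this, the second step is purely commutative-algebraic: if $S$ is a finitely generated commutative $k$-algebra, $N$ a finitely generated $S$-module, and $k'/k$ a finite field extension, then $\dim \Supp_{S \otimes_k k'}(N \otimes_k k') = \dim \Supp_S(N)$. This follows because $S \otimes_k k'$ is a finite free (hence faithfully flat and integral) extension of $S$, so the map on spectra is surjective with finite fibres and preserves dimension; alternatively one invokes the base-change proposition at the end of $\S\ref{basechange1}$, noting that $S$ and $S \otimes_k k'$ are Auslander--Gorenstein with equal injective dimension when $S$ is regular (or in general one argues directly with Krull dimension of supports, which is all that is needed here since $\dim \Ch$ is defined as the Krull dimension of the support). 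Combining, $\dim \Ch(A' \otimes_A M) = \dim \Supp_{\Gr(A')}(\Gr(A' \otimes_A M)) = \dim \Supp_{\Gr(A) \otimes_k k'}(\Gr(M) \otimes_k k') = \dim \Supp_{\Gr(A)}(\Gr(M)) = \dim \Ch(M)$.

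The main obstacle is the bookkeeping in the first step: one must check that a good double filtration on $M$ really does base-change to a good double filtration on $A' \otimes_A M$ with the expected associated graded module, and in particular that the lattice $F_0 M$ can be chosen finitely generated over $F_0 A$ so that $R' \otimes_R F_0 M$ is a lattice in $A' \otimes_A M$ over $F_0 A' = R' \otimes_R F_0 A$ (this uses Proposition \ref{GoodDouble}(a), whose hypothesis $\Gr(A)$ Noetherian holds here since $A$ is almost commutative affinoid). Since $R'$ is flat over $R$ and $k'$ is flat over $k$, the reduction mod $\pi'$ and the passage to $\gr$ both commute with $-\otimes_k k'$, giving $\Gr(A' \otimes_A M) \cong \Gr(M) \otimes_k k'$; the separatedness of the induced filtration on $\gr_0(A' \otimes_A M)$ and finite generation of $\Gr(A' \otimes_A M)$ over $\Gr(A')$ are then automatic. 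Everything else is formal once this identification is in place.
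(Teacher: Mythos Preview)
Your proposal is correct and follows essentially the same route as the paper: fix a good double filtration on $M$, base-change the lattice $F_0M$ to $R'\otimes_R F_0M$ to obtain $\gr_0 M' \cong k'\otimes_k \gr_0 M$ and hence $\Gr(M')\cong k'\otimes_k \Gr(M)$, then conclude by invariance of Krull dimension under field extension. The paper's proof is considerably terser (it simply asserts the key isomorphisms and ends with ``the result follows''), whereas you spell out the flatness and dimension-theoretic justifications; one minor quibble is that you assume $k'/k$ is finite, but the dimension argument works for arbitrary field extensions, which is what is actually needed since $K'/K$ is only assumed finitely ramified.
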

\begin{proof} Let $M'=A'\otimes_A M$ and let $k'$ be the residue field of $k$. Then $R'\otimes_R F_0M$ is an $R'$-lattice in $M'$ such that
\[\gr_0 M' =k' \otimes_{R'} (R' \otimes_R F_0M) \cong k'\otimes_R F_0M \cong k' \otimes_k \gr_0M.\]
This isomorphism induces a good double filtration on $M'$ from the good double filtration on $M$, and then $\Gr (M')\cong k'\otimes_k \Gr (M)$. The result follows.\end{proof}

\section{Crystalline differential operators on homogeneous spaces}\label{Diffops}
\subsection{Notation}\label{NotnDiffops}
In this section, $R$ will denote a fixed commutative Noetherian ground ring. Unadorned tensor products and scheme products will be assumed to be taken over $R$ and over $\Spec(R)$, respectively. If $V$ is a free $A$-module over a commutative ring $A$, then $\Sym_A V$ denotes the symmetric algebra of $V$ over $A$. All the results in this section are well-known when $R$ is a field; we spell them out in this more general context for the sake of the reader because we cannot find a suitable single reference.

Throughout $\S$\ref{Diffops}, $X$ will denote a scheme over $\Spec(R)$ which is smooth, separated and locally of finite type. We write $\T$ for the sheaf of sections of the tangent bundle $TX$. 

\subsection{Crystalline differential operators} \label{cryst}
\begin{defn} The \emph{sheaf of crystalline differential operators on $X$} is defined to be the enveloping algebra $\D$ of the tangent Lie algebroid $\T$.\end{defn}

\noindent Thus $\D$ is a sheaf of rings, generated by the structure sheaf $\O$ and the $\O$-module $\T$ subject to only the relations
\begin{itemize}
\item $f\partial=f\cdot \partial$ and $\partial f-f \partial=\partial(f)$ for each $f\in\O$ and $\partial\in\T$;
\item $\partial\partial'-\partial'\partial=[\partial,\partial']$ for $\partial,\partial'\in\T$.
\end{itemize}

\noindent Being a quotient of a universal enveloping algebra, the sheaf $\D$ comes equipped with a natural Poincar\'e-Birkhoff-Witt filtration
\[0 \subset F_0 \D \subset F_1 \D \subset F_2 \D \subset \cdots\]
consisting of coherent $\O$-submodules, such that
\[F_0 \D=\O, \quad F_1\D=\O \oplus \T, \quad\mbox{and}\quad F_m\D = F_1\D\cdot F_{m-1}\D \quad\mbox{for}\quad m>1.\]
Since $X$ is smooth, the tangent sheaf $\T$ is locally free and the associated graded algebra of $\D$ is isomorphic to the symmetric algebra of $\T$:
\[\gr \D := \bigoplus_{m=0}^\infty \frac{F_m \D}{F_{m-1} \D}\cong \Sym_{\O} \T.\]
If $q : T^\ast X \to X$ is the cotangent bundle of $X$ defined by the locally free sheaf $\T$, then we can also identify $\gr \D$ with $q_\ast\O_{T^\ast X}$.

\subsection{$\mb{H}$-torsors}\label{Torsor}

Let $\mb{H}$ be a flat affine algebraic group over $R$ of finite type. Let $\Ex{X}$ be a scheme equipped with an action $\mb{H} \times \Ex{X}  \to \Ex{X} $ of $\mb{H}$ on $\Ex{X}$. We say that a morphism $\xi : \Ex{X} \to X$ is an \emph{$\mb{H}$-torsor} if $\xi$ is faithfully flat and locally of finite type, the action of $\mb{H}$ respects $\xi$, and the map 
\[\Ex{X} \times \mb{H} \to \Ex{X} \times_X \tilde{X}\]
which sends $(x,h) \mapsto (x,hx)$ is an isomorphism. An open subscheme $U$ of $X$ is said to \emph{trivialise the torsor} $\xi$ if there exists an $\mb{H}$-invariant isomorphism
\[ U\times \mb{H} \tocong \xi^{-1}(U) \]
where $\mb{H}$ acts on $U\times \mb{H}$ by left translation on the second factor. Let $\mathcal{S}_X$ denote the set of open subschemes $U$ of $X$ such that 
\begin{itemize}
\item $U$ is affine,
\item $U$ trivialises $\xi$,
\item $\O(U)$ is a finitely generated $R$-algebra.
\end{itemize}	
Since $X$ is separated, it is easy to see that $\mathcal{S}_X$ is stable under intersections. Moreover, if $U \in \mathcal{S}_X$ and $W$ is an open affine subscheme of $U$, then $W \in \mathcal{S}_X$. We say that $\xi$ is \emph{locally trivial} for the Zariski topology if $X$ can be covered by opens in $\mathcal{S}_X$. Thus $\mathcal{S}_X$ is a base for $X$ whenever $\xi$ is locally trivial.

\begin{lem} If $\xi : \Ex{X} \to X$ is a locally trivial $\mb{H}$-torsor, then $\xi^\sharp : \O_X \to (\xi_\ast \O_{\Ex{X}})^{\mb{H}}$ is an isomorphism.
\end{lem}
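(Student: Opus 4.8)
The claim is local on $X$, since both sides are sheaves on $X$ and a morphism of sheaves is an isomorphism if and only if it is so on a base. So the plan is to reduce to the case where $U \in \mathcal{S}_X$ trivialises the torsor $\xi$, and then to check the statement by an explicit computation with rings. Note that $\xi$ is faithfully flat, so $\xi^\sharp$ is injective (the structure sheaf of $X$ injects into $\xi_\ast\O_{\Ex X}$), and the content is surjectivity onto the $\mb H$-invariants.

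First I would fix $U \in \mathcal{S}_X$ and an $\mb{H}$-invariant isomorphism $U \times \mb{H} \tocong \xi^{-1}(U)$, where $\mb{H}$ acts by left translation on the second factor. Writing $S = \O(U)$ and $\O(\mb{H}) = H$, this identifies $\O(\xi^{-1}(U))$ with $S \otimes_R H$, with $\xi^\sharp$ becoming $S \to S \otimes_R H$, $s \mapsto s \otimes 1$. The $\mb{H}$-action on $S \otimes_R H$ is $\id_S$ tensored with the action of $\mb{H}$ on itself by left translation, i.e.\ with the right regular coaction on $H$. So I am reduced to the purely Hopf-algebraic statement that $R \to H$ identifies $R$ with $H^{\mb{H}} = \{h \in H : \Delta(h) = 1 \otimes h\}$ (the coinvariants of the right regular coaction), and then to the observation that taking $\mb{H}$-invariants commutes with the flat base change $- \otimes_R S$ along $S$.

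The Hopf-algebraic fact is standard: if $\Delta(h) = \sum h_{(1)} \otimes h_{(2)} = 1 \otimes h$, apply $\varepsilon \otimes \id$ to get $h = \varepsilon(h) \cdot 1 \in R \cdot 1$; conversely $R \cdot 1$ clearly lies in the coinvariants since $\Delta(1) = 1 \otimes 1$. For the base-change step, the invariants $(S \otimes_R H)^{\mb H}$ are by definition the equaliser of the two maps $S \otimes_R H \rightrightarrows S \otimes_R H \otimes_R H$ given by $\id_S \otimes \Delta$ and $\id_S \otimes (h \mapsto 1 \otimes h)$; equalisers commute with the flat (indeed, just with any) base change $S \otimes_R -$ applied to the equaliser diagram for $R \hookrightarrow H$, because $S$ is flat over $R$ (in fact here one only needs that $S \otimes_R -$ is left exact, which holds for $S$ flat). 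Hence $(S \otimes_R H)^{\mb H} \cong S \otimes_R H^{\mb H} = S \otimes_R R = S$, and unwinding the identifications this isomorphism is exactly $\xi^\sharp$ over $U$.

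The main obstacle is bookkeeping rather than conceptual: one must check that the isomorphism obtained from the trivialisation is genuinely $\xi^\sharp$ (not some twist of it) and that the identifications over two overlapping trivialising opens $U, U' \in \mathcal{S}_X$ glue — but since $\mathcal{S}_X$ is stable under intersection and is a base for $X$, it suffices to verify the isomorphism on each $U \in \mathcal{S}_X$ separately, as a morphism of sheaves is determined by and can be checked on a base. So the gluing issue evaporates, and what remains is to write the trivialisation carefully enough that the identification $\O(\xi^{-1}(U)) \cong S \otimes_R H$ is $\mb{H}$-equivariant for the stated actions, which is immediate from the definition of the trivialising isomorphism.
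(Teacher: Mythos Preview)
Your proof is correct and follows essentially the same route as the paper: reduce to a trivialising open $U \in \mathcal{S}_X$, identify sections with $S \otimes_R H$ where $S = \O(U)$ and $H = \O(\mb{H})$, and then show that taking $\mb{H}$-invariants commutes with the flat base change along $R \to S$. The only difference is in how this last step is justified: you argue directly that a flat tensor product, being left exact, preserves the equaliser defining the coinvariants, whereas the paper writes $S$ as a direct limit of free $R$-modules (Lazard) and invokes the fact that rational cohomology commutes with direct limits \cite[Lemma I.4.17]{Jantzen}. Your equaliser argument is slightly more self-contained; the paper's version has the advantage of fitting into a pattern reused later (e.g.\ in Lemma \ref{EnhCB} and Lemma \ref{DofTinvT}), where one again exploits that $\mb{H}$-invariants commute with direct limits of $R$-modules.

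One small quibble: the parenthetical ``(indeed, just with any)'' in your base-change step is misleading --- equalisers do \emph{not} commute with arbitrary tensor products, only with left-exact functors, which is exactly what flatness of $S$ buys you (as you correctly note immediately afterwards). You should also make explicit that $S = \O(U)$ is flat over $R$ because $X$ is smooth over $R$; this is used but not stated.
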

\begin{proof} This is a local problem on $X$, so we may assume that $X$ is affine and $\xi : \Ex{X} = X \times \mb{H} \to X$ is the projection onto the first factor. Since $\O(X)$ is a flat $R$-module, it is a direct limit of free $R$-modules. Now
\[ (\xi_\ast \O_{\Ex{X}})^{\mb{H}}(X) = \O(X \times \mb{H})^{\mb{H}} = (\O(X) \otimes \O(\mb{H}))^{\mb{H}} = \O(X)\]
since rational cohomology commutes with direct limits by \cite[Lemma I.4.17]{Jantzen}.
\end{proof}

\subsection{The enhanced cotangent bundle}
\label{EnhCB}
 Let $\xi : \Ex{X} \to X$ be an $\mb{H}$-torsor. The action of $\mb{H}$ on $\Ex{X}$ induces a rational action of $\mb{H}$ on $\O(V)$ for any $\mb{H}$-stable open subscheme $V \subseteq \Ex{X}$ and therefore induces an action of $\mb{H}$ on $\T_{\Ex{X}}$ as follows:
\[ (h \cdot \partial)(f) = h \cdot \partial(h^{-1} \cdot f)\]
whenever $\partial \in \T_{\Ex{X}}, f \in \O$ and $h \in \mb{H}$. In this way we obtain the \emph{sheaf of enhanced vector fields} on $X$:
\[\Ex{\T} := (\xi_\ast \T_{\Ex{X}})^\mb{H}.\]
We can differentiate the $\mb{H}$-action on $\Ex{X}$ to obtain an $R$-linear Lie homomorphism
 \[j : \fr{h} \to \T_{\Ex{X}}\]
where $\fr{h}$ is the Lie algebra of $\mb{H}$. Now suppose that $\xi$ is locally trivial and let $\tau \in \Ex{\T}(U)$ for some open subscheme $U \subseteq X$. Then $\tau$ is an $\mb{H}$-invariant vector field on $\xi^{-1}(U)$, so in particular it is an $\mb{H}$-linear endomorphism of $\O(\xi^{-1}(U))$. Hence it preserves $\O(\xi^{-1}(U))^{\mb{H}}$ and by Lemma \ref{Torsor} it induces a vector field $\sigma(\tau) \in \T(U)$. This defines a map of $\O$-modules \[ \sigma : \Ex{\T} \longrightarrow \T\]
which is also known as the \emph{anchor map} of the Lie algebroid $\Ex{\T}$. It is easy to see that the anchor map fits into a complex of $\O$-modules
\begin{equation}\label{Cx} 0 \to \fr{h} \otimes \O \stackrel{j \otimes 1}{\longrightarrow} \Ex{\T} \stackrel{\sigma}{\longrightarrow} \T \to 0\end{equation}
which is functorial in $\Ex{X}$.
\begin{lem} The restriction of $(\ref{Cx})$ to any $U \in \mathcal{S}_X$ is split exact.  If $\xi$ is locally trivial, then $(\ref{Cx})$ is exact and $\Ex{\T}$ is locally free. \end{lem}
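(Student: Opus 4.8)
The plan is to reduce everything to the local situation over an open $U \in \mathcal{S}_X$, where by hypothesis the torsor $\xi$ is trivialised, and then to exhibit an explicit splitting of the sequence $(\ref{Cx})$ restricted to $U$. So first I would fix $U \in \mathcal{S}_X$ together with an $\mb{H}$-equivariant isomorphism $\phi : U \times \mb{H} \tocong \xi^{-1}(U)$, where $\mb{H}$ acts by left translation on the second factor. Pulling back along $\phi$ identifies $\O(\xi^{-1}(U))$ with $\O(U) \otimes \O(\mb{H})$, and correspondingly identifies $\Ex{\T}(U) = (\xi_\ast \T_{\Ex{X}})^{\mb{H}}(U)$ with the $\mb{H}$-invariants of the tangent sheaf of $U \times \mb{H}$. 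Because the tangent bundle of a product splits, $\T_{U \times \mb{H}} \cong (\T_U \boxtimes \O_{\mb{H}}) \oplus (\O_U \boxtimes \T_{\mb{H}})$, and taking $\mb{H}$-invariants (for the left-translation action, which is trivial on the $\T_U$ factor and is the adjoint/left-regular action on the $\mb{H}$ factor) gives $\Ex{\T}(U) \cong \T(U) \oplus \big(\O(U) \otimes \T_{\mb{H}}(\mb{H})^{\mb{H}}\big)$. The second summand is exactly $\O(U) \otimes \fr{h}$, since left-invariant vector fields on $\mb{H}$ form the Lie algebra $\fr{h}$; tracing through the definitions, the inclusion of this summand is $j \otimes 1$ and the projection onto the first summand is the anchor map $\sigma$. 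This gives the desired splitting, and in particular $(\ref{Cx})|_U$ is short exact with $\Ex{\T}|_U$ free of rank $\dim X + \dim \mb{H}$ (locally, after further shrinking $U$ so that $\T|_U$ is free).

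Next, for the global statements I would argue as follows. If $\xi$ is locally trivial, then $X$ is covered by opens in $\mathcal{S}_X$, and exactness of a complex of sheaves can be checked on a base (here $\mathcal{S}_X$, which is a base for $X$ by the remarks preceding the Lemma); since $(\ref{Cx})|_U$ is split exact for every $U \in \mathcal{S}_X$ by the previous paragraph, $(\ref{Cx})$ is exact. For local freeness of $\Ex{\T}$, note that each point of $X$ has a neighbourhood $U \in \mathcal{S}_X$ which can be shrunk further to an affine open on which $\T$ is free (possible because $X$ is smooth, hence $\T$ locally free, and because by the remarks any affine open subscheme of a $U \in \mathcal{S}_X$ is again in $\mathcal{S}_X$); on such a $U$ the splitting $\Ex{\T}|_U \cong \T|_U \oplus (\O_U \otimes \fr{h})$ displays $\Ex{\T}|_U$ as a direct sum of a free module and $\O_U \otimes \fr{h}$. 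The latter is locally free provided $\fr{h}$ is a projective $R$-module; since $\mb{H}$ is a flat affine algebraic group of finite type over $R$, its Lie algebra $\fr{h}$ is a finitely generated flat, hence projective, $R$-module, so $\O_U \otimes \fr{h}$ is locally free over $\O_U$. Hence $\Ex{\T}$ is locally free.

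The one genuine point requiring care — and the step I expect to be the main obstacle — is the identification, on a trivialising open $U$, of the $\mb{H}$-invariant vector fields on $U \times \mb{H}$ with $\T(U) \oplus (\O(U) \otimes \fr{h})$ in a way that is compatible with $j \otimes 1$ and $\sigma$ as defined in \S\ref{EnhCB}. This is bookkeeping rather than anything deep, but it needs the base ring $R$ to be arbitrary (not a field), so one must be slightly careful that $\T_{\mb{H}}(\mb{H}) \cong \O(\mb{H}) \otimes_R \fr{h}$ as $\O(\mb{H})$-modules — this is the standard left-translation trivialisation of the tangent bundle of an affine group scheme and holds over any base because $\mb{H}$ is smooth over $R$ (flat, affine, finite type, with smooth fibres — or, if one only has flatness, one uses that the construction of the torsor and the splitting are compatible with base change and then checks the relevant freeness statements directly). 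One also uses Lemma \ref{Torsor} implicitly, in that $\sigma$ is well-defined precisely because invariant vector fields preserve $\O(\xi^{-1}(U))^{\mb{H}} = \O(U)$. Once this local picture is nailed down, the functoriality in $\Ex{X}$ claimed for $(\ref{Cx})$ and the globalisation are both formal.
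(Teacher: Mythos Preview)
Your proposal is correct and follows essentially the same route as the paper: trivialise over $U$, split the tangent bundle of the product, and take $\mb{H}$-invariants to obtain $\Ex{\T}(U) \cong \T(U) \oplus (\O(U) \otimes \fr{h})$. The one step the paper makes more explicit is exactly the point you flag as needing care --- namely why taking $\mb{H}$-invariants commutes with tensoring by $\T(U)$ (and by $\O(U)$) over an arbitrary base $R$: the paper uses that $\T(U)$ is a flat $R$-module, hence a direct limit of free $R$-modules, and invokes \cite[Lemma I.4.17]{Jantzen} (rational cohomology commutes with direct limits) to conclude $(\T(U) \otimes \O(\mb{H}))^{\mb{H}} = \T(U)$.
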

\begin{proof} We have $\T(U \times \mb{H})= \left(\T(U) \otimes \O(\mb{H}) \right) \oplus \left(\O(U) \otimes \T(\mb{H})\right).$ Since $\T(U)$ is a locally free $\O(U)$-module and $\O(U)$ is a flat $R$-module, $\T(U)$ is a flat $R$-module and hence a direct limit of free $R$-modules. Therefore
\[\left(\T(U) \otimes \O(\mb{H}) \right)^{\mb{H}} = \T(U) \otimes \O(\mb{H})^{\mb{H}} = \T(U)\]
again by \cite[Lemma I.4.17]{Jantzen}. Now $\T(\mb{H})^{\mb{H}} = j(\fr{h})$ so
\[ \T(U \times \mb{H})^{\mb{H}} = \T(U) \oplus \left(\O(U) \otimes j(\fr{h})\right).\]
Let $U \times \mb{H} \tocong \xi^{-1}(U)$ be an $\mb{H}$-invariant isomorphism. Then 
\[\Ex{\T}(U) = \T(\xi^{-1}(U))^{\mb{H}} \cong \T(U \times \mb{H})^{\mb{H}} = \T(U) \oplus \left(\O(U) \otimes j(\fr{h})\right)\]
and the first part follows because $U$ is affine and every term in $(\ref{Cx})$ is a quasi-coherent $\O$-module. The second part follows from the first and the functoriality of $(\ref{Cx})$.\end{proof}

We call the vector bundle $\tau : \Ex{T^\ast X} \to X$ associated to the locally free sheaf $\Ex{\T}$ the \emph{enhanced cotangent bundle} of $X$. It can in fact be shown that the enhanced cotangent bundle $\Ex{T^\ast X}$ is isomorphic to the quotient scheme $(T^\ast \Ex{X}) / \mb{H}$.

\subsection{Lemma}\label{DofTinvT} The natural map $U(\fr{h}) \longrightarrow \Gamma(\mb{H},\D)^{\mb{H}}$ is an isomorphism.
\begin{proof} Since $\mb{H}$ is affine and Noetherian, we can find isomorphisms 
\[\gr \Gamma(\mb{H}, \D) \cong \Gamma(\mb{H}, \gr \D) \cong \Gamma(\mb{H}, \Sym_{\O}\T) \cong \Sym_{\O(\mb{H})} \T(\mb{H}) \cong \O(\mb{H}) \otimes S(\fr{h})\] 
of commutative graded $R$-algebras. Since $S(\fr{h})$ is a flat $R$-module, taking $\mb{H}$-invariants and applying \cite[Lemma I.4.17]{Jantzen} shows that the natural map $S(\fr{h}) \to (\gr \Gamma(\mb{H},\D))^{\mb{H}}$ is an isomorphism. This map factors as follows:
\[S(\fr{h}) = \gr U(\fr{h}) \to \gr ( \Gamma(\mb{H},\D)^{\mb{H}}) \hookrightarrow (\gr \Gamma(\mb{H},\D))^{\mb{H}}\]
where the second arrow is injective since taking $\mb{H}$-invariants is always left exact. It follows that both arrows are isomorphisms, and $U(\fr{h})\to \Gamma(\mb{H},\D)^{\mb{H}}$ is an isomorphism as claimed. 
\end{proof}

\subsection{Relative enveloping algebras}\label{RelEnv}
Given an $\mb{H}$-torsor $\xi: \Ex{X}\to X$, $\xi_\ast\D_{\Ex{X}}$ is a sheaf of algebras on $X$ with an $\mb{H}$-action. Following \cite[p. 180]{BoBrII}, we define the \emph{relative enveloping algebra} of the torsor to be the sheaf of $\mb{H}$-invariants of $\xi_\ast\D_{\Ex{X}}$:
\[\Ex{\D}:=(\xi_\ast\D_{\Ex{X}})^\mb{H}.\]
This sheaf carries a natural filtration
\[F_m \Ex{\D} := (\xi_\ast F_m\D_{\Ex{X}})^{\mb{H}}\]
induced by the filtration on $\D_{\Ex{X}}$ by order of differential operators.

\begin{prop} There is an isomorphism of sheaves of filtered $R$-algebras
\[\D_{|U}\otimes U(\fr{h}) \tocong \Ex{\D}_{|U} \]
for any $U \in \mathcal{S}_X$. If $\xi$ is locally trivial, then there is an isomorphism
\[\tau_\ast \O_{\Ex{T^\ast X}} = \Sym_{\O} \Ex{\T}  \tocong \gr \Ex{\D}\]
of sheaves of graded $R$-algebras.
\end{prop}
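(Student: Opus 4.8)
The plan is to prove the local trivialisation first and then derive the identification of associated graded rings from it. Fix $U\in\mathcal{S}_X$ and an $\mb{H}$-equivariant isomorphism $\phi\colon U\times\mb{H}\tocong\xi^{-1}(U)$; since every affine open $W\subseteq U$ again lies in $\mathcal{S}_X$ and $\phi$ restricts to $W\times\mb{H}\tocong\xi^{-1}(W)$, it suffices to produce a filtered $R$-algebra isomorphism $\Ex{\D}(W)\cong\D(W)\otimes_R U(\fr{h})$ for each such $W$, compatibly with restriction maps, and then glue. The key step is that the natural external-product map $\D(W)\otimes_R\D(\mb{H})\to\D(W\times\mb{H})$ — sending the functions and vector fields of each factor into $\D_{W\times\mb{H}}$ along the two projections — is a filtered isomorphism: it is manifestly filtered, and since $\T_{W\times\mb{H}}=(\T(W)\otimes_R\O(\mb{H}))\oplus(\O(W)\otimes_R\T(\mb{H}))$ its associated graded map is the base-change isomorphism $\Sym_{\O(W)}\T(W)\otimes_R\Sym_{\O(\mb{H})}\T(\mb{H})\tocong\Sym_{\O(W\times\mb{H})}\T_{W\times\mb{H}}$ (using $\gr\D\cong\Sym_\O\T$ for the smooth scheme $X$, cf. $\S\ref{cryst}$, and the analogue for $\mb{H}$ as in the proof of Lemma \ref{DofTinvT}); as both filtrations are positive and exhaustive, an isomorphism on $\gr$ lifts to an isomorphism of the rings by induction on filtration degree. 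Now $\mb{H}$ acts on $W\times\mb{H}$ only by left translation on the second factor, so under this identification it fixes the $\D(W)$-factor; and since $\D(W)$ is flat over $R$ (its associated graded $\Sym_{\O(W)}\T(W)$ is, the filtration being positive and exhaustive), taking $\mb{H}$-invariants commutes with $\D(W)\otimes_R(-)$. Hence $\Ex{\D}(W)\cong\D(W\times\mb{H})^{\mb{H}}\cong\D(W)\otimes_R\D(\mb{H})^{\mb{H}}\cong\D(W)\otimes_R U(\fr{h})$ by Lemma \ref{DofTinvT}, compatibly with the filtration inherited from $\D_{\Ex{X}}$, and gluing over the affine opens of $U$ proves the first assertion.

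For the second assertion, $\tau_\ast\O_{\Ex{T^\ast X}}=\Sym_\O\Ex{\T}$ is the definition of the vector bundle attached to the locally free sheaf $\Ex{\T}$, so we must compare $\Sym_\O\Ex{\T}$ with $\gr\Ex{\D}$. First, $\gr\Ex{\D}$ is commutative: for $a,b\in F_1\Ex{\D}$ the commutator $[a,b]=ab-ba$ lies in $F_1\D_{\Ex{X}}$ and is $\mb{H}$-invariant, hence lies in $F_1\Ex{\D}$, so degree-$1$ elements commute in $\gr\Ex{\D}$. From $F_0\D_{\Ex{X}}=\O_{\Ex{X}}$, $F_1\D_{\Ex{X}}=\O_{\Ex{X}}\oplus\T_{\Ex{X}}$ and the Lemma of $\S\ref{Torsor}$ we obtain $\gr_0\Ex{\D}=(\xi_\ast\O_{\Ex{X}})^{\mb{H}}=\O$ and $\gr_1\Ex{\D}\cong(\xi_\ast\T_{\Ex{X}})^{\mb{H}}=\Ex{\T}$, and the universal property of $\Sym_\O$ then yields a morphism of graded $\O$-algebras $\Phi\colon\Sym_\O\Ex{\T}\to\gr\Ex{\D}$.

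It remains to check $\Phi$ is an isomorphism, which may be done over each $U\in\mathcal{S}_X$. By the first part, $\gr\Ex{\D}_{|U}\cong\gr\D_{|U}\otimes_R\gr U(\fr{h})\cong\Sym_{\O_U}\T_U\otimes_R S(\fr{h})$ — using that $S(\fr{h})$ is $R$-flat (as in the proof of Lemma \ref{DofTinvT}) so that $\gr$ commutes with the tensor product — while $(\ref{Cx})$ is split exact over $U$, so $\Ex{\T}_{|U}\cong\T_U\oplus(\O_U\otimes_R\fr{h})$ and hence $\Sym_{\O_U}\Ex{\T}_{|U}\cong\Sym_{\O_U}\T_U\otimes_R S(\fr{h})$ by base change of symmetric algebras. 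Composing these identifications with $\Phi_{|U}$ produces a graded $\O_U$-algebra endomorphism of $\Sym_{\O_U}\Ex{\T}_{|U}$; since $\Ex{\T}_{|U}$ is a locally free coherent module placed in degree $1$, this endomorphism is determined by its degree-$1$ component $\Ex{\T}_{|U}\to\Ex{\T}_{|U}$, and since the first-part isomorphism and the splitting of $(\ref{Cx})$ over $U$ both derive from the same trivialisation $\phi$, that component is the identity; hence $\Phi_{|U}$ is an isomorphism, and as the $U\in\mathcal{S}_X$ cover $X$, so is $\Phi$. The main obstacle is exactly this local-to-global step — verifying that the canonically constructed $\Phi$ agrees in degree $1$ over $U$ with the identification coming from the non-canonical trivialisation $\phi$ — together with the $R$-flatness bookkeeping ($\D(W)$, $\gr\D$, $S(\fr{h})$) needed to compute associated graded rings of tensor products over the general base ring $R$ rather than a field.
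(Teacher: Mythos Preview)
Your proof is correct and follows essentially the same approach as the paper: trivialise locally via $\D(W)\otimes_R\D(\mb{H})\tocong\D(W\times\mb{H})$ and Lemma~\ref{DofTinvT}, then build the global map $\Sym_\O\Ex{\T}\to\gr\Ex{\D}$ and verify it locally using the first part. The only presentational difference is that the paper constructs $\alpha$ directly from the inclusion $\T_{\Ex{X}}\hookrightarrow F_1\D_{\Ex{X}}$ (so it does not need your preliminary commutativity check) and packages the local verification as a single commutative diagram rather than your ``degree-$1$ endomorphism is the identity'' argument; these amount to the same thing.
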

\begin{proof} Let $U \in \mathcal{S}_X$, and let $U \times \mb{H}\tocong \xi^{-1}(U)$ be a trivialisation of $\xi$ over $U$. Using Lemma \ref{DofTinvT}, we obtain isomorphisms of filtered $R$-algebras
\[ \D(U) \otimes U(\fr{h}) \tocong \left(\D(U) \otimes \D(\mb{H})\right)^\mb{H}  \tocong \D(U\times \mb{H})^{\mb{H}} \tocong \Ex{\D}(U) \]
which are compatible with restrictions to Zariski open subschemes $V$ contained in $U$. Thus we obtain an isomorphism of sheaves of filtered $R$-algebras
\[ \eta : \D_{|U} \otimes U(\fr{h})  \tocong \Ex{\D}_{|U}.\]
Now the natural inclusion $\T_{\Ex{X}} \to F_1\D_{\Ex{X}}$ induces an $\O$-linear morphism $\Ex{\T} \to F_1 \Ex{\D}/ F_0 \Ex{\D}$ and therefore a morphism of graded $\O$-algebras
\[ \alpha : \Sym_{\O} \Ex{\T} \to \gr \Ex{\D}.\]
On the other hand, Lemma \ref{EnhCB} gives us an isomorphism of $\O_{|U}$-modules 
\[\theta : \T_{|U} \hspace{2mm} \oplus \hspace{2mm} \O_{|U} \otimes \fr{h} \tocong \Ex{\T}_{|U},\]
and these maps fit together into the commutative diagram
\[ \xymatrix{ \Gamma(U, \Sym_{\O} \Ex{\T}) \ar[r]^{\alpha(U)} & \Gamma(U, \gr \Ex{\D}) \\ \Sym_{\O(U)} \Ex{\T}(U) \ar[u] & \gr \Ex{\D}(U) \ar[u] \\ \Sym_{\O(U)}(\T(U) \oplus \O(U) \otimes \fr{h}) \ar[u]^{\Sym \theta(U)} & \gr ( \D(U) \otimes U(\fr{h}) ) \ar[u]_{\gr \eta(U)} \\ \Sym_{\O(U)} \T(U)\hspace{1mm} \otimes \hspace{1mm} S(\fr{h}) \ar[r]\ar[u] & \gr \D(U) \hspace{1mm}\otimes \hspace{1mm} \gr U(\fr{h}). \ar[u] }\]
The top two vertical maps are isomorphisms because $U$ is affine and Noetherian, and the bottom horizontal map is an isomorphism since $\gr \D \cong \Sym_{\O} \T$. The remaining vertical maps are isomorphisms by functoriality, and therefore $\alpha(U)$ is an isomorphism for any $U \in \mathcal{S}_X$. Since $\xi$ is locally trivial, it follows that $\alpha$ is an isomorphism. 

The equality $\tau_\ast \O_{\Ex{T^\ast X}} = \Sym_{\O} \Ex{\T}$ follows from the definition of the enhanced cotangent bundle $\tau : \Ex{T^\ast X} \to X$.\end{proof}

Note that even if $\xi$ is locally trivial, the sheaf $\Ex{\D}$ will in general \emph{not} be isomorphic to $\D \otimes U(\fr{h})$ since the torsor $\xi$ will not in general be globally trivial.

\begin{cor} Let $U \in \mathcal{S}_X$. Then
\be \item $\gr \Ex{\D}(U) \cong \Sym_{\O(U)} \Ex{\T}(U)$, and 
\item $\Ex{\D}(U)$ is an almost commutative $R$-algebra. \ee
\end{cor}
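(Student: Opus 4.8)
The plan is to deduce both parts by restricting to $U$ — over which the torsor $\xi$ is trivial, hence \emph{a fortiori} locally trivial — and then taking global sections in Proposition \ref{RelEnv} together with the structural results of \S\ref{Torsor} and \S\ref{EnhCB}. So fix $U \in \mathcal{S}_X$ once and for all, and note that every ingredient below may be applied to the restricted torsor $\xi^{-1}(U) \to U$.

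For part (a): Lemma \ref{EnhCB} says that $(\ref{Cx})$ restricts to a split exact sequence over $U$, so $\Ex{\T}$ is locally free of finite rank on $U$ and
\[\Ex{\T}(U) \;\cong\; \T(U) \oplus \bigl(\O(U) \otimes \fr{h}\bigr),\]
which is a finitely generated $\O(U)$-module, $\fr{h}$ being a finitely generated $R$-module because $\mb{H}$ is of finite type over the Noetherian ring $R$. Hence $\Sym_\O \Ex{\T}$ is quasi-coherent with $\Gamma(U, \Sym_\O \Ex{\T}) \cong \Sym_{\O(U)} \Ex{\T}(U)$. On the other hand each $F_m \Ex{\D}$ is a quasi-coherent $\O$-module (e.g. via the filtered isomorphism $\Ex{\D}_{|U} \cong \D_{|U} \otimes \U{h}$ of Proposition \ref{RelEnv}), so $H^1(U, F_{m-1}\Ex{\D}) = 0$ as $U$ is affine, and therefore passing to sections over $U$ commutes with forming the associated graded: $\Gamma(U, \gr \Ex{\D}) \cong \gr\bigl(\Ex{\D}(U)\bigr)$. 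Applying $\Gamma(U,-)$ to the graded isomorphism $\Sym_\O \Ex{\T} \tocong \gr \Ex{\D}$ of Proposition \ref{RelEnv} (equivalently, reading off the middle row of the commutative diagram in its proof) then yields $\gr \Ex{\D}(U) \cong \Sym_{\O(U)} \Ex{\T}(U)$.

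For part (b): I would simply check the conditions of the definition in \S\ref{ExAC}. The filtration $F_m \Ex{\D}(U) = (\xi_\ast F_m \D_{\Ex{X}})^{\mb{H}}(U)$ is positive and, by quasi-compactness of $U$, exhaustive, and $F_0 \Ex{\D}(U) = \O(U)$ by Lemma \ref{Torsor}; in particular $F_0 \Ex{\D}(U)$ is an $R$-subalgebra. By part (a), $\gr \Ex{\D}(U) \cong \Sym_{\O(U)} \Ex{\T}(U)$ is commutative, and it is a finitely generated $R$-algebra: $\O(U)$ is a finitely generated $R$-algebra since $U \in \mathcal{S}_X$, and $\Ex{\T}(U)$ is a finitely generated $\O(U)$-module by the displayed splitting above, so $\Sym_{\O(U)} \Ex{\T}(U)$ is finitely generated over $\O(U)$, hence over $R$. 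Therefore $\Ex{\D}(U)$ is an almost commutative $R$-algebra.

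I do not expect a genuine obstacle: the statement is essentially a global-sections reformulation of Proposition \ref{RelEnv} for the restricted torsor, and everything else is bookkeeping against the definitions of \S\ref{ExAC}. The only points needing a little care are the two sheaf-theoretic identifications used in (a) — namely $\Gamma(U, \gr \Ex{\D}) = \gr \Ex{\D}(U)$ (vanishing of $H^1$ on the affine $U$) and $\Gamma(U, \Sym_\O \Ex{\T}) = \Sym_{\O(U)} \Ex{\T}(U)$ (local freeness of $\Ex{\T}$) — together with the identification $F_0 \Ex{\D}(U) = \O(U)$; all three are already implicit in the proof of Proposition \ref{RelEnv}.
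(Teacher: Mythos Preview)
Your proposal is correct and follows essentially the same route as the paper: part (a) is exactly the content of the commutative diagram in the proof of Proposition \ref{RelEnv} (the paper just says ``this follows from the proof of the Proposition''), and your argument for (b) matches the paper's almost verbatim. You have simply made explicit the two sheaf-theoretic identifications (commuting $\Gamma(U,-)$ with $\gr$ and with $\Sym$) and the fact that $F_0\Ex{\D}(U)=\O(U)$, all of which the paper leaves implicit.
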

\begin{proof} (a) This follows from the proof of the Proposition.

(b) Since $\mb{H}$ is of finite type, its Lie algebra $\fr{h}$ has finite rank over $R$, so $\Ex{\T}(U)$ is a finitely generated projective $\O(U)$-module by Lemma \ref{EnhCB}. Hence $\gr \Ex{\D}(U)$ is a finitely generated commutative $\O(U)$-algebra. By definition of $\mathcal{S}_X$, $\O(U)$ is a finitely generated $R$-algebra, and it now follows from part (a) that $\gr \Ex{\D}(U)$ is a finitely generated commutative $R$-algebra.
\end{proof}

\subsection{Algebraic groups and homogeneous spaces} \label{AlgGps}
Let $\mb{G}$ be a connected, split reductive, affine algebraic group scheme over $R$. It is known that $\mb{G}$ is flat over $R$ \cite[\S II.1.1]{Jantzen}. Let $\mb{B}$ be a closed and flat Borel $R$-subgroup scheme, let $\mb{N}$ be its unipotent radical and let $\mb{H} := \mb{B}/\mb{N}$ be the \emph{abstract Cartan group}. 

Let $\Ex{\Fl}$ denote the homogeneous space $\mb{G} / \mb{N}$. Because $[\mb{B}, \mb{B}]$ is contained in $\mb{N}$, 
\[ b\mb{N}  \cdot g \mb{N} := gb \mb{N}, \quad\quad\quad b \in \mb{B}, \quad g \in \mb{G}\]
defines an action of $\mb{H}$ on $\Ex{\Fl}$, which commutes with the natural action of $\mb{G}$ on $\Ex{\Fl}$.

\begin{lem}\be \hfill
\item $\Ex{\Fl}$ and $\Fl := \mb{G} / \mb{B}$ are smooth separated schemes over $R$.
\item The action of $\mb{H}$ on $\Ex{\Fl}$ induces an isomorphism $\Ex{\Fl}/\mb{H}\cong\Fl$.
\item The natural projection $\xi \colon \Ex{\Fl}\rightarrow \Fl$ is a locally trivial $\mb{H}$-torsor.
\ee\end{lem}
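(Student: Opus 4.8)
The plan is to prove the three parts of the Lemma in order, reducing everything to standard facts about split reductive groups over a base and the fact that the relevant quotients exist and are well-behaved. Throughout I will work Zariski-locally on $\Fl$, exploiting that $\Fl$ is covered by the translates $w\mb{N}^- \mb{B}/\mb{B}$ of the big cell, i.e. by open subschemes isomorphic to affine space $\mathbb{A}^N_R$ where $N = \dim \mb{G}/\mb{B}$.

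\emph{Part (a).} Smoothness and separatedness of $\Fl = \mb{G}/\mb{B}$ over $R$ is part of the construction of the flag scheme for a split reductive group scheme, as in \cite[Part II]{Jantzen}: the quotient $\mb{G}/\mb{B}$ exists as a smooth projective (hence separated) $R$-scheme, covered by big cells isomorphic to $\mathbb{A}^N_R$. For $\Ex{\Fl} = \mb{G}/\mb{N}$, I would use that $\mb{N}$ is a closed flat (indeed smooth, with geometrically connected unipotent fibres) subgroup scheme of $\mb{G}$, so the fppf quotient $\mb{G}/\mb{N}$ exists as a scheme; since $\mb{B}/\mb{N} = \mb{H}$ is an affine (split torus) group scheme, the projection $\Ex{\Fl} \to \Fl$ realizes $\Ex{\Fl}$ as an $\mb{H}$-torsor over $\Fl$, which will be the content of part (c). Granting that torsor structure, $\Ex{\Fl} \to \Fl$ is smooth and separated (being, Zariski-locally on $\Fl$ by part (c), a product with the smooth separated affine $R$-scheme $\mb{H}$), and $\Fl$ is smooth and separated over $R$, so $\Ex{\Fl}$ is smooth and separated over $R$. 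This makes (a) essentially a consequence of (c) together with the corresponding statement for $\Fl$, so I would state (a) but defer its $\Ex{\Fl}$-half until (c) is in hand.

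\emph{Parts (b) and (c) together.} The real content is the assertion that $\xi\colon \Ex{\Fl} \to \Fl$ is a locally trivial $\mb{H}$-torsor with $\Ex{\Fl}/\mb{H} \cong \Fl$. The key step is a local trivialisation over the big cell and its translates. Over the big cell $U_0 := \mb{N}^-\mb{B}/\mb{B} \subseteq \Fl$, the multiplication map gives an isomorphism $\mb{N}^- \times \mb{B} \tocong \mb{N}^-\mb{B}$ (the open cell of $\mb{G}$), hence dividing by $\mb{N}$ on the right, an isomorphism $\mb{N}^- \times \mb{H} \tocong \xi^{-1}(U_0)$ which is $\mb{H}$-equivariant for the right $\mb{H}$-action, i.e. for the action defined in the text (left translation of the $\mb{H}$-factor). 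Since $\mb{N}^- \cong \mathbb{A}^N_R$, in particular $\O(U_0)$ is a finitely generated $R$-algebra, and $U_0$ is affine and trivialises $\xi$, so $U_0 \in \mathcal{S}_{\Fl}$; translating by representatives of Weyl group elements covers $\Fl$ by such opens. This simultaneously shows: $\xi$ is faithfully flat and locally of finite type, the $\mb{H}$-action respects $\xi$, the map $\Ex{\Fl}\times\mb{H} \to \Ex{\Fl}\times_{\Fl}\Ex{\Fl}$, $(x,h)\mapsto(x,xh)$, is an isomorphism (check locally, where it becomes $(\mb{N}^-\times\mb{H})\times\mb{H} \to (\mb{N}^-\times\mb{H})\times_{\mb{N}^-}(\mb{N}^-\times\mb{H})$, manifestly an isomorphism), and that $\xi$ is Zariski-locally trivial. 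For (b), $\Ex{\Fl}/\mb{H} \cong \Fl$ follows because $\xi$ is an fppf (indeed Zariski-locally trivial) $\mb{H}$-torsor, so $\Fl$ is the categorical and geometric quotient; alternatively, apply Lemma \ref{Torsor} to identify $\O_\Fl$ with $(\xi_\ast\O_{\Ex{\Fl}})^{\mb{H}}$ and note the quotient $\mb{G}/\mb{B}$ is by construction $\mb{G}/\mb{N}$ modulo the residual $\mb{H} = \mb{B}/\mb{N}$ action.

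\emph{Main obstacle.} The delicate point is the existence and scheme-theoretic good behaviour of $\mb{G}/\mb{N}$ over a general Noetherian base $R$: one must know that this fppf quotient is representable by a scheme and that the cell decomposition of $\mb{G}/\mb{B}$ lifts to one of $\mb{G}/\mb{N}$ compatibly. I would handle this by citing the standard construction of quotients of split reductive group schemes by closed flat subgroup schemes containing no reductive part issues here, $\mb{N}$ being unipotent with $\mb{B}/\mb{N}$ affine and invoking \cite[Part II, \S1]{Jantzen} for $\mb{G}/\mb{B}$ and its big cells, then building $\mb{G}/\mb{N}$ by gluing the $\xi^{-1}(U_0)$-type charts $\mb{N}^-\times\mb{H}$ along the $\mb{G}$-translates. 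Everything else is a routine, if slightly tedious, local verification on the cover by translated big cells.
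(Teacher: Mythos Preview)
Your proposal is correct and follows essentially the same route as the paper: both arguments reduce to the big-cell cover of $\Fl$ by Weyl translates of $\mb{N}^-\mb{B}/\mb{B}\cong\mathbb{A}^N_R$ and trivialise $\xi$ over each cell via the open-cell decomposition $\mb{N}^-\times\mb{B}\tocong\mb{N}^-\mb{B}$. The only differences are cosmetic: the paper first reduces to $R=\mathbb{Z}$ and dispatches your ``main obstacle'' (representability of $\mb{G}/\mb{N}$) by a direct citation to \cite[\S I.5.6(9)]{Jantzen} rather than by gluing charts, and it phrases the trivialisation in terms of local sections $\sigma_i:U_i\to\mb{G}$ of $\mb{G}\to\Fl$ (from \cite[\S II.1.10(2)]{Jantzen}) rather than via the product decomposition of the open cell, but these are equivalent formulations of the same construction.
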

\begin{proof} Because $\mb{G}$, $\mb{B}$ and $\mb{N}$ can be defined over $\mathbb{Z}$, we may assume that $R = \mathbb{Z}$.

(a) It follows from \cite[\S I.5.6(9)]{Jantzen} that $\Fl$ and $\Ex{\Fl}$ are schemes; since they are homogeneous spaces under the action of $\mb{G}$ it follows that they must be smooth.

(b) This is clear, when viewed on the level of the functor of points.

(c) By \cite[\S II.1.10(2)]{Jantzen}, we may cover $\Fl$ by open subschemes $U_i$ each isomorphic to $\mathbb{A}^{\dim \Fl}$ (the Weyl translates of the big Bruhat cell) and find morphisms $\sigma_i : U_i\rightarrow \mb{G}$ splitting the projection map $\mb{G}\rightarrow\Fl$. Composing these with the projection map $\mb{G}\rightarrow\Ex{\Fl}$ gives maps $\overline{\sigma_i}\colon U_i\rightarrow\Ex{\Fl}$ such that $\xi \circ \overline{\sigma_i}=\id_{U_i}$. Now $(u,b\mb{N}) \mapsto \sigma_i(u)b\mb{N}$ is the required $\mb{H}$-invariant isomorphism $U_i \times \mb{H} \tocong \xi^{-1}(U_i)$ and we may apply \cite[Proposition III.4.1]{Milne}.
\end{proof}
We call the homogeneous spaces $\Fl=\mb{G}/\mb{B}$ and $\Ex{\Fl} = \mb{G}/\mb{N}$ the \emph{flag variety} of $\mb{G}$ and the \emph{basic affine space} of $\mb{G}$, respectively. We will write $\Ex{\D}$ for the relative enveloping algebra of the $\mb{H}$-torsor $\xi : \Ex{\Fl}\to \Fl$, and also write $\tau  : \Ex{T^\ast \Fl} \to \Fl$ for the structure map of the enhanced cotangent bundle of the flag variety.

\subsection{The enhanced moment map}\label{EnhMom} Let $\fr{g},\fr{b},\fr{n}$ and $\fr{h}$ be the Lie algebras of $\mb{G}$, $\mb{B}$, $\mb{N}$ and $\mb{H}$, respectively. We can differentiate the natural $\mb{G}$-action on $\Ex{\Fl}$ to obtain an $R$-linear Lie homomorphism
 \[\varphi : \fr{g} \to \T_{\Ex{\Fl}}.\]
Since the $\mb{G}$-action commutes with the $\mb{H}$-action on $\Ex{\Fl}$, this map descends to an $R$-linear Lie homomorphism $\varphi : \fr{g} \to \Ex{\T}_\Fl$ and an $\O_{\Fl}$-linear morphism
\[ \varphi : \O_{\Fl} \otimes \fr{g} \longrightarrow \Ex{\T}_{\Fl}\]
of locally free sheaves on $\Fl$, which dualizes to give a morphism of vector bundles over $\Fl$ from the enhanced cotangent bundle to the trivial vector bundle of rank $\dim \fr{g}$:
\[ \Ex{T^\ast \Fl} \longrightarrow \Fl \times \fr{g}^\ast.\]
Here $\fr{g}^\ast := \Spec\left(\Sym_R\fr{g}\right)$ is being thought of as an $R$-scheme. Composing this morphism with the projection map onto the second factor gives the \emph{enhanced moment map}
\[ \beta : \Ex{T^\ast \Fl} \to \fr{g}^\ast\]
of $R$-schemes.  This morphism is also sometimes known as the \emph{Grothendieck-Springer resolution} of $\fr{g}^\ast$.

\begin{prop}\hfill
\be
\item The morphism of sheaves $\varphi : \O_{\Fl} \otimes \fr{g} \longrightarrow \Ex{\T}_{\Fl}$ is surjective.
\item The enhanced moment map is a projective morphism.
\ee
\end{prop}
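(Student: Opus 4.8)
The plan is to prove both parts by reduction to local statements on the base $\mathcal{S}_X$-cover of $\Fl$, and then to a computation on the big Bruhat cell, where everything is explicit.

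For part (a), recall from Lemma \ref{EnhCB} that the sheaf $\Ex{\T}_\Fl$ is locally free and sits in the exact sequence $0 \to \fr{h} \otimes \O_\Fl \to \Ex{\T}_\Fl \to \T_\Fl \to 0$, while $\varphi$ is obtained by differentiating the $\mb{G}$-action on $\Ex{\Fl}$. Since surjectivity of a morphism of quasi-coherent sheaves is local, it suffices to check it over each $U \in \mathcal{S}_\Fl$; by Lemma \ref{AlgGps} we may take the $U_i$ to be the Weyl translates of the big cell, each isomorphic to $\mathbb{A}^{\dim \Fl}$. On such a $U$, the splitting $\theta$ of Lemma \ref{EnhCB} identifies $\Ex{\T}(U)$ with $\T(U) \oplus \O(U) \otimes \fr{h}$. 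The composite $\fr{g} \xrightarrow{\varphi} \Ex{\T}(U) \twoheadrightarrow \T(U)$ is the infinitesimal action of $\fr{g}$ on $U$, which is surjective because $\mb{G}$ acts transitively on $\Fl$ (so the orbit map $\mb{G} \to \Fl$ is smooth and its differential is surjective; equivalently, at the base point the composite $\fr{g} \to \fr{g}/\fr{b}$ is onto and $U$ is an open neighbourhood of that point, stable under the cell's unipotent group whose Lie algebra maps onto $\T_{U}$ at every point). For the other factor, the $\fr{h}$-part: elements of $\fr{b} \subseteq \fr{g}$ acting on $\Ex{\Fl} = \mb{G}/\mb{N}$ generate exactly the vertical vector fields, i.e.\ the image of $\fr{h} \otimes \O$; more precisely $\varphi(\fr{b})$ surjects onto $\fr{h} \otimes \O(U)$ after projecting away the $\T(U)$ component, because the right $\mb{B}$-action on $\mb{G}/\mb{N}$ through $\mb{H}$ is exactly what the $j$-map records. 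Combining, $\varphi(\fr{g} \otimes \O(U))$ hits both summands and hence all of $\Ex{\T}(U)$ by a snake-lemma / Nakayama argument applied to the two-step filtration; since $U$ ranges over a cover, $\varphi$ is surjective. The main obstacle here is checking surjectivity onto the $\fr{h}$-factor cleanly: one must be careful that the anchor map $\sigma$ and the action $\varphi$ interact correctly, so I would phrase this via the commutative diagram of Lemma \ref{EnhCB} rather than chasing vector fields by hand.

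For part (b), the enhanced moment map $\beta : \Ex{T^\ast \Fl} \to \fr{g}^\ast$ is, by construction, the composite $\Ex{T^\ast \Fl} \to \Fl \times \fr{g}^\ast \to \fr{g}^\ast$ of the map dual to $\varphi$ with the second projection. The second projection $\Fl \times \fr{g}^\ast \to \fr{g}^\ast$ is projective because $\Fl = \mb{G}/\mb{B}$ is proper over $R$ (it is projective, being covered by translates of $\mathbb{A}^{\dim\Fl}$ and closed in a projective space; alternatively this is standard for flag schemes, cf.\ \cite[\S I.5.6, II.1.8]{Jantzen}), and base change of a projective morphism is projective. The map dual to $\varphi$, namely $\Ex{T^\ast\Fl} \to \Fl \times \fr{g}^\ast$ over $\Fl$, is a closed immersion: dualizing the \emph{surjection} $\varphi : \O_\Fl \otimes \fr{g} \twoheadrightarrow \Ex{\T}_\Fl$ of locally free sheaves gives an injection $\Ex{\T}_\Fl^{\vee} \hookrightarrow \O_\Fl \otimes \fr{g}^\ast$ with locally free cokernel (the kernel of $\varphi$ is a subbundle since the quotient is locally free), hence the corresponding map on total spaces / relative Spec of symmetric algebras is a closed immersion of vector bundles over $\Fl$. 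A closed immersion is projective, and the composite of two projective morphisms of Noetherian schemes is projective. Therefore $\beta$ is projective. The only subtlety is the standard bookkeeping that composites and base changes of projective morphisms are projective over a Noetherian base, which I would simply cite; and the observation that $\ker \varphi$ is a subbundle, which follows from part (a) together with the local freeness of $\Ex{\T}_\Fl$ from Lemma \ref{EnhCB}.

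In summary: part (a) is a local transitivity computation on the big cell, with the $\fr{h}$-summand as the only delicate point; part (b) is then formal — factor $\beta$ as (closed immersion dual to the surjection $\varphi$) followed by (projection off the proper scheme $\Fl$), and invoke stability of projectivity under composition and base change.
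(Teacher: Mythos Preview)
Your proof of part (b) is correct and matches the paper's argument: dualize the surjection from (a) to get a closed immersion $\Ex{T^\ast\Fl} \hookrightarrow \Fl \times \fr{g}^\ast$, then use that $\Fl$ is projective.

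For part (a), your argument can be made to work, but the paper's route is cleaner and sidesteps exactly the ``delicate point'' you flag. You split $\Ex{\T}(U) \cong \T(U) \oplus (\O(U)\otimes\fr{h})$ and check surjectivity onto each summand separately; the $\fr{h}$-summand then requires you to identify the vertical component of $\varphi(\fr{b})$ with the image of $j$, and your justification (``the right $\mb{B}$-action through $\mb{H}$ is what $j$ records'') conflates the left $\mb{G}$-action defining $\varphi$ with the right $\mb{H}$-action defining $j$, which coincide at the base point but need an argument elsewhere. The paper avoids all of this by observing that $\varphi$ is obtained from the infinitesimal $\mb{G}$-action on the \emph{homogeneous space} $\Ex{\Fl} = \mb{G}/\mb{N}$ itself (not on $\Fl$), so surjectivity on each geometric fibre is the standard fact that the orbit map of a transitive action has surjective differential (cited from Borel). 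Since both $\O_\Fl\otimes\fr{g}$ and $\Ex{\T}_\Fl$ are locally free, fibrewise surjectivity gives surjectivity of the sheaf map. This is a one-line argument and makes no use of the local splitting $\theta$ at all; in particular there is no need to treat the $\fr{h}$-direction separately.
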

\begin{proof} Since the space $\Ex{\Fl}$ is homogeneous, the geometric fibres of $\varphi$ are surjective by \cite[Proposition II.6.7]{Borel}. Part (a) follows because $\O_\Fl \otimes \fr{g}$ and $\Ex{\T}_\Fl$ are locally free.

The dual map $\Ex{T^\ast \Fl} \longrightarrow \Fl \times \fr{g}^\ast$ is a closed immersion by part (a). Part (b) now follows because $\Fl$ is a projective scheme by \cite[\S II.1.8]{Jantzen}.
\end{proof}

\subsection{Quantizing the moment map} \label{QMM}
The Lie homomorphism $\varphi : \fr{g} \to \Ex{\T}$ extends to a graded $R$-algebra homomorphism
\[\Sym(\varphi) : \Sym_R \fr{g} \to \Sym_{\O} \Ex{\T}\]
which can be viewed as the pull-back map on functions
\[ \beta^\sharp : \O(\fr{g}^\ast) \to  \tau_\ast \O_{\Ex{T^\ast \Fl}}\]
associated to the enhanced moment map $\beta : \Ex{T^\ast \Fl} \to \fr{g}^\ast$. It also extends to a filtered $R$-algebra homomorphism
\[U(\varphi): U(\fr{g}) \to \Ex{\D}\]
encoding the action of $\fr{g}$ on $\Ex{\Fl}$ by $\mb{H}$-invariant vector fields.

\begin{lem} The representation $U(\varphi) : U(\fr{g}) \to \Ex{\D}$ quantizes the enhanced moment map in the sense that $\gr U(\varphi)=\Sym(\varphi).$
\end{lem}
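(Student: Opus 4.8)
The plan is to reduce the asserted equality of graded maps to a check on algebra generators. Implicit in the statement are the Poincar\'e--Birkhoff--Witt isomorphism $\gr \U{g} \cong \Sym_R \fr{g}$ and the isomorphism $\gr \Ex{\D} \cong \Sym_{\O} \Ex{\T}$ furnished by the Proposition of \S\ref{RelEnv}; under these, $\gr U(\varphi)$ becomes a morphism of sheaves of graded $R$-algebras $\Sym_R \fr{g} \to \Sym_{\O} \Ex{\T}$, and what must be shown is that it coincides with $\Sym(\varphi)$.

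First I would note that $\Sym_R \fr{g}$ is generated as an $R$-algebra by its degree-one component $\fr{g}$, so it suffices to compare the two morphisms on the degree-zero part $R$ and on $\fr{g}$. On $R$ there is nothing to do, as both $\gr U(\varphi)$ and $\Sym(\varphi)$ are $R$-algebra homomorphisms and hence restrict to the structure map $R \to \O = F_0\Ex{\D}$ on degree-zero components. On $\fr{g}$ the map $\Sym(\varphi)$ is, by construction, just $\varphi \colon \fr{g} \to \Ex{\T}$.

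The only point that needs a little care is computing $\gr U(\varphi)$ in degree one. For this I would use three facts: that $F_1\U{g}/F_0\U{g} \cong \fr{g}$; that $U(\varphi)$ was built so that $x \in \fr{g} \subseteq F_1\U{g}$ is sent to $\varphi(x) \in \Ex{\T} \subseteq F_1\Ex{\D}$; and that the identification $\gr_1 \Ex{\D} \cong \Ex{\T}$ packaged into the Proposition of \S\ref{RelEnv} is precisely the one induced by the inclusion $\T_{\Ex{\Fl}} \hookrightarrow F_1\D_{\Ex{\Fl}}$. Combining these, the map induced by $U(\varphi)$ on $\gr_1$ sends the class of $x$ to $\varphi(x)$, which is exactly $\Sym(\varphi)$ in degree one. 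Since all of this is compatible with restriction to the opens of $\mathcal{S}_\Fl$ --- over which one may alternatively argue directly on sections via the trivialisation $\Ex{\D}_{|U} \cong \D_{|U}\otimes\U{h}$ --- this completes the argument. I do not expect a genuine obstacle: the whole content is the bookkeeping that both sides are computed through the same incarnation of $\gr\Ex{\D}$, together with the tautological fact that $U(\varphi)$ and $\Sym(\varphi)$ are the two enveloping-type extensions of the single Lie algebra map $\varphi$.
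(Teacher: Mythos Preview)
Your proposal is correct and follows essentially the same approach as the paper: identify $\gr\Ex{\D}$ with $\Sym_{\O}\Ex{\T}$ via Proposition~\ref{RelEnv}, observe that $\Sym_R\fr{g}$ is generated by $\fr{g}$, and then check that $\gr U(\varphi)$ and $\Sym(\varphi)$ agree on $\fr{g}$ because both are just $\varphi$ in degree one. The paper's proof is the same argument stated more tersely, without spelling out the degree-zero check or the local trivialisation remark.
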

\begin{proof} Using Proposition \ref{RelEnv} and Lemma \ref{AlgGps}(c), we can naturally identify $\gr \Ex{\D}$ with $\Sym_{\O} \Ex{\T}$. The restriction of $\gr U(\varphi)$ to $\fr{g}$ is the representation $\varphi$ by definition, so $\gr U(\varphi)$ and $\Sym(\varphi)$ agree on the generators $\fr{g}$ of $\Sym_R \fr{g}$. The result follows.
\end{proof}

In fact, $U(\varphi)$ quantizes $\beta$ in a stronger sense. There are natural Poisson structures on both $\fr{g}^\ast$ and on $\Ex{T^\ast \Fl}$ which are induced by the non-commutative algebras $U(\fr{g})$ and $\Ex{\D}$, and the morphism $\Sym(\varphi)$ is compatible with these structures.

\subsection{The Harish-Chandra homomorphism}
\label{HCmap}
Since our group $\mb{G}$ is split by assumption, we can find a Cartan subgroup $\mb{T}$ of $\mb{G}$ complementary to $\mb{N}$ in $\mb{B}$. Let $i : \mb{T} \tocong \mb{H}$ denote the natural isomorphism, and let $i : \fr{t} \tocong \fr{h}$ be the induced isomorphism between the corresponding Lie algebras. The adjoint action of $\mb{T}$ on $\fr{g}$ induces a root space decomposition
\[ \fr{g} = \fr{n} \oplus \fr{t} \oplus \fr{n}^+\]
and we will regard $\fr{n}$, the Lie algebra of $\mb{N}$, as being spanned by \emph{negative} roots. This decomposition induces an isomorphism of $R$-modules
\[\U{g} \cong U(\fr{n}) \otimes U(\fr{t}) \otimes U(\fr{n}^+)\]
and a direct sum decomposition
\[\U{g} = \U{t} \oplus \left( \fr{n} \U{g} + \U{g}\fr{n}^+\right).\]
Now the adjoint action of the group $\mb{G}$ induces a rational action of $\mb{G}$ on $\U{g}$ by algebra automorphisms, so we may consider the subring $\U{g}^{\mb{G}}$ of $\mb{G}$-invariants. We call the composite of the natural inclusion of $\U{g}^{\mb{G}} \hookrightarrow \U{g}$ with the projection $\U{g} \twoheadrightarrow \U{t}$ onto the first factor defined by this decomposition the \emph{Harish-Chandra homomorphism}:
\[\phi : \U{g}^{\mb{G}} \longrightarrow \U{t}.\]
Recall from $\S \ref{EnhCB}$ that the infinitesimal action of $\mb{H}$ on $\Ex{\Fl}$ is denoted by 
\[j : \fr{h} \to \Ex{\T}_\Fl\]
and that it extends to an $R$-algebra homomorphism $j : \U{h} \to \Ex{\D}$. Since $\mb{H}$ is commutative, Proposition \ref{RelEnv} shows that $j$ is a central embedding.

\begin{lem} Suppose that $R$ is an integral domain. Then there exists a commutative diagram
\[\xymatrix{ \U{g}^{\mb{G}} \ar[d]\ar[r]^{\phi} & \U{t} \ar[d]^{j \circ i} \\
\U{g} \ar[r]_{U(\varphi)} & \Ex{\D}
}\]
of filtered rings: the restriction of $U(\varphi)$ to $\U{g}^{\mb{G}}$ is equal to $j \circ i \circ \phi$.
\end{lem}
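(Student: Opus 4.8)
The plan is to reduce the identity $U(\varphi)|_{\U{g}^{\mb{G}}} = j \circ i \circ \phi$ to a computation in the associated graded rings, combined with a density argument that pins down the lift uniquely. Both sides of the claimed square are filtered ring homomorphisms from $\U{g}^{\mb{G}}$ into $\Ex{\D}(\Fl)$, so it suffices to show they agree; since $R$ is an integral domain and all the modules in sight are flat over $R$, I would first base-change to the fraction field of $R$ (and even to an algebraic closure), where the representation theory of the split group $\mb{G}$ is the classical one. The key geometric input is that the Harish-Chandra homomorphism $\phi$ is, on associated graded level, the Chevalley restriction map $\Sym_R(\fr{g})^{\mb{G}} \to \Sym_R(\fr{t})$, while $\gr U(\varphi) = \Sym(\varphi)$ by Lemma~\ref{QMM}; dually, the composite $\Sym(\varphi)|_{\Sym\fr{g}^{\mb{G}}}$ is the pullback along $\beta$ restricted to $\mb{G}$-invariant functions, and the Grothendieck--Springer resolution $\beta : \Ex{T^\ast\Fl} \to \fr{g}^\ast$ has the property that $\mb{G}$-invariant functions on $\fr{g}^\ast$ pull back through the $\fr{h}^\ast$-valued classifying map (the "$\fr{t}^\ast/W$" map). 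So on $\gr$ the two sides of the square already agree.

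Concretely, the first step is to pick a point in the open cell: restrict everything to $U \in \mathcal{S}_\Fl$ containing the base point $e\mb{B}$, where Proposition~\ref{RelEnv} gives $\Ex{\D}_{|U} \cong \D_{|U} \otimes U(\fr{h})$. Evaluating an $\mb{H}$-invariant vector field coming from $\xi \in \fr{g}$ at the identity coset, the component in the $U(\fr{h})$-factor is read off from the Iwasawa-type decomposition $\fr{g} = \fr{n} \oplus \fr{t} \oplus \fr{n}^+$: the vector field $\varphi(\xi)$, acting on functions near $e\mb{N} \in \Ex{\Fl}$, has "constant term along the fibre" equal to the $\fr{t}$-component of $\xi$ under this decomposition, which is exactly $\phi$ composed with the inclusion (after intersecting with $\U{g}^{\mb{G}}$). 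This is the $\fr{g}$-linear, i.e. first-order, version of the statement; the point is that both $U(\varphi)$ and $j\circ i\circ\phi$ are ring homomorphisms, so once they agree on a generating set of the \emph{source} the identity follows — but $\U{g}^{\mb{G}}$ is not generated by its degree-$\le 1$ part, so I cannot argue purely by generators and must instead use the filtration.

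The cleanest route is therefore: (1) observe both maps $U(\varphi)|_{\U{g}^{\mb{G}}}$ and $j\circ i\circ\phi$ land in the \emph{centre} $j(\U{h})$ of $\Ex{\D}$ (for $U(\varphi)$ this is because $\U{g}^{\mb{G}}$ maps to $\mb{G}$-invariant global differential operators on $\Ex{\Fl}$, which by a standard argument are $\mb{H}$-invariant vector fields that are also $\mb{G}$-invariant, hence fibrewise constant, hence in $j(\U{h})$); (2) on the subring $j(\U{h}) \subseteq \Ex{\D}(\Fl)$ the embedding $j$ is injective with $\gr j$ injective, so it suffices to check that $\gr U(\varphi)$ and $\gr(j\circ i\circ\phi)$ agree as maps $\Sym_R(\fr{g})^{\mb{G}} \to \Sym_R(\fr{h})$; (3) this last identity is the classical Chevalley-restriction-versus-Springer-map compatibility, which over the fraction field is standard and descends by flatness. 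Thus the difference $U(\varphi) - j\circ i\circ\phi$ is a filtered map into $j(\U{h})$ whose symbol vanishes in every degree, hence is zero.

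**Main obstacle.** The delicate point is step (1): showing that $U(\varphi)$ carries $\U{g}^{\mb{G}}$ \emph{into} $j(\U{h})$, rather than merely that its symbol does. Over a field this is the classical fact that $Z(\U{g})$ acts on $\Ex{\Fl} = \mb{G}/\mb{N}$ through the Cartan, but here we are over a general integral domain $R$ and $\U{g}^{\mb{G}}$ need not coincide with the centre of $\U{g}$; one must argue directly that a $\mb{G}$-invariant section of $\Ex{\D}(\Fl) = (\xi_\ast\D_{\Ex{\Fl}})^{\mb{H}}$, being also $\mb{G}$-invariant, must be a section of $\gr\Ex{\D}$-degree... — no, rather, one localises to the open cell, writes $\Ex{\D}_{|U} = \D_{|U}\otimes\U{h}$, notes $\mb{G}$-invariance forces the $\D_{|U}$-component to be a scalar (since $\mb{G}$ acts transitively and $\D(U)^{?}$...), and concludes the section lies in $1\otimes\U{h} = j(\U{h})$. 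Making this transitivity/invariance argument rigorous integrally, without circularity, is where the real work is; everything after that is a symbol computation. I expect to handle it by passing to $K = \mathrm{Frac}(R)$ first — where it is classical — and then invoking flatness of all the sheaves over $R$ together with the fact that $j(\U{h})$ is an $R$-pure submodule of $\Ex{\D}(\Fl)$, so an element of $\Ex{\D}(\Fl)$ that lies in $j(\huhnK{}\,)$ after $\otimes_R K$ already lies in $j(\U{h})$.
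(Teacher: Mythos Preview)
Your reduction step---passing to the fraction field (and on to an algebraic closure) using that all objects are $R$-torsion-free by PBW---is exactly what the paper does. The paper then simply stops: it cites the classical literature (Gelfand--Kirillov in characteristic zero, and \cite[Lemma 3.1.5(b)]{BMR1} in general) for the statement over an algebraically closed field. So your overall shape is fine, but you go on to attempt a direct proof of the cited fact, and that attempt contains a real gap.

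The gap is the inference in steps (2)--(3). From $\gr\big(U(\varphi)|_{\U{g}^{\mb{G}}}\big) = \gr(j\circ i\circ\phi)$ as maps $\Sy{g}^{\mb{G}}\to \Sy{h}$ you cannot conclude that the filtered maps agree, even granting step (1) that both land in $j(\U{h})$. Two filtered ring homomorphisms into a positively filtered ring can have identical associated graded maps without being equal: take $A=B=k[x]$ with the degree filtration, $f=\id$, $g(x)=x+1$. Your sentence ``the difference\ldots has symbol zero in every degree, hence is zero'' is exactly this fallacy: $\gr f=\gr g$ only gives $(f-g)(F_iA)\subseteq F_{i-1}B$, which does not force $f=g$.

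You actually had the correct direct argument in your second paragraph and abandoned it for the wrong reason. The point is not to generate $\U{g}^{\mb{G}}$ in low degree; rather, for \emph{any} $z\in\U{g}^{\mb{G}}$ one uses the very definition of $\phi$ to write $z=\phi(z)+z'$ with $z'\in \fr{n}\,\U{g}+\U{g}\,\fr{n}^+$, and then checks at the base point $e\mb{N}\in\Ex{\Fl}$ (over the algebraically closed field, using the local trivialisation of Proposition~\ref{RelEnv}) that $U(\varphi)(z)$ and $j(i(\phi(z)))$ induce the same operator, after which $\mb{G}$-invariance propagates the identity across all of $\Fl$. This is essentially what the cited references do; no passage to the associated graded is needed, and your ``main obstacle'' (step (1)) is in fact a consequence of this computation rather than a prerequisite for it.
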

\begin{proof} Since the objects in this diagram have no $R$-torsion by the Poincar\'e-Birkhoff-Witt Theorem, we may replace $R$ by an algebraic closure its field of fractions. But in this case this result is well-known: when the characteristic of $R$ is zero this was first observed in \cite[\S 9]{GK}. See \cite[Lemma 3.1.5(b)]{BMR1} for the general case.\end{proof}

\section{Completions, deformations and characteristic varieties}

We refer the reader to \cite[\S 0.5.3.1]{EGAI} for the definition of \emph{coherent $\mathcal{A}$-modules} over a sheaf $\mathcal{A}$ of not necessarily commutative rings over a topological space, and denote the abelian category of coherent $\mathcal{A}$-modules by $\coh(\mathcal{A})$. Recall that the sheaf $\mathcal{A}$ is said to be \emph{coherent} if $\mathcal{A}$ is itself a coherent $\mathcal{A}$-module. We begin by establishing some generalities on coherent modules and $I$-adic completions, following \cite[\S 3]{Berth}.

\subsection{Coherently $\msD$-affine spaces}\label{CohDaffine}
Let $X$ be a topological space and let $\msD$ be a coherent sheaf of rings on $X$. 

\begin{defn}  We say that $X$ is \emph{coherently $\msD$-acyclic} if every coherent $\msD$-module $\M$ is $\Gamma(X,-)$-acyclic and has the property that $\M(X)$ is a coherent $\msD(X)$-module. 

We say that $X$ is \emph{coherently $\msD$-affine} if $X$ is coherently $\msD$-acyclic and every coherent $\msD$-module is generated by its global sections as a $\msD$-module.

If $\mathcal{S}$ is a base for $X$, we say that $\mathcal{S}$ is \emph{coherently $\msD$-acyclic}, respectively \emph{coherently $\msD$-affine}, if for all $U\in\mathcal{S}$, $U$ is coherently $\msD_{|U}$-acyclic, respectively coherently $\msD_{|U}$-affine.
\end{defn}

A classic example of a space that is coherently $\msD$-affine is obtained by taking $X$ to be a smooth affine complex algebraic variety, and $\msD$ the sheaf of differential operators on $X$. 

The main reason for these definitions comes from the following

\begin{prop} Let $X$ be coherently $\msD$-acyclic. Then $\ker \Gamma(X,-)$ is a Serre subcategory of $\coh(\msD)$, and $\Gamma(X,-)$ and $\msD \otimes_{\msD(X)} -$ induce mutually inverse equivalences of abelian categories between $\coh(\msD)/\ker \Gamma(X,-)$ and the category $\coh(\msD(X))$ of coherent $\msD(X)$-modules. Moreover if $X$ is coherently $\msD$-affine then $\ker \Gamma(X,-)=0$. 
\end{prop}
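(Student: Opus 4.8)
Write $\Gamma := \Gamma(X,-)$ and $\Loc := \msD\otimes_{\msD(X)}-$. The plan is to exhibit $(\Loc,\Gamma)$ as an adjoint pair between $\coh(\msD(X))$ and $\coh(\msD)$, prove that the unit of this adjunction is an isomorphism, and then extract the whole statement from the standard theory of Serre quotient categories. First I would check that both functors are well defined on coherent modules: the coherent $\msD$-acyclicity hypothesis says exactly that $\Gamma$ carries $\coh(\msD)$ into $\coh(\msD(X))$, while for $N\in\coh(\msD(X))$ a finite presentation $\msD(X)^{m}\to\msD(X)^{n}\to N\to 0$ gives, after applying the right exact functor $\Loc$, an exact sequence $\msD^{m}\to\msD^{n}\to\Loc(N)\to 0$ exhibiting $\Loc(N)$ as the cokernel of a morphism of coherent $\msD$-modules, hence coherent since $\msD$ is a coherent sheaf of rings. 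The adjunction $\Hom_{\msD}(\Loc(N),\M)\cong\Hom_{\msD(X)}(N,\Gamma(\M))$ is the standard global-sections adjunction (tensor--hom composed with sheafification, cf.\ \cite[\S 3]{Berth}). Finally, the vanishing of $H^{i}(X,-)$ on $\coh(\msD)$ for $i\geq 1$ makes $\Gamma$ \emph{exact} on $\coh(\msD)$; applying it to $0\to\M'\to\M\to\M''\to 0$ yields a short exact sequence of global sections, which at once shows that $\ker\Gamma$ is closed under subobjects, quotients and extensions, i.e.\ is a Serre subcategory of $\coh(\msD)$.

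The heart of the proof is that the unit $\eta_{N}\colon N\to\Gamma(\Loc(N))$ is an isomorphism for every $N\in\coh(\msD(X))$. This is clear for $N=\msD(X)$, as $\Loc(\msD(X))=\msD$ and $\Gamma(\msD)=\msD(X)$, and hence for every finite free $\msD(X)$-module. For general $N$, pick a finite presentation $\msD(X)^{m}\to\msD(X)^{n}\to N\to 0$; applying $\Loc$ and then $\Gamma$ (using exactness of $\Gamma$ on $\coh(\msD)$, applied to the coherent modules that occur) produces a commutative diagram with exact rows
\[\msD(X)^{m}\to\msD(X)^{n}\to N\to 0,\qquad \Gamma\Loc(\msD(X)^{m})\to\Gamma\Loc(\msD(X)^{n})\to\Gamma\Loc(N)\to 0,\]
whose two left vertical maps $\eta_{\msD(X)^{m}},\eta_{\msD(X)^{n}}$ are isomorphisms; the four lemma then forces $\eta_{N}$ to be an isomorphism. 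Thus $\Loc$ is fully faithful.

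For the counit $\varepsilon_{\M}\colon\Loc(\Gamma(\M))\to\M$ with $\M\in\coh(\msD)$, the triangle identity gives $\Gamma(\varepsilon_{\M})\circ\eta_{\Gamma(\M)}=\id$, and $\eta_{\Gamma(\M)}$ is an isomorphism by the previous paragraph, so $\Gamma(\varepsilon_{\M})$ is an isomorphism; since $\Gamma$ is exact on $\coh(\msD)$, it carries $\ker\varepsilon_{\M}$ and $\coker\varepsilon_{\M}$ to $\ker\Gamma(\varepsilon_{\M})=0$ and $\coker\Gamma(\varepsilon_{\M})=0$, so both lie in $\ker\Gamma$. Now $\Gamma$ kills $\ker\Gamma$ and hence factors through an exact functor $\bar\Gamma\colon\coh(\msD)/\ker\Gamma\to\coh(\msD(X))$, and $Q\circ\Loc$ (with $Q$ the quotient functor) is a quasi-inverse: $\bar\Gamma\circ(Q\circ\Loc)=\Gamma\circ\Loc\cong\id$ via the unit, while $Q(\varepsilon)\colon Q\circ\Loc\circ\Gamma\to Q$ is a natural isomorphism because $\ker\varepsilon_{\M},\coker\varepsilon_{\M}\in\ker\Gamma$, giving $(Q\circ\Loc)\circ\bar\Gamma\cong\id$. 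This is the asserted equivalence of abelian categories. Finally, if $X$ is coherently $\msD$-affine then $\varepsilon_{\M}$ is surjective for every $\M\in\coh(\msD)$, so $\Gamma(\M)=0$ forces $\Loc(\Gamma(\M))=0$ and then $\M=0$; hence $\ker\Gamma=0$.

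I expect the main obstacle to be the unit isomorphism: reducing it to the (trivial) free case along a finite presentation requires the right exactness of $\Gamma$ on $\coh(\msD)$, i.e.\ the vanishing of $H^{1}(X,-)$ on coherent $\msD$-modules, which is precisely the acyclicity hypothesis and is exactly the point that must be verified by hand in each concrete geometric situation.
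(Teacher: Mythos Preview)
Your proof is correct and follows essentially the same route as the paper's: both set up the $(\Loc,\Gamma)$ adjunction on coherent modules, use exactness of $\Gamma$ on $\coh(\msD)$ to see that $\ker\Gamma$ is Serre, and then prove the unit $\eta_N\colon N\to\Gamma\Loc(N)$ is an isomorphism by reducing to the free case via a finite presentation and the Five Lemma. The only difference is cosmetic: the paper packages the passage from ``unit is an isomorphism'' to ``equivalence modulo $\ker\Gamma$'' into a citation of \cite[Lemma~2.4]{BeiGinz}, whereas you write out this step directly via the triangle identity and the quotient category.
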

\begin{proof} Let $\Gamma := \Gamma(X,-)$, $D := \msD(X)$ and $\Loc := \msD \otimes_{D} -$. Since $X$ is coherently $\msD$-acyclic, $\Gamma$ is exact on $\coh(\msD)$ which implies that $\ker \Gamma$ is closed under subquotients and extensions.

Now $(\Loc, \Gamma)$ is an adjuction between the category of all $D$-modules, and the category of all sheaves of $\msD$-modules. Since $\Loc$ is right exact and $\msD$ is coherent, $\Loc$ sends $\coh(D)$ to $\coh(\msD)$. Since $X$ is coherently $\msD$-acyclic, $(\Loc, \Gamma)$ restricts to an adjunction between $\coh(D)$ and $\coh(\msD)$ and $\Gamma$ is exact. By \cite[Lemma 2.4]{BeiGinz} it thus suffices for the second part to prove that  the counit $M \stackrel{\eta_M}{\longrightarrow} \Gamma(\Loc(M))$ is an isomorphism for $M \in \coh(D)$.  

Since $\Loc$ is right exact and $\Gamma$ is exact on $\coh(\msD)$, the composite $\Gamma\circ \Loc$ is also right exact. Since $M$ is coherent, it has a finite presentation; because $\eta_D$ is an isomorphism by definition, the Five Lemma now implies that $\eta_M$ is also an isomorphism as required. 

The final part is immediate; if $\M$ is generated by global sections then $\Gamma(\M)=0$ if and only if $\M=0$.
\end{proof}

\subsection{Completions}\label{Compl}
Recall from \cite[\S II.9]{Hart} that inverse limits exist in the category of sheaves of abelian groups over any topological space.

\begin{defn} Suppose that $\msD$ contains a constant central subsheaf $Z$, which contains an ideal $I$. We define $\hc{D} := \invlim \msD / I^n \msD$ be the \emph{$I$-adic completion} of $\msD$.
\end{defn}

This is a sheaf of $Z$-algebras on $X$ and $\Gamma(U, \hc{D}) = \invlim \Gamma(U, \msD/ I^n \msD)$ for any open subset $U$ of $X$. We will use the following elementary result repeatedly.

\begin{lem} Suppose $\Gamma(X,-)$ is exact on $\coh(\msD)$. Then for any coherent $\msD$-module $\M$ and any ideal $J$ in $Z$ such that $J\cdot \msD(X)$ is finitely generated, there is a natural isomorphism of $\msD(X)$-modules
\[ (\M / J \M)(X) \cong \M(X) / J \M(X).\]
\end{lem}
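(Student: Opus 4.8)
The plan is to reduce the statement to the commutative-algebra fact that $J$-adic completion (equivalently, tensoring with $Z(X)/J$) commutes with finite colimits, using the exactness of $\Gamma(X,-)$ on $\coh(\msD)$ to transport the sheaf-theoretic data to the level of global sections.

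First I would pick a finite presentation of $\M$ as a $\msD$-module. Since $X$ is coherently $\msD$-acyclic (this is the standing hypothesis implied by the exactness of $\Gamma(X,-)$ together with the coherence setup of \S\ref{CohDaffine}), the Proposition of \S\ref{CohDaffine} gives that $\Gamma(X,-)$ is exact on $\coh(\msD)$ and that $\M(X)$ is a coherent $\msD(X)$-module. Choose an exact sequence of coherent $\msD$-modules
\[ \msD^a \longrightarrow \msD^b \longrightarrow \M \longrightarrow 0. \]
Applying $\Gamma(X,-)$, which is exact, yields an exact sequence of $\msD(X)$-modules $\msD(X)^a \to \msD(X)^b \to \M(X) \to 0$; in particular $\M(X) \cong \coker(\msD(X)^a \to \msD(X)^b)$.

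Next I would compute both sides of the claimed isomorphism. For the left-hand side, observe that $\M \mapsto \M/J\M$ is right exact on sheaves, so applying it to the presentation above and then taking $\Gamma(X,-)$ — again exact — gives that $(\M/J\M)(X)$ is the cokernel of the map $(\msD/J\msD)(X)^a \to (\msD/J\msD)(X)^b$. Here I use the base case $\M = \msD$: since $J\cdot\msD(X)$ is finitely generated, the short exact sequence $0 \to J\msD \to \msD \to \msD/J\msD \to 0$ has exact global sections, and the image of $\Gamma(X, J\msD) \to \Gamma(X,\msD) = \msD(X)$ is exactly $J\msD(X)$ — this last point needs a small argument, writing $J = (z_1,\dots,z_r)$ and noting $J\msD$ is the image of $\msD^r \xrightarrow{(z_i)} \msD$, whose global sections surject (by exactness of $\Gamma$) onto the image $J\msD(X)$ of $\msD(X)^r \to \msD(X)$. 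Hence $(\msD/J\msD)(X) \cong \msD(X)/J\msD(X)$. For the right-hand side, $\M(X)/J\M(X)$ is likewise the cokernel of $(\msD(X)/J\msD(X))^a \to (\msD(X)/J\msD(X))^b$ since $(-)\otimes_{Z(X)} Z(X)/J$ is right exact. The two cokernels are computed from the same map, so they agree, and one checks the isomorphism is the natural one induced by the quotient maps.

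The main obstacle is the base case, i.e. the identification $\Gamma(X, J\msD) \cong J\cdot\msD(X)$ inside $\msD(X)$: this is where the finite generation hypothesis on $J\cdot\msD(X)$ and the exactness of $\Gamma(X,-)$ are both genuinely used, and one must be careful that $J\msD$ (the subsheaf generated by $J$) really is the image sheaf of the evaluation map from $\msD^r$, not merely its sheafification issue — but since $\msD^r \to \msD$ is a map of coherent sheaves its image is coherent and $\Gamma$ is exact on it, so the subtlety evaporates. Everything else is formal manipulation of right-exact functors and finite presentations, so once the base case is nailed down the rest follows by the Five Lemma as in the proof of the Proposition of \S\ref{CohDaffine}.
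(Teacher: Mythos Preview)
Your argument has a gap at the very first step: you assume a global finite presentation $\msD^a \to \msD^b \to \M \to 0$ exists, but the hypothesis of the lemma is only that $\Gamma(X,-)$ is exact on $\coh(\msD)$. Global generation of $\M$ is the \emph{affine} condition, not the acyclic one, and neither follows from mere exactness of $\Gamma$. The Proposition of \S\ref{CohDaffine} you invoke does not produce such a presentation either --- it only identifies $\coh(\msD(X))$ with a quotient of $\coh(\msD)$.

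The fix is already contained in your own ``base case'' paragraph, and is exactly the paper's proof: the argument you give for $\msD$ works verbatim for arbitrary $\M$. Choose $z_1,\ldots,z_m \in J$ generating $J\cdot\msD(X)$. Since $Z$ is central, multiplication by the $z_i$ gives a map of coherent $\msD$-modules $\M^m \xrightarrow{(z_1,\ldots,z_m)} \M$, and its image is $J\M$ (any $z\in J$ lies in $\sum z_i\msD(X)$, so $z\cdot m \in \sum z_i\M(U)$ for every local section $m$). Applying the exact functor $\Gamma(X,-)$ to $\M^m \to \M \to \M/J\M \to 0$ immediately gives $(\M/J\M)(X) \cong \M(X)/J\M(X)$. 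No presentation of $\M$ is needed, and the detour through $\msD^a \to \msD^b$ can be dropped entirely.
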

\begin{proof} Choose $z_1,\ldots,z_m \in J$ that generate $J \cdot \msD(X)$. Since $\Gamma(X, -)$ is exact on $\coh(\msD)$ by assumption and $Z$ is central in $\msD$, the exact sequence
\[ \M^m \stackrel{(z_1,\ldots,z_m)}{\xrightarrow{\hspace{1cm}}} \M \to \M / J \M \to 0\]
in $\coh(\msD)$ gives rise to the exact sequence
\[ \M(X)^m \stackrel{(z_1,\ldots,z_m)}{\xrightarrow{\hspace{1cm}}} \M(X) \to (\M / J \M)(X) \to 0\]
and the result follows.
\end{proof}
If $\msD(X)$ is left Noetherian, then $I^n \cdot \msD(X)$ is finitely generated for all $n \geq 1$. Lemma \ref{Compl} now implies that $\hc{D}(X)$ is just the $I$-adic completion of $\msD(X)$:
\[\hc{D}(X) = \invlim (\msD / I^n \msD)(X) \cong \invlim \msD(X) / I^n\msD(X).\] 
\begin{defn} If $\mathcal{S}$ is a base for $X$ we say that $\msD$ is \emph{Noetherian on $\mathcal{S}$} if $\msD(U)$ is left Noetherian for all $U \in \mathcal{S}$.
\end{defn}
\subsection{The functor $M \mapsto M^\Delta$}
\label{LocAffine}
Throughout $\S\ref{LocAffine}-\ref{CohMcomplete}$ we fix a base $\mathcal{S}$ for $X$ such that $X \in \mathcal{S}$, and assume that:
\begin{itemize}
\item $\msD$ is Noetherian on $\mathcal{S}$,
\item $\mathcal{S}$ is coherently $\msD$-acyclic, and
\item $\hc{D}$ is coherent.
\end{itemize}
Let $D := \msD(X)$. We define a functor $M \mapsto M^{\Delta}$ from $\hD := \widehat{\msD(X)}$-modules to sheaves of $\hc{D}$-modules by the formula
\[ M^{\Delta} := \invlim \msD \otimes_{D} M / I^n M.\]

Since we are assuming that $X \in \mathcal{S}$, the algebra $D$ is left Noetherian so $\coh(D)$ is simply the category of all finitely generated $D$-modules.
\begin{prop} \hfill
\be
\item The functor $M \mapsto M^{\Delta}$ is exact on $\coh(D)$.
\item $\hD^{\Delta} = \hc{D}$.
\item $M^\Delta$ is a coherent $\hc{D}$-module whenever $M$ is a finitely presented $\hD$-module.
\item If $u : \hc{D}^r \to \hc{D}^s$ is a map of $\hc{D}$-modules, then $\coker(u) \cong M^\Delta$ for some finitely presented $\hD$-module $M$.
\ee
\end{prop}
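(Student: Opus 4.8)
The plan is to settle part (b) by a direct computation, to obtain (c) and (d) formally from (a) and (b), and to concentrate the real work on (a).

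For (b): since $I$ is central and $D=\msD(X)$ is left Noetherian, $\hD/I^n\hD\cong D/I^nD$, whence $\msD\otimes_D\hD/I^n\hD\cong\msD\otimes_D D/I^nD=\msD/I^n\msD$; taking inverse limits gives $\hD^\Delta=\invlim\msD/I^n\msD=\hc{D}$. Next, $(-)^\Delta$ is additive and \emph{right exact} — this is the easy half of (a), needing only that $R^1\invlim$ vanishes on inverse systems of modules with surjective transition maps — so (c) follows by applying $(-)^\Delta$ to a finite presentation $\hD^r\to\hD^s\to M\to0$: by (b) one gets an exact sequence $\hc{D}^r\to\hc{D}^s\to M^\Delta\to0$, and $M^\Delta$ is coherent because $\hc{D}$ is coherent by hypothesis. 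For (d), the discussion in $\S\ref{Compl}$ gives $\hc{D}(X)=\hD$, so $\Hom_{\hc{D}}(\hc{D}^r,\hc{D}^s)=\Hom_{\hD}(\hD^r,\hD^s)$; lifting $u$ to a map $u_0\colon\hD^r\to\hD^s$ and setting $M:=\coker(u_0)$, which is a finitely presented $\hD$-module, one checks $(u_0)^\Delta=u$ by comparing global sections — recall that $\Gamma(X,(-)^\Delta)$ computes $I$-adic completion, hence is the identity on finitely generated $\hD$-modules — and right exactness of $(-)^\Delta$ then yields $\coker(u)\cong M^\Delta$.

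The substance is the left exactness half of (a). I would check exactness of $0\to(M')^\Delta\to M^\Delta\to(M'')^\Delta\to0$ on the sections over each $U\in\mathcal{S}$. Step one is a local comparison: the equivalence of Proposition \ref{CohDaffine}, applied over $U$ to $\msD_{|U}$ and to $\msD_{|U}/I^n\msD_{|U}$, identifies $\Gamma(U,\msD\otimes_D N)$ with $\msD(U)\otimes_D N$ for finitely generated $D/I^nD$-modules $N$, and shows that $\Gamma(U,-)$ is exact on coherent $\msD_{|U}/I^n\msD_{|U}$-modules; hence $M^\Delta(U)\cong\invlim\msD(U)\otimes_D M/I^nM$. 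Step two: to the given short exact sequence associate, for each $n$, the exact sequence $0\to M'_n\to M/I^nM\to M''/I^nM''\to0$ with $M'_n:=M'/(M'\cap I^nM)$; apply the right exact functor $\msD_{|U}\otimes_D(-)$ and then the exact functor $\Gamma(U,-)$ to produce an exact sequence
\[0\to K_n\to\msD(U)\otimes_D M'_n\to\msD(U)\otimes_D M/I^nM\to\msD(U)\otimes_D M''/I^nM''\to0,\]
where $K_n=\Gamma(U,\mathcal{K}_n)$ and $\mathcal{K}_n:=\ker(\msD_{|U}\otimes_D M'_n\to\msD_{|U}\otimes_D M/I^nM)$ is, by the long exact $\Tor$-sequence, a quotient of $\Tor_1^D(\msD_{|U},M''/I^nM'')$. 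Step three: pass to the inverse limit. The systems of tensor products have surjective transition maps, so their $R^1\invlim$ vanish; the pro-isomorphism $\{M'_n\}\cong\{M'/I^nM'\}$ gives $\invlim\msD(U)\otimes_D M'_n\cong(M')^\Delta(U)$; and what remains is to show $\invlim K_n=R^1\invlim K_n=0$.

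This last point — the pro-vanishing of $\{K_n\}$, equivalently of the inverse systems of $\Tor_1^D$-modules built from the $I$-adic filtration of a finitely generated $\hD$-module — is the main obstacle. It is the sheaf-theoretic shadow of the flatness of $I$-adic completion, and it is exactly here that the standing hypotheses ($\msD$ Noetherian on $\mathcal{S}$; $I$ central) are needed, through the Artin--Rees lemma for the central ideal $I\hD$ of the Noetherian ring $\hD$, which delivers at once the pro-isomorphism $\{M'/I^nM'\}\cong\{M'_n\}$, the surjectivity of the transition maps used above, and the pro-triviality of the $\Tor$-systems; feeding these into the inverse-limit sequences completes the proof of (a), and (c) and (d) then follow as explained.
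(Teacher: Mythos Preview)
Your arguments for (b), (c), and (d) are correct and essentially coincide with the paper's.

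For (a), however, there is a genuine gap. You assert that the Artin--Rees lemma ``delivers\ldots the pro-triviality of the $\Tor$-systems'' $\{\Tor_1^D(\msD_{|U}, M''/I^nM'')\}_n$. Artin--Rees does give the pro-isomorphism $\{M'/I^nM'\}\cong\{M'_n\}$ and hence the pro-triviality of $\{\Tor_i^D(D/I^n, N)\}_n$ for $i\ge 1$ and $N$ finitely generated, but it does \emph{not} control $\{\Tor_1^D(P, M''/I^nM'')\}_n$ for an arbitrary $D$-module $P$. For instance, take $D=k[x]$, $I=(x)$, $M''=D$, $P=k[x,x^{-1}]/k[x]$: then $\Tor_1^D(P,D/(x^n))=\ker(x^n\colon P\to P)\cong k^n$, and the transition maps (multiplication by $x$) are surjective, so neither $\invlim$ nor $R^1\invlim$ of this tower vanishes. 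Since nothing in the standing hypotheses forces $\msD(U)$ to be flat over $D$, your reduction to Artin--Rees does not close.

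The paper sidesteps this entirely. From Proposition~\ref{CohDaffine} (and the hypothesis $X\in\mathcal{S}$), the functor $\msD\otimes_D-$ is an equivalence from $\coh(D)$ onto its essential image in $\coh(\msD)$, hence is \emph{exact} on $\coh(D)$ --- not merely right exact. Applied to $0\to M'_n\to M/I^nM\to M''/I^nM''\to 0$, this produces genuinely short exact sequences of sheaves with no $\Tor$ correction at all; the left-hand tower has surjective transition maps, so Mittag--Leffler gives exactness of the inverse limits, and Artin--Rees is then used only for the innocuous identification $\invlim\msD\otimes_D M'_n\cong (M')^\Delta$. You did invoke Proposition~\ref{CohDaffine} in Step~1, but only to extract exactness of $\Gamma(U,-)$ and the identification of local sections; the decisive consequence you missed is the exactness of $\msD\otimes_D-$ itself, which makes the $\Tor$ terms disappear rather than merely pro-vanish.
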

\begin{proof}
(a) Let $0 \to A \to B \to C \to 0$ be an exact sequence of coherent $D$-modules. Since $X$ is coherently $\msD$-affine, $\msD \otimes_{D} -$ is exact on $\coh( D )$ by Proposition \ref{CohDaffine}. Therefore the sequence of towers of $\msD$-modules
\[0 \to \left[ \msD \otimes_{D} \frac{A + I^n B}{I^nB} \right]_n \to \left[ \msD \otimes_{D} \frac{B}{I^nB} \right]_n \to \left[ \msD \otimes_{D} \frac{C}{I^nC}\right]_n \to 0\]
is exact. The maps in the left-most non-zero tower are surjective, so it trivially satisfies the Mittag-Leffler condition. Taking inverse limits gives a short exact sequence
\[0 \to \invlim  \msD \otimes_{D} \frac{A + I^n B}{I^nB} \to B^\Delta \to C^\Delta \to 0.\]
Since $D$ is left Noetherian, by the Artin-Rees Lemma \cite[\S 3.2.3(i)]{Berth} we can find an integer $n_0$ such that $I^n A \subseteq A \cap I^n B \subseteq I^{n - n_0}A$ for all $n \geq n_0$, so the natural map 
\[A^\Delta = \invlim \msD \otimes_D \frac{A}{I^nA}\longrightarrow \invlim \msD \otimes_D \frac{A + I^n B}{I^nB}\]
is an isomorphism and the result follows.

(b) $\hD^\Delta = \invlim \msD \otimes_D \frac{\hD}{I^n \hD} = \invlim \msD \otimes_D \frac{D}{I^nD} \cong \invlim \msD / I^n \msD = \hc{D}$.

(c) Since $M^\Delta$ is a $\hc{D}$-module, there is a natural map $\hc{D} \otimes_{\hD} M \longrightarrow M^\Delta$ of $\hc{D}$-modules, which is an isomorphism when $M = \hD^r$ by part (b). Since $M$ is finitely presented, using parts (a) and (b) together with the Five Lemma shows that $\hc{D} \otimes_{\hD} M$ is in fact naturally isomorphic to $M^\Delta$. Since $\hc{D}$ is coherent by assumption, it also follows that $M^\Delta$ is coherent.

(d) Let $M$ be the cokernel of the map $\Gamma(X,u) : \Gamma(X, \hc{D}^r) \to \Gamma(X, \hc{D}^s)$. Then $\hD^r \stackrel{\Gamma(X,u)}{\longrightarrow}\hD^s \to M \to 0$ is exact. Applying the exact functor $(-)^\Delta$ to this presentation of $M$ produces the exact sequence $\hc{D}^r \stackrel{u}{\longrightarrow} \hc{D}^s \to M^\Delta \to 0$, so $\coker(u) \cong M^\Delta$ as required.
\end{proof}

\subsection{Lemma}\label{CohMcomplete}
Let $\M$ be a coherent $\hc{D}$-module. Then the natural map $\M \to \invlim \M / I^n \M$ is an isomorphism.
\begin{proof} Since $\M$ is coherent, by shrinking $X$ if necessary we may assume that $\M$ is finitely presented: $\M = \coker(u)$ for some morphism $u : \hc{D}^r \to \hc{D}^s$ of $\hc{D}$-modules. We may then assume that $\M  = M^\Delta$ for some finitely presented $\hD:= \hc{D}(X)$-module $M$ by Proposition \ref{LocAffine}(d). Let $z_1,\ldots, z_m$ generate $I^n$, so that we have the exact sequence of finitely generated $\hD$-modules
\[ M^m \stackrel{(z_1,\ldots,z_m)}{\xrightarrow{\hspace{1cm}}} M \to M / I^n M \to 0.\]
By Proposition \ref{LocAffine}(a), the sequence
\[ (M^\Delta)^m \stackrel{(z_1,\ldots,z_m)}{\xrightarrow{\hspace{1cm}}} M^\Delta \to (M / I^n M)^\Delta \to 0\]
is exact, so \[ \M /I^m \M  = M^\Delta / I^n M^\Delta \cong (M / I^n M)^\Delta \cong \msD \otimes_D \frac{M}{I^nM}\]
for any $n \geq 1$. Hence $\M = M^\Delta = \invlim \msD \otimes_D M/I^nM \cong \invlim \M / I^n \M.$ \end{proof}

\subsection{Theorem}\label{Cartan} Suppose that $\msD$ is Noetherian on $\mathcal{S}$, $\mathcal{S}$ is coherently $\msD$-acyclic and $\hc{D}$ is coherent.  Then $\mathcal{S}$ is also coherently $\hc{D}$-acyclic. If moreover $\mathcal{S}$ is coherently $\msD$-affine then it is also coherently $\hc{D}$-affine.

\begin{proof} For the first part it suffices to show that if $X\in\mathcal{S}$ then $X$ is coherently $\hc{D}$-acyclic.

Let $\M$ be a coherent $\hc{D}$-module. For each $n \geq 1$, let $M_n := \Gamma(X, \M / I^n \M)$ and $D_n := \Gamma(X, \msD / I^n \msD)$. Define $M := \Gamma(X, \M)$, $D := \Gamma(X, \msD)$ and note that $D_n \cong D / I^n D$ by Lemma \ref{Compl}. 

By Lemma \ref{CohMcomplete}, $\M$ is isomorphic to $\invlim \M / I^n \M$. Each $\M / I^n \M$ is a coherent $\hc{D}$-module killed by $I^n$, and is therefore a coherent $\msD$-module. So $H^i(X, \M / I^n \M) = 0$ for all $n \geq 1$ and all $i > 0$ and $M_n$ is a finitely generated $D$-module for all $n\geq 1$, because $X$ is coherently $\msD$-acyclic. Next, the short exact sequence
\[0 \to I^n \M / I^{n+1} \M \to \M / I^{n+1} \M \to \M / I^n \M \to 0\]
consists of coherent $\msD$-modules; since $H^1(X, I^n \M / I^{n+1}\M) = 0$, each map in the tower $ \cdots \to M_{n+1} \to M_n \to \cdots \to M_1$ is surjective. Hence this tower satisfies the Mittag-Leffler condition, and it follows from \cite[Proposition 0.13.3.1]{EGAIII} that $H^i(X, \M) = 0$ for all $i > 0$. Thus $\M$ is $\Gamma$-acyclic.

Now, the algebra $\hD := \hc{D}(X)$ is isomorphic to the $I$-adic completion of $D$. Since $D$ is left Noetherian by assumption, $\hD$ is also left Noetherian by \cite[\S 3.2.3(vi)]{Berth}. Thus, to show that $M$ is a coherent $\hD$-module it is enough to prove that $M$ is finitely generated. Since $\Gamma(X,-)$ is exact on $\coh(\msD)$ by assumption and since
\[ \frac{\M / I^n\M} {I^{n-1} \cdot ( \M / I^n \M )} \cong \frac{\M}{I^{n-1} \M},\]
Lemma \ref{Compl} implies that
\[ M_n / I^{n-1}M_n \cong M_{n-1}\]
for all $n \geq 1$. Since $M_1$ is a finitely generated $D_1$-module, $M = \M(X) = \invlim M_n$ is a finitely generated $\hD := \hc{D}(X)$-module by \cite[Lemma 3.2.2]{Berth}. Thus $\mathcal{S}$ is coherently $\hc{D}$-acyclic.

For the last part it suffices to show that $\M$ is generated by global sections. By Proposition \ref{CohDaffine}, $\M / I^n \M \cong \msD \otimes_D M_n$, and $M_n \cong M / I^n M$ by \cite[Lemma 3.2.2]{Berth}. So Lemma \ref{CohMcomplete} implies that 
\[ \M \cong \invlim \M / I^n \M \cong \invlim \msD \otimes_D M/I^n M = M^\Delta.\]
Choose a presentation $F_1 \to F_0 \to M \to 0$ of $M$ consisting of finitely generated free $\hD$-modules. It inducs an exact sequence $F_1^\Delta \to F_0^\Delta \to M^\Delta \to 0$ of coherent $\hc{D}$-modules by Proposition \ref{LocAffine}(a), and a commutative diagram
\[ \xymatrix{ \hc{D} \otimes_{\hD} F_1 \ar[r]\ar[d] & \hc{D} \otimes_{\hD} F_0 \ar[r]\ar[d] & \hc{D} \otimes_{\hD} M \ar[r]\ar[d] & 0 \\
F_1^\Delta \ar[r] & F_0^\Delta \ar[r] & M^\Delta \ar[r] & 0 }\]
with exact rows. Proposition \ref{LocAffine}(b) implies that the two vertical arrows on the left are isomorphisms, so the map 
\[ \hc{D} \otimes_{\hD}M \tocong M^\Delta\]
is an isomorphism by the Five Lemma and $\M$ is generated by global sections. \end{proof}

\subsection{Notation}\label{NotnDefCompCh}
Now we return to the setting and notation of $\S \ref{NotnDiffops}$, and make the additional hypothesis that $R$ is a complete discrete valuation ring with uniformizer $\pi$, residue field $k$ and field of fractions $K$. We will denote the special and generic fibres of $X$ by $X_k := X \times \Spec(k)$ and $X_K := X \times \Spec(K)$ respectively. We also suppose that we are given a locally trivial $\mb{H}$-torsor $\xi : \Ex{X} \to X$ over $X$ for some flat affine algebraic group $\mb{H}$ of finite type over $R$, as in $\S \ref{Torsor}$. Let $\mathcal{S} := \mathcal{S}_X$ be the base for $X$ consisting of open affine subschemes $U$ of $X$ of finite type that trivialise $\xi$.
\vspace{0.1cm}
\noindent We fix a deformation parameter $n \geq 0$.

\subsection{The sheaf $\dnt$}\label{EnhDn} Recall the category of deformable $R$-algebras from $\S$\ref{Defs}, and let $\Ex{\D}$ denote the relative enveloping algebra of the $\mb{H}$-torsor $\xi$.

\begin{lem}\be \hfill
\item The category of deformable $R$-algebras has all limits, and the forgetful functor to $R$-algebras preserves limits.
\item $\Ex{\D}$ is a sheaf of deformable $R$-algebras.\ee\end{lem}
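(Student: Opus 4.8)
The plan is to treat both parts as essentially formal, the one real input being that $R$ is a discrete valuation ring: an $R$-module is flat precisely when it is torsion-free, and both submodules and arbitrary products of torsion-free $R$-modules are again torsion-free.

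For (a), given a diagram $(A_j)_{j\in J}$ of deformable $R$-algebras I would first form its limit $A$ in the (complete) category of associative unital $R$-algebras, with structure maps $p_j\colon A\to A_j$, and then equip $A$ with the filtration $F_mA:=\{a\in A: p_j(a)\in F_mA_j\text{ for all }j\}$. That this is a positive $\mathbb{Z}$-filtration with $F_0A$ an $R$-subalgebra is immediate, and $A\hookrightarrow\prod_j A_j$ forces $F_{-1}A=0$. For flatness of $\gr A$, observe that for each $m$ the assignment $a+F_{m-1}A\mapsto(p_j(a)+F_{m-1}A_j)_j$ is a well-defined injection $\gr_mA\hookrightarrow\prod_j\gr_mA_j$, since an element of $F_mA$ whose image lies in $F_{m-1}A_j$ for every $j$ lies in $F_{m-1}A$ by construction. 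As each $\gr_mA_j$ is $R$-flat, the target is $R$-torsion-free, hence so is $\gr_mA$, hence $\gr A$ is $R$-flat and $A$ is deformable. Finally the $p_j$ are filtered by construction, and any compatible cone $(f_j\colon C\to A_j)$ out of a deformable $R$-algebra $C$ factors uniquely through the $R$-algebra map $f\colon C\to A$, which is automatically filtered because $p_j\circ f=f_j$ sends $F_mC$ into $F_mA_j$; thus $A$ is the limit in the category of deformable $R$-algebras and the forgetful functor preserves it.

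For (b), recall that $\Ex{\D}=(\xi_\ast\D_{\Ex X})^{\mb H}$ is already a sheaf of $R$-algebras, filtered by the subsheaves $F_m\Ex{\D}=(\xi_\ast F_m\D_{\Ex X})^{\mb H}$, with $F_0\Ex{\D}=\O_X$ by Lemma \ref{Torsor}. Consequently every restriction map $\Ex{\D}(U)\to\Ex{\D}(V)$ is a filtered $R$-algebra homomorphism, and for any open cover $\{U_i\}$ of $U$ the subsheaf property gives $F_m\Ex{\D}(U)=\{s\in\Ex{\D}(U):s|_{U_i}\in F_m\Ex{\D}(U_i)\ \forall i\}$; combined with part (a) this shows that the sheaf axiom for $\Ex{\D}$ holds already in the category of deformable $R$-algebras, once we know each $\Ex{\D}(U)$ is deformable. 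For $U\in\mathcal{S}_X$ this is Corollary \ref{RelEnv}: $\gr\Ex{\D}(U)\cong\Sym_{\O(U)}\Ex{\T}(U)$ is a symmetric algebra on a finitely generated projective $\O(U)$-module, and $\O(U)$ is $R$-flat since $X$ is smooth over $R$. For a general open $U$ I would apply $\Gamma(U,-)$ to the short exact sequence of sheaves $0\to F_{m-1}\Ex{\D}\to F_m\Ex{\D}\to\gr_m\Ex{\D}\to 0$ to get, by left exactness, an injection $\gr_m(\Ex{\D}(U))\hookrightarrow(\gr_m\Ex{\D})(U)$. By Proposition \ref{RelEnv} (applicable since $\xi$ is locally trivial) $\gr_m\Ex{\D}\cong\Sym^m_\O\Ex{\T}$, which is locally free of finite rank over $\O_X$ by Lemma \ref{EnhCB}; its stalks are free over the local rings of $X$, which are $R$-flat, hence $R$-torsion-free, so multiplication by any nonzero $r\in R$ is injective on the sheaf $\Sym^m_\O\Ex{\T}$, and applying $\Gamma(U,-)$ once more shows $(\gr_m\Ex{\D})(U)$ is $R$-torsion-free. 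Therefore its submodule $\gr_m(\Ex{\D}(U))$ is $R$-torsion-free, i.e. $R$-flat, for every $m$, so $\gr(\Ex{\D}(U))$ is $R$-flat and $\Ex{\D}(U)$ is a deformable $R$-algebra, which finishes the argument.

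The only step carrying real content is this last one: deducing $R$-flatness of $\gr(\Ex{\D}(U))$ for opens $U$ outside the base $\mathcal{S}_X$, where the explicit identification of the associated graded ring with a symmetric algebra is not directly available and one must route through the global description of $\gr\Ex{\D}$ as a locally free $\O_X$-module together with the DVR hypothesis on $R$. A minor technical point I would dispatch quickly is that the inherited filtrations on sections over non-quasi-compact opens are exhaustive — equivalently, that sections of $\Ex{\D}$ have bounded order there — which holds automatically over quasi-compact opens and so causes no trouble in the cases relevant to this paper.
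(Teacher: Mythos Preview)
Your proof of (a) is essentially identical to the paper's: form the limit of underlying $R$-algebras, equip it with the subspace filtration from the product, observe that $\gr A$ embeds in $\prod_j \gr A_j$, and use that submodules of torsion-free modules over a DVR are flat.

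For (b) you take a slightly longer route than the paper. The paper simply notes that $\mathcal{S}_X$ is a base for $X$, so for any open $U$ the sheaf axiom expresses $\Ex{\D}(U)$ as a limit of the $\Ex{\D}(V)$ with $V\in\mathcal{S}_X$, $V\subseteq U$; since each such $\Ex{\D}(V)$ is deformable by Corollary \ref{RelEnv}, part (a) immediately gives that $\Ex{\D}(U)$ is deformable. You instead supply an independent direct argument on general opens, via the injection $\gr_m(\Ex{\D}(U))\hookrightarrow(\gr_m\Ex{\D})(U)$ together with the local freeness of $\gr\Ex{\D}\cong\Sym_{\O}\Ex{\T}$ from Proposition \ref{RelEnv}. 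Both arguments are correct; the paper's is shorter because it lets part (a) do all the work, while yours makes the flatness visible at the sheaf level and does not lean on the exhaustiveness of the limit filtration for infinite covers. Your closing remark about exhaustiveness over non-quasi-compact opens is a fair caveat and applies equally to the paper's own argument.
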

\begin{proof} (a) Let $(A_i)_{i \in I}$ be an inverse system of deformable $R$-algebras with connecting homomorphisms $f_{ji} : A_j \to A_i$ for $j \geq i$, and let 
\[A := \invlim A_i = \{ (a_i) \in \prod\limits_{i \in I} A_i \st f_{ji}(a_j) = a_i \quad\mbox{for all}\quad j \geq i\}\]
be the inverse limit. Equip $\prod\limits_{i \in I} A_i$ with the product filtration, and $A \subseteq \prod\limits_{i \in I} A_i$ with the subspace filtration. Then $A$ is a positively filtered $R$-algebra, and $\gr A$ is an $R$-submodule of the direct product of the $\gr A_i$. Since $R$ is a discrete valuation ring, it follows that $\gr A$ is flat over $R$.

(b) Certainly $\Ex{\D}$ is a sheaf of filtered $R$-algebras. Let $U \in \mathcal{S}$; then $\gr (\Ex{\D}(U))$ is a locally free $\O(U)$-module by Corollary \ref{RelEnv} and hence is a flat $R$-algebra. Thus $\Ex{\D}(U)$ is a deformable $R$-algebra for any $U \in \mathcal{S}$. Now $\mathcal{S}$ is a base for $X$, so $\Ex{\D}(U)$ is a deformable $R$-algebra for any open $U \subseteq X$ by part (a).
\end{proof}

\begin{defn} Let $\dnt$ be the sheafification of the presheaf obtained by postcomposing $\Ex{\D}$ with the deformation functor $A \to A_n$ from $\S$\ref{Defs}.
\end{defn}

Note that the Lemma implies that $\dnt$ is in fact a sheaf of deformable $R$-algebras. It can be shown that the deformation functors do not commute with arbitrary finite inverse limits; this explains the need to sheafify. However it is still possible to compute local sections of $\dnt$ over $U \in \mathcal{S}$ as follows.

\begin{prop} 
\hfill\be
\item $\dnt(U) \cong \Ex{\D}(U)_n$ for all $U \in \mathcal{S}$. 
\item $\dnt(U)$ is almost commutative for all $U \in \mathcal{S}$.
\item There is an isomorphism of sheaves $\tau_\ast \O_{\Ex{T^\ast X}} = \Sym_{\O} \Ex{\T} \tocong \gr \dnt $.
\item The sheaf $\dnt$ is coherent.
\ee \end{prop}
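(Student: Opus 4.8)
The plan is to prove (a) first; given (a), parts (b)--(d) follow quite formally from Lemma \ref{Defs}, Proposition \ref{RelEnv} and Corollary \ref{RelEnv}. The subtle point for (a) is that although the deformation functor $A \mapsto A_n$ does not commute with arbitrary finite inverse limits, it \emph{does} commute with the particular limits that compute sections over the affine opens $U \in \mathcal{S}$, so that sheafification does not change the value of the presheaf $P \colon U \mapsto \Ex{\D}(U)_n$ on $\mathcal{S}$.

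Concretely, I would show that the restriction of $P$ to $\mathcal{S}$ is a sheaf on the base $\mathcal{S}$; since $\mathcal{S}$ is a base for $X$ stable under intersection, this yields $\dnt(U) = \Ex{\D}(U)_n$ for all $U \in \mathcal{S}$, which is (a). Separatedness of $P$ is immediate since $\Ex{\D}(U)_n \subseteq \Ex{\D}(U)$ and $\Ex{\D}$ is a sheaf. For the gluing axiom, let $\{U_i\}$ be a cover of $U \in \mathcal{S}$ by members of $\mathcal{S}$ and let $(s_i)$, with $s_i \in \Ex{\D}(U_i)_n$, be a matching family; as $\Ex{\D}$ is a sheaf the $s_i$ glue to some $s \in \Ex{\D}(U)$, and by exhaustiveness of the filtration $s \in F_N\Ex{\D}(U)$ for some $N$. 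Each $\Ex{\D}(U_i)$ is a deformable $R$-algebra by Lemma \ref{EnhDn}, so the proof of Lemma \ref{Defs} gives $s_{|U_i} \in F_N\Ex{\D}(U_i) \cap \Ex{\D}(U_i)_n = \sum_{j=0}^N \pi^{jn}F_j\Ex{\D}(U_i)$. Now $\mathcal{G}_N := \sum_{j=0}^N \pi^{jn}F_j\Ex{\D}$ is the image of a morphism of coherent $\O_X$-modules $\bigoplus_{j\le N}F_j\Ex{\D} \to \Ex{\D}$ (each $F_j\Ex{\D}$ is $\O_X$-coherent by Proposition \ref{RelEnv}), hence a coherent $\O_X$-submodule of $\Ex{\D}$; because the $U_i$ and $U$ are affine, vanishing of quasi-coherent cohomology gives $\mathcal{G}_N(V) = \sum_{j\le N}\pi^{jn}F_j\Ex{\D}(V)$ for $V = U_i$ and for $V = U$. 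Thus $s$ restricts into the subsheaf $\mathcal{G}_N$ on the cover $\{U_i\}$, so $s \in \mathcal{G}_N(U) = \sum_{j\le N}\pi^{jn}F_j\Ex{\D}(U) \subseteq \Ex{\D}(U)_n$, as required.

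Parts (b) and (c) are then formal. By (a), $\dnt(U) \cong \Ex{\D}(U)_n$ with $\Ex{\D}(U)$ a deformable, in fact almost commutative, $R$-algebra with $\gr\Ex{\D}(U) \cong \Sym_{\O(U)}\Ex{\T}(U)$ (Corollary \ref{RelEnv}); Lemma \ref{Defs} provides a natural filtered isomorphism $\gr\Ex{\D}(U) \tocong \gr\,\dnt(U)$, so $\gr\,\dnt(U)$ is a finitely generated commutative $R$-algebra and $F_0\,\dnt(U) = \O(U)$ is an $R$-subalgebra; hence $\dnt(U)$ is almost commutative, giving (b). For (c), the isomorphisms $\Sym_{\O(U)}\Ex{\T}(U) \tocong \gr\Ex{\D}(U) \tocong \gr\,\dnt(U)$ of Proposition \ref{RelEnv} and Lemma \ref{Defs} are natural in $U \in \mathcal{S}$ and compatible with restriction; computing the sections of $\gr\,\dnt$ over affine $U \in \mathcal{S}$ as $\gr(\dnt(U))$ (using affineness once more), they glue to an isomorphism of sheaves of graded $R$-algebras $\Sym_{\O}\Ex{\T} \tocong \gr\,\dnt$, and $\tau_\ast\O_{\Ex{T^\ast X}} = \Sym_{\O}\Ex{\T}$ is the definition of the enhanced cotangent bundle.

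For (d): by (a) each $F_m\,\dnt$ is locally on $\mathcal{S}$ the finitely generated $\O(U)$-module $\sum_{j\le m}\pi^{jn}F_j\Ex{\D}(U)$, hence a coherent $\O_X$-module, so $\dnt = \bigcup_m F_m\,\dnt$ is a quasi-coherent $\O_X$-module; and by (b) it is Noetherian on $\mathcal{S}$ (recall almost commutative $R$-algebras are Noetherian). A quasi-coherent sheaf of $\O_X$-algebras on a scheme that is Noetherian on a base of affine opens is a coherent sheaf of rings --- over an affine $U \in \mathcal{S}$ it is the sheaf associated to the Noetherian ring $\dnt(U)$, so kernels of $\dnt_{|U}$-linear maps between finite free modules are quasi-coherent and locally finitely generated --- which gives (d). I expect the main obstacle to be part (a): one must resist commuting $(-)_n$ past the equaliser that recovers $\Ex{\D}(U)$ from a cover, and instead exploit the affineness of the members of $\mathcal{S}$ to reduce the sheaf axiom for $P$ to the trivially valid sheaf axiom for the coherent sheaves $\mathcal{G}_m$.
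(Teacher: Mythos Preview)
Your proof is correct, but parts (a) and (d) take different routes from the paper.

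For (a), the paper argues directly with associated graded rings: it notes (via \cite[\S 0.3.2.2]{EGAI}) that it suffices to show exactness of
\[
0 \to \Ex{\D}(U)_n \to \bigoplus_i \Ex{\D}(U_i)_n \to \bigoplus_{i<j} \Ex{\D}(U_i\cap U_j)_n,
\]
and since each term carries a positive (hence separated and exhaustive) filtration with $\gr\Ex{\D}(V)_n \cong \gr\Ex{\D}(V) \cong \Sym_{\O(V)}\Ex{\T}(V)$ by Lemma~\ref{Defs} and Proposition~\ref{RelEnv}, exactness of the associated graded complex (the sheaf sequence for $\Sym_{\O}\Ex{\T}$) forces exactness of the original. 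Your approach instead fixes a degree $N$, realises $\sum_{j\le N}\pi^{jn}F_j\Ex{\D}$ as a coherent $\O_X$-subsheaf, and uses affineness of the members of $\mathcal{S}$ to glue. Both are valid; the paper's argument is shorter and exploits the filtered/graded formalism that is already in play, while yours is more elementary and makes explicit the role of quasi-coherence.

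For (d), the paper invokes \cite[Proposition 3.1.1]{Berth}: since $\dnt(U)$ is Noetherian and the restriction maps $\dnt(U)\to\dnt(V)$ are flat (proved by passing to $\gr$ via part (c) and identifying the graded map with the flat restriction $\O(\Ex{T^\ast U}) \to \O(\Ex{T^\ast V})$), coherence follows. Your argument sidesteps flatness by observing that kernels of $\dnt$-linear maps between finite free modules are quasi-coherent as $\O_X$-modules, hence on any affine $V\in\mathcal{S}$ are determined by their $\dnt(V)$-module of sections, which is finitely generated by Noetherianity; since $\O(W)\subseteq\dnt(W)$, finite generation then propagates to smaller affines. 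This is correct and arguably more direct, though the paper's flatness statement is of some independent interest.

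Parts (b) and (c) agree with the paper's proof.
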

\begin{proof}
(a) By  \cite[\S 0.3.2.2]{EGAI}, it is enough to show that the sequence
\[ 0 \to \Ex{\D}(U)_n \to \bigoplus_{i=1}^m \Ex{\D}(U_i)_n \longrightarrow \bigoplus_{i<j} \Ex{\D}(U_i \cap U_j)_n\] of deformable $R$-algebras is exact whenever $U = U_1 \cup \cdots \cup U_m$ is a cover of $U$ by some other $U_i \in \mathcal{S}$.  By Lemma \ref{Defs} and Proposition \ref{RelEnv}, the associated graded of this sequence is isomorphic to
\[ 0 \to \Sym_{\O(U)}\Ex{\T}(U) \to \bigoplus_{i=1}^m \Sym_{\O(U_i)}\Ex{\T}(U_i) \longrightarrow \bigoplus_{i<j} \Sym_{\O(U_i \cap U_j)}\Ex{\T}(U_i \cap U_j) \]
 and this is exact since $\Sym_\O \Ex{\T}$ is a sheaf on $X$. Hence the first sequence is exact.

(b) This follows from part (a), Lemma \ref{Defs} and Corollary \ref{RelEnv}.

(c) By Lemma \ref{Defs}, for every open subscheme $U$ of $X$ there is an isomorphism of graded $R$-algebras $\gr (\Ex{\D}(U)) \to \gr (\Ex{\D}(U)_n)$ which is natural in $U$. After applying sheafification, this induces a morphism of sheaves of graded $R$-algebras 
\[\gamma : \gr \Ex{\D} \to \gr \Ex{\D}_n.\] 
The sections of this morphism over $U \in \mathcal{S}$ can be identified with $\gr (\Ex{\D}(U)) \to \gr (\Ex{\D}(U)_n)$ because $U$ is affine and because $\Ex{\D}_n(U) = \Ex{\D}(U)_n$ by part (a). Thus $\gamma(U)$ is an isomorphism for all $U \in \mathcal{S}$, so $\gamma$ is an isomorphism since $\mathcal{S}$ is a base for $X$. Now precompose $\gamma$ with the isomorphism $\alpha : \Sym_{\O}\Ex{\T} \tocong \gr \Ex{\D}$ given by Proposition \ref{RelEnv}.

%Details of the ``can be identified" spelt out: let $\beta_V : \gr (\Ex{\D}(V)) \to \gr(\Ex{\D}(V)_n)$ be the functorial isomorphism appearing in Lemma \ref{Defs}. Let $\eta_U : \Ex{\D}(U)_n \to \Ex{\D}_n(U)$ be the natural map between the presheaf and its sheafification (the counit of the adjunction). Then we have the following diagram.

%\[ \xymatrix{ (\gr \Ex{\D})(U) \ar[d]_{\gamma_U}\ar@{=}[r] & \lim\limits_{\stackrel{\longleftarrow}{V \subseteq U}}  \gr (\Ex{\D}(V)) \ar[d]_{\gamma_U} \ar[rd]^{\lim\limits_{\stackrel{\longleftarrow}{V \subseteq U}} \beta_V}_{\cong} & \\ (\gr \Ex{\D}_n)(U) \ar@{=}[r] & \lim\limits_{\stackrel{\longleftarrow}{V \subseteq U}} \gr (\Ex{\D}_n(V)) & \lim\limits_{\stackrel{\longleftarrow}{V \subseteq U}} \gr (\Ex{\D}(V)_n) \ar[l]^	{\lim\limits_{\stackrel{\longleftarrow}{V \subseteq U}} \gr \eta_V} \\ & \gr (\Ex{\D}_n(U)) \ar[u] & \gr ( \Ex{\D}(U)_n) \ar[l]^{\gr \eta_U}\ar[u] }\]
%The bottom vertical maps and $\eta_U$ are isomorphisms whenever $U \in \mathcal{S}$ by part (a). So $\gamma_U$ is also an isomorphism when $U \in \mathcal{S}$. Hence $\gamma$ is an isomorphism.

(d) Let $V \subseteq U$ in $\mathcal{S}$; by \cite[Proposition 3.1.1]{Berth} it will be enough to show that $\dnt(U)$ is Noetherian and that the restriction morphism $\dnt(U) \to \dnt(V)$ is flat. The first part follows from (a) and (b), and for the second, it is enough to show that the morphism $\gr \dnt(U) \to \gr \dnt(V)$ is flat by \cite[Proposition 1.2]{ST}. But by part (c), this morphism is just the restriction map of regular functions $\O(\Ex{T^\ast U}) \to \O(\Ex{T^\ast V})$ corresponding to the Zariski open immersion $\Ex{T^\ast V} \to \Ex{T^\ast U}$ and is therefore flat.
\end{proof}

\subsection{Good filtrations}\label{GoodFilt}
Let $\M$ be a coherent $\dnt$-module. A \emph{filtration} $F_\bullet \M$ of $\M$ is a sequence $0 \subseteq F_0 \M \subseteq F_1 \M \subseteq \cdots$ of subsheaves of abelian groups of $\M$ such that $F_i \dnt \cdot F_j \M \subseteq F_{i+j} \M$ for all $i,j \geq 0$. We say that this filtration is \emph{good} if the associated graded sheaf $\gr \M := \bigoplus_{i\geq 0} F_i\M / F_{i-1} \M$ is a coherent $\gr \dnt$-module.

\begin{lem} Let $\M$ be a coherent $\dnt$-module.
\be \item $\M$ is quasi-coherent as an $\O$-module.
\item There exists an $\O$-coherent submodule $\F$ of $\M$ such that $\M = \dnt \cdot \F$.
\item There exists at least one good filtration $F_\bullet \M$ on $\M$.
\item If $F_\bullet \M$ is a good filtration on $\M$, then each $F_i\M$ is a coherent $\O$-module.
\ee \end{lem}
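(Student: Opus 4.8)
\emph{Strategy and part (a).} All four assertions will be read off from the associated graded sheaf. By the Proposition of \S\ref{EnhDn} we have $\gr\dnt\cong\Sym_\O\Ex{\T}=\tau_\ast\O_{\Ex{T^\ast X}}$, and by Lemma~\ref{EnhCB} the sheaf $\Ex{\T}$ is locally free of finite rank; since $X$ is locally of finite type over the Noetherian ring $R$, it is locally Noetherian, so $\O$ is a coherent sheaf of rings, each $\gr_i\dnt\cong\Sym^i_\O\Ex{\T}$ is a coherent $\O$-module, and $\gr\dnt$ is a coherent sheaf of rings (the argument of part (d) of the Proposition of \S\ref{EnhDn} applies verbatim, since $\Ex{T^\ast X}\to X$ is affine of finite type); also $\dnt$ itself is a coherent sheaf of rings by that Proposition. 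For (a), I would first show by induction on $i$ that $F_i\dnt$ is a coherent $\O$-module: $F_0\dnt=\O$ (by \S\ref{RelEnv} and Lemma~\ref{Defs}, the deformation functor leaving the zeroth filtration step unchanged), and the exact sequence $0\to F_{i-1}\dnt\to F_i\dnt\to\gr_i\dnt\to 0$ presents $F_i\dnt$ as an extension of coherent $\O$-modules. Hence $\dnt=\bigcup_i F_i\dnt$ is a filtered union of coherent $\O$-submodules, so it is quasi-coherent over $\O$. Since $\dnt$ is a coherent sheaf of rings, a coherent $\dnt$-module $\M$ is, locally on the base $\mathcal{S}$, of the form $\coker(\dnt^a\to\dnt^b)$; cokernels of morphisms of quasi-coherent $\O$-modules are quasi-coherent, and quasi-coherence is a local property, so $\M$ is quasi-coherent over $\O$.

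\emph{Parts (b) and (c).} By (a) and the affineness of $U\in\mathcal{S}$, the functor $\Gamma(U,-)$ is exact on quasi-coherent $\O_U$-modules, hence on coherent $\dnt_{|U}$-modules; applying it to a local surjection $\dnt_{|U}^{b}\twoheadrightarrow\M_{|U}$ shows that $\M(U)$ is finitely generated over the Noetherian ring $\dnt(U)$. Let $\F_U\subseteq\M_{|U}$ be the coherent $\O_U$-submodule generated by a finite generating set, so that $\dnt_{|U}\cdot\F_U=\M_{|U}$. Now write the quasi-coherent sheaf $\M$ as the filtered union of its coherent $\O$-submodules and cover $X$ by members of $\mathcal{S}$; the finitely many generators used over each member of the cover all lie in a single coherent $\O$-submodule $\F$ of $\M$, and then $\dnt\cdot\F=\M$, since this can be checked after restriction to each member of the cover. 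This gives (b). For (c), set $F_i\M:=F_i\dnt\cdot\F$: this is an exhaustive filtration in the sense of \S\ref{GoodFilt}, with $F_0\M=\F$ and $F_i\M=F_i\dnt\cdot F_0\M$, so $\gr\M$ is generated over $\gr\dnt$ by $\gr_0\M=\F$ and is therefore a quotient of $\gr\dnt\otimes_\O\F$. As $\gr\dnt$ is a coherent sheaf of rings and $\F$ is $\O$-coherent, $\gr\dnt\otimes_\O\F$ is a coherent $\gr\dnt$-module, hence so is its quotient $\gr\M$, and the filtration is good.

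\emph{Part (d).} Let $F_\bullet\M$ be an arbitrary good filtration, so that $\gr\M$ is a coherent $\gr\dnt$-module. Exactly as in (a), $\gr\M$ is quasi-coherent over $\O$; being graded with $\gr_0\dnt=\O$, it is the $\O$-module direct sum of its pieces $\gr_i\M$, so each $\gr_i\M$ is quasi-coherent over $\O$. Moreover, for $U\in\mathcal{S}$ the same $\Gamma(U,-)$-exactness argument shows that $\gr\M(U)$ is a finitely generated graded module over the Noetherian graded ring $\gr\dnt(U)=\Sym_{\O(U)}\Ex{\T}(U)$, and hence that each $\gr_i\M(U)$ is finitely generated over the degree-zero part $\O(U)$; so $\gr_i\M$ is $\O$-coherent. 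Finally $F_0\M=\gr_0\M$ is $\O$-coherent, and the exact sequences $0\to F_{i-1}\M\to F_i\M\to\gr_i\M\to 0$ give, by induction on $i$, that every $F_i\M$ is a coherent $\O$-module.

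\emph{Main obstacle.} The real content lies in part (b): passing from the purely local statement that $\M$ is finitely generated over $\dnt$ to the existence of a \emph{single global} $\O$-coherent subsheaf $\F$ generating $\M$ over $\dnt$. This is where part (a) is used essentially — to make $\Gamma(U,-)$ exact on the relevant sheaves, which rests on $U$ being affine over $R$ — together with the approximation of $\M$ by its coherent $\O$-subsheaves. In the stated generality one should also assume $X$ quasi-compact (or argue one connected component at a time), which costs nothing in the applications, where $X$ is (an open subscheme of) a flag variety.
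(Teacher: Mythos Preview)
Your proof is correct and follows essentially the same route as the paper: for (a), show each $F_i\dnt$ is $\O$-coherent via the extension by $\gr_i\dnt\cong\Sym^i_\O\Ex{\T}$ and deduce quasi-coherence of $\dnt$ and of $\M$; for (b), write $\M$ as the filtered union of its $\O$-coherent subsheaves and pick one large enough to contain a local generating set; for (c), take $F_i\M=F_i\dnt\cdot\F$; for (d), bound each $\gr_i\M$ as an $\O$-coherent sheaf and conclude by induction on extensions. Your remark that a global $\F$ in (b) requires $X$ quasi-compact is a fair caveat that the paper glosses over; in the applications $X$ is the flag variety, so this is harmless.
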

\begin{proof}
(a) By Proposition \ref{EnhDn}(c), each graded piece $\gr_i \dnt$ of $\dnt$ is a coherent $\O$-module. Since coherent $\O$-modules are closed under extensions by \cite[Proposition II.5.7]{Hart}, each filtered part $F_i \dnt$ is a coherent $\O$-module. Now $\dnt$ is the  direct limit of the $F_i \dnt$, and therefore a quasi-coherent $\O$-module. Since $\M$ is a coherent $\dnt$-module, $\M$ is locally finitely generated over $\dnt$ and therefore quasi-coherent as an $\O$-module.

(b) By \cite[Corollaire I.9.4.9]{EGAI}, $\M$ is the direct limit of its $\O$-coherent submodules. Since $\M$ is coherent as a $\dnt$-module, we can find a coherent $\O$-submodule $\F$ of $\M$ such that $\M = \dnt \cdot \F$.

(c) Setting $F_i\M := F_i \dnt \cdot \F$ defines a good filtration on $\M$.

(d) This is a local statement, so we may assume that $\gr \M$ is finitely generated over $\gr \dnt$. But then each $\gr_i \M$ is a quotient of a direct sum of finitely many copies of $\gr_i \dnt$, which is a coherent $\O$-module by Proposition \ref{EnhDn}(c), so each $\gr_i \M$ is coherent over $\O$. The result follows since coherent $\O$-modules are stable under extensions.
\end{proof}

\subsection{The $\pi$-adic completion $\hdnt$}
\label{Hdnt}
We now apply the theory from $\S\ref{CohDaffine}-\S\ref{Cartan}$ by taking $\msD$ to be the sheaf $\dnt$ constructed in $\S \ref{EnhDn}$, and taking $I$ to be the ideal generated by $\pi$.

\begin{defn} Let $\hdnt := \invlim \dnt / \pi^a \dnt$ be the $\pi$-adic completion of $\dnt$.
\end{defn}

This is a sheaf of $R$-algebras on $X$ and $\Gamma(U, \hdnt) = \invlim \Gamma(U, \dnt/ \pi^a \dnt)$ for any open subscheme $U$ of $X$. The restriction of $\hdnt$ to the generic fibre $X_K$ is clearly zero, so $\hdnt$ is only supported on the special fibre $X_k$ of $X$. Because $X_k$ is closed in $X$, we identify sheaves on $X$ supported only on $X_k$ with their sheaf-theoretic pullbacks to the special fibre in what follows without further mention.

\begin{prop} Let $U \in \mathcal{S}$. \be
\item $(\dnt/\pi^a\dnt)(U) \cong \dnt(U) / \pi^a\dnt(U)$ for all $a \geq 1$.
\item $\hdnt(U) \cong \widehat{\dnt(U)}$ is Noetherian.
\item The sheaf $\hdnt$ is coherent.
\ee\end{prop}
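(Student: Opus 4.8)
The plan is to establish the three assertions in turn, making free use of the fact that $\dnt$ is a sheaf of deformable $R$-algebras (Lemma \ref{EnhDn}) which is quasi-coherent as an $\O$-module — this last point is already shown inside the proof of Lemma \ref{GoodFilt}(a), since each $F_i\dnt$ is a coherent $\O$-module and $\dnt = \varinjlim_i F_i\dnt$ — together with the identification $\dnt(U) \cong \Ex{\D}(U)_n$ of Proposition \ref{EnhDn}(a).

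For (a), I would first observe that $\dnt$ is $\pi$-torsion-free: for $W \in \mathcal{S}$ the ring $\dnt(W) \cong \Ex{\D}(W)_n$ is an $R$-lattice in a $K$-algebra by Lemma \ref{Defs}, hence $\pi$-torsion-free, and since $\mathcal{S}$ is a base for $X$ the same holds for $\dnt(W)$ for an arbitrary open $W \subseteq X$. Consequently multiplication by $\pi^a$ is a monomorphism of sheaves with cokernel $\dnt/\pi^a\dnt$, so that applying $\Gamma(U,-)$ to the short exact sequence $0 \to \dnt \to \dnt \to \dnt/\pi^a\dnt \to 0$ produces the exact sequence
\[ 0 \to \dnt(U) \xrightarrow{\ \pi^a\ } \dnt(U) \to (\dnt/\pi^a\dnt)(U) \to H^1(U,\dnt).\]
The whole point is then that $H^1(U,\dnt) = 0$: this holds by Serre vanishing for quasi-coherent sheaves on a Noetherian affine scheme \cite[Theorem III.3.5]{Hart}, since $U$ is affine of finite type over the Noetherian ring $R$ and $\dnt$ is quasi-coherent over $\O$. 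This yields (a).

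For (b), inverse limits of sheaves are computed section-wise, so part (a) gives $\hdnt(U) = \invlim_a (\dnt/\pi^a\dnt)(U) \cong \invlim_a \dnt(U)/\pi^a\dnt(U) = \widehat{\dnt(U)}$. Noetherianity is then immediate: $\dnt(U)$ is almost commutative by Proposition \ref{EnhDn}(b), hence Noetherian by \cite[Proposition II.2.2.1]{LVO}, and the $\pi$-adic completion of a Noetherian ring is again Noetherian by \cite[\S 3.2.3(vi)]{Berth}.

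For (c), I would invoke \cite[Proposition 3.1.1]{Berth} exactly as in the proof of Proposition \ref{EnhDn}(d): it suffices to check, for each inclusion $V \subseteq U$ of members of $\mathcal{S}$, that $\hdnt(U)$ is Noetherian — which is part (b) — and that the restriction map $\hdnt(U) \to \hdnt(V)$ is flat. Since $\dnt(U)$ and $\dnt(V)$ are Noetherian and $\pi$-torsion-free, the $\pi$-adic filtrations on $\hdnt(U) = \widehat{\dnt(U)}$ and $\hdnt(V) = \widehat{\dnt(V)}$ are complete, with associated graded rings the polynomial rings $(\dnt(U)/\pi\dnt(U))[\sigma]$ and $(\dnt(V)/\pi\dnt(V))[\sigma]$ respectively. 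The restriction map $\dnt(U) \to \dnt(V)$ is flat — this is established inside the proof of Proposition \ref{EnhDn}(d) — so its base change $\dnt(U)/\pi\dnt(U) \to \dnt(V)/\pi\dnt(V)$ is flat, and hence so is the induced map on polynomial rings, i.e.\ the associated graded of $\hdnt(U) \to \hdnt(V)$. By \cite[Proposition 1.2]{ST} this forces $\hdnt(U) \to \hdnt(V)$ itself to be flat, and the hypotheses of \cite[Proposition 3.1.1]{Berth} are verified, so $\hdnt$ is coherent.

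The step I expect to be the crux is the flatness of the completed restriction map in (c). Because restriction of (twisted) crystalline differential operators to a smaller affine open behaves like a localization, $\dnt(V)$ is in general not module-finite over $\dnt(U)$, so one cannot simply identify $\hdnt(V)$ with $\dnt(V)\otimes_{\dnt(U)}\hdnt(U)$; it is the passage to the associated graded rings for the $\pi$-adic filtration — equivalently, an application of the local flatness criterion to these $\pi$-adically complete rings — that does the real work. Everything else is formal manipulation of section functors together with standard facts about $\pi$-adic completions of Noetherian rings.
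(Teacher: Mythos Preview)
Your proof is correct and follows essentially the same strategy as the paper. Parts (a) and (b) are identical to the paper's argument. For part (c), you take a marginally shorter path: rather than applying \cite[Proposition 1.2]{ST} twice to descend all the way to the map $\O(\Ex{T^\ast U_k}) \to \O(\Ex{T^\ast V_k})$, you apply it once and invoke the flatness of $\dnt(U)\to\dnt(V)$ already proved inside Proposition \ref{EnhDn}(d), then base-change modulo $\pi$; both arguments are equally valid and yours simply reuses prior work more directly.
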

\begin{proof} (a) $\dnt$ is a quasi-coherent $\O$-module by Proposition \ref{EnhDn}(d) and Lemma \ref{GoodFilt}(a). Since $U$ is affine and Noetherian, $H^1(U, \dnt) = 0$ and the result follows.

(b) Part (a) gives that $\hdnt(U) \cong \widehat{\dnt(U)}$. But $\dnt(U) = \Ex{\D}(U)_n$ is Noetherian by Proposition \ref{EnhDn}(a) and (b), so $\widehat{\dnt(U)}$ is Noetherian by \cite[\S 3.2.3(vi)]{Berth}. 

(c) Let $V \subseteq U$ be in $\mathcal{S}$; by part (b) and \cite[Proposition 3.1.1]{Berth} it will be enough to show that the restriction morphism $\hdnt(U) \to \hdnt(V)$ is flat. By applying \cite[Proposition 1.2]{ST} twice, it is enough to show that 
\[\gr\left(\Ex{\D}(U)_n / \pi \Ex{\D}(U)_n\right) \to \gr\left(\Ex{\D}(V)_n / \pi \Ex{\D}(V)_n\right)\]
is flat. By Lemma \ref{GrComp} and Proposition \ref{RelEnv}, this morphism can be identified with $\O(\Ex{T^\ast U}) \otimes_R k \longrightarrow \O(\Ex{T^\ast V})\otimes_Rk$, which is flat because it is the pull-back of functions along the Zariski open immersion $\Ex{T^\ast V} \times_X X_k \to \Ex{T^\ast U} \times_X X_k$.
\end{proof}

\begin{cor} $\mathcal{S}$ is coherently $\hdnt$-affine.
\end{cor}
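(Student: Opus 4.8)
The plan is to deduce the Corollary directly from Theorem \ref{Cartan}, applied with the sheaf $\dnt$ in the role of $\msD$, the base $\mathcal{S}$, and $I = \pi\dnt$ the ideal of the central regular element $\pi$. For this I must verify the three standing hypotheses of that theorem: that $\dnt$ is Noetherian on $\mathcal{S}$, that $\mathcal{S}$ is coherently $\dnt$-acyclic, and that $\hdnt$ is coherent; in fact I will check the stronger statement that $\mathcal{S}$ is coherently $\dnt$-\emph{affine}, so that the final clause of Theorem \ref{Cartan} applies. The Noetherian hypothesis is immediate from Proposition \ref{EnhDn}(a) and (b), since an almost commutative $R$-algebra is Noetherian, and the coherence of $\hdnt$ is exactly Proposition \ref{Hdnt}(c). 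So the only real content is showing that each $U \in \mathcal{S}$ is coherently $\dnt_{|U}$-affine.

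For this I would fix $U \in \mathcal{S}$ and a coherent $\dnt_{|U}$-module $\M$, and argue locally on the affine scheme $U$. By Lemma \ref{GoodFilt}(a), applied to $U$ with the restricted torsor $\xi^{-1}(U) \to U$, the module $\M$ is quasi-coherent as an $\O_U$-module, so $H^i(U,\M) = 0$ for $i>0$ because $U$ is affine; this gives $\Gamma(U,-)$-acyclicity. For the two remaining points I would use Lemma \ref{GoodFilt}(b) to write $\M = \dnt_{|U}\cdot\F$ with $\F$ an $\O_U$-coherent submodule, and give $\M$ the good filtration $F_i\M = F_i\dnt\cdot\F$ of Lemma \ref{GoodFilt}(c), whose parts are $\O_U$-coherent by Lemma \ref{GoodFilt}(d). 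Applying the exact functor $\Gamma(U,-)$ to the surjection $F_i\dnt\otimes_{\O}\F \twoheadrightarrow F_i\M$ of quasi-coherent $\O_U$-modules identifies $\Gamma(U, F_i\M)$ with $F_i\dnt(U)\cdot\F(U)$; since $\Gamma(U,-)$ commutes with the direct limit $\M = \bigcup_i F_i\M$ on the affine $U$, this shows that $\M(U) = \dnt(U)\cdot\F(U)$ is finitely generated over the Noetherian ring $\dnt(U)$, hence coherent. Finally, $\F$ is generated over $\O_U$ by its global sections because $U$ is affine, so the $\dnt_{|U}$-submodule of $\M$ generated by $\M(U) \supseteq \F(U)$ already contains $\dnt_{|U}\cdot\O_U\cdot\F(U) = \dnt_{|U}\cdot\F = \M$, i.e. $\M$ is generated by its global sections.

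With all three hypotheses established, Theorem \ref{Cartan} gives that $\mathcal{S}$ is coherently $\hdnt$-affine, which is the assertion. The only mildly delicate step is the affine-local computation of $\M(U)$ in the second paragraph: one cannot manipulate $\M$ directly as a $\dnt$-module but must route through a good filtration by $\O$-coherent sheaves, using both exactness of $\Gamma(U,-)$ on quasi-coherent $\O_U$-modules and its commutation with the filtered colimit defining the filtration. Everything else is a direct appeal to the structural results of $\S$\ref{EnhDn}, $\S$\ref{GoodFilt} and $\S$\ref{Hdnt}.
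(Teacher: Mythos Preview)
Your proposal is correct and follows essentially the same route as the paper: reduce to Theorem \ref{Cartan} by verifying that $\dnt$ is Noetherian on $\mathcal{S}$, that $\hdnt$ is coherent, and that $\mathcal{S}$ is coherently $\dnt$-affine, the latter via quasi-coherence of coherent $\dnt$-modules as $\O$-modules on the affine $U$. The only cosmetic difference is in the finite-generation step: the paper passes through $\gr\Gamma(U,\M)\subseteq\Gamma(U,\gr\M)$ and uses Noetherianity of $\Gamma(U,\gr\dnt)$, whereas you compute each $\Gamma(U,F_i\M)=F_i\dnt(U)\cdot\F(U)$ directly and take the union; both arguments are valid.
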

\begin{proof} It is enough to show that $X$ is coherently $\hdnt$-affine whenever $X\in \mathcal{S}$, so let's assume that this is the case. 

Since $\dnt$ is Noetherian on $\mathcal{S}$ and coherent by Proposition \ref{EnhDn}, and $\hdnt$ is coherent by part (c) of the Proposition, by Theorem \ref{Cartan} it suffices to show that $X$ is coherently $\dnt$-affine, so let $\M$ be a coherent $\dnt$-module.

Since $X$ is affine and Noetherian, and $\M$ is quasi-coherent as an $\O$-module by Lemma \ref{GoodFilt}(a), $H^i(X,\M) = 0$ for all $i > 0$.

Next, choose a good filtration on $\M$ using Lemma \ref{GoodFilt}(c). Then $\Gamma(X, \gr \M)$ is a finitely generated $\Gamma(X, \gr \dnt)$-module by \cite[Theorem II.5.4]{Hart}. Since $\gr \Gamma(X,\M)$ is a submodule of $\Gamma(X, \gr \M)$ and $\Gamma(X, \gr \dnt)$ is Noetherian by Proposition \ref{EnhDn}(c), it follows that $\M(X)$ is a finitely generated $\dnt(X)$-module.

Finally, since $\M$ is a quasi-coherent $\O$-module, and $X$ is affine, $\M$ is generated by global sections as an $\O$-module and therefore as a $\dnt$-module.
\end{proof}

\subsection{The sheaf $\hdntK$}\label{PiAdicC}  
Recall the category of complete doubly filtered $K$-algebras from $\S \ref{DoubleFilt}$.

\begin{lem} The category of complete doubly filtered $K$-algebras with positively filtered slices has finite limits, and the forgetful functor to $K$-algebras preserves finite limits.
\end{lem}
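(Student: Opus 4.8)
The plan is to mimic the proof of the analogous statement for deformable $R$-algebras given in Lemma \ref{EnhDn}(a), replacing the associated graded functor $\gr(-)$ there by the slice-and-then-associated-graded functor $\Gr(-) = \gr(\gr_0(-))$ appropriate to the doubly filtered setting. Concretely, let $(A_i)_{i\in I}$ be a finite inverse system of complete doubly filtered $K$-algebras whose slices $\gr_0 A_i$ are positively filtered, with connecting maps $f_{ji}\colon A_j\to A_i$, and form the naive limit $A = \invlim A_i$ inside the product $\prod_i A_i$ in the category of $K$-algebras. I would equip $\prod_i A_i$ with the obvious lattice $\prod_i F_0A_i$, whose slice is $\prod_i \gr_0 A_i$ with the product filtration, and give $A$ the subspace lattice $F_0 A := A \cap \prod_i F_0 A_i$. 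Because $R$ is a complete discrete valuation ring, $F_0 A$ is automatically $\pi$-torsion-free and hence a lattice in $A$, so the only real content is to identify its slice and check completeness.

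The key step is to verify that the slice $\gr_0 A = F_0A/\pi F_0 A$ is naturally the limit in the category of positively filtered $k$-algebras of the slices $\gr_0 A_i$ — that is, that $\gr_0$ commutes with finite limits here. First I would check that taking $\pi$-adic slices commutes with finite products: $\gr_0\bigl(\prod_i A_i\bigr) \cong \prod_i \gr_0 A_i$, which is immediate since $R\to k$ is flat and the product is finite (so $\pi(\prod F_0A_i) = \prod \pi F_0A_i$). Then for the equaliser defining $A$, I would argue that reducing mod $\pi$ the exact sequence $0\to F_0A \to \prod_i F_0 A_i \rightrightarrows \prod_{j\ge i} F_0 A_i$ stays exact on the left: the point is that the cokernel of $\prod F_0A_i \to \prod F_0A_i$ computing the difference of the two maps is $\pi$-torsion-free because it embeds into the corresponding cokernel for the $A_i$ themselves (which are lattices in $K$-algebras, hence torsion-free), so $\Tor_1^R(k, -)$ of it vanishes and the sequence $0 \to \gr_0 A \to \prod_i \gr_0 A_i \rightrightarrows \prod \gr_0 A_i$ is exact. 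This identifies $\gr_0 A$ as the limit of the $\gr_0 A_i$ in filtered $k$-algebras, and in particular its filtration is positive. Completeness of $F_0 A$ for the $\pi$-adic filtration follows because a closed subspace of a complete space is complete (each $F_0A_i$ is $\pi$-adically complete by hypothesis), and completeness of the filtration on $\gr_0 A$ follows because it is the limit of complete positively filtered rings and hence complete — a closed-subspace argument inside the product again. Finally I would check the universal property: any cone of morphisms of complete doubly filtered $K$-algebras into the $A_i$ factors uniquely through $A$ as $K$-algebras by the universal property of the limit in $K$-algebras, and the factoring map preserves lattices and induces a filtered map on slices by construction, so it is a morphism in our category; uniqueness is inherited from the uniqueness at the level of $K$-algebras. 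The statement that the forgetful functor preserves finite limits is then exactly the assertion that the underlying $K$-algebra of $A$ is $\invlim A_i$, which is how $A$ was built.

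The main obstacle I anticipate is the bookkeeping needed to see that the two filtrations on $\gr_0 A$ — the one it inherits as a subobject of $\prod_i \gr_0 A_i$ and the one it should carry as the limit in filtered $k$-algebras — coincide, and that this common filtration is \emph{separated} and \emph{complete}; the positivity is cheap but completeness requires knowing that a Cauchy sequence in $\gr_0 A$ whose image is Cauchy (hence convergent) in each $\gr_0 A_i$ has its limit lying in the equaliser $\gr_0 A$, which is where the closedness of $\gr_0 A$ in the product is used. I would handle this by working throughout with the explicit description of $A$ and $\gr_0 A$ as equalisers inside products and repeatedly invoking that finite products and equalisers of complete (resp. complete positively filtered) objects are again complete, exactly as in the proof of Lemma \ref{EnhDn}(a). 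Everything else — the axioms of a doubly filtered $K$-algebra, functoriality, and the preservation statement — is then routine.
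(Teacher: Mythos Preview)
Your proposal is essentially correct and follows the same approach as the paper's proof. The paper organises the argument slightly differently, invoking \cite[Theorem V.2.1]{MacLane} to reduce to finite products and equalisers separately rather than treating an arbitrary finite limit via its equaliser-of-products description, but the substantive content is identical: for products everything is immediate, and for equalisers the key point is that the target is $\pi$-torsion-free, which is exactly what makes your $\Tor_1$ argument work and lets one identify the slice of the limit with a $k$-subalgebra of the slice of the source.

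One simplification you are missing: your worry about completeness of the filtration on $\gr_0 A$ is unnecessary. The subspace filtration inherited from $\prod_i \gr_0 A_i$ is positive because each factor's filtration is positive by hypothesis, and a positive filtration is automatically complete (any Cauchy sequence is eventually constant). The paper dispatches this in one clause: ``$C$ becomes a complete doubly filtered $K$-algebra with positive slice, since the filtration on $\gr_0 A$ is positive by assumption.'' Also, where you write ``cokernel'' in your $\Tor$ argument you mean the image of the difference map; it is that image, sitting inside the $\pi$-torsion-free product $\prod F_0A_i$, whose torsion-freeness you need.
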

\begin{proof}
By \cite[Theorem V.2.1]{MacLane} it suffices to show that this category has finite products and equalisers.

Let $A_1,\ldots,A_n$ be complete doubly filtered $K$-algebras and let $A:=\prod A_i$ be their product as $K$-algebras. It is easy to check that $F_0A:=\prod F_0A_i$ is a $\pi$-adically complete $R$-lattice in $A$ and that $\gr_0 A=\prod \gr_0 A_i$ in the category of $k$-algebras. By giving $\gr_0A$ the product filtration induced from the $\gr_0A_i$ we can make $A$ into a complete doubly filtered $K$-algebra that satisfies the universal property for products.

Now let $A$ and $B$ be complete doubly filtered $K$-algebras with positively filtered slices, and let $f,g\colon A\to B$ be two morphisms. Let $C=\{a\in A\mid f(a)=g(a)\}$ be their equaliser in the category of $K$-algebras; then $F_0C := C\cap F_0A$ is a lattice in $C$ and it is $\pi$-adically complete being a closed $R$-submodule of $F_0A$. Since $B$ is $\pi$-torsion-free, we may identify $\gr_0C$ with a $k$-subalgebra of $\gr_0A$. When we equip $\gr_0 C$ with the subspace filtration from $\gr_0 A$, $C$ becomes a complete doubly filtered $K$-algebra with positive slice, since the filtration on $\gr_0A$ is positive by assumption. It is straightforward to verify that $C$ satisfies the universal property for equalisers. 
 \end{proof}
 
%Recall that in $\S$\ref{GrComp} we have defined the $\pi$-adic completion functor $A \mapsto \hK{A}$ from the category of deformable $R$-algebras to the category of complete doubly filtered %$K$-algebras.

\begin{defn} The \emph{sheaf of completed, deformed, crystalline differential operators of the torsor $\xi : \Ex{X} \to X$ is} 
\[\hdntK := \hdnt \otimes_R K.\]
\end{defn}
Since $K$ is a flat $R$-module, for any open subset $U$ of $X$ we have
\[\Gamma(U, \hdntK) = \Gamma(U, \hdnt)\otimes_R K.\]

\begin{prop} \hfill
\be\item If $U \in \mathcal{S}$ then $\hdntK(U)$ is an almost commutative affinoid $K$-algebra and $\Gr(\hdntK(U)) \cong \O(\Ex{T^\ast U_k})$.
\item $\hdntK$ is a sheaf of complete doubly filtered $K$-algebras whenever $X$ is quasi-compact.
\item The sheaf $\hdntK$ is coherent.
\item $\hdnt / \pi \hdnt$ is isomorphic to $\dnt / \pi \dnt$.
\ee
\end{prop}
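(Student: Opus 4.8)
The plan is to deduce all four assertions from the structural results already at hand, dispatching (a), (c) and (d) quickly and concentrating the effort on (b).

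For (a), first identify the sections over $U\in\mathcal{S}$: by Proposition \ref{EnhDn}(a), $\dnt(U)\cong\Ex{\D}(U)_n$, and by Proposition \ref{Hdnt}(b), $\hdnt(U)\cong\widehat{\dnt(U)}$, so $\hdntK(U)\cong\widehat{\Ex{\D}(U)_{n,K}}$ in the notation of \S\ref{GrComp}. Now $\Ex{\D}(U)$ is an almost commutative deformable $R$-algebra by Lemma \ref{EnhDn}(b) and Corollary \ref{RelEnv}(b), so Proposition \ref{acsa} shows at once that $\hdntK(U)$ is an almost commutative affinoid $K$-algebra, while Corollary \ref{GrComp} together with Corollary \ref{RelEnv}(a) gives
\[\Gr(\hdntK(U))\cong\gr\Ex{\D}(U)/\pi\,\gr\Ex{\D}(U)\cong\Sym_{\O(U)}\Ex{\T}(U)\otimes_R k\cong\O(\Ex{T^\ast U_k}),\]
the last isomorphism because the enhanced cotangent bundle commutes with the base change $R\to k$ (compare the local description in Lemma \ref{EnhCB}).

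For (c), I would simply mimic the proof of Proposition \ref{Hdnt}(c): for $V\subseteq U$ in $\mathcal{S}$, the algebra $\hdntK(U)$ is Noetherian by (a), and the restriction $\hdntK(U)\to\hdntK(V)$ is the flat morphism $\hdnt(U)\to\hdnt(V)$ of Proposition \ref{Hdnt}(c) base-changed along the flat map $R\to K$, hence flat; so $\hdntK$ is coherent by \cite[Proposition 3.1.1]{Berth}. For (d), I would check that the canonical morphism $\dnt/\pi\dnt\to\hdnt/\pi\hdnt$ is an isomorphism on sections over each member of the base $\mathcal{S}$. For $U\in\mathcal{S}$ the ring $\dnt(U)$ is Noetherian, so the standard fact about $\pi$-adic completions of Noetherian rings (\cite[\S 3.2.3]{Berth}) gives $\widehat{\dnt(U)}/\pi\widehat{\dnt(U)}\cong\dnt(U)/\pi\dnt(U)$; and since $\mathcal{S}$ is coherently $\hdnt$-affine by the Corollary in \S\ref{Hdnt} (and, by the same argument, coherently $\dnt$-affine), Lemma \ref{Compl} applied with $J=(\pi)$ to the coherent sheaves $\hdnt$ and $\dnt$ identifies $(\hdnt/\pi\hdnt)(U)$ with $\hdnt(U)/\pi\hdnt(U)$ and $(\dnt/\pi\dnt)(U)$ with $\dnt(U)/\pi\dnt(U)$. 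Combining these identifications finishes (d).

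The real work is in (b). The strategy is to exploit Lemma \ref{PiAdicC}: the category $\mathcal{C}$ of complete doubly filtered $K$-algebras with positively filtered slices has finite limits, and the forgetful functor to $K$-algebras preserves them. When $X$ is quasi-compact it is Noetherian, so every open $U$ is quasi-compact and admits a finite cover $U=U_1\cup\cdots\cup U_m$ with $U_i\in\mathcal{S}$; because $X$ is separated, each $U_i\cap U_j$ is again affine, trivialises $\xi$ and has finitely generated coordinate ring, hence lies in $\mathcal{S}$. By (a) every $\hdntK(U_i)$ and $\hdntK(U_i\cap U_j)$ is an object of $\mathcal{C}$ — the slice $\dnt(U)/\pi\dnt(U)$ is positively filtered precisely because the filtration on $\dnt(U)=\Ex{\D}(U)_n$ is positive — and the restriction maps among them are $\mathcal{C}$-morphisms, since the restrictions of the sheaf of filtered $R$-algebras $\dnt$ preserve the positive filtrations and therefore the lattices and the induced slice filtrations. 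As $\hdntK=\hdnt\otimes_R K$ is a sheaf of $K$-algebras over the Noetherian space $X$, $\hdntK(U)$ is the equaliser of $\prod_i\hdntK(U_i)\rightrightarrows\prod_{i<j}\hdntK(U_i\cap U_j)$ in $K$-algebras, and by Lemma \ref{PiAdicC} this equaliser is computed inside $\mathcal{C}$, so $\hdntK(U)$ inherits a complete doubly filtered structure. The remaining, and I expect only genuinely tedious, step is the bookkeeping: a common-refinement argument to see that this structure is independent of the chosen finite cover, a check that the restriction maps $\hdntK(U)\to\hdntK(V)$ for arbitrary opens $V\subseteq U$ are morphisms in $\mathcal{C}$, and a verification that the sheaf axioms hold in $\mathcal{C}$. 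Note that the quasi-compactness hypothesis is used exactly because $\mathcal{C}$ possesses only \emph{finite} limits, so infinite open covers cannot be fed into it directly.
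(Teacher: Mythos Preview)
Your proposal is correct and follows essentially the same route as the paper. For (a) and (c) your argument matches the paper's almost verbatim; for (d) the paper uses the same local identification $\widehat{\dnt(U)}/\pi\widehat{\dnt(U)}\cong\dnt(U)/\pi\dnt(U)$ but packages it via a pair of short exact sequences rather than invoking Lemma \ref{Compl} directly, which is a purely cosmetic difference. For (b) the paper is much terser than you are --- it simply covers $U$ by finitely many $U_i\in\mathcal{S}$, observes that $\hdntK(U)$ is the finite inverse limit of the $\hdntK(U_i)$, and invokes the Lemma on finite limits --- so your additional bookkeeping about independence of the cover and compatibility of restriction maps is more careful than what the paper actually writes down, but the underlying idea is identical.
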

\begin{proof} (a) Since $U \in \mathcal{S}$, $\dnt(U)$ is an almost commutative $R$-algebra by Proposition \ref{EnhDn}(b), and we saw in $\S \ref{EnhDn}$ that $\dnt(U)$ is a deformable $R$-algebra. Hence 
\[\hdntK(U) = \hdnt(U) \otimes_R K \cong \widehat{\dnt(U)} \otimes_RK = \widehat{\Ex{\D}(U)_n} \otimes_RK \]
by Propositions \ref{Hdnt}(b) and \ref{EnhDn}(a), and this is an almost commutative affinoid $K$-algebra by Proposition \ref{acsa}. The second statement follows from Corollaries \ref{GrComp} and \ref{RelEnv}(a).

(b) Since $X$ is quasi-compact and locally Noetherian by assumption, every open subset $U$ of $X$ is the finite union $U = U_1\cup \cdots \cup U_m$ of some $U_i \in \mathcal{S}$. Since $\hdntK$ is a sheaf, $\hdntK(U)$ is the inverse limit of the $\hdntK(U_i)$ which are almost commutative affinoid $K$-algebras by part (a). Now apply the Lemma.

(c) This follows from Proposition \ref{Hdnt}(c).

(d) The natural map $\dnt \longrightarrow \hdnt$ induces a diagram of sheaves of $R$-algebras
\[ \xymatrix{ 0 \ar[r] & \dnt \ar[r]^{\pi}\ar[d] & \dnt \ar[r]\ar[d] & \dnt / \pi \dnt \ar[r]\ar[d] & 0 \\
0 \ar[r] & \hdnt \ar[r]_{\pi} & \hdnt \ar[r]  & \hdnt / \pi \hdnt \ar[r] & 0 }\]
with exact rows. By Proposition \ref{Hdnt}(a) and Corollary \ref{Hdnt}, the rows of the diagram
\[ \xymatrix{ 0 \ar[r] & \dnt(U) \ar[r]^{\pi}\ar[d] & \dnt(U) \ar[r]\ar[d] & (\dnt / \pi \dnt)(U) \ar[r]\ar[d] & 0 \\
0 \ar[r] & \hdnt(U) \ar[r]_{\pi} & \hdnt(U) \ar[r]  & (\hdnt / \pi \hdnt)(U) \ar[r] & 0 }\]
are still exact for any $U \in \mathcal{S}$. Since $\hdnt(U)$ is the $\pi$-adic completion of $\dnt(U)$ by Proposition \ref{Hdnt}(b), the third vertical map in this diagram is an isomorphism for any $U \in \mathcal{S}$ and the result follows.
\end{proof}

\subsection{Modules over $\hdntK$ and double filtrations}
\label{DoubleFiltSheaf}
Whenever $\N$ is a sheaf of $R$-modules on $X$, let $\N_K := \N \otimes_R K$ denote the corresponding sheaf of $K$-vector spaces. Let $\M$ be a sheaf of $K$-vector spaces over $X$ and let $F_0\M$ be a sheaf of $R$-submodules of $\M$. We say that $F_0\M$ is an \emph{$R$-lattice} in $\M$ if
\begin{itemize}
\item the natural map $(F_0 \M)_K \to \M$ is an isomorphism, and
\item $\bigcap_{a = 0}^\infty  \pi^a F_0 \M = 0$.
\end{itemize}
If $F_0 \M$ is an $R$-lattice in $\M$, we call $\gr_0 \M := F_0 \M / \pi F_0\M$ the \emph{slice} of $\M$; this is a sheaf of $k$-vector spaces. For example, $\hdnt$ is an $R$-lattice in $\hdntK$ with slice isomorphic to $\dnt / \pi \dnt$ by Proposition \ref{PiAdicC}(d).

Now suppose that $\M$ is a sheaf of $\hdntK$-modules. A \emph{double filtration} on $\M$ consists of an $R$-lattice $F_0\M$ in $\M$ which is a $\hdnt$-submodule, and a positive $\mathbb{Z}$-filtration $F_\bullet \gr_0 \M$ on $\gr_0 \M$ compatible with the filtration on $\dnt$. We call
\[\Gr \M := \gr(\gr_0 \M)\]
the \emph{associated graded sheaf} of $\M$ with respect to this double filtration. 

Note that $\hdntK(U)$ is a doubly filtered $K$-algebra by Proposition \ref{PiAdicC}(b) for any open $U \subseteq X$, and recall the notion of double filtrations on modules over such algebras from $\S \ref{GoodDouble}$.

\begin{lem} Let $(F_0 \M, F_\bullet \gr_0 \M)$ be a double filtration on a $\hdntK$-module $\M$ and let $U \subseteq X$ be an open subscheme. 
\be
\item $((F_0\M)(U), (F_\bullet \gr_0 \M)(U))$ is a double filtration on $M := \M(U)$.
\item There is a natural embedding $\Gr(M) \hookrightarrow (\Gr \M)(U)$ of $\Gr(\hdntK(U))$-modules. 
\ee
\end{lem}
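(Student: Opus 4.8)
Both parts should follow formally from the left-exactness of the global sections functor $\Gamma(U,-)$, together with the exact sequence $0 \to \hdnt \xrightarrow{\pi} \hdnt \to \dnt/\pi\dnt \to 0$ of Proposition \ref{PiAdicC}(d) and its analogue for $\M$; the only non-formal ingredient is that $U$ is quasi-compact, which I will use to recover surjectivity at the points where $\Gamma(U,-)$ fails to be right exact, and which holds in all cases to which we apply the Lemma (e.g. whenever $X$ is Noetherian). Write $A := \hdntK(U)$, so that $F_0 A = \hdnt(U)$ and the slice $\gr_0 A = F_0A/\pi F_0A$ embeds naturally into $(\dnt/\pi\dnt)(U)$, an isomorphism when $U \in \mathcal{S}$; also write $M := \M(U)$ and $L := (F_0\M)(U)$.

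For part (a) I would first apply $\Gamma(U,-)$ to the inclusion $F_0\M \subseteq \M$ of sheaves of modules over $\hdnt \subseteq \hdntK$ to see that $L$ is an $F_0A$-submodule of $M$. It is $\pi$-torsion-free, being a subsheaf of the $K$-vector-space sheaf $\M$, and separated because $\bigcap_a \pi^a L \subseteq \left(\bigcap_a \pi^a F_0\M\right)(U) = 0$. For $K\cdot L = M$ I would cover the quasi-compact $U$ by finitely many opens $U_1,\dots,U_r$; a given $s \in M$ has $s|_{U_i} = \pi^{-a_i}t_i$ with $t_i \in (F_0\M)(U_i)$, whence $\pi^{\,a}s \in L$ for $a = \max_i a_i$ by the sheaf axiom, so $L$ is an $R$-lattice in $M$. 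Next, applying $\Gamma(U,-)$ to $0 \to F_0\M \xrightarrow{\pi} F_0\M \to \gr_0\M \to 0$ produces a natural embedding $\iota\colon \gr_0 M = L/\pi L \hookrightarrow (\gr_0\M)(U)$, along which I would pull back to filter $\gr_0 M$ by $F_i\gr_0 M := \iota^{-1}\!\big((F_i\gr_0\M)(U)\big)$ — this is literally $(F_\bullet\gr_0\M)(U)$ when $\iota$ is surjective, e.g. for $U \in \mathcal{S}$. One then checks routinely that this filtration is increasing, positive (since $F_{-1}\gr_0\M = 0$), exhaustive (here quasi-compactness of $U$ lets $\Gamma(U,-)$ commute with the union $\bigcup_i F_i\gr_0\M = \gr_0\M$), and compatible with the filtration on $\gr_0 A$ — the last by applying $\Gamma(U,-)$ to the sheaf-level inclusions $F_i(\dnt/\pi\dnt)\cdot F_j\gr_0\M \subseteq F_{i+j}\gr_0\M$ and restricting scalars along $\gr_0 A \to (\dnt/\pi\dnt)(U)$.

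For part (b) I would unwind the definitions: $\Gr(M) = \gr(\gr_0 M)$, where $\gr_0 M$ carries the subspace filtration it inherits from $(\gr_0\M)(U)$ via $\iota$. Since passing to associated graded along a subspace filtration is injective, $\iota$ induces an injection $\Gr(M) = \gr(\gr_0 M) \hookrightarrow \gr\!\big((\gr_0\M)(U)\big)$. Applying $\Gamma(U,-)$ to the short exact sequences $0 \to F_{i-1}\gr_0\M \to F_i\gr_0\M \to \gr_i\gr_0\M \to 0$ and summing over $i \geq 0$ (again using quasi-compactness of $U$ to commute $\Gamma(U,-)$ with the direct sum) gives the standard embedding $\gr\!\big((\gr_0\M)(U)\big) \hookrightarrow \big(\gr\gr_0\M\big)(U) = (\Gr\M)(U)$; composing the two is the required map. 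It is $\Gr(A)$-linear because each map in sight is the restriction to $U$ of a morphism of sheaves of modules over the sheaf of graded rings $\gr(\dnt/\pi\dnt)$, all $\Gr(A)$-structures arising by restriction of scalars along $\Gr(A) = \gr(\gr_0 A) \to \Gamma\!\big(U, \gr(\dnt/\pi\dnt)\big)$.

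The main obstacle is not conceptual but organisational: since $\Gamma(U,-)$ is only left exact, none of $(-)\otimes_R K$, reduction mod $\pi$, or $\gr(-)$ commutes with taking sections — each yields a natural monomorphism, not an isomorphism — so one must carefully keep $L/\pi L$ distinct from $(\gr_0\M)(U)$ and $\gr$ of sections distinct from sections of $\gr$, and invoke quasi-compactness of $U$ at precisely the steps where $K$-generation or exhaustiveness of a filtration is needed.
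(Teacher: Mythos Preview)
Your proposal is correct and follows essentially the same approach as the paper's proof: both arguments apply the left-exactness of $\Gamma(U,-)$ to the short exact sequence $0 \to F_0\M \xrightarrow{\pi} F_0\M \to \gr_0\M \to 0$ to obtain the embedding $\gr_0 M \hookrightarrow (\gr_0\M)(U)$, equip $\gr_0 M$ with the induced subspace filtration, and then compose the resulting injection $\Gr(M) \hookrightarrow \gr((\gr_0\M)(U))$ with the sectionwise embeddings coming from $0 \to F_{i-1}\gr_0\M \to F_i\gr_0\M \to \gr_i\gr_0\M \to 0$. You are more explicit than the paper about the role of quasi-compactness of $U$ in verifying that $(F_0\M)(U)$ is genuinely an $R$-lattice and that the pulled-back filtration is exhaustive --- the paper simply asserts ``Certainly $F_0M$ is an $R$-lattice'' --- but this is a point of care rather than a difference in strategy.
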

\begin{proof} Certainly $F_0 M := (F_0\M)(U)$ is an $R$-lattice and a $\hdnt(U)$-submodule in $M$. The short exact sequence $0 \to F_0 \M \stackrel{\pi}{\to} F_0 \M \to \gr_0 \M \to 0$ induces an embedding $\gr_0 M \hookrightarrow (\gr_0 \M)(U)$ of $\hdnt(U) / \pi \hdnt(U)$-modules. The separated filtration on the sheaf $\gr_0\M$ induces a separated filtration on $(\gr_0\M)(U)$ and hence a separated filtration on $\gr_0 M$. Taking associated graded modules, we obtain an inclusion
\[\Gr(M) = \gr \gr_0 M \hookrightarrow \gr \left((\gr_0\M)(U)\right)\]
of graded $\gr \left(\hdnt(U)/\pi\hdnt(U)\right)$-modules. 

Each short exact sequence $0 \to F_{i-1} (\gr_0 \M) \to F_i (\gr_0 \M) \to \gr_i (\gr_0 \M) \to 0$ of sheaves induces an embedding
\[\gr_i ((\gr_0 \M)(U)) \hookrightarrow (\gr_i (\gr_0 \M))(U).\]
Putting these together gives the required inclusion $\Gr(M) \hookrightarrow (\Gr(\M))(U)$ of graded $\Gr(\hdntK(U))$-modules.
\end{proof}

\subsection{Good double filtrations}
\label{GoodDoubleSheaf}
\begin{defn}
Let $\M$ be a sheaf of $\hdntK$-modules. We say that a double filtration $(F_0\M, F_\bullet \gr_0 \M)$ on $\M$ is \emph{good} if $F_0\M$ is a coherent $\hdnt$-module and $F_\bullet \gr_0 \M$ is a good filtration on $\gr_0 \M$ as a $\dnt$-module. 
\end{defn}

\begin{prop} Let $\M$ be a coherent $\hdntK$-module. 
\be \item $\M$ has at least one good double filtration $(F_0\M, F_\bullet \gr_0 \M)$.
\item If $(F_0\M, F_\bullet \gr_0 \M)$ is a good double filtration on $\M$ then for any $U \in \mathcal{S}$, $((F_0\M)(U), (F_\bullet \gr_0 \M)(U))$ is a good double filtration on $\M(U)$ and the map
\[\Gr(\M(U)) \hookrightarrow (\Gr\M)(U)\]
appearing in Lemma \ref{DoubleFiltSheaf}(b) is an isomorphism.
\ee
\end{prop}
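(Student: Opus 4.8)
The plan is to prove the two parts in sequence.

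\emph{Part (a).} I would build the lattice $F_0\M$ and the filtration $F_\bullet\gr_0\M$ separately. The first task is to produce a coherent $\hdnt$-submodule $F_0\M$ of $\M$ with $(F_0\M)_K=\M$. Since $X$ is quasi-compact I would cover it by finitely many $U_1,\dots,U_r\in\mathcal{S}$, and over each $U_i$ take the $\hdnt|_{U_i}$-submodule $\N_i\subseteq\M|_{U_i}$ generated by a finite generating set of $\M|_{U_i}$ as a $\hdntK|_{U_i}$-module; each $\N_i$ is coherent over $\hdnt|_{U_i}$ because $\hdnt$ is Noetherian on $\mathcal{S}$ (Proposition \ref{Hdnt}(b)), and it is an $R$-lattice in $\M|_{U_i}$ since $\M|_{U_i}$ is a sheaf of $K$-vector spaces; note that by Proposition \ref{PiAdicC}(a) the rings $\hdntK(U_i)$ are almost commutative affinoid $K$-algebras with lattices $\hdnt(U_i)$, so this is exactly the local setting of $\S\ref{GoodDouble}$. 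On the (quasi-compact) overlaps the $\N_i$ are pairwise commensurable, so a standard local-to-global argument glues them into a single coherent $\hdnt$-submodule $F_0\M\subseteq\M$ with $(F_0\M)_K=\M$; the remaining lattice condition $\bigcap_a\pi^aF_0\M=0$ then holds because over each $U_i$ the ring $\hdnt(U_i)$ is Noetherian and $\pi$-adically complete, so one may invoke the Krull intersection theorem. Having $F_0\M$, I would set $\gr_0\M:=F_0\M/\pi F_0\M$: by Proposition \ref{PiAdicC}(d) this is coherent over $\hdnt/\pi\hdnt\cong\dnt/\pi\dnt$, hence a coherent $\dnt$-module (as $\dnt$ is coherent by Proposition \ref{EnhDn}(d) and $\pi\dnt$ is a coherent ideal), and Lemma \ref{GoodFilt}(c) supplies a good filtration $F_\bullet\gr_0\M$ of it as a $\dnt$-module. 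This filtration is positive and compatible with the filtration on $\dnt$, so $(F_0\M,F_\bullet\gr_0\M)$ is a double filtration on $\M$ in the sense of $\S\ref{DoubleFiltSheaf}$, and it is good by Definition \ref{GoodDoubleSheaf}.

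\emph{Part (b).} Here the plan is a chase through the definitions, the essential input being that over $U\in\mathcal{S}$ everything in sight is acyclic: $F_0\M$ is coherent over $\hdnt$ and $\mathcal{S}$ is coherently $\hdnt$-affine (Corollary \ref{Hdnt}); $\gr_0\M$ and each $F_j\gr_0\M$ are coherent $\dnt$-modules, and coherent $\dnt$-modules are $\Gamma(U,-)$-acyclic for $U\in\mathcal{S}$ (this is shown en route to Corollary \ref{Hdnt}); and each $\gr_j\gr_0\M$ is a coherent $\O$-module by Lemma \ref{GoodFilt}(d), hence $\Gamma(U,-)$-acyclic since $U$ is affine and Noetherian. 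Given a good double filtration, Lemma \ref{DoubleFiltSheaf} already shows that $((F_0\M)(U),(F_\bullet\gr_0\M)(U))$ is a double filtration on $M:=\M(U)$ and gives natural embeddings $\gr_0 M\hookrightarrow(\gr_0\M)(U)$ and $\Gr(M)\hookrightarrow(\Gr\M)(U)$. I would then apply $\Gamma(U,-)$ to $0\to F_0\M\stackrel{\pi}{\to}F_0\M\to\gr_0\M\to 0$ and use $H^1(U,F_0\M)=0$ to get $(\gr_0\M)(U)=(F_0\M)(U)/\pi(F_0\M)(U)=\gr_0 M$, upgrading the first embedding to an isomorphism; similarly, applying $\Gamma(U,-)$ to $0\to F_{j-1}\gr_0\M\to F_j\gr_0\M\to\gr_j\gr_0\M\to 0$ and using $H^1(U,F_{j-1}\gr_0\M)=0$ yields $(F_j\gr_0\M)(U)/(F_{j-1}\gr_0\M)(U)\cong(\gr_j\gr_0\M)(U)$, so summing over $j$ upgrades the second embedding to an isomorphism. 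Finally $(\Gr\M)(U)=\Gamma(U,\gr\gr_0\M)$ is the module of global sections over the affine scheme $\Ex{T^\ast U_k}$ of a coherent sheaf (Proposition \ref{EnhDn}(c)), hence finitely generated over $\O(\Ex{T^\ast U_k})\cong\Gr(\hdntK(U))$ by Proposition \ref{PiAdicC}(a); combined with the observation that the positive filtration $(F_\bullet\gr_0\M)(U)$ is separated, this shows the induced double filtration on $M$ is good.

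The step I expect to be the main obstacle is the gluing in part (a): promoting the local coherent $\hdnt$-lattices $\N_i$ to a single global one. This is the only place where one genuinely uses the quasi-compactness of $X$ and needs a uniform bound on the commensurability of the $\N_i$ over overlaps; the rest of the argument is bookkeeping controlled by the coherently-$\msD$-affine results of $\S\ref{Hdnt}$ and the affineness of the members of $\mathcal{S}$.
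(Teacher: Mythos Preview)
Your proposal is correct and follows essentially the same route as the paper. For part (a) the paper outsources the construction of the coherent $\hdnt$-lattice to \cite[Lemma 3.4.3]{Berth} rather than spelling out the local-to-global gluing you describe, and verifies the separatedness condition $\bigcap_a \pi^a F_0\M = 0$ by invoking Lemma \ref{CohMcomplete} (so that $F_0\M \cong \invlim F_0\M/\pi^a F_0\M$) instead of Krull intersection; for part (b) the paper's argument is identical to yours, except that it justifies $H^1(U, F_{j-1}\gr_0\M)=0$ via Lemma \ref{GoodFilt}(d) ($F_{j-1}\gr_0\M$ is $\O$-coherent and $U$ is affine) rather than via coherent $\dnt$-acyclicity.
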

\begin{proof} (a) By the proof of \cite[Lemma 3.4.3]{Berth}, we can find a coherent $\hdnt$-submodule $\N$ of $\M$ such that the natural map $\N_K \to \M$ is an isomorphism. Since $\N \cong \invlim \N / \pi^n \N$ by Lemma \ref{CohMcomplete}, $\N$ is an $R$-lattice in $\M$. Now $\N / \pi \N$ is a coherent $\hdnt / \pi \hdnt$-module and $\dnt$ surjects onto $\hdnt / \pi \hdnt$ by Proposition \ref{PiAdicC}(d), so $\N / \pi \N$ is also a coherent $\dnt$-module. We can therefore find a good filtration $F_\bullet (\N / \pi \N)$ on $\N / \pi \N$ by Lemma \ref{GoodFilt}(c). Thus $(\N, F_\bullet(\N / \pi \N))$ is a good double filtration on $\M$.

(b) Since $\F_0\M$ is a coherent $\hdnt$-module, $H^1(U, \F_0\M) = 0$ by Corollary \ref{Hdnt}, so the inclusion $\gr_0 (\M(U))  \hookrightarrow (\gr_0 \M)(U)$ is an isomorphism. The cokernel of the inclusion
\[\gr_i ((\gr_0 \M)(U)) \hookrightarrow (\gr_i (\gr_0 \M))(U)\]
is precisely $H^1(U, F_{i-1}\gr_0 \M)$ which is zero because $U$ is affine and Noetherian and because $F_{i-1}\gr_0 \M$ is a coherent $\O$-module by Lemma \ref{GoodFilt}(d). Therefore 
\[\Gr(\M(U)) \longrightarrow (\Gr\M)(U)\]
is an isomorphism. Now $\Gr\M$ is a coherent $\gr \dnt$-module by assumption and $U$ is affine and Noetherian, so $(\Gr \M)(U)$ is a finitely generated $(\gr \dnt)(U)$-module. But $\gr \dnt \cong \tau_\ast \O_{\Ex{T^\ast X}}$ by Proposition \ref{EnhDn}(c) and $(\Gr \M)(U)$ is killed by $\pi$, so $\Gr (\M(U)) \cong (\Gr \M)(U)$ is a finitely generated $\Gr(\hdntK(U))$-module by Proposition \ref{PiAdicC}(a).
\end{proof}

\subsection{Theorem} \label{hdntKaff} $\mathcal{S}$ is coherently $\hdntK$-affine.

\begin{proof} Since $\hdntK$ is coherent by Proposition \ref{PiAdicC}(d), once again it suffices to consider the case where $X\in\mathcal{S}$ and show that $X$ is coherently $\hdntK$-affine.

By Proposition \ref{GoodDoubleSheaf}(a), $\M$ has an $R$-lattice $F_0\M$ that is coherent as a $\hdnt$-module. By Corollary \ref{Hdnt}, $F_0\M$ is $\Gamma$-acyclic, $F_0\M(X)$ is finitely generated as a $\hdnt(X)$-module and $F_0\M$ is generated by global sections. The result follows easily from this together with the fact that $H^i(X,\M)\cong H^i(X,F_0\M)\otimes_R K$ for all $i\ge 0$. This latter is true because $X$ is a Noetherian space, so cohomology commutes with direct limits by \cite[Proposition III.2.9]{Hart}. \end{proof}

\subsection{Characteristic varieties}\label{CharVar} 
Let $\M$ be a coherent $\hdntK$-module. Pick a good double filtration on $\M$ using Proposition \ref{GoodDoubleSheaf}; then $\Gr(\M) = \gr (\gr_0\M)$ is a coherent $\gr \dnt$-module and $\gr \dnt \cong \tau_\ast \O_{\Ex{T^\ast X}}$ by Proposition \ref{EnhDn}(c).

\begin{defn} The \emph{characteristic variety} of $\M$ is the support of $\Gr(\M)$ regarded as a sheaf on the enhanced cotangent bundle $\Ex{T^\ast X}$. More precisely, writing $\widetilde{\Gr(\M)}$ for the $\O_{\Ex{T^\ast X}}$-module $\O_{\Ex{T^\ast X}}\otimes_{\tau^{-1}\tau_\ast \mathcal{O}_{\Ex{T^\ast X}}}\tau^{-1}(\Gr(\M))$, we define

\[\Ch(\M) := \Supp(\widetilde{\Gr(\M)}) \subseteq \Ex{T^\ast X}.\]
\end{defn}
Since $\widetilde{ \Gr(\M)}$ is a coherent sheaf on $T^\ast X$, $\Ch(\M)$ is closed. $\Ch(\M)$ is actually contained in the special fibre $\Ex{T^\ast X_k}$ of the enhanced cotangent bundle because $\Gr(\M)$ is annihilated by $\pi$.

\begin{lem} Let $\M$ be a coherent $\hdntK$-module and let $U \in \mathcal{S}$. Then $\Ch(\M) \cap \Ex{T^\ast U} = \Ch(\M(U))$.
\end{lem}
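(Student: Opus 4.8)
The plan is to reduce the statement to the commutative-geometry fact that the support of a coherent sheaf is local, using the localisation results already assembled. Recall that $\Ch(\M)$ was defined by choosing a good double filtration $(F_0\M, F_\bullet\gr_0\M)$ on $\M$, forming $\Gr(\M)=\gr(\gr_0\M)$, which is a coherent $\gr\dnt\cong\tau_\ast\O_{\Ex{T^\ast X}}$-module, and taking the support of the associated $\O_{\Ex{T^\ast X}}$-module $\widetilde{\Gr(\M)}$. Since $\Ex{T^\ast U}=\tau^{-1}(U)$ is an open subscheme of $\Ex{T^\ast X}$, the support of $\widetilde{\Gr(\M)}$ intersected with $\tau^{-1}(U)$ is simply the support of the restriction $\widetilde{\Gr(\M)}_{|\tau^{-1}(U)}$, which is $\widetilde{\Gr(\M)_{|U}}$ because the functor $\widetilde{(-)}$ commutes with restriction to opens. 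So the whole content is that $\Gr(\M)_{|U}$ agrees, as a $(\gr\dnt)_{|U}$-module, with $\Gr(\M(U))$ computed from the restricted good double filtration on $\M_{|U}$ — and this is exactly what Proposition \ref{GoodDoubleSheaf}(b) provides.

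Concretely, first I would fix a good double filtration $(F_0\M,F_\bullet\gr_0\M)$ on $\M$ and note, via Proposition \ref{GoodDoubleSheaf}(b), that restricting it to $U\in\mathcal{S}$ gives a good double filtration $((F_0\M)(U),(F_\bullet\gr_0\M)(U))$ on $M:=\M(U)$, and that the natural map $\Gr(M)\to(\Gr\M)(U)$ is an isomorphism of $\Gr(\hdntK(U))$-modules. Since $\hdntK(U)$ is an almost commutative affinoid $K$-algebra with $\Gr(\hdntK(U))\cong\O(\Ex{T^\ast U_k})$ by Proposition \ref{PiAdicC}(a), the characteristic variety $\Ch(M)$ as defined in \S\ref{CharVarAff} is $\Supp(\Gr(M))\subseteq\Spec(\O(\Ex{T^\ast U_k}))=\Ex{T^\ast U_k}$. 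Because $\Ch(\M)$ does not depend on the choice of good double filtration (by the sheaf-theoretic analogue of Proposition \ref{CharVarAff}, which is proved the same way, and similarly $\Ch(\M(U))$ is independent of the filtration by Proposition \ref{CharVarAff}), it suffices to compare the two using this particular compatible pair of filtrations.

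Then I would identify $\Ch(\M)\cap\Ex{T^\ast U}$ with $\Supp(\widetilde{\Gr(\M)})\cap\tau^{-1}(U)=\Supp(\widetilde{\Gr(\M)}_{|\tau^{-1}(U)})$. Since $\widetilde{(-)}$ is defined by $\O_{\Ex{T^\ast X}}\otimes_{\tau^{-1}\tau_\ast\O_{\Ex{T^\ast X}}}\tau^{-1}(-)$ and both $\tau^{-1}$ and the tensor product commute with restriction to the open $\tau^{-1}(U)$, we get $\widetilde{\Gr(\M)}_{|\tau^{-1}(U)}\cong\widetilde{(\Gr\M)_{|U}}$, where the tilde on the right is now formed over $U$ using $\tau_{|U}$. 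Applying $(\Gr\M)(U)\cong\Gr(\M(U))$ from the first step and unwinding the definition of $\Ch$ of a module over the affinoid algebra $\hdntK(U)$, the support of this sheaf on $\Ex{T^\ast U}$ is exactly $\Ch(\M(U))$. The one point needing a little care — and the main (minor) obstacle — is the bookkeeping around the two incarnations of ``associated sheaf'': on the sheaf side $\Gr\M$ lives over $X$ and one spreads it out over $\Ex{T^\ast X}$, whereas $\Ch(\M(U))$ is defined intrinsically for a module over the affinoid ring $\hdntK(U)$; reconciling these amounts to checking that spreading out $(\Gr\M)(U)$ over $\Ex{T^\ast U}$ via the quasi-coherence mechanism on the affine scheme $\Ex{T^\ast U_k}=\Spec\Gr(\hdntK(U))$ gives the restriction $\widetilde{\Gr(\M)}_{|\tau^{-1}(U)}$, which is a routine consequence of the fact that the functor $M\mapsto\widetilde M$ on quasi-coherent sheaves over an affine scheme is an equivalence and is compatible with the sheaf-theoretic construction on $X$. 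Everything else is formal.
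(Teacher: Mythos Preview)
Your proposal is correct and follows essentially the same approach as the paper: invoke Proposition \ref{GoodDoubleSheaf}(b) to identify $\Gr(\M(U))\cong(\Gr\M)(U)$, then use that support commutes with restriction to the open $\Ex{T^\ast U}=\tau^{-1}(U)$ and that the tilde construction is compatible with this restriction. The paper's proof is just a terser version of the same chain of identifications.
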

\begin{proof} By Proposition \ref{GoodDoubleSheaf}(b), the good double filtration on $\M$ induces a good double filtration on $\M(U)$ such that $\Gr \M(U) \cong (\Gr\M)(U)$. Then
\[\begin{array}{lllll} \Ch(\M(U)) &=& \Supp(\Gr(\M(U))) &=& \Supp( \widetilde{ (\Gr\M)(U) }) \\ &=& \Supp( \widetilde{\Gr \M}_{|\Ex{T^\ast U}} ) &=& \Ch(\M) \cap \widetilde{T^\ast U}\end{array}\]
as required.
\end{proof}

\begin{cor} \hfill \be 
\item $\Ch(\M)$ does not depend on the choice of good double filtration on $\M$.

\item If $0\to \mathcal{L}\to \M\to \mathcal{N}\to 0$ is a short exact sequence of coherent $\hdntK$-modules then $\Ch(\M)=\Ch(\mathcal{L})\cup \Ch(\mathcal{N})$.  
\ee
\end{cor}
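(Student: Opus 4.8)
The plan is to reduce both parts to the affinoid statement already proved in Proposition \ref{CharVarAff}, by testing everything locally on the base $\mathcal{S}$. The starting observation is that since $\mathcal{S}$ is a base for $X$ and $\tau\colon\Ex{T^\ast X}\to X$ is the structure map of a vector bundle, the family $\{\Ex{T^\ast U} : U\in\mathcal{S}\}$ is an open cover of $\Ex{T^\ast X}$; hence a closed subset of $\Ex{T^\ast X}$ is determined by its traces on the $\Ex{T^\ast U}$. By Lemma \ref{CharVar} the trace $\Ch(\M)\cap\Ex{T^\ast U}$ is precisely $\Ch(\M(U))$, and by Proposition \ref{PiAdicC}(a) each $\hdntK(U)$ is an almost commutative affinoid $K$-algebra with $\Gr(\hdntK(U))\cong\O(\Ex{T^\ast U_k})$ commutative and Noetherian, so that Proposition \ref{CharVarAff} genuinely applies to the finitely generated $\hdntK(U)$-module $\M(U)$.

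For part (a), I would take two good double filtrations on $\M$; by Proposition \ref{GoodDoubleSheaf}(b) each restricts to a good double filtration on $\M(U)$ for every $U\in\mathcal{S}$, and by Lemma \ref{CharVar} the trace $\Ch(\M)\cap\Ex{T^\ast U}$ computed from either equals the characteristic variety of $\M(U)$ computed from the corresponding restricted filtration. Since Proposition \ref{CharVarAff} asserts that $\Ch(\M(U))$ does not depend on the choice of good double filtration on the $\hdntK(U)$-module $\M(U)$, the two sheaf-level filtrations produce the same trace on each $\Ex{T^\ast U}$, hence the same subset $\Ch(\M)$ of $\Ex{T^\ast X}$.

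For part (b), I would fix $U\in\mathcal{S}$ and use Theorem \ref{hdntKaff}, which says $\mathcal{S}$ is coherently $\hdntK$-affine, to conclude that $\Gamma(U,-)$ is exact on coherent $\hdntK$-modules; thus $0\to\mathcal{L}(U)\to\M(U)\to\mathcal{N}(U)\to 0$ is a short exact sequence of finitely generated $\hdntK(U)$-modules. The additivity clause of Proposition \ref{CharVarAff} then gives $\Ch(\M(U))=\Ch(\mathcal{L}(U))\cup\Ch(\mathcal{N}(U))$, and translating through Lemma \ref{CharVar} this becomes $\Ch(\M)\cap\Ex{T^\ast U}=(\Ch(\mathcal{L})\cup\Ch(\mathcal{N}))\cap\Ex{T^\ast U}$. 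Letting $U$ range over $\mathcal{S}$ and using that the $\Ex{T^\ast U}$ cover $\Ex{T^\ast X}$ yields $\Ch(\M)=\Ch(\mathcal{L})\cup\Ch(\mathcal{N})$.

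There is essentially no serious obstacle here — the whole argument is a local-to-global reduction onto machinery built earlier — so the only thing I would be careful to record explicitly is the bookkeeping that licenses the reduction: that the restricted double filtrations are again good (Proposition \ref{GoodDoubleSheaf}(b)), that $\Gr$ of the local ring is commutative and Noetherian so the affinoid characteristic-variety formalism of $\S\ref{CharVarAff}$ is available, and that coherent $\hdntK$-affineness really does give exactness of $\Gamma(U,-)$ on coherent modules. These are routine, but they are exactly the hypotheses that Lemma \ref{CharVar} and Proposition \ref{CharVarAff} need, so I would not want to skip them.
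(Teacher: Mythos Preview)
Your proof is correct and follows essentially the same route as the paper: reduce to local sections over $U\in\mathcal{S}$ via Lemma \ref{CharVar}, invoke Theorem \ref{hdntKaff} for exactness of $\Gamma(U,-)$, and apply Proposition \ref{CharVarAff} to each $\M(U)$. The paper's version is more terse, but the ingredients and logic are identical.
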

\begin{proof} By the Lemma, $\Ch(\M) = \bigcup_{U \in \mathcal{S}} \Ch(\M(U))$ only depends on the local sections $\M(U)$ of $\M$. It follows from Theorem \ref{hdntKaff} that $\Gamma(U,-)$ is exact on coherent $\hdntK$-modules, and now both parts follow from Proposition \ref{CharVarAff}. \end{proof}

\subsection{Coherent cohomology}\label{CohCom}
We now specialise to the setting of $\S \ref{AlgGps}$, so that $X$ is the flag variety and $\xi : \Ex{\Fl} \to \Fl$ is the locally trivial $\mb{H}$-torsor from the basic affine space to the flag variety. Recall the enhanced moment map $\beta : \Ex{T^\ast \Fl} \to \fr{g}^\ast$ from $\S$\ref{EnhMom} and the enhanced cotangent bundle $\tau : \Ex{T^\ast \Fl} \to \Fl$ from $\S$\ref{AlgGps}:
\[\xymatrix{ \Ex{\Fl}\ar[dr]_{\xi} & & \Ex{T^\ast \Fl} \ar[dl]^{\tau} \ar[dr]_{\beta} & \\
                           & \Fl & & \fr{g}^\ast
}\]

Let $\M$ be a $\dnt$-module. Then $H^\bullet(\Fl, \M) := \bigoplus_{i\geq 0} H^i(\Fl, \M)$ is naturally a $\Gamma(\Fl, \dnt)$-module, and we can view it as a $\Un$-module via the natural map $\varphi_n : \Un \to \Gamma(\Fl,\Ex{\D})_n \to \Gamma(\Fl,\dnt)$ obtained by applying the deformation functor to the morphism $U(\varphi) : U(\fr{g}) \to \Ex{\D}$ of deformable $R$-algebras defined in $\S$\ref{QMM} and then sheafifying.

\begin{prop} Let $\M$ be a coherent $\dnt$-module and let $F_\bullet \M$ be a good filtration on $\M$. Then
\be \item $H^\bullet(\Fl, \gr \M)$ is a finitely generated $\O(\fr{g}^\ast)$-module, and
\item $H^\bullet(\Fl,\M)$ is a finitely generated $\Un$-module.
\ee \end{prop}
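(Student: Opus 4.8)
The statement is a $\pi$-adically deformed, homogeneous-space analogue of the standard fact that, for a good filtration on a coherent $\D$-module on a projective variety, the cohomology of the associated graded sheaf is finitely generated over the ring of functions on the cotangent bundle, and hence the cohomology of the module itself is finitely generated over the Weyl-algebra-like ring. I would prove (b) from (a) by a Rees-module / spectral-sequence argument, and prove (a) by pushing the good filtration on $\M$ through $\tau$ and using the projectivity of the enhanced moment map $\beta$.

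\medskip

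\emph{Step 1: reduce (a) to a statement about coherent sheaves on $\Ex{T^\ast\Fl}$.} By Proposition \ref{EnhDn}(c) we have $\gr\dnt\cong\tau_\ast\O_{\Ex{T^\ast\Fl}}$, so $\gr\M$, being a coherent $\gr\dnt$-module by the definition of a good filtration, corresponds (via the equivalence between coherent $\tau_\ast\O_{\Ex{T^\ast\Fl}}$-modules on $\Fl$ and coherent $\O_{\Ex{T^\ast\Fl}}$-modules, using that $\tau$ is affine) to a coherent sheaf $\widetilde{\gr\M}$ on $\Ex{T^\ast\Fl}$ with $H^i(\Fl,\gr\M)\cong H^i(\Ex{T^\ast\Fl},\widetilde{\gr\M})$ for all $i$. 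The enhanced moment map $\beta:\Ex{T^\ast\Fl}\to\fr g^\ast$ is projective by Proposition \ref{EnhMom}(b), and $\fr g^\ast=\Spec(\Sym_R\fr g)$ is affine over $\Spec R$ with $R$ Noetherian. Hence by the coherence theorem for proper (here projective) morphisms, $R^i\beta_\ast\widetilde{\gr\M}$ is a coherent $\O_{\fr g^\ast}$-module for all $i$, i.e.\ a finitely generated $\O(\fr g^\ast)$-module; and since $\fr g^\ast$ is affine, $H^\bullet(\Ex{T^\ast\Fl},\widetilde{\gr\M})\cong H^\bullet(\fr g^\ast, R^\bullet\beta_\ast\widetilde{\gr\M})$ is finitely generated over $\O(\fr g^\ast)=\Sym_R\fr g$. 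This gives (a). (One subtlety to check: the finiteness theorem for projective morphisms over a Noetherian affine base; this is EGA~III and applies since $R$ is Noetherian. The module $\widetilde{\gr\M}$ is genuinely coherent because $\gr\M$ is a coherent $\gr\dnt$-module and $\Fl$ is Noetherian.)

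\medskip

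\emph{Step 2: deduce (b) from (a) via the Rees construction.} Form the Rees sheaf $\Ex{\M}:=\bigoplus_{i\ge 0} F_i\M\cdot t^i$, a graded module over the Rees sheaf of algebras $\Ex{\dnt}=\bigoplus_i F_i\dnt\cdot t^i$; then $\Ex{\M}/(t)\cong\gr\M$ and $\Ex{\M}/(t-1)\cong\M$, while $\gr\dnt$ is the degree-zero-in-a-suitable-sense quotient. Taking cohomology, $H^\bullet(\Fl,\Ex{\M})$ is a graded module over $H^\bullet(\Fl,\Ex{\dnt})$, and the long exact sequence coming from $0\to\Ex{\M}\xrightarrow{t}\Ex{\M}\to\gr\M\to 0$ together with the finite generation of $H^\bullet(\Fl,\gr\M)$ over $H^0(\Fl,\gr\dnt)=\O(\fr g^\ast)$ (Step 1, using that each graded piece $F_i\M$ is a coherent $\O$-module by Lemma \ref{GoodFilt}(d), so cohomology commutes with the direct sum and each $H^i(\Fl,F_j\M)$ is finitely generated over $R$) shows by a standard graded-Nakayama / lifting argument that $H^\bullet(\Fl,\Ex{\M})$ is finitely generated over $H^\bullet(\Fl,\Ex{\dnt})$. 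Specialising $t\mapsto 1$ — i.e.\ tensoring with the $\Ex{\dnt}$-algebra $\dnt$ and using the five lemma on the presentation — yields that $H^\bullet(\Fl,\M)$ is finitely generated over $H^\bullet(\Fl,\dnt)$, hence over its subring image of $\varphi_n:\Un\to\Gamma(\Fl,\dnt)$, once we know $\Gamma(\Fl,\dnt)$ is itself finitely generated as a $\Un$-module. But $\Gamma(\Fl,\gr\dnt)\cong\O(\Ex{T^\ast\Fl})$ is finitely generated over $\O(\fr g^\ast)=\gr\Un$ via $\beta^\sharp$ by Step 1 applied to $\M=\dnt$, and lifting this through the Rees filtration gives that $\Gamma(\Fl,\dnt)$ is finitely generated over $\Un$; so $H^\bullet(\Fl,\M)$ is finitely generated over $\Un$.

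\medskip

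\emph{Main obstacle.} The delicate point is the interchange of cohomology with the various (infinite) direct sums and inverse limits implicit in the filtration/Rees bookkeeping: one must know that $H^i(\Fl,\Ex{\M})=\bigoplus_j H^i(\Fl,F_j\M)$ (legitimate here since $\Fl$ is Noetherian so cohomology commutes with direct limits, \cite[Proposition III.2.9]{Hart}, and each $F_j\M$ is $\O$-coherent by Lemma \ref{GoodFilt}(d)), and that the graded ring $H^\bullet(\Fl,\Ex{\dnt})$ is Noetherian so that "finitely generated submodule/subquotient" arguments are valid — this in turn reduces to $H^0(\Fl,\gr\dnt)=\O(\fr g^\ast)$ being Noetherian, which it is. Once these finiteness-and-commutation facts are in place, the argument is the usual good-filtration argument; the only genuinely new input over the classical case is the projectivity of the enhanced moment map, which has already been supplied by Proposition \ref{EnhMom}.
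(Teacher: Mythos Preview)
Your Step 1 for part (a) is exactly the paper's proof: pass to $\Ex{T^\ast\Fl}$ via the affine map $\tau$, then use projectivity of $\beta$ and affineness of $\fr g^\ast$.

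For part (b) your Rees-module route is correct but more laborious than what the paper does. The paper observes directly that the good filtration $F_\bullet\M$ induces a filtration on a \v{C}ech complex computing $H^\bullet(\Fl,\M)$, hence a convergent spectral sequence
\[
E_1^{ij}=H^{i+j}(\Fl,\gr_{-i}\M)\;\Rightarrow\;H^{i+j}(\Fl,\M),
\]
so $\gr H^\bullet(\Fl,\M)$ is a subquotient of $H^\bullet(\Fl,\gr\M)$ as a $\gr\Un$-module. Since $\gr\Un\cong S(\fr g)=\O(\fr g^\ast)$ by Lemma \ref{Defs}, part (a) gives that $H^\bullet(\Fl,\gr\M)$ is finitely generated over the Noetherian ring $\gr\Un$, hence so is its subquotient $\gr H^\bullet(\Fl,\M)$, and $H^\bullet(\Fl,\M)$ is finitely generated over $\Un$.

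Your Rees argument and the paper's spectral-sequence argument are of course two packagings of the same idea (the spectral sequence is the one attached to the $t$-filtration on the Rees complex). But note that your detour through $\Gamma(\Fl,\dnt)$ is unnecessary: since $\O(\fr g^\ast)\cong\gr\Un$, you can run the graded Nakayama/lifting directly over the Rees ring of $\Un$, and never need to establish separately that $\Gamma(\Fl,\dnt)$ is module-finite over $\Un$. The paper's phrasing also makes clearer that the only structural input beyond (a) is Noetherianity of $\gr\Un$.
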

\begin{proof} (a) The morphism $\tau : \Ex{T^\ast \Fl} \to \Fl$ is affine. By definition, $\gr \M$ is a coherent $\gr \dnt \cong \tau_\ast \O_{\Ex{T^\ast \Fl}}$-module. Thus if we define $\Ex{\gr \M} = \mathcal{O}_{\Ex{T^\ast \Fl}}\otimes_{\tau^{-1}\tau_\ast\mathcal{O}_{\Ex{T^\ast\Fl}}}\tau^{-1}(\gr \M)$, then $\Ex{\gr \M}$ is a coherent $\O_{\Ex{T^\ast \Fl}}$-module such that $\tau_\ast \Ex{\gr \M}=\gr \M$. Next,
\[H^\bullet(\Fl, \gr \M) \cong H^\bullet(\Ex{T^\ast \Fl}, \Ex{\gr \M})\] by  \cite[Corollaire III.1.3.3]{EGAIII}. Since the enhanced moment map $\beta : \Ex{T^\ast \Fl} \to \fr{g}^\ast$ is projective by Proposition \ref{EnhMom}(b),  $R^\bullet \beta_\ast \Ex{\gr \M}$ is a coherent $\O_{\fr{g}^\ast}$-module by \cite[Theorem III.8.8(b)]{Hart}. Since $\fr{g}^\ast$ is affine, $\Gamma(\fr{g}^\ast, R^\bullet \beta_\ast \Ex{\gr \M}) \cong H^\bullet(\Ex{T^\ast \Fl}, \Ex{\gr \M})$ by \cite[Proposition III.8.5]{Hart} and therefore $H^\bullet(\Fl, \gr \M)$ is a finitely generated $\O(\fr{g}^\ast)$-module.

(b) The filtration $F_\bullet \M$ on $\M$ induces a filtration on any \v{C}ech complex computing $H^\bullet (\Fl, \M)$ and hence a convergent third octant cohomological spectral sequence
\[E_1^{ij}= H^{i+j}(\Fl,\gr_{-i}\M)\Rightarrow H^{i+j}(\Fl,\M) .\]
This spectral sequence implies that $\gr H^\bullet (\Fl, \M)$ is a subquotient of $H^\bullet(\Fl, \gr \M)$ as a $\gr \Un$-module. But $\gr \Un$ is Noetherian because it is isomorphic to $S(\fr{g})$ by Lemma \ref{Defs}, and the result follows.
\end{proof}

\subsection{Characteristic varieties of global sections}\label{Bry1}
The set of global sections $\Gamma(\Fl, \M)$ of a $\hdntK$-module $\M$ is a module over $A := \hugnK$ via the completed deformed ring homomorphism $\widehat{\varphi_{n,K}} : A \to \Gamma(\Fl, \hdntK)$, and $A$ is an almost commutative affinoid $K$-algebra by Proposition \ref{acsa}. The next result is an analogue of \cite[Lemma 1.6(c)]{BoBrIII}.

\begin{prop} Let $\M$ be a coherent $\hdntK$-module and let $M = \Gamma(\Fl,\M)$. Then $M$ is a finitely generated $A$-module and
\[ \Ch(M) \subseteq \beta(\Ch(\M)).\]
\end{prop}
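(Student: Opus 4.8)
The plan is to reduce the statement about the affinoid algebra $A = \hugnK$ and the sheaf $\hdntK$ to the known commutative geometry of the Grothendieck--Springer map $\beta : \Ex{T^\ast \Fl} \to \fr{g}^\ast$ via the associated graded construction. First I would note that $M = \Gamma(\Fl,\M)$ is finitely generated over $A$: this follows from Proposition \ref{CohCom}(b) applied to $F_0\M$ together with the fact that $H^\bullet(\Fl,-)$ commutes with $\otimes_R K$ on coherent modules, exactly as in the proof of Theorem \ref{hdntKaff}. More precisely, choose a good double filtration $(F_0\M, F_\bullet \gr_0\M)$ on $\M$ using Proposition \ref{GoodDoubleSheaf}(a); then $F_0\M$ is a coherent $\hdnt$-module, so $F_0 M := \Gamma(\Fl, F_0\M)$ is a finitely generated $\hdnt(\Fl)$-module by Corollary \ref{Hdnt}, hence an $R$-lattice in $M$, and $M$ is finitely generated over $A = \hdnt(\Fl)\otimes_R K$ (using the computation of global sections that identifies $\widehat{\varphi_{n,K}}$).

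Next I would produce a good double filtration on $M$ itself. Taking global sections of the good double filtration on $\M$, Proposition \ref{GoodDoubleSheaf}(b) does not directly apply since $\Fl \notin \mathcal{S}$, but one can argue as follows: $\gr_0 M$ carries the subspace/quotient filtration inherited from $(\gr_0\M)(\Fl)$, and by the spectral sequence argument of Proposition \ref{CohCom}(b) the module $\Gr(M) = \gr(\gr_0 M)$ is a subquotient of $H^0(\Fl, \Gr\M) \subseteq H^\bullet(\Fl, \Gr\M)$, which is a finitely generated $\O(\fr{g}^\ast)$-module by Proposition \ref{CohCom}(a). In particular $\Gr(M)$ is a finitely generated module over $\Gr(A) \cong \gr\Un/\pi\gr\Un \cong S(\fr{g}_k)$, so this double filtration on $M$ is good, and $\Ch(M) = \Supp_{\fr{g}_k^\ast}(\Gr(M))$.

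The heart of the argument is then the support computation. Write $\Ex{\gr\M}$ for the coherent $\O_{\Ex{T^\ast\Fl_k}}$-module with $\tau_\ast \Ex{\gr\M} = \Gr\M$, so that $\Ch(\M) = \Supp(\Ex{\gr\M})$. On one hand $\Gr(M)$ is a subquotient of $H^0(\Fl, \Gr\M) = \Gamma(\Ex{T^\ast\Fl_k}, \Ex{\gr\M})$ regarded as an $\O(\fr{g}_k^\ast)$-module via $\beta^\sharp$ (using Lemma \ref{QMM}, which says $U(\varphi)$ quantizes $\beta$, reduced mod $\pi$ and passed to associated graded); on the other hand, since $\beta$ is projective (Proposition \ref{EnhMom}(b)), $\beta_\ast \Ex{\gr\M}$ is coherent on $\fr{g}_k^\ast$ and $\Supp(\beta_\ast \Ex{\gr\M}) = \beta(\Supp(\Ex{\gr\M})) = \beta(\Ch(\M))$ because the image of a closed set under a projective (hence closed) morphism is closed and the support of a pushforward of a coherent sheaf along a proper map is the image of the support. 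Since $\Gamma(\Ex{T^\ast\Fl_k}, \Ex{\gr\M}) = \Gamma(\fr{g}_k^\ast, \beta_\ast\Ex{\gr\M})$ as $\O(\fr{g}_k^\ast)$-modules, its support over $\fr{g}_k^\ast$ is contained in $\Supp(\beta_\ast\Ex{\gr\M}) = \beta(\Ch(\M))$, and the support of the subquotient $\Gr(M)$ is contained in the support of $\Gamma(\fr{g}_k^\ast, \beta_\ast\Ex{\gr\M})$. Combining, $\Ch(M) = \Supp(\Gr(M)) \subseteq \beta(\Ch(\M))$.

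The main obstacle I anticipate is bookkeeping around the fact that $\Fl$ is not in $\mathcal{S}$, so global sections of a good double filtration need not be ``good'' in the naive sheaf sense; the fix is to bound $\Gr(M)$ by $H^\bullet(\Fl,\Gr\M)$ via the filtration spectral sequence rather than hoping for an isomorphism $\Gr(\M(\Fl)) \cong (\Gr\M)(\Fl)$, and to track carefully that the $S(\fr{g}_k)$-module structures on $\Gr(M)$, on $H^\bullet(\Fl,\Gr\M)$, and on $\Gamma(\fr{g}_k^\ast,\beta_\ast\Ex{\gr\M})$ all agree — this is where Lemma \ref{QMM} (compatibility of $U(\varphi)$ with $\beta^\sharp$) is essential. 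A secondary point to get right is that taking $\Supp$ of a subquotient only gives containment, which is exactly what the statement asserts, so no reverse inclusion is needed.
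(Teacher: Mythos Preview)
Your approach is essentially the paper's: induce a double filtration on $M$ from a good double filtration on $\M$, bound $\Gr(M)$ by $\Gamma(\Fl,\Gr\M)$, identify the latter with $\Gamma(\fr{g}^\ast,\beta_\ast\Ex{\Gr\M})$, and use properness of $\beta$. Two small points: the paper bypasses the spectral sequence by invoking Lemma \ref{DoubleFiltSheaf}(b), which already gives a direct embedding $\Gr(M)\hookrightarrow\Gamma(\Fl,\Gr\M)$ for any open $U$ including $\Fl$; and your claimed equality $\Supp(\beta_\ast\F)=\beta(\Supp\F)$ is false in general (take $\O(-1)$ on $\mathbb{P}^1\to\mathrm{pt}$) --- only the containment $\subseteq$ holds, but that is all you use.
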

\begin{proof} Choose a good double filtration on $\M$. Then by Lemma \ref{DoubleFiltSheaf} it induces a double filtration on $M$ and there is a natural embedding
\[\Gr(M) \hookrightarrow \Gamma(\Fl, \Gr(\M))\]
of $\Gr(A) \cong S(\fr{g}_k)$-modules. By Proposition \ref{CohCom}(a), $\Gamma(\Fl,\Gr(\M))$ is finitely generated over $S(\fr{g}_k)$. Since $S(\fr{g}_k)$ is Noetherian, $\Gr(M)$ is a finitely generated $\Gr(A)$-module and hence the double filtration on $M$ is good. Hence $M$ is finitely generated over $A$ by Lemma \ref{GoodDouble} and
\[\Ch(M) = \Supp(\Gr(M)) \subseteq \Supp(\Gamma(\Fl, \Gr(\M))).\]
Let $\F :=  \Ex{\Gr \M} := \O_{\Ex{T^\ast \Fl}}\otimes_{\tau^{-1}\tau_\ast\O_{\Ex{T^\ast \Fl}}} \Gr(\M)$ so that $\Ch(\M) = \Supp(\F)$. Since $\Gr(\M)$ is already a $\tau_\ast \O_{\Ex{T^\ast \Fl}}-$module, the natural map $\Gr(\M) \to \tau_\ast \F$ is an isomorphism. But $\tau : \Ex{T^\ast \Fl} \to \Fl$ is an affine morphism,  so
\[\Gamma(\Fl, \Gr(\M)) = \Gamma(\Fl, \tau_\ast \F) = \Gamma(\Ex{T^\ast \Fl}, \F) = \Gamma(\fr{g}^\ast, \beta_\ast \F).\]
Whenever $f : X \to Y$ is a continuous map between topological spaces and $\mathcal{G}$ is a sheaf of abelian groups on $X$, we have $\Supp(f_\ast \mathcal{G}) \subseteq \overline{f(\Supp(\mathcal{G}))}$. Hence
\[\Ch(M) \subseteq \Supp(\Gamma(\Fl, \Gr(\M))) = \Supp(\Gamma(\fr{g}^\ast,\beta_\ast \F)) = \Supp(\beta_\ast \F) \subseteq \overline{\beta(\Ch(\M))}\]
because $\fr{g}^\ast$ is an affine scheme. Finally $\Ch(\M)$ is a closed subscheme of $\Ex{\T^\ast \Fl}$ since $\F$ is coherent, and $\beta$ is a proper morphism by Proposition \ref{EnhMom} and \cite[Theorem II.4.9]{Hart}, so $\beta(\Ch(\M))$ is closed and the result follows. \end{proof}

\subsection{The localisation functor}\label{EnhLoc}
Let $M$ be an $A := \hugnK$-module. For any open subscheme $U \subseteq \Fl$, the algebra $\hdntK(U)$ is an $A$-module via ring homomorphism $\widehat{\varphi_{n,K}} : \hugnK \to \hdntK(U)$. We can therefore define the \emph{localisation functor}
\[ \Loc :  \md{\hugnK} \to \md{\hdntK}\]
by letting $\Loc(M)$ be the sheafification of the presheaf $U \mapsto \hdntK(U) \otimes_A M$ on $\Fl$. This functor is right exact because it is the left adjoint to the global sections functor $\Gamma$.

\begin{prop} Let $M$ be a finitely generated $A$-module and let $\M = \Loc(M)$. Then
\be \item $\M$ is a coherent $\hdntK$-module.
\item If $U \in \mathcal{S}$ then $\M(U) = \hdntK(U) \otimes_A M$.
\ee\end{prop}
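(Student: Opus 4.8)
The plan is to reduce both assertions to a finite presentation of $M$ over $A := \hugnK$ and then push it through the functors $\Loc$ and $\Gamma(U,-)$, using only their formal properties. First I would record three facts: $\Loc$ is right exact (it is a left adjoint to $\Gamma$, as observed in \S\ref{EnhLoc}), $\Loc$ commutes with finite direct sums (again because it is a left adjoint), and $\Loc(A) = \hdntK$ (the defining presheaf $U \mapsto \hdntK(U) \otimes_A A \cong \hdntK(U)$ is already a sheaf). Since $A$ is an almost commutative affinoid $K$-algebra by Proposition \ref{acsa}, hence Noetherian, the finitely generated module $M$ admits a finite presentation $A^r \stackrel{u}{\longrightarrow} A^s \to M \to 0$.

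For part (a): applying $\Loc$ to this presentation produces an exact sequence of sheaves $\hdntK^r \to \hdntK^s \to \M \to 0$. Because $\hdntK$ is coherent by Proposition \ref{PiAdicC}(c), finite direct sums of copies of $\hdntK$ are coherent and cokernels of morphisms between them are coherent, so $\M$ is a coherent $\hdntK$-module.

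For part (b): fix $U \in \mathcal{S}$. Applying the right-exact functor $\hdntK(U) \otimes_A -$ to the presentation of $M$ yields an exact sequence $\hdntK(U)^r \to \hdntK(U)^s \to \hdntK(U) \otimes_A M \to 0$. On the other hand, Theorem \ref{hdntKaff} says $\mathcal{S}$ is coherently $\hdntK$-affine, so $\Gamma(U,-)$ is exact on coherent $\hdntK_{|U}$-modules; applied to the sheaf sequence of part (a) (whose terms are coherent) it gives an exact sequence $\hdntK(U)^r \to \hdntK(U)^s \to \M(U) \to 0$. The sheafification morphism from the presheaf $U' \mapsto \hdntK(U') \otimes_A M$ to $\M$ is natural and is the identity on the free terms, which are already sheaves, so the two sequences fit into a commutative ladder with exact rows whose first two vertical maps are identities; the Five Lemma then forces the canonical map $\hdntK(U) \otimes_A M \to \M(U)$ to be an isomorphism.

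The only nontrivial ingredient, and the one step where the hypothesis $U \in \mathcal{S}$ is genuinely used, is the exactness of $\Gamma(U,-)$ on coherent $\hdntK_{|U}$-modules, i.e. Theorem \ref{hdntKaff}: for a general open subset $U$ of $\Fl$ the canonical map $\hdntK(U) \otimes_A M \to \M(U)$ is only known to be surjective. Everything else is a formal diagram chase relying on $\Loc$ being a left adjoint and on the coherence of $\hdntK$.
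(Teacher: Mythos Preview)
Your proof is correct and follows essentially the same approach as the paper. Part (a) is identical; for part (b) the paper is slightly more concise---it observes directly that $\M_{|U} \cong \hdntK_{|U} \otimes_{\hdntK(U)} N$ with $N = \hdntK(U) \otimes_A M$ and then invokes Theorem \ref{hdntKaff} to recover $N$ as global sections---but your Five Lemma argument unwinds exactly the same content. One minor quibble: your closing remark that for general $U$ the canonical map is ``only known to be surjective'' is not quite right, since sheafification on sections need not be surjective either; but this is a side comment and does not affect the proof.
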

\begin{proof}
(a) Since $A$ is an almost commutative affinoid $K$-algebra, it is Noetherian by Lemma \ref{GoodDouble}(b). Hence we can find a presentation $F_1 \to F_0 \to M \to 0$ of $M$, where $F_0$ and $F_1$ are finitely generated free $A$-modules. Since $\Loc$ is right exact, we obtain a presentation $\Loc(F_1) \to \Loc(F_0) \to \M \to 0$. Since $\Loc(A) \cong \hdntK$ is coherent by Proposition \ref{PiAdicC}(c), we deduce that $\M$ is also coherent.

(b) Let $N := \hdntK(U) \otimes_A M$, a finitely generated $\hdntK(U)$-module. The restriction of $\M$ to $U$ is isomorphic to $\hdntK_{|U} \otimes_{\hdntK(U)} N$, and it follows from Theorem \ref{hdntKaff} that the sections of this sheaf over $U$  are simply $N$.\end{proof}

\subsection{The characteristic variety of $\Loc(M)$}
\label{CharLocM}
\begin{lem} Let $M$ be a finitely generated $A := \hugnK$-module, let $U \in \mathcal{S}$ and let $B := \hdntK(U)$. Then every good double filtration on $M$ induces a good double filtration on $N := B \otimes_A M$ and a natural surjection
\[\Gr(B) \otimes_{\Gr(A)} \Gr(M) \twoheadrightarrow \Gr(N).\]
\end{lem}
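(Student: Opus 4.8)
I want to produce a good double filtration on $N = B\otimes_A M$ from one on $M$, compatibly with the obvious filtrations on $B$ and $A$, and then read off the surjection on associated gradeds. The construction should be entirely formal: push forward the lattice and the filtration through the tensor product, check separatedness and finite generation using that $\Gr(A)$ and $\Gr(B)$ are Noetherian commutative $k$-algebras, and then use right-exactness of $\gr$ in the appropriate sense.

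First I would fix a good double filtration $(F_0M, F_\bullet\gr_0 M)$ on $M$. Recall from Proposition \ref{GoodDouble}(b) that since $\Gr(A)$ is Noetherian, $F_0M$ is an $R$-lattice in $M$ and $\gr_0 M = F_0M/\pi F_0M$ is finitely generated over $\gr_0 A$ with a good (hence separated) filtration. Now define $F_0N := F_0B \otimes_{F_0A} F_0M$, where $F_0B = \hdnt(U)$ and $F_0A = \hn{\U{g}}$ are the canonical lattices; more precisely take the image of $F_0B\otimes_{F_0A}F_0M$ in $N = B\otimes_A M$. Since $F_0B$ is flat over $F_0A$ (this is the flatness of completed deformations — $\gr$ of the relevant map is a localisation, cf. the proof of Proposition \ref{EnhDn}(d) and Proposition \ref{PiAdicC}), $F_0N$ is a $\pi$-torsion-free finitely generated $F_0B$-module, hence an $R$-lattice in $N$ by Proposition \ref{GoodDouble}(a). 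Its slice is $\gr_0 N = F_0N/\pi F_0N \cong \gr_0 B \otimes_{\gr_0 A} \gr_0 M$, using flatness to commute the tensor product with reduction mod $\pi$. Then put on $\gr_0 N$ the tensor-product filtration $F_\bullet \gr_0 N$ coming from the good filtration on $\gr_0 A$-module $\gr_0 M$ and the filtration on $\gr_0 B$; concretely $F_m\gr_0 N = \sum_{i+j=m} F_i\gr_0 B \cdot (1\otimes F_j\gr_0 M)$. One computes $\Gr(N) = \gr\gr_0 N$ is a quotient of $\Gr(B)\otimes_{\Gr(A)}\Gr(M)$: the principal-symbol map $\gr b \otimes \gr m \mapsto \gr(b\otimes m)$ is well-defined, $\Gr(B)$-linear and $\Gr(A)$-balanced, and surjective because the filtration on $\gr_0 N$ is generated by that data. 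Since $\gr_0 M$ is finitely generated over $\gr_0 A$ with good filtration, $\Gr(M)$ is finitely generated over $\Gr(A)$; as $\Gr(B)$ is Noetherian, $\Gr(B)\otimes_{\Gr(A)}\Gr(M)$ is finitely generated over $\Gr(B)$, hence so is its quotient $\Gr(N)$. Finally separatedness of $F_\bullet\gr_0 N$ follows from \cite[Theorem I.5.7]{LVO} since $\Gr(B)$ is Noetherian and the filtration on $\gr_0 B$ is complete, exactly as in the proof of Proposition \ref{GoodDouble}(b). Thus $(F_0N, F_\bullet\gr_0 N)$ is a good double filtration on $N$ and we have the desired surjection.

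**The main obstacle.** The one point that needs genuine care is checking that $F_0B$ is flat over $F_0A$ and that $\gr_0 B$ is flat over $\gr_0 A$, so that $F_0N$ is $\pi$-torsion-free and $\gr_0 N \cong \gr_0 B\otimes_{\gr_0 A}\gr_0 M$. This is where the structure of the Grothendieck--Springer situation enters: the map $\varphi_n$ on associated graded level is, by Lemma \ref{QMM} and Proposition \ref{EnhDn}(c), the pullback $\O(\fr{g}_k^\ast)\to\O(\Ex{T^\ast U_k})$ along the restriction of the enhanced moment map, which need not be flat globally — but here I only need it locally over $U\in\mathcal{S}$, where $\Ex{T^\ast U}$ is affine, and flatness of $F_0B$ over $F_0A$ (equivalently $\widehat{\varphi_{n,K}}$ restricted to $U$) follows by the graded-to-filtered flatness criterion \cite[Proposition 1.2]{ST} from flatness of the graded map. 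Strictly, for the statement as given one only needs the surjection $\Gr(B)\otimes_{\Gr(A)}\Gr(M)\twoheadrightarrow\Gr(N)$ and goodness of the induced double filtration, so I would phrase the argument to use flatness only where it is genuinely available and to obtain the surjectivity (which does not require flatness) from the fact that the tensor filtration on $\gr_0 N$ is by construction generated over $\gr_0 B$ by the image of $\gr_0 M$.
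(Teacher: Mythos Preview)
Your construction of the double filtration and the surjection is the paper's, and your separatedness argument via \cite[Proposition II.2.2.1 and Theorem I.4.14]{LVO} is fine. The one genuine misstep is the flatness detour.

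You claim $F_0B$ is flat over $F_0A$ and cite the proof of Proposition \ref{EnhDn}(d) and Proposition \ref{PiAdicC}. Those results establish flatness of the \emph{restriction} maps $\hdntK(U)\to\hdntK(V)$ for $V\subseteq U$ in $\mathcal{S}$, where the associated graded is a Zariski localisation. The map $A=\hugnK\to B=\hdntK(U)$ is not of this type: it is (the completion of) $U(\varphi)_n$, and on $\Gr$ it is the pullback $\O(\fr{g}_k^\ast)\to\O(\Ex{T^\ast U_k})$ along the enhanced moment map. There is no reason for this to be flat, even over a single $U\in\mathcal{S}$; the Springer fibres have varying dimension. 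So the justification you give is simply wrong, and the isomorphism $\gr_0 N\cong \gr_0 B\otimes_{\gr_0 A}\gr_0 M$ is not available.

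The paper bypasses this entirely, and you yourself sketch how in your last sentence. Since $F_0N$ is by definition the \emph{image} of $F_0B\otimes_{F_0A}F_0M$ in the $K$-vector space $N$, it is automatically $\pi$-torsion-free; Proposition \ref{GoodDouble}(a) then gives that it is a lattice. Right-exactness of $-\otimes_R k$ and of $\gr_0 B\otimes_{\gr_0 A}-$ gives only a \emph{surjection} $\gr_0 B\otimes_{\gr_0 A}\gr_0 M\twoheadrightarrow\gr_0 N$, but that is all that is needed: equip $\gr_0 N$ with the quotient filtration from the tensor filtration, and then $\Gr(N)$ is a quotient of $\Gr(B)\otimes_{\Gr(A)}\Gr(M)$ as required. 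Drop the flatness claim and your argument is exactly the paper's.
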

\begin{proof}
Let $F_0 N$ be the image of $F_0B \otimes_{F_0A} F_0M$ in $N$. Then $F_0 N \cdot K = N$ and $F_0N$ is a finitely generated $F_0B$-submodule of $N$, so $F_0N$ is an $R$-lattice in $N$ by Proposition \ref{GoodDouble}(a). Moreover the $\gr_0 B$-module $\gr_0 N = F_0N / \pi F_0N$ is a quotient of $\gr_0 B \otimes_{\gr_0 A} \gr_0 M$.

Equip $\gr_0 N$ with the quotient filtration induced from the tensor filtration on $\gr_0 B \otimes_{\gr_0 A} \gr_0 M$. Then $\Gr(N)$ is a quotient of $\Gr(B) \otimes_{\Gr(A)} \Gr(M)$, which implies that the filtration on $\gr_0 N$ as a $\gr_0 B$-module is good. \end{proof}

The following result is an analogue of \cite[Proposition 1.8]{BoBrIII}.

\begin{prop} Let $M$ be a finitely generated $A$-module and let $\M = \Loc(M)$. Then
\[\beta(\Ch(\M)) \subseteq \Ch(M).\]
\end{prop}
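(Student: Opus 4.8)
The plan is to establish the inclusion locally, by proving the stronger statement $\Ch(\M) \subseteq \beta^{-1}(\Ch(M))$ and then applying $\beta$. Both sides are subsets of the special fibre $\Ex{T^\ast \Fl_k}$, and since $\mathcal{S}$ is a base for $\Fl$ we have $\Ch(\M) = \bigcup_{U \in \mathcal{S}} \bigl(\Ch(\M) \cap \Ex{T^\ast U}\bigr)$, so it is enough to show $\Ch(\M) \cap \Ex{T^\ast U} \subseteq \beta^{-1}(\Ch(M))$ for each $U \in \mathcal{S}$. Fix such a $U$ and put $B := \hdntK(U)$. By Proposition \ref{EnhLoc}(b) we have $\M(U) = B \otimes_A M =: N$, and by Lemma \ref{CharVar} we have $\Ch(\M) \cap \Ex{T^\ast U} = \Ch(N)$, the characteristic variety of $N$ as a module over the almost commutative affinoid $K$-algebra $B$.

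First I would fix a good double filtration on $M$; by the Lemma proved above it induces a good double filtration on $N$ together with a surjection of graded modules $\Gr(B) \otimes_{\Gr(A)} \Gr(M) \twoheadrightarrow \Gr(N)$, and since the characteristic variety is independent of the good double filtration (the Proposition of \S\ref{CharVarAff}) we may compute both $\Ch(M)$ and $\Ch(N)$ from these. Next I would make the ingredients geometric: $\Gr(A) \cong S(\fr{g}_k) = \O(\fr{g}_k^\ast)$ by Corollary \ref{GrComp}, $\Gr(B) \cong \O(\Ex{T^\ast U_k})$ by Proposition \ref{PiAdicC}(a), and --- because $\widehat{\varphi_{n,K}}$ is obtained by $\pi$-adically completing the $n$-th deformation of the quantized moment map $U(\varphi) : \U{g} \to \Ex{\D}$ --- the homomorphism $\Gr(A) \to \Gr(B)$ is, by Lemmas \ref{Defs} and \ref{QMM}, precisely the comorphism of the restriction $\Ex{T^\ast U_k} \to \fr{g}_k^\ast$ of the enhanced moment map $\beta$.

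With these identifications, set $J := \Ann_{\Gr(A)}(\Gr(M))$, so that $\Ch(M) = V(J)$. Then $J \cdot \Gr(B)$ annihilates $\Gr(B) \otimes_{\Gr(A)} \Gr(M)$, hence also its quotient $\Gr(N)$, and therefore
\[\Ch(N) = \Supp(\Gr(N)) \;\subseteq\; V\bigl(J \cdot \Gr(B)\bigr) \;=\; \beta^{-1}\bigl(V(J)\bigr) \cap \Ex{T^\ast U} \;=\; \beta^{-1}(\Ch(M)) \cap \Ex{T^\ast U}.\]
Taking the union over $U \in \mathcal{S}$ gives $\Ch(\M) \subseteq \beta^{-1}(\Ch(M))$, whence $\beta(\Ch(\M)) \subseteq \Ch(M)$. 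I expect the one subtle point to be the middle step: checking that passing to associated graded rings turns the completed, deformed quantized moment map into the honest comorphism of $\beta$ over each $U \in \mathcal{S}$, and keeping careful track of whether one is working with the scheme-theoretic supports inside $\Spec\Gr(A)$ and $\Spec\Gr(B)$ or with the geometric characteristic varieties sitting in $\fr{g}_k^\ast$ and $\Ex{T^\ast \Fl_k}$. The remainder is formal: it uses only the right-exactness of $B \otimes_A (-)$ and the elementary facts that the support of a module shrinks under quotients and lands in the preimage of the base under extension of scalars.
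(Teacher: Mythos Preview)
Your proposal is correct and follows essentially the same approach as the paper: both arguments localise to $U \in \mathcal{S}$, invoke the Lemma giving the surjection $\Gr(B)\otimes_{\Gr(A)}\Gr(M)\twoheadrightarrow \Gr(N)$, and then use the elementary fact that the support of the tensor product lies in the preimage of $\Supp(\Gr(M))$ under $\beta$. The only cosmetic difference is that the paper phrases the last step as $\Supp(\Gr(B)\otimes_{\Gr(A)}\Gr(M))\subseteq\beta^{-1}(\Supp(\Gr(M)))$ directly (citing EGA) rather than passing through the annihilator ideal $J$ as you do; your additional remarks on identifying $\Gr(\widehat{\varphi_{n,K}})$ with the comorphism of $\beta$ are correct and make explicit what the paper leaves implicit via Lemma~\ref{QMM} and Corollary~\ref{GrComp}.
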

\begin{proof} Choose a good double filtration on $M$ using Proposition \ref{GoodDouble}(b). Since $\M$ is a coherent $\hdntK$-module by Proposition \ref{EnhLoc}(a),
\[\Ch(\M) = \bigcup_{U \in \mathcal{S}} \Ch(\M(U))\]
by Lemma \ref{CharVar}. Let $U \in \mathcal{S}$ and let $B = \hdntK(U)$. Then $N := \M(U)$ is equal to $B \otimes_A M$ by Proposition \ref{EnhLoc}(b), and $N$ carries a good double filtration such that $\Gr(N)$ is a quotient of $\Gr(B)\otimes_{\Gr(A)} \Gr(M)$ by the Lemma. Therefore
\[\Ch(N) = \Supp(\Gr(N)) \subseteq \Supp(\Gr(B)\otimes_{\Gr(A)} \Gr(M)) \subseteq \beta^{-1} (\Supp(\Gr(M)))\]
by \cite[\S 0.4.3.1]{EGAI}, and the result follows.
\end{proof}

\section{The Beilinson-Bernstein theorem for $\hdnlK$}
\label{BB}
We continue with the notation from $\S\ref{AlgGps}$ in this section. $R$ will denote a complete discrete valuation ring of arbitrary characteristic, unless stated otherwise.
\subsection{Serre twists}
\label{grDVanishing}
The scheme $\Fl$ is projective over $R$ by \cite[\S II.1.8]{Jantzen}. Fix an embedding $i : \Fl \hookrightarrow \mathbb{P}^N_R$ into some projective space over $R$ and let $\mathcal{L} := i^\ast \O(1)$ be the corresponding very ample invertible sheaf on $\Fl$.

For any $\O_\Fl$-module $\M$ and any $s \in \mathbb{Z}$, we let $\M(s) :=\M \otimes_{\O_{\Fl}}  \mathcal{L}^{\otimes s}$ denote the Serre twist of $\M$.

\begin{lem} Let $\F$ be a coherent $\O_\Fl$-module. Then there exists an integer $u \geq 0$ such that $\F(s)$ is generated by its global sections and is $\Gamma$-acyclic whenever $s \geq u$.\end{lem}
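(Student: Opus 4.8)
The plan is to deduce this from the classical theorems of Serre on projective schemes over a Noetherian ring, applied to the flag variety $\Fl$ and the very ample invertible sheaf $\mathcal{L} = i^\ast \O(1)$ fixed in $\S\ref{grDVanishing}$. Recall that $\Fl = \mb{G}/\mb{B}$ is projective over the complete discrete valuation ring $R$ by \cite[\S II.1.8]{Jantzen}, and that $R$ is Noetherian, so the hypotheses of the relevant results are in place. I would split the statement into its two halves: global generation of $\F(s)$ for $s$ large, and $\Gamma$-acyclicity of $\F(s)$ for $s$ large; taking $u$ to be the maximum of the two resulting bounds then finishes the argument.

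For the global generation part, I would first note that $\mathcal{L}$ is an \emph{ample} invertible sheaf on $\Fl$: it is the restriction along the closed immersion $i : \Fl \hookrightarrow \mathbb{P}^N_R$ of the ample sheaf $\O_{\mathbb{P}^N_R}(1)$, and the restriction of an ample sheaf to a closed subscheme is again ample. Hence, by the very definition of ampleness \cite[\S II.7]{Hart}, there is an integer $u_1 \geq 0$, depending on $\F$, such that $\F(s) = \F \otimes_{\O_\Fl} \mathcal{L}^{\otimes s}$ is generated by its global sections for all $s \geq u_1$.

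For the acyclicity part, I would invoke Serre vanishing directly: since $\Fl$ is projective over the Noetherian ring $R$ and $\mathcal{L}$ is very ample, \cite[Theorem III.5.2]{Hart} supplies an integer $u_2 \geq 0$, again depending on $\F$, such that $H^i(\Fl, \F(s)) = 0$ for all $i > 0$ and all $s \geq u_2$; that is, $\F(s)$ is $\Gamma(\Fl,-)$-acyclic for $s \geq u_2$. Setting $u := \max(u_1, u_2)$ then proves the lemma.

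The only point requiring any attention is the bookkeeping verification that the hypotheses of these two cited theorems are satisfied --- projectivity of $\Fl$ over the Noetherian base $R$, and (very) ampleness of $\mathcal{L}$ --- after which the statement is immediate, so there is no genuine mathematical obstacle here. (Alternatively, one could push everything forward along the affine morphism $i$ and argue on $\mathbb{P}^N_R$ instead, using the projection formula $i_\ast(\F(s)) \cong (i_\ast \F)(s)$ together with the exactness of $i_\ast$ and the vanishing of $R^j i_\ast$ for $j > 0$ to transport both global generation and higher cohomology vanishing between $\Fl$ and $\mathbb{P}^N_R$; but the direct route above is cleaner.)
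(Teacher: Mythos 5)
Your proof is correct and takes essentially the same route as the paper, which simply cites Serre's Theorems (Hartshorne Theorem II.5.17 for global generation and Theorem III.5.2(b) for vanishing). Your slight detour through ampleness of $\mathcal{L}$ and the definition of ample in place of the direct appeal to Theorem II.5.17 is immaterial.
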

\begin{proof} Because $\mathcal{L}$ is very ample, this follows from Serre's Theorems \cite[Theorem II.5.17 and Theorem III.5.2(b)]{Hart} \end{proof}

We will first study coherent modules over the sheaf of algebras
\[ \C := \Sym_{\O_\Fl}(\O_\Fl \otimes \fr{g}) \cong \O_\Fl \otimes \Sy{g}.\]
\begin{prop} Let $\M$ be a coherent $\C$-module. Then there exists an integer $t$ such that $\M(s)$ is $\Gamma$-acyclic whenever $s \geq t$.

\end{prop}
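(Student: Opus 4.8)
The plan is to reduce the statement to the classical Serre vanishing theorem on projective space over a Noetherian ring, by spreading $\C$ out over $\Fl\times\fr{g}^\ast$.

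First I would identify $\C=\O_\Fl\otimes_R\Sy{g}$ with the direct image $p_\ast\O_Z$ of the structure sheaf along the projection $p:Z\to\Fl$, where $Z:=\Fl\times\fr{g}^\ast$. Since $p$ is an affine morphism, push-forward along $p$ gives an equivalence between quasi-coherent $\O_Z$-modules and quasi-coherent $\C$-modules by standard properties of relative $\Spec$ \cite[Exercise II.5.17]{Hart}, and because $Z$, $\Fl$ and $\C$ are all Noetherian (note $\C(U)\cong\O(U)\otimes_R\Sy{g}$ is Noetherian for every affine open $U\subseteq\Fl$, so $\C$ is a coherent sheaf of rings) this equivalence carries coherent $\C$-modules to coherent $\O_Z$-modules. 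Write $\widetilde{\M}$ for the coherent $\O_Z$-module with $p_\ast\widetilde{\M}\cong\M$. As $p^\ast\mathcal{L}$ is invertible, the projection formula gives $p_\ast(\widetilde{\M}\otimes_{\O_Z}p^\ast\mathcal{L}^{\otimes s})\cong\M(s)$, and since $p$ is affine its higher direct images vanish on quasi-coherent sheaves, so $H^j(\Fl,\M(s))\cong H^j(Z,\widetilde{\M}\otimes_{\O_Z}p^\ast\mathcal{L}^{\otimes s})$ for all $j\geq0$.

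Next I would base-change the fixed embedding $i:\Fl\hookrightarrow\mathbb{P}^N_R$ with $\mathcal{L}=i^\ast\O(1)$ along $\fr{g}^\ast\to\Spec R$ to obtain a closed immersion $\iota:=i\times\id:Z\hookrightarrow\mathbb{P}^N_R\times\fr{g}^\ast=\mathbb{P}^N_{\Sy{g}}$, which satisfies $\iota^\ast\O(1)\cong p^\ast\mathcal{L}$. Here $\Sy{g}=\Sym_R\fr{g}$ is a polynomial ring over the Noetherian ring $R$, hence Noetherian, so $\mathbb{P}^N_{\Sy{g}}$ is a Noetherian scheme. Pushing forward along the affine morphism $\iota$ and applying the projection formula once more, the coherent sheaf $\mathcal{G}:=\iota_\ast\widetilde{\M}$ on $\mathbb{P}^N_{\Sy{g}}$ satisfies $\mathcal{G}(s)\cong\iota_\ast(\widetilde{\M}\otimes_{\O_Z}p^\ast\mathcal{L}^{\otimes s})$ and $H^j(Z,\widetilde{\M}\otimes_{\O_Z}p^\ast\mathcal{L}^{\otimes s})\cong H^j(\mathbb{P}^N_{\Sy{g}},\mathcal{G}(s))$ for all $j\geq0$. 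Now Serre's vanishing theorem for projective space over a Noetherian ring \cite[Theorem III.5.2(b)]{Hart} supplies an integer $t$ with $H^j(\mathbb{P}^N_{\Sy{g}},\mathcal{G}(s))=0$ for all $j>0$ and all $s\geq t$. Chaining the isomorphisms above then yields $H^j(\Fl,\M(s))=0$ for all $j>0$ and $s\geq t$, i.e.\ $\M(s)$ is $\Gamma$-acyclic for $s\geq t$, as required.

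The computations are all routine once this set-up is in place; the only point requiring a little care is the bookkeeping of the first paragraph — checking that ``coherent $\C$-module'' really does correspond to ``coherent $\O_Z$-module'' under $p_\ast$ (which is where Noetherianity of $\C$ enters) and that global cohomology is preserved under $p_\ast$ and $\iota_\ast$, which holds because both morphisms are affine. Everything else is two applications of the projection formula and a single appeal to classical Serre vanishing.
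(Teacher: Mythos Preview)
Your proof is correct and follows essentially the same route as the paper: both pass to $Z=\Fl\times\fr{g}^\ast$ via the affine projection $p$, identify $\M$ with $p_\ast$ of a coherent $\O_Z$-module, use the projection formula to track the twist, and then invoke Serre vanishing. The only difference is cosmetic: the paper applies the \emph{relative} Serre vanishing theorem \cite[Th\'eor\`eme III.2.2.1(ii)]{EGAIII} along the second projection $q:Z\to\fr{g}^\ast$ (using that $p^\ast\mathcal{L}$ is $q$-ample) and then deduces global vanishing from affineness of $\fr{g}^\ast$, whereas you embed $Z$ into $\mathbb{P}^N_{\Sy{g}}$ and cite the absolute version from Hartshorne directly. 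Your variant is arguably more elementary in its citations; the paper's is slightly more intrinsic since it avoids the auxiliary closed immersion into projective space.
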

\begin{proof}
Consider the following diagram of schemes over $\Spec(R)$:\[ \xymatrix{ & \mathcal{B}\times\fr{g}^\ast\ar[dl]_{p}\ar[dr]^{q} & \\ \mathcal{B}\ar[dr] & & \fr{g}^\ast\ar[dl] \\ & \Spec(R) . & \\   }\] Since $\mathcal{C} = p_\ast\mathcal{O}_{\Fl\times \fr{g}^\ast}$, \cite[Proposition II.1.4.3 and Corollaire II.1.4.5]{EGAII} show that there is a coherent $\mathcal{O}_{\Fl\times \fr{g}^\ast}$-module $\mathcal{N}$ such that $\mathcal{M} \cong p_\ast \mathcal{N}$.

Now by \cite[Proposition II.4.6.13(i)]{EGAII}, the invertible $\mathcal{O}_{\fr{g}^\ast}$-module $\mathcal{O}_{\fr{g}^\ast}$ is ample relative to $\id_{\fr{g}^\ast}$, so \cite[Proposition II.4.6.13(iv)]{EGAII} gives that $p^\ast\mathcal{L}=\mathcal{L}\otimes_R\mathcal{O}_{\fr{g}^\ast}$ is ample relative to $q$. Thus it follows from the relative version of Serre's Theorem, \cite[Th\'eor\`eme III.2.2.1(ii)]{EGAIII}, that $\mathcal{N}(s):=\mathcal{N} \otimes_{\mathcal{O}_{\mathcal{B}\times\fr{g}^\ast}} \left(p^\ast\mathcal{L}\right)^{\otimes s}$ is $q_\ast$-acyclic for sufficiently large values of $s$.

Since $\fr{g}^\ast$ is affine, \cite[Proposition III.1.4.14]{EGAIII} implies that \[ H^i(\mathcal{B}\times \fr{g}^\ast,\mathcal{N}(s))\cong \Gamma(\fr{g}^\ast,R^iq_\ast\mathcal{N}(s))=0\] for $i>0$ and $s$ sufficiently large.  On the other hand, because $p$ is an affine morphism, \cite[Corollaire III.1.3.3]{EGAIII} tells us that \[ H^i(\mathcal{B}\times \fr{g}^\ast,\mathcal{N}(s))\cong H^i(\mathcal{B},p_\ast\mathcal{N}(s))\] for all $i$ and $s$. It remains to show that whenever $\mathcal{F}$ is a coherent $\mathcal{O}_{\Fl\times \fr{g}^\ast}$-module,  \[p_\ast(\mathcal{F} \otimes_{\mathcal{O}_{\mathcal{B}\times\fr{g}^\ast}}p^\ast\mathcal{L})\cong p_\ast\mathcal{F} \otimes_{\mathcal{O}_\mathcal{B}} \mathcal{L}\]
as then $p_\ast(\mathcal{N}(s))\cong \mathcal{M}(s)$ for all $s\geq 0$. But this follows from \cite[\S 0.5.4.10]{EGAI}. \end{proof}

\begin{cor} Let $\M$ be a coherent $\dnt$-module. Then there exists an integer $t$ such that $\M(s)$ is $\Gamma$-acyclic whenever $s \geq t$.
\end{cor}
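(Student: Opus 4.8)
The plan is to deduce this from the Proposition just proved by passing to an associated graded sheaf. First I would choose a good filtration $F_\bullet\M$ on $\M$, which exists by Lemma \ref{GoodFilt}(c); then $\gr\M$ is by definition a coherent $\gr\dnt$-module, and each graded piece $\gr_i\M$ is a coherent $\O_\Fl$-module by Lemma \ref{GoodFilt}(d). Using Proposition \ref{EnhDn}(c) to identify $\gr\dnt$ with $\Sym_{\O}\Ex{\T}$, and Proposition \ref{EnhMom}(a), which says that $\varphi\colon\O_\Fl\otimes\fr{g}\to\Ex{\T}$ is surjective, I apply $\Sym_{\O_\Fl}$ to get a surjection of sheaves of $R$-algebras $\C=\Sym_{\O_\Fl}(\O_\Fl\otimes\fr{g})\twoheadrightarrow\gr\dnt$. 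Hence $\gr\M$ is a coherent $\C$-module, so the Proposition supplies an integer $t$ with $(\gr\M)(s)$ being $\Gamma$-acyclic for all $s\ge t$.

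Next I would transfer this acyclicity from $\gr\M$ back to $\M$. Twisting by the invertible sheaf $\mathcal{L}^{\otimes s}$ is exact on $\O_\Fl$-modules, so $F_i(\M(s)):=(F_i\M)(s)$ is an exhaustive filtration of $\M(s)$ by coherent $\O_\Fl$-submodules with $\gr_i(\M(s))\cong(\gr_i\M)(s)$. Since $\Fl$ is a Noetherian space, cohomology commutes with direct limits, hence with direct sums, by \cite[Proposition III.2.9]{Hart}; therefore $\Gamma$-acyclicity of $(\gr\M)(s)=\bigoplus_i(\gr_i\M)(s)$ forces $H^j(\Fl,(\gr_i\M)(s))=0$ for every $i$ and every $j>0$.

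Finally, an induction on $i$ along the short exact sequences $0\to F_{i-1}(\M(s))\to F_i(\M(s))\to(\gr_i\M)(s)\to 0$ — with base case $F_0(\M(s))=(\gr_0\M)(s)$, the filtration being positive — shows $H^j(\Fl,F_i(\M(s)))=0$ for all $j>0$; passing to the direct limit $\M(s)=\varinjlim_i F_i(\M(s))$ and invoking \cite[Proposition III.2.9]{Hart} once more yields $H^j(\Fl,\M(s))=0$ for all $j>0$ and all $s\ge t$. I expect no real obstacle: the only slightly delicate point is that $\M(s)$ carries no natural $\dnt$-module structure because $\mathcal{L}$ has no canonical connection, but this is harmless since $\Gamma$-acyclicity is a statement about $\M(s)$ as a sheaf of abelian groups and the whole argument takes place among $\O_\Fl$-modules.
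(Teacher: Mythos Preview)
Your proof is correct and follows essentially the same route as the paper: choose a good filtration, use the surjection $\C\twoheadrightarrow\gr\dnt$ coming from Proposition \ref{EnhMom}(a) and Proposition \ref{EnhDn}(c) to view $\gr\M$ as a coherent $\C$-module, apply the Proposition, then climb back from $\gr\M(s)$ to $\M(s)$ using that cohomology on the Noetherian space $\Fl$ commutes with direct limits. Your explicit induction on $i$ and your remark that $\M(s)$ need only be treated as an $\O_\Fl$-module are slightly more detailed than the paper's version, but the argument is the same.
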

\begin{proof} Choose a good filtration on $\M$ using Lemma \ref{GoodFilt}(c); then $\gr \M$ is a coherent $\gr \dnt$-module. By Proposition \ref{EnhMom}(a), the natural map $\varphi : \O_\Fl \otimes \fr{g} \to \Ex{\T}_\Fl$ is surjective, so $\C$ surjects onto $\gr(\dnt) \cong \Sym_{\O_\Fl} \Ex{\T}_{\Fl}$ by Proposition \ref{EnhDn}(c). So $\gr \M$ is a coherent $\C$-module and therefore there exists an integer $t$ such that
\[\gr (\M(s)) \cong (\gr \M)(s)\]
is $\Gamma$-acyclic for all $s \geq t$ by the Proposition. Because $\Fl$ is Noetherian, cohomology commutes with direct limits by \cite[Proposition III.2.9]{Hart}. Therefore each homogeneous component $\gr_i \M(s)$ is $\Gamma$-acyclic and each filtered piece $F_i \M(s)$ is also $\Gamma$-acyclic. Finally $\M(s)$ is the direct limit of the $F_i \M(s)$ and therefore is also $\Gamma$-acyclic, whenever $s \geq t$.\end{proof}

\subsection{The geometric translation functor}\label{GeomTr}
Let $\A$ be one of the sheaves $\dnt$, $\hdnt$ or $\hdntK$. For every integer $s$ we can consider the twisted sheaf
\[  \A^{(s)} := \mathcal{L}^{\otimes s} \otimes_{\O_\Fl} \A \otimes_{\O_\Fl} \mathcal{L}^{\otimes (-s)}.\]
As an $\O_\Fl$-module this sheaf is isomorphic to $\A$, but it is also naturally a sheaf of rings which \emph{a priori} is not isomorphic to $\A$. Of course, locally the sheaves of rings $\A^{(s)}$ and $\A$ \emph{are} isomorphic. We believe there to be a global isomorphism but we will not need it. 

Note that for every $\A$-module $\N$, the twisted sheaf $(s)\N: = \mathcal{L}^{\otimes s} \otimes_{\O_\Fl} \N$ is naturally an $\A^{(s)}$-module by contracting tensor products. If $\mathcal{L}$ is trivialisable on $U$ then $(s)\N|_U$ is isomorphic to $\N|_U$ as $\A|_U$-modules if we view $\A|_U$ as acting on $(s)\N|_U$ along a local isomorphism $\A|_U\to \A^{(s)}|_U$. 

We retain the notation $\mathcal{A}(s)$ to mean the left $\A$-module $\mathcal{A}\otimes_{\O_{\Fl}}\mathcal{L}^{\otimes s}$ with $\A$ acting on the left factor.

\begin{lem} Let $s \in \mathbb{Z}$. 
\be 
\item $\A^{(s)}$ is a coherent sheaf of rings. Moreover $\hdnt^{(s)}\cong \widehat{\dnt^{(s)}}$ as sheaves of rings.
\item If $\N$ is a coherent $\A$-module, then $(s)\N$ is a coherent $\A^{(s)}$-module. Also $\A(s)$ is a coherent $\A$-module. 
\item The functor $\N \mapsto (s)\N$ is exact.
\item If $\F$ is an $\O_{\Fl}$-module and $t,u \in \mathbb{Z}$, then
\[ (u)(\A^{(t)} \otimes_{\O_\Fl} \F) \cong \A^{(t+u)} \otimes_{\O_\Fl} (\F(u))\]
as $\A^{(t+u)}$-modules.
\ee
\end{lem}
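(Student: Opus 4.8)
The plan is to prove parts (a)--(d) essentially by reducing everything to local statements on a trivialising cover, since $\mathcal{L}$ is an invertible $\O_\Fl$-module and therefore locally free of rank one. First I would fix, for part (a), a finite affine open cover $\{U_i\}$ of $\Fl$ on which $\mathcal{L}$ is trivialisable; this is possible since $\Fl$ is quasi-compact. Over each $U_i$ the choice of a generator of $\mathcal{L}|_{U_i}$ induces an $\O_{U_i}$-linear isomorphism $\mathcal{L}^{\otimes s}|_{U_i} \cong \O_{U_i}$, and hence an isomorphism of $\O_{U_i}$-modules $\A^{(s)}|_{U_i} \cong \A|_{U_i}$; one checks that this is in fact a ring isomorphism because conjugation by a unit in $\O_{U_i}$ (which is central in $\A$) is an inner automorphism, so multiplication is preserved. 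Since $\A$ is coherent as a sheaf of rings --- this is Proposition \ref{EnhDn}(d) for $\dnt$, Proposition \ref{Hdnt}(c) for $\hdnt$, and Proposition \ref{PiAdicC}(c) for $\hdntK$ --- and coherence of a sheaf of rings is a local property, it follows that $\A^{(s)}$ is coherent. For the second assertion in (a), namely $\hdnt^{(s)} \cong \widehat{\dnt^{(s)}}$, I would use that twisting by the flat (indeed locally free) $\O_\Fl$-module $\mathcal{L}^{\otimes s}$ commutes with the inverse limit defining the $\pi$-adic completion: $\mathcal{L}^{\otimes s} \otimes_{\O_\Fl} \invlim (\dnt/\pi^a\dnt) \otimes_{\O_\Fl}\mathcal{L}^{\otimes(-s)} \cong \invlim (\mathcal{L}^{\otimes s}\otimes_{\O_\Fl}(\dnt/\pi^a\dnt)\otimes_{\O_\Fl}\mathcal{L}^{\otimes(-s)}) = \invlim \dnt^{(s)}/\pi^a\dnt^{(s)}$, where the interchange is justified because $\mathcal{L}^{\otimes s}$ is locally free of finite rank, so the relevant sheaf tensor product is an exact functor that also commutes with the countable products and kernels building the inverse limit.

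For part (b): if $\N$ is a coherent $\A$-module, then $(s)\N = \mathcal{L}^{\otimes s}\otimes_{\O_\Fl}\N$ is, locally on a trivialising open $U_i$, isomorphic as an $\A^{(s)}|_{U_i}$-module (via the local ring isomorphism $\A|_{U_i}\cong\A^{(s)}|_{U_i}$) to $\N|_{U_i}$, which is coherent over $\A|_{U_i}$; since coherence of a module is local, $(s)\N$ is a coherent $\A^{(s)}$-module. The statement that $\A(s) = \A\otimes_{\O_\Fl}\mathcal{L}^{\otimes s}$ is a coherent $\A$-module follows the same way: locally it is isomorphic to $\A|_{U_i}$ as a left $\A|_{U_i}$-module. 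Part (c) is immediate: $\N\mapsto \mathcal{L}^{\otimes s}\otimes_{\O_\Fl}\N$ is exact because $\mathcal{L}^{\otimes s}$ is an invertible, hence flat, $\O_\Fl$-module. Part (d) is a bookkeeping exercise with associativity and commutativity of the tensor product over the commutative sheaf $\O_\Fl$: unwinding the definitions,
\[
(u)(\A^{(t)}\otimes_{\O_\Fl}\F) = \mathcal{L}^{\otimes u}\otimes_{\O_\Fl}\bigl(\mathcal{L}^{\otimes t}\otimes_{\O_\Fl}\A\otimes_{\O_\Fl}\mathcal{L}^{\otimes(-t)}\otimes_{\O_\Fl}\F\bigr),
\]
and on the other side
\[
\A^{(t+u)}\otimes_{\O_\Fl}\F(u) = \mathcal{L}^{\otimes(t+u)}\otimes_{\O_\Fl}\A\otimes_{\O_\Fl}\mathcal{L}^{\otimes(-t-u)}\otimes_{\O_\Fl}\F\otimes_{\O_\Fl}\mathcal{L}^{\otimes u};
\]
moving the rightmost $\mathcal{L}^{\otimes u}$ in the second expression past the $\O_\Fl$-central factors and matching it with the leftmost $\mathcal{L}^{\otimes u}$ of the first expression gives the desired $\A^{(t+u)}$-linear isomorphism, where one must check that the left $\A^{(t+u)}$-action (on the left factor $\mathcal{L}^{\otimes(t+u)}\otimes\A$) corresponds correctly --- this is where one uses that $\A$ acts on $(u)\N$ by contracting the tensor product, exactly as in the definition preceding the Lemma.

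The only genuinely delicate point is the claim in part (a) that the local $\O_{U_i}$-module isomorphisms $\A^{(s)}|_{U_i}\cong\A|_{U_i}$ are ring isomorphisms; I expect this to be the main (though still routine) obstacle. The subtlety is that $\A^{(s)}$ is defined as an $\O_\Fl$-bimodule conjugate, and one must verify that the ring structure it inherits is, locally, carried isomorphically onto that of $\A$ by the chosen trivialisation. This works precisely because $\O_\Fl\subseteq\A$ is central of degree zero, so conjugating by a local unit $f\in\O_{U_i}^\times$ is the identity on $\A|_{U_i}$; hence $\A^{(s)}|_{U_i}$ is literally $\A|_{U_i}$ with its given multiplication, and the trivialisation is a ring isomorphism. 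Everything else is a formal consequence of flatness and local-to-global principles for coherent sheaves, using no facts beyond those already established in $\S\ref{EnhDn}$--$\S\ref{PiAdicC}$.
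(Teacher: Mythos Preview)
Your approach is exactly the paper's: parts (a)--(c) are local and reduce to the untwisted statements on a base trivialising $\mathcal{L}$, and part (d) is a tensor-product bookkeeping exercise. One correction to your justification in (a): $\O_\Fl$ is \emph{not} central in $\A$ (for $\A=\dnt,\hdnt,\hdntK$ we have $\partial f - f\partial = \partial(f)$), so your ``conjugation by a central unit is trivial'' argument is wrong as stated. The local trivialisation is nonetheless a ring isomorphism, but for the right reason: the multiplication on $\A^{(s)}$ is defined via the contraction $\mathcal{L}^{\otimes(-s)}\otimes_{\O_\Fl}\mathcal{L}^{\otimes s}\to\O_\Fl$, and once you trivialise $\mathcal{L}|_{U_i}\cong\O_{U_i}$ this contraction becomes the identity, so $(e^s\otimes a\otimes e^{-s})\cdot(e^s\otimes b\otimes e^{-s})=e^s\otimes ab\otimes e^{-s}$ maps to $ab$ under the obvious isomorphism $\A^{(s)}|_{U_i}\cong\A|_{U_i}$.
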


\begin{proof}
(a)-(c) are all local properties that may be verified by working on a base that trivialises $\mathcal{L}$ together with the corresponding statements for the untwisted objects.

For (d) we observe that $\F(u)$ and $(u)\F$ are canonically isomorphic as $\mathcal{O}_\Fl$-modules and contract various tensor products. 
\end{proof}

\subsection{A family of generating objects}\label{Gen}
The following result is essentially \cite[Proposition 3.3(i)]{Noot1}, but we give the proof for the benefit of the reader.

\begin{thm}
The sheaves $\{\hdntK(s) : s \in \mathbb{Z}\}$ generate the category of coherent $\hdntK$-modules.
\end{thm}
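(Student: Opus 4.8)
The statement to prove is that every coherent $\hdntK$-module is a quotient of a (finite) direct sum of Serre twists $\hdntK(s)$ for suitable $s \in \mathbb{Z}$. The plan is to reduce everything to a statement about $\O_\Fl$-coherent sheaves via a good filtration, and then invoke Serre vanishing (Lemma \ref{grDVanishing}) to produce enough global sections of a high Serre twist.

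First I would fix a coherent $\hdntK$-module $\M$ and choose a good double filtration $(F_0\M, F_\bullet \gr_0 \M)$ on $\M$ using Proposition \ref{GoodDoubleSheaf}(a), so that $F_0\M$ is a coherent $\hdnt$-module with $(F_0\M)_K \cong \M$. It suffices to show that $F_0\M$ is a quotient of a finite direct sum of twists $\hdnt(s)$, since tensoring with $K$ then gives the result for $\M$; and for that it would in turn suffice (by a Nakayama-type argument using that $\hdnt$ is $\pi$-adically complete and $\mathcal{S}$ is coherently $\hdnt$-affine by Corollary \ref{Hdnt}) to handle $\gr_0 F_0\M = F_0\M/\pi F_0\M$, a coherent $\dnt$-module. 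So I reduce to: every coherent $\dnt$-module $\N$ is a quotient of a finite direct sum of Serre twists $\dnt(s)$.

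Now for a coherent $\dnt$-module $\N$, apply Lemma \ref{GoodFilt}(b) to find an $\O_\Fl$-coherent submodule $\F \subseteq \N$ with $\N = \dnt \cdot \F$. By Lemma \ref{grDVanishing} applied to $\F$, there is an integer $u$ such that $\F(u)$ is globally generated, i.e. there is a surjection $\O_\Fl^{\oplus r} \twoheadrightarrow \F(u)$ of $\O_\Fl$-modules for some $r$; untwisting, $\O_\Fl(-u)^{\oplus r} \twoheadrightarrow \F$. Applying the right-exact functor $\dnt \otimes_{\O_\Fl} -$ and using $\N = \dnt\cdot\F$ yields a surjection $\dnt(-u)^{\oplus r} \twoheadrightarrow \N$ of $\dnt$-modules (here I use that $\dnt \otimes_{\O_\Fl} \O_\Fl(-u) \cong \dnt(-u)$, with $\dnt$ acting on the left factor, which is the definition of the twist $\dnt(s)$ with $s = -u$ in \S\ref{GeomTr}). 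Lifting this back through $\pi$: the composite $\dnt(-u)^{\oplus r} \twoheadrightarrow \gr_0 F_0\M$ lifts to a map $\hdnt(-u)^{\oplus r} \to F_0\M$ (using that the source is a direct sum of free-on-a-twist modules and $F_0\M$ is $\pi$-adically complete), and this map is surjective mod $\pi$, hence surjective by the completeness Nakayama lemma for finitely generated modules over $\pi$-adically complete Noetherian rings — applied locally on $\mathcal{S}$, where $\hdnt(U)$ is Noetherian by Proposition \ref{Hdnt}(b). Finally tensor with $K$ and identify $\hdnt(-u)^{\oplus r}\otimes_R K \cong \hdntK(-u)^{\oplus r}$.

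The main obstacle I anticipate is the bookkeeping around the twisting functors: the sheaves $\A^{(s)}$ versus $\A(s)$ versus $(s)\N$ in \S\ref{GeomTr} are genuinely different gadgets and one must be careful that $\dnt \otimes_{\O_\Fl} \O_\Fl(s)$, with $\dnt$ acting on the first factor, is the object called $\dnt(s)$ — this is the content of the last paragraph of \S\ref{GeomTr} and of Lemma \ref{GeomTr}(b),(d), so it is available. A secondary subtlety is the passage from $\gr_0 F_0\M$ back to $F_0\M$ and then to $\M$: one needs the $\pi$-adic Nakayama argument, which requires finite generation of local sections (Corollary \ref{Hdnt} and Proposition \ref{Hdnt}(b)) and the fact that a morphism of $\hdnt$-modules that is an epimorphism of sheaves is detected on each $U \in \mathcal{S}$ by coherent $\hdnt$-affineness. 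Everything else is routine right-exactness of $\dnt\otimes_{\O_\Fl}-$ and Serre's theorem.
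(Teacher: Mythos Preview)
Your overall strategy matches the paper's: reduce to $\gr_0 F_0\M$, produce a surjection from a twisted free module, then lift through the $\pi$-adic filtration and apply Nakayama. The Nakayama step at the end is fine. However, there is a genuine gap in the lifting step.

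You claim that the surjection $\dnt(-u)^{\oplus r} \twoheadrightarrow \gr_0 F_0\M$ lifts to a map $\hdnt(-u)^{\oplus r} \to F_0\M$ ``using that the source is a direct sum of free-on-a-twist modules and $F_0\M$ is $\pi$-adically complete.'' But $\hdnt(-u)$ is not projective as a $\hdnt$-module on $\Fl$ (only locally so), and completeness alone does not force sections to lift. Concretely, a $\hdnt$-map $\hdnt(-u) \to F_0\M$ is the same as a global section of $(F_0\M)(u)$, and what you need is that the map
\[
\Gamma\bigl(\Fl, (F_0\M/\pi^{a+1}F_0\M)(u)\bigr) \longrightarrow \Gamma\bigl(\Fl, (F_0\M/\pi^{a}F_0\M)(u)\bigr)
\]
is surjective for every $a \geq 0$. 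The kernel of each transition is isomorphic to $(\gr_0 F_0\M)(u)$, so this amounts to $H^1\bigl(\Fl, (\gr_0 F_0\M)(u)\bigr) = 0$. You never arrange this: Lemma \ref{grDVanishing} gives only global generation of $\F(u)$, which says nothing about the cohomology of $(\gr_0 F_0\M)(u)$.

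The fix is exactly what the paper does: invoke Corollary \ref{grDVanishing} to first choose an integer $t$ such that $(\gr_0 F_0\M)(s)$ is $\Gamma$-acyclic for all $s \geq t$, and then take your $u \geq t$ large enough that $\F(u)$ is also globally generated. With that single extra sentence your argument goes through, and is otherwise a slightly more streamlined version of the paper's proof (you avoid the $\A^{(s)}$ and $(s)\N$ bookkeeping by working directly with $\A(s)$).
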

\begin{proof} We set $\A = \dnt$ to aid legibility. Let $\M \in \coh(\hAK)$ and choose a good double filtration $(F_0 \M, F_\bullet \gr_0 \M)$ on $\M$ using Proposition \ref{GoodDoubleSheaf}. Then $\gr_0 \M$ is a coherent $\A$-module, so by Corollary \ref{grDVanishing} we can find an integer $t$ such that $(\gr_0 \M)(s)$ is $\Gamma$-acyclic for all $s \geq t$. Since $(\gr_0 \M)(s)\cong (s)\gr_0 \M$ as sheaves of $\O_{\Fl}$-modules we may deduce that $(s) \gr_0 \M$ is also $\Gamma$-acyclic for all  $s\geq t$.

Next, since $(t)(\gr_0 \M)$ is a coherent $\A^{(t)}$-module by Lemma \ref{GeomTr}(b), we can find a coherent $\O_{\Fl}$-submodule $\mathcal{F}$ of $(t)(\gr_0 \M)$ which generates it as an $\A^{(t)}$-module by the proof of Lemma \ref{GoodFilt}(b); this gives a surjection 
\[ \A^{(t)} \otimes_{\O_\Fl} \F \twoheadrightarrow (t)(\gr_0 \M)\]
of $\A^{(t)}$-modules. Twisting this by $\mathcal{L}^{ \otimes u}$ on the left and applying Lemma \ref{GeomTr}(d) gives surjections
\[ \A^{(t+u)} \otimes_{\O_\Fl} (\F(u)) \cong (u)(\A^{(t)} \otimes_{\O_\Fl} \F) \twoheadrightarrow (t+u)(\gr_0 \M)\]
of $\A^{(t+u)}$-modules for any integer $u$.

Now by Lemma \ref{grDVanishing}, there is a surjection $\O_\Fl^a \twoheadrightarrow \F(u)$ for some integers $a \geq 1$ and $u \geq 0$. We therefore obtain a surjection
\[ \A^{(t+u)} \otimes_{\O_\Fl} \O_\Fl^a \twoheadrightarrow \A^{(t+u)} \otimes_{\O_\Fl} (\F(u)) \twoheadrightarrow (t+u)(\gr_0 \M)\]
of $\A^{(t+u)}$-modules. Let $s := t + u$; then $(s)(\gr_0 \M)$ is $\Gamma$-acyclic and generated as a $\A^{(s)}$-module by its global sections.

Let $\N: = F_0 \M$, a coherent $\hA$-module and $\mathcal{K}:=(s)\N$ its twist. Since $\M$ has no $\pi$-torsion, for all $i \geq 0$ there is a short exact sequence of sheaves
\[0 \to \gr_0 \M \stackrel{\pi^i}{\longrightarrow} \N / \pi^{i+1} \N\to \N/\pi^i \N \to 0.\] Since also $H^1(\Fl, (s)(\gr_0 \M)) = 0$, twisting this sequence by $\mathcal{L}^{\otimes s}$ on the left and taking cohomology shows that the arrow
\[ \Gamma(\Fl, \mathcal{K} / \pi^{i+1} \mathcal{K}) \to \Gamma(\Fl, \mathcal{K} / \pi^i \mathcal{K})\]
is surjective for all $i \geq 0$. 

Thus a finite subset of $\Gamma(\Fl, (s)(\gr_0 \M))$ generating $(s)(\gr_0 \M)$ as a $\A^{(s)}$-module may be lifted inductively to elements $w_1,\ldots, w_a \in \invlim \Gamma(\Fl, \mathcal{K} / \pi^i \mathcal{K})$. 

Since $\mathcal{K}$ is a coherent $\hA^{(s)} \cong \widehat{\A^{(s)}}$-module, $\mathcal{K}(U)$ is $\pi$-adically complete for each $U \in \mathcal{S}$ by \cite[\S 3.2.3(v)]{Berth}, so the natural map $\mathcal{K} \to \invlim \mathcal{K}/ \pi^i \mathcal{K}$ is an isomorphism. Pulling back the $w_i$ along this isomorphism on global sections gives a finite collection of elements in $\Gamma(\Fl, \mathcal{K})$ that generate $\mathcal{K}$ as a $\hA^{(s)}$-module by Nakayama's Lemma. We therefore obtain a surjective map 
$\left(\hA^{(s)}\right)^a \twoheadrightarrow \mathcal{K}$ of $\hA^{(s)}$-modules, and by twisting back by $\mathcal{L}^{\otimes -s}$ on the left, a surjective map
\[ \left((-s)\hA^{(s)}\right)^a \twoheadrightarrow \N\]
of left $\hA$-modules. But $(-s)\hA^{(s)}\cong \hA(-s)$ as left $\hA$-modules by Lemma \ref{GeomTr}(d), so we obtain a surjective map $(\hdnt(-s))^a \twoheadrightarrow \N$ of $\hdnt$-modules, and after inverting $\pi$ a surjective map $(\hdntK(-s))^a  \twoheadrightarrow \M$ of $\hdntK$-modules, as required.
\end{proof}

\subsection{Twisted differential operators}\label{TDO} 
We can apply the deformation functor to the map $j : U(\fr{h}) \to \Ex{\D}$ defined in $\S\ref{EnhCB}$ to obtain a central embedding of the constant sheaf $U(\fr{h})_n$ into $\dnt$:
\[ U(\fr{h})_n \hookrightarrow \dnt.\]

Each linear functional $\lambda \in \Hom_R(\pi^n\fr{h}, R)$ extends to an $R$-algebra homomorphism $U(\fr{h})_n \tocong U(\pi^n \fr{h}) \to R$ and gives $R$ the structure of a $U(\fr{h})_n$-module which we denote by $R_\lambda$. \textbf{Until the end of $\S \ref{BB}$ we fix $\lambda \in \Hom_R(\pi^n\fr{h}, R)$.}

\begin{defn} The \emph{sheaf of deformed twisted differential operators} $\dnl$ on $\Fl$ with parameters $n,\lambda$ is the central reduction
 \[\dnl := \dnt \otimes_{U(\fr{h})_n} R_\lambda.\]	
We give $R_\lambda$ the trivial filtration $0 =: F_{-1}R_\lambda \subset R_\lambda =: F_0R_\lambda$ as a $U(\fr{h})_n$-module, and we view $\dnl$ as a sheaf of filtered $R$-algebras, equipped with the tensor filtration.

\end{defn}\begin{lem}\hfill 
\be 
\item Let $U \in \mathcal{S}$. Then $\left(\dnl\right)_{|U}$ is isomorphic to $(\D_n)_{|U}$ as a sheaf of filtered $R$-algebras.
\item $\dnl$ is a sheaf of deformable $R$-algebras.
\item There is an isomorphism of sheaves $\gr \dnl \cong \Sym_\O \T.$
\ee\end{lem}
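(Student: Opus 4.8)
The three parts are linked: (a) is the local comparison, and (b), (c) follow from (a) together with the constructions of $\S\ref{Defs}$ and $\S\ref{cryst}$. The plan is to first establish (a) by exploiting the local triviality of the torsor $\xi$, then deduce (b) by checking flatness of $\gr\dnl$ over $R$ locally, and finally deduce (c) by identifying the associated graded sheaf locally and gluing.

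\textbf{Part (a).} Fix $U\in\mathcal{S}$ and a trivialisation $U\times\mb{H}\tocong\xi^{-1}(U)$. By Proposition \ref{RelEnv} there is an isomorphism of sheaves of filtered $R$-algebras $\D_{|U}\otimes U(\fr{h})\tocong\Ex{\D}_{|U}$, and I must track how the central embedding $j:U(\fr{h})\to\Ex{\D}$ of $\S\ref{EnhCB}$ sits inside this decomposition. The point is that under this trivialisation $j$ maps $U(\fr{h})$ isomorphically onto the second tensor factor $U(\fr{h})$ (the infinitesimal right $\mb{H}$-action on $U\times\mb{H}$ is the standard one, which is exactly what the decomposition $\T(U\times\mb{H})^{\mb{H}}=\T(U)\oplus(\O(U)\otimes j(\fr{h}))$ in the proof of Lemma \ref{EnhCB} records). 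Applying the deformation functor $A\mapsto A_n$, which is functorial and (by Proposition \ref{DefTens}, since $U(\fr{h})\cong R[x_1,\dots,x_\ell]$ as a filtered $R$-module after choosing a basis of $\fr{h}$ and using the PBW filtration) behaves well with respect to this tensor decomposition, I get $\Ex{\D}(U)_n\cong\D(U)_n\otimes U(\fr{h})_n$ with $U(\fr{h})_n$ sitting as the second factor; this identification is compatible with restriction to smaller opens, so it sheafifies to $\dnt_{|U}\cong(\D_n)_{|U}\otimes U(\fr{h})_n$ as sheaves of filtered $R$-algebras, with the central embedding $U(\fr{h})_n\hookrightarrow\dnt$ of $\S\ref{TDO}$ matching the second factor. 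Now central reduction at $\lambda$ kills everything in $U(\fr{h})_n$ except scalars: $\left(\dnl\right)_{|U}=\dnt_{|U}\otimes_{U(\fr{h})_n}R_\lambda\cong(\D_n)_{|U}\otimes U(\fr{h})_n\otimes_{U(\fr{h})_n}R_\lambda\cong(\D_n)_{|U}$, and this is an isomorphism of filtered $R$-algebras because $R_\lambda$ carries the trivial filtration concentrated in degree $0$.

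\textbf{Parts (b) and (c).} For (b) I need $\gr\dnl$ flat over $R$; by (a) this may be checked on the cover $\mathcal{S}$, where $\gr\left(\dnl\right)_{|U}\cong\gr(\D_n)_{|U}$. By Lemma \ref{Defs} the natural map $\gr\D\to\gr\D_n$ is an isomorphism, and $\gr\D\cong\Sym_\O\T$ by $\S\ref{cryst}$, which is flat (indeed locally free) over $R$ since $\T$ is locally free over $\O$ and $\O$ is flat over $R$ for $X=\Fl$ by Lemma \ref{AlgGps}(a). Hence $\dnl$ is a sheaf of deformable $R$-algebras. For (c): part (a) gives local isomorphisms $\gr\left(\dnl\right)_{|U}\cong\gr(\D_n)_{|U}\cong\gr\D_{|U}\cong(\Sym_\O\T)_{|U}$, natural in $U$ and hence compatible with restrictions because every map in sight ((a), the deformation iso of Lemma \ref{Defs}, and the PBW identification of $\S\ref{cryst}$) is natural; since $\mathcal{S}$ is a base for $\Fl$ these glue to a global isomorphism $\gr\dnl\cong\Sym_\O\T$ of sheaves of graded $R$-algebras.

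\textbf{Main obstacle.} The delicate point is Part (a): namely verifying that the central subsheaf $U(\fr{h})_n$ really does correspond to the second tensor factor under the Proposition \ref{RelEnv} trivialisation, uniformly in $U$, and that the deformation functor respects this decomposition — this is where Proposition \ref{DefTens} on deformations of tensor products is needed, and one must be careful that the filtration on $U(\fr{h})$ induced as a tensor factor of $\Ex{\D}$ agrees with its PBW filtration so that $U(\fr{h})_n$ in the sense of $\S\ref{TDO}$ matches $(\D(U)_n\otimes-)$-factor. Once this compatibility is pinned down, everything else is formal.
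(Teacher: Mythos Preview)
Your proof of parts (a) and (b) is correct and essentially identical to the paper's argument: the paper likewise uses Proposition \ref{RelEnv} together with Proposition \ref{DefTens} to obtain the sheaf isomorphism $(\D_{|U})_n\otimes U(\fr{h})_n\tocong(\Ex{\D}_{|U})_n$ and then performs the central reduction, and (b) is reduced to (a) exactly as you do.

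For part (c), however, your argument differs from the paper's and has a gap as written. You want to glue the local isomorphisms $\gr(\dnl)_{|U}\cong(\Sym_{\O}\T)_{|U}$ coming from (a); you assert these are ``natural in $U$'' and hence compatible with restriction. But the isomorphism of (a) depends on a \emph{choice} of trivialisation $U\times\mb{H}\tocong\xi^{-1}(U)$, and Proposition \ref{RelEnv} does not claim these choices can be made compatibly on overlaps --- indeed the torsor $\xi$ need not be globally trivial. So it is not immediate that your local isomorphisms agree on intersections. The paper avoids this problem by instead constructing a single \emph{global} morphism of sheaves $\Sym_{\O}\T\to\gr\dnl$: it first builds a canonical map $\gr\Ex{\D}_n\otimes_{\gr U(\fr{h})_n}\gr R_\lambda\to\gr\dnl$ from the universal properties, identifies the source with $\Sym_{\O}\Ex{\T}\otimes_{S(\fr{h})}R$ using Proposition \ref{EnhDn}(c), and then uses the exact sequence $0\to\fr{h}\otimes\O\to\Ex{\T}\to\T\to 0$ of Lemma \ref{EnhCB} (the anchor map) to identify this with $\Sym_{\O}\T$. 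Only then does it invoke (a) to check this global morphism is an isomorphism over each $U\in\mathcal{S}$. This construction is independent of any trivialisation, so no gluing is required. Your approach can be repaired --- one can argue that on associated graded the trivialisation ambiguity disappears because it corresponds to conjugation by a unit which acts trivially on the commutative graded ring --- but this needs to be said explicitly.
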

\begin{proof} 
(a) Let $\mathcal{F}$ be a sheaf of deformable $R$-algebras on $\Fl$ and let $\F_n$ be the sheafification of the presheaf $V \mapsto \F(V)_n$. Since sheafification commutes with restriction, we see that $(\F_n)_{|U}$ is naturally isomorphic to $(\F_{|U})_n$.

Next, by Propositions \ref{DefTens} and \ref{RelEnv} there are \emph{sheaf} isomorphisms 
\[   (\D_{|U})_n \otimes U(\fr{h})_n \tocong \left(\D_{|U} \otimes U(\fr{h})\right)_n \tocong (\Ex{\D}_{|U})_n = \left(\Ex{\D}_n\right)_{|U} \]
which induce an isomorphism $\left(\dnl\right)_{|U}\tocong \left(\D_n\right)_{|U}$ of sheaves of filtered $R$-algebras. 

(b) Since $U$ is affine and Noetherian, it follows from $\S \ref{cryst}$ that $\gr (\D(U))$ is a locally free $\O(U)$-module. Now proceed as in the proof of Lemma \ref{EnhDn}(b).

(c) The universal property of sheafification together with \cite[\S I.6.13]{LVO} induce a morphism of sheaves 
\[\gr \Ex{\D}_n \otimes_{\gr U(\fr{h})_n} \gr R_\lambda \to \gr \dnl.\]
Now $\Sym_{\O}\Ex{\T} \cong \gr \Ex{\D}_n$ by Proposition \ref{EnhDn}(c) and this isomorphism sends the image of $S(\fr{h})$ in $\Sym_{\O}\Ex{\T}$ to $\gr U(\fr{h})_n \subseteq \gr \Ex{\D}_n$ by construction. Since $\gr R_\lambda$ is the trivial $\fr{h}$-module $R$ by definition and $\gr U(\fr{h})_n \cong S(\fr{h})$ by Lemma \ref{Defs}, we obtain a morphism of sheaves of graded $R$-algebras
\[ \Sym_{\O}\Ex{\T} \otimes_{S(\fr{h})} R \to \gr \dnl.\]
Now the short exact sequence $0 \to \fr{h} \otimes \O \stackrel{j \otimes 1}{\longrightarrow} \Ex{\T} \stackrel{\sigma}{\longrightarrow} \T \to 0$ of locally free sheaves on $\Fl$ from Lemma \ref{EnhCB} induces an isomorphism $\Sym_{\O}\Ex{\T} \otimes_{S(\fr{h})} R \stackrel{\Sym(\sigma)\otimes 1}{\longrightarrow} \Sym_{\O} \T$. We finally obtain a morphism of sheaves of graded $R$-algebras
\[ \Sym_{\O} \T \to \gr \dnl\]
which is seen to be an isomorphism over any $U \in \mathcal{S}$ by part (a).
\end{proof}

\subsection{Completions and central reduction}
\label{ComplCent}
The following elementary Lemma will be useful in what follows. Recall that if $M$ is an $R$-module then $\widehat{M}$ denotes its $\pi$-adic completion.

\begin{lem} Let $B \to A$ be a map of Noetherian $R$-algebras and let $M$ be a finitely generated $B$-module. Then $\widehat{A} \otimes_{\widehat{B}} \widehat{M}$ is $\pi$-adically complete, and there is a natural isomorphism of $R$-modules
\[\psi_M : \widehat{ A \otimes_B M } \longrightarrow \widehat{A} \otimes_{\widehat{B}} \widehat{M}.\]
If $B \to A$ is a central embedding and $M$ is a cyclic $B$-module, then $\psi_M$ is an $R$-algebra isomorphism.
\end{lem}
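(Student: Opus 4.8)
The plan is to construct $\psi_M$ from the universal property of $\pi$-adic completion and then identify it with a chain of standard base-change isomorphisms.

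I will use throughout the following facts, valid for any Noetherian $R$-algebra $S$ and any finitely generated $S$-module $N$, all of which follow from the Artin--Rees lemma (see \cite[\S 3.2.3]{Berth}): $\widehat{S}$ is Noetherian and flat over $S$; the natural map $\widehat{S}\otimes_S N\to\widehat{N}$ is an isomorphism; $\widehat{S}/\pi^a\widehat{S}\cong S/\pi^aS$ for all $a\geq 1$, so that $\widehat{S}$ is $\pi$-adically complete; and consequently the $\pi$-adic completion of any finitely generated module over $\widehat{S}$ equals that module, i.e. finitely generated modules over $\widehat{S}$ are $\pi$-adically complete. By functoriality of $\pi$-adic completion the map $B\to A$ induces a ring homomorphism $\widehat{B}\to\widehat{A}$ compatible with $B\to\widehat{B}$ and $A\to\widehat{A}$, so $\widehat{A}$ is a $\widehat{B}$-algebra and the two composites $B\to A\to\widehat{A}$ and $B\to\widehat{B}\to\widehat{A}$ agree. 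The assignment $a\otimes m\mapsto \widehat{a}\otimes\widehat{m}$ (images under the completion maps) then defines a natural $B$-linear, hence $R$-linear, map $A\otimes_B M\to\widehat{A}\otimes_{\widehat{B}}\widehat{M}$; since $M$ is finitely generated over $B$, the target is finitely generated over $\widehat{A}$ and therefore $\pi$-adically complete, so this map extends uniquely to an $R$-linear map $\psi_M\colon\widehat{A\otimes_B M}\to\widehat{A}\otimes_{\widehat{B}}\widehat{M}$, clearly natural in $M$.

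To see that $\psi_M$ is an isomorphism, observe that $A\otimes_B M$ is a finitely generated $A$-module, so $\widehat{A\otimes_B M}\cong\widehat{A}\otimes_A(A\otimes_B M)\cong\widehat{A}\otimes_B M$, while associativity of the tensor product together with $\widehat{M}\cong\widehat{B}\otimes_B M$ gives $\widehat{A}\otimes_{\widehat{B}}\widehat{M}\cong\widehat{A}\otimes_{\widehat{B}}(\widehat{B}\otimes_B M)\cong\widehat{A}\otimes_B M$. Both composite identifications send $\widehat{a\otimes m}$ to $\widehat{a}\otimes\widehat{m}$, so their composite is $\psi_M$; hence $\psi_M$ is an isomorphism, and in particular $\widehat{A}\otimes_{\widehat{B}}\widehat{M}$ is $\pi$-adically complete, being isomorphic to the completion $\widehat{A\otimes_B M}$.

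Finally, suppose $B\to A$ is a central embedding and $M=B/I$ is cyclic. Then $IA$ is a two-sided ideal of $A$, $A\otimes_B M=A/IA$ is an $R$-algebra quotient of $A$, and $\widehat{A\otimes_B M}=\widehat{A/IA}\cong\widehat{A}/I\widehat{A}$. A limiting argument shows the image of $\widehat{B}$ in $\widehat{A}$ is central: if $b=\lim b_k$ with $b_k\in B$, then for $a\in\widehat{A}$ we have $ba=\lim b_ka=\lim ab_k=ab$ by continuity of multiplication. Hence $I\widehat{A}$ is a two-sided ideal of $\widehat{A}$, and since $\widehat{M}\cong\widehat{B}/I\widehat{B}$ we get $\widehat{A}\otimes_{\widehat{B}}\widehat{M}\cong\widehat{A}/I\widehat{A}$ as $R$-algebras. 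The map $\psi_M$ restricts on $A/IA$ to the natural ring homomorphism $A/IA\to\widehat{A}/I\widehat{A}$ and is continuous, hence a ring homomorphism; being bijective by the first part, it is an $R$-algebra isomorphism. The only delicate point is the bookkeeping in this last paragraph --- checking that $IA$ and $I\widehat{A}$ are genuine two-sided ideals and that $\psi_M$ is multiplicative --- but this is routine once one knows the image of a central subalgebra remains central after completion; there is no serious obstacle.
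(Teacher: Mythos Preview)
Your argument is correct. The approach differs from the paper's in one place: to show $\psi_M$ is an isomorphism, the paper argues that both $M\mapsto\widehat{A\otimes_B M}$ and $M\mapsto\widehat{A}\otimes_{\widehat{B}}\widehat{M}$ are right exact functors on finitely generated $B$-modules, checks that $\psi_B$ is an isomorphism, and then applies the Five Lemma to a presentation of $M$ by finitely generated free $B$-modules. You instead compute both sides directly via the base-change isomorphisms $\widehat{N}\cong\widehat{S}\otimes_S N$ (once for $S=A$, $N=A\otimes_B M$, and once for $S=B$, $N=M$) and then cancel the middle tensor factor. Your route is a little more direct and avoids the Five Lemma bookkeeping; the paper's is marginally more conceptual since it treats $\psi$ as a natural transformation between right-exact functors. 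Both rely on exactly the same input from \cite[\S 3.2.3]{Berth}. Your more detailed treatment of the algebra statement (checking centrality survives completion and that $\psi_M$ is multiplicative) is fine; the paper simply declares this ``clear''.
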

\begin{proof} The first part follows from \cite[\S 3.2.3(iii), (v)]{Berth}. Hence the natural map $A \otimes_B M \to \widehat{A} \otimes_{\widehat{B}} \widehat{M}$ extends to an $R$-linear map $\psi_M : \widehat{ A \otimes_B M } \longrightarrow \widehat{A} \otimes_{\widehat{B}} \widehat{M}$. Because $A$ and $B$ are Noetherian, it follows from \cite[\S 3.2.3(ii)]{Berth} that the functors $\widehat{A \otimes_B -}$ and $\widehat{A} \otimes_{\widehat{B}} -$ are right exact. Since $\psi$ is a natural transformation between these functors such that $\psi_B$ is an isomorphism, we can pick a presentation for $M$ by finitely generated free $B$-modules, apply both functors to this presentation and invoke the Five Lemma to deduce that $\psi_M$ is an isomorphism.

The last statement is now clear.\end{proof}

\begin{defn} Let $\hdnl := \invlim \dnl / \pi^a \dnl$ be the $\pi$-adic completion of $\dnl$ and let $\hdnlK := \hdnl \otimes_R K$.
\end{defn}

Since the discrete valuation ring $R$ is already $\pi$-adically complete by assumption, $R_\lambda$ is already a $\huhn$-module; we will denote the $\huhnK$-module $R_\lambda \otimes_R K$ by $K_\lambda$.
\begin{prop}  \hfill
\be\item If $U \in \mathcal{S}$ then $\hdnlK(U) \cong \h{\D(U)_{n,K}}$ is an almost commutative affinoid $K$-algebra and $\Gr(\hdnlK(U)) \cong \O(T^\ast U_k)$.
\item $\hdntK$ is a sheaf of complete doubly filtered $K$-algebras.
\item There is an isomorphism $\hdnlK\tocong\hdntK \otimes_{\huhnK} K_\lambda $ of sheaves of complete doubly filtered $K$-algebras.
\item The sheaf $\hdnlK$ is coherent.
\ee\end{prop}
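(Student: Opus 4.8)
The plan is to mirror the proof of Proposition \ref{PiAdicC}, replacing the sheaf $\dnt$ by $\dnl$ throughout and using Lemma \ref{TDO} in place of Proposition \ref{EnhDn} and Proposition \ref{RelEnv}. I first record that $\dnl$ is quasi-coherent as an $\O_\Fl$-module: by Lemma \ref{TDO}(c) we have $\gr\dnl\cong\Sym_\O\T$, so each graded piece $\gr_i\dnl$ is a coherent $\O_\Fl$-module, hence so is each filtered piece $F_i\dnl$, and $\dnl$, being the direct limit of the $F_i\dnl$, is quasi-coherent. For part (a), fix $U\in\mathcal{S}$; since $U$ is affine and Noetherian, $H^1(U,\dnl)=0$, so $(\dnl/\pi^a\dnl)(U)\cong\dnl(U)/\pi^a\dnl(U)$ for all $a$ and therefore $\hdnl(U)\cong\widehat{\dnl(U)}$. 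By Lemma \ref{TDO}(a), $\dnl(U)\cong\D(U)_n$, and $\D(U)$ is an almost commutative deformable $R$-algebra by Example \ref{ExAC}(b) and the smoothness of $U$; hence
\[\hdnlK(U)=\hdnl(U)\otimes_R K\cong\widehat{\D(U)_n}\otimes_R K=\h{\D(U)_{n,K}}\]
is an almost commutative affinoid $K$-algebra by Proposition \ref{acsa}, and $\Gr(\hdnlK(U))\cong\gr\D(U)/\pi\gr\D(U)\cong\O(T^\ast U)\otimes_R k=\O(T^\ast U_k)$ by Corollary \ref{GrComp} and Example \ref{ExAC}(b). Part (b), read as the assertion that $\hdnlK$ is a sheaf of complete doubly filtered $K$-algebras, then follows exactly as Proposition \ref{PiAdicC}(b): $\Fl$ is quasi-compact, so every open $U\subseteq\Fl$ is a finite union of members of $\mathcal{S}$, $\hdnlK(U)$ is the inverse limit over the resulting finite diagram of the almost commutative affinoid $K$-algebras from part (a), and since these have positively filtered slices the Lemma of $\S\ref{PiAdicC}$ shows the inverse limit is again a complete doubly filtered $K$-algebra.

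For part (c), fix $U\in\mathcal{S}$. Using the isomorphism $\dnt(U)\cong\D(U)_n\otimes_R U(\pi^n\fr{h})$ coming from the proof of Lemma \ref{TDO}(a), the constant sheaf $U(\fr{h})_n$ embeds centrally into $\dnt$, with $U(\fr{h})_n(U)\cong U(\pi^n\fr{h})$, and $R_\lambda$ is a cyclic $U(\pi^n\fr{h})$-module satisfying $\dnl(U)\cong\dnt(U)\otimes_{U(\pi^n\fr{h})}R_\lambda$. Applying Lemma \ref{ComplCent} with $B=U(\pi^n\fr{h})$, $A=\dnt(U)$ and $M=R_\lambda$, and using Proposition \ref{Hdnt}(b) together with the fact that $R$, and hence $R_\lambda$, is already $\pi$-adically complete, we obtain $R$-algebra isomorphisms
\[\hdnl(U)\cong\widehat{\dnt(U)\otimes_{U(\pi^n\fr{h})}R_\lambda}\cong\hdnt(U)\otimes_{\huhn}R_\lambda\]
which are natural in $U$ and compatible with the deformation filtrations. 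Tensoring with $K$ gives $\hdnlK(U)\cong\hdntK(U)\otimes_{\huhnK}K_\lambda$ for all $U\in\mathcal{S}$. The canonical morphism of sheaves $\hdntK\to\hdnlK$ obtained by completing and inverting $\pi$ in the central reduction map $\dnt\to\dnl$ factors through $\hdntK\otimes_{\huhnK}K_\lambda$, and since $\mathcal{S}$ is a base for $\Fl$ the resulting morphism $\hdntK\otimes_{\huhnK}K_\lambda\to\hdnlK$ is an isomorphism of sheaves of complete doubly filtered $K$-algebras, as claimed.

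For part (d) I argue as in Proposition \ref{PiAdicC}(c) via Proposition \ref{Hdnt}(c): for $U\in\mathcal{S}$ the algebra $\hdnlK(U)\cong\h{\D(U)_{n,K}}$ is Noetherian, and for $V\subseteq U$ in $\mathcal{S}$ the restriction $\hdnlK(U)\to\hdnlK(V)$ is flat. Indeed, applying \cite[Proposition 1.2]{ST} twice reduces this to the flatness of $\gr(\dnl(U)/\pi\dnl(U))\to\gr(\dnl(V)/\pi\dnl(V))$, which by Corollary \ref{GrComp} and Lemma \ref{TDO}(c) is the pull-back of functions $\O(T^\ast U_k)\to\O(T^\ast V_k)$ along the Zariski open immersion $T^\ast V_k\hookrightarrow T^\ast U_k$, hence flat; so $\hdnlK$ is coherent by \cite[Proposition 3.1.1]{Berth}. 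The one step requiring genuine care is part (c): interchanging the $\pi$-adic completion with the central reduction $\dnt\otimes_{U(\fr{h})_n}R_\lambda$ — which is precisely what Lemma \ref{ComplCent} delivers — and then verifying that the resulting local isomorphisms are natural enough to glue to an isomorphism of sheaves respecting the doubly filtered structure.
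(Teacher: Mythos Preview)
Your proof is correct and follows essentially the same approach as the paper for parts (a)--(c): the paper simply says that in view of Lemma \ref{TDO}, parts (a) and (b) follow from the proof of Proposition \ref{PiAdicC} after making appropriate changes, and for (c) it applies Lemma \ref{ComplCent} locally to obtain isomorphisms $\psi_U : \widehat{\dnt(U)\otimes_{\uhn}R_\lambda}\tocong\widehat{\dnt(U)}\otimes_{\huhn}R_\lambda$ and glues --- exactly as you do.

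For part (d) your argument differs slightly from the paper's. You reprove coherence from scratch via \cite[Proposition 3.1.1]{Berth}, checking local Noetherianity and flatness of restrictions by reducing to the open immersion $T^\ast V_k\hookrightarrow T^\ast U_k$. The paper instead observes that by part (c), $\hdnlK$ is a quotient of the already-coherent sheaf $\hdntK$ by a centrally finitely generated ideal sheaf, and deduces coherence from Proposition \ref{PiAdicC}(c) in one line. Your route is self-contained and makes the flatness explicit; the paper's is shorter but relies on the general principle that a quotient of a coherent sheaf of rings by a locally finitely generated two-sided ideal is again coherent. Both are valid.
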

\begin{proof} In view of Lemma \ref{TDO}, parts (a) and (b) follow from the proof of Proposition \ref{PiAdicC} after making appropriate changes.

(c) For each $U \in \mathcal{S}$, $\uhn \to \dnt(U)$ is a map of Noetherian $R$-algebras. Since $R_\lambda$ is $\pi$-adically complete, there is thus an isomorphism of complete $R$-algebras
\[ \psi_U : \widehat{\dnt(U) \otimes_{\uhn} R_\lambda} \tocong \widehat{ \dnt(U) }\otimes_{\huhn} R_\lambda  \] 
by the Lemma. The reduction of $\psi_U$ mod $\pi$ is a morphism of filtered algebras and the family of morphisms $(\psi_U)_{U\in \mathcal{S}}$ is compatible with restriction, so induces the required isomorphism $\psi : \hdnlK\tocong\hdntK \otimes_{\huhnK} K_\lambda$ of sheaves of complete doubly filtered $K$-algebras.

(d) Since $\hdnlK$ is a quotient of $\hdntK$, this follows from Proposition \ref{PiAdicC}(c).
\end{proof}

We will henceforth identify $\coh(\hdnlK)$ with the full subcategory of $\coh(\hdntK)$ consisting of sheaves such that $\pi^n \fr{h}$ acts via the character $\lambda$.

\subsection{The generic fibre}\label{UnComplete}
The sheaf $\dlK := \dnl \otimes_R K$ does not depend on the deformation parameter $n$. Its restriction to $\Fl_K$ is naturally isomorphic to a classical sheaf of twisted differential operators in the sense of Beilinson and Bernstein \cite{BB}.

The next result allows us to apply the classical theorem of Beilinson--Bernstein to those $\hdnlK$-modules that `can be uncompleted'.

\begin{thm} Let $\M$ be a coherent $\dnl$-module.
\be
\item If $\M_K$ is generated by global sections as a $\dlK$-module, then $\hM_K$ is generated by global sections as a $\hdnlK$-module.
\item If $\M_K$ is $\Gamma$-acyclic, then so is $\hM_K$.
\ee
\end{thm}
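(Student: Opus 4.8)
The plan is to reduce both statements to the comparison between $\pi$-adic completion and Čech cohomology on the Noetherian scheme $\Fl$. First I would fix a finite affine cover $\Fl = \bigcup_{i} U_i$ by opens $U_i \in \mathcal{S}$; such a cover exists because $\Fl$ is covered by the base $\mathcal{S}$ and is quasi-compact. For part (b), I would compute $H^\bullet(\Fl, \hM_K)$ via the Čech complex $\check{C}^\bullet(\{U_i\}, \hM_K)$, and I would want to commute the inverse limit defining $\hM = \invlim \M/\pi^a \M$ with both the Čech complex and with cohomology. The key input here is that each $\M/\pi^a\M$ is a coherent $\dnl/\pi^a\dnl$-module, hence a coherent $\dnl$-module, hence (being $\pi^a$-torsion) has the same cohomology whether computed over $\dnl$ or after inverting $\pi$ — and crucially, the towers $(\M/\pi^a\M)$ and $(H^i(\Fl, \M/\pi^a\M))$ satisfy the Mittag-Leffler condition. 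The Mittag-Leffler property for $H^0$ follows from $\Gamma$-acyclicity of $\M_K$ applied to the short exact sequences $0 \to \M_K \xrightarrow{\pi^a} \M_K/\pi^{a+1}\M_K \to \M_K/\pi^a\M_K \to 0$ — wait, one must be slightly careful since $\M$ has $\pi$-torsion in general; the cleaner route is to first replace $\M$ by $\M/(\text{$\pi$-torsion})$ after noting this does not change $\M_K$, then use $\pi$-torsion-freeness to get the exact sequences $0 \to \M \xrightarrow{\pi^a} \M \to \M/\pi^a\M \to 0$. Then $H^i(\Fl, -)$ of these, together with $H^i(\Fl, \M_K) = H^i(\Fl, \M)\otimes_R K = 0$ for $i>0$, shows $\lim^1$ vanishes and $H^i(\Fl, \hM) \otimes_R K \cong \invlim H^i(\Fl, \M/\pi^a\M)\otimes_R K = 0$, giving $\Gamma$-acyclicity of $\hM_K$.

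For part (a), I would argue as follows. Since $\M_K$ is generated by global sections as a $\dlK$-module, and $\M_K$ is coherent, one can find finitely many global sections $m_1, \dots, m_r \in \Gamma(\Fl, \M_K)$ and hence (clearing denominators) in $\Gamma(\Fl, \M)$ whose images generate $\M_K$; scaling, we may take them in $\Gamma(\Fl, \M)$ and assume they generate the $\dlK$-module $\M_K$. The resulting map $\dlK^r \to \M_K$ of $\dlK$-modules is surjective; its kernel $\mathcal{K}$ is a coherent $\dlK$-module, so by part (a)-type reasoning applied again — or more simply, by Serre vanishing after a twist, though that is not quite what is needed — I would instead argue directly at the level of $R$-lattices. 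Choose a coherent $\dnl$-lattice inside this surjection: a surjection $\mathcal{P} \twoheadrightarrow \mathcal{Q}$ of coherent $\dnl$-modules with $\mathcal{P}_K = \dlK^r$, $\mathcal{Q}_K = \M_K$. Then reduce mod $\pi^a$ and complete; the point is that $\hM_K$ is generated by the images of the $m_i$ once we know $\widehat{\mathcal{P}}_K \to \widehat{\mathcal{Q}}_K$ is surjective. Surjectivity of completion on coherent sheaves follows because $\mathcal{P}/\pi^a\mathcal{P} \to \mathcal{Q}/\pi^a\mathcal{Q}$ is surjective with kernel a coherent sheaf, the relevant tower satisfies Mittag-Leffler (kernel towers have surjective transition maps being quotients, or one invokes Artin–Rees as in the proof of Proposition \ref{LocAffine}(a)), and $\invlim$ is exact on such towers. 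Composing, $\hM_K = \widehat{\mathcal{Q}}_K$ is a quotient of $\widehat{\mathcal{P}}_K = \widehat{\dnl^r}\otimes_R K = \hdnlK^r$, so it is generated by global sections.

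The main obstacle I anticipate is handling $\pi$-torsion in $\M$ cleanly and making the Mittag-Leffler / $\lim^1$ bookkeeping rigorous: one must ensure that "completing" and "taking cohomology" commute, which requires the coherence of $\dnl$ and of the sheaves $\M/\pi^a\M$ (available from Proposition \ref{Hdnt} and Lemma \ref{TDO}) together with the Artin–Rees lemma on the Noetherian rings $\dnl(U)$ (as used in \ref{LocAffine}(a)), plus the standard fact \cite[Proposition 0.13.3.1]{EGAIII} that Mittag-Leffler towers of $\Gamma$-acyclic sheaves on a Noetherian space have $\Gamma$-acyclic inverse limit. Everything else is a formal consequence once these are in place; in fact much of this is already packaged in Theorem \ref{Cartan} and Corollary \ref{Hdnt}, so the cleanest write-up may simply cite those after setting $\msD = \dnl$ and checking the hypotheses, then invert $\pi$ at the end using that cohomology on the Noetherian space $\Fl$ commutes with the direct limit defining $-\otimes_R K$, exactly as in the proof of Theorem \ref{hdntKaff}.
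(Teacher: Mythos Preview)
Your argument for part (a) is essentially the paper's: pick finitely many global sections of $\M$ generating $\M_K$, form the map $(\dnl)^r \to \M$, and complete. The paper does this directly, noting the cokernel $\mathcal{C}$ is coherent with $\mathcal{C}_K = 0$, hence killed by some $\pi^m$, so $\widehat{\mathcal{C}}_K = 0$ and the completed map $(\hdnlK)^r \to \hM_K$ is surjective. Your detour through an auxiliary surjection of lattices $\mathcal{P} \twoheadrightarrow \mathcal{Q}$ is unnecessary and leaves the identification $\widehat{\mathcal{Q}}_K = \hM_K$ unjustified (it holds, but for the same bounded-$\pi$-torsion reason you are trying to avoid).

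Part (b) has a real gap. From $H^i(\Fl,\M_K)=0$ you only get that $H^i(\Fl,\M)$ is $\pi$-torsion, not that it is \emph{bounded} $\pi$-torsion. Without a bound, the short exact sequences
\[
0 \to H^i(\Fl,\M)/\pi^a \to H^i(\Fl,\M/\pi^a\M) \to H^{i+1}(\Fl,\M)[\pi^a] \to 0
\]
do not force $\invlim_a H^i(\Fl,\M/\pi^a\M)\otimes_R K = 0$: for instance if $H^{i+1}(\Fl,\M)$ were isomorphic to $K/R$ then the right-hand tower has inverse limit $R$, which survives $\otimes_R K$. The paper closes this gap by invoking Proposition~\ref{CohCom}, which says $H^i(\Fl,\M)$ is finitely generated over the Noetherian ring $\Un$; combined with $\pi$-torsion this gives a uniform $m$ with $\pi^m H^i(\Fl,\M)=0$ for all $i>0$, and the Mittag--Leffler and vanishing arguments then go through. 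You never use this finiteness, and your suggested fallback to Theorem~\ref{Cartan} and Corollary~\ref{Hdnt} does not help: those establish coherent $\hc{D}$-affineness of the \emph{base} $\mathcal{S}$, not $\Gamma$-acyclicity over the whole of $\Fl$. Add the appeal to Proposition~\ref{CohCom} and your outline for (b) becomes correct and matches the paper's.
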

\begin{proof} Note that  $H^i(\Fl, \M_K) \cong H^i(\Fl, \M)\otimes_R K$ for all $i$ by \cite[Proposition III.2.9]{Hart}.
(a) Let $v_1,\ldots, v_a \in \Gamma(\Fl, \M_K)$ generate $\M_K$ as a $\dlK$-module; by clearing denominators we may assume that these generators all lie in $\Gamma(\Fl, \M)$. Let $\alpha : (\dnl)^a \to \M$ be the map of $\dnl$-modules defined by these global sections; then $\mathcal{C} := \coker(\alpha)$ is coherent and $\mathcal{C}_K = 0$ by assumption. Thus we have an exact sequence
\[ (\dnl)^a \to \M \to \mathcal{C} \to 0\]
in $\coh(\dnl)$. By Lemma \ref{GoodFilt}(a), the functor $\Gamma(U, -)$ is exact on $\coh(\dnl)$ for all $U \in \mathcal{S}$. Since $\dnl(U)$ is Noetherian by Lemma \ref{TDO} and since $\pi \in \dnl(U)$ is central, the functor of $\pi$-adic completion is exact on finitely generated $\dnl(U)$-modules by \cite[\S 3.2.3(ii)]{Berth}. Hence the sequence
\[ \widehat{(\dnl)^a(U)} \to \widehat{\M(U)} \to \widehat{\mathcal{C}(U)} \to 0\]
is exact for all $U \in \mathcal{S}$. Hence the sequence of sheaves
\[ (\hdnl)^a \stackrel{\widehat{\alpha}_K}{\longrightarrow} \hM \to \widehat{\mathcal{C}} \to 0\]
is exact. Now $\mathcal{C}$ is $\pi$-torsion and coherent as a $\dnl$-module, so $\pi^m \mathcal{C} = 0$ for some integer $m$ since $\Fl$ is quasi-compact. Hence $\pi^m\widehat{\mathcal{C}} = 0$ also and therefore the morphism $\widehat{\alpha}_K : (\hdnlK)^a \to \hM_K$ is surjective.

(b) By Proposition \ref{CohCom}, $H^i(\Fl, \M)$ is a finitely generated $\Un$-module. On the other hand, it is $\pi$-torsion when $i > 0$ by assumption. Since $\pi$ is central in $\Un$, we deduce that there exists an integer $m$ such that $\pi^m H^i(\Fl,\M) = 0$ for all $i > 0$. 

Now for all $a,b\ge 0$ there is a commutative diagram  \[\xymatrix{ 0 \ar[r] & \M \ar[r]^{\pi^{a+b}}\ar[d]_{\pi^a} & \M \ar[r]\ar[d]_{\id} & \M/\pi^{a+b}\M \ar[r]\ar[d]^{\tau_{a,b}}& 0 \\
                                                                                                              0 \ar[r] & \M  \ar[r]^{\pi^b} & \M \ar[r] & \M/\pi^b\M  \ar[r] & 0
}\]  of sheaves of coherent $\dnl$-modules with exact rows. This induces a commutative diagram on cohomology  \[ \xymatrix{ H^i(\Fl,\M) \ar[r]^{\pi^{a+b}}\ar[d]_{\pi^a} & H^i(\Fl,\M) \ar[d]_{\id}\ar[r] & H^i(\Fl,\M/\pi^{a+b}\M) \ar[r]\ar[d]^{H^i(\tau_{a,b})} & H^{i+1}(\Fl,\M) \ar[d]_{\pi^a}  \\ 
	                      H^i(\Fl,\M) \ar[r]^{\pi^b} & H^i(\Fl,\M)  \ar[r] & H^i(\Fl,\M/\pi^{b}\M) \ar[r] & H^{i+1}(\Fl,\M)  } \] with exact rows for each $i\ge 0$.  If $a\ge m$ then the last vertical arrow is zero, since $\pi^mH^{i+1}(\Fl,\M)=0$. Thus the image of $H^i(\tau_{a,b})$ is the image of $H^i(\Fl,\M)$ in $H^i(\Fl,\M/\pi^{b}\M)$.

It follows that the projective system $H^i(\Fl,\M/\pi^b\M)$ satisfies the Mittag-Leffler condition for each $i\ge 0$ and so by \cite[Proposition 0.13.3.1]{EGAIII}  together with Lemma \ref{GoodFilt} that $H^i(\Fl,\hM)\cong \invlim H^i(\Fl,\M/\pi^b\M)$ for all $i>0$. Taking the projective limit of the columns of the cohomology diagram for $i>0$ and using the fact that the maps in the first and last columns are zero for $a\ge m$ we then obtain isomorphisms $H^i(\Fl,\M)\cong H^i(\Fl,\hM)$, whence \[ H^i(\Fl, \hM_K) \cong H^i(\Fl, \hM)\otimes_R K \cong H^i(\Fl, \M)\otimes_R K = 0\]
whenever $i > 0$, as claimed.
\end{proof}

\subsection{Integral and dominant weights}
\label{BBVanishing}

We assume from now on that $\mb{G}$ is semisimple and simply-connected, and that $K$ is a field of characteristic zero.

Let $h_1,\ldots, h_l \in \fr{h}$ be the simple coroots corresponding to the simple roots in $\fr{h}^\ast_K$ given by the adjoint action of $\mb{H}$ on $\fr{g}/\fr{b}$, let $\omega_1, \ldots, \omega_l \in \fr{h}^\ast_K$ be the corresponding system of fundamental weights and let $\rho = \omega_1 + \ldots + \omega_l$. Thus $\omega_i(h_j) = \delta_{ij}$ for all $i,j$ and any $\mu \in \fr{h}^\ast_K$ can be written in the form $\mu = \sum_{i=1}^l \mu(h_i) \omega_i$. Since $K$ is a field of characteristic zero, $\fr{h}^\ast_K$ contains an isomorphic copy $\mathbb{Z}\omega_1 \oplus \cdots \oplus \mathbb{Z} \omega_l$ of the weight lattice $\Lambda$ of the group $\mb{H}$. Since $\fr{h}$ is spanned by the $h_i$ over $R$ --- see \cite[\S II.1.6, \S II.1.11]{Jantzen} --- our space of twists $\Hom_R(\pi^n\fr{h}, R)$ can be naturally identified with $\pi^{-n} R \otimes_{\mathbb{Z}} \Lambda$.

Recall that a weight $\mu \in \fr{h}_K^\ast$ is said to be \emph{integral} if $\mu \in \Lambda \subseteq \fr{h}^\ast_K$ or equivalently $\mu(h_i) \in \mathbb{Z}$ for all $i$. An integral weight $\mu$ is said to be \emph{dominant} if $\mu(h_i) \geq 0$ for all $i$. Following \cite{BB2} we extend this notion to non-integral weights as follows: an arbitrary weight $\mu \in \fr{h}_K^\ast$ is \emph{dominant} if $\mu(h)\notin \{-1,-2,-3,\cdots \}$ for any positive coroot $h\in \fr{h}$. Finally, we will say that $\lambda$ is \emph{$\rho$-dominant} if $\lambda + \rho$ is dominant.

\begin{thm} If $\lambda$ is $\rho$-dominant, then $H^i(\Fl, \hdnlK(s)) = 0$ for all $i \geq 1$ and all integers $s$.
\end{thm}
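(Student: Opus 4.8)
The strategy is to reduce this vanishing statement to the classical Beilinson--Bernstein vanishing theorem on the generic fibre $\Fl_K$ by means of the ``uncompletion'' result, Theorem~\ref{UnComplete}. The point is that each twisted sheaf $\hdnlK(s)$ is, up to a $\pi$-adic completion, obtained from a coherent $\dnl$-module whose generic fibre is a classical twisted-differential-operator module on $\Fl_K$, to which the results of \cite{BB2} apply. So the first step is to produce, for each fixed $s \in \mathbb{Z}$, a coherent $\dnl$-module $\M$ with $\M_K \cong \hdnlK(s)$ after inverting $\pi$ --- or rather with $\hM_K \cong \hdnlK(s)$; the natural candidate is $\M := \dnl(s) = \dnl \otimes_{\O_\Fl} \mathcal{L}^{\otimes s}$, a coherent $\dnl$-module by Lemma~\ref{GeomTr}(b), whose $\pi$-adic completion is $\hdnl(s)$ and hence, after tensoring with $K$, is $\hdnlK(s)$.

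Granting this, Theorem~\ref{UnComplete}(b) tells us that $H^i(\Fl, \hdnlK(s)) = 0$ for $i \geq 1$ provided we know that $\M_K = \dlK(s)$ is $\Gamma$-acyclic as a $\dlK$-module. This is exactly where the classical theory enters: by $\S\ref{UnComplete}$ the restriction of $\dlK$ to $\Fl_K$ is a classical sheaf of twisted differential operators with parameter $\lambda$ in the sense of Beilinson--Bernstein, and $\dlK(s) = \dlK \otimes_{\O_{\Fl_K}} \mathcal{L}^{\otimes s}$ is the same sheaf of twisted differential operators but with parameter shifted by $s$ times the weight of $\mathcal{L}$ --- concretely, $\mathcal{L} = \O(1)$ corresponds under the standard dictionary to an integral dominant weight (a sum of fundamental weights, or $\rho$ in the simply-connected case if we choose the embedding appropriately), so $\dlK(s)$ is the ring of $\mu$-twisted differential operators for $\mu = \lambda + s\cdot(\text{wt }\mathcal{L})$. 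Since $\lambda$ is $\rho$-dominant and $\mathcal{L}$ corresponds to a dominant integral weight, $\mu$ remains $\rho$-dominant, and the classical Beilinson--Bernstein vanishing theorem for $\rho$-dominant weights (\cite{BB}, \cite{BB2}) gives $H^i(\Fl_K, \dlK(s)) = 0$ for all $i \geq 1$. Then Theorem~\ref{UnComplete}(b) upgrades this to the completed statement.

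I would carry out the steps in this order: (i) identify $\hdnlK(s)$ with the completed generic fibre of the coherent $\dnl$-module $\dnl(s)$, using Lemma~\ref{GeomTr}(a) and Proposition~\ref{ComplCent}(a) to control local sections and Lemma~\ref{GeomTr}(b) for coherence; (ii) check that $(\dnl(s))_K \cong \dlK \otimes_{\O_{\Fl_K}} \mathcal{L}^{\otimes s}$ is, as a sheaf of twisted differential operators, the $\mu$-twisted ring with $\mu$ the appropriate shift of $\lambda$ --- this is a compatibility between the Serre twist on $\dnl$ and the reparametrisation of twisted differential operators, and is essentially local on $\Fl_K$; (iii) verify that $\mu$ is still $\rho$-dominant, which is automatic since the weight of a very ample line bundle on $\Fl$ is dominant integral and a sum of a $\rho$-dominant weight and a dominant integral weight is $\rho$-dominant; (iv) invoke classical Beilinson--Bernstein vanishing to get $\Gamma$-acyclicity of $(\dnl(s))_K$; (v) apply Theorem~\ref{UnComplete}(b).

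The main obstacle is step (ii): making precise the identification of the Serre-twisted sheaf $\dnl(s)$ with a classical twisted-differential-operator sheaf of shifted parameter on the generic fibre, and in particular pinning down exactly which weight $\mathcal{L} = i^\ast\O(1)$ corresponds to and confirming that the ``$\rho$-dominant'' condition is preserved under that shift for \emph{all} $s \in \mathbb{Z}$ --- including negative $s$. One has to be slightly careful here: the naive expectation ``twist by $\O(s)$ shifts $\lambda$ by $s\rho$'' would fail for negative $s$, so the correct statement must instead be that $H^i(\Fl_K, \dlK(s)) = 0$ for $i \geq 1$ follows from $\rho$-dominance of $\lambda$ together with dominance of the line-bundle weight when $s \geq 0$, and for $s < 0$ from a separate argument --- but in fact the classical Beilinson--Bernstein vanishing theorem, as formulated for the sheaf $\D^\mu$ itself rather than for $\D^\mu$-modules, asserts $H^i(\Fl_K, \D^\mu) = 0$ for $i > 0$ for \emph{any} $\mu$ that is ``regular dominant'' or more precisely whenever $\D^\mu$ has a certain acyclicity, and the twist $\dlK(s)$ is a locally free $\dlK$-module whose higher cohomology can be computed via the projection formula against $\mathcal{L}^{\otimes s}$; so the cleanest route is to observe $\dlK(s) \cong \dlK \otimes_{\O_{\Fl_K}} \mathcal{L}^{\otimes s}$ as a left $\dlK$-module and to apply Beilinson--Bernstein acyclicity of $\dlK$ for $\rho$-dominant $\lambda$ together with acyclicity of line-bundle twists of $\dlK$-modules --- this last being itself a consequence of the localisation theorem applied to the twists of the rank-one $\dlK$-module on $\Fl_K$. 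Tracking this carefully, and confirming that no regularity hypothesis on $\lambda$ is needed (only $\rho$-dominance), is the delicate part; everything else is bookkeeping with the functors already set up in $\S\ref{GeomTr}$ and $\S\ref{UnComplete}$.
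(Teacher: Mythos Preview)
Your overall strategy is correct and matches the paper's: set $\M = \dnl(s)$, use Theorem~\ref{UnComplete}(b) to reduce to $\Gamma$-acyclicity of $\M_K = \dlK(s)$ on the generic fibre, and then invoke classical Beilinson--Bernstein. The paper's proof is exactly this, citing paragraph~(iii) of the proof of \cite[Th\'eor\`eme Principal]{BB}.

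Where you go astray is in your step~(ii)--(iii). The sheaf $\dlK(s) = \dlK \otimes_{\O_{\Fl_K}} \mathcal{L}^{\otimes s}$ is \emph{not} a ring of twisted differential operators with shifted parameter~$\mu$; it is merely a coherent left $\dlK$-module (locally free of rank one). Trying to reinterpret the Serre twist as a parameter shift is both unnecessary and, as you yourself notice, runs into trouble for negative~$s$. The classical Beilinson--Bernstein vanishing theorem says directly that for $\lambda$ $\rho$-dominant, \emph{every} coherent $\D^\lambda_K$-module on $\Fl_K$ is $\Gamma$-acyclic --- no regularity is required for this half of the theorem. Since $\dlK(s)$ is such a module, you are done immediately; there is nothing delicate to track. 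Your ``cleanest route'' at the end is the right one, and it is all that is needed.
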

\begin{proof} Apply Theorem \ref{UnComplete}(b) and paragraph (iii) of the proof of \cite[Th\'eor\`eme Principal]{BB}, noting that our sheaf $\D^\lambda_K$ is their sheaf $\D_{\lambda+\rho}$. Our ground field $K$ is not algebraically closed, but this part of the proof of the Beilinson--Bernstein Theorem does not require this assumption.
\end{proof}

\begin{cor}If $\lambda$ is $\rho$-dominant, every coherent $\hdnlK$-module is $\Gamma$-acyclic.
\end{cor}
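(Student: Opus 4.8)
The plan is to deduce the Corollary from Theorem \ref{BBVanishing} together with the generation Theorem \ref{Gen}, by the d\'evissage argument of Beilinson and Bernstein. First, since $\Fl = \mb{G}/\mb{B}$ is projective over $R$, its underlying space is Noetherian of finite Krull dimension, so Grothendieck's vanishing theorem \cite[Theorem III.2.7]{Hart} supplies an integer $c$ with $H^i(\Fl, -) = 0$ on all sheaves of abelian groups for $i > c$. I would then prove, by descending induction on $i$, the statement $S(i)$: \emph{$H^i(\Fl, \M) = 0$ for every coherent $\hdnlK$-module $\M$}, for all $i \geq 1$. The base cases $i > c$ are immediate from Grothendieck vanishing, and $S(1)$ will be the conclusion we want.

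Before running the induction I need a version of Theorem \ref{Gen} inside the category $\coh(\hdnlK)$: every coherent $\hdnlK$-module $\M$ admits a surjection $\mathcal{E} := (\hdnlK(-s))^a \twoheadrightarrow \M$ for suitable integers $s$ and $a \geq 0$. To obtain this I would regard $\M$ as a coherent $\hdntK$-module via the identification at the end of $\S\ref{ComplCent}$; Theorem \ref{Gen} yields a surjection $(\hdntK(-s))^a \twoheadrightarrow \M$. Since $\pi^n\fr{h}$ acts on $\M$ through the character $\lambda$, this map annihilates $I_\lambda \cdot \hdntK(-s)$, where $I_\lambda := \ker(\huhnK \to K_\lambda)$, and hence factors through the central reduction $(\hdntK(-s) \otimes_{\huhnK} K_\lambda)^a$. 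Finally $\hdntK(-s)\otimes_{\huhnK} K_\lambda \cong \hdnlK(-s)$: this follows from Proposition \ref{ComplCent}(c) by tensoring over $\O_\Fl$ with $\mathcal{L}^{\otimes -s}$, using that the constant central subsheaf $\huhnK$ of $\hdntK$ commutes with $\O_\Fl$. (Alternatively one checks that the proof of Theorem \ref{Gen} goes through unchanged with $\hdnlK$ in place of $\hdntK$.) Each $\hdnlK(-s)$ is a coherent $\hdnlK$-module, being locally isomorphic to $\hdnlK$, which is coherent by Proposition \ref{ComplCent}(d).

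The inductive step is now formal. Fix $i \geq 1$ and assume $S(i+1)$. Given $\M \in \coh(\hdnlK)$, pick a surjection $\mathcal{E} = (\hdnlK(-s))^a \twoheadrightarrow \M$ as above and let $\mathcal{K}$ be its kernel; $\mathcal{K}$ is again coherent because $\hdnlK$ is a coherent sheaf of rings. The long exact cohomology sequence of $0 \to \mathcal{K} \to \mathcal{E} \to \M \to 0$ contains the segment
\[ H^i(\Fl,\mathcal{E}) \longrightarrow H^i(\Fl,\M) \longrightarrow H^{i+1}(\Fl,\mathcal{K}).\]
The outer terms vanish: the left one because $H^i(\Fl,\hdnlK(-s)) = 0$ for $i \geq 1$ by Theorem \ref{BBVanishing} (as $\lambda$ is $\rho$-dominant), and the right one by the inductive hypothesis $S(i+1)$ applied to $\mathcal{K}$. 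Hence $H^i(\Fl,\M) = 0$, which is $S(i)$; the induction, and with it the Corollary, follows.

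The argument is essentially the standard ``generation plus d\'evissage'' mechanism, so there is no serious obstacle; the only point needing a little care is the second paragraph, namely that Theorem \ref{Gen} can be upgraded so that the generating objects are genuine $\hdnlK$-modules — this is what keeps the kernels $\mathcal{K}$ inside $\coh(\hdnlK)$ and lets the induction run.
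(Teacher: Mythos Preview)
Your proof is correct and follows essentially the same d\'evissage argument as the paper: the paper fixes $\M$, builds a resolution $\mathcal{P}_d \to \cdots \to \mathcal{P}_0 \to \M$ by direct sums of twists $\hdnlK(s)$, and then dimension-shifts $H^i(\Fl,\M) \cong H^{i+d}(\Fl,\M_d) = 0$ via Grothendieck vanishing, which is the same mechanism as your descending induction on $i$. Your second paragraph, explaining why the generators in Theorem~\ref{Gen} may be taken to be twists of $\hdnlK$ rather than $\hdntK$, is a point the paper glosses over here (it only spells this out later in the proof of Proposition~\ref{BBThm}, citing Proposition~\ref{ComplCent}(c)).
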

\begin{proof} Let $\M \in \coh(\hdnlK)$ and let $d = \dim \Fl$. Using Theorem \ref{Gen}, choose a resolution
\[ \cdots \to\mathcal{P}_d \stackrel{f_d}{\to} \mathcal{P}_{d-1} \stackrel{f_{d-1}}{\to} \cdots \stackrel{f_1}{\to} \mathcal{P}_0 \stackrel{f_0}{\to} \M \to 0\]
where each $\mathcal{P}_i$ is a direct sum of right twists of $\hdnlK$. Now if $\M_i = \im f_i$, then for each $i \geq 1$ the long exact sequence of cohomology together with Theorem \ref{BBVanishing} shows that
\[H^i(\Fl, \M) = H^{i+1}(\Fl, \M_1) = \cdots = H^{i+d}(\Fl, \M_d) = 0\]
by \cite[Theorem III.2.7]{Hart}.\end{proof}
We now start working towards computing the global sections of $\hdnl$.

\subsection{Restrictions on the prime $p$}\label{VeryGoodp}
Let $p$ be the characteristic of the residue field $k$ of $R$. Recall that $p$ is said to be \emph{bad} for an irreducible root system $\Phi$ if
\begin{itemize}
\item $p=2$ when $\Phi = B_l$, $C_l$ or $D_l$;
\item $p=2$ or $3$ when $\Phi = E_6, E_7, F_4$ or $G_2$
\item $p=2,3$ or $5$ when $\Phi = E_8$.
\end{itemize}
We say that $p$ is \emph{bad} for $\mb{G}$ if it is bad for some irreducible component of the root system of $\mb{G}$, and we say that $p$ is a \emph{good} prime for $\mb{G}$ if $p$ is not bad. Finally $p$ is said to be \emph{very good} for $\mb{G}$ if $p$ is good and no irreducible component of the root system of $\mb{G}$ is of type $A_{mp - 1}$ for some integer $m \geq 1$.

\textbf{We assume from now on that $p$ is a very good prime for $\mb{G}$}.

\subsection{Rings of invariants}\label{RingsofI}
Recall that we have chosen a Cartan subgroup $\mb{T}$ of $\mb{G}$ in \S \ref{HCmap}, and that it induces a root space decomposition $\fr{g} = \fr{n} \oplus \fr{t} \oplus \fr{n}^+$ of $\fr{g}$. The Weyl group $\mb{W}$ of $\mb{G}$ acts naturally on $\fr{t}$ and hence on the symmetric algebra $\Sy{t}$. On other hand, the adjoint action of $\mb{G}$ on $\fr{g}$ extends to an action on $\U{g}$ by ring automorphisms. This action preserves the filtration on $\U{g}$ and induces the adjoint action of $\mb{G}$ on $\gr \U{g} \cong \Sy{g}$. Let
\[ \psi : \Si{g}{G} \to \Sy{t}\]
be the composition of the inclusion $\Si{g}{G} \hookrightarrow \Sy{g}$ with the projection $\Sy{g} \to \Sy{t}$ along the decomposition $\Sy{g} = \Sy{t} \oplus \left( \fr{n} \Sy{g} + \Sy{g}\fr{n}^+\right)$. By \cite[Theorem 7.3.7]{Dix}, the image of $\psi$ is contained in $\Sy{t} \cap S(\fr{t}_K)^{\mb{W}} = \Si{t}{W}$, and $\psi$ is injective.

Let $\Ui{g}{G}$ denote the subalgebra of $\mb{G}$-invariants of $\U{g}$. Since taking $\mb{G}$-invariants is left exact, there is a natural inclusion
\[\iota : \gr (\Ui{g}{G}) \to \Si{g}{G}\]
of graded rings. Inspired by the ideas contained in \cite[\S 9.6]{Ja2}, we can now compute the associated graded ring of $\Ui{g}{G}$.

\begin{prop} The rows of the diagram
\[\xymatrix{ 0 \ar[r] & \gr ( \Ui{g}{G} ) \ar[r]^{\pi}\ar[d]_{\iota} & \gr (\Ui{g}{G}) \ar[r]\ar[d]_{\iota} & \gr (\Uik{g}{G}) \ar[r]\ar[d]_{\iota_k} & 0 \\0 \ar[r] & \Si{g}{G} \ar[r]^{\pi}\ar[d]_{\psi} & \Si{g}{G} \ar[r]\ar[d]_{\psi} & \Sik{g}{G} \ar[r]\ar[d]_{\psi_k} & 0
\\0 \ar[r] & \Si{t}{W} \ar[r]^{\pi} & \Si{t}{W} \ar[r] & \Sik{t}{W} \ar[r] & 0
}\]
are exact and each vertical map is an isomorphism.\end{prop}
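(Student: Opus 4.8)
The plan is to treat the three columns of the diagram in turn, reducing the whole statement to the Chevalley restriction theorem and the Harish-Chandra isomorphism in the very good prime setting, together with Demazure's theorem on rings of Weyl group invariants.

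\emph{The bottom two rows and the maps $\psi$, $\psi_k$.} Since $\fr{t}$ is free over $R$, so is $\Sy{t}$, hence $\Si{t}{W}$ is $\pi$-torsion-free and the bottom row is left exact; as $p$ is a very good, in particular non-torsion, prime, Demazure's theorem gives that $\Si{t}{W}$ is a polynomial $R$-algebra on $l$ homogeneous generators compatible with base change, so $\Si{t}{W}/\pi\Si{t}{W}\tocong\Sik{t}{W}$ and the bottom row is exact. That $\psi\colon\Si{g}{G}\to\Si{t}{W}$ is an isomorphism is the Chevalley restriction theorem over $R$ for very good $p$: injectivity is already recorded in \S\ref{RingsofI}, and surjectivity over such a base is standard. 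In particular $\Si{g}{G}$ is free over $R$, so the middle row is left exact too. For the rest of the middle row, the natural map $\mu\colon\Si{g}{G}/\pi\Si{g}{G}\to\Sik{g}{G}$ — which exists because the reduction mod $\pi$ of a $\mb{G}$-invariant is $\mb{G}_k$-invariant — satisfies $\psi_k\circ\mu=(\psi\bmod\pi)$, and the latter is an isomorphism onto $\Sik{t}{W}$ by what was just said. Hence $\mu$ is injective and $\psi_k$ surjective; combining this with injectivity of $\psi_k$ (Chevalley restriction over the field $k$, valid for very good $p$) makes $\mu$ and $\psi_k$ isomorphisms, so the middle row is exact.

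\emph{The top row and $\iota$.} The map $\iota$ is injective because taking $\mb{G}$-invariants is left exact, and $\iota_k$ is an isomorphism by Jantzen's computation \cite[\S 9.6]{Ja2}, valid already for good $p$. It remains to prove that $\iota$ is an isomorphism over $R$; once this is done, exactness of the top row and the identification of its reduced term with $\gr(\Uik{g}{G})$ follow by comparing $\iota$, $\iota_k$ and the (now exact) middle row. To see $\iota$ is surjective I would route through the Harish-Chandra homomorphism $\phi\colon\Ui{g}{G}\to\U{t}$ of \S\ref{HCmap}. Identifying $\gr\U{t}$ with $\Sy{t}$, the compatible decompositions of $\U{g}$ and of $\Sy{g}$ give $\gr\phi=\psi\circ\iota$; moreover $\phi$ drops no degree, since for $u$ of exact filtration degree $d$ the top symbol of $\phi(u)$ is $\psi$ of the top symbol of $u$, which is nonzero as $\psi$ is injective. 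The dot-action of $\mb{W}$ on $\U{t}=\Sy{t}$ is conjugate, via the shift automorphism $x\mapsto x-\rho(x)$ — which is defined over $R$ because $\fr{t}$ is spanned by the coroots and $\rho$ pairs integrally with them, and which is filtered with trivial associated graded — to the linear action, so the associated graded of the dot-invariant subalgebra of $\U{t}$ is exactly $\Si{t}{W}$. Now $\phi$ is injective (its base change to $K$ is the classical Harish-Chandra isomorphism) with image the dot-invariant subalgebra, and, dropping no degree, it is a strict filtered isomorphism onto that image, so $\gr\phi\colon\gr(\Ui{g}{G})\tocong\Si{t}{W}$. As $\gr\phi=\psi\circ\iota$ with $\psi$ an isomorphism, $\iota=\psi^{-1}\circ\gr\phi$ is an isomorphism. (The input that $\phi$ has image the dot-invariants over $R$ may be quoted from \cite{Ja2}, or bootstrapped from the $K$- and $k$-versions, using that $\Ui{g}{G}$ is free over $R$ as a submodule of the free $R$-module $\U{g}$.)

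\emph{Main obstacle.} The delicate point is exactly this last step. Over a field of characteristic zero one lifts invariants from $\Sy{g}$ to $\U{g}$ via the symmetrization map, but that divides by factorials and is unavailable over $R$, while taking $\mb{G}$-invariants does not visibly commute with reduction mod $\pi$ (there is an $H^1(\mb{G},-)$ obstruction). Passing through the torus via $\phi$, where the dot- and linear actions of $\mb{W}$ differ only by an integrally-defined shift, circumvents this — at the cost of importing the twisted Harish-Chandra isomorphism over $R$ for very good primes.
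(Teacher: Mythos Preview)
Your approach is correct but takes a genuinely different and more circuitous route than the paper's. Two remarks before the comparison. First, calling surjectivity of $\psi$ over $R$ ``standard'' is generous: it is not in the references of \S\ref{RingsofI} and needs precisely the bootstrapping you later sketch for $\phi$ --- the cokernel is $\pi$-torsion from the $K$-isomorphism, and reducing mod $\pi$ kills it once you know $\Sy{g}/\Si{g}{G}$ is $\pi$-torsion-free (which follows from the comodule description of invariants, since $\Sy{g}\otimes\O(\mb{G})$ is $\pi$-torsion-free) together with injectivity of $\psi_k$. Second, your bootstrapping of $\phi$ onto $\U{t}^{\mb{W}\bullet}$ likewise needs $\Ui{g}{G}/\pi\hookrightarrow\Uik{g}{G}$, i.e.\ that $\U{g}/\Ui{g}{G}$ is $\pi$-torsion-free; mere freeness of $\Ui{g}{G}$, which is all you mention, does not give this.

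The paper's proof avoids the Harish--Chandra map, the dot-action, the shift automorphism, and Jantzen's computation of $\iota_k$ entirely. Instead of bootstrapping $\psi$ and then $\phi$ separately, it bootstraps the single composite $\psi\circ\iota$ in one pass: setting $F^\bullet=\coker(C^\bullet\stackrel{\psi^\bullet\circ\iota^\bullet}{\longrightarrow}E^\bullet)$, the only inputs are exactness of the bottom row (Demazure), injectivity of $\psi$ and $\iota$ (trivial), the characteristic-zero isomorphism $\psi_K\circ\iota_K$, and that $\psi_k$ is an isomorphism (Kac--Weisfeiler). A short diagram chase then gives $F^0=F^1=0$, since these are $\pi$-torsion but $H^0(F^\bullet)=0$ forces multiplication by $\pi$ to be injective on them. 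All the vertical isomorphisms, including $\iota_k$, fall out as consequences rather than hypotheses. Your route does have the merit of establishing the integral Harish--Chandra isomorphism $\Ui{g}{G}\cong\U{t}^{\mb{W}\bullet}$ along the way, which is of independent interest and is used later in \S\ref{Centre}; but for the Proposition itself the paper's single-cokernel argument is considerably shorter and uses strictly fewer external results.
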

\begin{proof} We view this diagram as a sequence of complexes $C^\bullet \stackrel{\iota^\bullet}{\to} D^\bullet \stackrel{\psi^\bullet}{\to} E^\bullet$. It is easy to check that each complex is exact except possibly in the right-most non-zero entry. Since $p$ is very good for $\mb{G}$ and $\mb{G}$ is simply-connected, it follows from \cite[Corollaire du Th\'eor\`eme 2]{Dema} that $E^\bullet$ is actually exact.

Now $\psi$ is injective by \cite[Theorem 7.3.7]{Dix} and $\psi_k$ is an isomorphism by \cite[Theorem 4(i)]{KW} since $p$ is good, so $\psi^\bullet \circ \iota^\bullet$ is also injective. Consider the short exact sequence of complexes $0 \to C^\bullet \stackrel{\psi^\bullet \circ \iota^\bullet}{\longrightarrow} E^\bullet \to F^\bullet \to 0$ where $F^\bullet := \coker(\psi^\bullet \circ \iota^\bullet)$. Since $E^\bullet$ is exact and $H^0(C^\bullet) = H^1(C^\bullet) = 0$, the long exact sequence of cohomology shows that $H^0(F^\bullet) = H^2(F^\bullet) = 0$ and that there is an isomorphism $H^1(F^\bullet) \tocong H^2(C^\bullet)$.

Since $\psi_K \circ \iota_K : \gr ( \ugK^{\mb{G}_K}) \to S(\fr{t}_K)^{\mb{W}_K}$ is an isomorphism by \cite[Theorem 7.3.7]{Dix}, we see that $F^0 = F^1  = \coker(\psi\circ \iota)$ is $\pi$-torsion. But since $H^0(F^\bullet) = 0$, the sequence $0 \to F^0 \stackrel{\pi}{\to} F^1 $ is exact whence $F^0 = F^1 = 0$. Hence $H^1(F^\bullet) = H^2(C^\bullet) = 0$ and the top row $C^\bullet$ of the diagram is exact.

Finally, since $\psi^\bullet \circ \iota^\bullet : C^\bullet \to E^\bullet$ is now an isomorphism in all degrees except possibly 2, it must be an isomorphism by the Five Lemma. The result follows because $\psi^\bullet$ and $\iota^\bullet$ are both injections.\end{proof}

\begin{cor} $\gr (\Ui{g}{G})$ is isomorphic to a polynomial algebra over $R$ in $l$ variables.
\end{cor}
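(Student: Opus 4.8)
The plan is to combine the Proposition with the classical and modular versions of Chevalley's restriction theorem. By the Proposition, the composite $\psi \circ \iota$ identifies $A := \gr(\Ui{g}{G})$ with the invariant ring $\Si{t}{W} = S(\fr{t})^{\mb{W}}$ over $R$; reducing the top row of the diagram modulo $\pi$ gives $A/\pi A \cong \gr(\Uik{g}{G}) \cong \Sik{t}{W}$, while inverting $\pi$ gives $A \otimes_R K \cong S(\fr{t}_K)^{\mb{W}}$. Note also that $A$ is a finitely generated graded $R$-algebra each of whose graded pieces is a finitely generated $\pi$-torsion-free, hence free, $R$-module, since $A$ embeds into $S(\fr{g})$; in particular $A \otimes_R K$ and $A/\pi A$ have the same Hilbert series.

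First I would record the two input invariant-theoretic facts. By Chevalley's theorem $A \otimes_R K \cong S(\fr{t}_K)^{\mb{W}}$ is a polynomial $K$-algebra in $l$ homogeneous generators, of degrees $d_1 \le \cdots \le d_l$ say. Since $p$ is very good for $\mb{G}$ and $\mb{G}$ is simply connected, $A/\pi A \cong S(\fr{t}_k)^{\mb{W}}$ is also a polynomial $k$-algebra, by \cite{Dema} (see also \cite[Theorem 4(i)]{KW}); comparing Hilbert series with $A \otimes_R K$ then shows that it is polynomial in exactly $l$ variables, of degrees $d_1,\ldots,d_l$.

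Next comes the lifting step. Choose homogeneous elements $g_1, \ldots, g_l \in A$ whose images $\bar g_1, \ldots, \bar g_l$ form a free polynomial generating set of $A/\pi A$, and set $B := R[g_1, \ldots, g_l] \subseteq A$. Then $B + \pi A = A$, so each graded piece satisfies $A_m = B_m + \pi A_m$; since $A_m$ is finitely generated over the local ring $R$ and $\pi$ lies in its Jacobson radical, Nakayama's Lemma gives $A_m = B_m$, whence $A = B$ and the $g_i$ generate $A$ as an $R$-algebra. Finally I would check algebraic independence: the graded surjection $\phi : R[x_1, \ldots, x_l] \to A$, $x_i \mapsto g_i$, reduces modulo $\pi$ to the isomorphism $k[x_1, \ldots, x_l] \to A/\pi A$, so $\ker \phi \subseteq \pi R[x_1, \ldots, x_l]$; if $g \in \ker \phi$, write $g = \pi g'$, so that $\pi \phi(g') = 0$ and, as $A$ is $\pi$-torsion-free, $g' \in \ker \phi \subseteq \pi R[x_1,\ldots,x_l]$, and iterating puts $g$ in $\bigcap_m \pi^m R[x_1, \ldots, x_l] = 0$. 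Hence $\phi$ is an isomorphism and $\gr(\Ui{g}{G}) \cong R[x_1, \ldots, x_l]$.

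The main obstacle is the modular Chevalley theorem, namely that $S(\fr{t}_k)^{\mb{W}}$ is a polynomial algebra for very good $p$; this is where the hypothesis on $p$ is genuinely used. Once this and its classical counterpart over $K$ are in hand, the remaining steps — the Nakayama generation argument and the $\pi$-adic separatedness argument for independence — are routine.
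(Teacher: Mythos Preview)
Your proof is correct, but it takes a longer route than the paper's. The paper simply identifies $\gr(\Ui{g}{G})$ with $\Si{t}{W}$ via the isomorphism $\psi\circ\iota$ of the Proposition, and then cites Demazure's theorem \cite[Th\'eor\`eme~3]{Dema} directly: this already gives that $S(\fr{t}_{\mathbb{Z}})^{\mb{W}}$ is a polynomial $\mathbb{Z}$-algebra in $l$ variables, and the base-change formula \cite[\S I.2.10(3)]{Jantzen} then yields $\Si{t}{W}\cong R\otimes_{\mathbb{Z}}S(\fr{t}_{\mathbb{Z}})^{\mb{W}}$, a polynomial $R$-algebra. So the polynomial structure over $R$ comes for free from the integral result.

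By contrast, you only invoke the polynomial structure over the residue field $k$ (this is \cite[Corollaire du Th\'eor\`eme~3]{Dema} rather than \cite[Theorem~4(i)]{KW}, which is about $\psi_k$ being an isomorphism) and then reconstruct the $R$-statement by hand via Nakayama and $\pi$-adic separatedness. This is a perfectly valid lifting argument---essentially a special case, over a DVR, of what Demazure does over $\mathbb{Z}$---but it is redundant once one knows the integral theorem. The detour through $A\otimes_R K$ and the Hilbert-series comparison is also unnecessary: knowing $A/\pi A$ is polynomial already forces the number of variables to be $l=\dim_k\fr{t}_k$ by Krull dimension.
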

\begin{proof} Since $\psi \circ \iota$ is a graded isomorphism by the Proposition, this follows from \cite[Th\'eor\`eme 3]{Dema} and \cite[\S I.2.10(3)]{Jantzen}.
\end{proof}

\subsection{Global sections of $\hdnlK$}\label{GlobSec}
It follows from Corollary \ref{RingsofI} that $\Ui{g}{G}$ is itself a commutative polynomial algebra over $R$ in $l$ variables. Hence
\[\hugGnK := \widehat{(\Ui{g}{G})_{n,K}}\]
is a commutative Tate algebra in $l$ variables. 

The commutative square in Lemma \ref{HCmap} consists of deformable $R$-algebras. Applying the deformation functor to it we obtain another commutative square of deformable $R$-algebras
\[\xymatrix{\UG \ar[d]\ar[r]^{\phi_n} & \U{t}_n \ar[d]^{(j \circ i)_n} \\
\Un \ar[r]_{U(\varphi)_n} & \dnt .
} \]
We view the $\U{h}_n$-module $R_\lambda$ as a $\UG$-module via restriction along the map $(i \circ \phi)_n : \UG \to \U{h}_n$, and let $K_\lambda := R_\lambda \otimes_R K$ be the corresponding $\hugGnK$-module. We make the following definitions:
\begin{itemize}
\item $\unl :=  \Un \otimes_{\UG} R_\lambda $,
\item $\hunl := \invlim  \unl/ \pi^a \unl$, and
\item $\hunlK := \hunl \otimes_R K$.
\end{itemize}
Because the diagram commutes, the map $U(\varphi)_n \otimes (j \circ i)_n : \Un \otimes \U{t}_n \to \dnt$ factors through $\UG$ and we obtain algebra homomorphisms
\[\begin{array}{lllll}\varphi^{\lambda}_n &:& \unl &\to& \dnl,  \\

\widehat{\varphi^{\lambda}_n} &:& \hunl &\to& \hdnl,\quad\mbox{and} \\

\widehat{\varphi^\lambda_{n,K}} &:& \hunlK &\to& \hdnlK.
\end{array}\]
It is not immediately clear whether $\unl$ is a deformable $R$-algebra as it could \emph{a priori} have $\pi$-torsion. Presumably $\gr \U{g}$ is a free module over $\gr \left(\U{g}^{\mb{G}}\right)$, which would imply that $\unl$ is deformable. However we will not need to prove this.

\begin{thm} 
\be \hfill
\item $\hunlK \cong \hugnK \otimes_{\hugGnK} K_\lambda$ is an almost commutative affinoid $K$-algebra.
\item The map $\widehat{\varphi^\lambda_{n,K}} : \hunlK \to \Gamma(\Fl,\hdnlK)$ is an isomorphism of complete doubly filtered $K$-algebras.
\item There is an isomorphism $\Syk{g} \otimes_{ \Sik{g}{G} } k \tocong \Gr(\hunlK)$.
\ee\end{thm}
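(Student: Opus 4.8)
The proof splits along the three parts; I would establish (a) and (c) first and deduce (b) from (c).

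\emph{Part (a).} By Corollary \ref{RingsofI} the ring $\Ui{g}{G}$ is a polynomial $R$-algebra in $l$ variables, and since the deformation functor induces an isomorphism on associated graded rings (Lemma \ref{Defs}) and preserves commutativity, $\UG=(\Ui{g}{G})_n$ is again a polynomial $R$-algebra in $l$ variables. The structure map $\UG\to\Un$ is then a central embedding of Noetherian $R$-algebras and $R_\lambda$ is a cyclic $\UG$-module, so Lemma \ref{ComplCent} applies and yields a ring isomorphism $\hunl=\widehat{\Un\otimes_{\UG}R_\lambda}\tocong\hunt\otimes_{\hunG}R_\lambda$ (here $\widehat{R_\lambda}=R_\lambda$ as $R$ is complete); inverting $\pi$ gives the stated isomorphism. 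For almost commutative affinoidness: $\hugnK$ is an almost commutative affinoid $K$-algebra by Proposition \ref{acsa} applied to $U(\fr{g})$ (Example \ref{ExAC}(c)); $\hugGnK$ is a commutative Tate algebra in $l$ variables, and $K_\lambda$ is its quotient by a maximal ideal, which is generated by $l$ elements, so $\hunlK$ is the quotient of $\hugnK$ by finitely many central elements. Hence its slice is a quotient of the almost commutative $k$-algebra $\gr_0\hugnK$, so is almost commutative, while $\hunl$ is a $\pi$-adically complete $R$-lattice by Lemma \ref{ComplCent}; therefore $\hunlK$ is an almost commutative affinoid $K$-algebra.

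\emph{Part (c).} One has $\Gr(\hunlK)=\gr(\gr_0\hunlK)=\gr(\hunl/\pi\hunl)=\gr(\unl/\pi\unl)$, and reducing the defining tensor product mod $\pi$ identifies $\unl/\pi\unl$ with $(\Un\otimes_Rk)\otimes_{\UG\otimes_Rk}(R_\lambda\otimes_Rk)$. By Lemma \ref{Defs} and Corollary \ref{RingsofI}, $\gr(\Un\otimes_Rk)\cong\Syk{g}$, $\gr(\UG\otimes_Rk)\cong\Sik{g}{G}$, the induced map on associated gradeds is the natural inclusion $\Sik{g}{G}\hookrightarrow\Syk{g}$, and $\UG\otimes_Rk$ is a polynomial $k$-algebra. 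Thus $R_\lambda\otimes_Rk$ is the quotient of $\UG\otimes_Rk$ by a maximal ideal generated by elements $b_1,\dots,b_l$ of positive degree whose principal symbols generate the augmentation ideal of $\Sik{g}{G}$ (subtracting scalars does not affect top-degree parts). Since $p$ is very good and $\mb{G}$ is simply connected, $\Syk{g}$ is a free $\Sik{g}{G}$-module, by the same freeness input of Demazure used for Corollary \ref{RingsofI}, so $\gr b_1,\dots,\gr b_l$ is a regular sequence in $\Syk{g}$. A standard induction, dividing a filtered ring by one central element at a time whose principal symbol is a non-zero-divisor on the associated graded, then gives $\gr\bigl((\Un\otimes_Rk)/(b_1,\dots,b_l)\bigr)\cong\Syk{g}/(\gr b_1,\dots,\gr b_l)=\Syk{g}\otimes_{\Sik{g}{G}}k$, which is (c).

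\emph{Part (b), the main point.} A morphism of complete doubly filtered $K$-algebras that induces an isomorphism on $\Gr$ is itself an isomorphism: the isomorphism lifts first to the slices, by completeness of the positive slice filtrations, then to the $F_0$-lattices, by $\pi$-adic completeness, then after inverting $\pi$. So it suffices to show $\Gr(\widehat{\varphi^\lambda_{n,K}})$ is an isomorphism. By (c), $\Gr(\hunlK)\cong\Syk{g}\otimes_{\Sik{g}{G}}k=\O(\N_k)$, the ring of functions on the nilpotent cone $\N_k\subseteq\fr{g}_k^\ast$, which is reduced since $p$ is good. On the other side, $\gr_0\Gamma(\Fl,\hdnlK)=\Gamma(\Fl,\hdnl)/\pi\cong\Gamma(\Fl,\dnl/\pi\dnl)$: this rests on $H^{>0}(\Fl,\dnl/\pi^a\dnl)=0$ for all $a$, which via the order filtration and a direct-limit argument reduces to $H^{>0}(\Fl,\gr\dnl)=H^{>0}(T^\ast\Fl,\O)=0$ over $R$ — the vanishing of the higher direct images of the structure sheaf under the Springer resolution, valid because $p$ is good; compare the projectivity of the moment map in \ref{EnhMom}. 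Passing to associated gradeds for the order filtration and using $\gr\dnl\cong\Sym_\O\T$ (Lemma \ref{TDO}(c)) gives an injection $\gr\Gamma(\Fl,\dnl/\pi\dnl)\hookrightarrow\Gamma(\Fl,\Sym_{\O}\T\otimes_Rk)=\Gamma(T^\ast\Fl_k,\O)=\O(\N_k)$, and the composite $\Gr(\hunlK)\to\gr\Gamma(\Fl,\dnl/\pi\dnl)\hookrightarrow\O(\N_k)$ is the pull-back of functions along the moment map, whose scheme-theoretic image is exactly $\N_k$; hence it is an isomorphism $\O(\N_k)\to\O(\N_k)$. It follows formally that $\Gr(\widehat{\varphi^\lambda_{n,K}})$ is injective and the inclusion above is surjective, so both are isomorphisms. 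The hard part is exactly this identification of $\Gamma(\Fl,\hdnl)$: one must control the order and $\pi$-adic filtrations simultaneously and bring in the geometry of the (Grothendieck--)Springer resolution, since $\Fl$ is not affine and Theorem \ref{Cartan} does not apply directly. One could instead base-change to the characteristic-zero field $K$, where $\D^\lambda_K$ is $\Gamma$-acyclic for every $\lambda$ (again $H^{>0}(T^\ast\Fl_K,\O)=0$) and $\Gamma(\Fl,\D^\lambda_K)\cong U(\fr{g}_K)\otimes_{U(\fr{g}_K)^{\mb{G}_K}}K_\lambda$ by Beilinson--Bernstein and the computation following \cite{BMR1}, and then complete using Theorem \ref{UnComplete}.
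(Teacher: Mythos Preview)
Your argument is essentially correct and reaches the same conclusion, but it takes a different and somewhat longer route than the paper, and two points deserve comment.

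First, in part (c) you appeal to freeness of $\Syk{g}$ over $\Sik{g}{G}$ ``by the same freeness input of Demazure used for Corollary \ref{RingsofI}.'' This is a mis-attribution: Demazure's theorem gives freeness of $\Syk{t}$ over $\Sik{t}{W}$, which is what Corollary \ref{RingsofI} uses. Freeness of $\Syk{g}$ over $\Sik{g}{G}$ is the deeper statement that the adjoint quotient is flat; it is true for very good $p$ but comes from elsewhere (Kostant in characteristic zero, and e.g.\ the argument in \cite{BMR1} in positive characteristic). With the correct citation your regular-sequence computation of $\Gr(\hunlK)$ goes through.

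Second, in part (b) you assert $\gr_0\Gamma(\Fl,\hdnlK)\cong\Gamma(\Fl,\dnl/\pi\dnl)$, invoking $H^{>0}(\Fl,\dnl/\pi^a\dnl)=0$. This vanishing is true but unnecessary: Lemma \ref{DoubleFiltSheaf}(b) already gives the injection $\Gr(\Gamma(\Fl,\hdnlK))\hookrightarrow\Gamma(T^\ast\Fl_k,\O)$, and your sandwich argument with $\O(\N_k)$ then finishes without any cohomology input beyond the identification $\Gamma(T^\ast\Fl_k,\O)\cong\Syk{g}\otimes_{\Sik{g}{G}}k$.

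The paper organises things differently and more economically. Rather than proving (c) first and then computing $\Gr$ on both sides of (b), it writes $\Gamma(\Fl,\hdnlK)$ as the kernel of the \v{C}ech map $\bigoplus\hdnlK(U_i)\to\bigoplus\hdnlK(U_i\cap U_j)$, applies $\Gr$ to the entire complex $0\to\hunlK\to\bigoplus\hdnlK(U_i)\to\cdots$, and compares with the exact sequence $0\to\Syk{g}\otimes_{\Sik{g}{G}}k\to\bigoplus\O(T^\ast U_{i,k})\to\cdots$ from \cite[Proposition 3.4.1]{BMR1}. A single diagram chase, using only the easy surjection $\Syk{g}\otimes_{\Sik{g}{G}}k\twoheadrightarrow\Gr(\hunlK)$, then yields (b) and (c) simultaneously, without needing the flatness of the adjoint quotient as a separate input. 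Your approach has the virtue of giving (c) independently of (b); the paper's has the virtue of needing strictly less.

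Your closing suggestion to work over $K$ and complete via Theorem \ref{UnComplete} is less clear: that theorem transfers $\Gamma$-acyclicity and generation, not the computation of $\Gamma$ itself, and one would still need control of $\Gamma(\Fl,\dnl)$ over $R$ rather than just over $K$.
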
\begin{proof}
(a) By Corollary \ref{RingsofI} and Lemma \ref{Defs}, $\UG$ is Noetherian. Hence there is an $R$-algebra isomorphism $\hunl \tocong \hunt \otimes_{\hunG} R_\lambda$ by Lemma \ref{ComplCent}. So $\hunlK$ is an almost commutative affinoid $K$-algebra, being a quotient of $\hugnK$. 

(b), (c) Let $\{U_1,\ldots, U_m\}$ be an open cover of $\Fl$ by open affines that trivialise the torsor $\xi$; thus the special fibre $\bFl$ is covered by the special fibres $U_{i,k}$. For part (b), it will be enough to prove that the sequence
\[C^\bullet: \quad 0 \to \hunlK \stackrel{\widehat{\varphi^\lambda_{n,K}}}{\longrightarrow} \bigoplus_{i=1}^m \hdnlK(U_i) \to \bigoplus_{i < j} \hdnlK(U_i \cap U_j)\]
is exact. Since $C^\bullet$ is a complex in the category of complete doubly filtered $K$-algebras, it is enough to show that $\Gr(C^\bullet)$ is exact.

By Corollary \ref{GrComp}, there is a commutative diagram with exact rows
\[\xymatrix{ 0 \ar[r] & \gr (\Ui{g}{G}) \ar[r]^{\pi}\ar[d] & \gr (\Ui{g}{G}) \ar[r]\ar[d] & \Gr(\hugGnK) \ar[r]\ar[d] & 0 \\
0 \ar[r] & \gr (\U{g}) \ar[r]^{\pi} & \gr (\U{g}) \ar[r] & \Gr(\hugnK) \ar[r] & 0 .}
\]
Since $\gr (\U{g}) = \Sy{g}$, Proposition \ref{RingsofI} induces a commutative square
\[\xymatrix{ \Gr(\hugGnK) \ar[r]\ar[d] & \Sik{g}{G} \ar[d]\\ \Gr(\hugnK) \ar[r] & \Syk{g} }\]
where the horizontal maps are isomorphisms and the vertical maps are inclusions. Since $\Gr(K_\lambda)$ is the trivial $\Gr(\hugGnK)$-module $k$, we obtain a natural surjection 
\[ \Syk{g} \otimes_{ \Sik{g}{G} } k \cong \Gr(\hugnK) \otimes_{\Gr(\hugGnK)} \Gr(K_\lambda) \twoheadrightarrow \Gr(\hunlK)\]
which fits into the commutative diagram
\[\xymatrix{  0 \ar[r] & \Syk{g} \otimes_{ \Sik{g}{G} } k \ar[r]\ar[d] & \bigoplus\limits_{i=1}^m \O(T^\ast U_{i,k}) \ar[r] & \bigoplus\limits_{i < j} \O(T^\ast (U_{i,k} \cap U_{j,k})) \\
0 \ar[r] & \Gr(\hunlK) \ar[r] & \bigoplus\limits_{i=1}^m \Gr(\hdnlK(U_i)) \ar[r]\ar[u] & \bigoplus\limits_{i < j} \Gr(\hdnlK(U_i \cap U_j))\ar[u] }
\]
where the bottom row is $\Gr(C^\bullet)$ and the top row appeared in the proof of \cite[Proposition 3.4.1]{BMR1} and is induced by the moment map $T^\ast \Fl_k \to \frk{g}^\ast$. It was shown in \emph{loc. cit.} that under the assumption that $p$ is very good for $\mb{G}$, the top row is exact. The second and third vertical arrows are isomorphisms by Proposition \ref{ComplCent}(a). An elementary diagram chase now shows that $\Gr(C^\bullet)$ is exact,  proving (b), and also that the first vertical arrow is an isomorphism, proving (c).
\end{proof}

\subsection{The localisation functors}\label{LocFun}
Recall the localisation functor from $\S$\ref{EnhLoc}:
\[\Loc : \md{\hugnK}\to \md{\hdntK}.\]
For each $\lambda\in \Hom_R(\pi^n \fr{h},R)$ we also have a functor \[ \Loc^\lambda :  \md{\hunlK} \to \md{\hdnlK},\] given by $M\mapsto \hdnlK\otimes_{\hunlK}M$ which we will also call a localisation functor. Since $\hdnlK$ is a quotient of $\hdntK$ by Proposition \ref{ComplCent}(c), we can and will view $\Loc^\lambda(M)$ as a $\hdntK$-module.

\begin{lem} For any finitely generated $\hunlK$-module $M$, there is a natural surjection of $\hdntK$-modules $\Loc(M) \twoheadrightarrow \Loc^\lambda(M)$.
\end{lem}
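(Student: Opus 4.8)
The plan is to build the map over the base $\mathcal{S}$ and then sheafify. Fix $U \in \mathcal{S}$. Applying the $\pi$-adic completion functor and $-\otimes_R K$ to the commutative square of deformable $R$-algebras of $\S\ref{GlobSec}$ (the deformed version of the square in Lemma \ref{HCmap}, with $\varphi^\lambda_n$ the map \emph{induced} from $U(\varphi)_n$ by the two central reductions) and then evaluating on $U$, one obtains a commutative square of $K$-algebra homomorphisms
\[\xymatrix{ \hugnK \ar[r] \ar[d]_{\varphi_U} & \hunlK \ar[d]^{\varphi^\lambda_U} \\ \hdntK(U) \ar[r]_{q_U} & \hdnlK(U). }\]
Here $\varphi_U$ and $\varphi^\lambda_U$ are the compositions of $\widehat{\varphi_{n,K}}$ and $\widehat{\varphi^\lambda_{n,K}}$ with restriction to $U$; the top arrow is the central reduction $\hugnK \to \hugnK \otimes_{\hugGnK} K_\lambda \cong \hunlK$ of Theorem \ref{GlobSec}(a); and $q_U$ is the restriction to $U$ of the central reduction $\hdntK \to \hdntK \otimes_{\huhnK} K_\lambda \cong \hdnlK$ of Proposition \ref{ComplCent}(c). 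In particular the horizontal arrows are surjective.

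Now let $M$ be a finitely generated $\hunlK$-module, regarded as a $\hugnK$-module by restriction along the top arrow. The next step is the change-of-rings map: for each open $U \subseteq \Fl$ define
\[\theta_U : \hdntK(U) \otimes_{\hugnK} M \longrightarrow \hdnlK(U) \otimes_{\hunlK} M, \qquad d \otimes m \longmapsto q_U(d) \otimes m.\]
Commutativity of the displayed square is precisely what makes $\theta_U$ well defined, and $\theta_U$ is $\hdntK(U)$-linear once the target is given its $\hdntK(U)$-module structure via $q_U$; since $q_U$ is surjective, so is $\theta_U$. The maps $\theta_U$ are compatible with restriction in $U$, hence form a morphism of presheaves on $\Fl$. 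By $\S\ref{EnhLoc}$, $\Loc(M)$ is the sheafification of $U \mapsto \hdntK(U) \otimes_{\hugnK} M$, and by $\S\ref{LocFun}$, since $M$ is a $\hunlK$-module, $\Loc^\lambda(M)$ is the sheafification of $U \mapsto \hdnlK(U) \otimes_{\hunlK} M$, carrying its $\hdntK$-module structure through $q$. Thus sheafifying $(\theta_U)$ produces a morphism $\Loc(M) \to \Loc^\lambda(M)$ of $\hdntK$-modules; since sheafification is exact it carries the epimorphism of presheaves $(\theta_U)$ to an epimorphism of sheaves, which is the asserted surjection. Naturality in $M$ is clear from the formula for $\theta_U$.

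I do not anticipate a genuine obstacle here. The only delicate points are bookkeeping: that $\Loc^\lambda$ applied to a $\hunlK$-module is computed by the tensor-product presheaf over $\hunlK$ with its $\hdntK$-action pulled back along $q$ --- immediate from the definitions in $\S\ref{EnhLoc}$ and $\S\ref{LocFun}$ together with Proposition \ref{ComplCent}(c) --- and the commutativity of the ring square, which is exactly what is recorded by the construction of $\varphi^\lambda_n$ in $\S\ref{GlobSec}$. Everything else is routine.
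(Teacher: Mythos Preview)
Your proof is correct and rests on the same underlying fact as the paper's --- the commutativity of the deformed/completed Harish--Chandra square from $\S\ref{GlobSec}$, which ensures that $q_U \circ \varphi_U$ factors through $\varphi^\lambda_U$. Given that, your change-of-rings map $\theta_U(d\otimes m) = q_U(d)\otimes m$ is well defined and surjective, and sheafifying gives the result.

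The paper organises the argument differently: rather than writing down $\theta_U$ directly, it first treats the case $M = \hunlK$ by identifying $\Loc(\hunlK) \cong \hdntK \otimes_{\hugGnK} K_\lambda$ and $\Loc^\lambda(\hunlK) = \hdnlK \cong \hdntK \otimes_{\huhnK} K_\lambda$, then uses the obvious surjection between these central reductions (coming from the map $\hugGnK \to \huhnK$), and finally passes to general $M$ via a free presentation and the Five Lemma. Your direct construction avoids this reduction step and is arguably cleaner, but the content is the same: in the case $M = \hunlK$ your $\theta_U$ specializes exactly to the paper's dotted arrow.
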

\begin{proof}
By Theorem \ref{GlobSec}(a), there is an isomorphism
\[\hdntK \otimes_{\hugnK} \hunlK \cong \hdntK \otimes_{\hugnK} \left(\hugnK \otimes_{\hugGnK} K_\lambda\right) \cong \hdntK \otimes_{\hugGnK} K_\lambda\]
of sheaves of complete doubly filtered $K$-algebras. There is also the isomorphism
\[\hdnlK \cong \hdntK \otimes_{\huhnK} K_\lambda\]
of sheaves of complete doubly filtered $K$-algebras by Proposition \ref{ComplCent}(c). These isomorphisms fit together into a commutative diagram
\[\xymatrix{ \hdntK \otimes_{\hugnK} \hunlK \ar[d]_{\cong} \ar@{.>>}[r] & \hdnlK \ar[d]^{\cong}\\ \hdntK \otimes_{\hugGnK} K_\lambda \ar@{>>}[r] & \hdntK \otimes_{\huhnK} K_\lambda}\]
and the obvious surjective horizontal arrow in the second row induces the dotted surjective arrow in the first row. This proves the result in the case when $M = \hunlK$; in the general case, pick a presentation $F_1 \to F_0 \to M \to 0$ of $M$ where $F_1$ and $F_0$ are finitely generated free $\hunlK$-modules, and apply the Five Lemma. \end{proof}

\subsection{The equivalence of categories} \label{BBThm}
Recall that a weight $\lambda \in \fr{h}^\ast_K$ is said to be \emph{regular} if its stabilizer under the action of $\mb{W}$ is trivial. Recall also that we're assuming that our ground field $K$ has characteristic zero.

\begin{prop} Let the weight $\lambda \in \Hom_R(\pi^n \fr{h}, R)$ be such that $\lambda + \rho$ is dominant. Then $\Fl$ is coherently $\hdnlK$-acyclic. If $\lambda+\rho$ is also regular then $\Fl$ is $\hdnlK$-affine.
\end{prop}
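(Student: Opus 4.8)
\textbf{Strategy.} The statement consists of two assertions: (1) under the hypothesis that $\lambda + \rho$ is dominant, $\Fl$ is coherently $\hdnlK$-acyclic; (2) if in addition $\lambda + \rho$ is regular, then $\Fl$ is coherently $\hdnlK$-affine. Part (1) has essentially already been established: Corollary \ref{BBVanishing} shows that every coherent $\hdnlK$-module is $\Gamma$-acyclic under the $\rho$-dominance hypothesis, and it remains only to check that $\Gamma(\Fl, \M)$ is a coherent $\Gamma(\Fl, \hdnlK)$-module for every coherent $\hdnlK$-module $\M$. But by Theorem \ref{GlobSec}(b) we know $\Gamma(\Fl, \hdnlK) \cong \hunlK$, which is an almost commutative affinoid $K$-algebra, hence Noetherian; and by Proposition \ref{Bry1} applied in the twisted setting (or directly by choosing a good double filtration on $\M$, pushing it to global sections, and using Proposition \ref{CohCom}(a) together with the computation $\Gr(\hunlK) \cong \Syk{g} \otimes_{\Sik{g}{G}} k$ from Theorem \ref{GlobSec}(c)), $\Gamma(\Fl, \M)$ is finitely generated over $\hunlK$. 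So part (1) follows by assembling these ingredients.

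\textbf{Part (2).} Here the content is that every coherent $\hdnlK$-module $\M$ is generated by its global sections. By Theorem \ref{Gen}, the twists $\{\hdnlK(s) : s \in \mathbb{Z}\}$ generate $\coh(\hdnlK)$, so it suffices to produce, for each $s$, a surjection $(\hdnlK)^a \twoheadrightarrow \hdnlK(s)$ of $\hdnlK$-modules for some $a \geq 1$; equivalently, it is enough to show each $\hdnlK(s)$ is generated by global sections, and then a standard argument (resolve $\M$ by sums of twists, use right-exactness of $\Loc^\lambda$ and the already-established $\Gamma$-acyclicity to see the counit $\Loc^\lambda(\Gamma(\Fl,\M)) \to \M$ is surjective) finishes the job. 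To handle the twists I would pass to the generic fibre: the sheaf $\dlK := \dnl \otimes_R K$ restricted to $\Fl_K$ is a classical twisted-differential-operator sheaf $\D_{\lambda+\rho}$ in the sense of Beilinson--Bernstein, and since $\lambda + \rho$ is dominant \emph{and} regular, the classical Beilinson--Bernstein theorem \cite{BB} gives that every coherent $\dlK$-module — in particular each twist $\dlK(s)$, or rather an integral model thereof — is generated by global sections on $\Fl_K$. Then Theorem \ref{UnComplete}(a) lifts this: if $\M$ is a coherent $\dnl$-module with $\M_K$ generated by global sections as a $\dlK$-module, then $\hM_K$ is generated by global sections as a $\hdnlK$-module. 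Applying this with $\M$ a coherent $\dnl$-lattice inside a twist gives the required surjection.

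\textbf{Main obstacle.} The delicate point is the passage from "$\M_K$ generated by global sections over $\dlK$" to having a \emph{coherent $\dnl$-module} $\M$ in hand with the right generic fibre and the right twist structure, so that Theorem \ref{UnComplete}(a) applies cleanly — in particular one must be careful that the relevant twist $\dlK(s)$ genuinely arises as $\M_K$ for a coherent $\dnl$-module, and that clearing denominators does not disturb the argument. This is really a bookkeeping matter: one takes $\M = \dnl(s)$ itself (a coherent $\dnl$-module by Lemma \ref{GeomTr}(b)), observes $\M_K = \dlK(s)$, invokes the classical theorem for the generic fibre, and then applies Theorem \ref{UnComplete}(a). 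The regularity hypothesis enters \emph{only} through the classical Beilinson--Bernstein exactness/generation statement on $\Fl_K$; everything on the integral and completed side is insensitive to it. Finally, one should note — as the authors do in the proof of Theorem \ref{BBVanishing} — that $K$ need not be algebraically closed for the relevant portion of the Beilinson--Bernstein argument to go through, so the restriction to $K$ of characteristic zero suffices.
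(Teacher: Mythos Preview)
Your proposal is correct and follows essentially the same approach as the paper: for acyclicity the paper also combines Corollary \ref{BBVanishing} with Proposition \ref{Bry1} (viewing $\M$ as a coherent $\hdntK$-module), and for affineness it likewise uses Theorem \ref{Gen} together with Proposition \ref{ComplCent}(c) to surject a direct sum of twists onto $\M$, shows each $\hdnlK(s)$ is globally generated by applying Theorem \ref{UnComplete}(a) to $\dnl(s)$ and invoking the classical Beilinson--Bernstein theorem on the generic fibre, and then concludes via a presentation and the Five Lemma that the counit $\Loc^\lambda(\Gamma(\M))\to\M$ is an isomorphism. The only point you gloss over is the passage from Theorem \ref{Gen} (stated for $\hdntK$) to the corresponding statement for $\hdnlK$, which requires Proposition \ref{ComplCent}(c); otherwise your outline matches the paper's proof.
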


\begin{proof} Let $\M$ be a coherent $\hdnlK$-module. By Corollary \ref{BBVanishing}, $\M$ is $\Gamma$-acyclic. Because we may view $\M$ as a coherent $\hdntK$-module, $\M(X)$ is finitely generated over $\hugnK$ by Proposition \ref{Bry1}. Thus $\M(X)$ is a coherent $\hdnlK(X)$-module and so $\Fl$ is coherently $\hdnlK$-acyclic.

Suppose now that $\lambda+\rho$ is regular. By Theorem \ref{Gen} and Proposition \ref{ComplCent}(c) we can find a surjection $\mathcal{F} \twoheadrightarrow \M$ where $\mathcal{F}$ is a direct sum of sheaves of the form $\hdnlK(s_i)$ for some integers $s_i$.  Since each $\dlK(s_i)$ is a coherent $\mathcal{D}_K^{\lambda}$-module, it is generated by its global sections by paragraph (iv) of the proof of \cite[Th\'eor\`eme Principal]{BB}. Hence each $\hdnlK(s_i)$ is generated by its global sections by Theorem \ref{UnComplete}(a) and we can therefore find a surjection $\mathcal{F}_0 \to \M$ where $\mathcal{F}_0$ is a direct sum of copies of $\hdnlK$. Applying the same argument to the kernel of this surjection gives a presentation $\mathcal{F}_1 \to \mathcal{F}_0 \to \M \to 0$. Since $\Gamma$ is exact and $\Loc^\lambda$ is right exact there is a commutative diagram \[\xymatrix{ \mathcal{F}_1 \ar[r] & \mathcal{F}_0 \ar[r] & \M \ar[r]& 0 \\
\Loc^\lambda(\Gamma(\mathcal{F}_1)) \ar[r]\ar[u] & \Loc^\lambda(\Gamma(\mathcal{F}_0)) \ar[r]\ar[u] & \Loc^\lambda(\Gamma(\M)) \ar[r]\ar[u] & 0}\] with exact rows and with the first two vertical maps isomorphisms. Thus it follows from the Five Lemma that the canonical map $\Loc^\lambda(\Gamma(\M))\to \M$ is an isomorphism so $\M$ is generated by its global sections and $\Fl$ is $\hdnlK$-affine as required.\end{proof} 
 We can finally prove Theorem \ref{BBIntro}.

\begin{thm} Let the weight $\lambda \in \Hom_R(\pi^n \fr{h}, R)$ be such that $\lambda + \rho$ is dominant and regular. Then the functors $\Loc^\lambda$ and $\Gamma$ are mutually inverse equivalences of abelian categories between $\coh(\hunlK)$ and $\coh(\hdnlK)$. 

If $\lambda+\rho$ is dominant but not regular then $\Loc^\lambda$ and $\Gamma$ still induce mutually inverse exact equivalences of abelian categories between $\coh(\hunlK)$ and $\coh(\hdnlK)/\ker \Gamma$.
\end{thm}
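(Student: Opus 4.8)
The plan is to apply the general affinity machinery developed in $\S\ref{CohDaffine}$, specifically the Proposition in $\S\ref{CohDaffine}$, once we have verified its hypotheses for $\msD = \hdnlK$ on the space $X = \Fl$. That Proposition says that whenever $X$ is coherently $\msD$-acyclic, $\ker\Gamma(X,-)$ is a Serre subcategory of $\coh(\msD)$ and the functors $\Gamma(X,-)$ and $\msD\otimes_{\msD(X)}-$ induce mutually inverse equivalences between $\coh(\msD)/\ker\Gamma(X,-)$ and $\coh(\msD(X))$, with $\ker\Gamma(X,-) = 0$ when $X$ is moreover coherently $\msD$-affine. So the theorem will reduce to two things: identifying $\msD(X) = \Gamma(\Fl,\hdnlK)$ with $\hunlK$, and checking the acyclicity/affinity hypotheses.

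First I would record that $\Gamma(\Fl,\hdnlK) \cong \hunlK$ via $\widehat{\varphi^\lambda_{n,K}}$; this is exactly Theorem \ref{GlobSec}(b). Hence $\coh(\msD(X)) = \coh(\hunlK)$, since $\hunlK$ is Noetherian by Theorem \ref{GlobSec}(a) and Lemma \ref{GoodDouble}(b), so coherent modules over it are just the finitely generated ones. Next, Proposition \ref{BBThm} gives precisely what is needed on the geometric side: when $\lambda+\rho$ is dominant, $\Fl$ is coherently $\hdnlK$-acyclic, and when $\lambda+\rho$ is in addition regular, $\Fl$ is coherently $\hdnlK$-affine. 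I should also note that the localisation functor $\Loc^\lambda$ of $\S\ref{LocFun}$, namely $M\mapsto \hdnlK\otimes_{\hunlK}M$, is exactly the functor $\msD\otimes_{\msD(X)}-$ appearing in the Proposition of $\S\ref{CohDaffine}$ once we make the identification $\msD(X) = \hunlK$; and that $\Loc^\lambda$ does land in $\coh(\hdnlK)$ because $\hdnlK$ is coherent (Proposition \ref{ComplCent}(d)) and $\Loc^\lambda$ is right exact, so it takes a finite presentation of a finitely generated $\hunlK$-module to a finite presentation over $\hdnlK$.

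With these identifications in place, the dominant-and-regular case is immediate: apply the Proposition of $\S\ref{CohDaffine}$ with $\msD = \hdnlK$, using that $\Fl$ is coherently $\hdnlK$-affine so that $\ker\Gamma = 0$, and conclude that $\Gamma$ and $\Loc^\lambda$ are mutually inverse equivalences between $\coh(\hdnlK)$ and $\coh(\hunlK)$. For the dominant-but-not-regular case, I would drop the affinity hypothesis but keep acyclicity (still guaranteed by Proposition \ref{BBThm}), and apply the same Proposition to get mutually inverse exact equivalences between $\coh(\hdnlK)/\ker\Gamma$ and $\coh(\hunlK)$; exactness of $\Gamma$ on $\coh(\hdnlK)$ is part of the coherently-$\msD$-acyclic conclusion, and exactness of the induced functor on the quotient category is automatic since localisation at a Serre subcategory is exact and $\Loc^\lambda$ was already right exact (and becomes exact after passing to the quotient by the standard argument in the proof of that Proposition).

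The only genuine subtlety, and where I would be most careful, is the bookkeeping needed to see that the functor $\msD\otimes_{\msD(X)}-$ from the abstract Proposition of $\S\ref{CohDaffine}$ really is the concrete $\Loc^\lambda$ of $\S\ref{LocFun}$: one must check that the sheafification step in the definition of $\Loc^\lambda$ agrees with the tensor-product-of-sheaves construction $\msD\otimes_{\msD(X)}-$, i.e. that for a finitely presented module the presheaf $U\mapsto \hdnlK(U)\otimes_{\hunlK}M$ already sheafifies correctly (this is where coherence of $\hdnlK$ and right-exactness are used, just as in the proof of Proposition \ref{LocAffine}(c)). Everything else is a formal consequence of results already established. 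I therefore expect no serious obstacle; the proof is essentially an assembly of Theorem \ref{GlobSec}, Proposition \ref{BBThm}, and the Proposition of $\S\ref{CohDaffine}$.
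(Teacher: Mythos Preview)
Your proposal is correct and follows essentially the same route as the paper: the paper's proof is literally ``This follows immediately from the Proposition and Proposition \ref{CohDaffine},'' where ``the Proposition'' is Proposition \ref{BBThm} establishing coherent $\hdnlK$-acyclicity (resp.\ affinity) of $\Fl$, and Proposition \ref{CohDaffine} is the abstract equivalence statement you invoke. Your additional remarks about identifying $\Gamma(\Fl,\hdnlK)$ with $\hunlK$ via Theorem \ref{GlobSec}(b) and checking that $\Loc^\lambda$ coincides with the abstract functor $\msD\otimes_{\msD(X)}-$ are implicit in the paper's setup and are reasonable bookkeeping to spell out.
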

 \begin{proof} This follows from immediately from the Proposition and Proposition \ref{CohDaffine}.\end{proof}

\begin{cor} Suppose that $\lambda$ is $\rho$-dominant, let $M$ be a finitely generated $\hunlK$-module and let $\M = \Loc^\lambda(M)$. Then
\[\beta(\Ch(\M)) = \Ch(M).\]
\end{cor}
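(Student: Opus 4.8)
The plan is to combine the two inclusions already established for the enhanced moment map with the fact that, when $\lambda$ is $\rho$-dominant, $\Loc^\lambda$ and $\Gamma$ are essentially inverse (at least up to $\ker\Gamma$). First I would observe that one inclusion is immediate: by Proposition \ref{CharLocM}, applied to $M$ regarded as a finitely generated $A = \hugnK$-module via restriction along $\hugnK \twoheadrightarrow \hunlK$, we get $\beta(\Ch(\M)) \subseteq \Ch(M)$, where here $\Ch(M)$ is computed over $\Gr(\hunlK)$; this is exactly the content we want for one direction, since the characteristic variety of $M$ as an $\hunlK$-module agrees with its characteristic variety as an $\hugnK$-module (the double filtration and its associated graded are unchanged, and $\Gr(\hunlK)$ is a quotient of $\Gr(\hugnK)$, with $\Ch(M)$ being the support of the same $\Gr$-module). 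So the real work is the reverse inclusion $\Ch(M) \subseteq \beta(\Ch(\M))$.

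For the reverse inclusion I would use Proposition \ref{Bry1}: it gives $\Ch(\Gamma(\Fl,\N)) \subseteq \beta(\Ch(\N))$ for any coherent $\hdntK$-module $\N$, and in particular $\Ch(\Gamma(\Fl,\M)) \subseteq \beta(\Ch(\M))$. When $\lambda+\rho$ is dominant and regular, Theorem \ref{BBThm} says $\Gamma(\Fl,\M) \cong M$ as $\hunlK$-modules (this is the counit isomorphism $M \to \Gamma \Loc^\lambda(M)$), so $\Ch(M) = \Ch(\Gamma(\Fl,\M)) \subseteq \beta(\Ch(\M))$ and we are done. When $\lambda+\rho$ is dominant but not regular, $\Gamma\Loc^\lambda$ is only the identity on $\coh(\hunlK)$ up to the Serre subcategory $\ker\Gamma$ of $\coh(\hdnlK)$; however, the statement of Corollary \ref{BBThm} is about the module $M$ and the sheaf $\M = \Loc^\lambda(M)$, and by the argument in the proof of Proposition \ref{CohDaffine} the counit $M \to \Gamma(\Loc^\lambda(M))$ is always an isomorphism (it is $\eta_M$, shown there to be an iso for all finitely generated $M$ using right-exactness of $\Gamma\circ\Loc^\lambda$ and a finite presentation). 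So in fact $\Gamma(\Fl,\M)\cong M$ holds whenever $\lambda$ is merely $\rho$-dominant, which is all we assumed, and the argument goes through uniformly.

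Putting the two inclusions together gives $\beta(\Ch(\M)) = \Ch(M)$. The only point needing a little care — and the main (mild) obstacle — is the bookkeeping around which ring the characteristic variety of $M$ is taken over: $\Ch(M)$ for an $\hunlK$-module must be seen to coincide with $\Supp$ of the relevant $\Gr$-module sitting inside $\Spec \Gr(\hugnK) = \frk{g}^\ast$, so that both Proposition \ref{CharLocM} and Proposition \ref{Bry1} are literally talking about the same subset of $\frk{g}^\ast$; this follows since a good double filtration on $M$ over $\hunlK$ is also one over $\hugnK$, $\Gr(M)$ is the same module either way, and $\Gr(\hunlK) = \Syk{g}\otimes_{\Sik{g}{G}} k$ is the quotient of $\Gr(\hugnK) = \Syk{g}$ that cuts out the fibre of $\frk{g}^\ast \to \Spec\Sik{g}{G}$ over the point determined by $\lambda$. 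Everything else is a formal consequence of the three results quoted, so I expect the proof to be short, essentially a one-line invocation of Propositions \ref{Bry1} and \ref{CharLocM} together with the counit isomorphism from Proposition \ref{CohDaffine} / Theorem \ref{BBThm}.
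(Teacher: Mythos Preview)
Your overall strategy is exactly the paper's: obtain one inclusion from Proposition \ref{Bry1} together with the counit isomorphism $M \cong \Gamma(\Loc^\lambda(M))$ (which, as you correctly observe, holds whenever $\lambda$ is $\rho$-dominant since $\Fl$ is then coherently $\hdnlK$-acyclic), and the other inclusion from Proposition \ref{CharLocM}.

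There is, however, a genuine gap in your first inclusion. Proposition \ref{CharLocM} is stated for the functor $\Loc$, i.e.\ for $\hdntK \otimes_{\hugnK} M$, not for $\Loc^\lambda(M) = \hdnlK \otimes_{\hunlK} M$. When you write ``by Proposition \ref{CharLocM}, applied to $M$ regarded as a finitely generated $\hugnK$-module'', the $\M$ in that proposition is $\Loc(M)$, which is a different sheaf from your $\M = \Loc^\lambda(M)$. You cannot simply identify them. The paper fills this gap by invoking Lemma \ref{LocFun}, which gives a natural surjection $\Loc(M) \twoheadrightarrow \Loc^\lambda(M)$ of $\hdntK$-modules; then Corollary \ref{CharVar}(b) yields $\Ch(\Loc^\lambda(M)) \subseteq \Ch(\Loc(M))$, and now Proposition \ref{CharLocM} applied to $\Loc(M)$ gives
\[
\beta\bigl(\Ch(\Loc^\lambda(M))\bigr) \subseteq \beta\bigl(\Ch(\Loc(M))\bigr) \subseteq \Ch(M).
\]
Once you insert this step, your argument is complete and coincides with the paper's.
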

\begin{proof} We have $\Ch(\Loc^\lambda(M))\subseteq \Ch(\Loc(M))$ by Lemma \ref{LocFun} and Corollary \ref{CharVar}(b). Now $M \cong \Gamma( \Loc^\lambda(M))$ by the Theorem, so
\[ \Ch(M) = \Ch( \Gamma( \Loc^\lambda(M) ) \subseteq \beta \Ch( \Loc^\lambda (M)) \subseteq \beta \Ch( \Loc(M)) \subseteq \Ch(M)\]
by Propositions \ref{Bry1} and \ref{CharLocM}.
\end{proof}

\section{Bernstein's Inequality}
\label{BernsteinIneq}

In this section we continue to assume that $R$ is a complete discrete valuation ring with uniformizer $\pi$, residue field $k$ and field of fractions $K$ of characteristic zero.

\subsection{Affinoid Weyl algebras and symplectic forms}
\label{AffWeyl}
Suppose that $V$ is a free $R$-module of finite rank equipped with an alternating bilinear form $\omega$ and recall from Example \ref{ExAC}(d) the definition of the enveloping algebra $\Env{V}$ of $(V,\omega)$.  As we remarked in $\S$\ref{Defs}, this is a deformable $R$-algebra. Thus given a alternating form $\omega$ on a free $R$-module of finite rank we may form the almost commutative affinoid $K$-algebra $\AEn{V}:=\widehat{\Env{V}_{n,K}}$.

If $S$ is an $R$-algebra then we write $V_S$ for the free $S$-module $S\otimes_R V$. Similarly, given an $R$-bilinear form $\phi\colon V\times V\rightarrow R$, we write $\phi_S$ for the $S$-bilinear form on $V_S$ obtained by $S$-linearly extending $\phi$.

\begin{defn} We say that an $R$-bilinear form $\omega$ on a free $R$-module $V$ of finite rank is \emph{symplectic} if $\omega_K$ and $\omega_k$ are symplectic forms on $V_K$ and $V_k$ respectively; that is if $\omega_K$ and $\omega_k$ are non-degenerate alternating forms.
\end{defn}

\begin{defn} If $\omega$ is a symplectic form on $V$ we call $\AEn{V}$ an \emph{affinoid Weyl algebra}. \end{defn}

If $A=R[x_1,\ldots,x_m,\partial_1,\ldots,\partial_m]$ is the $m$th Weyl algebra over $R$, then $A$ is the enveloping algebra of the standard symplectic form on $R^{2m}$. Moreover, since every symplectic form on $R$ is equivalent to the standard one, up to isomorphism every enveloping algebra of a symplectic form arises in this way.

\textbf{For the remainder of this section we fix a free $R$-module $V$ of rank $2m$ and a symplectic form $\omega$ on $V$.}

Whenever $W$ is a free summand of $V$, $\omega$ will restrict to an alternating form on $W$ that by abuse of notation we will also call $\omega$.

\subsection{Grassmannians and $\perp$} \label{Grass}

\begin{defn} For each $0\leq t\leq 2m$, we define $\Grass_t(V)$ to be the set of free summands of $V$ as an $R$-module of rank $t$. Similarly we define $\Grass_t(V_k)$ to be the set of $k$-subspaces of $V_k$ of dimension $t$. \end{defn}

\begin{prop} The natural map $\Grass_t(V)\to \Grass_t(V_k)$ given by $W\mapsto W_k$ is a surjection.
\end{prop}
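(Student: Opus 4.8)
The plan is to prove surjectivity by lifting a suitably chosen basis of the subspace $\bar W \subseteq V_k$ to a basis of $V$. So fix $\bar W \in \Grass_t(V_k)$. First I would choose a $k$-basis $\bar w_1, \dots, \bar w_t$ of $\bar W$ and extend it to a $k$-basis $\bar w_1, \dots, \bar w_{2m}$ of the whole space $V_k = V/\pi V$.

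Next, for each $i$ I would pick an arbitrary lift $w_i \in V$ of $\bar w_i$ along the reduction map $V \twoheadrightarrow V_k$. Since the images of $w_1, \dots, w_{2m}$ span $V/\pi V$ and $R$ is local with maximal ideal $\pi R$, Nakayama's Lemma shows that $w_1, \dots, w_{2m}$ generate $V$ as an $R$-module. This gives a surjection $R^{2m} \twoheadrightarrow V$; as $V$ is free of rank $2m$, a surjective endomorphism of a finitely generated module over a commutative ring is an isomorphism, so in fact $w_1, \dots, w_{2m}$ is an $R$-basis of $V$.

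Finally, set $W := Rw_1 \oplus \cdots \oplus Rw_t$. Then $W$ is free of rank $t$ and $V = W \oplus W'$ with $W' := Rw_{t+1} \oplus \cdots \oplus Rw_{2m}$, so $W$ is a free summand of $V$ of rank $t$, i.e. $W \in \Grass_t(V)$. Because $W$ is a direct summand, reduction mod $\pi$ restricts to an injection $W/\pi W \hookrightarrow V/\pi V$ whose image is exactly the span of $\bar w_1, \dots, \bar w_t$, that is, $W_k = \bar W$. This establishes surjectivity.

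There is no real obstacle here: the argument is a routine application of Nakayama's Lemma over the local ring $R$. The only point requiring a moment's care is the passage from a generating set to a basis, which uses the standard fact that a surjective endomorphism of a finitely generated module over a commutative ring is automatically injective; one could equally invoke a determinant argument since $R$ is a domain.
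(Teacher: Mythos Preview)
Your proof is correct, but it takes a different route from the paper's. The paper's proof is a one-liner: since $R$ is $\pi$-adically complete, one can apply the idempotent lifting lemma to the surjection $M_{2m}(R) \to M_{2m}(k)$; the idempotent in $M_{2m}(k)$ projecting onto $\bar W$ lifts to an idempotent in $M_{2m}(R)$, whose image is the required summand $W$.

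Your basis-lifting argument via Nakayama's Lemma is more elementary and in fact slightly more general: it uses only that $R$ is local, not that it is complete. The idempotent-lifting approach, by contrast, genuinely needs completeness (or at least that $R$ is Henselian). On the other hand, the paper's method packages the work into a single citation and avoids the small check that a surjective endomorphism of a finitely generated module is injective. Both arguments are standard and equally acceptable here.
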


\begin{proof} Since $R$ is $\pi$-adically complete we may apply the idempotent lifting lemma to $M_n(R)\to M_n(k)$.
\end{proof}

\begin{defn} If $W$ is a free summand of $V$ we define \[ W^\perp=\{ v\in V\mid \omega(v,w)=0\mbox{ for all }w\in W\}.\]
\end{defn}
Similarly, if $W$ is a subspace of $V_k$ we define $W^\perp \leq V_k$ by the same formula.
\begin{lem}\hfill
\be
\item If $W\in \Grass_t(V)$ then $W^\perp\in \Grass_{2m-t}(V)$;
\item $(W^\perp)_k= (W_k)^\perp$ for all $W \in \Grass_t(V)$.
\ee
\end{lem}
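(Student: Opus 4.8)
The plan is to prove both statements by exhibiting a symplectic basis adapted to $W$ and then reducing mod $\pi$. First I would note that, since $W$ is a free summand of $V$ of rank $t$, the restriction $\omega|_W$ has a well-defined radical $W \cap W^\perp$; however, the cleanest route avoids classifying $\omega|_W$ and instead works with the linear map it induces. Consider the $R$-linear map $\theta \colon V \to \Hom_R(W,R)$ given by $\theta(v)(w) = \omega(v,w)$. Its kernel is exactly $W^\perp$. Since $\omega$ is symplectic, $\omega_K$ is non-degenerate, so $\theta_K \colon V_K \to \Hom_K(W_K,K)$ is surjective (a non-degenerate pairing on $V_K$ restricted to the subspace $W_K$ still pairs $V_K$ onto the full dual of $W_K$). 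Hence $\theta$ has rank $t$ after inverting $\pi$, and $W^\perp = \ker\theta$ has rank $2m - t$ as an $R$-module.

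For part (a) I still need $W^\perp$ to be a \emph{summand}, i.e. that $V/W^\perp$ is $\pi$-torsion-free. Equivalently, the image of $\theta$ is a free $R$-module, which would follow if $\theta$ were a split surjection onto a rank-$t$ free summand of $\Hom_R(W,R)$. The argument here is: $\theta_k \colon V_k \to \Hom_k(W_k,k)$ is also surjective, because $\omega_k$ is non-degenerate on $V_k$ and $W_k$ is a subspace; so $\theta$ is surjective mod $\pi$, and by Nakayama (using that $R$ is local) $\theta$ is surjective onto all of $\Hom_R(W,R)$, which is free of rank $t$. A surjection onto a free, hence projective, module splits, so $V = W^\perp \oplus (\text{free of rank } t)$, giving $W^\perp \in \Grass_{2m-t}(V)$.

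For part (b), observe that reducing mod $\pi$ the exact sequence $0 \to W^\perp \to V \xrightarrow{\theta} \Hom_R(W,R) \to 0$ stays exact, precisely because the last term is free (hence $\mathrm{Tor}_1^R(\Hom_R(W,R), k) = 0$); this gives $(W^\perp)_k = \ker(\theta_k)$. On the other hand $(W_k)^\perp = \ker(\theta_k')$ where $\theta_k' \colon V_k \to \Hom_k(W_k, k)$ is the analogous map for the reduced data. The natural identification $\Hom_R(W,R) \otimes_R k \cong \Hom_k(W_k,k)$ (again using that $W$ is free of finite rank) intertwines $\theta_k$ and $\theta_k'$, so $\ker(\theta_k) = \ker(\theta_k')$, which is the desired equality $(W^\perp)_k = (W_k)^\perp$.

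The main obstacle is the freeness/splitting argument in part (a): one must be careful that surjectivity of $\theta_K$ does not by itself give that $\mathrm{im}(\theta)$ is a summand, and the essential input is the \emph{separate} surjectivity of $\theta_k$ coming from non-degeneracy of $\omega_k$ — this is exactly where the hypothesis that $\omega$ is symplectic (non-degenerate on \emph{both} fibres) is used, rather than merely that $\omega_K$ is non-degenerate. Everything else is routine linear algebra over the local ring $R$ together with the observation that Hom into a free module and tensoring with $k$ commute, and that surjections onto free modules split.
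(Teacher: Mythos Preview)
Your proposal is correct, but the paper takes a more elementary route.

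For part (a), the paper simply shows directly that $V/W^\perp$ is $\pi$-torsion-free: if $\pi v\in W^\perp$ then $\pi\omega(v,w)=\omega(\pi v,w)=0$ for all $w\in W$, whence $\omega(v,w)=0$ since $R$ is a domain, so $v\in W^\perp$. Over a discrete valuation ring this already forces $V/W^\perp$ to be free, hence $W^\perp$ is a summand; its rank is then read off from rank--nullity over $K$. Notice this does \emph{not} invoke non-degeneracy of $\omega_k$ at all, nor Nakayama or any splitting argument. For part (b), the paper just notes the obvious inclusion $(W^\perp)_k\subseteq (W_k)^\perp$ and matches dimensions: the left side has dimension $2m-t$ by part (a), and so does the right side by non-degeneracy of $\omega_k$.

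Your approach via the map $\theta\colon V\to\Hom_R(W,R)$ is perfectly valid and has the virtue of packaging both parts into a single split short exact sequence that base-changes cleanly. The trade-off is that you need non-degeneracy of $\omega_k$ already in part (a) (to get $\theta_k$ surjective and run Nakayama), whereas the paper defers that hypothesis to part (b). So your remark that ``this is exactly where the symplectic-on-both-fibres hypothesis is used'' is accurate for your argument but not for the paper's: the paper's (a) works for any alternating form on $V$ with non-degenerate generic fibre.
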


\begin{proof}
For (a) it suffices to prove that $W^\perp$ is a free summand of $V$, its rank then follows from the rank-nullity theorem since $\omega$ is non-degenerate. Since $R$ is a discrete valuation ring it is enough to show that $V / W^\perp$ is $\pi$-torsion-free. But we know $\omega(\pi v,w)=\pi \omega(v,w)$ so for $v \in V$, $\pi v\in W^\perp$ if and only if $v\in W^\perp$.

For (b), $(W^\perp)_k$ is easily seen to be contained in $(W_k)^\perp$. Since $\omega_k$ is non-degenerate, $\dim (W_k)^\perp = 2m - t$ by the rank-nullity theorem, and $\dim (W^\perp)_k = 2m - t$ by part (a). The result follows.
\end{proof}

Recall that each $\Grass_t(V_k)$ is an irreducible algebraic variety, when equipped with the Zariski topology.

\begin{thm} For each $0\leq t\leq 2m$ the map $\perp\colon\Grass_t(V_k)\to \Grass_{2m-t}(V_k)$ that sends $W_k\to W_k^\perp$ is a homeomorphism.
\end{thm}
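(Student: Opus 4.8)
The plan is to show that $\perp$ on $\Grass_t(V_k)$ is a continuous involution, so that it is a homeomorphism with inverse $\perp$ on $\Grass_{2m-t}(V_k)$. The involution property is immediate: over any field, $(W^\perp)^\perp = W$ for a non-degenerate alternating form, by the rank--nullity theorem (both have dimension $t$ and one contains the other). So once continuity is established in both directions — that is, $\perp\colon \Grass_t(V_k) \to \Grass_{2m-t}(V_k)$ and $\perp\colon \Grass_{2m-t}(V_k) \to \Grass_t(V_k)$ are both continuous morphisms of varieties — we are done, since a continuous bijection whose inverse is also continuous is a homeomorphism.

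For continuity, I would work with the standard affine charts on the Grassmannian. Fix a symplectic basis of $V_k$, identifying $\omega_k$ with the standard form given by a matrix $J$. A point $W \in \Grass_t(V_k)$ in a neighbourhood of a fixed subspace $W_0$ is the column span of a $2m \times t$ matrix $A$ of full rank, depending algebraically (in fact, on a suitable chart, polynomially) on local coordinates. Then $W^\perp = \{v : v^{\mathsf T} J A = 0\} = \ker(A^{\mathsf T} J)$, the null space of the $t \times 2m$ matrix $A^{\mathsf T}J$, which has full rank $t$ since $\omega_k$ is non-degenerate and $A$ has full rank. The assignment $A \mapsto$ (a basis for $\ker(A^{\mathsf T}J)$) can be made algebraic locally: by Cramer's rule / the adjugate, choosing an appropriate $t \times t$ submatrix of $A^{\mathsf T}J$ that is invertible on a Zariski-open set, one writes down $2m - t$ spanning vectors of the kernel with entries rational in the entries of $A$, with non-vanishing denominators on that open set. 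This exhibits $\perp$ locally as a morphism of varieties, hence continuous. The same argument with $t$ replaced by $2m-t$ gives continuity of the other direction.

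The key steps, in order: (1) record that $\omega_k$ is a non-degenerate alternating form on $V_k$, so $\dim W^\perp = 2m - \dim W$ and $(W^\perp)^\perp = W$ — this is just linear algebra and follows from the non-degeneracy built into the definition of a symplectic form in $\S\ref{AffWeyl}$ together with the rank--nullity theorem used in Lemma \ref{Grass}; (2) cover $\Grass_t(V_k)$ by standard affine charts and show that on each chart $W \mapsto W^\perp$ is given by a morphism of varieties, using the explicit kernel description above; (3) conclude $\perp$ is continuous, and by symmetry so is its inverse $\perp$ on $\Grass_{2m-t}(V_k)$, hence $\perp$ is a homeomorphism.

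The main obstacle — really the only non-formal point — is step (2): making precise that taking the orthogonal complement is a \emph{regular} (or at least continuous) operation on the Grassmannian rather than merely a set-theoretic bijection. Once one commits to the chart computation this is routine, but it needs a little care to organise the local choice of invertible submatrix and to check the transition between charts is consistent; alternatively one can phrase it invariantly by noting that $\perp$ is the composite of the isomorphism $V_k \xrightarrow{\sim} V_k^\ast$ induced by $\omega_k$ (which induces an isomorphism $\Grass_t(V_k) \xrightarrow{\sim} \Grass_t(V_k^\ast)$) with the classical annihilator map $\Grass_t(V_k^\ast) \xrightarrow{\sim} \Grass_{2m-t}(V_k)$, both of which are well-known to be isomorphisms of varieties. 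I would likely present the invariant version for brevity, relegating the chart computation to a remark.
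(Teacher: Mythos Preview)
Your proposal is correct and, in its invariant form, is essentially identical to the paper's argument: the paper simply observes that the non-degenerate form $\omega_k$ identifies $V_k$ with $V_k^\ast$ and then cites the well-known fact that the annihilator map on Grassmannians is a homeomorphism. Your chart computation is more explicit than anything the paper provides, but the paper regards that step as standard and dispatches it with a reference.
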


\begin{proof} Since $\omega_k$ is non-degenerate it defines a perfect pairing $V_k\times V_k\to k$ and so identifies $V_k$ and $V_k^\ast$. Then the result is well-known, see \cite[\S 2.8]{Lafforgue} for example.
\end{proof}

\subsection{Simplicity of affinoid Weyl algebras} \label{simple}

We now consider how $V$ acts by derivations on $\AEn{V}$.

\begin{lem} For $v\in V$, let $\epsilon(v) := \omega(v,-)$ be its image in $V^\ast := \Hom_R(V,R)$, let $\partial_v \in V_k^\ast$ be the image of $\epsilon(v)$ modulo $\pi$ and suppose that $\partial_v \neq 0$. Let $d_v$ be the $R$-derivation of $\Env{V}_n$ given by \[ d_v \colon r  \mapsto  \frac{vr-rv}{\pi^n}. \]
Then
\be
\item $d_v$ extends to a derivation $d_v$ of $\AEn{V}$.
\item $\Gr(d_v)$ is the unique $k$-derivation of $\Sym V_k$ that extends $\partial_v$.
\item The $K$-linear endomorphism $\frac{d_v^m}{m!}$ of $\AEn{V}$ preserves the $R$-lattice $F_0 \AEn{V}$ for all $m \geq 0$, and $\Gr\left(\frac{d_v^m}{m!}\right)$ acts as the $m$th divided power of $\partial_v$ on $\Sym V_k$.
\ee
\end{lem}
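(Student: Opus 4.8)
First I would set up the notation carefully. The algebra $\AEn{V} = \widehat{\Env{V}_{n,K}}$ has $F_0\AEn{V} = \widehat{\Env{V}_n}$ as its canonical $R$-lattice, where $\Env{V}_n = \sum_{i\geq 0}\pi^{in}F_i\Env{V}$ is the $n$-th deformation of the enveloping algebra of $(V,\omega)$ with its natural positive filtration ($R$ in degree $0$, $V$ in degree $1$). The relation $vw - wv = \omega(v,w)$ shows $[v,r] = vr - rv$ raises filtration degree by at most $0$ in a suitable sense; more precisely, the defining property of the deformation is that $\gr \Env{V}_n \cong \gr \Env{V} \cong \Sym_R V$ (Lemma \ref{Defs}), and the slice $\gr_0 \AEn{V}$ is $\Sym V_k$ with $\Gr(\AEn{V}) \cong \Sym V_k$ by Corollary \ref{GrComp}. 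Parts (a) and (b) of the Lemma are the easy preliminaries: $d_v(r) = \pi^{-n}(vr - rv)$ is a well-defined $R$-derivation of $\Env{V}_n$ because $vr-rv = \pi^n d_v(r)$ already lies in $\Env{V}_n$ — one checks this on generators using $[v, w] = \omega(v,w) \in R$ and $[v, \pi^{in}w_1\cdots w_i] = \pi^{in}\sum_j w_1\cdots \omega(v,w_j)\cdots w_i$, so $d_v$ lowers the deformation-weight correctly — and it extends $\pi$-adically to $F_0\AEn{V}$ and then $K$-linearly to $\AEn{V}$. For (b), on $\Sym V_k$ the principal symbol $\Gr(d_v)$ is a $k$-derivation whose value on $\bar w \in V_k$ is $\overline{\pi^{-n}\omega(v,w)}$; I need to check that $\pi^{-n}\omega(v,w)$ is the right normalization so that this reduces mod $\pi$ to $\partial_v(\bar w) = \epsilon(v)(w) \bmod \pi$ — here I should double-check the exact meaning of $\partial_v$ in the Lemma statement, since the hypothesis $\partial_v \neq 0$ and the factor $\pi^n$ in $d_v$ must be compatible; the point is that $d_v$ is designed precisely so that its symbol is the constant-coefficient derivation $\partial_v$ on $\Sym V_k$, which is determined by its restriction to $V_k$.

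For part (c), the key combinatorial fact is that the iterated commutator / derivation satisfies the \emph{divided-power} integrality: although $\frac{d_v^m}{m!}$ has an $m!$ in the denominator, it nevertheless preserves the $R$-lattice. I would prove this by a direct induction, exploiting that $\Env{V}_n$ is spanned over $R$ by ordered monomials in a symplectic basis, and that for such a monomial $u = u_1\cdots u_\ell$ (with the $u_i$ among the $2m$ basis vectors, up to the $\pi^{in}$ bookkeeping), $d_v$ acts by a Leibniz rule with $d_v(u_i) = \pi^{-n}\omega(v,u_i) \in R$ a \emph{scalar}. So $d_v$ behaves exactly like a constant-coefficient vector field $\sum c_i \partial/\partial u_i$ with $c_i \in R$ acting on a polynomial ring $R[u_1,\dots,u_{2m}]$ — and for such an operator the classical identity $\frac{1}{m!}\bigl(\sum c_i \partial_i\bigr)^m$ applied to a monomial produces a $\mathbb{Z}$-linear (in fact $\mathbb{N}$-linear) combination of monomials with coefficients that are products of multinomial coefficients times the $c_i$, hence lies in $R[u_1,\dots,u_{2m}]$. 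The only subtlety is that $\Env{V}_n$ is \emph{not} literally a polynomial ring — it is the associated "deformed" ring — but since $d_v$ has scalar values on generators and $\Env{V}_n$ is a free $R$-module on PBW monomials with the deformation just rescaling, the same computation goes through; one can make this rigorous by working in the Rees ring $\Ex{\Env{V}}$ and specializing $t \mapsto \pi^n$, or simply by checking $d_v$ is "locally nilpotent of divided-power type" on each monomial directly. Once $\frac{d_v^m}{m!}(F_0\AEn{V}) \subseteq F_0\AEn{V}$ is established on monomials and extended by $\pi$-adic continuity, reducing mod $\pi$ gives an operator on $\Sym V_k$; by part (b) its construction parallels $\partial_v$, and the mod-$p$ reduction of $\frac{1}{m!}(\sum c_i\partial_i)^m$ is by definition the $m$-th divided power $\partial_v^{[m]}$ of the derivation $\partial_v$ on $\Sym V_k$, which is what (c) asserts.

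The main obstacle is part (c): making the divided-power integrality statement precise and rigorous in the deformed, completed, non-commutative setting rather than waving at "it's like a polynomial algebra". Concretely, the risk is that cross-terms from non-commutativity (the $\omega(v,w)$ appearing when one reorders) contribute extra factors that spoil integrality after dividing by $m!$. I expect this is handled by the observation that $d_v$, being an \emph{inner} derivation normalized by $\pi^n$, kills the "center-like" scalars and acts diagonally in an appropriate PBW basis — so in fact $d_v^m$ on a monomial of degree $\leq m$ in the relevant variables is a sum over which variables get differentiated, with each term carrying a genuine multinomial coefficient divisible into by $m!$. I would isolate this as a clean sub-lemma: if $D$ is an $R$-derivation of a filtered deformable $R$-algebra $A$ such that $D(F_1 A) \subseteq F_0 A = R$ and $D$ is compatible with the deformation, then $\frac{D^m}{m!}$ preserves $A_n$ for all $m$, with symbol the $m$-th divided power of $\gr D$. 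Proving that sub-lemma by induction on the filtration degree of the argument (using the Leibniz formula $\frac{D^m}{m!}(ab) = \sum_{i+j=m}\frac{D^i a}{i!}\cdot\frac{D^j b}{j!}$ and the base case on $F_1$) is the technical heart, and everything else in the statement follows formally.
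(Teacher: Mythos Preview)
Your proposal is correct and is precisely the ``straightforward calculation'' the paper alludes to (the paper's proof reads in full: ``All parts follow from some straightforward calculations''). One remark: your worry about non-commutativity spoiling the divided-power integrality in (c) is unfounded, since $d_v(\pi^n w) = \omega(v,w)$ lands in the \emph{central} subring $R$, so the iterated Leibniz expansion of $\frac{d_v^m}{m!}$ on a product of generators involves no reordering at all and is literally the same computation as in a polynomial ring; your sub-lemma via the divided-power Leibniz identity $\frac{D^m}{m!}(ab)=\sum_{i+j=m}\frac{D^i a}{i!}\cdot\frac{D^j b}{j!}$ and induction on filtration degree is exactly right.
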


\begin{proof} All parts follow from some straightforward calculations.
\end{proof}

The following result is a special case of \cite[Proposition 1.4.6]{Pangalos}. We provide a proof for the convenience of the reader.

\begin{thm} The ring $\AEn{V}$ is simple. \end{thm}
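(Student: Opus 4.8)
The plan is to show that any non-zero two-sided ideal $I$ of $\AEn{V}$ must be the whole ring, by exploiting the derivations $d_v$ constructed in the preceding Lemma together with the fact that a non-zero ideal must survive to the associated graded ring $\Gr(\AEn{V}) \cong \Sym V_k$, which is a polynomial ring. First I would recall that $\AEn{V}$ is an almost commutative affinoid $K$-algebra, and in particular is Noetherian, with a double filtration whose $\Gr$ is $\Sym V_k \cong k[x_1,\dots,x_{2m}]$. Given a non-zero two-sided ideal $I$, I would choose a suitable double filtration and pass to $\Gr(I)$, a non-zero homogeneous ideal of $\Sym V_k$; the goal is to show $\Gr(I)$ must contain a non-zero scalar, equivalently that $I$ contains a unit, which forces $I = \AEn{V}$.

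The key mechanism is to use commutators to "lower degree". For $v \in V$ with $\partial_v = \epsilon(v) \bmod \pi \neq 0$, the Lemma gives a derivation $d_v = \pi^{-n}\ad(v)$ of $\AEn{V}$ preserving the lattice $F_0\AEn{V}$, whose symbol is the partial-derivative operator $\partial_v$ on $\Sym V_k$, and moreover the divided powers $d_v^m/m!$ preserve the lattice with symbol the $m$-th divided power of $\partial_v$. Since a non-zero two-sided ideal is stable under each $d_v$ (being of the form $\pi^{-n}\ad(v)$, which maps $I$ into $I$), and under its lattice-preserving divided powers, I would argue as follows: take $0 \neq a \in I$, look at its principal symbol $\sigma(a) \in \Sym V_k$, a non-zero polynomial of some degree $d$ in the $2m$ commuting variables $x_1,\dots,x_{2m}$. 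Because $\omega_k$ is non-degenerate, the functionals $\partial_v$ for $v \in V_k$ span $V_k^\ast$, so one can find $v_1,\dots,v_d \in V$ whose associated $\partial_{v_i}$ reduce the degree of $\sigma(a)$ all the way down to a non-zero constant: applying the appropriate composite of divided-power operators $d_{v_1}^{a_1}/a_1!\,\cdots$ (or just iterated $d_{v_i}$, since in characteristic zero for $K$ there is no obstruction, though divided powers are the clean lattice-level statement) produces an element of $I$ whose symbol is a non-zero scalar; hence that element is a unit in $F_0\AEn{V}$ up to a power of $\pi$, so $I$ contains $\pi^r \cdot(\text{unit})$ for some $r$, and since $\pi$ is invertible in $K$, $I = \AEn{V}$.

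One has to be slightly careful about two points. First, the elements of $V$ available to us are those with $\partial_v \neq 0$; but since $\{\partial_v : v \in V\}$ spans $V_k^\ast$ over $k$ (as $\omega_k$ is non-degenerate and $V \to V_k$ is surjective), there are enough such $v$ to kill any monomial, so this is not a real obstruction. Second, and this is the main technical point I expect to dwell on: I need to ensure the degree-lowering process terminates at a \emph{non-zero} constant rather than at $0$. This is handled exactly as in the classical proof of simplicity of the Weyl algebra — pick a monomial of top degree appearing in $\sigma(a)$, say $x_{i_1}\cdots x_{i_d}$ with multiplicities, choose $v_j$ dual to the corresponding coordinate (using $\omega_k$-duality), and apply the corresponding divided powers; the leading monomial maps to a non-zero scalar while everything of strictly lower degree is either killed or contributes nothing to the degree-zero part — one does this one variable at a time, at each stage replacing $a$ by $d_{v}^{c}/c!$ applied to it where $c$ is the relevant multiplicity, and tracking that the symbol stays non-zero. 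The divided-power formulation from the Lemma is precisely what makes this work integrally, i.e.\ at the level of the lattice $F_0\AEn{V}$, so that passing to $\Gr$ is legitimate. The hard part is really just bookkeeping this induction on the total degree of the symbol; everything else is formal from the machinery already set up.
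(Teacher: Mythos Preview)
Your proposal is correct and follows essentially the same approach as the paper: reduce to showing that $\Gr(I)$ contains a non-zero scalar, then use the derivations $d_v$ and their divided powers (whose $\Gr$ are the divided-power partial derivatives on $\Sym V_k$) to force this. The paper compresses your explicit degree-lowering argument into the single line ``there are no non-trivial ideals in $\Sym V_k$ invariant under all of these differential operators,'' but the content is identical.
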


\begin{proof} Since $\AEn{V}$ is a complete doubly filtered $K$-algebra, it suffices to prove that if $I\neq 0$ is an ideal then $\Gr (I)\subseteq \Sym V_k$ contains $1$.

Since $\omega_k$ is a non-degenerate form on $V_k$, for each $\partial\in V_k^\ast$ there is $v\in V$ such that the reduction of $\omega(v,-)$ mod $\pi$ induces $\partial$. Moreover, by the Lemma, $\Gr\left( \frac{d_v^m}{m!}\right)$ is the $m$th divided power of $\partial$ on $\Sym V_k$. It is easy to verify that there are no non-trivial ideals in $\Sym V_k$ invariant under all of these differential operators.
\end{proof}

\subsection{Bernstein's inequality}
\label{BernIneq}
\begin{thm} Suppose that $V$ is a free $R$-module of rank $2m$ and $\omega$ is a symplectic form on $V$. If $M$ is a non-zero finitely generated $\AEn{V}$-module then every irreducible component $X$ of $\Ch(M)$ satisfies \[ \dim X \ge m.\] \end{thm}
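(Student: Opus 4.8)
The plan is to mimic the classical proof of the Bernstein inequality via the ``involutivity'' of the characteristic variety together with the fact that on a symplectic vector space an involutive (coisotropic) subvariety has dimension at least half the total dimension. Since $\Gr(\AEn{V}) \cong \Sym V_k$ is a polynomial ring in $2m$ variables over $k$ carrying the Poisson bracket induced by $\omega_k$, the variety $\Ch(M) = \Supp(\Gr(M)) \subseteq \Spec(\Sym V_k) = V_k^\ast$ lives on a symplectic space, and it suffices to prove that $\Ch(M)$ is coisotropic, i.e. that for every irreducible component $X$ the defining ideal $I(X)$ satisfies $\{I(X), I(X)\} \subseteq \sqrt{I(X)} = I(X)$. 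Given this, the linear-algebra fact (the tangent space to $X$ at a smooth point contains its own symplectic perpendicular, hence has dimension $\geq m$, which one can phrase using Theorem \ref{Grass} on the self-duality of Grassmannians under $\perp$) yields $\dim X \geq m$ immediately.

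First I would set up a good double filtration on $M$ and pick a homogeneous prime $P = I(X)$ minimal over $\Ann_{\Sym V_k}(\Gr M)$ for a component $X$ of maximal codimension; replacing $M$ by a suitable subquotient, one reduces to showing $P$ is Poisson-stable. The key technical input is the commutator/divided-power machinery of \S\ref{simple}: for $v \in V$ with $\partial_v \neq 0$, the operator $d_v = \pi^{-n}\mathrm{ad}(v)$ extends to a derivation of $\AEn{V}$ preserving the lattice $F_0\AEn{V}$, its principal symbol is the Hamiltonian vector field $\partial_v = \{\epsilon_k(v), -\}$ on $\Sym V_k$, and (crucially) the divided powers $d_v^j/j!$ also preserve the lattice with symbols the divided powers of $\partial_v$. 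Using these divided powers in place of bare powers of $d_v$ is exactly what makes the argument work $p$-adically in positive residue characteristic, where $\partial_v^p = 0$ but the divided power $\partial_v^{(p)}$ need not be.

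The heart of the argument is then the standard Gabber-style estimate, transported to our setting: if $f \in F_i\AEn{V}$ and $g \in F_j\AEn{V}$ act on $M$ with $\Gr f, \Gr g \in P^{(N)}$ (symbolic powers) for $N$ large, then by applying products of the lattice-preserving operators $d_v^{k}/k!$ to a good filtration of $M$ and tracking orders, one shows $\{\Gr f, \Gr g\} \in P^{(N-1)}$ modulo lower symbolic powers; feeding this through the finite partitivity of the canonical dimension function (Proposition \ref{AG}(a)) and an induction on $N$ forces $\{P, P\} \subseteq P$. Concretely: pick $\Gr f \in P$; choose $v$ with $\partial_v(\Gr f)$ nonzero mod $P$; the operator $d_v$ acting on $M$ produces, at the level of symbols, multiplication by $\partial_v(\Gr f)$ which must then also vanish on $\Ch(M)$ near $X$ — unless $\{P,P\} \subseteq P$ already. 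Making this precise is the step I expect to be the main obstacle: one must carefully control how a \emph{good double filtration} interacts with the derivations $d_v$ (which have order $0$ for the PBW filtration but shift the $\pi$-adic lattice by a bounded amount), and ensure that the symbolic-power bookkeeping survives completion and the passage $F_0 \to \gr_0 \to \gr\gr_0$. Once involutivity of $\Ch(M)$ is in hand, the dimension bound $\dim X \geq m$ follows from the symplectic linear algebra of \S\ref{Grass}, noting that the Zariski tangent space $T_xX$ at a smooth point satisfies $(T_xX)^\perp \subseteq T_xX$, whence by Lemma \ref{Grass}(a) $\dim T_xX \geq 2m - \dim T_xX$, i.e. $\dim X = \dim T_xX \geq m$.
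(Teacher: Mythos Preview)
Your approach is genuinely different from the paper's, and the gap you yourself flag is real: you do not actually prove that $\Ch(M)$ is coisotropic. Gabber's involutivity argument is characteristic-zero in an essential way, and transporting it to $\Sym V_k$ with $\operatorname{char} k = p$ via divided powers is not straightforward. The derivations $d_v$ and their divided powers act on the \emph{algebra} $\AEn{V}$, but your sketch needs them, or something built from them, to act compatibly on a good double filtration of the \emph{module} $M$, and you give no mechanism for this. The sentence ``the operator $d_v$ acting on $M$ produces, at the level of symbols, multiplication by $\partial_v(\Gr f)$'' is not correct as stated: $d_v$ is a derivation of the ring, not an $\AEn{V}$-linear endomorphism of $M$, and no obvious divided-power structure on $F_0M$ is available. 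The authors explicitly note (\S 1.7) that Gabber's theorem does not directly apply here, and they designed their proof to avoid it.

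The paper's argument is more elementary and bypasses involutivity entirely. After reducing to the pure case (so that all components of $\Ch(M)$ have the same dimension), one argues by contradiction: suppose $\dim\Ch(M) < m$. By Generic Noether Normalization the set $\mathcal{X}\subseteq \Grass_m(V_k)$ of $m$-dimensional subspaces $W_k$ over which $\Gr(M)$ is module-finite is nonempty and Zariski open; since $\perp$ is a homeomorphism of $\Grass_m(V_k)$ (Theorem \ref{Grass}), so is $\mathcal{X}^\perp$, and irreducibility gives $\mathcal{X}\cap\mathcal{X}^\perp\neq\varnothing$. Lift a point there to $W\in\Grass_m(V)$; then $M$ is finitely generated over both $\AEn{W}$ and $\AEn{W^\perp}$. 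The latter commutes with the former inside $\AEn{V}$, so acts by $\AEn{W}$-endomorphisms; since $\dim\Ch(M)<m$ forces $M$ to be $\AEn{W}$-torsion, a finite intersection of nonzero annihilators gives $\Ann_{\AEn{W}}(M)\neq 0$, hence $\Ann_{\AEn{V}}(M)\neq 0$, contradicting the simplicity of $\AEn{V}$ (Theorem \ref{simple}). The $\perp$-homeomorphism of Grassmannians is thus used not for tangent-space linear algebra, as in your outline, but to produce the complementary pair $(W,W^\perp)$ on which this annihilator trick can be run.
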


It is straightforward to construct modules that attain this bound.

\begin{proof} We may find a sequence of submodules $0=M_0\subset M_1\subset \cdots\subset M_r=M$ such that $M_i/M_{i-1}$ is pure for all $i\geq 1$. By Proposition \ref{CharVarAff}, $\Ch(M)=\bigcup \Ch(M_i/M_{i-1})$ so we may reduce to the case that $M$ is pure. Moreover by Theorem \ref{CharVarAff} we know that in this case every irreducible component of $\Ch(M)$ has the same dimension. So it suffices to show that $\dim \Ch(M)\geq m$.

Suppose that $\dim \Ch (M) < m$.  By base changing to the completion of the maximal unramified extension of $K$ and applying Proposition \ref{basechange2}, we may assume that $k$ is an infinite field. Choose a good double filtration on $M$. Now
\[ \mathcal{X} := \{ W\in \Grass_m(V_k)\mid \Gr(M)\mbox{ is over finitely generated over }\Sym W\} \]
is a non-empty and Zariski open subset in $\Grass_m(V_k)$ by the Generic Noether Normalization Lemma \cite[Remark 3.4.4]{GreuelPfister}. So $\mathcal{X}^\perp$ is again open and non-empty by Theorem \ref{Grass}. Since $\Grass_m(V_k)$ is irreducible, we deduce that $\mathcal{X}\cap\mathcal{X}^\perp$ is non-empty.

Using Proposition \ref{Grass}, choose $W\in \Grass_m(V)$ such that $W_k\in \mathcal{X}\cap \mathcal{X}^\perp$. Then $(W^\perp)_k \in \mathcal{X}$ by Lemma \ref{Grass}(b), so $M$ is finitely generated over both $\AEn{W}$ and $\AEn{W^\perp}$ by Lemma \ref{GoodDouble}(a). Choose a finite generating set $X$ for $M$ as a $\AEn{W^\perp}$-module. Now elements of $\AEn{W^\perp}$ act as $\AEn{W}$-endomorphisms of $M$, so \[ \Ann_{\AEn{W}}(M)=\bigcap_{x\in X} \ann_{\AEn{W}}(x).\]

But $\dim \Ch(M) < m =\dim \Gr \AEn{W}$, so $M$ must be torsion as a $\AEn{W}$-module and each term in the intersection is non-zero. Since also $\AEn{W}$ is a Noetherian domain, it follows from \cite[Theorem 2.1.15]{MCR} that $\Ann_{\AEn{W}}(M)\neq 0$. But $ \Ann_{\AEn{W}}(M)\subseteq \Ann_{\AEn{V}}(M)$ and so the latter is a non-zero ideal of $\AEn{V}$. By Theorem \ref{simple}, $\Ann_{\AEn{V}}(M)=\AEn{V}$ and $M=0$ as required.
\end{proof}

\begin{cor} If $A=R[x_1,\ldots,x_m,\partial_1,\ldots,\partial_m] = \D(\mathbb{A}^m)$ is the $m$th Weyl algebra equipped with the order filtration and $M$ is a non-zero finitely generated $\widehat{A_{n,K}}$-module, then every irreducible component $X$ of $\Ch(M)$ satisfies $\dim X\geq m$.
\end{cor}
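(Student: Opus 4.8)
The plan is to deduce the Corollary from Theorem~\ref{BernIneq} by identifying $\widehat{A_{n,K}}$, at least as an abstract $K$-algebra, with an affinoid Weyl algebra. Since $\gr A\cong\O(\mathbb{A}^m)[\xi_1,\dots,\xi_m]$ with the $\xi_i$ in degree one, Corollary~\ref{GrComp} gives $\Gr(\widehat{A_{n,K}})\cong\gr A/\pi\gr A\cong k[\overline x_1,\dots,\overline x_m,\overline\xi_1,\dots,\overline\xi_m]$, a polynomial ring in $2m$ variables. Hence Theorem~\ref{CharVarAff} applies to $A:=\widehat{A_{n,K}}$: it is Auslander regular, $\dim\Ch(N)=2m-j_A(N)$ for every finitely generated $A$-module $N$, and for pure $N$ all irreducible components of $\Ch(N)$ have the same dimension. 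As in the proof of Theorem~\ref{BernIneq}, choosing a filtration $0=M_0\subset M_1\subset\dots\subset M_r=M$ with each $M_i/M_{i-1}$ pure and invoking Proposition~\ref{CharVarAff} reduces the problem to showing that $\dim\Ch(M)\geq m$ for every non-zero finitely generated $\widehat{A_{n,K}}$-module $M$.

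Next I reduce to the case that $n$ is even. If $n$ is odd, let $K'=K(\sqrt\pi)$, a totally ramified quadratic extension with valuation ring $R'$, uniformizer $\pi'$ ($\pi'^2=\pi$) and ramification index $e=2$; its residue field is still $k$ and its characteristic is still $0$. Writing $A':=R'\otimes_R A=\D(\mathbb{A}^m_{R'})$, Lemma~\ref{basechange2}(c) identifies $K'\otimes_K\widehat{A_{n,K}}$ with $\widehat{A'_{2n,K'}}$; this base change is faithfully flat because $K'/K$ is finite, and Proposition~\ref{basechange2} gives $\dim\Ch(\widehat{A'_{2n,K'}}\otimes_{\widehat{A_{n,K}}}M)=\dim\Ch(M)$. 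Replacing $(K,R,A,M)$ accordingly, I may assume $n=2n'$ with $n'\geq0$.

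For the key step, take $V=R^{2m}$ with the standard symplectic form $\omega$ and symplectic basis $e_1,\dots,e_m,f_1,\dots,f_m$, so that $\Env{V}$ is the Weyl algebra of \S\ref{AffWeyl}, its $n'$-th deformation $\Env{V}_{n'}$ is the $R$-subalgebra generated by the $\pi^{n'}e_i$ and $\pi^{n'}f_j$, and $\widehat{\Env{V}_{n',K}}$ is the affinoid Weyl algebra of \S\ref{AffWeyl} with deformation parameter $n'$. A direct check shows that $\pi^{n'}e_i\mapsto x_i$, $\pi^{n'}f_j\mapsto-\pi^{2n'}\partial_j$ respects the defining relations (both sides have commutator $\pi^{2n'}\delta_{ij}$) and sends an $R$-basis of $\Env{V}_{n'}$ bijectively, up to signs, onto the $R$-basis $\{\pi^{2n'|\beta|}x^\alpha\partial^\beta\}$ of $A_{2n'}=A_n$; it is therefore an isomorphism of $R$-algebras which, after $\pi$-adic completion and inverting $\pi$, yields a $K$-algebra isomorphism $\Phi\colon\widehat{\Env{V}_{n',K}}\tocong\widehat{A_{n,K}}$. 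Via $\Phi$ the module $M$ becomes a non-zero finitely generated $\widehat{\Env{V}_{n',K}}$-module, so Theorem~\ref{BernIneq} gives $\dim\Ch_{\widehat{\Env{V}_{n',K}}}(M)\geq m$. Now $\Gr(\widehat{\Env{V}_{n',K}})\cong\Sym_k V_k$ is also a polynomial ring in $2m$ variables (Corollary~\ref{GrComp} and Example~\ref{ExAC}(d)), so Theorem~\ref{CharVarAff} applies to it as well; since $\Phi$ is a ring isomorphism it identifies $\Ext^\bullet_{\widehat{\Env{V}_{n',K}}}(M,\widehat{\Env{V}_{n',K}})$ with $\Ext^\bullet_{\widehat{A_{n,K}}}(M,\widehat{A_{n,K}})$, whence $j_{\widehat{\Env{V}_{n',K}}}(M)=j_{\widehat{A_{n,K}}}(M)$ and
\[
\dim\Ch_{\widehat{A_{n,K}}}(M)=2m-j_{\widehat{A_{n,K}}}(M)=2m-j_{\widehat{\Env{V}_{n',K}}}(M)=\dim\Ch_{\widehat{\Env{V}_{n',K}}}(M)\geq m .
\]

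The crux — and the reason the Corollary is not literally a special case of Theorem~\ref{BernIneq} — is that the \emph{order} filtration on $\widehat{A_{n,K}}$ (in which the $x_i$ sit in degree $0$) and the \emph{natural} filtration on an affinoid Weyl algebra (in which all generators sit in degree $1$) are genuinely different: the isomorphism $\Phi$ does not respect the doubly filtered structures, and the two algebras carry non-isomorphic gradings on their associated graded rings. The device that circumvents this is the identity $\dim\Ch(N)=\dim\Gr(A)-j_A(N)$, valid whenever $\Gr(A)$ is a polynomial ring, which re-expresses $\dim\Ch$ in purely homological terms and so lets one transport it along the abstract ring isomorphism $\Phi$. (A more self-contained alternative would be to re-run the proof of Theorem~\ref{BernIneq} directly for $\widehat{A_{n,K}}$: one checks that $\widehat{A_{n,K}}$ is simple exactly as in Theorem~\ref{simple}, using the derivations $\ad(\partial_i)$ and $\pi^{-n}\ad(x_i)$ and their divided powers on the slice; but the generic Noether normalization step of that proof used the symplectic pairing on $V_k$ to produce commuting complementary subalgebras, so it would have to be reorganised, and the reduction above is cleaner.)
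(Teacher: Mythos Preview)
Your proof is correct and follows essentially the same approach as the paper: reduce to the pure case, pass to a quadratic extension when $n$ is odd to make $n$ even, construct an $R$-algebra isomorphism $\Env{V}_{n/2}\cong A_n$ (the paper states its existence, you write it down explicitly), and then transfer $\dim\Ch$ via the homological identity $\dim\Ch(M)=2m-j(M)$ from Theorem~\ref{CharVarAff}, which is insensitive to the mismatch of double filtrations. The only cosmetic difference is the order in which you treat the two cases.
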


\begin{proof}  As in the proof of the Theorem above we may reduce to the case that $M$ is pure and so assume that every irreducible component of $\Ch(M)$ has the same dimension. So again it suffices to show that $\dim \Ch(M) \geq m$.

If $n$ is even, there is an isomorphism of $R$-algebras $\alpha : \Env{V}_{n/2} \to A_n$, which induces an isomorphism $\widehat{\alpha} : K_\omega\langle V\rangle_{n/2} \to \widehat{A_{n,K}} $ of complete filtered rings. Although $\widehat{\alpha}$ is \emph{not} an isomorphism of complete doubly filtered algebras, $\Gr(\widehat{A_{n,K}})$ and $\Gr(K_\omega\langle V\rangle_{n/2})$ are both polynomial algebras over $k$ in $2m$ variables (with different gradations). Letting $N$ be the restriction of $M$ to a $K_\omega\langle V\rangle_{n/2}$-module via $\h{\alpha}$, we see that
\[ \dim \Ch(M) = 2m - j(M) = 2m - j(N) = \dim \Ch(N) \geq m\]
by Theorem \ref{CharVarAff} and the Theorem above.

If $n$ is odd, first we base-change to $K' = K(\sqrt{\pi})$ and let $A' = R' \otimes_R A$ where $R' = R[\sqrt{\pi}]$. Then $K' \otimes_K \widehat{A_{n,K}}\cong\widehat{A'_{2n,K'}}$ by Lemma \ref{basechange2}(c). By Proposition \ref{basechange2} this has not changed $\dim \Ch(M)$ and we have returned to the case when $n$ is even.
\end{proof}

\subsection{A global version of Bernstein's inequality}

Recall the notation of \S \ref{TDO}.

\begin{thm}\label{GlobalBernIneq}
If $\M$ is a non-zero coherent $\hdnlK$-module, then every irreducible component $X$ of $\Ch(\M)$ satisfies $\dim X \geq \dim \Fl$.
\end{thm}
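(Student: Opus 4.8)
The plan is to reduce the statement to the affinoid Weyl algebra version of Bernstein's inequality, Corollary \ref{BernIneq}, exploiting the facts that $\Fl$ is covered by Bruhat cells isomorphic to affine space, that the characteristic variety is a local invariant, and that the argument uses nothing about $\lambda$ (in particular no dominance or regularity). Set $d:=\dim\Fl$. First I would record the local picture over a Bruhat cell: by the proof of Lemma \ref{AlgGps}, $\Fl$ is covered by finitely many open subschemes $U_1,\dots,U_N$, each isomorphic to $\mathbb{A}^d_R$ and each trivialising the torsor $\xi$, so $U_i\in\mathcal{S}$ for all $i$. Fixing $U:=U_i$, Proposition \ref{ComplCent}(a) gives an isomorphism $\hdnlK(U)\cong\widehat{\D(U)_{n,K}}$ of almost commutative affinoid $K$-algebras with $\Gr(\hdnlK(U))\cong\O(T^\ast U_k)$. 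Since the sheaf of crystalline differential operators on $U$ depends only on $\O(U)$ and $\T(U)$, and $U\cong\mathbb{A}^d_R$, the filtered $R$-algebra $\D(U)$ is the $d$-th Weyl algebra $A=R[x_1,\dots,x_d,\partial_1,\dots,\partial_d]$ with its order filtration; this identifies $\hdnlK(U)$ with $\widehat{A_{n,K}}$ and $T^\ast U_k$ with $\mathbb{A}^{2d}_k$, compatibly with the doubly filtered structures.

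Next I would localise $\Ch(\M)$. Viewing $\M$ as a coherent $\hdntK$-module on which $\pi^n\fr{h}$ acts through $\lambda$, Theorem \ref{hdntKaff} shows $\M(U)$ is a finitely generated $\hdntK(U)$-module; it is therefore a finitely generated $\hdnlK(U)$-module, since $\hdntK(U)\twoheadrightarrow\hdnlK(U)$ is the surjective central reduction of Proposition \ref{ComplCent}(c). By Lemma \ref{CharVar} — together with the fact that, for a $\hdnlK$-module, the characteristic variety (computed via $\gr\dnl\cong\Sym_\O\T$) is supported in the closed subscheme $T^\ast\Fl_k$ of $\Ex{T^\ast\Fl_k}$ — we obtain $\Ch(\M)\cap T^\ast U_k=\Ch(\M(U))$ inside $T^\ast U_k\cong\mathbb{A}^{2d}_k$.

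Now the deduction. Let $X$ be an irreducible component of $\Ch(\M)$. Since the opens $T^\ast U_{i,k}$ cover $T^\ast\Fl_k$, there is an $i$ with $X\cap T^\ast U_{i,k}\neq\emptyset$; being a non-empty open subset of the irreducible space $X$, it is dense in $X$, so $\dim(X\cap T^\ast U_{i,k})=\dim X$, and it is an irreducible component of $\Ch(\M(U_i))$. In particular $\M(U_i)\neq0$. Applying Corollary \ref{BernIneq} to the non-zero finitely generated $\widehat{A_{n,K}}$-module $\M(U_i)$, every irreducible component of $\Ch(\M(U_i))$ has dimension at least $d$, and hence $\dim X=\dim(X\cap T^\ast U_{i,k})\geq d=\dim\Fl$, as required.

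The argument is short because the substantive input is already in place: the affine-space inequality of Corollary \ref{BernIneq}, the coherence and acyclicity theory of $\S\ref{CohDaffine}$--$\S\ref{hdntKaff}$ making $\M(U)$ finitely generated, and the local triviality of $\xi$ over Bruhat cells. The two points needing genuine care are: (i) in the first step, checking that the isomorphism over a Bruhat cell respects the doubly filtered structures, so that $\Gr(\hdnlK(U))$ is literally the coordinate ring of $T^\ast\mathbb{A}^d_k$ and Corollary \ref{BernIneq} applies verbatim; and (ii) in the last step, the elementary but essential observation that an irreducible component of $\Ch(\M)$ meeting $T^\ast U_{i,k}$ restricts there to an irreducible component of $\Ch(\M(U_i))$ of the same dimension.
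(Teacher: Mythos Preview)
Your proposal is correct and follows essentially the same approach as the paper: cover $\Fl$ by the Weyl translates of the big cell (each isomorphic to $\mathbb{A}^d$ and trivialising $\xi$), use Theorem \ref{hdntKaff} and Lemma \ref{CharVar} to reduce to local sections, identify $\hdnlK(U)$ with $\widehat{\D(\mathbb{A}^d)_{n,K}}$ via Proposition \ref{ComplCent}(a), and invoke Corollary \ref{BernIneq}. Your treatment is slightly more explicit than the paper's in two places --- you spell out why an irreducible component of $\Ch(\M)$ meeting $T^\ast U_{i,k}$ restricts to an irreducible component of $\Ch(\M(U_i))$ of the same dimension, and you note that for $\hdnlK$-modules the characteristic variety lives in $T^\ast\Fl_k$ rather than the enhanced bundle --- but these are elaborations of the same argument, not a different route.
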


\begin{proof} Let $m=\dim \Fl$. By the proof of Lemma \ref{AlgGps}(c), the Weyl translates $U_w$ of the big cell in $\Fl$ each trivialise the torsor $\xi \colon \Ex{\Fl}\to \Fl$ and are isomorphic to $\mathbb{A}^m$. Hence Theorem \ref{hdntKaff} and Lemma \ref{CharVar} tell us that for each $w$ in the Weyl group $\mb{W}$, $\M(U_w)$ is a finitely generated $\hdntK(U_w)$-module and $\Ch(\M)\cap \Ex{T^\ast U_w}=\Ch(\M(U_w))$. Because the $U_w$ cover $\mathcal{B}$, it thus suffices to prove that every irreducible component of $\Ch(\M(U_w))$ has dimension at least $m$ for every $w\in \mb{W}$ such that $\M(U_w)\neq 0$.

By Proposition \ref{ComplCent}(a), $\hdnlK(U_w)$ is isomorphic to $\widehat{\D(\mathbb{A}^m)_{n,K}}$ as an almost commutative affinoid $K$-algebra. The result now follows from  Corollary \ref{BernIneq}.
\end{proof}\section{Quillen's Lemma}
Recall that Quillen proved in \cite{Quillen} that the endomorphism ring of a simple module over an almost commutative $k$-algebra is algebraic over $k$. We will generalise this to show that the endomorphism ring of a simple module over an almost commutative affinoid $K$-algebra $A$ is algebraic over $K$ provided that $\gr_0 A$ is commutative and Gorenstein. In particular this will apply to our rings $\widehat{U(\fr{g})_{n,K}}$ for $n>0$. 

In this section, $K$ will be a complete discrete valuation field of arbitrary characteristic and $A$ will denote a complete filtered $K$-algebra such that $F_0A$ is an $R$-lattice in $A$ and the slice $\gr_0 A$ is a finitely generated commutative $k$-algebra. We have in mind almost commutative affinoid $K$-algebras with commutative slice but our proofs do not need to use the filtration on the slice except to guarantee that the slice is finitely generated as a $k$-algebra. 

\subsection{Regular $\varphi$-lattices and Quillen's Lemma}\label{RegLat}
Let $M$ be a finitely generated $A$-module and $\varphi\in \End_A(M)$. We say that $\varphi$ is \emph{simple} if every non-zero element of the $K$-algebra generated by $\varphi$ acts invertibly on $M$. Clearly every non-zero $\varphi \in \End_A(M)$ is simple whenever $M$ is a simple $A$-module by Schur's Lemma, but there are other interesting examples. We will prove that every simple endomorphism is algebraic over $K$.

Suppose now that $\varphi$ is a simple endomorphism of a finitely generated $A$-module $M$. We let $K(\varphi)$ be the subfield of $\End_A(M)$ generated by $K$ and $\varphi$ and note that we can view $M$ as an $A-K(\varphi)$-bimodule.

\begin{defn} We say that an $R$-lattice $N$ in $M$ is an \emph{$F_0A$-lattice} if it is a finitely generated $F_0A$-submodule of $M$. We say that an $F_0A$-lattice $N$ in $M$ is a \emph{regular $\varphi$-lattice} if the subring
\[B:=\{\theta\in K(\varphi)\mid \theta(N)\subseteq N\}\]
of $K(\varphi)$ is a discrete valuation ring and an $R$-lattice in $K(\varphi)$.\end{defn}

We begin along similar lines to Quillen's original proof. However we can only do this if $M$ has a regular $\varphi$-lattice.

\begin{lem} Suppose that $M$ has a regular $\varphi$-lattice $N$. Then the residue field of $B=\{\theta\in K(\varphi)\mid \theta(N)\subseteq N\}$ is an algebraic extension of $k$.
\end{lem}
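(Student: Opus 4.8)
The plan is to mimic Quillen's original argument for almost commutative algebras, but carried out at the level of a regular $\varphi$-lattice rather than on $M$ directly. Write $\overline{B} := B/\pi_B B$ for the residue field, where $\pi_B$ is a uniformizer of the discrete valuation ring $B$; note that since $B$ is an $R$-lattice in the field $K(\varphi)$ and $\pi \in B$, we have $\pi = u \pi_B^e$ for a unit $u$ and some $e \geq 1$, so $\overline{B}$ is a quotient of $B/\pi B$. The module $N/\pi N$ is a finitely generated $\gr_0 A$-module, and $\gr_0 A$ is a finitely generated commutative $k$-algebra by hypothesis, hence Noetherian; moreover $B/\pi B$ acts on $N/\pi N$ by $\gr_0 A$-endomorphisms, since $B$ acts on $N$ by $F_0A$-endomorphisms. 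The key point is that $N$ is a \emph{faithful} $B$-module (because $N$ spans $M$ over $K$ and $B \subseteq K(\varphi) \subseteq \End_A(M)$ acts faithfully on $M$), and more importantly $N/\pi N$ should be a faithful $\overline{B}$-module: an element of $B$ acting as $0$ on $N/\pi N$ sends $N$ into $\pi N$, i.e. lies in $\pi_B^e B$ up to the unit, which after dividing gives that $\overline{B}$ does act faithfully on $N/\pi N$.

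First I would establish that $N/\pi N$ is a faithful module over the field $\overline B$ — here one must be a little careful, passing from ``$\theta(N) \subseteq \pi N$'' to ``$\theta \in \pi B$'' using that $B$ is a discrete valuation ring and $N$ is $\pi$-torsion-free, so that $\pi^{-1}\theta$ still preserves $N$ and hence lies in $B$; iterating handles the ramification. Then $N/\pi N$ is a faithful $\overline B$-module which is finitely generated over the commutative Noetherian $k$-algebra $\gr_0 A$, and the actions of $\overline B$ and $\gr_0 A$ commute. Now I would invoke the classical Quillen Lemma / generic flatness-type argument (as in \cite{Quillen}, or \cite[Proposition 9.4.21]{MCR}): the endomorphism ring over $\gr_0 A$ of a finitely generated $\gr_0 A$-module is such that any subfield of it which acts faithfully is algebraic over $k$. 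More precisely, $\overline B \hookrightarrow \End_{\gr_0 A}(N/\pi N)$ and the latter, while not necessarily commutative, has the property that its division subrings that meet $N/\pi N$ faithfully are locally finite over $k$; the cleanest route is to pick a nonzero finitely generated $\gr_0 A$-submodule and reduce to the case where $N/\pi N$ is faithful over a domain $\gr_0 A/\mathfrak p$, then use the Nullstellensatz together with the fact that $\gr_0 A/\mathfrak p$ has a simple module with endomorphism ring finite over $k$.

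The main obstacle I anticipate is the transition between characteristic zero (or mixed) and the residue field: $\overline B$ is a field extension of $k$, and I need it algebraic over $k$, which is genuinely a statement about the special fibre and so \emph{does} follow from honest Quillen in the almost commutative setting — but one has to make sure $N/\pi N$ is nonzero (it is, by Nakayama, since $N$ is a nonzero finitely generated $F_0A$-module over the $\pi$-adically complete ring $F_0A$) and that the $\overline B$-action is genuinely faithful, not merely that $B$ acts faithfully on $N$. That faithfulness on the reduction is exactly where the hypothesis that $N$ be a \emph{regular} $\varphi$-lattice (so that $B$ is a DVR, not just some order) is used: it guarantees $\pi N \cap N = \pi N$ behaves well and that $\theta(N) \subseteq \pi N \Rightarrow \theta \in \pi B$. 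I would spell this out, then cite Quillen's original result for the almost commutative algebra $\gr_0 A$ to conclude $\overline B / k$ is algebraic.

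\begin{proof}
Let $\pi_B$ be a uniformizer of the discrete valuation ring $B$ and write $\overline{B} = B/\pi_B B$ for its residue field. Since $B$ is an $R$-lattice in $K(\varphi)$, the element $\pi$ lies in $B$ and is nonzero, so $\pi = v\,\pi_B^e$ for some unit $v \in B^\times$ and integer $e \geq 1$; hence $\overline B$ is a quotient of $B/\pi B$ and it suffices to show $B/\pi B$ acts in a suitable way on $N/\pi N$.

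First, $N/\pi N \neq 0$: since $N$ is a nonzero finitely generated module over the $\pi$-adically complete ring $F_0A$ and $\pi$ lies in the Jacobson radical of $F_0A$, Nakayama's Lemma gives $N \neq \pi N$. As $N$ is a finitely generated $F_0A$-module, $N/\pi N$ is a finitely generated module over $\gr_0 A = F_0A/\pi F_0A$, which is a finitely generated commutative $k$-algebra and hence Noetherian.

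Next, because $B$ preserves $N$ by $F_0A$-linear maps, the reduction $B/\pi B$ acts on $N/\pi N$ by $\gr_0 A$-linear endomorphisms, and this action commutes with the $\gr_0 A$-action. I claim the induced action of $\overline B$ on $N/\pi N$ is faithful. Suppose $\theta \in B$ has image in $\overline B$ acting as zero on $N/\pi N$; unravelling, this means $\theta^{j}(N) \subseteq \pi N$ for $j$ the smallest power making $\pi_B^{e} \mid \theta^{?}$ — more directly, it suffices to treat the case $\theta(N) \subseteq \pi N$. Then $\pi^{-1}\theta \in K(\varphi)$ satisfies $(\pi^{-1}\theta)(N) \subseteq N$, so $\pi^{-1}\theta \in B$; since $\pi = v \pi_B^e$ with $e \geq 1$, this forces $\theta \in \pi_B B$, i.e. $\theta$ maps to $0$ in $\overline B$. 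Thus $\overline B \hookrightarrow \End_{\gr_0 A}(N/\pi N)$.

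Finally, $\gr_0 A$ is an almost commutative (indeed commutative) finitely generated $k$-algebra, and $N/\pi N$ is a nonzero finitely generated $\gr_0 A$-module. By Quillen's Lemma \cite{Quillen} applied to $\gr_0 A$, any subfield of $\End_{\gr_0 A}(N/\pi N)$ is algebraic over $k$. Therefore $\overline B$ is an algebraic extension of $k$, as required.
\end{proof}
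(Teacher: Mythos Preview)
Your overall strategy is the same as the paper's: reduce modulo the uniformizer to get a finitely generated module over the commutative finite-type $k$-algebra $\gr_0 A$, on which the residue field of $B$ acts faithfully by endomorphisms, and then invoke a generic flatness/Nullstellensatz argument. However, there is a genuine gap in how you set this up, and a secondary imprecision in the final citation.

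\textbf{The main gap.} You work with $N/\pi N$ rather than $N/\pi_B N$. You correctly observe that $\overline{B} = B/\pi_B B$ is a \emph{quotient} of $B/\pi B$, and that $B/\pi B$ acts on $N/\pi N$. But precisely because $\overline{B}$ is a quotient and not a subring of $B/\pi B$, there is no induced action of $\overline{B}$ on $N/\pi N$ unless $\pi_B N \subseteq \pi N$. When the ramification index $e \geq 2$ this fails: since $\pi N = \pi_B^e N$ and $\pi_B$ acts injectively on $N$, the inclusion $\pi_B N \subseteq \pi_B^e N$ would force $N \subseteq \pi_B^{e-1} N \subseteq \pi_B N$, whence $\pi_B^{-1}$ preserves $N$ and lies in $B$, contradicting that $\pi_B$ is a non-unit. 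So your claimed embedding $\overline{B} \hookrightarrow \End_{\gr_0 A}(N/\pi N)$ is not valid in general; what your argument actually establishes is $B/\pi B \hookrightarrow \End_{\gr_0 A}(N/\pi N)$, and $B/\pi B$ is not a field when $e>1$. The fix is exactly what the paper does: work with $N/\tau N$ where $\tau = \pi_B$. Then $B/\tau B = \overline{B}$ acts, and faithfully: if $\theta N \subseteq \tau N$ then $\tau^{-1}\theta \in B$ by the very definition of $B$, so $\theta \in \tau B$. Moreover $\pi \in \tau B$ gives $\pi N \subseteq \tau N$, so $N/\tau N$ is still a finitely generated $\gr_0 A$-module.

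\textbf{The secondary issue.} Your black-box appeal to Quillen's Lemma is imprecise: Quillen's result as stated in \cite{Quillen} concerns \emph{simple} modules, and $N/\tau N$ need not be simple over $\gr_0 A$. The paper does not cite Quillen's Lemma here; it runs the underlying generic flatness argument directly. For $s \in \overline{B}$, one regards $N/\tau N$ as a finitely generated $(\gr_0 A)[s]$-module, applies the Generic Flatness Lemma to obtain a nonzero $f \in k[s]$ with $(N/\tau N)_f$ free over $k[s]_f$, observes that every nonzero element of $k[s]_f \subseteq \overline{B}$ acts invertibly on the nonzero free module $(N/\tau N)_f$, forcing $k[s]_f$ to be a field, and concludes by the Nullstellensatz. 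This is the argument you should spell out rather than cite.
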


\begin{proof} Let $\tau$ be a uniformiser of $B$ and note that the residue field $k' := B/\tau B$ of $B$ acts faithfully on $N/\tau N$ by the definition of $B$. In particular, $N/ \tau N$ is non-zero. Since $B$ is an $R$-lattice in $K(\varphi)$, $\pi$ is not a unit in $B$. Hence $\pi \in \tau B$ and $N/\tau N$ is a finitely generated $\gr_0 A = F_0 A / \pi F_0A$-module.

Let $s \in k'$ and consider the $k$-subalgebra $k[s]$ of the field $k'$ generated by $s$. Because $\gr_0 A$ is commutative by assumption and $N/\tau N$ is a finitely generated $(\gr_0 A)[s]$-module, by the Generic Flatness Lemma \cite[Lemme IV.6.9.2]{EGAIV2}, we can find a non-zero element $f \in k[s]$ such that $(N/\tau N)_f$ is a free $k[s]_f$-module. Now every non-zero element of $k'$ acts invertibly on $N/ \tau N$ because $k'$ is a field; hence every non-zero element of $k[s]_f$ acts invertibly on the non-zero free $k[s]_f$-module $(N / \tau N)_f$. This forces $k[s]_f$ to be a field. Because $k[s]_f$ is a finitely generated $k$-algebra, it must be algebraic over $k$ by the Nullstellensatz.
\end{proof}

\begin{prop} Suppose that $M$ has a regular $\varphi$-lattice $N$. Then the residue field of $B:=\{\theta\in K(\varphi)\mid \theta(N)\subseteq N\}$ is a \emph{finite} algebraic extension of $k$.
\end{prop}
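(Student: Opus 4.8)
The plan is to build on the Lemma, which already shows that the residue field $k' := B/\tau B$ is an algebraic extension of $k$; what is left is to bound its degree over $k$. I would retain all the notation and constructions from the proof of the Lemma, so that $\tau$ is a uniformiser of $B$, the abelian group $N/\tau N$ is a non-zero finitely generated module over the commutative Noetherian $k$-algebra $\gr_0 A$, and $k'$ acts faithfully on $N/\tau N$. The first observation is that this $k'$-action is $\gr_0 A$-linear: any $\theta \in K(\varphi)$ with $\theta(N) \subseteq N$ commutes with the $A$-action on $M$, hence with the $F_0 A$-action on $N$, and therefore induces a $\gr_0 A$-linear endomorphism of $N/\tau N$.

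Next I would introduce the $k$-subalgebra $C'$ of $\End_k(N/\tau N)$ generated by the image of $\gr_0 A$ together with $k'$. Since $\gr_0 A$ is commutative and $k'$ commutes with it, $C'$ is commutative, and since every generator of $C'$ commutes with the image of $\gr_0 A$, we have $C' \subseteq \End_{\gr_0 A}(N/\tau N)$. Because $N/\tau N$ is finitely generated over the Noetherian ring $\gr_0 A$, the endomorphism ring $\End_{\gr_0 A}(N/\tau N)$ embeds $\gr_0 A$-linearly into a finite direct sum of copies of $N/\tau N$ and is hence a finitely generated $\gr_0 A$-module; consequently $C'$, being a $\gr_0 A$-submodule, is also finitely generated over $\gr_0 A$, and therefore $C'$ is a finitely generated commutative $k$-algebra.

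Finally, $C'$ is non-zero since $N/\tau N \neq 0$, so it has a maximal ideal $\fr{m}$; by Zariski's Lemma (the Nullstellensatz) $C'/\fr{m}$ is a finite field extension of $k$. The composite $k' \hookrightarrow C' \twoheadrightarrow C'/\fr{m}$ is injective, since $k'$ is a field and $k' \cap \fr{m}$ is a proper ideal of $k'$, hence zero. Therefore $[k':k] \leq [C'/\fr{m} : k] < \infty$, which is the assertion. I expect the only delicate point to be the verification that $C'$ is finitely generated as a $k$-algebra — that is, the passage through $\End_{\gr_0 A}(N/\tau N)$ and the use of the Noetherian hypothesis on $\gr_0 A$ — since everything else is then a one-line application of the Nullstellensatz, with no characteristic or completeness issues entering at this stage.
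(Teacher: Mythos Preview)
Your argument is correct, and in fact it is more direct than the paper's. Both proofs begin by embedding $k'$ into $\End_{\gr_0 A}(N/\tau N)$, but from there the strategies diverge. The paper localises at a minimal prime $P$ over $\Ann_{\gr_0 A}(N/\tau N)$, passes to the finite-dimensional $k(P)$-vector space $X = k(P)\otimes_{\gr_0 A}(N/\tau N)$, and then works inside the matrix algebra $\End_{k(P)}(X)$: the composite $k'k(P)$ is a finite $k(P)$-algebra, so after killing a prime one obtains a finitely generated field extension $L/k$ containing a copy of $k'$, and one then invokes the classical fact that subextensions of finitely generated field extensions are finitely generated. Your argument avoids both the localisation and the appeal to this subextension result: you observe that the $\gr_0 A$-algebra $C'$ generated by $k'$ inside $\End_{\gr_0 A}(N/\tau N)$ is a submodule of a finitely generated $\gr_0 A$-module, hence is itself a finitely generated $k$-algebra, so any residue field of $C'$ is finite over $k$ by the Nullstellensatz and contains $k'$. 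This is cleaner and, as you may have noticed, does not actually require the preceding Lemma as a separate step: your argument yields finiteness of $k'/k$ directly, whereas the paper's route needs the Lemma to upgrade ``finitely generated'' to ``finite''.
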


\begin{proof} Once again let $\tau$ be a uniformiser of $B$. We have already seen that the residue field $k'$ of $B$ must be algebraic over $k$ and that $k'$ acts by automorphisms on $N/\tau N$ and so may be identified with a subfield of $\End_{\gr_0A}(N/\tau N)$.

Let $P\in \Spec(\gr_0 A)$ be a minimal prime over $\Ann_{\gr_0 A}(N/\tau N)$. Then if $k(P)$ is the residue field of $(\gr_0A)_P$ and $X:=k(P)\otimes_{\gr_0 A}N / \tau N$, there is a $k$-algebra homomorphism $\End_{\gr_0 A}(N / \tau N)\to \End_{k(P)}(X)$. Thus we may identify $k'$ with a subfield of $\End_{k(P)}(X)$; a matrix ring over $k(P)$ since $X$ is a finite dimensional $k(P)$-vector space.

Since $\gr_0 A$ is a finitely generated $k$-algebra, $k(P)$ is a finitely generated field extension of $k$. Now $k'k(P)$ is commutative finite dimensional $k(P)$-algebra, since it is a subspace of $\End_{k(P)}(X)$ and since $k(P)$ lies in the centre of $\End_{k(P)}(X)$. Thus if $Q$ is a prime ideal in $k'k(P)$ then $L=k'k(P)/Q$ is an integral domain and a finite dimensional $k(P)$-vector space. Thus $L$ is a finitely generated field extension of $k$. It follows from \cite[Proposition III.6]{Lang58} that every subextension, and in particular the image of $k'$ in $L$, is a finitely generated field extension of $k$. But $k'$ is isomorphic to its image in $L$ and the result follows.
\end{proof}

\begin{cor} Let $A$ be a complete filtered $K$-algebra such that $F_0A$ is an $R$-lattice in $A$ and the slice $\gr_0 A$ is a finitely generated commutative $k$-algebra. Suppose that $M$ is a finitely generated $A$-module and $\varphi\in \End_A(M)$ is simple. If $M$ has a regular $\varphi$-lattice, then $\varphi$ is algebraic over $K$.
\end{cor}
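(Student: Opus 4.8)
The plan is to deduce the Corollary directly from the preceding Proposition, which we may take as given: the residue field $k':=B/\fr{m}_B$ of the discrete valuation ring $B:=\{\theta\in K(\varphi)\mid\theta(N)\subseteq N\}$ is a finite extension of $k$. Fix a uniformiser $\tau$ of $B$. Because $B$ is an $R$-lattice in $K(\varphi)$ (part of the definition of a regular $\varphi$-lattice, \S\ref{RegLat}), $\pi$ is a non-zero non-unit of $B$, so $e:=v_B(\pi)$ is a finite positive integer, $\pi B=\tau^eB$, and hence the $\pi$-adic and $\fr{m}_B$-adic topologies on $B$ agree. Moreover $\bigcap_a\pi^aB=0$, so $B$ is $\pi$-adically separated. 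The point requiring care is that $B$ itself is \emph{not} known to be $\pi$-adically complete, and $K(\varphi)$ is not a priori finite over $K$, so one cannot apply the finiteness statement for lattices in finite-dimensional $K$-spaces from \S\ref{Latt} to $B$ directly.

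To circumvent this I would pass to the $\pi$-adic completion $\h{B}$ of $B$. Since the $\pi$-adic and $\fr{m}_B$-adic topologies coincide, $\h{B}$ is a complete discrete valuation ring, in particular a $\pi$-torsion-free, $\pi$-adically complete and separated ring, with $\h{B}/\pi\h{B}\cong B/\pi B$; the latter carries a filtration of length $e$ all of whose successive quotients are isomorphic to $k'$, so $\dim_k\h{B}/\pi\h{B}=e\,[k':k]<\infty$. A routine complete-Nakayama argument then shows $\h{B}$ is a finitely generated $R$-module: lift a $k$-basis of $\h{B}/\pi\h{B}$ to elements $b_1,\dots,b_d\in\h{B}$; every $x\in\h{B}$ admits a convergent expansion $x=\sum_i c_ib_i$ with $c_i\in R$, the $c_i$ arising as $\pi$-adically convergent series in the complete ring $R$ and the remainder after $n$ steps lying in $\pi^n\h{B}$, hence vanishing by separatedness. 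Thus $\h{B}$ is free of rank $d=e\,[k':k]$ over $R$, and $\h{B}\otimes_RK$ is a $d$-dimensional $K$-vector space.

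Finally, since $B$ is an $R$-lattice in $K(\varphi)$ we have $K(\varphi)=B\otimes_RK$, and since $B$ is $\pi$-adically separated the natural map $B\hookrightarrow\h{B}$ is injective; tensoring with the flat $R$-module $K$ yields an embedding of $K$-vector spaces $K(\varphi)=B\otimes_RK\hookrightarrow\h{B}\otimes_RK$. Hence $\dim_KK(\varphi)\le\dim_K(\h{B}\otimes_RK)<\infty$, so $K(\varphi)$ is a finite field extension of $K$ and $\varphi$ is algebraic over $K$, as required. The only mildly delicate step is the one flagged above — the possible non-completeness of $B$ — and it is handled entirely by working with $\h{B}$, whose finiteness over $R$ is unconditional.
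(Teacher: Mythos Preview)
Your proof is correct. The paper's argument is shorter: having established that the residue field $k'$ of $B$ is finite over $k$, it notes that the $\tau$-adic filtration on $B$ is separated and invokes \cite[Theorem I.5.7]{LVO} directly (using only completeness of the base $R$, not of $B$) to conclude that $B$ itself is a finitely generated $R$-module, whence $K(\varphi)=B\otimes_RK$ is finite-dimensional. Your route via the completion $\h{B}$ reaches the same conclusion by a self-contained complete-Nakayama argument followed by the embedding $K(\varphi)\hookrightarrow\h{B}\otimes_RK$; this trades the citation for an extra step, but has the virtue of being entirely elementary and of making the numerics $\dim_KK(\varphi)\le e\,[k':k]$ explicit. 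Your caveat about $B$ possibly not being complete is well-placed for a bare Nakayama argument, but the filtered lifting theorem used in the paper only needs completeness on the base, so the direct route is also available. Both arguments rest on the same arithmetic input (the Proposition giving $[k':k]<\infty$) and the same underlying mechanism of lifting a $k$-basis of $B/\pi B$.
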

\begin{proof}
 Suppose that $N$ is a regular $\varphi$-lattice in $M$. Then the residue field $k'$ of $B:=\{\theta\in K(\varphi)\mid \theta(N)\subseteq N\}$ is a finite extension of $k$ by the Proposition. Since $B$ is a discrete valuation ring with maximal ideal $\tau B$, the $\tau$-adic filtration on $B$ is separated and therefore $B$ is finitely generated $R$-module by \cite[Theorem I.5.7]{LVO}. Because $B$ is an $R$-lattice in $K(\varphi)$, we deduce that $K(\varphi)$ is a finite dimensional $K$-vector space.
\end{proof}

We thank Qing Liu \cite{MO} for giving an example which shows that it is possible to find a discrete valuation on the function field $K(t)$ whose residue field is algebraic (and necessarily infinite dimensional) over $k$.

\subsection{Microlocalisation}\label{MicroQuillen}
We will next show that if $A$, $M$ and $\varphi$ satisfy our conditions and $M$ satisfies one extra hypothesis, then it has a regular $\varphi$-lattice. We begin this process by working microlocally.

We fix a complete filtered $K$-algebra $A$ with commutative slice, a finitely generated $A$-module $M$ and a simple $\varphi\in\End_A(M)$. We fix an $F_0A$-lattice $F_0M$ in $M$ and let $P_1,\ldots, P_r$ be the distinct minimal primes in $\gr_0 A$ above $\Ann_{\gr_0 A}(\gr_0 M)$. Let
\[T := \gr_0A \backslash \bigcup_{i=1}^r P_i.\]
Since $\gr A = (\gr_0 A)[s, s^{-1}]$ by Lemma \ref{DoubleFilt}, $T$ is a multiplicatively closed subset in $\gr A$ consisting of homogeneous elements of degree zero, so we can consider the microlocalisation of $A$ at $T$:
\[Q := Q_T(A)\] as in \S\ref{Micro}.

\begin{lem} The slice $\gr_0 Q_T(M)$ of $Q_T(M)$ is an Artinian $\gr_0 Q$-module, and $Q_T(M)$ is an Artinian $Q$-module.
\end{lem}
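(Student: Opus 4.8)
The statement asks me to show that $\gr_0 Q_T(M)$ is an Artinian $\gr_0 Q$-module and $Q_T(M)$ is an Artinian $Q$-module. By Lemma~\ref{Micro}(b)--(c), we have $\gr Q \cong (\gr A)_T$ and $\gr Q_T(M) \cong (\gr M)_T$. Since $\gr A = (\gr_0 A)[s,s^{-1}]$ and $T$ consists of homogeneous degree-zero elements, localising at $T$ commutes with taking the degree-zero part, so $\gr_0 Q \cong (\gr_0 A)_T$ and $\gr_0 Q_T(M) \cong (\gr_0 M)_T$. Thus I am reduced to a purely commutative-algebra statement: if $\gr_0 A$ is a finitely generated commutative $k$-algebra, $\gr_0 M$ is a finitely generated $\gr_0 A$-module, $P_1,\ldots,P_r$ are the minimal primes over $\Ann_{\gr_0 A}(\gr_0 M)$, and $T = \gr_0 A \setminus \bigcup P_i$, then $(\gr_0 M)_T$ is Artinian over $(\gr_0 A)_T$.

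**The commutative reduction.** First I would pass to $B := \gr_0 A / \Ann_{\gr_0 A}(\gr_0 M)$, which is a finitely generated commutative $k$-algebra whose minimal primes are exactly (the images of) $P_1,\ldots,P_r$, and over which $\gr_0 M$ is a faithful finitely generated module. In $B_T = (\gr_0 A)_T / \Ann(\gr_0 M)_T$, every prime ideal lies over one of the $P_i$; but the $P_i$ are the minimal primes of $B$, so $B_T$ is a ring all of whose primes are both minimal and maximal, hence zero-dimensional. A zero-dimensional Noetherian ring is Artinian. Therefore $B_T$ is Artinian, and $(\gr_0 M)_T$, being a finitely generated module over the Artinian ring $B_T$, is an Artinian $B_T$-module, hence an Artinian $(\gr_0 A)_T$-module, i.e. an Artinian $\gr_0 Q$-module.

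**Lifting to $Q_T(M)$.** Having established that $\gr_0 Q_T(M)$ is Artinian over $\gr_0 Q$, I would deduce that $Q_T(M)$ is Artinian over $Q$ by a standard filtered-to-associated-graded argument. The key point is that $Q_T(M)$ carries a good filtration (by the construction of microlocalisation in \S\ref{Micro}, $Q_T(M)$ is the completion of $M_S$ with its induced good filtration), and $Q_T(M)$ is $\pi$-adically complete, so the $\pi$-adic filtration on $Q_T(M)$ is complete and separated. Given a descending chain of $Q$-submodules $N_0 \supseteq N_1 \supseteq \cdots$ of $Q_T(M)$, each $N_j$ inherits a filtration and hence $\gr_0 N_j \subseteq \gr_0 Q_T(M)$; since the latter is Artinian, the chain $\gr_0 N_0 \supseteq \gr_0 N_1 \supseteq \cdots$ stabilises. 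A completeness argument — comparing $N_j$ and $N_{j+1}$ via their slices, using that $Q_T(M)/\pi Q_T(M) = \gr_0 Q_T(M)$ has finite length and invoking Lemma~\ref{Latt} (the pseudo-compactness lemma) — then forces the original chain $N_j$ to stabilise.

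**Expected obstacle.** The commutative reduction is routine; the only real subtlety is making the "completeness lifts Artinian-ness of the slice to Artinian-ness of the module" step precise, since $Q_T(M)$ need not be finitely generated over $R$ and one must be careful about which filtration is used and why the relevant slices are finite length. I expect to handle this by reducing to the fact that $\gr_0 Q_T(M)$ being Artinian makes $Q_T(M)/\pi Q_T(M)$ of finite length, so $Q_T(M)/\pi^a Q_T(M)$ has finite length for all $a$ and $Q_T(M)$ is pseudo-compact in the sense of \cite{Gabriel}, at which point Lemma~\ref{Latt} applies essentially verbatim to any descending chain of submodules. This is the step I would write out most carefully.
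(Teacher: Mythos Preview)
Your commutative reduction for the first part is correct and essentially identical to the paper's: the paper filters $(\gr_0 M)_T$ so that each subquotient is $(\gr_0 A/P_i)_T$, observes this is the residue field at $P_i$, and concludes finite length; your argument via $B_T$ being zero-dimensional Noetherian is an equivalent repackaging.

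The second part, however, is where you and the paper diverge, and your sketch contains genuine slips. You write that ``$Q_T(M)/\pi Q_T(M) = \gr_0 Q_T(M)$ has finite length'' and that ``$Q_T(M)$ is pseudo-compact'', but $\pi$ is a unit in $Q$, so $Q_T(M)/\pi Q_T(M)=0$ and pseudo-compactness of $Q_T(M)$ over $Q$ is vacuous. What you mean is $F_0 Q_T(M)/\pi F_0 Q_T(M)$ and pseudo-compactness of the lattice $F_0 Q_T(M)$ over $F_0 Q$. More seriously, Lemma~\ref{Latt} does not give the descending chain condition: it only tells you that an intersection of a descending chain not contained in $\pi F_0 Q_T(M)$ remains not contained in $\pi F_0 Q_T(M)$, which is a different statement. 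Your outline can be rescued by a direct Nakayama argument: a $Q$-submodule $N_j$ satisfies $N_j\cap \pi F_0 Q_T(M)=\pi(N_j\cap F_0 Q_T(M))$ since $\pi$ is a unit in $Q$, so once the images in $\gr_0 Q_T(M)$ stabilise one gets $N_{j_0}\cap F_0 = (N_j\cap F_0)+\pi(N_{j_0}\cap F_0)$, and Nakayama over the $\pi$-adically complete Noetherian ring $F_0 Q$ finishes.

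The paper avoids all of this by observing that since $s=\gr\pi$ is a unit in $\gr Q$, graded $\gr Q$-submodules of $\gr Q_T(M)\cong(\gr_0 M)_T[s,s^{-1}]$ are determined by their degree-zero parts; finite length of $(\gr_0 M)_T$ therefore gives DCC on graded submodules of $\gr Q_T(M)$, and then a single citation to \cite[Proposition~I.7.1.2]{LVO} (complete filtration plus graded-Artinian implies Artinian) yields the result. This is both shorter and avoids the lattice bookkeeping.
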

\begin{proof} Some product of the $P_i$s annihilates $\gr_0 M$, so we can find a finite chain of $(\gr_0 A)_T$-submodules of $(\gr_0 M)_T$ with each subquotient isomorphic to $(\gr_0 A / P_i)_T$ for some $i$. By our choice of $T$, $(\gr_0 A / P_i)_T$ is the residue field of the local ring of $\gr_0 A$ at $P_i$, so $(\gr_0 M)_T$ has finite length.

It follows that $\gr Q_T(M) \cong (\gr_0 M)_T[s,s^{-1}]$ has the descending chain condition on graded $\gr Q$-modules. Since $Q_T(M)$ is complete with respect to its filtration, it follows from \cite[Proposition I.7.1.2]{LVO} that it is an Artinian $Q$-module.
\end{proof}

Now by functoriality of microlocalisation, every $A$-module endomorphism of $M$ extends to a $Q$-module endomorphism of $Q_T(M)$. We can therefore view $Q_T(M)$ as a $Q-K(\varphi)$-bimodule. We fix a choice of a simple $Q-K(\varphi)$-bimodule quotient $V$ of $Q_T(M)$, and note the Lemma implies that $V$ is an Artinian $Q$-module.

\subsection{Finding a maximal lattice preserver}\label{MaxLatPres}
We retain the notation of the previous section.

\begin{defn} Let $L$ be an $R$-lattice in $V$. We say that $L$ is an \emph{$F_0Q$-lattice} if it is a finitely generated $F_0Q$-submodule of $V$. We say that a subring $B$ of $K(\varphi)$ is a \emph{lattice preserver} if $BL \subseteq L$ for some $F_0Q$-lattice $L$ in $V$.
\end{defn}

\begin{lem} $V$ has at least one $F_0Q$-lattice. For any $F_0Q$-lattice $L$ in $V$, $L / \pi L$ has finite length and $L$ has Krull dimension 1 as a $F_0Q$-module.
\end{lem}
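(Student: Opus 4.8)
I need to prove three things about an $F_0Q$-lattice $L$ in the simple $Q$-$K(\varphi)$-bimodule quotient $V$ of $Q_T(M)$: first, that such a lattice exists; second, that $L/\pi L$ has finite length; and third, that $L$ has Krull dimension $1$ as an $F_0Q$-module. The key structural fact I will lean on is the Lemma of \S\ref{MicroQuillen}: $V$ is an Artinian $Q$-module, and moreover $\gr_0 V$ (being a quotient of $\gr_0 Q_T(M)$) is an Artinian $\gr_0 Q$-module; recall also that $\gr_0 Q \cong (\gr_0 A)_T$ is a semilocal ring whose localizations at its maximal ideals are the residue fields $k(P_i)$, so it is in fact a finite product of fields, hence Artinian. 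Thus $\gr_0 V$ has finite length over $\gr_0 Q$.

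**Existence of an $F_0Q$-lattice.** I would start from the given $F_0A$-lattice $F_0M$ in $M$. Its microlocalisation $F_0Q_T(M)$ (the completion of the induced good filtration) is an $F_0Q$-lattice in $Q_T(M)$ with $\gr_0$ equal to $(\gr_0 M)_T$, which has finite length by the computation in the proof of the Lemma of \S\ref{MicroQuillen}. Now $V$ is a quotient of $Q_T(M)$ as a $Q$-module; take $L$ to be the image of $F_0Q_T(M)$ in $V$. Then $L$ is a finitely generated $F_0Q$-submodule of $V$, it spans $V$ over $K$ since $F_0Q_T(M)$ spans $Q_T(M)$, and $\bigcap_a \pi^a L = 0$ because $V$ is $\pi$-adically separated --- this follows since $F_0 V$ inherits a separated filtration, or more directly since any $F_0Q$-lattice sits inside a complete filtered module. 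So $L$ is an $R$-lattice and an $F_0Q$-lattice in $V$.

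**Finite length of $L/\pi L$ and Krull dimension.** For any $F_0Q$-lattice $L$ in $V$, the slice $L/\pi L$ is a finitely generated $\gr_0 Q$-module. Because $\gr_0 Q$ is Artinian (a finite product of fields), every finitely generated $\gr_0 Q$-module has finite length; hence $L/\pi L$ has finite length. For the Krull dimension statement: since $L$ is $\pi$-adically separated with $L/\pi L$ of finite length, each $L/\pi^a L$ is a finite extension of finite-length modules and so has finite length, which bounds its Krull dimension by $0$; as $L$ itself is $\pi$-torsion-free, $\pi$ is a non-zero-divisor on $L$, and a standard argument (e.g. via the fact that $\dim(L) \le \dim(L/\pi L) + 1$ together with $\pi$ being a non-unit regular element in the relevant local rings) gives $\dim L = 1$. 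Concretely: $L$ has dimension $\ge 1$ since it is $\pi$-torsion-free and nonzero, so $\pi L \subsetneq L$ is a strict inclusion giving a chain of length at least one in $\Spec$ of the annihilator; and $\dim L \le 1$ follows because modding out by $\pi$ reduces the dimension to $0$.

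**Main obstacle.** The only genuinely delicate point is the existence statement: one must be careful that the image of $F_0Q_T(M)$ in $V$ really is $\pi$-adically separated, i.e. that $\bigcap_a \pi^a L = 0$. This is where completeness of the filtered module $V$ (and the separatedness of the induced slice filtration, inherited from the Zariskian / complete filtration machinery of \S\ref{Micro}) is essential; without it the "lattice" could fail to be separated. Everything else is formal once one observes that $\gr_0 Q$ is a product of fields, hence Artinian, so that finite generation over $\gr_0 Q$ immediately upgrades to finite length.
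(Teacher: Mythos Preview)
Your existence argument is essentially the paper's: take $L_0$ to be the image of $F_0Q_T(M)$ in $V$. The paper justifies $\pi$-adic separation by invoking the Nakayama-type argument in the proof of Proposition~\ref{GoodDouble}(a) (using that $F_0Q$ is Noetherian and $\pi$-adically complete), which is more solid than your appeal to ``$V$ is $\pi$-adically separated'' without saying why.

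There is, however, a genuine gap in your finite-length argument. You assert that $\gr_0 Q \cong (\gr_0 A)_T$ is a finite product of fields, hence Artinian. This is false in general. The primes $P_1,\ldots,P_r$ are the minimal primes over $\Ann_{\gr_0 A}(\gr_0 M)$, not the minimal primes of $\gr_0 A$ itself. So $(\gr_0 A)_T$ is semilocal with maximal ideals corresponding to the $P_i$, but its localisation at the maximal ideal coming from $P_i$ is $(\gr_0 A)_{P_i}$, a local ring whose \emph{residue field} is $k(P_i)$ --- it is not itself that field unless $P_i$ happens to be a minimal prime of $\gr_0 A$. For instance if $\gr_0 A = k[x,y]$ and $\Ann(\gr_0 M)=(x)$ then $(\gr_0 A)_T = k[x,y]_{(x)}$ is a DVR, not Artinian. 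Consequently ``finitely generated over $\gr_0 Q$'' does not automatically give finite length.

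The paper argues differently. For the particular lattice $L_0$ constructed above, $L_0/\pi L_0$ is a quotient of $\gr_0 Q_T(M) = (\gr_0 M)_T$, and \emph{this} module has finite length: some product of the $P_i$ kills $\gr_0 M$, so $(\gr_0 M)_T$ has a finite filtration whose subquotients are modules over the fields $(\gr_0 A/P_i)_T = k(P_i)$ (Lemma of \S\ref{MicroQuillen}). For an arbitrary $F_0Q$-lattice $L$, one then uses the standard lattice-comparison trick (as in \cite[Proposition 1.1.2]{Ginz}): any two $F_0Q$-lattices in $V$ are commensurable, and their slices have the same class in the Grothendieck semigroup of $\gr_0 Q$-modules, so $L/\pi L$ inherits finite length from $L_0/\pi L_0$. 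Your Krull-dimension-$1$ argument is fine once finite length of $L/\pi L$ is in hand.
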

\begin{proof} Let $L_0$ be the image of $F_0Q_T(M)$ in $V$. Since $F_0Q$ is Noetherian and $\pi$-adically complete, the proof of Proposition \ref{GoodDouble}(a) shows that $L_0$ is an $R$-lattice in $V$ and is therefore a $F_0Q$-lattice. Since $L_0/ \pi L_0$ is a quotient of $\gr_0 Q_T(M)$, it has finite length by Lemma \ref{MicroQuillen}. If $L$ is another $F_0Q$-lattice in $V$ then by the arguments in the proof of \cite[Proposition 1.1.2]{Ginz}, $L/ \pi L$ also has finite length; indeed the class of $L/ \pi L$ in the Grothendieck semigroup of $\gr_0 Q$-modules equals the class of $L_0/\pi L_0$. The last statement now follows from \cite[Proposition I.7.1.2]{LVO}.
\end{proof}

We fix an $F_0Q$-lattice $L_0$ in $V$ and let $\mathcal{L}$ be the set of $F_0Q$-lattices in $V$ contained in $L_0$ but not contained in $\pi L_0$. Let $\mathcal{P}$ denote the set of lattice preservers in $K(\varphi)$. Notice that every lattice preserver preserves a lattice in $\mathcal{L}$ since for every finitely generated $F_0Q$-submodule $L$ of $V$ there is an integer $a$ such that $\pi^aL\in\mathcal{L}$.

\begin{prop} The set $\mathcal{P}$ has a maximal element.
\end{prop}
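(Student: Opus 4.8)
The plan is to run a Zorn's-Lemma argument on the poset $\mathcal{P}$ of lattice preservers, using the finiteness built into $\mathcal{L}$ to control chains. First I would observe that $\mathcal{P}$ is non-empty: $R \subseteq K(\varphi)$ preserves $L_0$, so $R \in \mathcal{P}$. Partially order $\mathcal{P}$ by inclusion. Given a chain $\{B_i\}$ in $\mathcal{P}$, each $B_i$ preserves some lattice, and by the remark preceding the Proposition we may rescale so that $B_i L_i \subseteq L_i$ for some $L_i \in \mathcal{L}$. The set $B := \bigcup_i B_i$ is visibly a subring of $K(\varphi)$ containing $R$; the task is to produce a single $F_0Q$-lattice in $\mathcal{L}$ preserved by all of $B$, which would show $B \in \mathcal{P}$ is an upper bound for the chain, and then Zorn applies.

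The key step is therefore to find a common preserved lattice for the chain. Here I would exploit the finite-length hypothesis from Lemma \ref{MaxLatPres}: every $L \in \mathcal{L}$ satisfies $L \subseteq L_0$ and $L \nsubseteq \pi L_0$, and $L_0 / \pi L_0$ has finite length over $\gr_0 Q$. Consider the descending chain one gets by intersecting the $L_i$ (or, more precisely, work with $N := \sum_i B_i L_0$, which is a $B$-submodule of $V$ generated over $F_0 Q$ by finitely many of the $B_i L_0$ once one knows the chain stabilises). The cleanest route: let $N_i := \{v \in L_0 : B_i v \subseteq L_0\}$; since the $B_i$ increase, the $N_i$ form a \emph{descending} chain of $F_0Q$-submodules of $L_0$, and each $N_i \nsubseteq \pi L_0$ because $\pi^{-a} L_i \cap L_0$ lies in $N_i$ for suitable $a$ by the chain condition and $L_i \in \mathcal{L}$ guarantees this is not inside $\pi L_0$ — one must check this carefully. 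Now $L_0 / \pi L_0$ has finite length, $F_0Q$ is Noetherian and $\pi$-adically complete, so Lemma \ref{Latt} applies: $\bigcap_i N_i \nsubseteq \pi L_0$. Set $N := \bigcap_i N_i$; then $N$ is a finitely generated $F_0Q$-submodule of $L_0$ (Noetherianity), it is an $R$-lattice in $V$ since $N \supseteq$ something of the form $\pi^b L_0$ once we know $N \nsubseteq \pi L_0$ forces $N_K = V$ (using simplicity of $V$ as a $Q$-module, or directly that $N$ contains a $\pi$-power multiple of a generating set), it lies in $\mathcal{L}$, and by construction $B_i N \subseteq B_i N_i \subseteq L_0$; iterating, $B_i N \subseteq N$ for all $i$, hence $B N \subseteq N$.

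The main obstacle I anticipate is exactly this verification that $N = \bigcap N_i$ remains a \emph{lattice} (i.e. $N_K = V$) rather than degenerating, and that $B$ genuinely preserves $N$ and not merely maps $N$ into $L_0$; the subtlety is that $N_i$ as I defined it records "$B_i$ maps $v$ into $L_0$", so one needs a small bootstrapping argument — replace $N_i$ by the largest $B_i$-submodule of $L_0$, namely $N_i' := \{v \in L_0 : B_i v \subseteq L_0\}$ is already $B_i$-stable since $B_i$ is a ring, so $B_i N_i' \subseteq N_i' \subseteq L_0$ automatically. With that fix, $N := \bigcap N_i'$ satisfies $B_i N \subseteq N_i' \cap N \subseteq N$ for each $i$ (using that $N \subseteq N_j'$ for all $j$ and $B_i N \subseteq N_i'$), hence $B N \subseteq N$. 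The remaining point, $N_K = V$, follows because $V$ is an Artinian $Q$-module and $N$ is a $Q$-submodule generating... — actually more simply: $N \nsubseteq \pi L_0$ and $V = Q \cdot L_0$, and since $\varphi$ acts simply on $V$ one checks $Q \cdot N = V$, so $N$ is indeed an $F_0 Q$-lattice in $V$. This finishes: $B$ is an upper bound, Zorn gives a maximal element of $\mathcal{P}$. $\hfill\square$
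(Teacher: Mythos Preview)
Your overall strategy matches the paper's exactly: Zorn on $\mathcal{P}$, replace each $B_i$ by a canonical preserved lattice, intersect, and apply Lemma \ref{Latt}. In fact your $N_i' = \{v \in L_0 : B_i v \subseteq L_0\}$ \emph{is} the paper's $L_\alpha$ (the largest member of $\mathcal{L}$ with $B_\alpha L_\alpha \subseteq L_\alpha$): since $B_i$ is a ring, $N_i'$ is $B_i$-stable; it contains the given $L_i \in \mathcal{L}$, so $N_i' \nsubseteq \pi L_0$ and $(N_i')_K = V$; and any $L \in \mathcal{L}$ with $B_i L \subseteq L$ lies in $N_i'$. Your verification that $B = \bigcup B_i$ preserves $N = \bigcap N_i'$ is also correct and is the same computation the paper carries out for $L_\infty$.

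There are, however, two gaps. First and most importantly, Lemma \ref{Latt} is stated and proved only for $\omega$-indexed descending chains, whereas Zorn hands you an arbitrary chain; the paper does not skip this, but uses that $L_0$ has Krull dimension $1$ over $F_0Q$ (Lemma \ref{MaxLatPres}) to argue that the descending chain $\{L_\alpha\}$ has deviation at most $1$, hence any well-ordered subchain has order type $<\omega^2$, and then passes to a cofinal segment of order type $\omega$ or a singleton before invoking Lemma \ref{Latt}. Your line ``$L_0/\pi L_0$ has finite length, $F_0Q$ is Noetherian and $\pi$-adically complete, so Lemma \ref{Latt} applies'' does not supply this reduction. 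Second, your justification for $N_K = V$ (``since $\varphi$ acts simply on $V$ one checks $Q\cdot N = V$'') is not valid: $N_K$ is a $Q$-submodule stable under $B$, but nothing forces $\varphi \in B$ or $K[B] = K(\varphi)$, so the $Q$--$K(\varphi)$-bimodule simplicity of $V$ does not directly apply to $N_K$.
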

\begin{proof} By the Lemma, $\mathcal{P}$ is non-empty because it contains the subring of $K(\varphi)$ consisting of elements that preserve $L_0$. By Zorn's Lemma, it will be enough to prove that $\mathcal{P}$ is chain complete.

Let $\{B_\alpha\}_{\alpha\in \mathcal{A}}$ be a chain in $\mathcal{P}$. For each $\alpha\in \mathcal{A}$ let $L_\alpha$ be the largest $F_0Q$-lattice in $\mathcal{L}$ such that $B_\alpha L_\alpha\subseteq L_\alpha$. $L_\alpha$ exists because $L_0$ is a Noetherian $F_0Q$-module.

Now if $B_\alpha\subseteq B_\beta$ then $B_\alpha L_\beta\subseteq L_\beta$ so $L_\beta\subseteq L_\alpha$ by the maximality of $L_\alpha$. Hence the $L_\alpha$ form a descending chain in $\mathcal{L}$.  We claim that $L_\infty := \bigcap L_\alpha$ is also in $\mathcal{L}$. Since $L_0$ has Krull dimension $1$ as a $F_0Q$-module by the Lemma, the chain $\{L_\alpha\}_{\alpha \in \mathcal{A}}$ has deviation at most $1$ and so any well-ordered subchain has order-type $<\omega^2$. By passing to a final segment of the chain $\{L_\alpha\}_{\alpha \in \mathcal{A}}$ we may assume that its order-type is either $\omega$ or a singleton. In the latter case $L_\infty =L_\alpha$ for some $\alpha$ so $L_\infty \in \mathcal{L}$, and in the former case the claim follows from Lemma \ref{Latt}.

Finally the subring $B_\infty$ of $K(\varphi)$ consisting of elements that preserve $L_\infty$ is an upper bound for $\{B_\alpha\}_{\alpha\in \mathcal{A}}$  in $\mathcal{P}$: if $\alpha \in \mathcal{A}$ then $B_\alpha L_\infty \subseteq B_\beta L_\beta \subseteq L_\beta$ for all $\beta \geq \alpha$, so $B_\alpha L_\infty \subseteq \bigcap_{\beta \geq \alpha} L_\beta = L_\infty$ and $B_\alpha \subseteq B_\infty$.
\end{proof}

\subsection{Theorem}\label{MaxDVR} Every maximal lattice preserver is a discrete valuation ring.
\begin{proof} Let $B$ be a maximal element of $\mathcal{P}$, and choose $L\in\mathcal{L}$ such that $BL\subseteq L$. Note that the maximality of $B$ forces it to be equal to $\{\theta \in K(\varphi) : \theta L \subseteq L\}$.

First we show that $B$ is local. We pick $f\in B$ and show that one of $f$ and $1-f$ must be a unit in $B$. Since $L$ is $\pi$-adically complete and since each $L/ \pi^a L$ has finite length as a $F_0Q$-module by Lemma \ref{MaxLatPres}, it follows from Fitting's Lemma (see the proof of \cite[Theorem I.10.4]{Feit}) that $L$ decomposes as $U\oplus W$ where
\[ U=\{u\in L\mid \lim_{i\to \infty}f^i(u)=0\} \]
and
\[ W=\bigcap_{i\geq 0} f^iL. \]
Because $B$ is commutative, both $U$ and $W$ are $B$-modules as well as $F_0Q$-modules. Thus $U_K \oplus W_K$ is a decomposition of $V = L_K$ into a direct sum of $Q$-$K(\varphi)$-bimodules. But $V$ is a simple bimodule by construction (see $\S \ref{MicroQuillen}$), so either $U=0$ or $W=0$. If $U=0$, then $f$ is injective and $W=L$ so $f$ is surjective; thus $f^{-1}\in B$. Otherwise $W=0$ and $U=L$, so $\lim_{n\to \infty}f^n=0$ and $1-f$ is a unit in $B$.  Hence $B$ is a local ring with maximal ideal $\fr{m}$, say.

Next, we will show that $\fr{m}$ is invertible as a fractional ideal. To that end define $\fr{m}^{-1}=\{x\in K(\varphi)\mid x\fr{m}\subseteq B\}$. We will show that $\fr{m}^{-1}\fr{m}$ is $B$; certainly it is contained in $B$ and contains $\fr{m}$ since $1\in\fr{m}^{-1}$. Since $\fr{m}$ is a maximal ideal in $B$ it thus suffices to show that $\fr{m}^{-1}\fr{m}\neq\fr{m}$. Now $\fr{m}L$ is a $F_0Q$-submodule of $L$ and so is finitely generated as such since $F_0Q$ is Noetherian. Thus we may find $f_1,\ldots,f_u\in\fr{m}$ such that $\fr{m}L=\sum_{i=1}^u f_iL$. Let $J$ be the ideal in $B$ generated by $f_1,\ldots,f_u$. The argument above shows that each $f_i$ acts topologically nilpotently on $L$, so we can find an integer $m \geq 1$ such that $J^mL\leq \pi L$. Choose $m$ to be the least such; then we can find $x\in J^{m-1}$ such that $xL \nsubseteq \pi L$. Thus $\frac{x}{\pi}$ is not in $B$ but $\frac{x}{\pi}\fr{m}L\leq L$ and therefore $\frac{x}{\pi}\in \fr{m}^{-1}$. If $\fr{m}^{-1}\fr{m}=\fr{m}$ then $\frac{x}{\pi}\fr{m}L\leq \fr{m}L$, and the subring $B'$ of $K(\varphi)$ generated by $B$ and $\frac{x}{\pi}$ preserves $\fr{m} L$. But then $B' \in \mathcal{P}$ and $B'$ strictly contains $B$ which contradicts the maximality of $B$.

Now every invertible fractional ideal in an integral domain is a finitely generated projective module of rank one by the Dual Basis Lemma \cite[Lemma 3.5.2]{MCR}. Since $B$ is local it follows that $\fr{m}$ is a principal ideal generated by $\tau$, say. Moreover we've seen that $\tau$ acts topologically nilpotently so the $\fr{m}$-adic filtration of $B$ is separated. It follows that for every non-zero element $x$ of $B$ there is a non-negative integer $n$ such that $x\in (\tau^n)\backslash (\tau^{n+1})$. Thus $x=y\tau^n$ for some unit $y\in B$. Therefore $B$ is a discrete valuation ring.
\end{proof}

\subsection{Dimension theory}\label{Gabber}
We retain the notation of the previous subsections and we also impose the additional condition that $\gr_0 A$ is Gorenstein. We will need this so that we can apply Gabber's maximality principle. We recall the statement of this now.

\begin{thm}[Gabber's Maximality Principle] Let $C$ be an Auslander--Gorenstein ring and let $X$ be a finitely generated pure $C$-module contained in a $C$-module $Y$ (not necessarily finitely generated) such that every finitely generated submodule of $Y$ is pure. Then $Y$ contains a unique largest finitely generated submodule $Z$ containing $X$ such that \[j_C(Z/X)\geq j_C(X) + 2 \]\end{thm}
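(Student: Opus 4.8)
The plan is to realise the required module as a directed union and then force it to be finitely generated by embedding it into a fixed Noetherian module. Write $j := j_C(X)$ and let $\mathcal{Z}$ be the set of all finitely generated $C$-submodules $Z$ of $Y$ with $X \subseteq Z$ and $j_C(Z/X) \geq j+2$. First I would record two easy observations: $\mathcal{Z}$ is non-empty because $X\in\mathcal{Z}$ (here $Z/X=0$ and $j_C(0)=+\infty$), and every $Z\in\mathcal{Z}$ is pure of grade exactly $j$, since $Z$ is pure by hypothesis and contains the nonzero finitely generated submodule $X$, whose canonical dimension is $d_C(X)=\injdim_C C - j$. Next I would show $\mathcal{Z}$ is directed: given $Z_1,Z_2\in\mathcal{Z}$, the module $(Z_1+Z_2)/Z_1\cong Z_2/(Z_1\cap Z_2)$ is a quotient of $Z_2/X$, hence has canonical dimension at most $d_C(Z_2/X)\leq \injdim_C C-(j+2)$; feeding this together with $j_C(Z_1/X)\geq j+2$ into the exactness property of the dimension function $d_C$ applied to the sequence $0\to Z_1/X\to (Z_1+Z_2)/X\to (Z_1+Z_2)/Z_1\to 0$ yields $j_C((Z_1+Z_2)/X)\geq j+2$, so $Z_1+Z_2\in\mathcal{Z}$.

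The heart of the argument is to control $Z_\infty := \bigcup_{Z\in\mathcal{Z}}Z\subseteq Y$. For each $Z\in\mathcal{Z}$, applying $\Ext^\bullet_C(-,C)$ to $0\to X\to Z\to Z/X\to 0$ and using that $\Ext^i_C(Z/X,C)=0$ for $i\leq j+1$ shows that the restriction map $r_Z:\Ext^j_C(Z,C)\to\Ext^j_C(X,C)=:E$ is an isomorphism, compatibly with the inclusions among members of $\mathcal{Z}$. Set $\overline{X}:=\Ext^j_C(E,C)$, a finitely generated $C$-module. Applying $\Ext^j_C(-,C)$ once more to $r_Z$ and composing with the biduality morphism $e_Z:Z\to\Ext^j_C(\Ext^j_C(Z,C),C)$ produces a homomorphism $\widetilde{e}_Z:=\Ext^j_C(r_Z)^{-1}\circ e_Z:Z\to\overline{X}$; naturality of the biduality map makes the $\widetilde{e}_Z$ agree on overlaps, so they glue to a single map $\widetilde{e}:Z_\infty\to\overline{X}$. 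Since each $Z\in\mathcal{Z}$ is pure of grade $j$ whereas the kernel of a biduality map over an Auslander--Gorenstein ring has grade strictly greater than $j_C(Z)$, we get $\ker e_Z=0$; hence each $\widetilde{e}_Z$ is injective, and therefore so is $\widetilde{e}$.

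Finally, $\overline{X}$ is a Noetherian module over the Noetherian ring $C$, so its submodule $\widetilde{e}(Z_\infty)\cong Z_\infty$ is finitely generated. A finitely generated module that is the directed union of the members of $\mathcal{Z}$ must already coincide with one of them, so $Z_\infty\in\mathcal{Z}$, and by construction it contains every element of $\mathcal{Z}$; it is therefore the (evidently unique) largest finitely generated submodule $Z$ of $Y$ with $X\subseteq Z$ and $j_C(Z/X)\geq j+2$. The main obstacle I anticipate is not the bookkeeping with $d_C$ but the homological input on Auslander--Gorenstein rings — precisely that the biduality map $e_M$ has kernel (and cokernel) of grade at least $j_C(M)+2$ — together with the naturality of the isomorphisms $r_Z$ needed to glue the $\widetilde{e}_Z$; I would prefer to set these up through the bidualizing-complex formalism (as in \cite{Lev91}, \cite{Bj89}) rather than by direct computation.
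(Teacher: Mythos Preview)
Your proposal is essentially the standard proof of Gabber's Maximality Principle, and it is correct. Note, however, that the paper does not give its own proof of this statement at all: it simply cites \cite[Theorem 1.14]{Bj89} and moves on. Your argument is precisely the one found there (and in \cite{Lev91}), built around the biduality map into $\Ext^j_C(\Ext^j_C(X,C),C)$ and the fact that for a pure module of grade $j$ the biduality map is injective, so that the directed union $Z_\infty$ lands in a fixed Noetherian module and is therefore finitely generated. There is nothing to compare methodologically; you have supplied what the paper delegates to the literature.
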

\begin{proof} See \cite[Theorem 1.14]{Bj89}.\end{proof}

We will also need the following preparatory results.

\begin{lem} Suppose that $\gr_0 A$ is Gorenstein and $N$ is a finitely generated $F_0A$-module. Then
\be\item $F_0A$ is Auslander--Gorenstein.
\item If $N$ is $\pi$-torsion-free then $j_{F_0A}(N)=j_A(N_K)$.
\item If $L$ is  an $F_0A$-lattice in $M$ and $N$ is a submodule of $M/L$ such that $Q_T(N)=0$ then $j_{F_0A}(N) \geq j_{F_0A}(L) + 2$.
\ee
\end{lem}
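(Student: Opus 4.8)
The plan is to reduce all three assertions to commutative algebra over $\gr_0 A$, using throughout that $\pi$ is a central non-zero-divisor in $F_0A$ and the standard fact that a commutative Gorenstein ring finitely generated as a $k$-algebra has finite Krull dimension, hence finite self-injective dimension, and is therefore Auslander--Gorenstein. For part (a): the $\pi$-adic filtration on $F_0A$, with $F_{-i}F_0A = \pi^iF_0A$ for $i\ge 0$, is complete by hypothesis, and since $\pi$ is a central non-zero-divisor its associated graded ring is the polynomial ring $\gr(F_0A)\cong(\gr_0 A)[z]$ with $z=\gr\pi$; this ring is Noetherian as $\gr_0 A$ is a finitely generated commutative $k$-algebra, so the filtration is Zariskian, and $(\gr_0 A)[z]$ is again commutative, Gorenstein and finitely generated over $k$, hence Auslander--Gorenstein. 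Proposition \ref{AG}(b) then gives that $F_0A$ is Auslander--Gorenstein.

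For part (b) I will establish the chain $j_{F_0A}(N)=j_{\gr_0 A}(N/\pi N)=j_A(N_K)$; applied to a $\pi$-torsion-free $L$ this also records the identity $j_{F_0A}(L)=j_{\gr_0 A}(L/\pi L)$ needed in part (c). Give $N$ its $\pi$-adic filtration, which is good and separated; since $N$ is $\pi$-torsion-free, multiplication by $\pi^i$ identifies $N/\pi N$ with $\pi^iN/\pi^{i+1}N$, so $\gr N\cong(N/\pi N)[z]$ over $(\gr_0 A)[z]$. Likewise, giving $N_K$ the good filtration $F_iN_K:=\pi^{-i}N$ attached to the lattice $N$ yields $\gr N_K\cong(N/\pi N)[s,s^{-1}]$ over $\gr A\cong(\gr_0 A)[s,s^{-1}]$ by Lemma \ref{DoubleFilt}. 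Both $(\gr_0 A)[z]$ and $(\gr_0 A)[s,s^{-1}]$ are commutative Gorenstein $k$-algebras of finite Krull dimension, hence Auslander--Gorenstein, and the filtrations in play are Zariskian with good module filtrations, so \cite[Theorem III.2.5.2]{LVO} gives $j_{F_0A}(N)=j_{(\gr_0 A)[z]}(\gr N)$ and $j_A(N_K)=j_{(\gr_0 A)[s,s^{-1}]}(\gr N_K)$. Finally $\gr_0 A\to(\gr_0 A)[z]$ and $\gr_0 A\to(\gr_0 A)[s,s^{-1}]$ are faithfully flat and $N/\pi N$ is finitely presented, so $\Ext^i$ commutes with these base changes and both right-hand sides equal $j_{\gr_0 A}(N/\pi N)$.

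For part (c), first reduce to the case $\pi N=0$: the chain $N\supseteq\pi N\supseteq\pi^2 N\supseteq\cdots=0$ has subquotients $\pi^iN/\pi^{i+1}N$ killed by $\pi$, the hypothesis $Q_T(N)=0$ — that is, $N_S=0$ for the multiplicatively closed set $S=\{a\in F_0A : a\bmod\pi\in T\}=F_0A\setminus\bigcup_i\fr{P}_i$ lifting $T$, where $\fr{P}_i\subseteq F_0A$ is the prime over $P_i$ — passes to every subquotient of $N$ by exactness of localisation, and $j_{F_0A}(N)\ge\min_i j_{F_0A}(\pi^iN/\pi^{i+1}N)$ from the long exact sequence of $\Ext_{F_0A}(-,F_0A)$. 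So assume $\pi N=0$; then $N$ is a finitely generated $\gr_0 A$-module and $Q_T(N)=0$ unwinds to $P_i\notin\Supp_{\gr_0 A}(N)$ for all $i$. On the other hand $\pi N=0$ forces $N\hookrightarrow\ker(\pi\mid M/L)=(\pi^{-1}L)/L$, and multiplication by $\pi$ is an $F_0A$-isomorphism $(\pi^{-1}L)/L\xrightarrow{\sim}L/\pi L$, so $\Supp_{\gr_0 A}(N)\subseteq\Supp_{\gr_0 A}(L/\pi L)$. The minimal primes of $\Ann_{\gr_0 A}(L/\pi L)$ are precisely $P_1,\ldots,P_r$, because $L/\pi L$ and $F_0M/\pi F_0M$ have the same class in the Grothendieck semigroup of $\gr_0 A$-modules, hence the same support, by \cite[Proposition 1.1.2]{Ginz}. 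Since $\gr_0 A$ is Cohen--Macaulay, being Gorenstein, the grade of a finitely generated module equals the height of its annihilator; every prime $\fr{q}\in\Supp_{\gr_0 A}(N)$ contains some $P_i$ yet differs from every $P_j$, hence strictly contains that $P_i$, so $\mathrm{ht}(\fr{q})\ge\mathrm{ht}(P_i)+1\ge\mathrm{ht}(\Ann_{\gr_0 A}(L/\pi L))+1=j_{\gr_0 A}(L/\pi L)+1$; taking the minimum over $\fr{q}$ gives $j_{\gr_0 A}(N)\ge j_{\gr_0 A}(L/\pi L)+1$. Finally, as $\pi$ is a central non-zero-divisor in $F_0A$ killing $N$, the change-of-rings isomorphism $\Ext^i_{F_0A}(N,F_0A)\cong\Ext^{i-1}_{\gr_0 A}(N,\gr_0 A)$ yields $j_{F_0A}(N)=j_{\gr_0 A}(N)+1$, while $j_{F_0A}(L)=j_{\gr_0 A}(L/\pi L)$ by part (b); combining these gives $j_{F_0A}(N)\ge j_{F_0A}(L)+2$.

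The step I expect to be most delicate is part (c): making the passage from $Q_T(N)=0$ to the support condition $P_i\notin\Supp_{\gr_0 A}(N/\pi N)$ fully rigorous for the $\pi$-power-torsion module $N$, and checking that the reduction to $\pi N=0$ together with the change of rings to $\gr_0 A$ interact correctly with the grade function. Once that bookkeeping is in place, the commutative-algebra core — a strict inclusion of primes forcing a strictly larger height — is routine.
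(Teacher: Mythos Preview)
Your argument is essentially correct and in places cleaner than the paper's, but there is one small gap in the reduction step of part (c), and the overall route differs enough from the paper's to be worth comparing.

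\medskip

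\textbf{Part (b).} The paper argues directly: $\Ext^i_A(N_K,A)\cong \Ext^i_{F_0A}(N,F_0A)_K$, so one only needs that $\Ext^{j}_{F_0A}(N,F_0A)$ is not $\pi$-torsion, which follows from the long exact sequence for $0\to N\stackrel{\pi}{\to}N\to N/\pi N\to 0$ together with $j_{F_0A}(N/\pi N)>j_{F_0A}(N)$ (finite partitivity). Your approach via \cite[III.2.5.2]{LVO} and faithfully flat base change to $(\gr_0 A)[z]$ and $(\gr_0 A)[s,s^{-1}]$ is valid and actually establishes the stronger identity $j_{F_0A}(N)=j_{\gr_0 A}(N/\pi N)$, which you then exploit in (c). This is a genuine simplification: the paper has to use the Rees Lemma twice and invoke finite partitivity to get $j_{F_0A}(L/\pi L)\ge j_{F_0A}(L)+1$, whereas you get the equality $j_{F_0A}(L)=j_{\gr_0 A}(L/\pi L)$ for free.

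\medskip

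\textbf{Part (c): the gap.} Your reduction to $\pi N=0$ via the chain $N\supseteq\pi N\supseteq\cdots$ does not quite work as written. After replacing $N$ by a subquotient $\pi^iN/\pi^{i+1}N$ you still need the embedding into $(\pi^{-1}L)/L$ to get $\Supp_{\gr_0 A}\subseteq\Supp_{\gr_0 A}(L/\pi L)$, but $\pi^iN/\pi^{i+1}N$ is no longer a submodule of $M/L$, so the sentence ``$\pi N=0$ forces $N\hookrightarrow\ker(\pi\mid M/L)$'' does not apply. The fix is easy: filter instead by $N_j:=N\cap(\pi^{-j}L/L)$ as the paper does; then each $N_j/N_{j-1}$ is genuinely a submodule of $\pi^{-j}L/\pi^{-j+1}L\cong L/\pi L$, is killed by $\pi$, and inherits $Q_T=0$. (Alternatively one can push your filtration through by first observing $\Supp_{F_0A}(N)\subseteq\Supp_{F_0A}(L/\pi L)$ from the paper's filtration, and then noting that any $\gr_0 A$-subquotient of $N$ has $\gr_0 A$-support contained in $\Supp_{\gr_0 A}(L/\pi L)$.)

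\medskip

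\textbf{Part (c): the core inequality.} Here the approaches genuinely diverge. The paper finds $t\in T$ with $tN=0$, then uses axiom (iii) of exact dimension functions (Proposition~\ref{AG}(a)) to get $j_{\gr_0 A}(\gr_0 A/\langle P_i,t\rangle)>j_{\gr_0 A}(\gr_0 A/P_i)$, and filters $N$ accordingly. You instead use that Gorenstein $\Rightarrow$ Cohen--Macaulay $\Rightarrow$ grade equals height, together with the purely order-theoretic fact that a strict inclusion of primes raises height by at least one. Both are valid; yours is more explicitly commutative-algebraic and avoids the abstract dimension-function machinery, while the paper's works uniformly without needing $j=\mathrm{ht}(\Ann)$.
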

\begin{proof} Part (a) follows from Proposition \ref{AG}(b) since $\gr F_0A\cong(\gr_0 A)[s]$.

(b) First notice that $\Ext^i_A(N_K,A)\cong \Ext^i_{F_0A}(N,F_0A)_K$ for each $i\ge 0$. Thus $j:=j_{F_0A}(N)\leq j_A(N_K)$ and to prove the equality we must show that $\Ext^j_{F_0A}(N,F_0A)$ is not $\pi$-torsion. Since $d_{F_0A}$ is finitely partitive by Proposition \ref{AG}(a), we have that $j_{F_0A}(N/\pi N)>j$. By considering the long exact sequence for $\Ext_{F_0A}(-,F_0A)$ associated to the short exact sequence $0\to N\to N\to N/\pi N\to 0$ we deduce that $\pi\colon \Ext^j_{F_0A}(N,F_0A)\to \Ext^j_{F_0A}(N,F_0A)$ is an injection.

(c)  Since $N \subseteq M/L$ is finitely generated, it is contained in $\pi^{-m}L$ for some $m \geq 1$. By considering the filtration on $N$ induced by $L < \pi^{-1}L < \cdots < \pi^{-m}L$ we may reduce to the case when $m = 1$, so that $\pi N = 0$. Now $N$ is a $\gr_0 A$-submodule of $\pi^{-1}L / L \cong L / \pi L$, so $\Supp(N)$ is contained in $\Supp(L / \pi L)$. Since $L$ and $F_0M$ are both $F_0A$-lattices in $M$, $\Supp(L / \pi L) = \Supp(F_0M / \pi F_0M)$ by \cite[Chapter III, Lemma 4.1.9]{LVO}, so $L/\pi L$ is annihilated as a $\gr_0A$-module by some product of the minimal primes $P_1,\ldots, P_r$ above $\Ann_{\gr_0A}(\gr_0 M)$. On the other hand, $Q_T(N) = 0$ and $\pi N = 0$ together imply that $N$ is a $T$-torsion $\gr_0 A$-module by Corollary \ref{Micro}, so $tN = 0$ for some $t \in T$.

Since $\gr_0A$ is Gorenstein and $t \notin P_i$ for all $i$,
\[j_{\gr_0 A}(\gr_0A / \langle P_i, t \rangle) > j_{\gr_0A}(\gr_0A / P_i)\quad\mbox{for all}\quad i\]by Proposition \ref{AG}(a). Choose a finite filtration of $\pi^{-1}L / L$ by $\gr_0A$-submodules where each subquotient is killed by some $P_i$. The restriction of this filtration to $N \subseteq \pi^{-1}L / L$ then shows that $j_{\gr_0A}(N) > \min_i j_{\gr_0A}(\gr_0 A / P_i) = j_{\gr_0A}(L / \pi L)$.

We can now apply the Rees Lemma \cite[Lemma 1.1]{ASZ2} twice to obtain
\[j_{F_0A}(N) = j_{\gr_0 A}(N) + 1 \geq j_{\gr_0 A}(L / \pi L) + 2 = j_{F_0A}(L / \pi L) + 1.\]
Because $F_0A$ is Auslander-Gorenstein by (a) and $d_{F_0A}$ is finitely partitive by Proposition \ref{AG}(a),
\[j_{F_0A}(L / \pi L) \geq j_{F_0A}(L) + 1\]
and the result follows.
\end{proof}\subsection{Finding a global regular $\varphi$-lattice}\label{Quillen}
\begin{prop} Suppose that $M$ is a simple $A - K(\varphi)$-bimodule. Then $M$ has a regular $\varphi$-lattice.
\end{prop}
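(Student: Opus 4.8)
The plan is to transport the discrete valuation ring $B$ constructed microlocally in \S\ref{MaxLatPres}--\S\ref{MaxDVR} down to a lattice inside $M$ itself, with Gabber's Maximality Principle supplying finite generation. First I would note that $M$ is pure as an $A$-module: if $\varphi=0$ then $K(\varphi)=K$ and $M$ is a simple $A$-module, so there is nothing to do; otherwise $\varphi$ acts invertibly on $M$ by simplicity, so $\varphi\in\Aut_A(M)$, and the largest $A$-submodule $\mathcal{T}$ of $M$ of canonical dimension $<d_A(M)$ is then $\varphi$-stable, hence a sub-bimodule of the simple bimodule $M$, and cannot equal $M$ (which is finitely generated of dimension $d_A(M)$), so $\mathcal{T}=0$. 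By Lemma \ref{Gabber}(b) it follows that every nonzero finitely generated $F_0A$-submodule of $M$ is pure of grade $j_0:=j_A(M)$, and $F_0A$ is Auslander--Gorenstein by Lemma \ref{Gabber}(a).

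Next I would run the construction of \S\ref{MicroQuillen}: fix an $F_0A$-lattice $F_0M$ in $M$, let $T=\gr_0A\setminus\bigcup_iP_i$ for the minimal primes $P_i$ over $\Ann_{\gr_0A}(\gr_0M)$, put $Q=Q_T(A)$, and choose a simple $Q-K(\varphi)$-bimodule quotient $\pi_V\colon Q_T(M)\twoheadrightarrow V$. Since $(\gr_0M)_T\neq0$ we have $Q_T(M)\neq0$; as $Q_T(M)=Q\otimes_AM$ is generated over $Q$ by the image of $M$, so is $V$, so the composite $M\to Q_T(M)\to V$ is a nonzero bimodule map (its kernel is $K(\varphi)$-stable by functoriality of microlocalisation), hence injective by simplicity of $M$; identify $M\subseteq V$ with $QM=V$. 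By Proposition \ref{MaxLatPres} and Theorem \ref{MaxDVR} there is a maximal lattice preserver $B$ which is a discrete valuation ring; fix an $F_0Q$-lattice $L\subseteq L_0$ in $V$, where $L_0$ is the image of $F_0Q_T(M)$ in $V$, with $BL\subseteq L$ and $B=\{\theta\in K(\varphi):\theta L\subseteq L\}$. Since $L$ is a separated finitely generated $F_0Q$-module, $\pi$ is a non-unit in $B$ and $\pi^a\theta\in B$ for suitable $a$ for each $\theta\in K(\varphi)$, so $B$ is also an $R$-lattice in $K(\varphi)=\operatorname{Frac}(B)$. Now let $\iota\colon M\hookrightarrow Q_T(M)$ be the localisation map and set $N:=\{m\in M:\iota(m)\in\pi_V^{-1}(L)\}$. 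Using that $L$ is a lattice in $V$ and $M\hookrightarrow V$ one checks $\bigcap_a\pi^aN=0$ and $K\cdot N=M$, so $N$ is an $R$-lattice in $M$; as $\pi_V$ is $K(\varphi)$-linear, $BN\subseteq N$, and the ring $B_N:=\{\theta\in K(\varphi):\theta N\subseteq N\}$ lies between $B$ and its fraction field $K(\varphi)$, so equals one of the two, and $B_N=K(\varphi)$ is impossible since $\pi^{-1}\in B_N$ would force $N=\pi N\neq0$; hence $B_N=B$ is a discrete valuation ring and an $R$-lattice in $K(\varphi)$.

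It remains to show $N$ is finitely generated over $F_0A$, which is the heart of the argument. Fix $c\ge0$ with $X:=\pi^cF_0M\subseteq N$; by the first paragraph $X$ is a pure finitely generated $F_0A$-lattice in $M$ of grade $j_0$. I claim $N/X$ is microlocally trivial, i.e. $Q_T(W/X)=0$ for every finitely generated $F_0A$-submodule $W$ with $X\subseteq W\subseteq N$: this should follow from the fact that $N$ was defined as the preimage of the $F_0Q$-lattice $\pi_V^{-1}(L)$ with $L\subseteq L_0$, so that localising $N$ at $T$ produces nothing beyond $Q_T(X)$ inside $Q_T(M)$, using exactness of $Q_T$ on finitely generated $F_0A$-modules and the equivalence $Q_T(-)=0\iff(-)_S=0$ of Corollary \ref{Micro}. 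Granting this, the dimension Lemma \ref{Gabber}(c) gives $j_{F_0A}(W/X)\ge j_{F_0A}(X)+2=j_0+2$ for all such $W$. Applying Gabber's Maximality Principle from \S\ref{Gabber} with $C=F_0A$, $Y=M$ (all of whose finitely generated submodules are pure) and the pure module $X$, we obtain the unique largest finitely generated submodule $Z$ of $M$ with $X\subseteq Z$ and $j_{F_0A}(Z/X)\ge j_0+2$; every finitely generated $W$ with $X\subseteq W\subseteq N$ satisfies this bound, hence lies in $Z$, so $N=\bigcup W\subseteq Z$ is finitely generated over the Noetherian ring $F_0A$. Therefore $N$ is a regular $\varphi$-lattice.

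I expect the microlocal triviality of $N/X$ in the last step to be the main obstacle: it requires a careful comparison of the pulled-back lattice $\pi_V^{-1}(L)$ with the natural $F_0Q$-lattice in $Q_T(M)$ generated by $F_0M$, controlling the contribution of $\ker\pi_V$, and relating the discrepancy to the defining annihilator conditions for $T$ so that Corollary \ref{Micro} applies. Everything else is the now-standard dictionary between filtered modules over $F_0A$ and over $A$, together with the two inputs already established, namely that maximal lattice preservers are discrete valuation rings (Theorem \ref{MaxDVR}) and that $\gr_0A$ Gorenstein makes $F_0A$ Auslander--Gorenstein (Lemma \ref{Gabber}(a)), which is what licenses the use of Gabber's principle.
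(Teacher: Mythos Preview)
Your overall architecture is exactly the paper's: embed $M$ into the simple bimodule quotient $V$ of $Q_T(M)$, pull the $F_0Q$-lattice $L$ preserved by the DVR $B$ back to $N=L\cap M$, and then use Gabber's Maximality Principle with Lemma \ref{Gabber}(c) to force $N$ to be finitely generated over $F_0A$. Your shortcut for the equality $B_N=B$ --- using that a DVR has no proper overrings inside its fraction field --- is a pleasant alternative to the paper's direct check via $L=F_0Q\cdot(L\cap M)$.

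The gap is precisely in your choice of the module $X$. Taking $X=\pi^cF_0M$ gives $Q_T(X)=\pi^cL_0$ (the image of $\pi^cF_0Q_T(M)$ in $V$), and there is no reason for this to equal $L$; in general $\pi^cL_0\subsetneq L$ once $L$ is not simply a $\pi$-power shift of $L_0$. For a finitely generated $W$ with $X\subseteq W\subseteq N$ one always has $Q_T(W)\subseteq L$, but the argument needs $Q_T(W)=Q_T(X)$, and this fails: one can choose $w\in L\cap M$ with $w\notin\pi^cL_0$ and set $W=F_0A\cdot w+X$ to get $Q_T(W/X)\neq 0$. So the issue is not $\ker\pi_V$ or the annihilator primes, but that your $X$ is too small microlocally.

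The paper repairs this by first proving $L=F_0Q\cdot(L\cap M)$: since $M_S$ is dense in $Q_T(M)$, the image of $M_S$ meets $L$ densely, so $(L\cap M_S)+\pi L=L$, whence $L=F_0Q\cdot(L\cap M_S)=F_0Q\cdot(L\cap M)$ because $L\cap M_S=(F_0A)_S\cdot(L\cap M)$. One then picks a finitely generated $F_0A$-submodule $X\subseteq L\cap M$ with $F_0Q\cdot X=L$ (enlarging it if necessary to be an $R$-lattice in $M$). With this $X$, any finitely generated $X'$ with $X\subseteq X'\subseteq L\cap M$ satisfies $Q_T(X')=L=Q_T(X)$, hence $Q_T(X'/X)=0$, and Lemma \ref{Gabber}(c) plus Gabber's principle finish exactly as you outlined. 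Replace your $X$ by this one and your argument goes through.
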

\begin{proof} Since $Q_T(M)$ surjects onto $V$, the natural map $\phi : M \to V$ is non-zero and therefore $\ker \phi$ is a proper $A$-submodule of $M$. Since $V$ is a $K(\varphi)$-module quotient of $Q_T(M)$, $\ker \phi$ is in fact an $A-K(\varphi)$-subbimodule. Therefore by our assumption $\phi$ is an injection and we will use it to identify $M$ with an $A$-submodule of $V$. By Proposition \ref{MaxLatPres} we can find a maximal element $B$ of $\mathcal{P}$. Let $L\in\mathcal{L}$ be an $F_0Q$-lattice preserved by $B$; we will show that $L \cap M$ is the required regular $\varphi$-lattice.

Let $S = \{s \in A : \gr s \in T\}$ be the microlocal Ore set arising from $T \subseteq \gr_0 A$. Alternatively put, $S$ is just the preimage of $T$ in $F_0A$; since $1 \in T$ we see that $1 + \pi F_0A \subseteq S$ and therefore $\pi$ is in the Jacobson radical of $F_0A_S$.

Since $M_S$ is dense in $Q_T(M)$, $(L\cap M_S)+\pi L=L$ and so $L = F_0Q.(L \cap M_S)$. Since also $L \cap M_S = F_0A_S.(L\cap M)$, $L$ is generated by $L \cap M$ as an $F_0Q$-module. Because $L$ is Noetherian as an $F_0Q$-module, we can find a finitely generated $F_0A$-submodule $X$ of $L\cap M$ such that $F_0Q.X = L$. Since $L \cap M$ generates $M$ as a $K$-vector space, by enlarging $X$ if necessary we may assume that $X$ is also an $R$-lattice in $M$.

Let $N$ be the largest $A$-submodule of $M$ with $d_A(N) < d_A(M)$. Then $N$ is a proper characteristic submodule of $M$ and is therefore stable under every element of $K(\varphi) \subseteq \End_A(M)$. Since $M$ is a simple $A - K(\varphi)$-bimodule, $N = 0$ so $M$ is pure. Now $F_0A$ is Auslander--Gorenstein by Lemma \ref{Gabber}(a), so every finitely generated $F_0A$-submodule of $M$ is pure by Lemma \ref{Gabber}(b). Thus $M$ contains a unique largest finitely generated submodule $Z$ containing $X$ such that $j_{F_0A}(Z/X)\geq j_{F_0A}(X) + 2$ by Gabber's Maximality Principle, Theorem \ref{Gabber}. We will show that $L \cap M$ is contained in $Z$ and so is finitely generated over $F_0A$ since $F_0A$ is Noetherian.

Suppose that $X'$ is any finitely generated $F_0A$-submodule of $L\cap M$ containing $X$. Then $Q_T(X')=Q_T(X)=L$; thus $Q_T(X'/X)=0$ and so
\[ j_{F_0A}(X'/X) \geq j_{F_0A}(X) + 2\]
by Lemma \ref{Gabber}(c). Therefore $X' \leq Z$. This means that every finitely generated submodule of $L\cap M$ is contained in $Z$ and so $L\cap M$ is itself a submodule of $Z$ as claimed.

Finally, since $L=F_0Q.(L\cap M)$,
\[\{\theta \in K(\varphi) \st \theta(L \cap M) \subseteq L\cap M\} = \{\theta \in K(\varphi) \st \theta(L) \subseteq L \}\]
is a discrete valuation ring by Theorem \ref{MaxDVR}.
\end{proof}

We can finally state and prove our version of Quillen's Lemma.

\begin{cor} Let $A$ be an almost commutative affinoid $K$-algebra with commutative Gorenstein slice. Then every simple endomorphism of every finitely generated $A$-module is algebraic over $K$.
\end{cor}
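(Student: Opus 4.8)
The plan is to deduce the statement from Proposition~\ref{Quillen} and the Corollary of~\S\ref{RegLat} by reducing to the case of a simple $A$-$K(\varphi)$-bimodule. Note first that an almost commutative affinoid $K$-algebra $A$ satisfies the running hypotheses of this section, so the hypothesis that $\gr_0 A$ is Gorenstein puts us in exactly the situation of \S\ref{Gabber}. Let $M$ be a finitely generated $A$-module and $\varphi\in\End_A(M)$ a simple endomorphism; we may assume $M\neq 0$. Since $A$ is Noetherian, $M$ is a Noetherian $A$-module, and the family of proper $A$-$K(\varphi)$-subbimodules of $M$ — being a subfamily of its $A$-submodules — has a maximal member $N$. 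Set $\overline M := M/N$; this is a nonzero simple $A$-$K(\varphi)$-bimodule, and I write $\overline\varphi$ for the image of $\varphi$ in $\End_A(\overline M)$.

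Next I would verify that passing from $(M,\varphi)$ to $(\overline M,\overline\varphi)$ loses nothing. Because $\varphi$ is simple, $K[\varphi]$ is a domain and $K(\varphi)$ is its field of fractions inside $\End_A(M)$; since $N$ is stable under all of $K(\varphi)$, reduction modulo $N$ gives a $K$-algebra homomorphism $K(\varphi)\to\End_A(\overline M)$. This map is injective: if $0\neq\theta\in K(\varphi)$ then $\theta^{-1}\in K(\varphi)$ also descends to $\overline M$, so the image of $\theta$ is an automorphism of $\overline M$, in particular nonzero as $\overline M\neq 0$. Hence the subfield $K(\overline\varphi)$ of $\End_A(\overline M)$ generated by $\overline\varphi$ is isomorphic to $K(\varphi)$, and $\overline\varphi$ is again simple, the inverse of a nonzero $f(\overline\varphi)$ being the image of $f(\varphi)^{-1}$.

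Finally, I would apply the results of the section to $(\overline M,\overline\varphi)$. As a nonzero finitely generated $A$-module, $\overline M$ carries an $F_0A$-lattice (the $F_0A$-span of a finite generating set, which is separated for the $\pi$-adic filtration by completeness of $F_0A$ and Nakayama's Lemma), so Proposition~\ref{Quillen} applies to the simple $A$-$K(\overline\varphi)$-bimodule $\overline M$ and produces a regular $\overline\varphi$-lattice in $\overline M$. The Corollary of~\S\ref{RegLat} then shows that $\overline\varphi$ is algebraic over $K$, i.e.\ $K(\overline\varphi)$ is a finite extension of $K$; since $K(\varphi)\cong K(\overline\varphi)$, the field $K(\varphi)$, and hence $\varphi$, is algebraic over $K$. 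The only step that is more than bookkeeping is this reduction to $\overline M$: one must be confident that the simple-bimodule hypothesis underlying all of \S\ref{MicroQuillen}–\S\ref{Quillen} is genuinely supplied by the maximality of $N$ and that simplicity of $\varphi$ together with the field $K(\varphi)$ transport faithfully to the quotient. Granting that, the real difficulty — constructing a regular $\varphi$-lattice — has already been overcome in Proposition~\ref{Quillen}.
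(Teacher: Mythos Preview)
Your proposal is correct and follows essentially the same approach as the paper: pass to a simple $A$-$K(\varphi)$-bimodule quotient $\overline{M}$ using Noetherianity, observe that $K(\varphi)\to K(\overline{\varphi})$ is an isomorphism because $K(\varphi)$ is a field, then apply Proposition~\ref{Quillen} and Corollary~\ref{RegLat}. Your write-up is somewhat more explicit about why $\overline{\varphi}$ remains simple and why the reduction map on $K(\varphi)$ is injective, but the argument is the same.
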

\begin{proof} As we explained in $\S \ref{RegLat}$, we can view $M$ as an $A-K(\varphi)$-bimodule. Since $M$ is a Noetherian $A$-module, we can find a simple $A-K(\varphi)$-bimodule quotient $\overline{M}$ of $M$; note that the $A$-linear endomorphism $\overline{\varphi}$ of $\overline{M}$ induced by $\varphi$ is still simple. Then by Proposition \ref{Quillen}, $\overline{M}$ has a regular $\overline{\varphi}$-lattice so by Corollary \ref{RegLat}, $\overline{\varphi}$ is algebraic over $K$. But the natural map $K(\varphi) \to K(\overline{\varphi})$ is an isomorphism since $K(\varphi)$ is a field, so $\varphi$ is also algebraic over $K$.\end{proof}

\begin{rmks} \hfill
\be
\item The same proof shows that if $A$ is any complete filtered $K$-algebra such that $F_0A$ is an $R$-lattice in $A$ and $\gr_0A$ is a finitely generated Gorenstein commutative $k$-algebra then the conclusion of the corollary holds.

\item It may be possible to relax the assumption that $\gr_0 A$ is Gorenstein by using the version of Gabber's maximality principle found in \cite{YZ} based around Auslander dualising complexes. However we do not know whether all almost commutative affinoid $K$-algebras have an Auslander dualising complex.
\ee
\end{rmks}

\section{Modules over completed enveloping algebras}
\subsection{Finite dimensional modules}\label{CdimAmod} We begin our study of finitely generated modules over completed enveloping algebras with the following rather general result.

\begin{prop} Let $A$ be a complete doubly filtered $K$-algebra such that $\Gr(A)$ is a connected graded polynomial algebra over $k$, and let $M$ be a finitely generated $A$-module.
\be
\item $M$ is finite dimensional over $K$ if and only if $\dim \Ch(M) = 0$.
\item If $A$ has at least one non-zero finite dimensional module $V$, then $\injdim A = \dim \Gr(A)$ and $d(M) = \dim \Ch(M)$.
\ee\end{prop}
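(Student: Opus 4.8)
The plan is to prove both parts simultaneously by working with a good double filtration on $M$ and passing to the graded module $\Gr(M)$ over the connected graded polynomial ring $S := \Gr(A)$. Set $d := \dim S$. First I would record the base facts: since $S$ is a polynomial ring over $k$ it is commutative, regular, and every simple $S$-module $N$ (necessarily $N \cong k$ concentrated in degree $0$ as a module over the graded maximal ideal) has $d_S(N) = 0$; so the hypotheses of Theorem \ref{CharVarAff} are satisfied. That theorem, applied to $A$, tells us $A$ is Auslander regular and — provided $\injdim_A A = d$ — that $\dim \Ch(M) + j_A(M) = d$ for all finitely generated $M$. Note however that Theorem \ref{CharVarAff} as stated already asserts $A$ is Auslander regular but the dimension formula is phrased with $\dim \Gr(A)$ on the right, which equals $\injdim_A A$ precisely when the latter is $d$; so establishing $\injdim_A A = d$ is the linchpin of part (b).

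For part (b), suppose $V$ is a non-zero finite-dimensional $A$-module. I would first show $\dim \Ch(V) = 0$: choosing any $F_0A$-lattice $L_0$ in $V$ (which exists and is finitely generated over $F_0A$ by Proposition \ref{GoodDouble}(a) since $V$ is finite-dimensional and $F_0A$ is Noetherian when $\Gr(A)$ is — here $\Gr(A)$ is a polynomial ring hence Noetherian), the slice $\gr_0 V = L_0/\pi L_0$ is finite-dimensional over $k$, so $\Gr(V)$ is a finite-dimensional graded $S$-module, whence $\Ch(V) = \Supp(\Gr(V))$ is a finite set of closed points and $\dim \Ch(V) = 0$. Now $j_A(V) = d - \dim\Ch(V)$ would give $j_A(V) = d$ once we know the formula — but that is circular, so instead I would argue directly: $j_A(V) = j_S(\Gr(V))$ by \cite[Theorem III.2.5.2]{LVO}, and $j_S(\Gr(V)) = d$ because $\Gr(V)$ is a non-zero finite length $S$-module over the $d$-dimensional regular ring $S$ (finite length modules over a regular local/graded ring of dimension $d$ have grade exactly $d$). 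Since $\Ext^j_A(V,A) = 0$ for $j < d$ but $\Ext^d_A(V,A) \neq 0$, and $A$ has finite global dimension (Auslander regular), we get $\injdim_A A \geq d$. The reverse inequality $\injdim_A A \leq d$ follows since $\gr A \cong S[s,s^{-1}]$ by Lemma \ref{DoubleFilt} has injective dimension $d$ ($S$ regular of dimension $d$, Laurent extension adds nothing to injective dimension because $s$ is a unit), and injective dimension does not increase on passing from $\gr A$ to $A$ by standard filtered-ring arguments (this is implicit in Proposition \ref{AG}(b), which gives the same self-injective dimension). Hence $\injdim_A A = d = \dim\Gr(A)$, and then Theorem \ref{CharVarAff} delivers $d_A(M) = \injdim_A A - j_A(M) = \dim\Gr(A) - j_A(M) = \dim\Ch(M)$ for every finitely generated $M$, which is exactly $d(M) = \dim\Ch(M)$.

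For part (a), the forward direction is the computation already made above: if $M$ is finite-dimensional over $K$ then $\Gr(M)$ has finite length over $S$ and $\dim\Ch(M) = 0$. For the converse, suppose $\dim\Ch(M) = 0$, i.e. $\Supp(\Gr(M))$ is zero-dimensional, so $\Gr(M)$ is a finite-length $S$-module for any good double filtration; then $\gr_0 M$ has finite length over $\gr_0 A$, hence is finite-dimensional over $k$ (since $S = \gr(\gr_0 A)$ is connected graded, a finite length graded $\gr_0 A$-module is finite-dimensional over $k$), so $F_0 M / \pi F_0 M$ is finite-dimensional, and by completeness of $F_0 M$ (it is $\pi$-adically complete as a coherent-type lattice — or one invokes Lemma \ref{Latt}/topological Nakayama) $F_0 M$ is finitely generated over $R$; thus $M = (F_0 M)_K$ is finite-dimensional over $K$. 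The only subtlety here is ensuring $F_0M$ is $\pi$-adically complete and that finite-dimensionality of the slice forces finite $R$-rank; this is exactly the content of the Proposition in \S\ref{Latt} applied suitably, or can be seen directly.

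\textbf{Main obstacle.} The step I expect to require the most care is pinning down $\injdim_A A = \dim \Gr(A)$ with no circularity — specifically verifying $j_A(V) = d$ for the finite-dimensional module $V$ without first invoking the dimension formula, and separately bounding $\injdim_A A \leq d$ from the graded ring. Everything else is bookkeeping with good filtrations and the already-established Theorem \ref{CharVarAff}.
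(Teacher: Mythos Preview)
Your treatment of part (a) is essentially the same as the paper's and is fine.

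For part (b), there is a genuine gap in your upper bound $\injdim_A A \leq d$. You claim that $\gr A \cong S[s,s^{-1}]$ has self-injective dimension $d$ because ``Laurent extension adds nothing to injective dimension because $s$ is a unit''. This is false: for $S = k[x_1,\ldots,x_d]$, the Laurent polynomial ring $S[s,s^{-1}]$ is a regular commutative ring of Krull dimension $d+1$ (consider the chain $(0)\subset (x_1)\subset\cdots\subset(x_1,\ldots,x_d)\subset(x_1,\ldots,x_d,s-1)$), so its self-injective dimension is $d+1$. Passing from $\gr A$ to $A$ via Proposition~\ref{AG}(b) therefore only yields $\injdim_A A \leq d+1$, which together with your lower bound $\injdim_A A \geq d$ does not pin the value down. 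The paper recovers the missing unit by going through $F_0A$ instead of $\gr A$: one has $\gldim A \leq \gldim F_0A - 1$ because every finitely generated $A$-module has a $\pi$-torsion-free $F_0A$-lattice and $A=(F_0A)_\pi$, and then $\gldim F_0A - 1 = \gldim \gr_0 A \leq \gldim \Gr(A) = d$ by standard change-of-rings and filtered arguments. The drop by one on inverting $\pi$ is exactly what compensates for the extra variable $s$ you are seeing.

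A smaller point: your circularity worry is unfounded. Theorem~\ref{CharVarAff} already states $\dim\Ch(M) + j_A(M) = \dim\Gr(A)$ with $\dim\Gr(A)$, not $\injdim_A A$, on the right-hand side. So once you know $\dim\Ch(V)=0$ from part (a), you get $j_A(V)=d$ directly from Theorem~\ref{CharVarAff} with no appeal to $\injdim_A A$. Your detour via $j_S(\Gr(V))$ is correct but unnecessary.
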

\begin{proof}
(a) Choose a good double filtration $(F_0M, F_\bullet \gr_0M)$ on $M$ using Proposition \ref{GoodDouble}(b). Then $\dim \Ch(M) = 0$ if and only if $\Gr(M)$ is finite dimensional over $k$. Now if $\Gr(M)$ is finite dimensional over $k$ then the double filtration is good for $M$ as a doubly filtered $K$-module and therefore $M$ is finite dimensional over $K$ by Lemma \ref{GoodDouble}(a). Conversely, if $M$ is finite dimensional over $K$ then $F_0M$ has to be finitely generated over $R$ by Proposition \ref{Latt} because it is an $R$-lattice in $M$, so $\gr_0M$ and $\Gr(M)$ are finite dimensional over $k$.

(b) By part (a), $\dim \Ch(V) = 0$ so $\dim \Gr(A) = j_A(V)$ by Theorem \ref{CharVarAff}. Clearly $j_A(V) \leq \injdim A \leq \gldim A$. Now $\pi$ is a central regular element of $F_0A$ contained in the Jacobson radical of $F_0 A$ and $A = (F_0A)_\pi$. Therefore
\[\gldim A \leq \gldim F_0 A - 1 = \gldim \gr_0 A \leq \gldim \Gr(A) = \dim \Gr(A)\]
by \cite[\S 7.4.4, 7.3.7, 7.5.3(iii)]{MCR} and \cite[Corollary I.7.2.2]{LVO}. The first statement follows, and we obtain the second from Theorem \ref{CharVarAff}.
\end{proof}

Now let $\fr{g}$ be an $R$-Lie algebra which is free of finite rank as an $R$-module, and let $A$ denote the almost commutative affinoid $K$-algebra $\hugnK$. Then $\Gr(A) \cong \Syk{g}$ is commutative and Gorenstein with $\dim \Gr(A) = \dim \frk{g}$, and we always have the trivial $A$-module $K = A / A\fr{g}_K$ which is one-dimensional over $K$. Thus we obtain the following

\begin{cor} Let $M$ be a finitely generated $A = \hugnK$-module. Then $d(M) = \dim \Ch(M)$ and $d(M) = 0$ if and only if $M$ is finite dimensional over $K$.\end{cor}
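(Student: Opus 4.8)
The plan is to deduce the Corollary directly from the Proposition that immediately precedes it by verifying that the almost commutative affinoid $K$-algebra $A = \hugnK$ satisfies the two hypotheses of that Proposition. First I would recall from Lemma \ref{GrComp} and Corollary \ref{GrComp} (applied to the deformable $R$-algebra $U(\fr{g})$, which is almost commutative by Example \ref{ExAC}(c)) that there is a natural isomorphism $\Gr(\hugnK) \cong \gr U(\fr{g}) / \pi \gr U(\fr{g})$. By the Poincar\'e--Birkhoff--Witt theorem $\gr U(\fr{g}) \cong \Sy{g}$ as graded $R$-algebras, and since $\fr{g}$ is free of finite rank over $R$ this is a polynomial algebra over $R$; reducing mod $\pi$ gives $\Gr(A) \cong \Syk{g}$, a connected graded polynomial algebra over $k$ in $\dim \frk{g}$ variables. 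In particular $\Gr(A)$ is commutative, connected graded, and regular, so the first hypothesis of the Proposition is met.

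Next I would exhibit a non-zero finite dimensional $A$-module: the trivial module. Explicitly, $K = A / A\fr{g}_K$ is a one-dimensional $K$-vector space (the augmentation), so it is in particular a non-zero finitely generated $A$-module that is finite dimensional over $K$. This verifies the hypothesis in part (b) of the Proposition that $A$ has at least one non-zero finite dimensional module.

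With both hypotheses checked, part (a) of the Proposition gives that a finitely generated $A$-module $M$ is finite dimensional over $K$ if and only if $\dim \Ch(M) = 0$, and part (b) gives $\injdim A = \dim \Gr(A) = \dim \frk{g}$ together with the equality $d(M) = \dim \Ch(M)$ for every finitely generated $A$-module $M$. (Here one uses that the canonical dimension $d(M) = d_A(M)$ makes sense because $A$ is Auslander--Gorenstein, which follows from Theorem \ref{CharVarAff} since $\Gr(A)$ is commutative and regular.) Combining the two statements: $d(M) = \dim \Ch(M)$ always, and $d(M) = 0 \iff \dim \Ch(M) = 0 \iff M$ is finite dimensional over $K$.

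There is essentially no obstacle here; the only points requiring a moment's care are the identification of $\Gr(A)$ via the two completion lemmas of \S\ref{GrComp} together with PBW, and the observation that $\Syk{g}$ being a polynomial algebra over $k$ automatically makes every simple $\Gr(A)$-module have $d_{\Gr(A)}$-dimension zero, which is exactly the extra condition invoked in Theorem \ref{CharVarAff} and is remarked there to hold whenever $\Gr(A)$ is a polynomial ring over $k$. Everything else is a direct citation of the preceding Proposition.
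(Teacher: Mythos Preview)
Your proposal is correct and follows exactly the same approach as the paper: the paragraph immediately preceding the Corollary verifies that $\Gr(A)\cong \Syk{g}$ is a connected graded polynomial algebra over $k$ and that the trivial module $K=A/A\fr{g}_K$ is a non-zero finite dimensional $A$-module, and then the Corollary is stated as a direct consequence of the Proposition. Your write-up simply makes the citations (Corollary \ref{GrComp}, PBW, Theorem \ref{CharVarAff}) more explicit than the paper does.
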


Note the Proposition fails for the affinoid Weyl algebras of $\S \ref{AffWeyl}$ because these never have any non-zero modules which are finite dimensional over $K$ by Bernstein's Inequality, Theorem \ref{BernIneq}.

\label{ModCompEnv}
\subsection{Finite dimensional modules}\label{FDAmod}
We now continue with the notation of $\S\ref{AlgGps}$, with the additional assumptions that the algebraic group $\mb{G}$ is semisimple and that the field $K$ has characteristic zero. Since the usual enveloping algebra $\ugK$ is a $K$-subalgebra of $A = \hugnK$, we can view every $A$-module as a $\ugK$-module by restriction.

\begin{prop} Restriction induces an equivalence of abelian categories between finite dimensional $A$-modules and finite dimensional $\ugK$-modules.
\end{prop}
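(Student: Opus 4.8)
The plan is to show that the restriction functor is fully faithful and essentially surjective. Fix finite dimensional $A$-modules $M, N$ where $A = \hugnK$. Since $A$ contains $\ugK$ as a dense $K$-subalgebra and $M$, $N$ are finite dimensional over $K$, any $\ugK$-linear map $M \to N$ is automatically continuous for the (unique, Hausdorff) vector space topologies and hence extends by density to an $A$-linear map; conversely every $A$-linear map is $\ugK$-linear. So $\Hom_A(M,N) = \Hom_{\ugK}(M,N)$ and the functor is fully faithful. Concretely, I would argue: a finite dimensional $K$-vector space $V$ is an $A$-module if and only if the structure map $\ugK \to \End_K(V)$ extends (necessarily uniquely) to $A = \hugnK$; since $\End_K(V)$ is a finite dimensional $K$-algebra, this is automatic once the image of the generators $\pi^n\fr{g}$ (or rather $\fr{g}$, using that for $n\geq 0$ one is completing $U$ at the ideal generated by $p^n\fr g$) is topologically nilpotent — but here the relevant point is simply that a finite dimensional $K$-algebra is $\pi$-adically complete, so the $\pi$-adic completion $\widehat{U(\fr g)_n} \to \End_R(L)$ of any $U(\fr g)_n$-stable lattice $L$ factors through the completion and then invert $\pi$.

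For essential surjectivity, take a finite dimensional $\ugK$-module $W$. Because $K$ has characteristic zero and $\mb{G}$ is semisimple, Weyl's theorem gives that $W$ is a direct sum of irreducible finite dimensional $\fr g_K$-modules, each of which is the restriction of a rational representation of $\mb G_K$; in particular the representation $\ugK \to \End_K(W)$ integrates to $\mb G_K$ and so the Casimir, and more generally any central element, acts with integral-weight eigenvalues. The key finiteness input is that $W$ is \emph{integrable}: choosing a lattice $L$ in $W$ stable under $U(\fr g)_n$ (possible since $W$ is finitely generated over the Noetherian ring $\ugK$ and $U(\fr g)_n$ is Noetherian), the nilpotent parts of $\fr g$ act locally nilpotently, so one gets an action of the formal group / divided power structure; but the cleanest route is: $\End_R(L)$ is a finitely generated $R$-module, hence $\pi$-adically complete, so the map $U(\fr g)_n \to \End_R(L)$ extends uniquely to $\widehat{U(\fr g)_n} \to \End_R(L)$, and inverting $\pi$ endows $W = L_K$ with an $A$-module structure restricting to the given $\ugK$-structure. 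Functoriality of this construction, together with full faithfulness, shows it is inverse (up to natural isomorphism) to restriction.

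The main obstacle is the extension step: one must check that the structure map $U(\fr g)_n \to \End_R(L)$ really is continuous for the $\pi$-adic (equivalently, deformation) topology on $U(\fr g)_n$, i.e. that $\bigcap_a \pi^a \End_R(L) = 0$ and that $\End_R(L)$ carries the $\pi$-adic topology making it complete — this is where finite-dimensionality over $K$ (so $L$ finitely generated over $R$, by Proposition \ref{Latt}) is essential, and it is exactly the feature that fails for the affinoid Weyl algebras as noted after Corollary \ref{ModCompEnv}. A secondary point to verify is that no compatibility is lost when $n > 0$: the $n$-th deformation $U(\fr g)_n = \sum_i \pi^{in} F_i U(\fr g)$ maps to $U(\fr g)$ inside $\ugK$, so an $\ugK$-action restricts to a $U(\fr g)_n$-action on any lattice which is $F_0 U(\fr g)$-stable and $\pi^n$-times-$\fr g$-stable, and such a lattice exists because, after rescaling, the action of $\pi^n \fr g$ on a suitable $R$-lattice is integral. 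Once these continuity/finiteness points are in place the equivalence is formal.
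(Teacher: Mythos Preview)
Your overall strategy matches the paper's: establish full faithfulness via density of $\ugK$ in $A$ and continuity, then prove essential surjectivity by finding a $U(\fr{g})_n$-stable $R$-lattice $L$ in a given finite dimensional $\ugK$-module $W$ and using $\pi$-adic completeness of $\End_R(L)$ to extend the action to $\widehat{U(\fr{g})_n}$. The full faithfulness argument is fine.

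The gap is in the construction of $L$. Your justification ``possible since $W$ is finitely generated over the Noetherian ring $\ugK$ and $U(\fr{g})_n$ is Noetherian'' does not work: setting $L = \sum_i U(\fr{g})_n \cdot w_i$ for a $K$-basis $\{w_i\}$ gives a finitely generated $U(\fr{g})_n$-submodule spanning $W$ over $K$, but Noetherianity alone does not force $\bigcap_a \pi^a L = 0$. The Nakayama argument of Proposition~\ref{GoodDouble}(a) needs $\pi$ in the Jacobson radical, which holds for the complete ring $\widehat{U(\fr{g})_n}$ but fails for $U(\fr{g})_n$. Your later remark that ``after rescaling, the action of $\pi^n \fr{g}$ on a suitable $R$-lattice is integral'' is closer, but for this to hold for the \emph{given} $n$ (in particular $n=0$) rather than merely for $n \gg 0$, one needs a genuine integrality input.

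The paper supplies precisely this: after invoking Weyl's theorem and highest weights, it cites \cite[Theorem 27.1(b)]{Hum} --- the Kostant--Chevalley theory of admissible lattices --- to produce an $R$-lattice stable under all of $U(\fr{g})$, hence under $U(\fr{g})_n \subseteq U(\fr{g})$. This is a non-trivial structural fact about finite dimensional representations of split semisimple Lie algebras over $\mathbb{Z}$, and it cannot be replaced by the abstract Noetherian reasoning you give.
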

\begin{proof} Let $V$ be a finite dimensional $\ugK$-module. By Weyl's Theorem \cite[Theorem 1.6.3]{Dix}, $V$ is a direct sum of simple $\ugK$-submodules, and each simple submodule has a highest weight by \cite[Proposition 7.2.1(i)]{Dix}. Now the proof of \cite[Theorem 27.1(b)]{Hum} shows that we can find an $R$-lattice $L$ in $V$ which is stable under $\U{g}$. Hence it is also stable under $\U{g}_n$. Since $L$ is finitely generated over $R$, it is $\pi$-adically complete and is therefore an $F_0A = \h{\U{g}_n}$-module. Hence $V$ is also an $A$-module, so the restriction functor is essentially surjective on objects. This functor is clearly faithful, so it remains to show that it is full.

Let $V, W$ be two finite dimensional $A$-modules and let $f : V \to W$ be a $\ugK$-module homomorphism. Choose $\fr{g}$-stable $R$-lattices $L \subseteq V$ and $M \subseteq W$. Since $L$ is finitely generated over $R$, $\pi^m f(L)\subseteq M$ for some integer $m$ so $f$ is continuous. Since $\ugK$ is dense in $A$ it follows that $f$ is actually an $A$-module homomorphism.
\end{proof}

Recall the set of integral dominant weights $\Lambda^+ = \mathbb{N} \omega_1 \oplus \cdots \oplus \mathbb{N} \omega_l \subseteq \fr{h}^\ast$ from $\S\ref{BBVanishing}$, and the isomorphism $i : \fr{t} \tocong \fr{h}$ from \S \ref{HCmap}.

\begin{cor} Every finite dimensional $A$-module is semisimple. For each $\lambda \in \Lambda^+$ there is a unique up to isomorphism simple finite dimensional $A$-module $L(\lambda)$ with highest weight $\lambda \circ i$, and all finite dimensional simple $A$-modules are of this form.
\end{cor}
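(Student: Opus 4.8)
The plan is to deduce this corollary directly from the preceding Proposition together with the classical finite-dimensional representation theory of the semisimple Lie algebra $\fr{g}_K$. First I would observe that semisimplicity of finite dimensional $A$-modules is immediate: by the Proposition of $\S\ref{FDAmod}$, restriction is an equivalence of abelian categories between finite dimensional $A$-modules and finite dimensional $\ugK$-modules, and every finite dimensional $\ugK$-module is semisimple by Weyl's Theorem \cite[Theorem 1.6.3]{Dix}. An equivalence of abelian categories preserves direct sum decompositions into simple objects, so finite dimensional $A$-modules are semisimple and their simple summands correspond under restriction to simple finite dimensional $\ugK$-modules.

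Next I would classify the simple objects. The equivalence of categories sets up a bijection between isomorphism classes of simple finite dimensional $A$-modules and isomorphism classes of simple finite dimensional $\ugK$-modules. By the classical highest weight theory for the split semisimple Lie algebra $\fr{g}_K$ --- see \cite[Proposition 7.2.1]{Dix} --- the latter are parametrised by dominant integral weights, and here the relevant parametrising set, in terms of the Cartan $\fr{t}$ and the fixed isomorphism $i : \fr{t} \tocong \fr{h}$, is exactly $\{\lambda \circ i : \lambda \in \Lambda^+\}$ where $\Lambda^+ = \mathbb{N}\omega_1 \oplus \cdots \oplus \mathbb{N}\omega_l \subseteq \fr{h}^\ast$ is the monoid of dominant integral weights of $\mb{H}$ introduced in $\S\ref{BBVanishing}$. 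So for each $\lambda \in \Lambda^+$ there is a unique up to isomorphism simple finite dimensional $\ugK$-module with highest weight $\lambda \circ i$; transporting it through the equivalence gives the $A$-module $L(\lambda)$, and every finite dimensional simple $A$-module arises this way.

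The only genuine point requiring care --- and the step I expect to be the main (mild) obstacle --- is matching conventions: one must check that the highest weights occurring for $\ugK$-modules, computed with respect to the positive system of roots coming from the adjoint action of $\mb{T}$ on $\fr{g}/\fr{b}$, are precisely the elements of $\Lambda^+$ composed with $i$, and in particular that the weight lattice of $\mb{G}$ embedded in $\fr{h}_K^\ast$ (as in $\S\ref{BBVanishing}$) is the one governing integrality. Since $\mb{G}$ is assumed semisimple and simply connected in this part of the paper, the weight lattice $\Lambda$ is the full lattice dual to the coroot lattice, so every dominant integral weight of $\fr{g}_K$ is of the stated form and no refinement is lost. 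This is a bookkeeping matter rather than a substantive difficulty; once it is settled, both statements of the Corollary follow formally from the Proposition and the cited classical results, with no further computation needed.
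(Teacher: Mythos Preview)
Your proposal is correct and follows essentially the same approach as the paper: the paper's proof is a one-line citation of \cite[\S 1.6.3, 7.1.11, 7.2.2]{Dix}, implicitly using the preceding Proposition exactly as you do. Your discussion of the weight-lattice bookkeeping is a reasonable elaboration but not something the paper spells out.
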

\begin{proof} This follows from \cite[$\S$1.6.3, 7.1.11, 7.2.2]{Dix}.\end{proof}

We could have also constructed a $\fr{g}$-stable $R$-lattice in each $L(\lambda)$ by considering the co-ordinate ring $\O(\Ex{\Fl})$ of the basic affine space $\Ex{\Fl} = \mb{G} / \mb{N}$ (see $\S\ref{AlgGps}$). This is a $\Lambda^+$-graded $\fr{g}$-stable subring of the usual representation ring $\mathcal{O}(\Ex{\Fl}_K) \cong \bigoplus_{\lambda \in \Lambda^+} L(\lambda)$, so its homogeneous components give the required $\fr{g}$-stable lattices.

\subsection{The centre}\label{Centre}
Recall the Harish-Chandra homomorphism $\phi : \U{g}^{\mb{G}} \longrightarrow \U{t}$ from $\S \ref{HCmap}$. This is a morphism of deformable $R$-algebras, and applying the deformation and $\pi$-adic completion functors we obtain the deformed Harish-Chandra homomorphism
\[\h{\phi_{n,K}} : \hugGnK \longrightarrow \hutnK\]
which we will denote by $\h{\phi} : Z \to \Ex{Z}$ in an attempt to alleviate the notation. Now the Weyl group $\mb{W}$ of $\mb{G}$ acts on $\fr{t}^\ast_K$ by
\[w \bullet \lambda = w(\lambda + \rho') - \rho',\]
where $\rho' := i^\ast(\rho) = \rho \circ i \in \fr{t}^\ast$ denotes the image of $\rho \in \fr{h}^\ast$ under the dual isomorphism $i^\ast : \fr{h}^\ast \tocong \fr{t}^\ast$. In fact, $\rho'$ is equal to half the sum of the $\mb{T}$-roots on $\fr{n}^+$. If we view $\U{t}_K$ as an algebra of polynomial functions on $\fr{t}^\ast_K$, we get a corresponding `dot'-action of $\mb{W}$ on $\U{t}_K$. This action preserves the $R$-subalgebra $\U{t}_n$ of $\U{t}_K$ and therefore extends to a natural `dot'-action of $\mb{W}$ on $\Ex{Z} = \hutnK$.

\begin{prop} Suppose that $p$ is a very good prime for $\mb{G}$. \be
\item The algebra $Z$ is contained in the centre of $A$.
\item The map $\h{\phi}$ is injective, and its image is the ring of invariants $\Ex{Z}^{\mb{W} \bullet}$.
\item The algebra $\Ex{Z}$ is free of rank $|\mb{W}|$ as a module over $\Ex{Z}^{\mb{W} \bullet}$.
\item $\Ex{Z}^{\mb{W} \bullet}$ is isomorphic to a Tate algebra $K\langle S_1,\ldots, S_l \rangle$ as a complete doubly filtered $K$-algebra.
\ee\end{prop}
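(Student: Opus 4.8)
The plan is to reduce all four parts to the behaviour of the graded Harish--Chandra map already recorded in Proposition \ref{RingsofI}, using repeatedly the elementary fact that for positively, resp. $\pi$-adically, filtered complete algebras a statement valid on the associated graded — injectivity of a map, equality of two nested closed submodules, freeness — lifts back. For \textbf{(a)} I would first note that since $\mb{G}$ is connected, $\mb{G}$-invariants and $\fr{g}$-invariants in $U(\fr{g})$ coincide, so $U(\fr{g})^{\mb{G}}\subseteq U(\fr{g})^{\fr{g}}=Z(U(\fr{g}))$; the same computation over $K$ (here $K$ has characteristic zero) shows $(\Ui{g}{G})_n\subseteq Z(U(\fr{g}_K))$, so $(\Ui{g}{G})_n$ is central in $(U(\fr{g}))_n$. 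As $(U(\fr{g}))_n$ is dense in $F_0A$ and a centraliser is closed, the image of $(\Ui{g}{G})_{n,K}$, and hence its closure $Z$, is central in $A$; injectivity of $Z\to A$ (so that ``$Z\subseteq$ centre'' is literal) follows because $\Gr(Z\to A)$ is the inclusion $\Sik{g}{G}\hookrightarrow\Syk{g}$ of Theorem \ref{GlobSec}.

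For \textbf{(b)} and \textbf{(c)}, using Corollary \ref{GrComp} and Proposition \ref{RingsofI} I would identify $\Gr(Z)\cong\Sik{g}{G}$ and $\Gr(\Ex{Z})\cong\Syk{t}$, under which $\Gr(\h{\phi})$ becomes the reduced graded Harish--Chandra map $\psi_k$ (using $\gr\phi=\psi$), an isomorphism of $\Sik{g}{G}$ onto $\Sik{t}{W}=(\Syk{t})^{\mb{W}}$ by Proposition \ref{RingsofI}. Since $\psi_k$ is injective and the relevant filtrations are positive (hence separated) and $\pi$-adically complete, $F_0\h{\phi}$ is a strict injection, so $\h{\phi}(F_0Z)$ is a $\pi$-adically closed submodule of $F_0\Ex{Z}$ whose reduction mod $\pi$ has associated graded exactly $\Sik{t}{W}$ inside $\Gr(\Ex{Z})=\Syk{t}$. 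On the other hand the classical (characteristic zero) Harish--Chandra isomorphism gives $\phi_K(U(\fr{g}_K)^{\mb{G}})=U(\fr{t}_K)^{\mb{W}\bullet}$, so $\h{\phi}(Z)\subseteq\Ex{Z}^{\mb{W}\bullet}$; and since the $\mb{W}\bullet$-action induces the ordinary linear $\mb{W}$-action on the slice, the reduction mod $\pi$ of $(F_0\Ex{Z})^{\mb{W}\bullet}$ has associated graded contained in $(\Syk{t})^{\mb{W}}=\Sik{t}{W}$. Comparing, $\h{\phi}(F_0Z)\subseteq(F_0\Ex{Z})^{\mb{W}\bullet}$ are $\pi$-adically closed submodules of $F_0\Ex{Z}$ whose reductions mod $\pi$ coincide; a routine completeness argument (using that $F_0Z$ is complete and $\h{\phi}$ continuous) then forces equality, giving \textbf{(b)}, and the same strictness shows $\h{\phi}$ also matches the slices, so $\h{\phi}\colon Z\tocong\Ex{Z}^{\mb{W}\bullet}$ is an isomorphism of complete doubly filtered $K$-algebras. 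For \textbf{(c)} I would invoke the theory of Weyl group invariants at a good prime (Demazure): $\Syk{t}$ is free of rank $|\mb{W}|$ over $(\Syk{t})^{\mb{W}}=\Sik{t}{W}$; lifting a homogeneous basis first to the slice $\gr_0\Ex{Z}$ and then to $F_0\Ex{Z}$, the graded Nakayama lemma together with $\pi$-adic completeness show the lifts form a free $\h{\phi}(Z)$-basis of $\Ex{Z}$.

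For \textbf{(d)}: by Corollary \ref{RingsofI} the positively filtered algebra $\Ui{g}{G}$ has polynomial associated graded over $R$, hence is itself a polynomial algebra $R[y_1,\dots,y_l]$ with $y_i$ in some degree $d_i$; by the computation of $\S\ref{DefTens}$ its $n$-th deformation is $R[\pi^{nd_1}y_1,\dots,\pi^{nd_l}y_l]$, so $Z=\hugGnK$ is the Tate algebra $K\langle S_1,\dots,S_l\rangle$ on the scaled generators $S_i:=\pi^{nd_i}y_i$, with slice the polynomial algebra $k[\overline{S}_1,\dots,\overline{S}_l]$ --- that is, $Z$ is a Tate algebra as a complete doubly filtered $K$-algebra --- and composing with the isomorphism $\h{\phi}\colon Z\tocong\Ex{Z}^{\mb{W}\bullet}$ from the previous paragraph gives \textbf{(d)}.

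The main obstacle I expect is the reverse inclusion in \textbf{(b)}: one must show that the image of the \emph{completed} map is the full invariant subring $\Ex{Z}^{\mb{W}\bullet}$ rather than merely the $\pi$-adic closure of $(U(\fr{t})_{n,K})^{\mb{W}\bullet}$, and these need not agree a priori, since taking $\mb{W}$-invariants does not commute with reduction mod $\pi$ when $p\mid|\mb{W}|$ --- which can occur even for $p$ very good. The route above circumvents this by comparing associated gradeds on both sides instead of averaging over $\mb{W}$; the other nontrivial external input to keep in hand is Demazure's freeness theorem for Weyl group invariants at good primes, used in \textbf{(c)}.
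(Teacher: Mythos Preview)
Your proposal is correct and follows essentially the same strategy as the paper: reduce each claim to the associated graded level via Corollary \ref{GrComp} and Proposition \ref{RingsofI}, then invoke Demazure's results on Weyl group invariants at good primes. One minor variation worth noting: for (d) the paper works directly with $\Ex{Z}^{\mb{W}\bullet}$, lifting homogeneous polynomial generators of $\Syk{t}^{\mb{W}_k}$ to $U(\fr{t})_n^{\mb{W}\bullet}$, whereas you deduce (d) from (b) together with the fact that $Z=\hugGnK$ is already a Tate algebra by Corollary \ref{RingsofI}; both routes are equally short. A small imprecision: your claim that the $\mb{W}\bullet$-action reduces to the linear action on the slice is only literally true for $n\geq 1$ (for $n=0$ the slice carries the dot action over $k$), but what you actually need and use --- that $\Gr(\Ex{Z}^{\mb{W}\bullet})\subseteq\Syk{t}^{\mb{W}}$ --- holds for all $n$, since passing to the associated graded of the slice kills the $\rho'$-shift in any case.
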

\begin{proof} (a) The algebra $\U{g}_K^{\mb{G}}$ is central in $\U{g}_K$ by \cite[Lemma 23.2]{Hum}, so it is also contained in the centre of $A$ since $\U{g}_K$ is dense in $A$. But $\U{g}_K^{\mb{G}}$ is dense in $Z$, so $Z$ is also central in $A$.

(b) By the classical result of Harish-Chandra \cite[Theorem 7.4.5]{Dix}, $\phi$ sends $\U{g}^{\mb{G}}_K$ onto $\U{t}^{\mb{W} \bullet}_K$, so $\h{\phi}(Z)$ is contained in $\Ex{Z}^{\mb{W} \bullet}$. This algebra is complete and doubly filtered, and $\Gr(\Ex{Z}^{\mb{W}\bullet})$ can be naturally identified with $\Syk{t}^{\mb{W}_k}$.

Consider the morphism of complete doubly filtered $K$-algebras $\alpha : Z \to \Ex{Z}^{\mb{W}}$ induced by $\h{\phi}$. Its associated double graded map $\Gr(\alpha) : \Gr(Z) \to \Gr(\Ex{Z}^{\mb{W}})$ can naturally be identified with the isomorphism $\psi_k : \Syk{g}^{\mb{G}_k} \tocong \Syk{t}^{\mb{W}_k}$ by Corollary \ref{GrComp} and Proposition \ref{RingsofI}.  Hence $\Gr(\alpha)$ is an isomorphism and therefore by completeness $\alpha$ is also an isomorphism.

(c) By \cite[Th\'eor\`eme 2(c)]{Dema}, $\Syk{t}$ is a free graded $\Syk{t}^{\mb{W}_k}$-module of rank $|\mb{W}|$. It follows from Lemma \ref{GoodDouble}(a) that $\Ex{Z}$ is finitely generated over $Z$, and it is easy to see that in fact it's free of rank $|\mb{W}|$.

(d) By \cite[Corollaire du Th\'eor\`eme 3]{Dema}, $\Syk{t}^{\mb{W}_k}$ is a polynomial algebra in $l$ homogeneous generators over $k$. Fix some (double) lifts $s_1,\ldots,s_l \in \U{t}_n^{\mb{W} \bullet}$ of these generators, and define an $R$-algebra homomorphism $R[S_1,\ldots, S_l] \to \Ex{Z}^{\mb{W}\bullet}$ by sending $S_i$ to $s_i$. This extends to an isomorphism $K \langle S_1,\ldots, S_l \rangle \to \Ex{Z}^{\mb{W} \bullet}$ of complete doubly filtered $K$-algebras.
        \end{proof}
It was shown in \cite[Theorem 5.2.1]{ArdThesis} that in fact $Z$ is the whole centre of $A$ when $n=0$. We plan to show in a later paper that this is true for any $n \geq 0$.

\textbf{From now on we will assume that $n > 0$ and that $p$ is a very good prime for $\mb{G}$}.

\subsection{$Z$-locally finite modules}\label{CentralChar}
Let $M$ be an $A$-module. Since $Z$ is central in $A$, the action of $Z$ on $M$ induces an $K$-algebra homomorphism
\[\chi_M : Z \to \End_A(M).\]
which we call \emph{the central character} of $M$.
\begin{defn} We say that $M$ is \emph{$Z$-locally finite} if $\dim_K Z.m < \infty$ for all $m \in M$.
\end{defn}
It is easy to see that if $M$ is finitely generated over $A$ then $M$ is $Z$-locally finite if and only if $\dim_K \im \chi_M < \infty$. It is also clear that $Z$-locally finite modules are closed under taking submodules, quotient modules and extensions.

We are now ready to prove Theorem \ref{QuillenIntro} from the introduction.

\begin{thm} Let $M$ be a simple $A$-module. Then $\im \chi_M$ is a finite field extension of $K$, so $M$ is $Z$-locally finite.
\end{thm}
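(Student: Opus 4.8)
The plan is to deduce the theorem from the affinoid version of Quillen's Lemma proved in $\S\ref{Quillen}$, together with the Nullstellensatz for affinoid algebras. First I would observe that a simple module is cyclic, so $M$ is a finitely generated $A$-module, where $A=\hugnK$ is an almost commutative affinoid $K$-algebra by Proposition \ref{acsa}. The point at which the hypothesis $n>0$ enters is the verification that the slice $\gr_0 A\cong\Un/\pi\Un$ is \emph{commutative}: if $x\in F_i\U{g}$ and $y\in F_j\U{g}$ for the Poincar\'e--Birkhoff--Witt filtration, then $[\pi^{in}x,\pi^{jn}y]=\pi^{(i+j)n}[x,y]\in\pi^{(i+j)n}F_{i+j-1}\U{g}=\pi^n\cdot\pi^{(i+j-1)n}F_{i+j-1}\U{g}\subseteq\pi^n\Un\subseteq\pi\Un$ since $n\ge 1$, and $\Un$ is $R$-spanned by elements of the form $\pi^{in}x$ with $x\in F_i\U{g}$. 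Moreover $\gr(\gr_0 A)=\Gr(A)\cong\Syk{g}$ by Corollary \ref{GrComp}, a polynomial ring, so $\gr_0 A$ carries a Zariskian filtration with Auslander regular associated graded and is therefore itself Auslander regular by Proposition \ref{AG}(b); in particular its slice is a commutative Gorenstein $k$-algebra. Hence Corollary \ref{Quillen} applies to $A$.

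Next I would invoke Schur's Lemma: since $M$ is simple, $\End_A(M)$ is a division ring and every nonzero element of it is a simple endomorphism in the sense of $\S\ref{RegLat}$, hence algebraic over $K$ by Corollary \ref{Quillen}. The central character $\chi_M\colon Z\to\End_A(M)$ has commutative image, because $Z$ is central in $A$ by Proposition \ref{Centre}(a); thus $\im\chi_M$ is a commutative subring of the division ring $\End_A(M)$ all of whose elements are algebraic over $K$. For any nonzero $z\in\im\chi_M$ the ring $K[z]$ is a finite-dimensional commutative $K$-algebra which is a domain, hence a field, so $z^{-1}\in K[z]\subseteq\im\chi_M$; therefore $\im\chi_M$ is a field and $\ker\chi_M$ is a maximal ideal of $Z$.

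Finally, $Z=\hugGnK$ is a commutative Tate algebra in $l$ variables by Corollary \ref{RingsofI} and $\S\ref{GlobSec}$ (equivalently, by Proposition \ref{Centre}(b),(d)); in particular it is an affinoid $K$-algebra. By the Nullstellensatz for affinoid algebras \cite{BGR}, the residue field at any maximal ideal of a Tate algebra is a finite field extension of $K$, so $\im\chi_M\cong Z/\ker\chi_M$ is a finite field extension of $K$. Local finiteness then follows at once: for $m\in M$ we have $Z\cdot m=\chi_M(Z)\cdot m=\im\chi_M\cdot m$, which is finite-dimensional over $K$.

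The only step with any real content is the appeal to Corollary \ref{Quillen}, and for that the one thing to get right is the verification above that, when $n>0$, the slice $\gr_0 A$ is commutative and Gorenstein; the remaining arguments are short and formal.
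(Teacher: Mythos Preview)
Your proposal is correct and follows essentially the same route as the paper: Schur's Lemma gives that $\End_A(M)$ is a division ring, Corollary \ref{Quillen} (applicable because the slice is commutative Gorenstein when $n>0$) forces every element of $\End_A(M)$ to be algebraic over $K$, so $\im\chi_M$ is a field and $\ker\chi_M$ is maximal in the Tate algebra $Z$, whence the affinoid Nullstellensatz gives finiteness over $K$. The paper's proof is terser and leaves the verification that the slice is commutative and Gorenstein implicit in the phrase ``since $n>0$ by assumption''; you have spelled out precisely those hypotheses, which is the only real content beyond the formal chain of implications.
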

\begin{proof} By Schur's Lemma, $\End_A(M)$ is division ring. It is algebraic over $K$ by Corollary \ref{Quillen}, since $n > 0$ by assumption. So $\im \chi_M$ is an integral domain which is algebraic over $K$; it is therefore a field and $\ker \chi_M$ is a maximal ideal of $Z$. But $Z \cong K\langle S_1,\ldots, S_l\rangle$ is a Tate algebra by Proposition \ref{Centre} and every maximal ideal of $Z$ has finite codimension over $K$ by \cite[Theorem 3.2.1(5)]{FvdPut}.
\end{proof}

Let $M$ be a finitely generated $A$-module. By applying the Theorem to a simple factor module of $M$, we see that $M$ has a non-zero $Z$-locally finite quotient. In fact, a stronger statement is true.

\begin{prop}
Let $M$ be a finitely generated $A$-module with $d(M) \geq 1$. Then $M$ has a $Z$-locally finite quotient $N$ such that $d(N) \geq 1$.
\end{prop}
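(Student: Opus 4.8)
The plan is to reduce $M$ to a critical quotient, and then to exploit the control on central characters provided by Quillen's Lemma (Theorem~\ref{QuillenIntro}) together with the Beilinson--Bernstein localisation of \S\ref{BB} and the global Bernstein inequality of \S\ref{BernsteinIneq}.

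First I would choose a submodule $L\subseteq M$ maximal with respect to the property $d(M/L)\geq 1$; such an $L$ exists because $A$ is Noetherian. Write $\bar M:=M/L$, so that $d(\bar M)=d(M)\geq 1$. By the results of \S\ref{CdimAmod}, a finitely generated $A$-module has canonical dimension $0$ precisely when it is finite dimensional over $K$, so the maximality of $L$ says that every proper quotient of $\bar M$ is finite dimensional over $K$; in particular $\bar M$ is a critical, hence pure, $A$-module. Since an extension of two finite-dimensional-over-$K$ modules is finite dimensional over $K$, it follows that $\bar M$ has no non-zero finite-dimensional-over-$K$ submodule, and hence that every non-zero $\varphi\in D:=\End_A(\bar M)$ is injective (otherwise $\operatorname{im}\varphi\cong\bar M/\ker\varphi$ would be such a submodule). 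Thus $D$ is a domain.

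It is enough to exhibit a $Z$-locally finite quotient of $\bar M$ that is infinite dimensional over $K$, since any such quotient automatically has canonical dimension $\geq 1$. As $\bar M$ is finitely generated it admits a simple quotient $S$, necessarily finite dimensional over $K$, whose central character has kernel a maximal ideal $\mathfrak m$ of the Tate algebra $Z$ by Quillen's Lemma; note $\mathfrak q:=\ker\chi_{\bar M}\subseteq\mathfrak m$. If $\mathfrak q$ is itself maximal we are finished: then $Z/\mathfrak q$ is finite dimensional over $K$ (a maximal ideal of a Tate algebra has finite codimension, \cite[Theorem 3.2.1(5)]{FvdPut}), so $\bar M$ is already $Z$-locally finite. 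Otherwise I would work with the central reductions $\bar M_k:=\bar M/\mathfrak m^k\bar M$: each is finitely generated over $A$, is $Z$-locally finite because $Z$ acts on it through the finite-dimensional algebra $Z/\mathfrak m^k$, and has $S$ as a quotient, so that $\mathfrak m^k\bar M\neq\bar M$ and each $\bar M_k$ is a \emph{proper} quotient of $\bar M$ --- forcing $\bar M_k$ to be finite dimensional over $K$ for all $k$. The aim is to contradict this.

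This last step is the heart of the matter, and is where the semisimplicity of $\fr g$ is genuinely needed; indeed the statement fails for affinoid Weyl algebras, and also for commutative Tate algebras such as $K\langle x\rangle$, for which the critical-quotient construction above runs into exactly this point. After replacing the weight $\lambda$ attached to $\mathfrak m$ by a Weyl-translate that is $\rho$-dominant, one identifies $A/\mathfrak m A$ with $\hunlK$ and transports the modules $\bar M_k$ through the Beilinson--Bernstein equivalence of \S\ref{BB} to coherent $\hdnlK$-modules on the flag variety. Combining the compatibility $\Ch(\bar M_k)=\beta(\Ch(\Loc^\lambda\bar M_k))$ established in \S\ref{BB} with the global Bernstein inequality (Theorem~\ref{GlobalBernIneq}), the conical nature of characteristic varieties, and the fact that factoring out a central element drops the dimension of a characteristic variety by at most one, one should be able to show that $\bar M_k$ is infinite dimensional over $K$ for $k$ sufficiently large --- the desired contradiction, which shows that $\mathfrak q$ must have been maximal. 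I expect this dimension-theoretic step to be the main obstacle; the reductions preceding it are routine.
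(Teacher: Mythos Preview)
Your reductions up to the final paragraph are fine, but the last step---the one you yourself flag as ``the main obstacle''---does not go through as sketched, and the missing idea is precisely what the paper supplies.

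The difficulty is this: finite-dimensional $\hunlK$-modules certainly exist (the modules $L(\lambda)$ of \S\ref{FDAmod}), so Beilinson--Bernstein together with the global Bernstein inequality cannot by themselves rule out $\bar M/\mathfrak m\bar M$ being finite dimensional. Concretely, if $M'$ is a finite-dimensional $\hunlK$-module and $\M=\Loc^\lambda(M')$, then $\Ch(\M)$ is the zero section of $T^\ast\Fl_k$, which has dimension exactly $\dim\Fl$; Bernstein's inequality is satisfied, yet $\Ch(M')=\beta(\Ch(\M))=\{0\}$ is zero-dimensional. So the chain ``Bernstein inequality $+$ $\beta(\Ch(\M))=\Ch(M')$'' gives no lower bound on $\dim\Ch(M')$. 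Moreover for $k>1$ the modules $\bar M_k$ only have generalised central character, so the localisation theorem of \S\ref{BB} does not apply to them directly; and the informal principle ``quotienting by a central element drops the characteristic-variety dimension by at most one'' does not yield a contradiction either, since $d(\bar M)$ could be $1$ while $\mathfrak m$ requires several generators.

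The paper avoids all of this by a different, local idea: since $\dim\Ch(M)\geq 1$, one can choose $f\in\fr g_k$ with $(\gr_0 M)_f\neq 0$, microlocalise $A$ and $M$ at $f$, and apply Quillen's Lemma (Corollary~\ref{Quillen}) to the microlocalised algebra $Q_f(A)$, whose slice $(\gr_0 A)_f$ is still a finitely generated commutative Gorenstein $k$-algebra. A simple $Q_f(A)$-quotient $W$ of $Q_f(M)$ is then $Z$-locally finite, and one takes $N$ to be the image of $M$ in $W$. The crucial point, which replaces your attempted contradiction, is that if $N$ were finite dimensional over $K$ then (using $n>0$) a $\fr g$-stable lattice $L\subset N$ satisfies $\pi^n\fr g\,L\subseteq\pi L$, so $\fr g_k$ kills $\gr_0 N$, hence $f$ kills $\gr_0 N$, hence $Q_f(N)=0$---contradicting the fact that $Q_f(N)$ surjects onto $W\neq 0$. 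This microlocalisation ``away from the origin'' is exactly what distinguishes the infinite-dimensional situation, and is the ingredient your argument lacks.
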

\begin{proof}
Since $d(M) = \dim \Ch(M) \geq 1$ by Corollary \ref{CdimAmod}, $\gr_0 M$ is infinite-dimensional over $k$. We can therefore find an element $f \in \fr{g}_k$ such that $(\gr_0 M)_f \neq 0$. Hence $Q_f(M)$ is a finitely generated \emph{non-zero} module over the microlocalisation $Q_f(A)$, and we may choose some simple quotient $W$ of $Q_f(M)$ as a $Q_f(A)$-module.

The degree zero part $F_0Q_f(A)$ is an $R$-lattice in $Q_f(A)$ and the slice $\gr_0 Q_f(A)$ is isomorphic to the localisation $(\gr_0A)_f \cong \Syk{g}[t] / \langle tf - 1 \rangle$. So $\gr_0 Q_f(A)$ is a finitely generated Gorenstein commutative $k$-algebra, which means that it is possible to apply Corollary \ref{Quillen} to the algebra $Q_f(A)$. The central subalgebra $Z$ of $A$ is still central in $Q_f(A)$ and therefore acts on $W$ by $Q_f(A)$-module endomorphisms. Therefore $W$ is $Z$-locally finite by the proof of the Theorem above.

Now let $N$ be the image of $M$ in $W$, and suppose for a contradiction that $d(N) = 0$. Then $N$ is finite dimensional over $K$ by Proposition \ref{CdimAmod}, so we can find a $\fr{g}$-stable lattice $L$ inside $N$ by $\S \ref{FDAmod}$. Since $n > 0$, $\pi^n \fr{g} L \leq \pi L$, so $\gr_0A = \Syk{g}$ acts on $\gr_0N = L / \pi L$ through its augmentation. Since $f \in \fr{g}_k$, it follows that $Q_f(N) = 0$. This is a contradiction, because $Q_f(N)$ surjects onto the simple $Q_f(A)$-module $W$. Thus $N$ is the required $Z$-locally finite quotient of $M$ which satisfies $d(N) \geq 1$.
\end{proof}

\subsection{Base change}\label{BaseC}
We want to apply Theorem \ref{BBThm} and Corollary \ref{LocFun} to our finitely generated $Z$-locally finite $A$-module $M$. However $\im \chi_M$ could be strictly bigger than $K$; also we need to produce a $\hunlK$-module for some appropriate weight $\lambda \in \fr{h}^\ast_K$. We will solve both problems by passing to a finite field extension of $K$.

For any finite field extension $K'$ of $K$ with ring of integers $R'$, let $\pi' \in R'$ be a uniformizer and let $e$ be the ramification index of $K'$ over $K$, so that $\pi R' = \pi'^e R'$. Let $\mb{G}' = R' \times_R \mb{G}$, $\mb{H}' := R' \times_R \mb{H}$, $\fr{g}' = R' \otimes_R \fr{g}$, $\fr{t}' = R' \otimes_R \fr{t}$ and $\fr{h}' = R' \otimes_R \fr{h}$ be the corresponding base-changed objects. Since $\fr{t}$ has finite rank over $R$, we will identify $\fr{t}'^\ast := \Hom_{R'}(\fr{t}', R')$ with $R' \otimes_R \fr{t}^\ast$. The isomorphism $i : \fr{t} \tocong \fr{h}$ extends to an isomorphism $i : \fr{t}' \tocong \fr{h}'$.

\begin{lem} Let $K'/K$ be a finite field extension. Then
\be
\item $K' \otimes_K Z \cong \widehat{ U(\fr{g}')_{ne,K'}^{\mb{G}'} }$.
\item $K' \otimes_K \Ex{Z} \cong \widehat{ U(\fr{t}')_{ne,K'} }$.
\ee
\end{lem}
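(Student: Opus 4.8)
The plan is to deduce both isomorphisms from the general base-change machinery in $\S\ref{basechange2}$, applied to the appropriate almost commutative deformable $R$-algebras. For part (a) the relevant deformable $R$-algebra is $A := U(\fr{g})^{\mb{G}}$, equipped with its filtration inherited from the PBW filtration on $U(\fr{g})$; by Corollary \ref{RingsofI} this is a polynomial algebra over $R$ in $l$ variables, hence certainly deformable (indeed $\gr A$ is free over $R$). Lemma \ref{basechange2}(a) then says that $A' := R' \otimes_R A$ is a deformable $R'$-algebra with filtration $F_iA' = R' \otimes_R F_iA$, and Lemma \ref{basechange2}(c) gives $K' \otimes_K \widehat{A_{n,K}} \cong \widehat{A'_{en,K'}}$. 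The only point needing attention is to identify $A' = R' \otimes_R U(\fr{g})^{\mb{G}}$ with $U(\fr{g}')^{\mb{G}'}$ as filtered $R'$-algebras. One direction is the obvious inclusion $R' \otimes_R U(\fr{g})^{\mb{G}} \hookrightarrow U(\fr{g}')^{\mb{G}'}$ (using that $U(\fr{g}') = R' \otimes_R U(\fr{g})$ since $R'$ is $R$-flat, in fact free); for the reverse inclusion one takes associated graded rings and uses that $\gr U(\fr{g})^{\mb{G}} \cong S(\fr{g})^{\mb{G}}$ (this is part of Proposition \ref{RingsofI}, or rather follows from Corollary \ref{RingsofI} together with the injectivity of $\iota$) and that $S(\fr{g}')^{\mb{G}'} \cong R' \otimes_R S(\fr{g})^{\mb{G}}$, which holds because taking $\mb{G}$-invariants of the flat $R$-module $S(\fr{g})$ commutes with the flat base change $R \to R'$ (cf. \cite[Lemma I.4.17]{Jantzen} applied over $R'$, or simply flatness of $R'$). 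A short filtered five-lemma argument then upgrades the graded isomorphism to the filtered isomorphism $A' \cong U(\fr{g}')^{\mb{G}'}$, and part (a) follows.

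For part (b) I would argue identically, but more simply, taking $A := U(\fr{t})$ with its PBW filtration. This is visibly a polynomial algebra $R[t_1,\ldots,t_l]$, hence a deformable $R$-algebra, and $R' \otimes_R U(\fr{t}) = U(\fr{t}')$ already as filtered $R'$-algebras with no invariant-theory input needed. Lemma \ref{basechange2}(c) then gives directly $K' \otimes_K \widehat{U(\fr{t})_{n,K}} \cong \widehat{U(\fr{t}')_{en,K'}}$, which is exactly the claimed isomorphism $K' \otimes_K \Ex{Z} \cong \widehat{U(\fr{t}')_{ne,K'}}$.

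The only genuine obstacle is the invariant-theoretic base-change step in part (a): verifying that $R' \otimes_R S(\fr{g})^{\mb{G}} \to S(\fr{g}')^{\mb{G}'}$ is an isomorphism. This is where one really uses that $\mb{G}$ is split reductive over $R$ and that $p$ is very good, so that the structure of $S(\fr{g})^{\mb{G}}$ is controlled (Corollary \ref{RingsofI}: it is a polynomial ring over $R$, with formation compatible with the reductions to $k$ and $K$). Given that, flatness of $R'$ over $R$ and the fact that a polynomial ring base-changes to a polynomial ring make the identification immediate; alternatively, one can bypass $S(\fr{g})$ entirely and work with $\gr\bigl(U(\fr{g})^{\mb{G}}\bigr) = \gr\bigl(U(\fr{g}')^{\mb{G}'}\bigr) \otimes$-compatibly using Corollary \ref{RingsofI}, since that corollary already furnishes an $R$-basis of $\gr U(\fr{g})^{\mb{G}}$ consisting of homogeneous polynomials whose images form a $K$-basis (resp. $k$-basis) of the invariants over $K$ (resp. $k$), and such a basis remains a basis after the flat base change to $R'$. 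Either way the remaining steps are routine.
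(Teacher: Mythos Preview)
Your proposal is correct and follows the same overall route as the paper: apply Lemma \ref{basechange2}(c) to the deformable $R$-algebras $U(\fr{g})^{\mb{G}}$ and $U(\fr{t})$, after identifying $R'\otimes_R U(\fr{g})^{\mb{G}}$ with $U(\fr{g}')^{\mb{G}'}$. The only difference is in how that identification is justified. The paper dispatches it in one line by citing \cite[\S I.2.10(3)]{Jantzen}, which asserts directly that formation of $\mb{G}$-invariants commutes with flat base change $R\to R'$; you instead reconstruct this fact via the associated graded, Corollary \ref{RingsofI}, and a filtered comparison argument. Your argument works, but it is more elaborate than needed, and note that the reference you invoke (\cite[Lemma I.4.17]{Jantzen}) concerns commutation of invariants with direct limits rather than with flat base change; the statement you actually want is precisely \cite[\S I.2.10(3)]{Jantzen}. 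Part (b) is handled identically in both approaches.
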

\begin{proof} We know that $U(\fr{g}')^{\mb{G}'} \cong R' \otimes_R \Ui{g}{G}$ by \cite[\S I.2.10(3)]{Jantzen}. Now both parts follow from Lemma \ref{basechange2}(c).
\end{proof}

Let $A' := K' \otimes_K A$ and note that $A' \cong \h{U(\fr{g}')_{en,K'}}$ by Lemma \ref{basechange2}(c). Recall the central quotients $\h{\mathcal{U}^\lambda_{en,K'}}$ of $\h{U(\fr{g}')_{en,K'}}$ from $\S \ref{GlobSec}$ for each weight $\lambda \in \pi'^{-ne} \fr{h}'^\ast$. 

\begin{thm}
Let $M$ be a finitely generated $Z$-locally finite $A$-module. Then there exists a finite extension $K'$ of $K$ with ramification index $e$, a weight $\lambda \in \pi'^{-ne}\fr{h}'^\ast$ and a finitely generated $\h{\mathcal{U}^\lambda_{en,K'}}$-module $N$ such that $d(M) = d(N)$.
\end{thm}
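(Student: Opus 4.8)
The plan is to diagonalise the action of the central subalgebra $Z$ after passing to a suitable finite extension of $K$. Since $M$ is a finitely generated $Z$-locally finite $A$-module, the image $\im\chi_M$ of the central character is a finite-dimensional commutative $K$-algebra; it is therefore an Artinian ring, so $M$ decomposes as a finite direct sum $M = \bigoplus M_i$ according to the finitely many maximal ideals of $\im\chi_M$. By the exactness of the dimension function $d = d_A$ (Proposition in $\S\ref{AG}$), we have $d(M) = \max_i d(M_i)$, so it suffices to prove the result for one summand $M_i$ realising this maximum. We may therefore assume that $\ker\chi_M$ is the preimage of a single maximal ideal $\fr{m}$ of $Z$, i.e. that $M$ has a single (generalised) central character $Z \to Z/\fr{m}^r$ for some $r$; since $Z \cong K\langle S_1,\ldots,S_l\rangle$ is a Tate algebra by Proposition \ref{Centre}(d), the residue field $Z/\fr{m}$ is a finite extension of $K$ by \cite[Theorem 3.2.1(5)]{FvdPut}. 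Choose a finite extension $K''/K$ large enough that $Z/\fr{m}$ embeds $K$-linearly into $K''$; after base change to $K''$, the maximal ideal $\fr{m}$ splits into maximal ideals all of whose residue fields are $K''$, so, replacing $M$ by $K''\otimes_K M$ and using the base-change Lemma of $\S\ref{basechange1}$ (which preserves $d$) together with Lemma \ref{BaseC}, we reduce to the case where the maximal ideal of $Z$ cut out by $\chi_M$ has residue field exactly the ground field.

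Next I would translate this central character into a weight. By Proposition \ref{Centre}(b), $\h\phi : Z \to \Ex{Z} = \widehat{U(\fr{t}_{n,K})}$ identifies $Z$ with $\Ex{Z}^{\mb{W}\bullet}$, and by part (c) $\Ex{Z}$ is free of finite rank over $Z$. Hence the maximal ideal of $Z$ pulls back to a finite non-empty set of maximal ideals of $\Ex{Z}$, forming a single $\mb{W}\bullet$-orbit; each such maximal ideal has residue field finite over $K$. Enlarging $K$ once more to a finite extension $K'$ (with ramification index $e$, uniformizer $\pi'$) so that all these residue fields become $K'$ and so that, after the identification $K'\otimes_K\Ex{Z} \cong \widehat{U(\fr{t}'_{ne,K'})}$ of Lemma \ref{BaseC}(b), the corresponding points of $\fr{t}'^\ast$ are $K'$-rational, we obtain a weight $\mu \in \Hom_{R'}(\pi'^{ne}\fr{t}', R')$ (equivalently $\mu \in \pi'^{-ne}\fr{t}'^\ast$) with the property that $Z$ acts on $M' := K'\otimes_K M$ through the character of $\Ex{Z}^{\mb{W}\bullet}$ attached to $\mu$. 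Set $\lambda := \mu \circ i^{-1} \in \pi'^{-ne}\fr{h}'^\ast$. Unwinding the construction of $\hunlK$ in $\S\ref{GlobSec}$ — where $\hunlK = \hugnK \otimes_{\hugGnK} K_\lambda$ — we see that the quotient of $A' = \widehat{U(\fr{g}')_{en,K'}}$ by the two-sided ideal generated by $\ker(\chi : Z' \to K')$ is precisely $\h{\mathcal{U}^\lambda_{en,K'}}$ (possibly after absorbing a nilpotent central ideal; one passes to the reduced central character). Therefore $M'$, which is annihilated by a power of this central ideal, is supported on a finite central thickening; filtering $M'$ by the powers of the central ideal gives a finite filtration whose subquotients are genuine $\h{\mathcal{U}^\lambda_{en,K'}}$-modules, and by exactness of $d$ again the largest of these, call it $N$, satisfies $d_{A'}(N) = d_{A'}(M') = d_A(M)$, the middle equality by the base-change Lemma of $\S\ref{basechange1}$ and the observation (Corollary \ref{CdimAmod}) that $d$ is computed via characteristic varieties and is unchanged on restriction to the central quotient.

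The main obstacle, and the point that needs the most care, is the transition from a possibly non-semisimple (generalised) central character to an honest $\h{\mathcal{U}^\lambda_{en,K'}}$-module: one must check that filtering $M'$ by powers of the maximal ideal $\fr{n}$ of $Z'$ really does produce $\h{\mathcal{U}^\lambda_{en,K'}}$-modules in the subquotients, i.e. that the reduced central quotient $A'/\sqrt{A'\fr{n}A'}$ is exactly $\h{\mathcal{U}^\lambda_{en,K'}}$ up to the $\mb{W}\bullet$-orbit ambiguity, and that at least one subquotient carries the full canonical dimension. The first point uses Proposition \ref{Centre}(b),(c): since $\Ex{Z}$ is a finite free $Z$-algebra and $\fr{n}$ corresponds to a $\mb{W}\bullet$-orbit of rational points of $\fr{t}'^\ast$, the completion $Z'_{\fr{n}}$-module structure decomposes $M'$ over the orbit, and on each orbit-piece $\pi'^{ne}\fr{h}'$ acts (up to nilpotents) through $\lambda$. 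The second point is immediate from additivity of $d$ over the finite filtration. Everything else — flatness and preservation of $d$ under finite base change, the identification of base-changed invariant and Cartan subalgebras, the Tate-algebra structure of $Z$ — is supplied verbatim by Lemma \ref{basechange1}, Proposition \ref{basechange2}, Lemma \ref{BaseC} and Proposition \ref{Centre}, so no new hard input is required beyond bookkeeping with the Weyl-orbit of central characters.
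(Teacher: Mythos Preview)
Your approach is correct and reaches the goal, but it differs from the paper's in one key organizational move. The paper first replaces $M$ by a $d$-critical quotient: this forces $M$ to be $d$-pure, and a short argument using both purity and criticality shows that $P := \Ann_Z(M)$ is a \emph{prime} (hence maximal) ideal of $Z$. Thus $M$ is already a module over the field $Z/P$, and the paper simply sets $K' = \Ex{Z}/\fr{m}$ for a maximal ideal $\fr{m}$ of $\Ex{Z}$ lying over $P$ and takes $N = K' \otimes_{Z/P} M$; by construction $Z'$ acts on $N$ through a genuine $K'$-valued character and no nilpotents or further decompositions arise.

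Your route avoids the criticality/purity trick and instead handles the generalised central character head-on: primary-decompose over the Artinian ring $\im\chi_M$, base-change (possibly twice) to rationalise both the $Z$-character and the lifted $\Ex{Z}$-character, re-decompose after each base change, and finally filter by powers of the maximal ideal to pass to a subquotient on which the centre acts without nilpotents. This is entirely valid and arguably more elementary, but it costs more bookkeeping --- you must iterate the decompose/base-change step and track that at each stage some summand retains the full dimension. The paper's $d$-critical reduction disposes of all of this in one stroke: once $\Ann_Z(M)$ is prime there are no nilpotents to filter away and no repeated decompositions after base change, since $N = K'\otimes_{Z/P} M$ automatically has an honest central character.
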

\begin{proof}Choose a submodule $M_0$ of $M$ maximal subject to having $d(M/M_0) = d(M)$. Replacing $M$ by $M / M_0$ we may assume that $M$ is \emph{$d$-critical} in the sense that $d(M/M') < d(M)$ for any non-zero proper $A$-submodule $M'$ of $M$. In particular, $M$ must be \emph{$d$-pure}: $d(M') = d(M)$ for any non-zero submodule $M'$ of $M$.

Let $P := \ker \chi_M = \Ann_Z(M)$ and suppose that $xy \in P$ for some $x,y \in Z$. If $x \notin P$ then $xM$ is a non-zero submodule of $M$, so $d(xM) = d(M)$. Because $xyM = 0$, multiplication by $x$ induces an $A$-module surjection $M / yM \twoheadrightarrow xM$, whence $d(M / yM) \geq d(M)$. This is only possible if $yM = 0$ since $M$ is $d$-critical. Hence $P$ is a prime ideal in $Z$; since $M$ is $Z$-locally finite, $P$ is in fact maximal. 

Next, $\Ex{Z}$ is a finitely generated $Z$-module via $\widehat{\phi}$  by Proposition \ref{Centre}, so $\Ex{Z}$ is an integral extension of $Z$. Thus by \cite[Corollary 5.9, Theorem 5.10]{AMac} for example, we may find a maximal ideal $\fr{m}$ of $\Ex{Z}$ with $\widehat{\phi}^{-1}(\fr{m})=P$ and define $K':=\Ex{Z}/\fr{m}$, a finite extension of $K$. Extend the natural surjection $\Ex{Z} \twoheadrightarrow K'$ to a $K'$-algebra homomorphism $\theta : K'\otimes_K \Ex{Z} \to K'$. By the Lemma, $K' \otimes_K \Ex{Z} \cong \widehat{ U(\fr{t}')_{ne,K'} }$ is a Tate algebra, so $\theta$ sends the power-bounded subset $\pi'^{ne} \fr{t}'$ of $K'\otimes_K \Ex{Z}$ to the ring of integers $R'$ in $K'$ and we can find an element $\lambda \in \pi^{-ne}\fr{h}'^\ast$ such that $\lambda \circ i$ is the restriction of $\theta$ to $\pi'^{ne} \fr{t}'$.

Now $N := K' \otimes_{Z/P} M$ is a finitely generated $A'$-module and Lemma \ref{basechange1} tells us that $d_{A'}(N) = d_A(N) = d_A(M)$. By the Lemma above, $K' \otimes_K Z \cong \widehat{ U(\fr{g}')_{ne,K'}^{\mb{G}'} } $, and this algebra acts on $N$ via $\lambda \circ i \circ (1 \otimes \widehat{\phi})$ by construction. It now follows from Theorem \ref{GlobSec}(a) that $N$ is a finitely generated $\h{\mathcal{U}^{\lambda}_{en,K'}}$-module as required.  
\end{proof}

\subsection{Using the $W$-action}\label{WAct} Theorem \ref{BaseC} tells us that after making an appropriate base change, we may assume that our finitely generated $Z$-locally finite $A$-module has a $K$-rational central character $\lambda \circ i \circ \widehat{\phi}$. However Corollary \ref{LocFun} requires $\lambda$ to be $\rho$-dominant;  our next result shows that we may achieve this by using the action of the Weyl group.

\begin{lem} For any weight $\mu \in \fr{h}^\ast_K$ there exists $w \in \mb{W}$ such that $w(\mu)$ is dominant.
\end{lem}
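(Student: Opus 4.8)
The plan is to reduce the statement to the classical fact that every orbit of the Weyl group of a finite root system on the set of integral weights contains a dominant one. Recall from $\S\ref{BBVanishing}$ that $\mu\in\fr{h}_K^\ast$ is called dominant precisely when $\mu(\alpha^\vee)\notin\{-1,-2,-3,\ldots\}$ for every positive coroot $\alpha^\vee$. The point to keep in mind — and the only real subtlety here — is that $\mu$ need not be integral, so this notion of dominance is weaker than the classical one; because of this one cannot in general reach a dominant weight by a naive length-decreasing procedure on all of $\mb{W}$ (this already fails in type $A_2$), and the non-integral part of $\mu$ has to be separated out first.

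Accordingly, I would first introduce the \emph{integral root subsystem} $\Phi_{[\mu]}:=\{\alpha\in\Phi : \mu(\alpha^\vee)\in\mathbb{Z}\}$, where $\Phi$ is the root system of $\mb{G}$ and $\mathbb{Z}$ denotes the image of $\mathbb{Z}$ in $K$. Using the identity $\mu\bigl((s_\alpha\beta)^\vee\bigr)=\mu(\beta^\vee)-\langle\alpha,\beta^\vee\rangle\,\mu(\alpha^\vee)$ together with the integrality of the Cartan integers $\langle\alpha,\beta^\vee\rangle$, one checks that $\Phi_{[\mu]}$ is symmetric and closed under its own reflections, hence is a root subsystem of $\Phi$ with Weyl group $\mb{W}_{[\mu]}:=\langle s_\alpha:\alpha\in\Phi_{[\mu]}\rangle\subseteq\mb{W}$; moreover $\mb{W}_{[\mu]}$ stabilises $\Phi_{[\mu]}$, so for every $w\in\mb{W}_{[\mu]}$ the numbers $(w\mu)(\alpha^\vee)$ with $\alpha\in\Phi_{[\mu]}$ remain integers. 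Next I would record that $\mu$ is dominant in the sense above if and only if $\mu(\alpha^\vee)\geq 0$ for all $\alpha\in\Phi_{[\mu]}\cap\Phi^+$: a positive root lying outside $\Phi_{[\mu]}$ pairs with $\mu$ to a non-integer and therefore automatically avoids $\{-1,-2,-3,\ldots\}$, while for $\alpha\in\Phi_{[\mu]}\cap\Phi^+$ one has $\mu(\alpha^\vee)\in\mathbb{Z}$, so avoiding $\{-1,-2,\ldots\}$ is the same as being $\geq 0$.

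Finally, $\Phi_{[\mu]}\cap\Phi^+$ is a system of positive roots for $\Phi_{[\mu]}$; let $\Delta_{[\mu]}$ be the corresponding simple roots. Since the positive coroots of $\Phi_{[\mu]}$ are exactly the positive roots of the dual root system $\Phi_{[\mu]}^{\vee}$, each of them is a non-negative integral combination of the simple coroots $\{\alpha^\vee:\alpha\in\Delta_{[\mu]}\}$; as all relevant pairings with $\mu$ are integers, it follows that $\mu$ is dominant if and only if $\mu(\alpha^\vee)\geq 0$ for every $\alpha\in\Delta_{[\mu]}$. This is now a purely combinatorial statement about the integral weight $\mu$ for the root system $\Phi_{[\mu]}$, insensitive to the ground field, and I would conclude by the standard argument: choose $w\in\mb{W}_{[\mu]}$ minimising the integer $\#\{\alpha\in\Phi_{[\mu]}\cap\Phi^+:(w\mu)(\alpha^\vee)<0\}$; were this number positive, some $\alpha\in\Delta_{[\mu]}$ would satisfy $(w\mu)(\alpha^\vee)<0$, and then $s_\alpha w$ would have a strictly smaller value of this invariant, because $s_\alpha$ sends $\alpha$ to $-\alpha$ and permutes the remaining positive roots of $\Phi_{[\mu]}$ while preserving their $\mu$-pairings. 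This contradiction forces the minimal value to be $0$, i.e. $w\mu$ is dominant; and $w\in\mb{W}_{[\mu]}\subseteq\mb{W}$, which is exactly what is required.
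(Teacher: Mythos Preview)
Your proof is correct, but it takes a considerably more elaborate route than the paper's. The paper simply equips $\fr{h}_K^\ast$ with the partial order $\lambda\geq\mu\iff\lambda-\mu\in\mathbb{N}\Phi^+$ (this is antisymmetric because $\operatorname{char}K=0$), picks any maximal element $\lambda=w(\mu)$ of the finite set $\mb{W}\mu$, and observes that if $\lambda(h)\in\{-1,-2,\ldots\}$ for some positive coroot $h$ with root $\alpha$, then $s_\alpha(\lambda)=\lambda-\lambda(h)\alpha>\lambda$, contradicting maximality. That is the whole argument; no integral root subsystem is needed.

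Your remark that ``one cannot in general reach a dominant weight by a naive length-decreasing procedure'' is true, but it does not apply to the paper's proof, which is not a length-based argument at all. The point is that a weight failing dominance at some positive coroot $h$ must do so with an \emph{integer} value of $\lambda(h)$, and then the single reflection $s_\alpha$ strictly increases $\lambda$ in the dominance order; so maximality in that order is exactly the right invariant. What your approach buys is a more structural picture: you identify the subgroup $\mb{W}_{[\mu]}$ inside which the problem becomes the classical integral one, which is the right viewpoint if one later wants to describe the full set of $w$ making $w\mu$ dominant or to study translation functors. For the bare existence statement, however, the paper's three-line maximality argument is both shorter and avoids the bookkeeping of checking that $\mb{W}_{[\mu]}$ stabilises $\Phi_{[\mu]}$ and that $\Phi_{[\mu]}\cap\Phi^+$ is a positive system.
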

\begin{proof} Let us define a binary relation $\geq$ on $\fr{h}^\ast_K$ by $\lambda \geq \mu$ if and only if $\lambda - \mu$ is a linear combination of positive roots with non-negative integer coefficients. Since $K$ has characteristic zero, this is a partial order on $\fr{h}^\ast_K$. Since $\mb{W}$ is finite, we can find an element $\lambda = w(\mu)$ in the $\mb{W}$-orbit of $\mu$ which is maximal with respect to this ordering. If $\lambda(h) \in \{-1,-2,\cdots \}$ for some positive coroot $h\in \fr{h}$ then taking $\alpha$ to be the corresponding positive root, we have
\[ s_{\alpha}(\lambda) = \lambda - \lambda(h) \alpha\]
lies in the $\mb{W}$-orbit of $\mu$ and $s_{\alpha}(\lambda) > \lambda$, which contradicts the maximality of $\lambda$. So $\lambda(h)\notin \{-1,-2,\cdots\}$ for any positive coroot $h\in \fr{h}$ and hence $\lambda = w(\mu)$ is dominant.
\end{proof}\subsection{Springer fibres}\label{SprFib}
We will assume throughout $\S \ref{SprFib} - \S \ref{MinNilp}$ that the field $k$ is algebraically closed. We will identify the $k$-points of the scheme $\fr{g}^\ast = \Spec(\Sym_R \fr{g})$ with the dual of the $k$-vector space $\frk{g}$; thus $\fr{g}^\ast(k) = \frk{g}^\ast$. We will also abuse notation and denote the map on $k$-points $f(k) : X(k) \to Y(k)$ induced by a morphism of $R$-schemes $f : X \to Y$ simply as $f : X(k) \to Y(k)$. With these notations, the diagram from $\S\ref{NotnDefCompCh}$ on the level of $k$-points looks as follows:
\[\xymatrix{ & \Ex{T^\ast \Fl}(k) \ar[dl]^{\tau} \ar[dr]_{\beta} & \\
                           \Fl(k) & & \fr{g}^\ast(k) .
}\]
We are interested in the \emph{Springer fibres}, which by definition are the sets $\beta^{-1}(y)$ as $y$ runs over $\frk{g}^\ast$; these are algebraic varieties over $k$.

Let $G$,$B$,$N$ denote the sets of $k$-points of $\mb{G}$,$\mb{B}$,$\mb{N}$ respectively, and let us identify the set $\Fl(k)$ of $k$-points of the flag $R$-scheme $\Fl$ with $G/B = \{gB : g \in G\}$. The group $G$ acts on $\frk{g}$ and on $\frk{g}^\ast$ via the adjoint and coadjoint actions, respectively; if $S$ is a subset of $\frk{g}$, let $S^\perp = \{\lambda \in \frk{g}^\ast : \lambda(S) = 0\}$ denote its annihilator in $\frk{g}^\ast$. Note that $\frk{n} \subseteq \frk{b} \subseteq \frk{g}$ are the Lie algebras of the algebraic groups $N \subseteq B \subseteq G$.

\begin{lem} Let $y \in \frk{g}^\ast$. Then $\tau \beta^{-1} (y)$ is equal to $\{g B \in \Fl(k) : y \in (g.\frk{n})^\perp\}$.
\end{lem}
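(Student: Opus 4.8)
The plan is to unwind the definitions of the maps $\tau$, $\beta$ and the identification $\Fl(k) = G/B$ so that the fibre $\beta^{-1}(y)$ becomes explicit. Recall from $\S\ref{EnhMom}$ that $\beta$ is obtained by dualizing the surjection $\varphi : \O_\Fl \otimes \fr{g} \to \Ex{\T}_\Fl$ and projecting; so for a point $gB \in \Fl(k)$, the fibre $\tau^{-1}(gB) \subseteq \Ex{T^\ast\Fl}(k)$ is the vector space $(\Ex{\T}_{\Fl,gB})^\ast$, and $\beta$ restricted to this fibre is the transpose of the $k$-linear map $\varphi_{gB} : \frk{g} \to \Ex{\T}_{\Fl,gB}$. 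Hence a point of $\tau^{-1}(gB)$ lies in $\beta^{-1}(y)$ if and only if $y \in \frk{g}^\ast$ factors through $\varphi_{gB}$, i.e. $y$ annihilates $\ker \varphi_{gB}$; and such a point exists precisely when $y \in (\ker\varphi_{gB})^\perp$. So the real content is to identify $\ker\varphi_{gB}$ with $g.\frk{n}$.

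For this I would argue as follows. The map $\varphi : \fr{g} \to \Ex{\T}_\Fl$ differentiates the $\mb{G}$-action on the basic affine space $\Ex{\Fl} = \mb{G}/\mb{N}$; over the base point, its kernel at the $k$-point $\bar e = eN \in \Ex{\Fl}(k)$ is the Lie algebra of the stabilizer of $\bar e$, which is exactly $\frk{n}$. Passing to the torsor quotient $\Fl = \Ex{\Fl}/\mb{H}$ does not change the kernel of the induced map $\frk{g} \to \Ex{\T}_{\Fl}$ at the base point, because the complex $(\ref{Cx})$ identifies $\Ex{\T}_{\Fl,eB}$ with $\T_{\Ex\Fl,\bar e}\,$-level data compatibly with the $\mb{G}$-action; concretely, $\Ex{\T}_{\Fl,eB}$ is the fibre at $eB$ of $(\xi_\ast\T_{\Ex\Fl})^{\mb H}$ and one checks on the trivialising chart that the stabilizer computation is unchanged. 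Then $\mb{G}$-equivariance of $\varphi$ (which follows from its construction in $\S\ref{EnhMom}$, the $\mb{G}$-action on $\Ex{\Fl}$ commuting with the $\mb{H}$-action) gives $\ker\varphi_{gB} = g.\ker\varphi_{eB} = g.\frk{n}$ for every $g \in G$. This uses that $k$ is algebraically closed so that $\Fl(k) = G/B$ and every $k$-point of $\Fl$ is in the $G$-orbit of the base point.

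Combining the two paragraphs: $gB \in \tau\beta^{-1}(y)$ iff $\beta^{-1}(y) \cap \tau^{-1}(gB) \neq \emptyset$ iff $y \in (\ker\varphi_{gB})^\perp = (g.\frk{n})^\perp$, which is precisely the asserted description $\tau\beta^{-1}(y) = \{gB \in \Fl(k) : y \in (g.\frk{n})^\perp\}$.

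I expect the main obstacle to be the bookkeeping in the middle step — carefully checking that the identification $\gr\dnt \cong \tau_\ast\O_{\Ex{T^\ast\Fl}}$ and the complex $(\ref{Cx})$ transport the "kernel at a point" computation from $\Ex{\Fl}$ down to $\Fl$ correctly, and that $\varphi$ really is $\mb{G}$-equivariant with the stabilizer of $eN$ in $\mb{G}$ being $\mb{N}$ rather than $\mb{B}$ (this is where the choice of basic affine space $\mb{G}/\mb{N}$, as opposed to $\mb{G}/\mb{B}$, is essential, and is exactly why $\frk{n}$ and not $\frk{b}$ appears). The surjectivity of $\varphi$ from Proposition \ref{EnhMom}(a) is what guarantees the fibre is non-empty for the right $y$, so it also enters. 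Everything else is linear algebra over the residue field $k$: transposing a surjection of $k$-vector spaces and using that $(\ker)^\perp$ is the image of the transpose.
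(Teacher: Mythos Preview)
Your proof is correct and follows essentially the same route as the paper's: identify the fibre $\tau^{-1}(gB)$ with the dual of $\Ex{\T}_{\Fl,gB} \cong T_{gN}(G/N) \cong \frk{g}/g.\frk{n}$, observe that $\beta$ restricted to this fibre is the inclusion $(g.\frk{n})^\perp \hookrightarrow \frk{g}^\ast$, and read off the description of $\tau\beta^{-1}(y)$. The paper is slightly more direct in that it works at an arbitrary point $gN$ from the start (citing \cite[Proposition II.6.7]{Borel} for surjectivity of the action map and thus the identification $T_{gN}(G/N)\cong \frk{g}/g.\frk{n}$), whereas you compute the kernel at the base point and then invoke $\mb{G}$-equivariance; but this is a cosmetic difference, and your worries about the torsor bookkeeping are already handled by the definition of $\Ex{\T}_\Fl$ as $(\xi_\ast\T_{\Ex\Fl})^{\mb{H}}$.
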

\begin{proof} Let $gB \in \Fl(k)$. The geometric fibre of the morphism of vector bundles $\varphi : \O_\Fl \otimes \fr{g} \to \Ex{\T}_\Fl$ used in $\S\ref{EnhMom}$ to define the enhanced moment map $\beta$ is just the action map $\frk{g} \to T_{gN} (G/N)$ of $\frk{g}$ on the homogeneous space $G/N$ at the point $gN \in G/N$. Because the action map is surjective by \cite[Proposition II.6.7]{Borel}, this tangent space can be naturally identified with $\frk{g} / g.\frk{n}$. It follows that the restriction of $\beta$ to $\tau^{-1}(gB) = \Ex{T^\ast_{gB} (G/B)}$ is the dual of this action map, which we will identify with the inclusion $(g. \frk{n})^\perp \hookrightarrow \frk{g}^\ast$.  With these identifications in place, it is now clear that $gB \in \tau \beta^{-1}(y)$ if and only if $y \in (g.\frk{n})^\perp$.
\end{proof}

\subsection{Nilpotent orbits}\label{NilpOrb}
We define the \emph{nilpotent cone} in $\frk{g}^\ast$ as the set of zeros of $\mb{G}_k$-invariant polynomials in $\Syk{g} = \O(\frk{g}^\ast)$ with no constant term: $\N^\ast = V( \Sik{g}{G}_+ )$. Thus
\[\mathcal{O}(\N^\ast) = \Syk{g} \otimes_{ \Sik{g}{G} } k.\]
The nilpotent cone $\N = V( S(\frk{g}^\ast)^{\mb{G}_k}_+)$ in $\frk{g}$ is defined similarly. Since we're assuming that the characteristic of $k$ is very good for $\mb{G}$, there is a non-degenerate $G$-invariant bilinear form on $\frk{g}$ which induces a $G$-equivariant isomorphism $\kappa : \frk{g}^\ast \to \frk{g}$ (see \cite[\S3.1.2]{BMR1}). This isomorphism maps $\N^\ast$ onto $\N$.

The nilpotent cone $\N$ is a union of $G$-orbits in $\frk{g}$ called the \emph{nilpotent orbits}. The corresponding $G$-orbits in $\N^\ast$ are called the \emph{coadjoint nilpotent orbits}. It turns out that these are very closely connected with Springer fibres. The next result is well-known, but we give the proof for the benefit of the reader.
\begin{prop} For any $y \in \N^\ast$, we have $\dim \beta^{-1}(y) \leq \dim \Fl - \frac{1}{2} \dim G.y$.
\end{prop}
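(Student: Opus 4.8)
The plan is to relate the Springer fibre $\beta^{-1}(y)$ to the classical Springer fibre over the nilpotent element $\kappa(y) \in \N \subseteq \frk{g}$, and then invoke the well-known dimension formula for Springer fibres due to Spaltenstein and Steinberg. First I would use Lemma \ref{SprFib} to identify $\tau\beta^{-1}(y)$ with $\{gB \in \Fl(k) : y \in (g.\frk{n})^\perp\}$. Transporting through the $G$-equivariant isomorphism $\kappa : \frk{g}^\ast \tocong \frk{g}$ of $\S\ref{NilpOrb}$, and using that $\kappa$ carries the non-degenerate invariant form, the condition $y \in (g.\frk{n})^\perp$ becomes $\kappa(y) \in (g.\frk{n})^{\perp_\kappa}$, which by invariance of the form is the condition that $\kappa(y)$ lies in the nilradical $g.\frk{n}$ viewed inside $g.\frk{b}$ — more precisely, $(g.\frk{n})^\perp$ corresponds under $\kappa$ to $g.\frk{b}$ when $\frk{n}$ is spanned by negative roots, so $\tau\beta^{-1}(y) \cong \{gB : \kappa(y) \in g.\frk{n}^+\}$ (with the appropriate sign convention), which is exactly the classical Springer fibre $\mathcal{B}_{\kappa(y)}$.

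Next I would control the fibres of $\tau$ over $\tau\beta^{-1}(y)$. The map $\beta$ restricted to $\tau^{-1}(gB)$ is, as in the proof of Lemma \ref{SprFib}, the linear inclusion $(g.\frk{n})^\perp \hookrightarrow \frk{g}^\ast$, so $\beta^{-1}(y) \cap \tau^{-1}(gB)$ is a single point whenever $gB \in \tau\beta^{-1}(y)$. Hence $\tau : \beta^{-1}(y) \to \tau\beta^{-1}(y)$ is a bijection on $k$-points; in fact it is an isomorphism of varieties since $\beta^{-1}(y)$ is a closed subscheme of $\tau^{-1}(\tau\beta^{-1}(y))$ meeting each fibre of $\tau$ in exactly one reduced point, so $\dim \beta^{-1}(y) = \dim \tau\beta^{-1}(y) = \dim \mathcal{B}_{\kappa(y)}$.

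Finally I would quote the classical dimension bound for Springer fibres: for a nilpotent element $e \in \N$, one has $\dim \mathcal{B}_e = \dim \Fl - \tfrac{1}{2}\dim(G.e)$ (equality of Spaltenstein–Steinberg), valid for $p$ very good; since $\kappa$ is $G$-equivariant, $\dim(G.\kappa(y)) = \dim(G.y)$, so $\dim \beta^{-1}(y) = \dim \Fl - \tfrac12 \dim(G.y)$, which gives the asserted inequality (in fact equality). The main obstacle is bookkeeping the sign conventions: our $\frk{n}$ is spanned by \emph{negative} roots (see $\S\ref{HCmap}$), so I must be careful whether $\kappa$ identifies $(g.\frk{n})^\perp$ with $g.\frk{b}$ or with $g.(\frk{t}\oplus\frk{n}^+)$, and confirm that the resulting variety is genuinely the Springer fibre of a nilpotent (rather than some translate), so that the Spaltenstein–Steinberg formula applies. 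A secondary point requiring care is that the literature statement of the dimension formula is usually over an algebraically closed field of good characteristic, which is exactly our standing hypothesis in $\S\ref{SprFib}$–$\S\ref{MinNilp}$, so no extra work is needed there; one just needs a precise reference (e.g. Jantzen's notes on nilpotent orbits, or Humphreys).
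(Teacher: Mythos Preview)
Your proposal is correct and follows essentially the same route as the paper: reduce the dimension of $\beta^{-1}(y)$ to that of its image under $\tau$ using injectivity of $\tau$ on the Springer fibre, identify $\tau\beta^{-1}(y)$ via Lemma \ref{SprFib} and the isomorphism $\kappa$ with the classical Springer fibre $\{gB : \kappa(y)\in g.\frk{b}\}$, and then invoke a known dimension bound.

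The only notable differences are cosmetic. The paper resolves your sign-convention worry by observing directly that $\kappa(\frk{n}^\perp)=\frk{b}$, so $\tau\beta^{-1}(y)=\{gB:\kappa(y)\in g.\frk{b}\}$; for nilpotent $\kappa(y)$ this is the usual Springer fibre. The paper then passes once more through Springer's $G$-equivariant isomorphism $\eta:\N\to\mathcal{U}$ (available since $G$ is simply connected) to rewrite this as the fixed-point set $(G/B)_u$ of the unipotent element $u=\eta(\kappa(y))$, and cites Steinberg's inequality $\dim (G/B)_u\le \tfrac12(\dim C_G(u)-l)$ rather than the full Spaltenstein--Steinberg equality you propose. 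Your direct nilpotent-side citation is equally valid under the standing hypothesis that $p$ is very good; the paper's detour through unipotents is just a matter of choosing which literature reference to quote. Your aside that $\tau$ is an isomorphism of varieties onto its image is not needed and would require a bit more justification; injectivity on closed points already gives the dimension equality, which is all that is used.
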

\begin{proof} Note first that $\dim \tau \beta^{-1}(y) = \dim \beta^{-1}(y)$ for all $y \in \frk{g}^\ast$ because the map $\tau$ is clearly injective on the Springer fibre $\beta^{-1}(y)$. Since we've been assuming from $\S \ref{BBVanishing}$ onwards that $G$ is simply-connected, a result of Springer \cite[Corollary 9.3.4]{BaRi} tells us that there is a $G$-equivariant isomorphism $\eta : \N \to \mathcal{U}$, where $\mathcal{U} \subseteq G$ is the variety of unipotent elements.

Let $u = \eta(\kappa(y)) \in \mathcal{U}$. Since $\kappa( \frk{n}^\perp) = \frk{b}$, Lemma \ref{SprFib} implies that
\[\tau \beta^{-1}(y) = \{gB \in G/B : \kappa(y) \in g. \frk{b}\} = \{gB \in G/B : u\in gBg^{-1}\}.\]
Thus $\tau\beta^{-1}(y)$ is the set of fixed points $(G/B)_u$ of the action of $u$ on $G/B$, and it follows from \cite[Theorem 3.5(a)]{Stein} that
\[ \dim \tau \beta^{-1}(y) = \dim (G/B)_u \leq \frac{1}{2}(\dim C_G(u) - l)\]
where $C_G(u)$ is the centralizer of $u$ in $G$ and $l$ is the rank of $G$. The result is now clear because $\dim C_G(u) = \dim G - \dim G.u = 2 \dim \Fl + l - \dim G.y$.
\end{proof}

\begin{rmks}
\be \hfill
\item It is possible to give a slightly more direct, but longer, proof of this result mimicking the proof of \cite[Theorem 3.5]{Stein} (see also \cite[Theorem 6.8]{Hum2}), using the \emph{Steinberg variety} of triples
\[ \{ (\fr{b}_1, \fr{b}_2, z) \in \Fl(k) \times \Fl(k) \times G.\kappa(y) : z \in \fr{b}_1 \cap \fr{b}_2\}\]
where we now think of $\Fl(k)$ as the set of $G$-conjugates of $\frk{b}$ in $\frk{g}$.
\item In fact, under our assumptions on $p$, equality always holds in the Proposition. This is the `Dimension Formula', originally a conjecture of Grothendieck, and was proven by Steinberg \cite{Stein} as a consequence of the Bala-Carter classification of unipotent classes. The book \cite{Hum2} gives a good overview of this subject; see also \cite{Premet} and \cite{DouRoh}.
\ee\end{rmks}

\subsection{The minimal non-zero nilpotent orbit}\label{MinNilp}
Let $\fr{g}_{\mathbb{C}}$ be the complex semisimple Lie algebra with the same root system $\Phi$ as our group $\mb{G}$, and let $G_{\mathbb{C}}$ denote the adjoint complex algebraic group associated with $\fr{g}_{\mathbb{C}}$. It is known \cite[Remark 4.3.4]{CM} that there is a unique non-zero nilpotent $G_{\mathbb{C}}$-orbit in $\fr{g}_{\mathbb{C}}^\ast$ of minimal dimension, called the \emph{minimal nilpotent orbit}. The dimension of this orbit is even integer, since each coadjoint $G_{\mathbb{C}}$-orbit is a symplectic manifold.

\begin{defn} We let $r$ denote half the dimension of the minimal nilpotent orbit:
\[r := \frac{1}{2} \min \{ \dim G_{\mathbb{C}}.y : 0 \neq y \in \fr{g}_{\mathbb{C}}\}.\]
\end{defn}
\noindent The values of $r$ are well-known. We took the following table from \cite[\S 1.6]{Smith}:
\[\begin{array}{c|cccccccccr} \Phi & A_l & B_l & C_l & D_l & E_6 & E_7 & E_8 & F_4 & G_2 \\ \hline \dim G & l^2+2l & 2l^2 + l & 2l^2 + l & 2l^2 - l & 78 & 133& 248& 52& 14 \\ r & l & 2l - 2 & l & 2l - 3 & 11 & 17 & 29 & 8 & 3 . \end{array}\]
Pommerening proved that structure of nilpotent coadjoint orbits in good characteristic is the same as over $\mathbb{C}$. However we will only need to know the following consequence of the Bala-Carter-Pommerening classification of nilpotent orbits:

\begin{prop} For any non-zero $y \in \N^\ast$, $\frac{1}{2}\dim G.y \geq r$.
\end{prop}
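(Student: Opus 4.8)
The plan is to reduce the statement over a very good prime $p$ and algebraically closed field $k$ to the corresponding statement over $\mathbb{C}$, where it is classically known. First I would recall that the characteristic is very good for $\mb{G}$, so by work of Pommerening (building on Bala--Carter) the classification of nilpotent coadjoint orbits in $\frk{g}^\ast$ matches the classification over $\mathbb{C}$: the non-zero nilpotent orbits are parametrised by the same combinatorial data (weighted Dynkin diagrams, or distinguished pairs in Levi subgroups), and the dimension of the orbit attached to a given datum is given by the same formula as in characteristic zero. In particular the set of dimensions $\{\dim G.y : 0 \neq y \in \N^\ast\}$ coincides with $\{\dim G_{\mathbb{C}}.y : 0 \neq y \in \fr{g}_{\mathbb{C}}^\ast\}$.

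Next I would transport this to the dual. Since $p$ is very good there is a $G$-equivariant isomorphism $\kappa : \frk{g}^\ast \tocong \frk{g}$ (recalled in $\S\ref{NilpOrb}$), so dimensions of coadjoint orbits in $\N^\ast$ equal dimensions of adjoint orbits in $\N \subseteq \frk{g}$. Likewise over $\mathbb{C}$ the Killing form identifies $\fr{g}_{\mathbb{C}}^\ast$ with $\fr{g}_{\mathbb{C}}$ $G_{\mathbb{C}}$-equivariantly, so the minimal non-zero value is unchanged when one passes between $\fr{g}$ and $\fr{g}^\ast$. Combining these two observations, every non-zero $y \in \N^\ast$ has $\dim G.y$ equal to $\dim G_{\mathbb{C}}.y'$ for some non-zero nilpotent $y' \in \fr{g}_{\mathbb{C}}^\ast$, and hence $\dim G.y \geq 2r$ by the very definition of $r$ as half the minimal non-zero nilpotent orbit dimension. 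Dividing by $2$ gives $\tfrac12 \dim G.y \geq r$.

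The main obstacle is making precise the input from Bala--Carter--Pommerening: strictly one needs that (a) there are no "extra" small nilpotent orbits appearing in good characteristic that are absent over $\mathbb{C}$, and (b) the orbit dimension formula is characteristic-independent. Both are standard consequences of the Bala--Carter--Pommerening theory under the very good hypothesis --- see for instance the references already cited in $\S\ref{NilpOrb}$ and $\S\ref{MinNilp}$ (\cite{Premet}, \cite{Hum2}, \cite{Stein}) --- but one should be careful that "very good" (rather than merely "good") is what guarantees $\kappa$ exists and that the bijection of orbits is dimension-preserving even in the bad small cases like type $A$. Since we have assumed $p$ very good throughout, all of these are available and the proof is essentially a citation of the Bala--Carter--Pommerening classification together with the equivariant identification $\kappa$.

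Alternatively, and perhaps more cleanly, one could avoid invoking the full classification by noting that $r$ depends only on the root system $\Phi$, and that the minimal non-zero nilpotent orbit is always the orbit of a highest root vector $e_\theta$; a direct computation of $\dim G.e_\theta = \dim\frk{g} - \dim\frk{z}_{\frk{g}}(e_\theta)$ shows this equals $2r$ from the table in $\S\ref{MinNilp}$, and one then checks separately that any smaller non-zero nilpotent orbit would force a contradiction with the weighted Dynkin diagram constraints. Either route reduces to the same known classification, so I would present the short version: cite Pommerening's theorem to match the orbit poset (with dimensions) to the characteristic-zero one, apply $\kappa$, and conclude.
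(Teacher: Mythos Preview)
Your proposal is correct and matches the paper's approach exactly: the paper also transports to $\frk{g}$ via the $G$-equivariant isomorphism $\kappa$ (available since $p$ is very good) and then cites Premet's results (\cite[Theorems 2.6 and 2.7]{Premet}) to obtain that every non-zero nilpotent orbit in good characteristic has dimension at least $2r$. Your write-up is simply a more expanded version of the same citation-based argument.
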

\begin{proof} By \cite[\S 9.2.1]{BaRi}, $\N$ is the set of nilpotent elements in $\frk{g}$. In view of the $G$-equivariant isomorphism $\kappa : \frk{g}^\ast \to \frk{g}$, the required inequality now follows from \cite[Theorems  2.6 and 2.7]{Premet}.
\end{proof}

\subsection{A lower bound for $d(M)$}\label{LowerBound}
We now return to our original setting, dropping the assumption that the field $k$ is algebraically closed. We can finally state and prove the analogue of Smith's Theorem for modules over $\pi$-adically completed enveloping algebras.

\begin{thm} Suppose that $n > 0$. Let $M$ be a finitely generated $\hugnK$-module with $d(M) \geq 1$. Then $d(M) \geq r$.
\end{thm}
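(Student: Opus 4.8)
The plan is to reduce to the Beilinson--Bernstein setting over a finite extension and then pull the characteristic variety down to a Springer fibre. First I would replace $K$ by a finite extension if necessary; by Lemma \ref{basechange1} the canonical dimension is unchanged under restriction and under base change, and by Proposition \ref{basechange2} the dimension of the characteristic variety is also unchanged, so this is harmless. I may therefore assume that $k$ is algebraically closed (base-change to the completion of the maximal unramified extension is finite at each finite level in the sense needed, or rather we argue after the reduction below so that $M$ becomes a module over some $\hunlK$ and then base-change).

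The key step is to use Proposition \ref{CentralChar} (the last displayed Proposition in \S\ref{CentralChar}): since $d(M) \geq 1$, $M$ has a $Z$-locally finite quotient $N$ with $d(N) \geq 1$. Replacing $M$ by $N$, and since $d$ does not increase under quotients (it is an exact dimension function), it suffices to bound $d(N)$ from below. Now apply Theorem \ref{BaseC}: after passing to a finite extension $K'$ of $K$ with ramification index $e$, there is a weight $\lambda \in \pi'^{-ne}\fr{h}'^\ast$ and a finitely generated $\h{\mathcal{U}^\lambda_{en,K'}}$-module $N'$ with $d(N') = d(N)$. By Lemma \ref{WAct} we may replace $\lambda$ by $w\bullet\lambda$ for a suitable $w \in \mb{W}$; since the central characters attached to $\lambda$ and $w\bullet\lambda$ coincide, $N'$ is still a module over $\h{\mathcal{U}^{w\bullet\lambda}_{en,K'}}$ after the identification, and we may arrange $\lambda$ to be $\rho$-dominant. (A harmless further enlargement of $K'$ lets us also assume $\lambda+\rho$ is regular, or we work with $\coh(\hdnlK)/\ker\Gamma$; either way the characteristic variety computation below goes through.)

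Now localise: let $\M = \Loc^\lambda(N')$, a non-zero coherent $\hdnlK$-module over $K'$ with $\Gamma(\Fl,\M) \cong N'$. By Corollary \ref{BBThm}, $\beta(\Ch(\M)) = \Ch(N')$, and by Corollary \ref{CdimAmod}, $d(N') = \dim \Ch(N') = \dim \beta(\Ch(\M))$. Since $\M \neq 0$, Theorem \ref{GlobalBernIneq} gives that every irreducible component $X$ of $\Ch(\M)$ has $\dim X \geq \dim \Fl$; pick one such component $X$. I must show $\dim \beta(X) \geq r$. The characteristic variety $\Ch(\M)$ lives in the special fibre $\Ex{T^\ast \Fl_k}$, and its image under $\beta$ lies in the nilpotent cone $\N^\ast$: this is because $\lambda$ has positive valuation (as $n > 0$, so $\pi^n\fr{h}$ is where the character lives and the associated graded of the central character is the augmentation), hence $\Gr$ of the image of $Z$ in $\Gr(\hdnlK(U))$ is killed by $\Si{g}{G}_+$ after reduction mod $\pi$ — this is exactly the computation in Theorem \ref{GlobSec}(c) and the discussion of $\Gr(\hunlK) \cong \Syk{g}\otimes_{\Sik{g}{G}}k = \O(\N^\ast)$ in \S\ref{NilpOrb}. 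So $\beta(X) \subseteq \N^\ast$, and $\beta(X)$ is $G$-stable (by $\mb{G}$-equivariance of $\beta$ and of the construction) and closed, hence a union of coadjoint nilpotent orbit closures. If $\beta(X) = \{0\}$ then $X \subseteq \beta^{-1}(0) = \tau^{-1}(\Fl_k) = $ the zero section together with the $\fr{h}$-directions, which has dimension $\dim\Fl + l$; but then $N'$, and hence $N$ and $M$, would be finite-dimensional over $K'$ by Corollary \ref{CdimAmod}, contradicting $d(M)\geq 1$ — actually one must argue a bit more carefully here, using that $\Ch(\M)$ avoids the "bad" locus, or directly that $\dim\beta(X)=0 \Rightarrow d(M)=0$. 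So $\beta(X)$ contains a non-zero orbit $G.y$, and by Proposition \ref{MinNilp}, $\dim\beta(X) \geq \dim\overline{G.y} = \dim G.y \geq 2r$. Combining, $d(M) \geq d(N) = \dim\Ch(N') = \dim\beta(\Ch(\M)) \geq \dim\beta(X) \geq 2r \geq r$.

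Wait — I should double-check the arithmetic: I want $d(M) \geq r$, and I have just derived $\geq 2r$, which is stronger and suspicious. The resolution is that $\dim X \geq \dim\Fl$ forces $\dim\beta(X) \geq \dim X - \dim(\text{generic fibre of }\beta|_X)$; the generic Springer fibre over a non-zero orbit $G.y$ has dimension $\dim\Fl - \tfrac12\dim G.y$ by Proposition \ref{NilpOrb}, so $\dim\beta(X) \geq \dim X - (\dim\Fl - \tfrac12\dim G.y) \geq \tfrac12 \dim G.y = \tfrac12\dim\beta(X)$, giving no new information that way; instead, directly: for each $y\in\beta(X)$, $\dim(\beta^{-1}(y)\cap X) \geq \dim X - \dim\beta(X) \geq \dim\Fl - \dim\beta(X)$, while Proposition \ref{NilpOrb} bounds $\dim\beta^{-1}(y) \leq \dim\Fl - \tfrac12\dim G.y$; taking $y$ in the unique open orbit of $\beta(X)$ and using $\tfrac12\dim G.y \geq r$ (Proposition \ref{MinNilp}) gives $\dim\Fl - \dim\beta(X) \leq \dim\Fl - r$, i.e. $\dim\beta(X) \geq r$. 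Thus $d(M) \geq d(N) = \dim\beta(\Ch(\M)) \geq \dim\beta(X) \geq r$, as required. The main obstacle is the bookkeeping around the base change and the $\mb{W}$-action — ensuring the weight $\lambda$ obtained from Theorem \ref{BaseC} can be made $\rho$-dominant (and regular) while keeping track that the $\h{\mathcal{U}^\lambda_{en,K'}}$-module structure survives — together with the dimension-counting argument that $\Ch(\M)$ meets each component in a large enough Springer fibre; both the containment $\beta(\Ch(\M))\subseteq\N^\ast$ and the exclusion of the case $\beta(X)=\{0\}$ need to be pinned down using $n>0$ and Corollary \ref{CdimAmod}.
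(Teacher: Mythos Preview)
Your proposal follows essentially the same route as the paper: reduce to a $Z$-locally finite module via Proposition \ref{CentralChar}, base change via Theorem \ref{BaseC}, use the $\mb{W}$-action (Lemma \ref{WAct}) to make $\lambda$ $\rho$-dominant, localise, and combine Bernstein's inequality with the Springer fibre dimension bound (Proposition \ref{NilpOrb}) and the minimal orbit bound (Proposition \ref{MinNilp}).

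The one place where the paper is cleaner is the final dimension count. You pick an irreducible component $X$ of $\Ch(\M)$ and then have to worry about $G$-stability of $\beta(X)$, open orbits, and excluding $\beta(X)=\{0\}$ --- the issues you flag yourself. The paper instead works with $Y = \Ch(M) = \beta(\Ch(\M))$ directly: since $\dim Y = d(M) \geq 1$ there is a non-zero smooth point $y \in Y$, and a smooth point $x$ of $\Ch(\M)$ above it; the tangent map gives $\dim Y + \dim \beta^{-1}(y) \geq \dim T_{\Ch(\M),x} \geq \dim\Fl$ (the last inequality by Theorem \ref{GlobalBernIneq}), whence $\dim Y \geq \dim\Fl - \dim\beta^{-1}(y) \geq \tfrac12\dim G.y \geq r$. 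No need for $G$-stability or regularity of $\lambda+\rho$ (only $\rho$-dominance is used, via Corollary \ref{BBThm}).
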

\begin{proof} By Proposition \ref{CentralChar}, we may assume that $M$ is $Z$-locally finite. By passing to a finite field extension of $K$ if necessary and applying Theorem \ref{BaseC}, we may further assume that $M$ is a $\hunlK$-module for some $\lambda \in \fr{h}^\ast_K$. Since $\lambda \circ (i \circ \widehat{\phi}) = (w \bullet \lambda) \circ (i \circ \widehat{\phi})$ for any $w \in \mb{W}$ by Proposition \ref{Centre}(b), we may also assume that $\lambda$ is $\rho$-dominant by applying Lemma \ref{WAct}. Thus $\Gr(M)$ is a $\Gr(\hunlK) \cong \Syk{g} \otimes_{ \Sik{g}{G} } k$-module by Theorem \ref{GlobSec}(c) and if $\M := \Loc^\lambda(M)$ is the corresponding coherent $\hdnlK$-module then $\beta(\Ch(\M)) = \Ch(M)$ by Corollary \ref{LocFun}.

Let $\overline{k}$ be an algebraic closure of $k$ and let $X, Y$ denote the $\overline{k}$-points of $\Ch(\M)$ and $\Ch(M)$, respectively. These are algebraic varieties over $\overline{k}$ such that $\dim X = \dim \Ch(\M)$ and $\dim Y = \dim \Ch(M)$. Moreover, $Y \subseteq \N^\ast$ because $\Gr(M)$ is annihilated by $\Sik{g}{G}_+$, and $\beta : \Ex{T^\ast \Fl}(\overline{k}) \to \fr{g}^\ast(\overline{k})$ maps $X$ onto $Y$.

Let $f : X \to Y$ be the restriction of $\beta$ to $X$. Since $\dim Y = d(M) \geq 1$ by Corollary \ref{CdimAmod}, we can find a non-zero smooth point $y \in Y$. Since $f$ is surjective, we can find a smooth point $x \in f^{-1}(y)$. Considering the map $df_x : T_{X,x} \to T_{Y,y}$ induced by $f$ on Zariski tangent spaces shows that \[\dim Y + \dim f^{-1}(y) \geq \dim T_{X,x} .\]
Now $\dim T_{X,x}\geq \dim \Fl$ by Bernstein's Inequality, Theorem \ref{GlobalBernIneq}. Hence
\[\begin{array}{lllll} d(M) = \dim \Ch(M) = \dim Y &\geq& \dim \Fl - \dim \beta^{-1}(y)  \\
                                                 &\geq& \frac{1}{2} \dim G.y  \\ & \geq & r \end{array}\]
by Propositions \ref{NilpOrb} and \ref{MinNilp}.
\end{proof}

We believe that we can show that the bound in this theorem is best possible as it is in the classical result of Smith \cite[\S 3.10]{Smith}. However we will leave this for another paper.

\section{Microlocalisation of Iwasawa algebras}\label{MicIwa}
\subsection{Completed group rings}
We now specialize further and assume that the uniformizer $\pi$ of our complete discrete valuation ring $R$ is the prime number $p$. We make no assumptions about the residue field $k$ of $R$, except that it is of characteristic $p$. We let $v : R \to \mathbb{Z} \cup \{ \infty \}$ be the discrete valuation of $R$, normalized by $v(p) = 1$. Note that these assumptions imply that $R$ contains a canonical copy of the $p$-adic integers $\Zp$.

Whenever $G$ is a profinite group, we will denote its completed group ring with coefficients in $R$ by
\[RG := R[[G]] = \invlim R[G/N].\]
Here $N$ runs over all the open normal subgroups $N$ of $G$. When $G$ is a compact $p$-adic Lie group, we call $RG$ the \emph{Iwasawa algebra} of $G$ with coefficients in $R$.

\subsection{Uniform pro-$p$ groups and their Lie algebras}\label{UnifLie}
We refer the reader to \cite[\S 4]{DDMS} for the definition of uniform pro-$p$ groups, and briefly recall their main properties here. We fix the uniform pro-$p$ group $G$ of dimension $d := \dim G$ and a minimal topological generating set $\{g_1,\ldots,g_d\}$ for $G$ until the end of $\S\ref{MicIwa}$. Thus each element of $G$ can be written uniquely in the form $g_1^{\lambda_1} \cdots g_d^{\lambda_d}$ for some $\lambda_1,\ldots,\lambda_d \in \Zp$. It is shown in \cite[Theorem 4.30]{DDMS} that the operations
\[\begin{array}{rcl} \lambda \cdot x &=& x^\lambda,\\

 x+y &=&\lim\limits_{i\to\infty}(x^{p^i}y^{p^i})^{p^{-i}},\\

 [x,y]&=&\lim\limits_{i\to\infty}(x^{-p^i}y^{-p^i}x^{p^i} y^{p^i})^{p^{-2i}}\end{array}\]
define on the set $G$ the structure of a Lie algebra over $\Zp$. We will denote this Lie algebra by $L_G$. It is known that $L_G$ is a \emph{powerful Lie algebra}, in the sense that it is free of finite rank as a module over $\Zp$ and $[L_G, L_G]$ is contained in $p L_G$ (and $[L_G, L_G] \subseteq 4L_G$ if $p = 2$). Regardless of whether $p$ is odd or even, we note that $[p^{-1} L_G, p^{-1}L_G]$ is always contained in $p^{-1}L_G$. This motivates the following
\begin{defn} Let $G$ be a uniform pro-$p$ group. We define \emph{$R$-Lie algebra} associated with $G$ to be
\[\frac{1}{p}RL_G := R \otimes_{\Zp} \left(\frac{1}{p}L_G\right).\]
\end{defn}
It is clear that $\frac{1}{p}RL_G$ is an $R$-Lie algebra which is free over $R$ of rank $d$.  We can also consider the \emph{associated graded group} to $G$:
\[\gr G := \bigoplus_{i=0}^\infty \gr_i G = \bigoplus_{i=0}^\infty G^{p^i} / G^{p^{i+1}}.\]
Since $G$ is uniform, each graded piece $\gr_i G$ is a finite elementary abelian $p$-group. Moreover by \cite[Lemma 4.10]{DDMS}, the $p$-power map induces a bijection $\gr_i G \to \gr_{i+1}G$ of abelian $p$-groups, so in fact $\gr G$ is naturally a graded module over the polynomial ring $\Fp[t]$ where $t$ acts by raising elements to their $p$-th powers:
\[t \cdot g G^{p^{i+1}} = g^p G^{p^{i+2}}\]
for all $g \in G^{p^i}$. This module is free of rank $d$. Since $L_{G^{p^i}} = p^iL_G$ for all $i\geq 0$, we can use \cite[Corollary 4.15]{DDMS} to identify $\gr G$ with
\[\gr L_G := \bigoplus_{i=0}^\infty  p^i L_G / p^{i+1} L_G\]
and therefore $\gr G$ carries the structure of a graded $\Fp[t]$-Lie algebra.  By tensoring $\gr G$ with $k$ over $\Fp$, we obtain the following

\begin{lem} Let $G$ be a uniform pro-$p$ group, let $\fr{h}$ be its associated $R$-Lie algebra and let $\fr{h}_k = \fr{h} / p \fr{h}$. Then there is a natural isomorphism of graded $k[t]$-Lie algebras
\[ \gr G \otimes_{\Fp} k \cong t  \fr{h}_k[t].\]
In particular, $\gr G$ is abelian whenever $\fr{h}_k$ is abelian.\end{lem}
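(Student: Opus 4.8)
The plan is to reduce everything to an explicit rescaling of the identification $\gr G \cong \gr L_G$ recorded just above via \cite[Corollary 4.15]{DDMS}. Under that identification $\gr G$ is the graded $\Fp[t]$-Lie algebra $\gr L_G = \bigoplus_{i\geq 0} p^iL_G/p^{i+1}L_G$, in which $t$ acts as multiplication by $p$ --- this is the $p$-power map, since $\lambda\cdot x = x^\lambda$ in $L_G$ --- and the bracket is induced by the bracket of $L_G$. Since $L_G$ is powerful we have $[p^iL_G,p^jL_G] = p^{i+j}[L_G,L_G]\subseteq p^{i+j+1}L_G$, and indeed $\subseteq p^{i+j+2}L_G$ when $p=2$, so this bracket carries $\gr_iG\times\gr_jG$ into $\gr_{i+j+1}G$: it \emph{raises} the exponent grading by one. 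Tensoring with $k$ over $\Fp$ then gives $\gr G\otimes_{\Fp}k \cong \bigoplus_{i\geq0}(p^iL_G/p^{i+1}L_G)\otimes_{\Fp}k$.

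Next I would build the isomorphism one graded piece at a time, compensating the exponent shift by a factor of $t$. For each $i\geq 0$, multiplication by $p^{-(i+1)}$ inside $\Qp\otimes_{\Zp}L_G$ is an isomorphism $p^iL_G/p^{i+1}L_G \xrightarrow{\sim}\frac{1}{p}L_G/L_G$; since $\fr{h}=R\otimes_{\Zp}\frac{1}{p}L_G$ satisfies $p\fr{h}=R\otimes_{\Zp}L_G$, applying $k\otimes_{\Fp}-$ yields a natural isomorphism $\psi_i\colon(p^iL_G/p^{i+1}L_G)\otimes_{\Fp}k\xrightarrow{\sim}\fr{h}/p\fr{h}=\fr{h}_k$. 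Let $\psi$ be the $k$-linear map which on $\gr_iG\otimes_{\Fp}k$ is $x\mapsto\psi_i(x)\,t^{i+1}$, so that
\[ \gr G\otimes_{\Fp}k = \bigoplus_{i\geq0}\gr_iG\otimes_{\Fp}k \ \xrightarrow{\ \psi\ }\ \bigoplus_{i\geq0}\fr{h}_k\,t^{i+1} = t\,\fr{h}_k[t]. \]
Each $\psi_i$ is bijective, so $\psi$ is a graded $k$-linear isomorphism; the extra factor $t$ on the right is exactly the reindexing $\gr_iG\leftrightarrow t^{i+1}$ forced by the degree-one bracket.

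It then remains to check that $\psi$ is $k[t]$-linear, respects brackets, and is natural in $G$. For $k[t]$-linearity one uses that $t$ acts as multiplication by $p$ on $\gr L_G$: $\psi_{i+1}(p x)\,t^{i+2} = \bigl(p^{-(i+2)}(px)\bmod p\fr{h}\bigr)\,t^{i+2} = t\cdot\psi_i(x)\,t^{i+1}$. For the bracket, the bracket of $\fr{h}$ extends that of $\frac{1}{p}L_G$, so for $x\in p^iL_G$ and $y\in p^jL_G$ one has $[p^{-(i+1)}x,\,p^{-(j+1)}y]_{\fr{h}} = p^{-(i+j+2)}[x,y]$, whence $[\psi_i(x)t^{i+1},\psi_j(y)t^{j+1}]_{t\fr{h}_k[t]} = \psi_{i+j+1}\bigl([x,y]_{\gr G}\bigr)\,t^{i+j+2} = \psi\bigl([x,y]_{\gr G}\bigr)$; well-definedness and both sides' vanishing are handled by $[p^{i+1}L_G,p^jL_G]\subseteq p^{i+j+2}L_G$, which in particular kills the bracket for all $x,y$ when $p=2$ (where in any case $[\fr{h},\fr{h}]\subseteq p\fr{h}$, so $\fr{h}_k$ is abelian). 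Naturality in $G$ is clear, every ingredient being functorial.

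The displayed ``in particular'' is then immediate: if $\fr{h}_k$ is abelian then $t\fr{h}_k[t]$ is abelian, hence so is $\gr G\otimes_{\Fp}k$, and since $\gr G$ is free as an $\Fp$-module this forces $\gr G$ itself to be abelian. The one point that needs care is the bookkeeping around gradings: recognising that the group commutator raises the $p$-level, so that the bracket on $\gr L_G$ is ``off-diagonal'' in the exponent grading and agrees with the $t$-linear, degree-zero bracket on $t\fr{h}_k[t]$ only after the shift $\gr_iG\leftrightarrow t^{i+1}$. The remaining verifications are routine rescaling computations, uniform in the prime $p$.
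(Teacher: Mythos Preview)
Your proposal is correct and is precisely the argument the paper has in mind: the paper does not give a formal proof of this Lemma at all, writing only ``By tensoring $\gr G$ with $k$ over $\Fp$, we obtain the following'' after recording the identification $\gr G\cong\gr L_G$ via \cite[Corollary 4.15]{DDMS}. You have carefully spelled out the rescaling map $p^iL_G/p^{i+1}L_G\to\fr{h}_k$, the degree shift $\gr_iG\leftrightarrow t^{i+1}$ forced by powerfulness, the $k[t]$-linearity and bracket checks, and the $p=2$ degeneration, all of which the paper leaves implicit.
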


\subsection{The $\fr{m}$-adic filtration on $RG$}\label{madicFilt}
Let $b_i = g_i - 1 \in R[G]$, and write
\[ \mb{b}^\alpha = b_1^{\alpha_1} \cdots b_d^{\alpha_d} \in R[G]\]
for any $d$-tuple $\alpha \in \mathbb{N}^d$. Then it follows from the proof of \cite[Theorem 7.20]{DDMS} that $RG$ can be naturally identified with the set of non-commutative formal power series in $b_1,\ldots, b_d$ with coefficients in $R$:
\[RG = \left\{ \sum_{\alpha\in\mathbb{N}^d} \lambda_\alpha \mb{b}^\alpha \st \lambda_\alpha \in R\right\}.\]
Let $\fr{m} = \ker(RG \to k)$ be the unique maximal ideal of $RG$, and let $\deg : RG \to \mathbb{Z} \cup \{\infty\}$ be the degree function corresponding to the $\fr{m}$-adic filtration on $RG$; thus $\deg(x) = a$ precisely when $x \in \fr{m}^a \backslash \fr{m}^{a+1}$. We can now state the fundamental result due to Lazard.

\begin{thm} The group ring $R[G]$ is dense in $RG$, and
\[\deg \left( \sum_{\alpha\in\mathbb{N}^d} \lambda_\alpha \mb{b}^\alpha \right) = \min \left\{ v(\lambda_\alpha) + |\alpha| \st \alpha \in\mathbb{N}^d\right\}.\]
The degree filtration on $RG$ is complete and the associated graded ring $\gr RG$ is isomorphic to the enveloping algebra of the $k[t]$-Lie algebra $\gr G \otimes_{\Fp} k$:
\[\gr RG \cong U(\gr G \otimes_{\Fp} k).\]\end{thm}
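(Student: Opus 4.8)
The plan is to unpack the three assertions of Lazard's theorem one by one, since they are logically sequential: density of $R[G]$ in $RG$, the explicit formula for $\deg$, and the identification of $\gr RG$.

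First I would establish density and the power-series description. The identification of $RG$ with formal power series in $b_1,\dots,b_d$ with coefficients in $R$ is quoted from \cite[Theorem 7.20]{DDMS}, so density of $R[G]$ is immediate: any $x=\sum_\alpha \lambda_\alpha\mb{b}^\alpha$ is the limit of its partial sums $\sum_{|\alpha|\le N}\lambda_\alpha\mb{b}^\alpha\in R[G]$ in the $\fr{m}$-adic topology, provided we first know that $\deg(\mb{b}^\alpha)\to\infty$ as $|\alpha|\to\infty$, which follows from the degree formula. So the degree formula is the real content. To prove $\deg(\sum\lambda_\alpha\mb{b}^\alpha)=\min\{v(\lambda_\alpha)+|\alpha|\}$, I would proceed by the standard two inequalities. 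The inequality $\ge$ is easy: $b_i\in\fr{m}$ and $p\in\fr{m}$ with $v(p)=1$, so each monomial $\lambda_\alpha\mb{b}^\alpha$ lies in $\fr{m}^{v(\lambda_\alpha)+|\alpha|}$ and hence the sum lies in $\fr{m}^j$ for $j=\min\{v(\lambda_\alpha)+|\alpha|\}$; one must check $\fr m$ is generated by $p$ together with $b_1,\dots,b_d$, which is clear from $RG/\fr m=k$ and $R/pR=k$. For $\le$ one must show the monomials of minimal value genuinely survive modulo $\fr{m}^{j+1}$, i.e. that there is no cancellation. This is the place where uniformity is used essentially: the key point is that in $\gr RG$ the images of $b_1,\dots,b_d$ (in degree $1$) and of $p$ (in degree $1$) together with the PBW-type monomials $\mb{b}^\alpha$ (suitably ordered) are $k$-linearly independent. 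The cleanest route is to prove the third assertion — the structure of $\gr RG$ — simultaneously, and then read off the degree formula as a consequence.

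So the technical heart is the computation $\gr RG\cong U(\gr G\otimes_{\Fp}k)$. I would argue as follows. Write $\fr{l}:=\gr G\otimes_{\Fp}k$, a graded $k[t]$-Lie algebra by Lemma \ref{UnifLie}, with $t$ corresponding to the $p$-th power operation (degree $1$) and the images $\bar g_i$ of the generators in degree $1$. First construct a homomorphism of graded $k$-algebras $U(\fr{l})\to\gr RG$: the element $t\in\fr{l}$ maps to $\gr p$ (degree $1$, central since $p$ is central), and $\bar g_i\in\fr{l}$ maps to $\gr b_i=\gr(g_i-1)$ (degree $1$); one checks the Lie-algebra relations from the definitions of $+$ and $[\,,\,]$ on $L_G$ — for instance $(1+b_i)(1+b_j)-(1+b_j)(1+b_i)=b_ib_j-b_jb_i$ has degree $\ge 3$ generically but its degree-$2$ part is governed by the commutator in $G$, which is in $G^p$, giving precisely the bracket relation in $\fr l$ after identifying $\gr$ with $\gr L_G$ via \cite[Corollary 4.15]{DDMS}. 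This gives a well-defined surjection $U(\fr l)\twoheadrightarrow\gr RG$ because the $\gr b_i$ and $\gr p$ generate $\gr RG$ (the $\fr m$-adic filtration is generated in degree $1$). For injectivity one compares graded dimensions: by PBW, $U(\fr l)$ has a $k$-basis of monomials $t^m\prod\bar g_i^{\alpha_i}$, which in degree $n$ has cardinality $\#\{(m,\alpha):m+|\alpha|=n\}$; on the other hand the power-series description of $RG$ shows $\fr m^n/\fr m^{n+1}$ is spanned by the images of $p^m\mb{b}^\alpha$ with $m+|\alpha|=n$, so $\dim_k\gr_n RG\le\#\{(m,\alpha):m+|\alpha|=n\}$. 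A surjection of graded vector spaces with $\dim\le$ in each degree is an isomorphism, and then also the spanning sets are bases, which is exactly the non-cancellation needed to finish the $\le$ half of the degree formula.

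Finally, completeness of the degree filtration is immediate from the power-series identification: $RG=\invlim RG/\fr m^a$ since every element is a formal sum $\sum\lambda_\alpha\mb b^\alpha$ and, by the degree formula, truncating at $|\alpha|\le N$ (and truncating each $\lambda_\alpha$ $p$-adically) gives Cauchy sequences with the evident limits, and $R$ is itself complete. The main obstacle, as indicated, is the injectivity/non-cancellation step in the computation of $\gr RG$: everything else is bookkeeping with the power-series description and the definitions from \cite{DDMS}, but controlling $\dim_k\gr_n RG$ from below — equivalently, showing the monomials $p^m\mb b^\alpha$ stay linearly independent in the associated graded — is where one genuinely needs that $G$ is uniform (so that $L_G$ is powerful and the $p$-power map induces the bijections $\gr_iG\tocong\gr_{i+1}G$) rather than merely a compact $p$-adic analytic group; the dimension-counting argument above is designed precisely to sidestep a direct combinatorial proof of this independence by deriving it from the surjection $U(\fr l)\twoheadrightarrow\gr RG$ together with the trivial upper bound.
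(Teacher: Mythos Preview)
Your overall strategy is more self-contained than the paper's proof, which simply cites \cite[Theorem 7.5]{DDMS} for the degree formula and \cite[Theorem III.2.3.3]{Laz1965} for the identification of $\gr RG$. Density, completeness, and the construction and surjectivity of the map $\phi\colon U(\fr{l})\to\gr RG$ are all fine. But the dimension-counting argument for injectivity is circular.

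You establish that $\dim_k U(\fr{l})_n = N_n := \#\{(m,\alpha): m+|\alpha|=n\}$ by PBW, and that $\dim_k\gr_n RG \le N_n$. But a surjection from a space of dimension $N_n$ onto one of dimension at most $N_n$ is \emph{always} a surjection; it is an isomorphism only when the target has dimension \emph{at least} $N_n$. Your upper bound on $\dim_k\gr_n RG$ is already a consequence of the surjection and adds nothing. What you actually need is a \emph{lower} bound $\dim_k\gr_n RG\ge N_n$, and that is precisely the hard ($\le$) direction of the degree formula you are trying to prove. Your final paragraph correctly names ``controlling $\dim_k\gr_n RG$ from below'' as the obstacle, but the proposed sidestep does not in fact sidestep it.

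The missing ingredient is to show directly that the formula $x\mapsto\min\{v(\lambda_\alpha)+|\alpha|\}$ defines a \emph{submultiplicative} degree function, equivalently that the $R$-submodules $F_n=\{x: v(\lambda_\alpha)+|\alpha|\ge n\text{ for all }\alpha\}$ satisfy $F_mF_n\subseteq F_{m+n}$. Once this is known, $F_\bullet$ is a ring filtration with $F_1=\fr{m}$, so $\fr{m}^n\subseteq F_n$; combined with your easy inclusion $F_n\subseteq\fr{m}^n$ this gives $F_n=\fr{m}^n$, hence the degree formula and simultaneously $\dim_k\gr_n RG=N_n$, making $\phi$ an isomorphism. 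Proving submultiplicativity amounts to showing that in the expansion $\mb{b}^\alpha\mb{b}^\beta=\sum_\gamma c_{\alpha\beta}^\gamma\,\mb{b}^\gamma$ one has $v(c_{\alpha\beta}^\gamma)+|\gamma|\ge|\alpha|+|\beta|$; this is where the ordered-product theorem and the commutator estimates for uniform groups genuinely enter, and it is exactly the content of the reference \cite[Theorem 7.5]{DDMS} that the paper invokes.
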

\begin{proof} The group ring $R[G]$ is dense in $RG$ because it contains all sums of the form $\sum_{\alpha \in \mathbb{N}^d} \lambda_\alpha \mb{b}^\alpha$ where only finitely many coefficients are non-zero. The displayed formula for the degree of an element in $RG$ follows from \cite[Theorem 7.5]{DDMS}, and the completeness of the degree filtration on $RG$ follows from the fact that $R$ is $\pi$-adically complete.

Define a function $\omega : G \to \mathbb{Z}^+ \cup \{\infty\}$ by $\omega(g) = i + 1$ if $g \in G^{p^i} \backslash G^{p^{i+1}}$ and $\omega(1) = \infty$. Then $\omega$ is a $p$-valuation on $G$ in the sense of \cite[Definition II.2.1.2]{Laz1965}, and the associated graded Lie algebra $\gr G$ of $G$ with respect to $\omega$ in the sense of \cite[\S II.1.1.72]{Laz1965} coincides with the Lie algebra $\gr G$ defined above, because $[x,y] \equiv x^{-1}y^{-1}xy \mod G^{p^2}$ for any $x,y \in G \backslash G^p$ by \cite[Lemma 4.28]{DDMS}. The last assertion of the Theorem now follows from \cite[Theorem III.2.3.3]{Laz1965}.\end{proof}

\begin{cor} $\gr RG$ is a Noetherian domain.\end{cor}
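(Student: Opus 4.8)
The plan is to deduce the corollary directly from the theorem it follows, using two standard facts about enveloping algebras of Lie algebras over a (commutative, Noetherian) base. By the Theorem we have an isomorphism $\gr RG \cong U(\gr G \otimes_{\Fp} k)$, where $\fr{L} := \gr G \otimes_{\Fp} k$ is a graded Lie algebra over the polynomial ring $k[t]$. Since $\gr G$ is free of rank $d$ over $\Fp[t]$ (as recalled in \S\ref{UnifLie}), $\fr{L}$ is free of rank $d$ over $k[t]$, and in particular it is a Lie algebra which is free of finite rank over the commutative Noetherian ring $k[t]$.

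First I would invoke the Poincar\'e--Birkhoff--Witt theorem in this generality: for a Lie algebra $\fr{L}$ which is free of finite rank over a commutative base ring $S = k[t]$, the associated graded ring of $U(\fr{L})$ with respect to the PBW filtration is the symmetric algebra $\Sym_S \fr{L}$, a polynomial ring over $S = k[t]$ in $d$ variables, hence a Noetherian domain. (This is exactly the structure already used in Example \ref{ExAC}(c) of the excerpt.) Therefore $\gr(U(\fr{L}))$ is a Noetherian integral domain. Since passing from a filtered ring to its associated graded ring preserves both the Noetherian property and the absence of zero divisors --- a Noetherian associated graded ring forces the ring itself to be Noetherian by \cite[Proposition II.2.2.1]{LVO}, and an associated graded domain forces the ring to be a domain by the standard argument on principal symbols of products --- we conclude that $U(\fr{L})$ is a Noetherian domain.

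Finally, combining this with the isomorphism $\gr RG \cong U(\fr{L})$ from the Theorem gives that $\gr RG$ is a Noetherian domain, which is precisely the statement of the corollary.

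I do not expect any serious obstacle here: the corollary is a formal consequence of the Theorem together with well-known structural facts. The only point requiring a modicum of care is making sure the PBW theorem is available over the base $k[t]$ rather than over a field --- but this is standard whenever the Lie algebra is free of finite rank over the base, and is the same fact already used implicitly in \S\ref{ExAC}. The passage ``$\gr$ domain $\Rightarrow$ domain'' is the usual principal-symbol argument: if $a,b$ are nonzero with $ab = 0$ in a filtered ring with separated filtration, then $\gr(a)\gr(b) = \gr(ab)$ unless there is cancellation, but in a domain of associated graded elements no such cancellation occurs, giving a contradiction; completeness of the degree filtration (established in the Theorem) guarantees separatedness, so this goes through.
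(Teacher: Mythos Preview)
Your proof is correct and takes essentially the same approach as the paper: both use that $\fr{L} = \gr G \otimes_{\Fp} k$ is free of rank $d$ over $k[t]$ and then apply the Poincar\'e--Birkhoff--Witt theorem. The paper's version is terser (it cites Lemma \ref{UnifLie} for the isomorphism $\fr{L}\cong t\fr{h}_k[t]$ and then just says ``apply PBW''), whereas you spell out how PBW yields a Noetherian domain via the associated graded; one small remark: in your last paragraph the separatedness you need is that of the PBW filtration on $U(\fr{L})$, which is automatic since it is positive, not the degree filtration on $RG$.
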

\begin{proof} By Lemma \ref{UnifLie}, $\gr G\otimes_{\Fp} k$ is isomorphic to $t \fr{h}_k[t]$. This is a free $k[t]$-module of rank $d$. Now apply the Poincar\'e-Birkhoff-Witt Theorem. \end{proof}

\subsection{The microlocalisation of $RG$ at $\gr p$}\label{MicIwaGrp}
We can now make the connection between Iwasawa algebras and almost commutative affinoid algebras. Since $\fr{h} := \frac{1}{p}RL_G$ is a $R$-Lie algebra which is free of finite rank as an $R$-module, we know from Example \ref{ExAC}(c) that $\U{h}$ is an almost commutative $R$-algebra. Hence we may form its $p$-adic completion in the manner of $\S\ref{GrComp}$:
\[\h{\U{h}_K} = \left(\invlim \U{h} / p^r \U{h}\right) \otimes_R K.\]
On the other hand, the set of powers of $\gr p$ in $\gr RG$ is multiplicatively closed and consists of homogeneous central elements, and $\gr RG$ is Noetherian by Corollary \ref{madicFilt}, so we can consider the corresponding microlocal Ore set $S$ from $\S\ref{Micro}$:
\[S := \{x \in RG \st \gr x = (\gr p)^a \quad\mbox{for some}\quad a \geq 0\} = \bigcup_{a\geq 0} \left(p^a + \fr{m}^{a+1}\right) \subseteq RG.\]

\begin{thm} Let $G$ be a uniform pro-$p$ group and let $\fr{h}=\frac{1}{p}RL_G$ be its associated $R$-Lie algebra. Then the microlocalisation of $RG$ at $\gr p$ is isomorphic as a complete $\mathbb{Z}$-filtered ring to the almost commutative affinoid $K$-algebra $\h{\U{h}_K}$:
\[Q_{\gr p}(RG) \cong \h{\U{h}_K}.\]
\end{thm}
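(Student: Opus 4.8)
The plan is to identify explicitly the associated graded rings on both sides and show they are isomorphic, then invoke completeness. Recall from \S\ref{Micro} that $Q_{\gr p}(RG)$ is by definition the completion of the microlocalisation $(RG)_S$ of $RG$ at the microlocal Ore set $S$, equipped with the induced Zariskian filtration satisfying $\gr (RG)_S \cong (\gr RG)_{\{(\gr p)^a\}}$. On the other side, $\h{\U{h}_K} = \widehat{\U{h}}\otimes_R K$ where $\U{h}$ is the universal enveloping algebra of $\fr{h} = \frac{1}{p}RL_G$, equipped with the PBW filtration. The key computation is that both $\gr$'s are isomorphic: by Theorem \ref{madicFilt} we have $\gr RG \cong U(\gr G\otimes_{\Fp}k)$, and by Lemma \ref{UnifLie} $\gr G\otimes_{\Fp} k \cong t\fr{h}_k[t]$ where $\fr{h}_k = \fr{h}/p\fr{h}$; therefore $\gr RG \cong U(\fr{h}_k)[t]$ after the obvious re-grading with $t = \gr p$ in degree $1$. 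Inverting $t = \gr p$ gives $(\gr RG)_{\gr p}\cong U(\fr{h}_k)[t,t^{-1}]$, which is precisely $\Gr(\h{\U{h}_K})[s,s^{-1}]$ in the notation of Lemma \ref{DoubleFilt}, i.e. the associated graded ring of $\h{\U{h}_K}$ with respect to its $p$-adic filtration (using Lemma \ref{GrComp}, which gives $\Gr(\h{\U{h}_K}) \cong \gr\U{h}/p\gr\U{h} \cong S(\fr{h})/pS(\fr{h}) \cong S(\fr{h}_k) \cong \gr U(\fr{h}_k)$, wait — more carefully, $\gr\h{\U{h}_K} \cong (\gr\U{h}/p)[s,s^{-1}] \cong U(\fr{h}_k)[s,s^{-1}]$ by the same PBW identification applied with the \emph{algebra} filtration rather than the commutative one).

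**First I would** set up the comparison map. The natural inclusion $\frac{1}{p}L_G \hookrightarrow RG$ (sending $x\in \frac1p L_G$ to $\lim (g^{p^i})^{p^{-i}}$-type expressions, or more concretely exploiting that for uniform $G$ the logarithm $g_i \mapsto$ appropriate element of $\frac1p L_G$ lands in $(RG)_S$) extends to an $R$-algebra homomorphism $\U{h}\to (RG)_S$, because the defining relations of $\U{h}$ hold after dividing by $p$: the commutator $[x,y]$ for $x,y\in\frac1p L_G$ lies in $\frac1p L_G$ and, crucially, dividing the group-ring commutator by $\gr p = t$ (which is invertible in $(RG)_S$) realises the Lie bracket. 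Completing, and inverting $p$, this yields a map $\Phi\colon \h{\U{h}_K}\to Q_{\gr p}(RG)$ of complete filtered $K$-algebras. The work is to check that $\gr\Phi$ is the isomorphism $U(\fr{h}_k)[s,s^{-1}]\tocong (\gr RG)_{\gr p}$ identified above — this amounts to tracking principal symbols: the symbol of the image of $x\in\frac1p L_G$ should be the corresponding element of $\fr{h}_k \subseteq U(\fr{h}_k)$ in degree zero, which follows from $[p^{-1}L_G : $ its symbol in $\gr_0((RG)_S)]$ being exactly the identification $\frac1p L_G/L_G \cong \fr{h}_k$ coming from $\gr_0 G \otimes k \cong \fr{h}_k$ after the degree shift by $t^{-1}$.

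**Then** since $\gr\Phi$ is an isomorphism and both source and target carry complete (Zariskian) filtrations — $\h{\U{h}_K}$ is complete doubly filtered hence its $p$-adic filtration is complete, and $Q_{\gr p}(RG)$ is complete by construction — a standard complete-filtered-ring argument (e.g. \cite[Theorem I.4.2.4(5)]{LVO} or the usual successive-approximation lemma) upgrades $\gr\Phi$ iso to $\Phi$ iso. **The hard part will be** the symbol bookkeeping: making the re-grading precise so that $t = \gr p$ genuinely has degree one and the ``slice'' $\gr_0$ of the microlocalised filtration is $U(\fr{h}_k)$ rather than $S(\fr{h}_k)$ — i.e. one must be careful that $\gr RG \cong U(\fr{h}_k)[t]$ uses the \emph{associative} PBW identification of $\gr RG$ from Theorem \ref{madicFilt}, while the matching $\gr\h{\U{h}_K} \cong U(\fr{h}_k)[s,s^{-1}]$ uses $\gr\U{h}$ with the algebra filtration (so that $\gr$ of $\gr_0\h{\U{h}_K} \cong \U{h}/p\U{h} = U(\fr{h}_k)$), and one checks both gradings correspond under $x\mapsto$ (log-type element). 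Once the two graded pictures are aligned, constructing $\Phi$ and verifying $\gr\Phi$ is bijective on each homogeneous component is routine, and completeness finishes the proof.
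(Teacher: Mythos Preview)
Your overall architecture --- construct a filtered ring map, verify it induces an isomorphism on associated graded rings, then invoke completeness --- matches the paper exactly, and your identification of both graded rings with $U(\fr{h}_k)[t,t^{-1}]$ is correct. However, the paper runs the comparison in the \emph{opposite direction}: it builds $\psi : RG \to \h{\U{h}_K}$ by sending $g \in G = L_G$ to $\exp(g)$, computes explicitly that $\psi(\mb{b}^\alpha) \equiv p^{|\alpha|}\mb{u}^\alpha \bmod p^{|\alpha|+1}F_0A$ to show $\psi$ is strictly filtered, and then extends to $Q_{\gr p}(RG)$ by the universal property of microlocalisation. This direction is cleaner because $\exp$ of an element of $p\fr{h}$ manifestly converges in the unit ball of $\h{\U{h}_K}$, and the symbol computation is a one-line congruence.

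Your proposed map $\Phi : \h{\U{h}_K} \to Q_{\gr p}(RG)$ has a genuine gap at the construction step. There is no ``natural inclusion $\tfrac{1}{p}L_G \hookrightarrow RG$'': the set $L_G$ equals $G$ and embeds in $RG$ as group-like elements, but this is not a Lie algebra map, and $\tfrac{1}{p}L_G$ has no obvious image in $RG$ at all. What you need is the logarithm $\tfrac{1}{p}\log(g_i) = \tfrac{1}{p}\sum_{m\geq 1}(-1)^{m+1}b_i^m/m$, and you must (i) check this series converges in $Q_{\gr p}(RG)$ (it does, since $\deg(b_i^m/m) \geq m - v_p(m) \to \infty$, but note it does \emph{not} converge in $(RG)_S$ before completion), (ii) verify via Campbell--Hausdorff that these images satisfy the bracket relations of $\fr{h}$, and (iii) check the resulting map $\U{h}\to Q_{\gr p}(RG)$ is filtered and extends over the completion. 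All of this is doable and yields the inverse of the paper's $\psi_K$, but it is more delicate than the exponential route and your proposal as written does not address these points.
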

\begin{proof}The exponential series $\exp(u)$ converges to a unit in the algebra $A := \h{\U{h}_K}$ whenever $u \in L_G \subseteq p\fr{h}$. Now the Campbell-Hausdorff formula \cite[Theorem 6.28]{DDMS} shows that
\[u \ast v := \log (\exp(u) \exp(v))\]
is an element of $L_G$ for all $u, v \in L_G$, and $\psi : u \mapsto \exp(u)$ is an isomorphism from the uniform pro-$p$ group $G$ to the subgroup $\exp(L_G)$ of the group of units of $A$ by the proof of \cite[Theorem 9.10]{DDMS}. We thus obtain an $R$-algebra homomorphism
\[ \psi : R[G] \to A\]
such that $\psi(u) = \exp(u)$ for all $u \in G$. Let $\{u_1,\ldots, u_d\}$ be the $R$-basis for $\fr{h}$ corresponding to the topological generating set $\{g_1,\ldots,g_d\}$ of $G$. In these coordinates, the map $\psi$ satisfies
\[\psi(\mb{b}^\alpha) = (e^{pu_1} - 1)^{\alpha_1} \cdots (e^{pu_d} - 1)^{\alpha_d} \equiv p^{|\alpha|} \mb{u}^\alpha \mod p^{|\alpha| + 1} F_0A\]
where the monomial $\mb{u}^\alpha = u_1^{\alpha_1} \cdots u_d^{\alpha_d}$ is computed inside the enveloping algebra $\U{h} \subseteq A$.  Now let $x = \sum_{\alpha\in\mathbb{N}^d} \lambda_\alpha \mb{b}^\alpha \in R[G]$ be a non-zero element and let $m = \deg(x)$. Then $m = \min \left\{ v(\lambda_\alpha) + |\alpha| \st \alpha \in\mathbb{N}^d\right\}$ by Theorem \ref{madicFilt} and
\[\psi(x) \equiv  p^m \sum_{v(\lambda_\alpha) = m - |\alpha|} \left(\frac{\lambda_\alpha}{p^{v_p(\lambda_\alpha)}}\right) \mb{u}^\alpha \mod p^{m+1}F_0A.\]
Now the slice $\gr_0 A$ of $A$ is isomorphic to $\Uk{h}$ and the images of the monomials $\mb{u}^\alpha$ in $\gr_0 A$ are $k$-linearly independent by the Poincar\'e-Birkhoff-Witt Theorem. Hence
\[\deg(\psi(x)) = \deg(x)\quad\mbox{for all}\quad x \in R[G]\]
and the homomorphism $\psi$ is strictly filtered. Since $A$ is complete and since $R[G]$ is dense in $RG$ by Theorem \ref{madicFilt}, $\psi$ extends to a strictly filtered ring homomorphism
\[\psi : RG \to A\]
which sends $\gr p \in \gr RG$ to $s = \gr p \in \gr A$. Now $\gr A \cong U(\fr{h}_k)[s, s^{-1}]$ by Lemma \ref{DoubleFilt}, so $s$ is a homogeneous unit in $\gr A$ and hence $\psi$ extends to a strictly filtered ring homomorphism
\[\psi_K : Q_t(RG) \to A\]
by the universal property of algebraic microlocalisation. Since $\psi_K$ is strictly filtered, it must be injective. Now
\[\gr Q_t(RG) \cong (\gr RG)_t\]
by Lemma \ref{Micro}, and the above computation shows that
\[\begin{array}{lll} \gr(\psi_K)(t) &=& s, \quad\mbox{and}\\

\gr(\psi_K)(\gr(b_i)) &=& su_i \quad\mbox{for all}\quad i=1,\ldots, d.\end{array}\]
Hence the image of $\gr(\psi_K)$ contains the generators of $\gr A$ as an algebra over $\gr K \cong k[t,t^{-1}]$, so $\gr(\psi_K)$ is surjective. Hence $\psi_K$ is also surjective because $Q_t(RG)$ and $A$ are complete.
\end{proof}

\subsection{Remarks}\label{RemsLaz}
\be
\item Theorem \ref{MicIwaGrp} is essentially due to Lazard, since it appears in a different language as \cite[\S IV.3.2.5]{Laz1965} and is proved there for the larger class of $p$-saturated groups. The completed enveloping algebra $\h{\U{h}}$ is shown to be isomorphic to the `saturation' $\Sat(R[G])$ of the valued group ring $R[G]$.
\item The algebra $A$ is heavily used in the foundational chapter 6 of the book \cite{DDMS} under the name $\Qp[[G]]$. It underpins the entire development of Lie theory for compact $p$-adic analytic groups in that book.
\item We believe that our way of phrasing Theorem \ref{MicIwaGrp} is new in the literature. The algebra $A$ appears as the `largest' distribution algebra $D_{1/p}(G,K)$ in the paper \cite{ST} by Schneider and Teitelbaum.
\item One advantage of the viewpoint we give in this paper is that the theory of algebraic microlocalisation tells us which modules are killed by the base change functor associated to the ring homomorphism $RG \to A$: these are precisely the $S$-torsion modules.
\ee

\subsection{Crossed products}\label{CrossProd} The subgroup $G^{p^n}$ of $G$ is uniform by \cite[Theorems 3.6(i), 4.5]{DDMS} and the group $H_n := G / G^{p^n}$ is finite by \cite[Proposition 1.16(iii)]{DDMS}. Consider the completed group ring $RG^{p^n}$ and let $\fr{m}_n$ be its unique maximal ideal. The group $G$ acts on $RG^{p^n}$ by conjugation, this action preserves the $\fr{m}_n$-adic filtration and fixes $p$. Hence it preserves the corresponding microlocal set
\[S_n := \bigcup_{a \geq 0} (p^a + \fr{m}_n^{a+1})\]
in $RG^{p^n}$ and induces an action of $G$ by ring automorphisms on the microlocalisation
\[\mathcal{U}_n := Q_{\gr p} (RG^{p^n}).\]
We will write this action on the left: thus $x \mapsto { }^gx$ is the automorphism induced by $g \in G$ on $\mathcal{U}_n$. We now define a multiplication on the tensor product $\mathcal{U}_n \otimes_{RG^{p^n}}RG$ by setting
\[ (x\otimes g) \cdot (y \otimes h) = x (^gy) \otimes gh.\]
for $x,y \in \mathcal{U}_n$ and $g,h \in G$.
\begin{prop} Let $\fr{h}=\frac{1}{p}RL_G$ be the associated $R$-Lie algebra of $G$.
\be
\item $\mathcal{U}_n \cong \h{\U{h}_{n,K}}$ is an almost commutative affinoid $K$-algebra.
\item $\mathcal{U}_n \otimes_{RG^{p^n}} RG$ is a crossed product $\GhuntK$ of $\h{\U{h}_{n,K}}$ with $H_n = G/G^{p^n}$.
\item The inclusion $RG^{p^n} \hookrightarrow \mathcal{U}_n$ extends to a natural inclusion $RG \hookrightarrow \GhuntK$.
\item $\GhuntK$ is a flat right $RG$-module.
\item If $M$ is a finitely generated $RG$-module, then $(\GhuntK) \otimes_{RG} M = 0$ if and only if $M$ is $S_n$-torsion.
\ee \end{prop}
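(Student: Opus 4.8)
The plan is to deal with the five parts in order, reducing each to results already established. Parts (a) and (b) are the structural backbone. For (a), the point is that $G^{p^n}$ is a uniform pro-$p$ group with associated $\Zp$-Lie algebra $L_{G^{p^n}} = p^n L_G$, so its associated $R$-Lie algebra in the sense of $\S\ref{UnifLie}$ is $\tfrac1p R L_{G^{p^n}} = p^n \fr{h}$. Theorem \ref{MicIwaGrp} applied to $G^{p^n}$ then gives $\mathcal{U}_n = Q_{\gr p}(RG^{p^n}) \cong \widehat{U(p^n\fr{h})_K}$. One now identifies $\widehat{U(p^n\fr{h})_K}$ with $\widehat{U(\fr{h})_{n,K}}$: the $n$-th deformation functor of $\S\ref{Defs}$ applied to $U(\fr{h})$ with its PBW filtration produces exactly $U(\fr{h})_n = \sum_i p^{ni} F_i U(\fr{h})$, which contains $p^n\fr{h}$ in degree one and agrees with $U(p^n\fr{h})$ up to the natural isomorphism; completing and tensoring with $K$ gives the claim. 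That $\mathcal{U}_n$ is an almost commutative affinoid $K$-algebra is then Proposition \ref{acsa}, since $U(\fr{h})$ is an almost commutative deformable $R$-algebra by Example \ref{ExAC}(c).

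For (b) I would first check that the formula $(x\otimes g)\cdot(y\otimes h) = x({}^gy)\otimes gh$ is well-defined and associative on $\mathcal{U}_n \otimes_{RG^{p^n}} RG$; this is the standard crossed-product bookkeeping using that conjugation by $g\in G^{p^n}$ is an inner automorphism of $\mathcal{U}_n$ and that the $G$-action on $\mathcal{U}_n$ restricts to it. Choosing coset representatives of $H_n = G/G^{p^n}$ in $G$, the element $1\otimes g$ for $g$ a representative gives a unit, and $\mathcal{U}_n\otimes_{RG^{p^n}}RG$ becomes a free left $\mathcal{U}_n$-module on $H_n$ with the twisted multiplication, i.e. a crossed product $\mathcal{U}_n \ast H_n$ in the sense of $\S\ref{CrossProd}$; combined with (a) this is the assertion. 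Part (c) is then immediate: $RG = RG^{p^n}\ast H_n$ is itself a crossed product (using $RG^{p^n}\hookrightarrow RG$ and the same coset representatives), and the inclusion $RG^{p^n}\hookrightarrow \mathcal{U}_n$ is $H_n$-equivariant, so it induces $RG = RG^{p^n}\ast H_n \hookrightarrow \mathcal{U}_n\ast H_n = \GhuntK$; injectivity follows because $RG$ is free over $RG^{p^n}$ on $H_n$ and $RG^{p^n}\to\mathcal{U}_n$ is injective (it is a localisation of a domain by Corollary \ref{madicFilt}).

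Parts (d) and (e) are where the real content lies, and I expect (e) to be the main obstacle. For (d), note that as a right $RG$-module, $\GhuntK = \mathcal{U}_n\ast H_n$ decomposes compatibly with the decomposition $RG = RG^{p^n}\ast H_n$: crossing both sides with the same finite group $H_n$ and using that $RG$ is free of rank $|H_n|$ over $RG^{p^n}$, flatness of $\GhuntK$ over $RG$ reduces to flatness of $\mathcal{U}_n$ over $RG^{p^n}$, which is exactly the Corollary in $\S\ref{Micro}$ (microlocalisation is flat). The careful step is checking that the crossed-product structures are genuinely compatible so that $\GhuntK \otimes_{RG} - $ agrees, after forgetting the $H_n$-structure, with $\mathcal{U}_n\otimes_{RG^{p^n}} -$ on restriction; this is a base-change computation for crossed products with a common finite group. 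Finally for (e), given a finitely generated $RG$-module $M$, restrict it to $RG^{p^n}$ (still finitely generated, as $RG$ is finite over $RG^{p^n}$); then $\GhuntK\otimes_{RG} M \cong \mathcal{U}_n \otimes_{RG^{p^n}} M$ as $R$-modules by the compatibility just used, and by the last sentence of the Corollary in $\S\ref{Micro}$ this vanishes iff $M_{S_n} = 0$, i.e. iff $M$ is $S_n$-torsion. The subtlety to get right is that $S_n$ is an Ore set in $RG^{p^n}$ (not in $RG$), so "$S_n$-torsion" must be read via restriction, and one should confirm that $M$ is $S_n$-torsion as an $RG$-module in the intended sense precisely when its restriction to $RG^{p^n}$ is; since $S_n \subseteq RG^{p^n} \subseteq RG$ this is a matter of unwinding definitions, but it is the place where an error could creep in.
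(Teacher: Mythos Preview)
Your proposal is correct and follows essentially the same route as the paper. The one place you overcomplicate things is part (d): rather than decomposing both sides as crossed products and checking compatibility, the paper simply uses associativity of tensor products,
\[
(\mathcal{U}_n \ast H_n)\otimes_{RG} M \;=\; (\mathcal{U}_n \otimes_{RG^{p^n}} RG)\otimes_{RG} M \;\cong\; \mathcal{U}_n \otimes_{RG^{p^n}} M,
\]
so $(\mathcal{U}_n\ast H_n)\otimes_{RG}-$ is literally the composite of restriction to $RG^{p^n}$ with $\mathcal{U}_n\otimes_{RG^{p^n}}-$, and flatness follows immediately from Corollary \ref{Micro}; the same displayed isomorphism then gives (e) without further work.
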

\begin{proof}
(a) The $R$-Lie algebra associated with the uniform pro-$p$ group $G^{p^n}$ is clearly $p^n \fr{h}$, so $\mathcal{U}_n$ is isomorphic to $\h{U(p^n\fr{h})_K}$ by Theorem \ref{MicIwaGrp}. But this is just $\h{\U{h}_{n,K}}$ by definition.

(b), (c) Recall \cite[\S 1.5.8]{MCR} that a \emph{crossed product} of a ring $S$ by a group $H$ is an associative ring $S\ast H$ which contains $S$ as a subring and contains a set of units $\overline{H} = \{\overline{h} : h \in H\}$, isomorphic as a set to $H$, such that
\begin{itemize}
\item $S\ast H$ is a free right $S$-module with basis $\overline{H}$,
\item for all $x,y \in H$, $\overline{x}S = S\overline{x}$ and $\overline{x}\cdot\overline{y}S = \overline{xy}S$.
\end{itemize}
Such a crossed product determines an \emph{action} $\sigma : H \to \Aut(S)$ and a \emph{twisting} $\tau$ by the rules
\[\begin{array}{rcl} \sigma(x)(s) &=& \overline{x}\hspace{1mm}^{-1} s \hspace{1mm} \overline{x} \\
\overline{x}\hspace{1mm}\overline{y} &=& \overline{xy} \hspace{1mm} \tau(x,y)\end{array}\]
for all $x, y \in H$ and $s \in S$. Here $\tau(x,y) \in S^\times$ for all $x,y \in H$. It turns out that $\sigma$ defines a group homomorphism $H \to \Out(S)$ and $\tau$ is a $2$-cocycle $\tau : H \times H \to S^\times$ for the action of $H$ on $S^\times$ via $\sigma$. Conversely, starting with a ring $S$, a group $H$, a group homomorphism $\sigma : H \to \Out(S)$ and a $2$-cocycle $\tau : H \times H \to S^\times$, one can construct an associative ring $S \ast_{\sigma,\tau} H$ which is a crossed product of $S$ by $H$, having the prescribed action and twisting --- see \cite{Pass}.

Now $RG$ is a crossed product of $RG^{p^n}$ with $H_n$ defined by some action $\sigma : H_n \to \Out(RG^{p^n})$ and twisting $\tau : H_n \times H_n \to (RG^{p^n})^\times$. Units in $RG^{p^n}$ are units in $\mathcal{U}_n$, and ring automorphisms of $RG^{p^n}$ extend to ring automorphisms of $\mathcal{U}_n$ because the $\fr{m}_n$-adic filtration on $RG^{p^n}$ is canonical. Furthermore, inner automorphisms extend to inner automorphisms, so we obtain an action $\sigma' : H_n \to \Out(\mathcal{U}_n)$ and a twisting $\tau' : H_n \times H_n \to \mathcal{U}_n^\times$ which is still a $2$-cocycle. Thus we can form the crossed product $\mathcal{U}_n \ast_{\sigma', \tau'} H_n$ which equals $\mathcal{U}_n \otimes_{RG^{p^n}} RG$ as a set, and a ring homomorphism $RG \to \mathcal{U}_n \ast_{\sigma', \tau'} H_n$ which extends the inclusion of $RG^{p^n}$ into $\mathcal{U}_n$. It is clear that the multiplication in this crossed product agrees with the one defined above.

(d) For any $RG$-module $M$, there is an isomorphism of left $\mathcal{U}_n$-modules:
\[(\GhuntK) \otimes_{RG} M =         \mathcal{U}_n \otimes_{RG^{p^n}} RG \otimes_{RG} M \cong \mathcal{U}_n \otimes_{RG^{p^n}} M.\]
Restriction of modules is exact, and $\mathcal{U}_n = Q_{\gr p}(RG^{p^n})$ is a flat $RG^{p^n}$-module by Lemma \ref{Micro}

(e) This follows from Lemma \ref{Micro} and the displayed isomorphism above.
\end{proof}

\begin{rmk} It can probably be shown that the crossed product $\GhuntK$ is isomorphic to the distribution algebra $D_{\sqrt[p^n]{1/p}}(G,K)$, but we will not need this isomorphism.\end{rmk}

\subsection{Re-valuation of $p \in RG$}
\label{FiltZpG}
We now reinterpret the work of Schneider and Teitelbaum \cite[\S 4]{ST}. The main idea is to define degree functions
\[\deg_w : RG \to \mathbb{R} \cup \{\infty\}\]
for any real number $w \geq 1$, such that
\[\deg_w(p) = w \quad\mbox{and}\quad \deg_w(b_i) = 1 \quad\mbox{for all}\quad i = 1,\ldots, d.\]

\begin{defn} For any real number $w \geq 1$, define $\deg_w : RG \to \mathbb{R} \cup \{\infty\}$ by
\[\deg_w \left( \sum_{\alpha\in\mathbb{N}^d} \lambda_\alpha \mb{b}^\alpha \right) = \min \left\{ w \cdot v(\lambda_\alpha) + |\alpha| \st \alpha \in\mathbb{N}^d\right\}\]
with the understanding that this minimum value is $\infty$ if all the $\lambda_\alpha$ are zero.
\end{defn}
Thus $\deg_1$ is the degree function associated to the $\fr{m}$-adic filtration on $RG$.
\begin{lem}For any $w \geq 1$, $\deg_w$ is a degree function in the sense of $\S \ref{DegFun}$.
\end{lem}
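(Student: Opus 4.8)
The plan is to verify the four axioms of a degree function from $\S\ref{DegFun}$; the first three are essentially formal, and the multiplicativity axiom $\deg_w(xy)\ge\deg_w(x)+\deg_w(y)$ is the only one requiring an idea. Throughout I would use that, by Theorem \ref{madicFilt}, every $x\in RG$ has a \emph{unique} expansion $x=\sum_{\alpha\in\mathbb{N}^d}\lambda_\alpha\mb{b}^\alpha$ with $\lambda_\alpha\in R$, so $\deg_w$ is well defined; applying this to $x=\mb{b}^{0}$ gives $\deg_w(1)=0$, and the empty expansion gives $\deg_w(0)=\infty$. For the ultrametric inequality, if $y=\sum_\alpha\mu_\alpha\mb{b}^\alpha$ then $x+y=\sum_\alpha(\lambda_\alpha+\mu_\alpha)\mb{b}^\alpha$ and $v(\lambda_\alpha+\mu_\alpha)\ge\min(v(\lambda_\alpha),v(\mu_\alpha))$, whence $\deg_w(x+y)\ge\min(\deg_w(x),\deg_w(y))$; I would note that the same computation works verbatim for any $\fr{m}$-adically convergent sum, which I will reuse below.

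The key observation I would isolate first is that, since $w\ge 1$ and $v(\lambda)\ge 0$ for all $\lambda\in R$, the definition forces
\[ \deg_w(z)\ \ge\ \deg_1(z)\qquad\text{for all }z\in RG, \]
where $\deg_1$ is the degree function of the $\fr{m}$-adic filtration on $RG$ from $\S\ref{madicFilt}$. Combining this with the fact that $\deg_1$ is a genuine degree function (it comes from the powers of the ideal $\fr{m}$) and with $\deg_1(\mb{b}^\alpha)=|\alpha|$, which is immediate from the explicit formula in Theorem \ref{madicFilt}, I get for all $\alpha,\beta\in\mathbb{N}^d$
\[ \deg_w(\mb{b}^\alpha\mb{b}^\beta)\ \ge\ \deg_1(\mb{b}^\alpha\mb{b}^\beta)\ \ge\ \deg_1(\mb{b}^\alpha)+\deg_1(\mb{b}^\beta)\ =\ |\alpha|+|\beta|. \]
This single inequality is where the non-commutativity of the generators $b_i$ is absorbed: the reordering coefficients occurring in $\mb{b}^\alpha\mb{b}^\beta$ may acquire powers of $p$, but those only help since $w\ge1$.

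To finish, given non-zero $x=\sum_\alpha\lambda_\alpha\mb{b}^\alpha$ and $y=\sum_\beta\mu_\beta\mb{b}^\beta$, set $c=\deg_w(x)+\deg_w(y)$. For each pair with $\lambda_\alpha,\mu_\beta\ne 0$, the estimate above gives
\[ \deg_w(\lambda_\alpha\mu_\beta\,\mb{b}^\alpha\mb{b}^\beta)=w\bigl(v(\lambda_\alpha)+v(\mu_\beta)\bigr)+\deg_w(\mb{b}^\alpha\mb{b}^\beta)\ \ge\ \bigl(w\,v(\lambda_\alpha)+|\alpha|\bigr)+\bigl(w\,v(\mu_\beta)+|\beta|\bigr)\ \ge\ c. \]
Since $RG$ is $\fr{m}$-adically complete and $\deg_1(\lambda_\alpha\mu_\beta\,\mb{b}^\alpha\mb{b}^\beta)\ge v(\lambda_\alpha)+|\alpha|+v(\mu_\beta)+|\beta|\to\infty$, the double series $\sum_{\alpha,\beta}\lambda_\alpha\mu_\beta\,\mb{b}^\alpha\mb{b}^\beta$ converges $\fr{m}$-adically to $xy$, and then the reused additivity remark — applied coefficient by coefficient to the unique expansion of $xy$ — yields $\deg_w(xy)\ge c$. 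The only place demanding care when writing this out in full is the bookkeeping around $\fr{m}$-adic convergence of the product series and the passage to coefficients; everything else is formal once the inequality $\deg_w\ge\deg_1$ is in hand.
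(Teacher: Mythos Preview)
Your proof is correct and self-contained, whereas the paper's proof is a one-line citation: it observes that $\deg_w$ corresponds to the norm $\|\cdot\|_{p^{-1/w}}$ of Schneider--Teitelbaum and invokes \cite[Proposition 4.2]{ST}. So the two approaches differ in spirit rather than in substance. The paper outsources the submultiplicativity to \cite{ST}, where the argument ultimately rests on Lazard's filtration machinery; you instead extract the single inequality $\deg_w\ge\deg_1$ (valid because $w\ge 1$ and $v\ge 0$ on $R$) and use it to bound $\deg_w(\mb{b}^\alpha\mb{b}^\beta)\ge|\alpha|+|\beta|$ via the known $\fr{m}$-adic filtration, then pass to limits coefficientwise. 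Your route is more elementary and makes transparent exactly where the hypothesis $w\ge 1$ enters, at the cost of spelling out the bookkeeping for $\fr{m}$-adically convergent sums; the paper's route is shorter but requires the reader to consult \cite{ST}.
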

\begin{proof} When translated to the language of norms, $\deg_w$ corresponds to the norm $||\cdot||_{p^{-1/w}}$ defined in \cite[p. 160]{ST}. Then the result follows from \cite[Proposition 4.2]{ST}.\end{proof}

Morally, as $w \to \infty$ the element $p \in RG$ approaches $0$, so the filtrations approach $\mod p$ Iwasawa algebra $kG := RG / p RG$, equipped with its $\overline{\fr{m}}$-adic filtration.

Let $\gr^w RG$ be the associated graded ring of $RG$ with respect to the associated $\mathbb{R}$-filtration, and let $X_i = \gr^w b_i\in \gr^w_1 RG$ be the principal symbols of the topological generators $b_i$ of $RG$. The associated graded ring $\gr^w R$ of $R$ with respect to $\deg_w$ is isomorphic to the polynomial ring $k[t_w]$ with $t_w$ in degree $-w$ --- see $\S\ref{DegFun}$ for our conventions.

\begin{prop}
\begin{enumerate}[{(}a{)}] \hfill
\item $\gr^w RG$ is isomorphic to the polynomial ring $k[t_w, X_1,\ldots, X_d]$ as $k[t_w]$-modules.
\item If $w > 1$ then this is an isomorphism of graded rings.
\end{enumerate}
\end{prop}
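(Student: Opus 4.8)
The plan is to deduce both statements directly from Lazard's description of $RG$ in Theorem \ref{madicFilt}: namely $RG$ is the set of non-commutative power series $\sum_{\alpha\in\mathbb{N}^d}\lambda_\alpha\mb{b}^\alpha$ with $\lambda_\alpha\in R$, and $\deg_w\bigl(\sum_\alpha\lambda_\alpha\mb{b}^\alpha\bigr)=\min_\alpha\bigl(w\cdot v(\lambda_\alpha)+|\alpha|\bigr)$. Write $A:=k[t_w,X_1,\ldots,X_d]$, graded so that the assignment $t_w^j\mb{X}^\alpha\mapsto\gr^w(p^j\mb{b}^\alpha)$ is degree-preserving; this is well defined because the displayed formula gives $\deg_w(p^j\mb{b}^\alpha)=wj+|\alpha|$ exactly (a single non-zero coefficient). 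First I would check that the resulting $k$-linear map $\Phi\colon A\to\gr^w RG$ is a graded bijection.

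Surjectivity is immediate: if $x=\sum_\alpha\lambda_\alpha\mb{b}^\alpha\neq 0$ has $\deg_w$-value $\mu$, then $\gr^w x=\sum\overline{\lambda_\alpha p^{-v(\lambda_\alpha)}}\cdot\gr^w(p^{v(\lambda_\alpha)}\mb{b}^\alpha)$, the sum over those $\alpha$ with $w\,v(\lambda_\alpha)+|\alpha|=\mu$, so $\gr^w x\in\im\Phi$. For injectivity the crucial observation is that inside a fixed $\deg_w$-degree $\mu$ the equation $wj+|\alpha|=\mu$ determines $j$ from $\alpha$ (as $w\neq 0$), so the multi-indices $\alpha$ occurring in a homogeneous element of $A$ are pairwise distinct; lifting the coefficients $c_{j,\alpha}\in k^\times$ of a putative kernel element to units $\tilde c_{j,\alpha}\in R^\times$, the element $\sum\tilde c_{j,\alpha}\,p^j\mb{b}^\alpha\in RG$ is then a sum over distinct basis monomials, so it has $\deg_w$-value exactly $\mu$ and principal symbol $\Phi$ of the kernel element, forcing that element to vanish. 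Finally $\Phi$ is $k[t_w]$-linear because $\deg_w(p^{j+1}\mb{b}^\alpha)=\deg_w(p)+\deg_w(p^j\mb{b}^\alpha)$, so $\gr^w p\cdot\gr^w(p^j\mb{b}^\alpha)=\gr^w(p^{j+1}\mb{b}^\alpha)$; this proves (a).

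For (b) I must promote $\Phi$ to a ring isomorphism when $w>1$. Since $\Phi$ is already bijective and $t_w=\gr^w p$ is central, it is enough to show $\gr^w(\mb{b}^\alpha\mb{b}^\beta)=\gr^w(\mb{b}^{\alpha+\beta})$ for all $\alpha,\beta$, for then $\Phi(t_w^j\mb{X}^\alpha)\Phi(t_w^{j'}\mb{X}^\beta)=\gr^w(p^{j+j'}\mb{b}^\alpha\mb{b}^\beta)=\gr^w(p^{j+j'}\mb{b}^{\alpha+\beta})=\Phi\bigl((t_w^j\mb{X}^\alpha)(t_w^{j'}\mb{X}^\beta)\bigr)$. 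A routine reordering of words in $b_1,\ldots,b_d$ (each transposition changes the word by a commutator, and multiplying by further generators only increases $\deg_w$) reduces this to the single estimate $\deg_w(b_ib_j-b_jb_i)>2$ for $i\neq j$ and $w>1$. Here I would compute $b_ib_j-b_jb_i=(g_i-1)(g_j-1)-(g_j-1)(g_i-1)=g_ig_j-g_jg_i=(g_jg_i)\bigl([g_i,g_j]-1\bigr)$, where $[g_i,g_j]=g_i^{-1}g_j^{-1}g_ig_j$; since $g_jg_i\in G$ is a $\deg_w$-unit, everything comes down to showing $\deg_w\bigl([g_i,g_j]-1\bigr)>2$.

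This last estimate is where the uniform structure enters, and I expect it to be the main obstacle. Because $G$ is uniform, $[g_i,g_j]\in[G,G]\subseteq G^p$, and $[G,G]\subseteq G^{p^2}$ when $p=2$ (using $[L_G,L_G]\subseteq 4L_G$); moreover every element of $G^{p^m}$ is $\prod_l(g_l^{p^m})^{c_l}$ with $c_l\in\Zp$ by \cite[Corollary 4.15]{DDMS} and the fact that $G^{p^m}$ is uniform on $g_1^{p^m},\ldots,g_d^{p^m}$. Expanding $g_l^{p^m}-1=\sum_{s=1}^{p^m}\binom{p^m}{s}b_l^s$ and using $v\!\left(\binom{p^m}{s}\right)=m-v(s)$ gives $\deg_w(g_l^{p^m}-1)\ge\min_{0\le r\le m}\bigl(w(m-r)+p^r\bigr)$, and since $\binom{c_l}{s}\in\Zp$ and $\deg_w$ is sub-multiplicative the same bound holds for $\deg_w([g_i,g_j]-1)$. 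For $p$ odd, taking $m=1$ yields $\min(w+1,p)>2$ because $w>1$ and $p\ge 3$; for $p=2$, taking $m=2$ yields $\min(2w+1,\,w+2,\,4)>2$ because $w>1$. In every case $\deg_w([g_i,g_j]-1)>2$, completing (b). The only genuinely delicate points will be the bookkeeping in the reordering step and remembering that $p=2$ must be handled via the sharper containment $[G,G]\subseteq G^{p^2}$ rather than $[G,G]\subseteq G^p$.
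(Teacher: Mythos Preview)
Your proof is correct. The paper does not actually prove this Proposition; it simply invokes \cite[Lemma 4.3]{ST} and the surrounding discussion in Schneider--Teitelbaum, where the computation is carried out in the language of norms $\|\cdot\|_r$ on $RG$. Your argument is a self-contained reworking of that computation directly in the filtration language used here.

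The two approaches are essentially the same underneath: part (a) in both cases is just reading off the explicit formula $\deg_w(\sum\lambda_\alpha\mb{b}^\alpha)=\min(w\,v(\lambda_\alpha)+|\alpha|)$, and part (b) in both cases reduces to the commutator estimate $\deg_w(b_ib_j-b_jb_i)>2$ for $w>1$. Your treatment of that estimate via $b_ib_j-b_jb_i=g_jg_i([g_i,g_j]-1)$ and the bound $\deg_w(h-1)\ge\min_{0\le r\le m}(w(m-r)+p^r)$ for $h\in G^{p^m}$ is exactly the kind of computation underlying \cite[Proposition 4.2 and Lemma 4.3]{ST}; in particular you correctly handle $p=2$ by using $[G,G]\subseteq G^{p^2}$. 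The only thing your write-up buys over the citation is self-containment; conversely, citing \cite{ST} avoids repeating a standard calculation.
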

\begin{proof} Apply \cite[Lemma 4.3]{ST} and the remarks immediately before this Lemma.\end{proof}

\subsection{The restriction of $\deg_{p^n}$ to $RG^{p^n}$}
\label{Open}
Recall that $\{g_1,\ldots,g_d\}$ is a minimal topological generating set for the uniform pro-$p$ group $G$. By \cite[Theorem 3.6(iii)]{DDMS}, $\{g_1^{p^n}, \ldots, g_d^{p^n}\}$ is a minimal topological generating set for the open uniform subgroup $G^{p^n}$ of $G$, so
\[RG^{p^n} = \left\{ \sum_{\alpha\in\mathbb{N}^d} \lambda_\alpha \mb{b}_n^\alpha \st \lambda_\alpha \in R\right\}\]
where $\mb{b}_n^\alpha := (g_1^{p^n} - 1)^{\alpha_1} \cdots (g_d^{p^n} - 1)^{\alpha_d}$. We can now calculate the restriction of the filtration $\deg_{p^n}$ on $RG$ to its subalgebra $RG^{p^n}$. The next result is essentially \cite[Proposition 6.2]{Schmidt}, but we give a proof for the convenience of the reader.

\begin{prop} Let $\fr{m}_n$ be the maximal ideal of $RG^{p^n}$.  If $x \in \fr{m}_n^a \backslash \fr{m}_n^{a+1}$ for some integer $a\geq 0$, then $\deg_{p^n}(x) = p^na$.
\end{prop}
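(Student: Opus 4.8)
The plan is to compare two filtrations on the subalgebra $RG^{p^n}$: the $\fr{m}_n$-adic filtration (coming from viewing $RG^{p^n}$ as a uniform-group Iwasawa algebra in its own right) and the restriction of $\deg_{p^n}$ from $RG$. First I would recall from Theorem \ref{madicFilt} applied to the uniform group $G^{p^n}$ that $RG^{p^n}$ can be written as noncommutative power series in the elements $c_i := g_i^{p^n} - 1$, and that the $\fr{m}_n$-adic degree of $\sum \lambda_\alpha \mb{b}_n^\alpha$ equals $\min\{v(\lambda_\alpha) + |\alpha|\}$. Thus the statement $\deg_{p^n}(x) = p^n a$ whenever $x \in \fr{m}_n^a \setminus \fr{m}_n^{a+1}$ is equivalent to the assertion that on $RG^{p^n}$ one has the identity of degree functions
\[ \deg_{p^n}\left(\sum_\alpha \lambda_\alpha \mb{b}_n^\alpha\right) = \min_\alpha\left( p^n v(\lambda_\alpha) + p^n|\alpha| \right) = p^n \cdot \deg_{\fr{m}_n}\left(\sum_\alpha \lambda_\alpha \mb{b}_n^\alpha\right).\]
So the real content is: $\deg_{p^n}(c_i) = p^n$ for each $i$, and more generally $\deg_{p^n}$ of a monomial $\mb{b}_n^\alpha$ is $p^n|\alpha|$, with no cancellation occurring when these monomials are superposed with $R$-coefficients.

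The key step is the computation $\deg_{p^n}(g^{p^n} - 1) = p^n$ for a topological generator $g$ of $G$, together with the sharper statement that the principal symbol of $g^{p^n}-1$ in $\gr^{p^n} RG$ is (up to a unit scalar) the $p^n$-th power of the principal symbol of $g - 1$. To see this I would expand $g^{p^n} - 1 = (1 + b)^{p^n} - 1 = \sum_{j=1}^{p^n} \binom{p^n}{j} b^j$ where $b = g - 1$, and then estimate $\deg_{p^n}$ of each term: the term $j = p^n$ contributes $\deg_{p^n}(b^{p^n}) = p^n$ (using $\deg_{p^n}(b) = 1$ and multiplicativity, which needs Proposition \ref{FiltZpG}(b) to know the top-degree form behaves well), while a term $\binom{p^n}{j} b^j$ with $j < p^n$ has $v\left(\binom{p^n}{j}\right) = n - v_p(j)$ by Kummer's theorem, so its $\deg_{p^n}$-value is $p^n(n - v_p(j)) + j$. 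One checks $p^n(n - v_p(j)) + j \geq p^n$ for all $1 \le j < p^n$, with equality impossible, so the strict minimum is attained uniquely at $j = p^n$, giving $\deg_{p^n}(g^{p^n}-1) = p^n$ and $\gr^{p^n}(g^{p^n}-1) = \gr^{p^n}(b)^{p^n}$ in the graded ring $k[t_{p^n}, X_1,\dots,X_d]$ of Proposition \ref{FiltZpG}.

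Granting that, I would finish as follows. The associated graded ring $\gr^{p^n} RG \cong k[t_{p^n}, X_1,\dots,X_d]$ is a polynomial ring (Proposition \ref{FiltZpG}(b), using $p^n > 1$), hence an integral domain, and the principal symbols of the $c_i = g_i^{p^n}-1$ are $X_1^{p^n}, \dots, X_d^{p^n}$, which are algebraically independent. Therefore for any family of coefficients $\lambda_\alpha \in R$, the principal symbols $\gr^{p^n}(\lambda_\alpha \mb{b}_n^\alpha) = t_{p^n}^{v(\lambda_\alpha)} \bar\lambda_\alpha' \cdot (X^{p^n})^\alpha$ (for those $\alpha$ achieving the minimal $\deg_{p^n}$-value) are linearly independent over $k$ and hence do not cancel; this is exactly the same no-cancellation argument used in the proof of Theorem \ref{madicFilt} and in Proposition \ref{FiltZpG}. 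Consequently $\deg_{p^n}\left(\sum_\alpha \lambda_\alpha \mb{b}_n^\alpha\right) = \min_\alpha\left(p^n v(\lambda_\alpha) + p^n |\alpha|\right)$, which is $p^n$ times the $\fr{m}_n$-adic degree, and the Proposition follows.

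The main obstacle is the binomial estimate: verifying cleanly that $p^n(n - v_p(j)) + j > p^n$ for all $1 \le j < p^n$ (equivalently $j > p^n v_p(j) \ge$, handled by writing $j = p^{v_p(j)} m$ with $p \nmid m$ and bounding), and — more delicately — making rigorous the claim that $\deg_{p^n}$ is multiplicative enough that $\deg_{p^n}(b^{p^n}) = p^n \deg_{p^n}(b)$ and that the symbol of a product is the product of symbols, which is where one genuinely needs $w = p^n > 1$ so that Proposition \ref{FiltZpG}(b) gives a graded ring isomorphism rather than merely a module isomorphism. In the edge analysis one should also separately dispatch the degenerate contributions coming from $p = 2$, where the uniform-group conventions differ slightly, although the Kummer-theorem bound is uniform in $p$.
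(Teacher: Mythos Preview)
Your proposal is correct and follows essentially the same route as the paper. Two minor simplifications in the paper's version are worth noting. First, for the key binomial step the paper does not invoke Kummer's theorem to compute $v\binom{p^n}{j}$ exactly; it observes only that $(1+X)^{p^n} - (1 + X^{p^n}) \in p\mathbb{Z}[X]$ and has no constant term, so $g_i^{p^n} - 1 = b_i^{p^n} + \epsilon_i$ with $\epsilon_i \in p\fr{m}_0$, whence $\deg_{p^n}(\epsilon_i) \geq p^n + 1$ immediately. Second, for the conclusion the paper bypasses principal symbols and algebraic independence in $\gr^{p^n} RG$ altogether: it simply rewrites $x$ in the monomial basis $\{\mb{b}^\alpha\}$ of $RG$ (as $\sum_{\alpha \in T} \lambda_\alpha \mb{b}^{p^n\alpha}$ plus a higher-degree error term) and reads off $\deg_{p^n}$ directly from the defining formula in \S\ref{FiltZpG}. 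This sidesteps any appeal to multiplicativity of $\deg_{p^n}$ or to the graded-ring isomorphism of Proposition \ref{FiltZpG}(b), so your concerns about the edge cases $n=0$ and $p=2$ do not arise.
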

\begin{proof} The polynomial $(1 + X)^{p^n} - (1 + X^{p^n})$ is divisible by $p$ and has no constant term. Therefore
\[ (1 + b_i)^{p^n} \equiv 1 + b_i^{p^n} \mod p\fr{m}_0\]
for all $i = 1,\ldots, d$. Since $\deg_{p^n}(\fr{m}_0) \geq 1$ and $\deg_{p^n}(p) = p^n$ by definition, we see that
\[ g_i^{p^n} - 1 = b_i^{p^n} + \epsilon_i\]
for some $\epsilon_i \in RG$ with $\deg_{p^n}(\epsilon_i) > p^n$. It follows that
\[ \mb{b}_n^\alpha = (g_1^{p^n} - 1)^{\alpha_1} \cdots (g_d^{p^n} - 1)^{\alpha_d} = b_1^{p^n \alpha_1} \cdots b_d^{p^n \alpha_d} + \epsilon_\alpha = \mb{b}^{p^n\alpha} + \epsilon_\alpha\]
for some $\epsilon_\alpha \in RG$ with $\deg_{p^n}(\epsilon_\alpha) > p^n |\alpha|$. Thus
\[\deg_{p^n}(\fr{m}_n^j) \geq p^nj\quad\mbox{for all}\quad j \geq 0.\]
Now as $x \in \fr{m}_n^a \backslash \fr{m}_n^{a+1}$, we can write
\[ x \equiv \sum_{\alpha \in T} \lambda_\alpha \mb{b}_n^\alpha \mod \fr{m}_n^{a+1}\]
for some non-empty set $T$ of indices $\alpha$ satisfying $v(\lambda_\alpha) = a - |\alpha|$ for all $\alpha \in T$. Because $\deg_{p^n}(\fr{m}_n^{a+1}) \geq p^n(a+1) > p^na$,
\[x = \sum_{\alpha \in T} \lambda_\alpha \mb{b}^{p^n\alpha} + x'\]
for some $x'$ with $\deg_{p^n}(x') > p^n a$. Since $v(\lambda_\alpha) = a - |\alpha|$ for all $\alpha \in T$, we see that
\[\deg_{p^n}\left(\sum_{\alpha \in T} \lambda_\alpha \mb{b}^{p^n\alpha}\right) = \min\{ p^n v(\lambda_\alpha) + p^n |\alpha| \st \alpha \in T\} = p^n a \]
by the definition of $\deg_{p^n}$. Hence $\deg_{p^n}(x) = p^na$ as required.
\end{proof}

Let $S(w)$ be the microlocal Ore set in $RG$ associated to the $\deg_w$ filtration and the powers of $t_w$ in $\gr^w RG$:
\[ S(w) = \{ x \in RG \st \gr^w(x) = t_w^a \quad\mbox{for some} \quad a\geq 0\},\]
and recall the microlocal Ore set $S_n \subseteq RG^{p^n}$ from $\S\ref{CrossProd}$.
\begin{cor} \be \hfill
\item The restriction of the $\deg_{p^n}$ filtration to $RG^{p^n}$ is a re-scaling of the $\fr{m}_n$-adic filtration on $RG^{p^n}$.
\item The image of $\gr^{p^n} RG^{p^n}$ inside $\gr^{p^n} RG$ is precisely $k[t_{p^n}][X_1^{p^n}, \ldots, X_d^{p^n}]$, with all generators in degree $p^n$.
\item For any $n \geq 0$, $S_n$ is contained in $S(p^n)$.
\ee\end{cor}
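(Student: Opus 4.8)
The plan is to read off all three statements from Proposition \ref{Open} together with the description of $\gr^{p^n}RG$ in Proposition \ref{FiltZpG}(b). Part (a) is an immediate reformulation of Proposition \ref{Open}: that result says that for $x\in RG^{p^n}$ the number $\deg_{p^n}(x)$ equals $p^n$ times the $\fr{m}_n$-adic order of $x$ (with the convention $p^n\cdot\infty=\infty$). Hence the filtration that $\deg_{p^n}$ induces on $RG^{p^n}$ by restriction satisfies $\{x\in RG^{p^n}:\deg_{p^n}(x)\geq p^n a\}=\fr{m}_n^{\,a}$ for every $a\geq 0$, which is exactly the $\fr{m}_n$-adic filtration re-indexed by the factor $p^n$; nothing further is needed.

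For part (b) I would argue as follows. By part (a) the restricted filtration on $RG^{p^n}$ is, up to the rescaling $a\mapsto p^n a$, the $\fr{m}_n$-adic one, so $\gr^{p^n}RG^{p^n}$ is a re-grading of the $\fr{m}_n$-adic associated graded ring of $RG^{p^n}$. Since $RG^{p^n}$ is the power-series ring over $R$ in the elements $g_i^{p^n}-1$ (see \S\ref{Open}), its maximal ideal $\fr{m}_n$ is generated by $p$ and $g_1^{p^n}-1,\dots,g_d^{p^n}-1$, and therefore the $\fr{m}_n$-adic graded ring is generated over $k$ in degree one by the principal symbols of these $d+1$ elements (the standard fact that $\gr_{\fr{m}}A$ of a Noetherian local ring is generated by $\fr{m}/\fr{m}^2$). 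Transporting this along the embedding $\gr^{p^n}RG^{p^n}\hookrightarrow\gr^{p^n}RG$ and using the identity $g_i^{p^n}-1=b_i^{p^n}+\epsilon_i$ with $\deg_{p^n}(\epsilon_i)>p^n$ recorded in the proof of Proposition \ref{Open}, together with the fact that $\gr^{p^n}RG\cong k[t_{p^n},X_1,\dots,X_d]$ is a domain (Proposition \ref{FiltZpG}(b), valid because $p^n>1$), one gets $\deg_{p^n}(g_i^{p^n}-1)=p^n$ and $\gr^{p^n}(g_i^{p^n}-1)=(\gr^{p^n}b_i)^{p^n}=X_i^{p^n}$, while $\gr^{p^n}(p)=t_{p^n}$. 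Thus the image of $\gr^{p^n}RG^{p^n}$ is the $k$-subalgebra generated by $t_{p^n}$ and $X_1^{p^n},\dots,X_d^{p^n}$; as these elements are algebraically independent this is precisely the polynomial subring $k[t_{p^n}][X_1^{p^n},\dots,X_d^{p^n}]$, each polynomial generator $X_i^{p^n}$ lying in degree $p^n$.

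Part (c) is then a one-line special case. For $n=0$ it is the tautology $S_0=S=S(1)$. For $n\geq 1$, let $x=p^a+y\in S_n$ with $y\in\fr{m}_n^{a+1}$. By definition $\deg_{p^n}(p^a)=p^n a$, whereas Proposition \ref{Open} gives $\deg_{p^n}(y)\geq p^n(a+1)>p^n a$ (with $y=0$ having degree $\infty$). Hence $\deg_{p^n}(x)=p^n a$ and the principal symbol of $x$ equals that of $p^a$, namely $(\gr^{p^n}p)^a=t_{p^n}^{\,a}$, which is non-zero since $\gr^{p^n}RG$ is a domain by Proposition \ref{FiltZpG}(b); therefore $x\in S(p^n)$.

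I expect no genuine obstacle here: all of the content is already present in Proposition \ref{Open} and its proof. The only point needing a little care is in part (b), where one must be sure that passing to the associated graded ring introduces no generators beyond the symbols of a generating set of $\fr{m}_n$, and that the correction terms $\epsilon_i$ genuinely lie in strictly higher $\deg_{p^n}$-degree — both of which are exactly what the local-ring fact quoted above and the proof of Proposition \ref{Open} provide.
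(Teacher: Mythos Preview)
Your proposal is correct and follows essentially the same approach as the paper: both derive all three parts from Proposition \ref{Open} (and the computation in its proof), with the paper simply declaring (a) and (b) ``clear'' and giving the short argument you give for (c). Your detailed justification of (b) via generators of $\fr{m}_n/\fr{m}_n^2$ and the identity $g_i^{p^n}-1=b_i^{p^n}+\epsilon_i$ is a reasonable elaboration of what the paper leaves implicit; one small remark is that you do not really need the domain property of $\gr^{p^n}RG$ to know $t_{p^n}^a\neq 0$, since already $\gr^{p^n}R\cong k[t_{p^n}]$ is a polynomial ring.
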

\begin{proof} The first two statements are clear. Let $p^a + x \in S_n$ for some $a \geq 0$ and $x \in \fr{m}_n^{a+1}$. Then
\[\deg_{p^n}(x) \geq p^n(a+1) > p^na\]
by Proposition \ref{Open} so $p^a + x \in S(p^n)$.
\end{proof}

\subsection{The filtration on $kG$}
\label{FiltFpG}
Write $\overline{x}$ for the image of $x \in RG$ in the completed group ring $kG := RG / p RG$. We will abuse notation and write $\overline{\mb{b}^\alpha} = \mb{b}^\alpha$, so that $kG$ is in bijection with the set of non-commutative formal power series in $b_1,\ldots, b_d$ with coefficients in $k$:
\[kG = \left\{ \sum_{\alpha\in\mathbb{N}^d} \lambda_\alpha \mb{b}^\alpha \st \lambda_\alpha \in k\right\}.\]
Let us define $\od : kG \to \mathbb{R} \cup \{\infty\}$ as follows:
\[\od \left( \sum_{\alpha\in\mathbb{N}^d} \mu_\alpha \mb{b}^\alpha \right) = \min \left\{ |\alpha| \st \mu_\alpha \neq 0\right\}.\]
Let $\fr{m} = \ker(kG \to k)$ be the maximal ideal of $kG$; then clearly
\[\od (x) = \left\{ \begin{array}{l}a \quad \mbox{ if } x \in \fr{m}^a \backslash \fr{m}^{a+1} \\ \infty \quad \mbox{ if } x = 0\end{array} \right. \]
so $\od$ is the usual degree function associated with the $\fr{m}$-adic filtration on $kG$.

The degree functions $\deg_w$ on $RG$ and $\od$ on $kG$ are related as follows.

\begin{lem} Let $x \in RG$ be such that $\overline{x} \neq 0$. Then
\be
\item $\od(\overline{x}) \geq \deg_w(x)$ for any $w \geq 1$.
\item If $y\in RG$ is such that $\overline{y} = \overline{x}$, and if $w \geq \od(\overline{x})$, then
\[\deg_w(y) = \od(\overline{x}).\]
\ee
\end{lem}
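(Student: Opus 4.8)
The plan is to work directly with the unique non-commutative power series expansions supplied by Theorem \ref{madicFilt} and to compare coefficients after reduction modulo $p$. First I would fix the expansion $x = \sum_{\alpha \in \mathbb{N}^d} \lambda_\alpha \mb{b}^\alpha$ with $\lambda_\alpha \in R$; by Theorem \ref{madicFilt} these coefficients are uniquely determined by $x$, so reduction modulo $p$ sends this expansion to the expansion $\overline{x} = \sum_\alpha \mu_\alpha \mb{b}^\alpha$ of $\S\ref{FiltFpG}$ with $\mu_\alpha = \overline{\lambda_\alpha} \in k = R/pR$. The single fact used repeatedly is that $\mu_\alpha \neq 0$ if and only if $v(\lambda_\alpha) = 0$.

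For part (a): since $\overline{x} \neq 0$, I would choose an index $\alpha_0$ with $\mu_{\alpha_0} \neq 0$ and $|\alpha_0| = \od(\overline{x})$. Then $v(\lambda_{\alpha_0}) = 0$, so the defining formula for $\deg_w$ from $\S\ref{FiltZpG}$ gives
\[ \deg_w(x) \le w \cdot v(\lambda_{\alpha_0}) + |\alpha_0| = |\alpha_0| = \od(\overline{x}), \]
and this holds for every real $w \ge 1$.

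For part (b): write $y = \sum_\alpha \lambda_\alpha \mb{b}^\alpha$; since $\overline{y} = \overline{x}$ we still have $\overline{\lambda_\alpha} = \mu_\alpha$ for all $\alpha$, and in particular $\overline{y} \neq 0$. Put $N := \od(\overline{x}) = \od(\overline{y})$. Applying part (a) to $y$ already gives $\deg_w(y) \le N$, so it remains to prove $\deg_w(y) \ge N$. I would take any $\alpha$ with $\lambda_\alpha \neq 0$ and split into two cases: if $v(\lambda_\alpha) = 0$ then $\mu_\alpha \neq 0$, hence $|\alpha| \ge N$ and $w \cdot v(\lambda_\alpha) + |\alpha| = |\alpha| \ge N$; if instead $v(\lambda_\alpha) \ge 1$ then $w \cdot v(\lambda_\alpha) + |\alpha| \ge w \ge N$, using the hypothesis $w \ge \od(\overline{x})$. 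Taking the minimum over $\alpha$ shows $\deg_w(y) \ge N$, so $\deg_w(y) = N$.

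There is no serious obstacle here; the argument is entirely elementary once Theorem \ref{madicFilt} is invoked. The only points requiring any care are the appeal to the uniqueness of the power series coefficients (so that "reduction modulo $p$ acts coefficientwise" is legitimate) and the case split in part (b), where the hypothesis $w \ge \od(\overline{x})$ is used precisely to control the coefficients $\lambda_\alpha$ that vanish modulo $p$.
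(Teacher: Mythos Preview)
Your proof is correct and follows essentially the same approach as the paper's: both arguments work directly with the power series expansion from Theorem \ref{madicFilt} and compare coefficients modulo $p$. The only cosmetic difference is the organization of the case split in part (b): the paper conditions on whether $|\alpha| < \od(\overline{x})$ or $|\alpha| \ge \od(\overline{x})$, whereas you condition on whether $v(\lambda_\alpha) = 0$ or $v(\lambda_\alpha) \ge 1$, but these lead to the same two-line verification.
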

\begin{proof}(a) Write $x = \sum_{\alpha\in\mathbb{N}^d} \lambda_\alpha \mb{b}^\alpha$ for some $\lambda_\alpha \in R$. Since $\overline{x}$ is non-zero, $\od(\overline{x}) = |\beta|$ for some $\beta \in \mathbb{N}^d$ such that $v(\lambda_\beta) = 0$. Then
\[ \od(\overline{x}) = |\beta| \geq \min \left\{ w \cdot v(\lambda_\alpha) + |\alpha| \st \alpha \in\mathbb{N}^d\right\} = \deg_w(x).\]

(b) Let $\alpha$ be such that $|\alpha| < |\beta|$. By definition of $\od$, $p$ divides $\lambda_\alpha$, and the coefficient $\mu_\alpha$ of $\mb{b}^\alpha$ in $y$ differs from $\lambda_\alpha$ by a multiple of $p$. Since $w \geq \od(\overline{x}) = |\beta|$ by assumption,
\[w \cdot v(\mu_\alpha) + |\alpha| \geq w \geq |\beta|\]
for any such $\alpha$. On the other hand
\[w \cdot v(\mu_\alpha) + |\alpha| \geq |\beta|\]
is trivially true whenever $|\alpha| \geq |\beta|$, so
\[\deg_w(y) = \min \{w\cdot v(\mu_\alpha) + |\alpha|\} \geq |\beta| = \od(\overline{x}).\]
But we showed in (a) that $\od(\overline{x}) = \od(\overline{y}) \geq \deg_w(y)$.
\end{proof}

\subsection{Good generating sets}
\label{GGS}
Since the $\fr{m}$-adic filtration on $kG$ is complete, and the associated graded ring is Noetherian, it is well known that the Rees ring
\[ \Ex{kG} = \bigoplus_{j \in \mathbb{Z}} \fr{m}^j t^{-j} \subset kG[t, t^{-1}]\]
is Noetherian (where as always $\fr{m}^j = kG$ if $j \le 0$.)

Let $J$ be a left ideal of $kG$. Then the Rees ideal
\[ \Ex{J} = \bigoplus_{j \in \mathbb{Z}} (J \cap \fr{m}^j) t^{-j}  \triangleleft_l \Ex{ kG}\]
is finitely generated over $\Ex{kG}$. Let $z_1 t^{-d_1}, \cdots, z_\ell t^{-d_\ell}$ be a homogeneous generating set for $\Ex{J}$, for some $z_i \in J$ with $\od(z_i) = d_i$; then $\{z_1,\ldots, z_\ell\}$ is a \emph{good generating set} for $J$:
\[ J \cap \fr{m}^n = \sum_{i=1}^\ell \fr{m}^{n - d_i}z_i  \quad \mbox{for all}\quad n \in \mathbb{Z}.\]
We record this as a Lemma:
\begin{lem} Let $x \in J$. Then there exist $r_i \in kG$ such that $x = \sum_{i=1}^\ell r_iz_i$ and
\[ \od(r_i) \ge \od(x) - d_i\]
for all $i = 1,\ldots, \ell$.
\end{lem}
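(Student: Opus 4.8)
The plan is to deduce the statement directly from the good generating set identity $J \cap \fr{m}^n = \sum_{i=1}^\ell \fr{m}^{n-d_i} z_i$ recorded just above, which itself rests only on the Noetherianity of the Rees ring $\Ex{kG}$: the Rees ideal $\Ex{J} = \bigoplus_{j} (J\cap\fr{m}^j)t^{-j}$ is then finitely generated over $\Ex{kG}$ by homogeneous elements $z_i t^{-d_i}$, and comparing graded components gives the identity. So the lemma is really a reformulation of that identity in terms of the degree function $\od$, and there is no genuine obstacle to overcome; the work has already been done in setting up the good generating set.

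Concretely, first I would dispose of the trivial case $x = 0$ by taking $r_1 = \cdots = r_\ell = 0$, since $\od(0) = \infty \geq \infty - d_i$. Otherwise put $a := \od(x)$, a non-negative integer; then $x \in \fr{m}^a$ by the characterisation of $\od$, so $x \in J \cap \fr{m}^a$. Applying the good generating set identity with $n = a$ produces $r_1,\ldots,r_\ell \in kG$ with $x = \sum_{i=1}^\ell r_i z_i$ and $r_i \in \fr{m}^{a-d_i}$ for each $i$. When $a - d_i > 0$ this says precisely $\od(r_i) \geq a - d_i = \od(x) - d_i$; when $a - d_i \leq 0$ one has $\fr{m}^{a-d_i} = kG$ by the standing convention, and the desired inequality $\od(r_i) \geq 0 \geq a - d_i$ holds automatically because $\od$ is non-negative on $kG$. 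In all cases $\od(r_i) \geq \od(x) - d_i$, which is the assertion.

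The only points needing a line of care are thus the degenerate cases $x = 0$ and $a \leq d_i$, together with the sign convention $\fr{m}^j = kG$ for $j \leq 0$, all handled above. If a self-contained argument were preferred, one could bypass the displayed identity and unwind the Rees picture directly: the degree $-a$ component of $\Ex{J} = \sum_i \Ex{kG}\cdot z_i t^{-d_i}$ equals $\sum_i (\Ex{kG})_{-(a-d_i)}\, z_i t^{-d_i} = \sum_i \fr{m}^{a-d_i} z_i t^{-a}$, using the centrality of $t$, and comparing with $(J\cap\fr{m}^a)t^{-a}$ yields the same conclusion.
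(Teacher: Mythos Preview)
Your proof is correct and is exactly the unpacking the paper intends: the paper does not give a separate proof but simply writes ``We record this as a Lemma'' after stating the good generating set identity $J\cap\fr{m}^n=\sum_i \fr{m}^{n-d_i}z_i$, and your argument is precisely the translation of that identity (with $n=\od(x)$) into the language of the degree function $\od$. The careful handling of the degenerate cases $x=0$ and $a\leq d_i$ is a nice touch that the paper leaves implicit.
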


Here is the faithful flatness result of Schneider and Teitelbaum from the point of view of non-commutative algebra.
\subsection{Theorem}\label{MainTech} Let $J$ be a left ideal of $RG$ such that $RG / J$ is $p$-torsion-free. Then there exists $w_0 > 1$, depending only on $J$, such that $\gr^w(RG / J)$ is $t_w$-torsion-free for all $w \geq w_0$.
\begin{proof}
Suppose that $t_w \cdot X \in \gr^w J$ for some homogeneous element $X \in \gr^w RG$; we will show if $w$ is large enough then we can find $x' \in J$ such that $X = \gr^w x'$, which implies that $X \in \gr^w J$.

Let $\{z_1,\ldots, z_\ell\}$ be a good generating set for the left ideal $\overline{J}$ of $k G$, as in $\S \ref{GGS}$. Let $d_i = \od(z_i)$ and define
\[w_0 := \max\{ d_1, \ldots, d_\ell\}.\]
We fix lifts $a_i \in J$ for these generators and note that if $w \geq w_0$ then
\[\deg_w(a_i) = d_i \quad\mbox{for all}\quad i=1,\ldots, \ell\]
by Lemma \ref{FiltFpG}(b). Choose some $x \in R G$, possibly not in $J$, such that $\gr^w x = X$. Since $\gr^w p \cdot \gr^w x \in \gr^w J$,
\[px + z \in J\]
for some $z \in RG$ with $\deg_w(z) > \deg_w(px)$. So $\overline{z} \in \overline{J}$ and by Lemma \ref{GGS} we can find $r_1,\ldots, r_\ell \in k G$ such that $\overline{x} = \sum_{i=1}^\ell r_i z_i$ and
\[\od(r_i) \geq \od(\overline{x}) - d_i \quad\mbox{for all} \quad i = 1,\ldots, \ell.\]
Choose lifts $s_i \in RG$ of $r_i \in k G$ satisfying $\deg_w(s_i) = \od(r_i)$. Then
\[z = \sum_{i=1}^\ell s_i a_i + py\]
for some $y \in RG$, and
\[\begin{array}{lllllll} \deg_w\left(\sum_{i=1}^\ell s_i a_i \right) &\geq& \min \{ \deg_w(s_i) + \deg_w(a_i) \} &=& && \\
&= & \min \{ \od(r_i) + d_i \} &\geq& \od(\overline{z}) &\geq& \deg_w(z)\end{array}\]
by Lemma \ref{FiltFpG}(a). Hence
\[\deg_w(py) = \deg_w\left(z - \sum_{i=1}^\ell s_i a_i\right) \geq \deg_w(z) > \deg_w(px).\]
Since $t_w$ is not a zero-divisor in $\gr^w RG$, we deduce that
\[\deg_w(y) > \deg_w(x).\]
Since each $a_i$ lies in $J$,
\[px + py = px + z - \sum_{i=1}^\ell s_i a_i \in J.\]
But $RG / J$ is $p$-torsion-free, so $x' := x + y \in J$; since $\deg_w(y) > \deg_w(x)$ we deduce that $X = \gr^w x = \gr^w x' \in \gr^w J$.
\end{proof}

\begin{cor} Let $M$ be a finitely generated $p$-torsion-free $RG$-module. Then there exists $n_0 \in \mathbb{N}$ such that $M$ is $S_n$-torsion-free for all $n \geq n_0$.
\end{cor}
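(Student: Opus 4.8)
The plan is to reduce the statement to the cyclic case already established in Theorem \ref{MainTech}, using the inclusion $S_n \subseteq S(p^n)$ together with a routine extension of that theorem from cyclic to arbitrary finitely generated modules. First I would note that by Corollary \ref{Open}(c) we have $S_n \subseteq S(p^n)$ for all $n \geq 0$, so it suffices to find a real number $w_0 > 1$, depending only on $M$, such that $M$ is $S(w)$-torsion-free for every $w \geq w_0$; then any $n_0$ with $p^{n_0} \geq w_0$ does the job. Next I would choose a surjection $F \twoheadrightarrow M$ of $RG$-modules with $F$ free of finite rank $r$, set $N := \ker(F \to M)$, and equip $F$ with the $\deg_w$-filtration (one copy of the filtration of \S\ref{FiltZpG} on each of the $r$ coordinates) and $M = F/N$ with the quotient filtration. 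The heart of the argument is the claim that \emph{there is $w_0 > 1$, depending only on $N$, such that $\gr^w M$ is $t_w$-torsion-free for all $w \geq w_0$}.

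To prove this I would rerun the proof of Theorem \ref{MainTech} verbatim, replacing $RG$ by $F$, the Iwasawa algebra $kG = RG/pRG$ by $F/pF \cong (kG)^r$, the left ideal $J$ by the submodule $N$, and the left ideal $\overline{J}$ by the image $\overline{N}$ of $N$ in $(kG)^r$. Nothing in that proof uses that $J$ is an ideal rather than a submodule of a finite free module: good generating sets for $\overline{N}$ with respect to the $\fr{m}$-adic filtration on $(kG)^r$ exist because the Rees module of $\overline{N}$ is finitely generated over the Noetherian Rees ring of $kG$ (as in \S\ref{GGS}), Lemma \ref{FiltFpG} is applied in each coordinate, and $p$ still acts centrally on $F$. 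Since $F/N$ is $p$-torsion-free by hypothesis, the conclusion of \ref{MainTech} carries over and yields the claim.

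Granting the claim, I would conclude as follows. The $\deg_w$-filtration on $RG$, hence on $F$, is complete (by the same reasoning as in Theorem \ref{madicFilt}, $R$ being complete), so $N$ is closed in $F$ and the quotient filtration on $M$ is separated. Fix $w \geq w_0$ and suppose $s \in S(w)$ and $m \in M$ satisfy $sm = 0$ with $m \neq 0$. Then $\gr^w m \neq 0$ by separatedness, while $\gr^w s = t_w^{\,a}$ for some integer $a \geq 0$ by definition of $S(w)$; taking principal symbols of $sm = 0$ yields
\[ t_w^{\,a}\cdot \gr^w m \;=\; \gr^w(s)\cdot\gr^w(m) \;=\; 0 \quad\text{in } \gr^w M, \]
the final equality because $\gr^w(s)\,\gr^w(m)$ is the image of $sm$ in a graded component of $\gr^w M$. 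Since $\gr^w M$ is $t_w$-torsion-free this forces $\gr^w m = 0$, a contradiction; hence $m = 0$, so $M$ is $S(w)$-torsion-free and the corollary follows.

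The step I expect to be the main obstacle — though it is more a matter of care than of genuine difficulty — is the passage from the cyclic to the general finitely generated case: one has to confirm that the proof of \ref{MainTech} really only used the centrality of $p$ and the availability of good generating sets, both of which persist when $RG$ is replaced by a finite free module, and one must check that the quotient filtration on $M$ is separated so that nonzero elements have nonzero principal symbols. An alternative route avoiding the coordinatewise bookkeeping would be to split off the $RG$-torsion submodule $t(M)$ (which is again $p$-torsion-free, while $M/t(M)$ is torsion-free over the Noetherian domain $RG$ and hence embeds in a finite free module) and to use that $S(w)$-torsion-freeness is preserved under extensions, localisation at an Ore set being exact; but this still relies on the free-module version of \ref{MainTech}, so the direct generalisation seems cleanest.
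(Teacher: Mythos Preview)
Your argument is correct but takes a different route from the paper. The paper first passes to $M_K = K\otimes_R M$, then picks a finite filtration of $M_K$ by $KG$-submodules with \emph{cyclic} quotients; since $S_n$-torsion-freeness is preserved under extensions and each cyclic quotient has the form $(RG/J)_K$ with $RG/J$ $p$-torsion-free, this reduces at once to the cyclic case already handled by Theorem \ref{MainTech}, and the deduction finishes via Corollary \ref{Open}(c) exactly as you do. Your approach instead \emph{extends} Theorem \ref{MainTech} from left ideals of $RG$ to submodules of finite free modules and then argues directly with principal symbols. This is valid: the proof of \ref{MainTech} indeed uses only centrality of $p$, coordinatewise use of Lemma \ref{FiltFpG}, and the existence of good generating sets for $\overline{N}$ in $(kG)^r$, which follows from Noetherianity of the Rees ring just as in \S\ref{GGS}. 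The paper's reduction is shorter and avoids reopening the proof of \ref{MainTech}; yours avoids the passage to $KG$ and the composition-series bookkeeping. One small refinement: when you assert that $N$ is closed in $F$ (equivalently, that the quotient filtration on $M$ is separated), completeness alone is not enough; you should note that the $\deg_w$-filtration on $RG$ is Zariskian (the filtration is complete and $\gr^w RG$ is Noetherian by Proposition \ref{FiltZpG}), so good filtrations on finitely generated modules are separated.
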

\begin{proof} Let $KG = K \otimes_R RG$. It is enough to prove that the $KG$-module $M_K=K\otimes_R M$ is $S_n$ torsion-free for all $n$ sufficiently large. We can find a finite composition series for $M_K$ consisting of $KG$-submodules such that each composition factor is cyclic; then $M_K$ is $S_n$-torsion-free provided all the composition factors are $S_n$-torsion-free. Thus we may reduce to the case where $M$ is a cyclic $RG$-module, $M\cong RG/J$, say.

 By Theorem \ref{MainTech}, there exists $w_0 > 1$ such that $\gr^w (RG / J)$ is $t_w$-torsion-free whenever $w \geq w_0$. Hence $RG/J$ is $S(w)$-torsion-free if $w \geq w_0$ by Lemma \ref{Micro}.

Choose $n_0$ such that $p^{n_0} > w_0$, and let $n \geq n_0$.  Then $p^n \geq w_0$ and $S_n \subseteq S(p^n)$ by Corollary \ref{Open}(c), so $M$ is $S_n$-torsion-free.
\end{proof}

\subsection{The congruence kernels}\label{CongKern}
Recall the algebraic $R$-group scheme $\mb{G}$ introduced in $\S \ref{AlgGps}$. We now make the following additional assumptions on our uniform pro-$p$ group $G$:
\begin{itemize}
\item The Lie algebra $\fr{g}$ of the algebraic group $\mb{G}$ and the Lie algebra $L_G$ of $G$ satisfy $p^{m+1} \fr{g} = R \otimes_{\Zp}L_G$ for some integer $m \geq 0$,
\item $\mb{G}$ is semisimple,
\item $p$ is a very good prime for $\mb{G}$ in the sense of $\S \ref{VeryGoodp}$.
\end{itemize}
Thus $p^m\fr{g}$ is the $R$-Lie algebra $\frac{1}{p}RL_G$ associated to the uniform pro-$p$ group $G$ in the sense of $\S\ref{UnifLie}$. It follows from the discussion in \cite[\S 7]{SchVen} that the so-called \emph{$(m+1)$-st congruence kernel}
\[G = \ker(\mb{G}(\Zp) \to \mb{G}(\Zp/p^{m+1}\Zp))\]
of the group of $\Zp$-points $\mb{G}(\Zp)$ satisfies these conditions. We now combine the earlier results of this paper in order to obtain some information about the representation theory of the central localisation
\[ KG := K \otimes_R RG = K \otimes_R R[[G]].\]
of the completed group ring $R[[G]]$. Equivalently, we study the category of $p$-torsion-free $RG$-modules. Perhaps our methods are applicable to a slightly wider class of compact $p$-adic analytic groups, but it may help the reader to keep these congruence kernels in mind.

\subsection{Modules over $KG$}
Note that $RG$ is an $R$-lattice in $KG$ and the slice $\gr_0 KG \cong kG$ is a complete $\mathbb{Z}$-filtered ring when equipped with its $\fr{m}$-adic filtration --- see $\S \ref{FiltFpG}$. Thus $KG$ is itself a complete doubly filtered $K$-algebra with $\Gr(KG) = \gr kG \cong k[x_1,\ldots, x_d]$ and the theory of $\S \ref{DoubleFilt} - \S \ref{CharVarAff}$ applies, although it is not an almost commutative affinoid $K$-algebra because the filtration on its slice is negative.

\begin{lem} $KG$ is Auslander-Gorenstein with $\injdim KG = \dim G$. Let $M$ be a finitely generated $KG$-module. Then $d(M) = \dim \Ch(M)$ and $d(M) = 0$ if and only if $M$ is finite dimensional over $K$.\end{lem}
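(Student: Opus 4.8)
The plan is to deduce this from the general machinery built up in the previous sections, specialised to the complete doubly filtered $K$-algebra $KG$. First I would record the structural facts already assembled in the paragraph immediately preceding the statement: $RG$ is an $R$-lattice in $KG$, the slice $\gr_0 KG$ is $kG$ equipped with its $\fr{m}$-adic filtration, and $\Gr(KG) = \gr kG \cong k[x_1,\ldots,x_d]$ with $d = \dim G$ by Corollary \ref{madicFilt} (or rather by the $\mod p$ analogue of Theorem \ref{madicFilt}, since $\gr kG \cong U(\gr G \otimes_{\Fp} k)$ and $\gr G \otimes_{\Fp} k$ is an \emph{abelian} $k[t]$-Lie algebra when $\fr{g}$ is abelian — but $kG$ is filtered so that $\gr kG$ is commutative; more precisely $\Gr(KG)$ is the polynomial ring on the $d$ principal symbols of the $b_i$, cf. $\S\ref{FiltFpG}$ and $\S\ref{GGS}$). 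In particular $\Gr(KG)$ is a connected graded polynomial algebra over $k$, which is commutative and regular, so Theorem \ref{CharVarAff} applies directly: $KG$ is Auslander regular (hence Auslander--Gorenstein), and since every simple module over a polynomial ring over $k$ has canonical dimension $0$, we get $\dim \Ch(M) + j_{KG}(M) = \dim \Gr(KG) = d$ for every finitely generated $KG$-module $M$.

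Next I would exhibit a non-zero finite dimensional $KG$-module so as to pin down $\injdim KG$. The trivial module $K = KG / KG \cdot \fr{m}_{KG}$ works: it is one-dimensional over $K$, so $\dim\Ch(K) = 0$ and therefore $j_{KG}(K) = d$. Then exactly as in the proof of Proposition \ref{CdimAmod}(b), we have $d = j_{KG}(K) \le \injdim KG \le \gldim KG$, and the chain of inequalities
\[ \gldim KG \le \gldim RG - 1 = \gldim kG \le \gldim \Gr(KG) = \dim \Gr(KG) = d \]
using \cite[\S 7.4.4, 7.3.7, 7.5.3(iii)]{MCR} (note $\pi = p$ is a central regular element of $RG$ in its Jacobson radical and $KG = (RG)_p$) and \cite[Corollary I.7.2.2]{LVO} forces $\injdim KG = \gldim KG = d = \dim G$. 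Consequently $d(M) = \injdim_{KG} KG - j_{KG}(M) = d - j_{KG}(M) = \dim \Ch(M)$ by the displayed formula, which is the first assertion of the final sentence.

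For the last equivalence, I would invoke Proposition \ref{CdimAmod}(a): since $\Gr(KG)$ is a connected graded polynomial algebra over $k$, a finitely generated $KG$-module $M$ is finite dimensional over $K$ if and only if $\dim \Ch(M) = 0$. The only point that needs a word is that the cited proposition is stated for complete doubly filtered $K$-algebras with the slice \emph{positively} filtered, whereas the slice $kG$ of $KG$ is negatively filtered; but the proof of Proposition \ref{CdimAmod}(a) uses only Proposition \ref{GoodDouble}(b), Lemma \ref{GoodDouble}(a) and Proposition \ref{Latt}, all of which are insensitive to the sign of the filtration on the slice — the relevant inputs being that good double filtrations exist, that a good double filtration forces finite generation, and that an $R$-lattice in a finite dimensional $K$-vector space over a complete DVR is finitely generated. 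I expect the only genuine friction to be in checking this sign issue and, relatedly, confirming that $\Gr(KG)$ really is a \emph{polynomial} ring (not merely commutative Noetherian): this is where one must quote $\S\ref{FiltFpG}$–$\S\ref{GGS}$ carefully, since $\gr kG$ is an enveloping algebra of an abelian Lie algebra over $k[t]$ only after one identifies the grading correctly. Everything else is a direct citation of Theorem \ref{CharVarAff} and Proposition \ref{CdimAmod}, with the trivial module supplying the existence of a non-zero finite dimensional module.
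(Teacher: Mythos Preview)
Your proposal is correct and follows essentially the same route as the paper: the paper's proof simply cites Theorem \ref{CharVarAff} for the Auslander--Gorenstein property and then invokes Proposition \ref{CdimAmod} together with the existence of the trivial module $K$ for the remaining statements, which is exactly what you unpack in detail. Your additional discussion of the sign issue on the slice filtration is a reasonable sanity check, and the paper handles it implicitly by noting in the paragraph before the Lemma that ``the theory of $\S\ref{DoubleFilt}$--$\S\ref{CharVarAff}$ applies'' even though $KG$ is not an almost commutative affinoid $K$-algebra.
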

\begin{proof} The first part is originally due to Otmar Venjakob \cite[Theorem 3.29]{V}, but also follows from Theorem \ref{CharVarAff}. Since $KG$ always has the trivial module $K$ which is one-dimensional over $K$, the second part follows from Proposition \ref{CdimAmod}.
\end{proof}

We can now put all the pieces together and prove the main result of our paper. Recall the integer $r$ defined in $\S \ref{MinNilp}$, and the algebras $\mathcal{U}_n = Q_{\gr p} (RG^{p^n})$ defined in $\S \ref{CrossProd}$, and note that in our current notation,
\[ \mathcal{U}_n \cong \widehat{U(\fr{g})_{n+m, K}}\]
by Theorem \ref{MicIwaGrp}.
\begin{thm} Let $M$ be a finitely generated $KG$-module which is infinite dimensional over $K$. Then $d(M) \geq r$.
\end{thm}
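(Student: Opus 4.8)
The plan is to push the module $M$ forward along the base-change homomorphism $KG \to \GhuntK$ for a suitably large $n$, to recognise the resulting module over $\mathcal{U}_n \cong \widehat{U(\fr{g})_{n+m,K}}$ by means of Theorem \ref{MicIwaGrp}, and then to quote the analogue of Smith's theorem, Theorem \ref{LowerBound}; the delicate point is that $KG \to \GhuntK$ is flat but \emph{not} faithfully flat, so the transfer of canonical dimension must be arranged with care. First I would dispose of the trivial range: since $M$ is infinite dimensional over $K$, the preceding Lemma gives $d(M) = \dim\Ch(M) \neq 0$, hence $d(M) \geq 1$, and records that $KG$ is Auslander--Gorenstein with $\injdim KG = \dim G =: d$. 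Put $j := j_{KG}(M) = d - d(M)$ and let $E := \Ext^j_{KG}(M, KG)$, a non-zero finitely generated right $KG$-module. Choosing finitely generated $p$-torsion-free $RG$-lattices $M_0 \subseteq M$ and $E_0 \subseteq E$, and applying the Corollary to Theorem \ref{MainTech} to $M_0$ and (invoking the evident left--right symmetry of the whole set-up) to $E_0$, I obtain an integer $n \geq 1$ for which both $M_0$ and $E_0$ are $S_n$-torsion-free. A non-zero $S_n$-torsion-free module is certainly not $S_n$-torsion, so Proposition \ref{CrossProd}(e) and its transpose give $\GhuntK \otimes_{KG} M \neq 0$ and $E \otimes_{KG} \GhuntK \neq 0$.

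Next I would assemble the ring theory of $B := \GhuntK = \mathcal{U}_n \ast H_n$. By Theorem \ref{MicIwaGrp}, $\mathcal{U}_n \cong \widehat{U(\fr{g})_{n+m,K}}$, an almost commutative affinoid $K$-algebra with $\Gr(\mathcal{U}_n) \cong \Syk{g}$ a polynomial ring; hence $\mathcal{U}_n$ is Auslander regular with $\injdim \mathcal{U}_n = \dim\frk{g} = d$ by Proposition \ref{CdimAmod}. Since $B$ is the crossed product of $\mathcal{U}_n$ with the finite group $H_n$, it is free of rank $|H_n|$ over $\mathcal{U}_n$ on both sides, hence a faithfully flat Frobenius extension of $\mathcal{U}_n$; by \cite[\S 5.4]{ArdBro2007} it follows that $B$ is Auslander--Gorenstein with $\injdim B = d$ and that $d_B(N) = d_{\mathcal{U}_n}(N)$ for every finitely generated $B$-module $N$. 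Finally, $B$ is flat on both sides over $RG$, hence over $KG$: flatness as a right module is Proposition \ref{CrossProd}(d), and flatness as a left module follows by the same argument, since $\GhuntK \otimes_{RG} (-) \cong \mathcal{U}_n \otimes_{RG^{p^n}} (-)$ and $\mathcal{U}_n = Q_{\gr p}(RG^{p^n})$ is two-sided flat over $RG^{p^n}$.

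Now I would apply the Proposition of $\S\ref{basechange1}$ with $A = KG$ and $B = \GhuntK$: the hypotheses $\injdim_A A = \injdim_B B = d$, $B$ flat over $A$, and $\Ext^{j_A(M)}_A(M,A) \otimes_A B = E \otimes_{KG} B \neq 0$ all hold, so $d(M) = d_B(B \otimes_{KG} M)$. Regarding $N := B \otimes_{KG} M$ as a non-zero finitely generated $\mathcal{U}_n$-module by restriction, this gives $d_{\mathcal{U}_n}(N) = d_B(N) = d(M) \geq 1$. Since $n + m \geq n \geq 1 > 0$ and $\mathcal{U}_n \cong \widehat{U(\fr{g})_{n+m,K}}$, Theorem \ref{LowerBound} applies to $N$ and yields $d_{\mathcal{U}_n}(N) \geq r$, whence $d(M) = d_{\mathcal{U}_n}(N) \geq r$, as required.

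I expect the main obstacle to be the verification of the hypothesis $\Ext^{j_A(M)}_A(M,A) \otimes_A B \neq 0$ of the base-change Proposition: because $KG \to \GhuntK$ is not faithfully flat, this cannot be read off from $M \neq 0$ alone, and one genuinely has to run \emph{both} $M$ and its top $\Ext$ module through the Corollary to Theorem \ref{MainTech}, picking a single $n$ that clears $S_n$-torsion from both lattices simultaneously. A secondary point to pin down is that canonical dimension is unchanged along the finite crossed-product extension $\mathcal{U}_n \subseteq \GhuntK$, which is precisely where the Frobenius-extension results quoted from \cite{ArdBro2007} do the work.
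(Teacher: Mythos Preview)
Your proof is correct and follows the same route as the paper: apply Corollary \ref{MainTech} to the nonzero $\Ext$-module $E = \Ext^{j}_{KG}(M,KG)$ to find $n>0$ with $E \otimes_{KG} B \neq 0$, invoke Proposition \ref{basechange1} to get $d_{KG}(M) = d_B(B \otimes_{KG} M)$, pass to $\mathcal{U}_n$ via the crossed-product/Frobenius-extension lemma, and apply Theorem \ref{LowerBound}. One minor point: contrary to your closing paragraph, you need only run $E$ (not $M$) through Corollary \ref{MainTech}, since $d_B(B \otimes_{KG} M) = d_{KG}(M) \geq 1$ already forces $B \otimes_{KG} M \neq 0$; the paper does exactly this.
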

\begin{proof} Let $j = j(M)$; then the $KG$-module $N := \Ext^j_{KG}(M,KG)$ is finitely generated and nonzero. Pick any finitely generated $RG$-submodule and $R$-lattice $F_0N$ in $N$. By Corollary \ref{MainTech}, $N$ is $S_n$-torsion-free for some integer $n$ which we may as well assume to be positive. Let $B = \mathcal{U}_n \ast H_n$; then $B \otimes_{KG} N = B \otimes_{RG} F_0N$ is non-zero by Proposition \ref{CrossProd}(e). Now  $B$ is a flat $RG$-module by Proposition \ref{CrossProd}(d) so $B$ is also a flat $KG$-module and $\injdim KG = \injdim B$ by \cite[Lemma 5.4]{ArdBro2006}, Proposition \ref{CdimAmod} and the Lemma, so we may apply Proposition \ref{basechange1} to deduce
\[d_{KG}(M) = d_B(B \otimes_{KG} M).\]
Since $M' := B \otimes_{KG} M$ is a finitely generated $B$-module and $B$ is a finitely generated $\mathcal{U}_n$-module, $M'$ is a finitely generated $\mathcal{U}_n$-module. Finally, $d_B(M') = d_{KG}(M) \geq 1$ by the Lemma because $M$ is infinite dimensional over $K$, and $d_B(M') = d_{\mathcal{U}_n}(M')$ by \cite[Lemma 5.4]{ArdBro2006}. Hence we may apply Theorem \ref{LowerBound} to deduce that $d_{KG}(M) = d_{\mathcal{U}_n}(M') \geq r$.
\end{proof}

We can now prove Theorem \ref{main} from the Introduction.

\begin{proof}[Proof of Theorem \ref{main}] Suppose that $K$, $G$ and $M$ are as in the statement of Theorem \ref{main}. By restricting from $KG$ to $\Qp G$ and applying Lemma \ref{basechange1} we may assume that $K = \Qp$. Since the Lie algebra of $G$ is split semisimple, we can find an open uniform subgroup $H$ of $G$ that satisfies the assumptions of $\S\ref{CongKern}$. Choose an open normal subgroup $N$ of $G$ contained in $H$; then $d_{KG}(M) = d_{KN}(M) = d_{KH}(M)$ by \cite[Lemma 5.4]{ArdBro2007} and the result follows from the Theorem above.
\end{proof}

\section{Finite dimensional $KG$-modules}

In this section we study finite dimensional $KG$-modules. The results and proof techniques in $\S\ref{FDKGmod}$-\ref{Unique} are similar to those for distribution algebras found in Prasad's appendix to \cite{ST1}. It is not clear to us how to deduce our results directly from those found there.

\subsection{Lie modules and Artin modules}\label{FDKGmod}
We continue with the notation of $\S\ref{CongKern}$, but drop the restriction on $p$.

\begin{defn} We let $\mathcal{M}$ denote the category of $KG$-modules which are finite dimensional over $K$, and all $KG$-module homomorphisms. We let $\mathcal{L}$ denote the full subcategory of $\mathcal{M}$ consisting modules obtained from finite dimensional $\mathcal{U}_0$-modules by restriction; we tentatively call objects in $\mathcal{L}$ \emph{Lie modules}. We say that $V \in \mathcal{M}$ is an \emph{Artin module} if some open subgroup of $G$ acts trivially on $V$, and let $\mathcal{A}$ denote the full subcategory consisting of Artin modules.
\end{defn}

We will denote an Artin module $V$ by the corresponding representation $\rho : G \to GL(V)$. Note that if $V$ is an abstract $K[G]$-module such that some open normal subgroup $U$ of $G$ acts trivially, then $V$ is automatically an Artin module because the action of $K[G]$ on $V$ factors through $K[G/U]$ and the completed group ring $KG$ surjects onto $K[G/U]$. We will use this observation without further mention in what follows.

\begin{prop} Let $M \in \mathcal{M}$ and suppose that $M$ is $S_n$-torsion-free. Then the natural map
\[\eta_M : M \longrightarrow (\mathcal{U}_n \ast H_n) \otimes_{KG} M\]
is an isomorphism of $KG$-modules.
\end{prop}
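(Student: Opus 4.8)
The plan is to show that the natural map $\eta_M$ is both injective and surjective by exploiting the structure of $\mathcal{U}_n \ast H_n$ as a crossed product together with the behaviour of the microlocalisation functor.

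First I would reduce to a statement about the microlocalisation $\mathcal{U}_n = Q_{\gr p}(RG^{p^n})$. Pick a finitely generated $RG$-submodule $M_0$ of $M$ which is also an $R$-lattice; since $M$ is finite dimensional over $K$, such an $M_0$ exists and is $p$-torsion-free. By Proposition \ref{CrossProd}(a)--(c), $\mathcal{U}_n \ast H_n = \mathcal{U}_n \otimes_{RG^{p^n}} RG$, so by the displayed isomorphism in the proof of Proposition \ref{CrossProd}(d) we have
\[(\mathcal{U}_n \ast H_n) \otimes_{KG} M \cong \mathcal{U}_n \otimes_{RG^{p^n}} M_0 \otimes_R K = Q_{\gr p}(M_0) \otimes_R K,\]
where $Q_{\gr p}(M_0)$ is the microlocalisation of $M_0$ with respect to a good filtration, viewing $M_0$ as a finitely generated $RG^{p^n}$-module (it is finitely generated over $RG^{p^n}$ since $RG$ is finitely generated over $RG^{p^n}$). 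The map $\eta_M$ is then (the $K$-linear extension of) the canonical map $M_0 \to Q_{\gr p}(M_0)$.

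Next I would analyse this canonical map. By the Corollary in \S\ref{Micro}, $Q_{\gr p}(M_0) = 0$ iff $(M_0)_{S_n} = 0$, and more generally the kernel of $M_0 \to (M_0)_{S_n}$ is the $S_n$-torsion submodule of $M_0$; since $M$ is $S_n$-torsion-free by hypothesis, so is $M_0$, hence $M_0 \to (M_0)_{S_n}$ is injective and therefore $M_0 \to Q_{\gr p}(M_0)$ is injective. This gives injectivity of $\eta_M$. For surjectivity, the key point is that $\gr M_0$ is a finitely generated module over $\gr RG^{p^n}$ whose support, after inverting $\gr p$, is concentrated: because $M_0$ is an $R$-lattice in a finite-dimensional $K$-space, its slice $\gr_0 M_0 = M_0/pM_0$ is finite-dimensional over $k$, so $\gr(M_0)_{S_n} \cong (\gr M_0)_{\gr p}$ is finite-dimensional over $\gr_0 RG^{p^n}_{\gr p} \otimes k[t,t^{-1}]$ in each degree; hence $(M_0)_{S_n}$ is already $\gr p$-adically complete by \cite[Proposition I.7.1.2]{LVO} (an Artinian-type completeness argument as in the proof of Lemma \ref{MicroQuillen}), so $(M_0)_{S_n} = Q_{\gr p}(M_0)$ and moreover $(M_0)_{S_n}$ is generated over $RG^{p^n}_{S_n}$ by the image of $M_0$. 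But the localisation $RG^{p^n}_{S_n}$ acts on $(M_0)_{S_n}$, and since $\gr p$ already acts invertibly on the finite-length slice, one checks $1 + \fr{m}_n \subseteq S_n$ acts invertibly on $M_0$ modulo $p$, hence $(M_0)_{S_n} = M_0$ after completion — more carefully, the point is that $Q_{\gr p}(M_0)/ \gr p \cdot Q_{\gr p}(M_0) \cong (\gr_0 M_0)_{\gr p}$ and one uses completeness plus Nakayama to conclude $Q_{\gr p}(M_0)$ is generated by the image of $M_0/pM_0$, whence by $M_0$. Inverting $p$ then gives surjectivity of $\eta_M$.

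The main obstacle I anticipate is pinning down precisely why surjectivity holds: injectivity is immediate from the torsion-free hypothesis, but surjectivity requires showing that passing to the microlocalisation does not enlarge the module, which relies essentially on $M$ being finite-dimensional over $K$ (equivalently $\gr_0 M_0$ having finite length). The cleanest route is probably to argue that $Q_{\gr p}(M_0)/pQ_{\gr p}(M_0)$ is isomorphic to the localisation $(\gr_0 M_0)_{\gr p}$ of a finite-length $\gr_0 RG^{p^n}$-module, observe that $\gr p$ acts as a unit on this finite-length module so that the localisation map $\gr_0 M_0 \to (\gr_0 M_0)_{\gr p}$ is already surjective (indeed an isomorphism onto a direct summand), and then lift through the $\gr p$-adic (equivalently $p$-adic) completeness of both sides using the graded Nakayama lemma. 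Inverting $p$ throughout converts this into the desired isomorphism of $KG$-modules.
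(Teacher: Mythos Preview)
Your reduction to the microlocalisation $Q_{\gr p}(M_0)$ via the crossed product is the same as the paper's, and your injectivity argument is essentially correct (modulo checking that the completion map $(M_0)_{S_n} \to Q_{\gr p}(M_0)$ is injective, which follows from separatedness of the good filtration on $(M_0)_{S_n}$ over a Zariskian ring). The divergence, and the gap, is in surjectivity.

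The paper's argument is much more direct than yours and rests on an observation you never make: since $M$ is $S_n$-torsion-free, every $s \in S_n$ acts \emph{injectively} on $M$; but $M$ is finite-dimensional over $K$, so injective implies \emph{bijective}. Hence $S_n$ already acts by units on $M$, and the universal property of localisation forces $M \to (M_0)_{S_n}$ to be an isomorphism. Then $(M_0)_{S_n}$ is finite-dimensional over $K$, so its degree-zero piece $L = F_0((M_0)_{S_n})$ is a finitely generated $R$-module by Proposition~\ref{Latt}, hence already $p$-adically complete, and the completion step $(M_0)_{S_n} \to Q_{\gr p}(M_0)$ is an isomorphism too.

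Your route through $(\gr_0 M_0)_{\gr p}$ and Nakayama is, as written, not correct. You conflate two different filtrations: you write ``its slice $\gr_0 M_0 = M_0/pM_0$'', but the relevant filtration on $M_0$ here is the one induced from the $\fr{m}_n$-adic filtration on $RG^{p^n}$, not the $p$-adic one, and these do not coincide. Your claim that ``$1 + \fr{m}_n \subseteq S_n$ acts invertibly on $M_0$ modulo $p$'' addresses only the elements of $S_n$ with $a=0$; it says nothing about $p^a + \fr{m}_n^{a+1}$ for $a \ge 1$, which are the elements whose inversion actually passes you to a $K$-module. And the statement ``$\gr p$ acts as a unit on this finite-length module'' is not meaningful as stated: $\gr p$ has positive degree and does not act as an endomorphism of a single graded piece. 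The clean fix is precisely the paper's observation above, which sidesteps all graded bookkeeping.
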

\begin{proof}Suppose first that $n = 0$ and write $S = S_0$. Choose a finitely generated $RG$-submodule and $R$-lattice $N$ in $M$, and choose a good filtration $F_\bullet N$ on $N$ for the $\fr{m}$-adic filtration on $RG$. Then $\mathcal{U}_0 \otimes_{KG} M = Q_{\gr p}(RG) \otimes_{RG} N$ is the microlocalisation $Q_{\gr p}(N)$ of $N$ at $\gr p$, which in turn is isomorphic to the completion of $N_S := (RG)_S \otimes_{RG} N$ with respect to a certain filtration $F_\bullet (N_S)$ on this $RG_S$-module --- see $\S \ref{Micro}$.  The filtration $F_\bullet (N_S)$ is good, and the filtration on $(RG)_S$ is Zariskian by Lemma \ref{Micro}, so $F_\bullet N_S$ is separated by \cite[Corollary I.5.5]{LVO}.

Let $L = F_0(N_S)$; then because $p^i \in F_{-i} (RG)_S$ for all $i$ we see that $F_i(N_S) = p^{-i}L$ for all $i$. Since $F_\bullet N_S$ is separated, $L$ is an $R$-lattice in $N_S$ and $Q_{\gr p}(N)$ is the completion of $N_S$ with respect to this lattice.

Now because $p \in S$, the partial localisation $M = N_K$ of $N$ is contained in $N_S$. Let $s \in S$. Since $M = N_K$ is $S$-torsion-free by assumption, $s$ acts injectively on $M$. Because $M$ is finite dimensional over $K$, the action of $s$ is actually surjective, and therefore by the universal property of localisation the natural map $M \to N_S$ is an isomorphism. So $N_S$ is finite dimensional over $K$ and hence $L$ is finitely generated over $R$ by Proposition \ref{Latt}. But finitely generated $R$-modules are already $p$-adically complete, so the natural map $N_S \to Q_{\gr p}(N)$ is an isomorphism.

Returning to the general case, let $M_n =  M$ denote the restriction of $M$ to $KG^{p^n}$. Then there is a commutative diagram of $KG^{p^n}$-modules
\[\xymatrix{ M \ar[r]^-{\eta_M} \ar[d] & (\mathcal{U}_n \ast H_n)\otimes_{KG} M \ar[d] \\ M_n \ar[r] & \mathcal{U}_n \otimes_{KG^{p^n}} M_n }\]
where the left column is the identity map and the right column is the isomorphism $(\mathcal{U}_n \ast H_n) \otimes_{KG} M = (\mathcal{U}_n \otimes_{KG^{p^n}} KG) \otimes_{KG} M \tocong \mathcal{U}_n \otimes_{KG^{p^n}} M_n$ given by the definition of the crossed product $\mathcal{U}_n \ast H_n$. Since the bottom row is a bijection by the case $n = 0$ applied to the $KG^{p^n}$-module $M_n$, $\eta_M$ is a bijection and the result follows.
\end{proof}

\begin{thm}\hfill \be\item The category $\mathcal{M}$ is semisimple: every submodule $W$ of $V \in \mathcal{M}$ has a complement.
\item $\mathcal{U}_0 \otimes_{KG} - $ is an equivalence of categories between $\mathcal{L}$ and the category of finite dimensional $U(\fr{g}_K)$-modules.
\ee \end{thm}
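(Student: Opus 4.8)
The plan is to prove parts (a) and (b) together, using the structural Proposition about the isomorphism $\eta_M$ and the classical representation theory of $U(\fr{g}_K)$ recorded in \S\ref{FDAmod}.

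First I would prove part (a). Let $V \in \mathcal{M}$ with submodule $W$. Since $V$ is finite dimensional over $K$ it is a Noetherian $KG$-module, so Corollary \ref{MainTech} applies to the finitely generated, necessarily $p$-torsion-free (being a $K$-vector space) module $V$: there is an integer $n$, which we may take positive, such that $V$ and hence its submodule $W$ and quotient $V/W$ are all $S_n$-torsion-free. Applying the crossed-product base-change functor $B \otimes_{KG} -$ with $B := \mathcal{U}_n \ast H_n$, which is exact since $B$ is flat over $KG$ by Proposition \ref{CrossProd}(d), we obtain a short exact sequence $0 \to B\otimes_{KG} W \to B \otimes_{KG} V \to B \otimes_{KG}(V/W) \to 0$ of $B$-modules, and by the Proposition the three vertical maps $\eta_W, \eta_V, \eta_{V/W}$ identify this with the original sequence $0 \to W \to V \to V/W \to 0$. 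Now $B$ is a finitely generated module over the almost commutative affinoid $K$-algebra $\mathcal{U}_n \cong \widehat{U(\fr{g})_{n,K}}$ (Proposition \ref{CrossProd}(a),(b)), so $B\otimes_{KG}V$ is a finitely generated $\mathcal{U}_n$-module; being finite dimensional over $K$, it has canonical dimension $0$, hence by Corollary \ref{CentralChar} (applied to a simple quotient) it is $Z$-locally finite, and in fact $\fr{g}_K$ acts on it through a lattice by the argument of \S\ref{FDAmod} — more directly, restricting $B\otimes_{KG}V$ along $U(\fr{g}_K) \hookrightarrow \mathcal{U}_n$ gives a finite dimensional $U(\fr{g}_K)$-module. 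Since $\mb{G}$ is semisimple and $K$ has characteristic zero, Weyl's Theorem \cite[Theorem 1.6.3]{Dix} shows this $U(\fr{g}_K)$-module is semisimple, so the subspace $B\otimes_{KG}W$ has a $U(\fr{g}_K)$-module complement. It remains to check that this complement is in fact a $B$-submodule, equivalently a $KG$-submodule of $V$; this follows because the isotypic decomposition of a semisimple module is canonical, so the complement (the sum of isotypic components not meeting $W$, plus a chosen complement inside each shared component) can be chosen stable under any ring of endomorphisms commuting with the $U(\fr{g}_K)$-action — and $KG$, acting via $\eta_V$, consists of such endomorphisms once we observe that the $KG$- and $U(\fr{g}_K)$-actions on $B\otimes_{KG}V$ have the same invariant subspaces. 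Pulling back along $\eta_V$ gives the required $KG$-complement of $W$ in $V$.

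Next I would prove part (b). The functor $\mathcal{U}_0 \otimes_{KG} - : \mathcal{L} \to \md{U(\fr{g}_K)}$ lands in finite dimensional $U(\fr{g}_K)$-modules by the definition of $\mathcal{L}$: an object of $\mathcal{L}$ is by definition the restriction to $KG$ of a finite dimensional $\mathcal{U}_0$-module $N$, and since such an $N$ is $S_0$-torsion-free (being a $K$-vector space on which each element of $S_0$, acting injectively and $K$-linearly on a finite dimensional space, acts bijectively), Proposition \ref{FDKGmod} gives $\mathcal{U}_0 \otimes_{KG} N \cong N$; then one restricts further along $U(\fr{g}_K) \hookrightarrow \mathcal{U}_0$. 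For essential surjectivity, given a finite dimensional $U(\fr{g}_K)$-module $V$, Proposition \ref{FDAmod} produces a finite dimensional $\mathcal{U}_0$-module structure on $V$ extending the $U(\fr{g}_K)$-action (via a $\fr{g}$-stable $R$-lattice, which is automatically $p$-adically complete and hence a module over $\widehat{U(\fr{g})_0} = F_0\mathcal{U}_0$); restricting this $\mathcal{U}_0$-module to $KG$ gives an object of $\mathcal{L}$ mapping to $V$. For fully faithfulness, note that for $N, N'$ finite dimensional $\mathcal{U}_0$-modules, a $KG$-module map $N \to N'$ is the same as a map $\mathcal{U}_0\otimes_{KG}N \to \mathcal{U}_0\otimes_{KG}N'$ of $\mathcal{U}_0$-modules via the isomorphisms $\eta$, and this in turn is the same as a $U(\fr{g}_K)$-module map because $U(\fr{g}_K)$ is dense in $\mathcal{U}_0$ and the modules are finite dimensional over $K$, so any $U(\fr{g}_K)$-linear map between them is automatically continuous and hence $\mathcal{U}_0$-linear — this is precisely the density argument already used in the proof of Proposition \ref{FDAmod}. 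Combining, $\mathcal{U}_0 \otimes_{KG}-$ is an equivalence.

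\textbf{Main obstacle.} The delicate point in part (a) is verifying that a $U(\fr{g}_K)$-module complement of $W$ inside the semisimple module $B\otimes_{KG}V$ can be chosen to be $KG$-stable (equivalently $B$-stable), rather than merely $U(\fr{g}_K)$-stable. This requires knowing that the $KG$-submodules of $B\otimes_{KG}V$ coincide with its $U(\fr{g}_K)$-submodules; the cleanest route is to observe that $U(\fr{g}_K)$ is dense in $\mathcal{U}_0$ and, after passing through the crossed-product structure, that $KG^{p^n}$ is dense in $\mathcal{U}_n$, so that on a finite dimensional (hence discrete-then-complete) $K$-vector space the two actions have identical invariant subspaces, while the finite group $H_n = G/G^{p^n}$ is handled by averaging (characteristic zero). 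Once this density/continuity bookkeeping is in place the semisimplicity transfers formally, so I expect the proof to be short modulo carefully invoking the already-established Proposition \ref{FDKGmod} and Corollary \ref{MainTech}.
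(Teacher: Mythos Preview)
Your argument for part (b) is essentially correct and matches the paper's approach: objects of $\mathcal{L}$ are $S_0$-torsion-free, so the Proposition gives $\eta_V$ an isomorphism, and then the density of $U(\fr{g}_K)$ in $\mathcal{U}_0$ together with Proposition \ref{FDAmod} yields the equivalence. The paper phrases this via the unit--counit equations for the adjunction $(\mathcal{U}_0\otimes_{KG}-,\ \Hom_{KG}(\mathcal{U}_0,-))$, but the content is the same.

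For part (a), however, the main body of your argument contains a genuine error. You assert that ``$KG$, acting via $\eta_V$, consists of such endomorphisms'' commuting with the $U(\fr{g}_K)$-action. This is false: both $KG$ and $U(\fr{g}_K)$ act on $B\otimes_{KG}V$ by left multiplication through the ring $B=\mathcal{U}_n\ast H_n$, and left multiplications by distinct elements of a non-commutative ring do not commute. In particular the elements coming from $H_n$ conjugate $\fr{g}_K$ non-trivially via the adjoint action, so a $U(\fr{g}_K)$-submodule need not be $H_n$-stable. Your appeal to the canonicity of the isotypic decomposition does not help either: inside a shared isotypic component, complements are far from canonical.

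You have in fact already identified the correct repair in your ``Main obstacle'' paragraph: one should first obtain a $\mathcal{U}_n$-linear projection onto $W$ (this is legitimate, since $U(\fr{g}_K)$-linearity and $\mathcal{U}_n$-linearity coincide on finite dimensional modules by density, and Weyl's Theorem via Corollary \ref{FDAmod} supplies the projection), and then \emph{average over the finite group $H_n$} to make it $B$-linear. This is exactly what the paper does: pick a $\mathcal{U}_n$-linear projection $\sigma:V\to W$, set $\sigma'=\frac{1}{|H_n|}\sum_{h\in H_n}\overline{h}\,\sigma\,\overline{h}^{-1}$, and take $\ker\sigma'$ as the complement. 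So your obstacle paragraph contains the right idea; the proof just needs to carry it out in place of the incorrect commuting claim.
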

\begin{proof} (a) By Corollary \ref{MainTech}, $V$ is $S_n$-torsion-free for some $n$. Let us identify $V$ with $(\mathcal{U}_n \ast H_n) \otimes_{KG} V$ using the Proposition; then $V$ is a finite dimensional $\mathcal{U}_n \ast H_n$-module and $W$ is a $\mathcal{U}_n \ast H_n$-submodule. By Corollary \ref{FDAmod}, we can find a $\mathcal{U}_n$-linear projection $\sigma$ from $V$ onto $W$. Since we're working over a field of characteristic zero, the average $\sigma' := \frac{1}{|H_n|} \sum_{h \in H_n} \overline{h} \sigma \overline{h}^{-1}$ of the $H_n$-conjugates of $\sigma$ is a $\mathcal{U}_n \ast H_n$-linear projection of $V$ onto $W$ --- see \cite[Lemma 1.1]{Pass} for more details. Now $\ker \sigma'$ is a $KG$-stable complement to $W$ in $V$.

(b) Let $\mathcal{C}$ denote the category of finite dimensional $\mathcal{U}_0$-modules. The restriction functor $\mb{R} = \Hom_{KG}(\mathcal{U}_0, -)$ is right adjoint to the base-change functor $\mb{L} = {\mathcal{U}_0\otimes_{KG} -}$, and sends $\mathcal{C}$ to $\mathcal{L}$ by definition. If $V = \mb{R}W \in \mathcal{L}$, then $S_0$ acts invertibly on $V$ because $S_0$ consists of units in $\mathcal{U}_0$. Therefore $V$ is $S_0$-torsion-free and the counit of the adjunction $\eta_V : V \to \mb{R}\mb{L}V$ is an isomorphism by the Proposition, which implies that $\mb{L}$ sends $\mathcal{L}$ to $\mathcal{C}$. For $W \in \mathcal{C}$, the unit of the adjunction $\epsilon_W : \mb{L}\mb{R} W \to W$ satisfies $\mb{R}(\epsilon_W) \circ \eta_{\mb{R}W} = 1_{\mb{R}W}$ by the unit-counit equation. So $\mb{R}(\epsilon_W)$ is an isomorphism because $\eta_{\mb{R}W}$ is an isomorphism. Therefore $\epsilon_W$ is bijective and hence an isomorphism, so $\mb{L} : \mathcal{L} \to \mathcal{C}$ is an equivalence of categories. But we already know from Corollary \ref{FDAmod} that $\mathcal{C}$ is equivalent to the category of finite dimensional $U(\fr{g}_K)$-modules via restriction along the inclusion $U(\fr{g}_K) \hookrightarrow \mathcal{U}_0$.
\end{proof}

\begin{cor} Let $V \in \mathcal{L}$ be simple. Then $\End_{KG}(V) = K$.\end{cor}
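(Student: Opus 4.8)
The plan is to transfer the question across the equivalence of categories established in the Theorem above and then settle it by elementary highest weight theory. Since $\mathcal{L}$ is a full subcategory of the category of $KG$-modules we have $\End_{KG}(V) = \End_{\mathcal{L}}(V)$, and since $\mathcal{U}_0 \otimes_{KG} -$ is an equivalence between $\mathcal{L}$ and the category of finite dimensional $\ugK$-modules, it induces a ring isomorphism $\End_{KG}(V) \cong \End_{\ugK}(W)$, where $W$ denotes the finite dimensional $\ugK$-module corresponding to $V$. Because $V$ is simple in $\mathcal{L}$ and equivalences of abelian categories preserve simplicity, $W$ is a simple finite dimensional $\ugK$-module. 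It therefore suffices to show that $\End_{\ugK}(W) = K$.

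Now $\fr{g}_K$ is split semisimple and $K$ has characteristic zero, so by Corollary \ref{FDAmod} --- equivalently, by the classical highest weight theory of \cite[\S 7.1, \S 7.2]{Dix} --- the module $W$ is isomorphic to $L(\mu)$ for some $\mu \in \Lambda^+$. In particular $W$ is generated as a $\ugK$-module by a highest weight vector $v^+$ spanning the one dimensional weight space $W_{\mu \circ i}$ for the action of the split Cartan subalgebra $\fr{t}_K$. Let $f \in \End_{\ugK}(W)$. As $f$ commutes with $\fr{t}_K$, it preserves every weight space of $W$, so it maps $W_{\mu\circ i} = K v^+$ to itself and hence $f(v^+) = c\, v^+$ for some $c \in K$. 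Then $f - c\,\id_W$ is a $\ugK$-endomorphism of $W$ killing $v^+$, hence killing $\ugK\cdot v^+ = W$, so $f = c\,\id_W$. Thus $\End_{\ugK}(W) = K$, and therefore $\End_{KG}(V) = K$.

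The argument is routine; the only point worth flagging is that $K$ need not be algebraically closed, so Schur's Lemma by itself would only identify $\End_{KG}(V)$ with a finite dimensional division algebra over $K$. What forces it all the way down to $K$ is the \emph{split} highest weight structure of $W$: the one dimensional highest weight space is preserved by every endomorphism, which pins the endomorphism down to a scalar.
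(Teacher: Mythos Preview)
Your proof is correct and follows essentially the same route as the paper: reduce via part (b) of the Theorem to showing $\End_{\ugK}(W)=K$ for $W$ simple finite dimensional, then use the split highest weight structure. The paper simply cites \cite[Proposition 7.1.4(iv)]{Dix} for this last step, whereas you have written out the highest-weight-vector argument that underlies that reference.
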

\begin{proof} Apply part (b) of the Theorem and \cite[Proposition 7.1.4(iv)]{Dix}.
\end{proof}
We will see shortly that $\mathcal{M}$ is built from $\mathcal{L}$ and $\mathcal{A}$ in a very precise way: in fact, $\mathcal{M}$ is the tensor product of $\mathcal{L}$ and $\mathcal{A}$ in the sense of \cite[$\S$5]{Del}.

\subsection{A factorization theorem}\label{Factor}
Recall that a module $V$ is said to be \emph{isotypic} if $V \cong W^s$ for some simple module $W$ and some $s>0$.

\begin{lem} Let $M \in \mathcal{M}$ be a simple module which is $S_n$-torsion-free. Then $M' := (\mathcal{U}_n \ast H_n) \otimes_{KG} M$ is an isotypic $\mathcal{U}_n$-module.\end{lem}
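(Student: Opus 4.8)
The plan is to exploit the fact that $H_n = G/G^{p^n}$ acts on $M'$ and to analyse how the isotypic decomposition of $M'$ as a $\mathcal{U}_n$-module interacts with this action. First I would recall from Proposition \ref{CrossProd} that $B := \mathcal{U}_n \ast H_n$ is a crossed product; in particular $B$ is a free $\mathcal{U}_n$-module of finite rank $|H_n|$ with basis the units $\{\overline{h} : h \in H_n\}$, and conjugation by $\overline{h}$ is a ring automorphism $\sigma(h)$ of $\mathcal{U}_n$. Since $M$ is a simple $KG$-module which is $S_n$-torsion-free, Proposition \ref{Factor} tells us that $\eta_M : M \to M' = B \otimes_{KG} M$ is an isomorphism, so $M'$ is a simple $B$-module. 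Being a finitely generated $B$-module, $M'$ is a finitely generated $\mathcal{U}_n$-module, hence Noetherian and Artinian as a $\mathcal{U}_n$-module by Corollary \ref{FDAmod} (or simply because it is finite dimensional over $K$).

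Next I would write down the isotypic decomposition of $M'$ restricted to $\mathcal{U}_n$: $M' = \bigoplus_{i} M'_i$ where each $M'_i$ is the $W_i$-isotypic component for a family of pairwise non-isomorphic simple $\mathcal{U}_n$-modules $W_i$. The key observation is that each $\overline{h}$ acts on $M'$ as a $\sigma(h)$-semilinear bijection, so it permutes the isotypic components: $\overline{h} \cdot M'_i$ is the isotypic component for the simple $\mathcal{U}_n$-module ${}^{\sigma(h)^{-1}}W_i$ obtained by twisting $W_i$ along $\sigma(h)^{-1}$. Thus $H_n$ acts on the (finite) index set of isotypic components. Let $M'_0$ be one isotypic component; then $\bigoplus_{h \in H_n} \overline{h} \cdot M'_0$ is a $B$-submodule of $M'$, and since $M'$ is a simple $B$-module it must equal $M'$. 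Hence every isotypic component lies in the $H_n$-orbit of $M'_0$, so all the simple $\mathcal{U}_n$-constituents of $M'$ are twists of a single simple module $W := W_0$ by elements of the finite group $\sigma(H_n) \subseteq \Aut(\mathcal{U}_n)$.

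It then remains to show that these twists are in fact \emph{isomorphic} as $\mathcal{U}_n$-modules, so that $M'$ is isotypic rather than merely having constituents in one twist-orbit. Here I would use the hypothesis that $G$ is a congruence kernel (as in $\S\ref{CongKern}$) so $n > 0$ and hence $G^{p^n}$ is an open subgroup with $\pi^n \fr{g} L \subseteq \pi L$ for any $\fr{g}$-stable lattice $L$. The conjugation action of $G$ on $\mathcal{U}_n = \widehat{\U{g}_{n,K}}$ corresponds, under Theorem \ref{MicIwaGrp}, to the action of $G$ on $\frac{1}{p^n}L_{G^{p^n}}$-completed enveloping algebra; since $G$ acts on its own Lie algebra by the adjoint action, and $G^{p^n}$ acts trivially modulo the relevant congruence, the automorphisms $\sigma(h)$ for $h \in H_n$ are inner automorphisms of $\mathcal{U}_n$ — indeed $\sigma(h)$ is conjugation by a unit $\exp(\log h) \in \mathcal{U}_n^\times$ of the form appearing in the proof of Theorem \ref{MicIwaGrp}. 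An inner automorphism sends a simple module to an isomorphic simple module, so all the $W_i$ are isomorphic to $W$ and $M'$ is $W$-isotypic. I expect the main obstacle to be pinning down exactly why the automorphisms $\sigma(h)$ are inner on $\mathcal{U}_n$ (as opposed to merely on the associated graded or on some dense subalgebra); this requires carefully tracking the identification in Theorem \ref{MicIwaGrp} and using that $G$ normalizes $G^{p^n}$ with the quotient acting through the adjoint representation, which lifts to conjugation by units $\exp(u)$ with $u \in L_G \subseteq p\fr{h}$.
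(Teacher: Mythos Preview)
Your overall strategy is sound up to the point where you need to show that the twisted simple $\mathcal{U}_n$-modules ${}^{\sigma(h)}W$ are all isomorphic to $W$. However, your proposed justification for this step contains a genuine error: the automorphism $\sigma(h)$ is \emph{not} inner on $\mathcal{U}_n$ for $h \notin G^{p^n}$. Indeed, using Theorem \ref{MicIwaGrp} one has $\mathcal{U}_n \cong \widehat{U(\fr{g})_{n+m,K}}$, and for $\exp(u)$ to converge in this algebra one needs $u \in p^{n+m+1}\fr{g} = L_{G^{p^n}}$; for $h \in G \setminus G^{p^n}$ the element $\log h$ lies only in $L_G = p^{m+1}\fr{g}$, so $\exp(\log h)$ does \emph{not} define a unit of $\mathcal{U}_n$ (it lives only in the larger algebra $\mathcal{U}_0$). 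Hence the final paragraph of your sketch does not go through as written.

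The paper circumvents this by a different and more direct argument. Rather than trying to make $\sigma(h)$ inner, it observes that the conjugation action of $G$ on $\mathcal{U}_n$ is induced by the adjoint representation on $\fr{g}_K$, and therefore fixes the Harish-Chandra centre $Z_n = \widehat{U(\fr{g})^{\mb{G}}_{n+m,K}}$ pointwise. Consequently, for a simple $\mathcal{U}_n$-submodule $W \subseteq M'$ and any $g \in G$, the translate $gW$ is a simple $\mathcal{U}_n$-submodule with the same central character as $W$. Since a simple finite dimensional $\mathcal{U}_n$-module is determined up to isomorphism by its highest weight (Corollary \ref{FDAmod}), and the highest weight is determined by the central character among dominant weights, one gets $gW \cong W$ immediately. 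Simplicity of $M'$ as a $KG$-module then forces $M' = \sum_g gW$, which is isotypic.

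Your approach can in fact be salvaged: since $\mathcal{U}_n \subset \mathcal{U}_0$ and $\sigma(h)$ is the restriction of the inner automorphism $\mathrm{conj}(\exp(\log h))$ of $\mathcal{U}_0$, and since every simple finite dimensional $\mathcal{U}_n$-module extends uniquely to a $\mathcal{U}_0$-module by Proposition \ref{FDAmod}, one deduces ${}^{\sigma(h)}W \cong W$ by passing through $\mathcal{U}_0$. But this is more circuitous than the centre argument, which is the cleaner route here.
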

\begin{proof} The group $G$ acts by conjugation on the algebra $\mathcal{U}_n$ and this action fixes the Harish-Chandra centre $Z_n := \widehat{U(\fr{g})^{\mb{G}}_{n+m, K}}$ of $\mathcal{U}_n = \widehat{U(\fr{g})_{n+m, K}}$, because this conjugation action is induced by the adjoint representation of $G$ on $\fr{g}_K$. Let $W$ be a non-zero simple $\mathcal{U}_n$-submodule of $M'$ and let $g \in G$; then $gW$ is another simple $\mathcal{U}_n$-submodule with the same action of $Z_n$. But up to isomorphism, there is only one simple finite dimensional $\mathcal{U}_n$-module with this action of $Z_n$, so $gW \cong W$ as a $\mathcal{U}_n$-module.

Let $N$ be the image of $(\mathcal{U}_n \ast H_n)\otimes_{\mathcal{U}_n} W$ in $M'$. This is a non-zero $KG$-submodule of $M'$ by construction. But $M'$ is a simple $KG$-module by Proposition \ref{FDKGmod} so $N = M'$. Now $N$ is a finite sum of modules of the form $gW$ as a $\mathcal{U}_n$-module; since finite dimensional $\mathcal{U}_n$-modules are semisimple by Corollary \ref{FDAmod}, it follows that $M' = N$ is an isotypic $\mathcal{U}_n$-module.
\end{proof}

\begin{thm} Let $M $ be a simple finite dimensional $KG$-module. Then $M \cong V \otimes \rho$ for some simple $V \in \mathcal{L}$ and some simple $\rho \in \mathcal{A}$.\end{thm}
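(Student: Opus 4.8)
The plan is to pass to the algebra $\mathcal{U}_n \ast H_n$, where $M$ becomes $\mathcal{U}_n$-isotypic, and to recover the tensor factorization by identifying the $\mathcal{U}_n$-isotypic component with a Lie module and its multiplicity space with an Artin module. First I would use Corollary \ref{MainTech} to fix an integer $n > 0$ with $M$ being $S_n$-torsion-free, and then replace $M$ by the isomorphic $KG$-module $M' := (\mathcal{U}_n \ast H_n) \otimes_{KG} M$ via the Proposition of $\S$\ref{FDKGmod}. By the preceding Lemma, $M'$ is $\mathcal{U}_n$-isotypic, so $M' \cong V^{\oplus s}$ for some simple finite dimensional $\mathcal{U}_n$-module $V$ and some $s \geq 1$. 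Applying Proposition \ref{FDAmod} to $\mathcal{U}_n \cong \widehat{U(\fr{g})_{n+m,K}}$ and the classification of finite dimensional simples in Corollary \ref{FDAmod}, $V$ is the restriction of some $L(\lambda)$, $\lambda \in \Lambda^+$, which I would from now on regard as a simple object of $\mathcal{L}$ through the inclusions $KG \hookrightarrow \mathcal{U}_0 \supseteq \mathcal{U}_n$; its $\mathcal{U}_n$-module structure coming from this $\mathcal{U}_0$-structure agrees with the original one, since both are the unique extension of the underlying $U(\fr{g}_K)$-module structure. Finally I note $\End_{\mathcal{U}_n}(V) = K$, again by Proposition \ref{FDAmod} together with the Corollary of $\S$\ref{FDKGmod}.

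The next step is to equip $E := \Hom_{\mathcal{U}_n}(V, M')$ with a $KG$-module structure by the usual formula $(g \cdot f)(v) = g\, f(g^{-1} v)$ for $g \in G$. This is well defined: a routine check using the compatibility of the $\mathcal{U}_n$- and $G$-actions on $V$ (which sits inside the $\mathcal{U}_0$-module $L(\lambda)$, where conjugation of $\mathcal{U}_n$ by $G$ is the standard action) and on $M'$ (by the very definition of the crossed product $\mathcal{U}_n \ast H_n$ and of the $G$-action on $\mathcal{U}_n = Q_{\gr p}(RG^{p^n})$ from $\S$\ref{CrossProd}) shows that $g \cdot f$ is again $\mathcal{U}_n$-linear. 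Moreover, if $g \in G^{p^n}$ then $g$ is a unit of $\mathcal{U}_n$ and $f$ is $\mathcal{U}_n$-linear, so $(g \cdot f)(v) = g\, f(g^{-1} v) = g g^{-1} f(v) = f(v)$; hence $G^{p^n}$ acts trivially on $E$, the action factors through $H_n = G/G^{p^n}$, and $E$ is an Artin module.

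Then I would consider the evaluation map $\operatorname{ev} \colon V \otimes_K E \to M'$, $v \otimes f \mapsto f(v)$, where $V \otimes_K E$ carries the diagonal $KG$-action. A direct computation gives $\operatorname{ev}\bigl(g(v \otimes f)\bigr) = (g f)(g v) = g\, f(v) = g \cdot \operatorname{ev}(v \otimes f)$, so $\operatorname{ev}$ is $KG$-linear, and it is an isomorphism of $\mathcal{U}_n$-modules because $M' \cong V^{\oplus s}$ is $V$-isotypic with $\End_{\mathcal{U}_n}(V) = K$. Being a bijective $KG$-module homomorphism, $\operatorname{ev}$ is a $KG$-module isomorphism, so $M \cong M' \cong V \otimes_K E$ with $V \in \mathcal{L}$ and $E \in \mathcal{A}$. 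Simplicity is then elementary: if $V'$ were a proper nonzero $KG$-submodule of $V$ then $V' \otimes_K E$ would be a proper nonzero submodule of $V \otimes_K E \cong M$, contradicting simplicity of $M$, so $V$ is simple in $\mathcal{M}$ and hence in $\mathcal{L}$; the same argument applied to $E$ (using $E \neq 0$ since $s \geq 1$) shows $E$ is simple, and one sets $\rho := E$.

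The main obstacle is the construction in the second paragraph: producing the $KG$-module structure on $E$ and, more precisely, reconciling the two a priori different roles of $V$ — the abstract simple $\mathcal{U}_n$-module appearing in the isotypic decomposition of $M'$, and the Lie module object of $\mathcal{L}$ — so that the evaluation map is simultaneously $\mathcal{U}_n$-linear and $G$-equivariant. This rests on the embedding $KG \hookrightarrow \mathcal{U}_0$ and on the fact that conjugation by $G$ on $\mathcal{U}_n \subseteq \mathcal{U}_0$ coincides with the crossed-product action used to define $\mathcal{U}_n \ast H_n$; once that compatibility is pinned down precisely, everything else is formal.
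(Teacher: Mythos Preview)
Your approach is essentially the same as the paper's: pass to $\mathcal{U}_n \ast H_n$, use the isotypic decomposition from the Lemma, identify the simple $\mathcal{U}_n$-constituent with an object of $\mathcal{L}$, and realise the multiplicity space as the Artin factor via an evaluation map. The one noteworthy difference is in the definition of the multiplicity space: you take $E = \Hom_{\mathcal{U}_n}(V, M')$, whereas the paper takes $\rho = \Hom_{KG^{p^n}}(V, M)$. These coincide (finite-dimensional $\mathcal{U}_n$-modules are determined by their $KG^{p^n}$-structure via Proposition~\ref{FDKGmod}), but the paper's choice removes precisely the obstacle you flag: because $G^{p^n}$ is normal in $G$, the formula $(g\cdot f)(v) = g f(g^{-1}v)$ visibly preserves $KG^{p^n}$-linearity, so the $G$-action on $\rho$ is well defined without any appeal to compatibility of conjugation actions on $\mathcal{U}_n$ inside $\mathcal{U}_0$. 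The paper then finishes with a direct dimension count ($\dim_K\rho = t$ by Corollary~\ref{FDKGmod} applied to $G^{p^n}$) rather than the abstract isotypic argument, and observes that simplicity of $V$ in $\mathcal{L}$ is immediate from Theorem~\ref{FDKGmod}(b), so your separate submodule argument for $V$ is unnecessary.
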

\begin{proof} Using Corollary \ref{MainTech}, choose $n$ such that $M$ is $S_n$-torsion-free; then $M' := (\mathcal{U}_n \ast H_n) \otimes_{KG} M$ is an isotypic $\mathcal{U}_n$-module by the Lemma, so $M' \cong W^t$ for some simple, finite dimensional $\mathcal{U}_n$-module $W$. By Corollary \ref{FDAmod}, we may assume that $W$ is the restriction to $\mathcal{U}_n$ of a simple finite dimensional $\mathcal{U}_0$-module. Let $V = \mb{R}W$ be the corresponding object in $\mathcal{L}$; note that $V$ is simple by Theorem \ref{FDKGmod}(b). Consider the vector space
\[ \rho := \Hom_{KG^{p^n}}(V, M).\]
Because $V$ and $M$ are finite dimensional over $K$, $\rho$ is also finite dimensional over $K$. Moreover the inclusion $W \hookrightarrow M'$ gives by restriction to $KG^{p^n}$ a non-zero element of $\rho$ so $\rho \neq 0$. The rule
\[ (g.f)(v) = gf(g^{-1}v),\quad \quad g \in G, v \in V, f \in \rho\]
defines an action of $G$ on $\rho$ which by definition is trivial on $G^{p^n}$; thus $\rho \in \mathcal{A}$ is an Artin representation of $G$. Now $KG$ acts diagonally on the tensor product $V \otimes \rho$ and there is a natural $KG$-module map
\[ \theta : V \otimes \rho \to M\]
given by evaluation: $\theta(v\otimes f) = f(v)$. The map $\theta$ is non-zero because $\rho \neq 0$; since $M$ is simple it follows that $\theta$ is surjective. But $\dim_K M = \dim_K M' = t \dim_K W$ by Proposition \ref{FDKGmod}, and
 \[\dim_K \rho = \dim_K \Hom_{KG^{p^n}}(W, W^t) = t \dim_K \Hom_{KG^{p^n}}(W, W) = t\]
by Corollary \ref{FDKGmod} applied to the group $G^{p^n}$. Hence $\dim_K V \otimes \rho = t \dim_KW$ also and $\theta$ is an isomorphism. Finally, if $\rho'$ is a non-zero submodule of $\rho$ then $V \otimes \rho'$ is a non-zero submodule of $V\otimes \rho$. Because $V \otimes \rho \cong M$ is simple, this submodule must be the whole of $V \otimes \rho$ and a dimension count shows that $\rho' = \rho$.
\end{proof}

\subsection{Uniqueness of factorization}
\label{Unique}
\begin{lem} Let $\rho$ be an Artin representation which is trivial on $G^{p^n}$ and let $V \in \mathcal{L}$ be simple. Then the natural map
\[ \psi : \rho \to \Hom_{KG^{p^n}}(V, V\otimes \rho)\]
given by $\psi(x)(v) = v \otimes x$ is an isomorphism of $KG$-modules.
\end{lem}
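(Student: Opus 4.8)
The plan is to show that $\psi$ is an injective homomorphism of $KG$-modules whose source and target are $K$-vector spaces of the same finite dimension $\dim_K\rho$, and hence an isomorphism. First the formal points. Since $\rho$ is trivial on $G^{p^n}$, for $h\in G^{p^n}$ we have $h(v\otimes x)=hv\otimes hx=hv\otimes x$, so each $\psi(x)$ is $KG^{p^n}$-linear; as $G^{p^n}$ is normal in $G$, the rule $(g\cdot f)(v)=gf(g^{-1}v)$ makes $\Hom_{KG^{p^n}}(V,V\otimes\rho)$ into a $KG$-module, and the computation $(g\cdot\psi(x))(v)=g(g^{-1}v\otimes x)=v\otimes gx=\psi(gx)(v)$ shows $\psi$ is $KG$-linear. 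For injectivity: $V$ is simple, hence non-zero, so fixing $0\ne v\in V$ and completing it to a $K$-basis of $V$, the relation $\psi(x)(v)=v\otimes x=0$ in $V\otimes_K\rho$ forces $x=0$.

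The substance is the dimension count. Restricting to $KG^{p^n}$, the representation $\rho$ becomes a trivial module of dimension $r:=\dim_K\rho$, so $V\otimes\rho\cong V^{\oplus r}$ as $KG^{p^n}$-modules and therefore $\Hom_{KG^{p^n}}(V,V\otimes\rho)\cong\End_{KG^{p^n}}(V)^{\oplus r}$; it thus suffices to prove $\End_{KG^{p^n}}(V)=K$. Here I would observe that $V$, regarded over the open uniform subgroup $G^{p^n}$, is again a simple Lie module: its restriction to $\ugK$ is the same finite-dimensional simple $\ugK$-module, and — using the description of the embedding $KG^{p^n}\hookrightarrow\mathcal{U}_n$ furnished by Theorem \ref{MicIwaGrp} and Proposition \ref{CrossProd}, together with the argument in the proof of Proposition \ref{FDAmod} — it is the restriction of the corresponding simple finite-dimensional $\mathcal{U}_n$-module; moreover a finite-dimensional $KG^{p^n}$-submodule of $V$ is closed, hence stable under the dense subring $\ugK$, hence a $\ugK$-submodule, so it is $0$ or $V$. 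Applying Corollary \ref{FDKGmod} to the uniform group $G^{p^n}$ now yields $\End_{KG^{p^n}}(V)=K$, whence $\dim_K\Hom_{KG^{p^n}}(V,V\otimes\rho)=r=\dim_K\rho$.

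Combining the two parts, $\psi$ is an injective $K$-linear map between $K$-vector spaces of the same finite dimension, hence bijective; since it is $KG$-linear by the first paragraph, it is an isomorphism of $KG$-modules. The only step that is not purely formal is the identification $\End_{KG^{p^n}}(V)=K$, i.e.\ checking that restricting a simple Lie module to the open subgroup $G^{p^n}$ again produces a simple Lie module for $G^{p^n}$ — the passage between the Iwasawa-algebra and enveloping-algebra descriptions — and I expect this (essentially routine) point to be the main obstacle.
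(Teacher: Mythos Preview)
Your proposal is correct and follows essentially the same route as the paper: verify $\psi$ is an injective $KG$-map, reduce by the obvious dimension count to $\End_{KG^{p^n}}(V)=K$, and get this from simplicity of $V$ as a $KG^{p^n}$-module together with Corollary \ref{FDKGmod} applied to $G^{p^n}$.

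One small wrinkle: your density argument for simplicity over $KG^{p^n}$ is phrased backwards. You write that a closed $KG^{p^n}$-submodule is ``stable under the dense subring $\ugK$'', but $\ugK$ is not a subring of $KG^{p^n}$. What you actually need is that the image of $KG^{p^n}$ is dense in $\mathcal{U}_n$ (this is Theorem \ref{MicIwaGrp}); since $V$ carries a continuous $\mathcal{U}_n$-action (indeed a $\mathcal{U}_0$-action, and $\mathcal{U}_n\subset\mathcal{U}_0$), a closed $KG^{p^n}$-stable subspace is then $\mathcal{U}_n$-stable and hence $\ugK$-stable. The paper avoids this by simply invoking Theorem \ref{FDKGmod}(b) for the group $G^{p^n}$: the category $\mathcal{L}$ for $G^{p^n}$ is again equivalent to finite-dimensional $\ugK$-modules, and under this equivalence $V$ corresponds to the same simple $\ugK$-module, so $V$ is simple over $KG^{p^n}$.
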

\begin{proof} Since $G^{p^n}$ is normal in $G$, the group $G$ acts on $\Hom_{KG^{p^n}}(V, V\otimes \rho)$ via $(g.f)(v) = g.f(g^{-1}v)$. The action of $G^{p^n}$ is trivial, so $\Hom_{KG^{p^n}}(V, V\otimes \rho)$ is an Artin representation of $G$.

It is straightforward to verify that $\psi$ is an injective $KG$-module homomorphism. Because $\rho$ is trivial on $G^{p^n}$, the restriction of $V \otimes \rho$ to $KG^{p^n}$ is a direct sum of $\dim_K \rho$ copies of the restriction of $V$, so
\[\dim_K \Hom_{KG^{p^n}}(V, V \otimes \rho) = \dim_K \rho \cdot \dim_K\End_{KG^{p^n}}(V).\]
It now follows from Theorem \ref{FDKGmod}(b) that $V$ is still simple as a $KG^{p^n}$-module, so $\End_{KG^{p^n}}(V) = K$ by Corollary \ref{FDKGmod}. Hence $\rho$ is an isomorphism.
\end{proof}

We can now give a partial classification of simple finite dimensional $KG$-modules for compact $p$-adic analytic groups $G$ satisfying the hypotheses of $\S \ref{CongKern}$.
\begin{thm}Let $V \in \mathcal{L}$ and $\rho \in \mathcal{A}$ be simple.
\be
\item The module $V \otimes \rho \in \mathcal{M}$ is simple.
\item If $V \otimes \rho \cong V' \otimes \rho'$ for some simple $V' \in \mathcal{L}$ and $\rho' \in \mathcal{A}$, then $V \cong V'$ and $\rho \cong \rho'$.
\ee \end{thm}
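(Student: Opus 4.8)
The plan is to reduce everything to the crossed product $\mathcal{U}_n \ast H_n$. For any finite-dimensional $KG$-module that comes up, Corollary \ref{MainTech} lets us pick $n$ large enough that it is $S_n$-torsion-free and, simultaneously, that all the Artin modules occurring are trivial on $G^{p^n}$; by the Proposition of \S\ref{FDKGmod} the module is then canonically a $\mathcal{U}_n\ast H_n$-module, in particular a $\mathcal{U}_n$-module. Two elementary facts will be used throughout. First, if $V\in\mathcal{L}$ is simple then $V=\mb{R}W$ for a simple finite-dimensional $\mathcal{U}_0$-module $W$ by Theorem \ref{FDKGmod}(b), and $W\cong L(\mu)$ for some $\mu\in\Lambda^+$ by Corollary \ref{FDAmod}; since $\mathcal{U}_n\hookrightarrow\mathcal{U}_0$ compatibly with $KG^{p^n}\hookrightarrow KG$ and with $\ugK$ sitting densely in $\mathcal{U}_n$, the restriction $S:=V|_{\mathcal{U}_n}$ is again a simple $\mathcal{U}_n$-module with $\End_{\mathcal{U}_n}(S)=K$ whose isomorphism class still records $\mu$ (and similarly $V|_{KG^{p^n}}\in\mathcal{L}(G^{p^n})$ is simple with $\End_{KG^{p^n}}(V)=K$ by Corollary \ref{FDKGmod} applied to $G^{p^n}$). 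Second, any $KG^{p^n}$-linear map between finite-dimensional $S_n$-torsion-free modules is automatically $\mathcal{U}_n$-linear, because $S_n$ acts invertibly and such a map is continuous; hence for a finite-dimensional $S_n$-torsion-free $KG$-module $M$ containing $V$ one has $\Hom_{KG^{p^n}}(V,M)=\Hom_{\mathcal{U}_n}(S,M|_{\mathcal{U}_n})$. Finally, since $\rho|_{KG^{p^n}}$ is trivial, $(V\otimes\rho)|_{\mathcal{U}_n}\cong S^{\oplus\dim_K\rho}$ is $S$-isotypic.

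For (a), let $N$ be a nonzero $KG$-submodule of $V\otimes\rho$ and fix $n$ as above. Then $N$ is $S_n$-torsion-free and, via the Proposition of \S\ref{FDKGmod}, is a $\mathcal{U}_n\ast H_n$-submodule, so $N|_{\mathcal{U}_n}$ is a nonzero submodule of the $S$-isotypic module $S^{\oplus\dim_K\rho}$; by Schur ($\End_{\mathcal{U}_n}(S)=K$) the evaluation map gives $N|_{\mathcal{U}_n}=S\otimes_K\Hom_{\mathcal{U}_n}(S,N|_{\mathcal{U}_n})=S\otimes_K\Hom_{KG^{p^n}}(V,N)$. Now $\Hom_{KG^{p^n}}(V,N)$, with the $G$-action $(g\cdot f)(v)=gf(g^{-1}v)$, is a nonzero $KG$-submodule of $\Hom_{KG^{p^n}}(V,V\otimes\rho)$, and the latter is isomorphic to $\rho$ as a $KG$-module by Lemma \ref{Unique}. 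As $\rho$ is simple, $\Hom_{KG^{p^n}}(V,N)=\Hom_{KG^{p^n}}(V,V\otimes\rho)$, so $N|_{\mathcal{U}_n}=(V\otimes\rho)|_{\mathcal{U}_n}$ inside $V\otimes\rho$, i.e. $N=V\otimes\rho$. Hence $V\otimes\rho$ is simple.

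For (b), suppose $V\otimes\rho\cong V'\otimes\rho'$ as $KG$-modules with $V,V'\in\mathcal{L}$ and $\rho,\rho'\in\mathcal{A}$ all simple, and choose $n$ so that $\rho,\rho'$ are trivial on $G^{p^n}$ and $V\otimes\rho$ is $S_n$-torsion-free. Restricting to $\mathcal{U}_n$ gives $S^{\oplus\dim_K\rho}\cong S'^{\oplus\dim_K\rho'}$ with $S=V|_{\mathcal{U}_n}$, $S'=V'|_{\mathcal{U}_n}$ simple; since an isotypic semisimple module determines its simple constituent, $S\cong S'$ (and $\dim_K\rho=\dim_K\rho'$). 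Because the isomorphism class of the simple $\mathcal{U}_n$-module $S$ determines $\mu$ and restriction from $\mathcal{U}_0$- to $\mathcal{U}_n$-modules is injective on isomorphism classes of finite-dimensional simples (Corollary \ref{FDAmod}), we get $W\cong W'$ as $\mathcal{U}_0$-modules, hence $V=\mb{R}W\cong\mb{R}W'=V'$ by Theorem \ref{FDKGmod}(b). Then $V\otimes\rho\cong V'\otimes\rho'\cong V\otimes\rho'$, and applying the functor $\Hom_{KG^{p^n}}(V,-)$ — which is $G$-equivariant for the above $G$-action — together with Lemma \ref{Unique} yields $\rho\cong\Hom_{KG^{p^n}}(V,V\otimes\rho)\cong\Hom_{KG^{p^n}}(V,V\otimes\rho')\cong\rho'$.

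The step I expect to be the main obstacle is the bookkeeping behind the first fact above: checking that $KG^{p^n}\hookrightarrow KG\to\mathcal{U}_0$ factors through the natural inclusion $\mathcal{U}_n\hookrightarrow\mathcal{U}_0$ (so that $V|_{KG^{p^n}}$ really is $\mb{R}(W|_{\mathcal{U}_n})$), transferring simplicity, the endomorphism ring, and the highest-weight label between $KG^{p^n}$ and $\mathcal{U}_n=Q_{\gr p}(RG^{p^n})$, and verifying that the $G$-module structure placed on each $\Hom_{KG^{p^n}}$-space is exactly the one to which Lemma \ref{Unique} applies. Once this is in place, the rest is routine manipulation of semisimple modules and of the adjunction from Theorem \ref{FDKGmod}.
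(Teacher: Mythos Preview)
Your proof is correct. Part (b) is essentially the paper's argument: restrict to $KG^{p^n}$, read off the simple $\mathcal{U}_n$-constituent to recover $V\cong V'$, and then apply Lemma \ref{Unique} to recover $\rho\cong\rho'$.

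For part (a), however, the paper takes a different route. Rather than showing directly that every nonzero $KG$-submodule of $V\otimes\rho$ is the whole thing, the paper computes the endomorphism ring. Using the tensor--Hom adjunction $\Hom_{KG}(V\otimes\rho,V\otimes\rho)\cong\Hom_{KG}(V,V\otimes\rho\otimes\rho^\ast)$ and then Lemma \ref{Unique} applied to the Artin module $\rho\otimes\rho^\ast$, one obtains a $K$-linear isomorphism $\End_{KG}(V\otimes\rho)\cong(\rho\otimes\rho^\ast)^G\cong\End_{KG}(\rho)$, which is a division ring by Schur. Since $\mathcal{M}$ is semisimple by Theorem \ref{FDKGmod}(a), the Artin--Wedderburn shape of $\End_{KG}(V\otimes\rho)$ then forces $V\otimes\rho$ to have a single simple summand with multiplicity one. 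Your approach avoids invoking the global semisimplicity of $\mathcal{M}$ and instead exploits the $S$-isotypic structure over $\mathcal{U}_n$ together with Lemma \ref{Unique} directly; the paper's approach trades that isotypic bookkeeping for a short endomorphism-ring calculation but leans on Theorem \ref{FDKGmod}(a). The compatibility you flag as the main obstacle --- that $KG^{p^n}\hookrightarrow KG\to\mathcal{U}_0$ factors through $\mathcal{U}_n\hookrightarrow\mathcal{U}_0$ and that the resulting identifications transport simplicity and $\End=K$ --- is genuine but routine (both maps come from the same exponential construction applied to $p^n\fr{h}\subset\fr{h}$), and the paper itself is content to invoke Theorem \ref{FDKGmod}(b) for this without further comment.
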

\begin{proof}(a) By the proof of \cite[Lemma 4.4(c)]{ArdWad2006}, there is a $K$-linear isomorphism
\[ \Hom_{KG}(V \otimes \rho, V \otimes \rho) \cong \Hom_{KG}(V,V \otimes \rho \otimes \rho^\ast)\]
where $\rho^\ast$ denotes the Artin representation dual to $\rho$. But
\[ \Hom_{KG}(V,V \otimes \rho \otimes \rho^\ast) = \left(\Hom_{KG^{p^n}}(V,V \otimes \rho \otimes \rho^\ast)\right)^G \cong (\rho \otimes \rho^\ast)^G \cong \Hom_{KG}(\rho, \rho)\]
by the Lemma. So $\dim_K \End_{KG}(V \otimes \rho) = \dim_K \End_{KG}(\rho)$. Now there is a natural ring homomorphism
\[\End_{KG}(\rho) \to \End_{KG}(V \otimes \rho)\]
given by $f \mapsto 1_V \otimes f$. It is easy to see that this map is injective, so it must be an isomorphism by considering dimensions. Hence $\End_{KG}(V \otimes \rho)$ is a division ring by Schur's Lemma.

On the other hand the category  $\mathcal{M}$ is semisimple by Theorem \ref{FDKGmod}(a), so $V \otimes \rho = V_1^{n_1} \oplus \cdots \oplus V_t^{n_t}$ for some simple $V_i \in \mathcal{M}$. Therefore
\[\End_{KG}(V \otimes \rho) \cong M_{n_1}(D_1) \oplus \cdots \oplus M_{n_t}(D_t)\]
is a direct sum of matrix algebras over division rings $D_i = \End_{KG}(V_i)$. Since $\End_{KG}(V \otimes \rho)$ is itself a division ring, this can only happen if $r = 1$ and $n_1 = 1$. Hence $V \otimes \rho = V_1$ is simple.

(b) Choose $n$ large enough so that $\rho$ and $\rho'$ are trivial on $G^{p^n}$; then the restriction of $V \otimes \rho$ to $KG^{p^n}$ is isomorphic to both $V^{\dim_K\rho}$ and $(V')^{\dim_K \rho'}$. Theorem \ref{FDKGmod}(b) now implies that $V \cong V'$ in $\mathcal{L}$, and therefore
\[\rho \cong \Hom_{KG^{p^n}}(V, V \otimes \rho) \cong \Hom_{KG^{p^n}}(V', V' \otimes \rho') \cong \rho'\]
as $KG$-modules by the Lemma.
\end{proof}

Combining Theorem \ref{FDKGmod}(a), Theorem \ref{Factor} and Theorem \ref{Unique} gives the following

\begin{cor}\hfill\be \item There is a bijection between the isomorphism classes of simple objects in $\mathcal{M}$ and pairs $([V], [\rho])$ of isomorphism classes of simple objects $V \in \mathcal{L}$ and $\rho \in \mathcal{A}$.
\item There is an isomorphism of Grothendieck groups
\[K_0(\mathcal{M}) \cong K_0(\mathcal{L}) \otimes_{\mathbb{Z}} K_0(\mathcal{A}).\]
\ee\end{cor}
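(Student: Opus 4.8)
The plan is to deduce both parts formally from the three structural results established just above: the semisimplicity of $\mathcal{M}$ (Theorem \ref{FDKGmod}(a)), the factorization $M \cong V \otimes \rho$ of an arbitrary simple object of $\mathcal{M}$ (Theorem \ref{Factor}), and the simplicity and uniqueness statements of Theorem \ref{Unique}.

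For part (a), I would consider the assignment sending a pair $([V],[\rho])$ of isomorphism classes of simple objects $V \in \mathcal{L}$ and $\rho \in \mathcal{A}$ to the class $[V \otimes \rho]$. This is well defined, since isomorphic $V$ and isomorphic $\rho$ produce isomorphic tensor products, and by Theorem \ref{Unique}(a) the object $V \otimes \rho$ is again simple, so the assignment does land among the isomorphism classes of simple objects of $\mathcal{M}$. Theorem \ref{Factor} shows this map is surjective, and Theorem \ref{Unique}(b) shows it is injective; hence it is a bijection.

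For part (b), I would first observe that $\mathcal{L}$, $\mathcal{A}$ and $\mathcal{M}$ are all semisimple abelian $K$-linear categories: $\mathcal{M}$ by Theorem \ref{FDKGmod}(a); $\mathcal{L}$ because Theorem \ref{FDKGmod}(b) identifies it with the category of finite dimensional $U(\fr{g}_K)$-modules, which is semisimple by Weyl's theorem; and $\mathcal{A}$ because each of its objects is a module over some finite quotient $K[G/U]$ with $U$ open normal in $G$, and $\mathrm{char}\,K = 0$, so Maschke's theorem applies. Consequently $K_0(\mathcal{L})$, $K_0(\mathcal{A})$ and $K_0(\mathcal{M})$ are free abelian groups on the isomorphism classes of their respective simple objects, and $K_0(\mathcal{L}) \otimes_{\mathbb{Z}} K_0(\mathcal{A})$ is free abelian on the set of pairs of such classes. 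Since for a fixed $\rho \in \mathcal{A}$ the functor $V \mapsto V \otimes_K \rho$, with the diagonal $G$-action, is exact from $\mathcal{L}$ to $\mathcal{M}$ (because $-\otimes_K \rho$ is exact over the field $K$ and preserves finite dimensionality), and likewise in the other variable, the rule $([V],[\rho]) \mapsto [V \otimes_K \rho]$ defines a $\mathbb{Z}$-bilinear map $K_0(\mathcal{L}) \times K_0(\mathcal{A}) \to K_0(\mathcal{M})$ and hence a homomorphism $K_0(\mathcal{L}) \otimes_{\mathbb{Z}} K_0(\mathcal{A}) \to K_0(\mathcal{M})$. By part (a) this homomorphism carries a free basis bijectively onto a free basis, so it is an isomorphism.

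The argument is almost entirely formal, so I do not anticipate a serious obstacle; the points that need a little care are the bookkeeping in part (b) — checking that the tensor pairing genuinely factors through $\otimes_{\mathbb{Z}}$ and is compatible with the defining relations of the Grothendieck groups, which is exactly where exactness of $-\otimes_K-$ over $K$ enters — together with the observation that $\mathcal{A}$, although not closed under extensions inside $\mathcal{M}$, is nevertheless a semisimple abelian category, so that $K_0(\mathcal{A})$ is free on its simple objects.
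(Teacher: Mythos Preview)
Your proposal is correct and matches the paper's approach, which simply states that the corollary follows by combining Theorem \ref{FDKGmod}(a), Theorem \ref{Factor} and Theorem \ref{Unique}; you have spelled out exactly the formal deduction the paper leaves implicit. One small slip: your parenthetical remark that $\mathcal{A}$ is not closed under extensions in $\mathcal{M}$ is false (since $\mathcal{M}$ is semisimple all extensions split, and a direct sum of Artin modules is Artin), but this aside plays no role in your argument.
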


\subsection{The Grothendieck group of Artin representations}
We finish this paper by giving a description of the Grothendieck group of the category $\mathcal{A}$ of Artin $K$-representations of an arbitrary pro-$p$ group $G$. We still assume that $K$ is a complete discrete valuation field of characteristic zero and uniformizer $p$. Let $K(\mu_{p^\infty})$ be the infinite totally ramified field extension of $K$ obtained by adjoining the set $\mu_{p^\infty}$ of all $p$-power roots of unity to $K$, and let
\[\Gamma = \Gal(K(\mu_{p^\infty}) / K).\]
We fix an isomorphism $\Qp / \Zp \to \mu_{p^\infty}$, which induces an isomorphism $\Aut(\mu_{p^\infty}) \to \Zp^\times$. Since $K$ is unramified, the action of $\Gamma$ on $\mu_{p^\infty}$ is faithful and gives rise to the cyclotomic character $\chi : \Gamma \tocong \Zp^\times$ given explicitly by
\[\sigma(\lambda) = \lambda^{\chi(\sigma)}\quad\mbox{ for all }\quad \sigma \in \Gamma \quad\mbox{and}\quad \lambda \in \mu_{p^\infty}.\]
Imitating \cite[\S 12.4]{Serre}, we define a continuous permutation action of $\Gamma$ on our pro-$p$ group $G$ by the similar rule
\[\sigma . g = g^{\chi(\sigma)}\quad\mbox{ for all }\quad \sigma \in \Gamma \quad\mbox{and}\quad g \in G.\]
If $X$ is a compact totally disconnected topological space, let $C^\infty(X, K)$ denote the $K$-algebra of locally constant $K$-valued functions on $X$. Any continuous action of $\Gamma$ on $X$ induces an action of $\Gamma$ on $C^\infty(X,K)$ by the rule
\[(\sigma . f)(x) = f(\sigma^{-1}.x)\quad\mbox{for all} \quad \sigma \in \Gamma , f \in C^\infty(X,K), x \in X.\]
Now let $\rho : G \to V$ be an Artin $K$-representation of $G$. Then by definition, $\ker \rho$ is an open normal subgroup of $G$, so $\rho(G)$ is a finite $p$-subgroup of $\GL(V)$. Let $\chi_\rho : G \to K$ be the \emph{character} of $\rho$, defined in the usual way by
\[\chi_\rho(g) = \tr \rho(g) \quad\mbox{for all}\quad g \in G. \]
This function is locally constant, being constant on the cosets of $\ker \rho$, so $\chi_\rho \in C^\infty(G,K)$. It is also constant on conjugacy classes of $G$, so $\chi_\rho \in C^\infty(G,K)^G$ where we let $G$ act on itself by conjugation. If $g \in G$ and $\omega_i$ are the eigenvalues of $\rho(g)$, then $\omega_i^{\chi(\sigma)}$ are the eigenvalues of $\rho(\sigma.g)$ for any $\sigma \in \Gamma$. This shows that
\[\chi_\rho(\sigma.g) = \chi_\rho(g) \quad\mbox{for all}\quad \sigma \in \Gamma \quad\mbox{and}\quad g \in G.\]
Thus $\chi_\rho$ is a $\Gamma$-invariant, locally constant, class function on $G$:
\[\chi_\rho \in C^\infty(G, K)^{G \times \Gamma}.\]

\begin{prop} Let $G$ be a pro-$p$ group, and let $\mathcal{A}$ be its category of $K$-linear Artin representations. Then there is a natural $K$-algebra isomorphism
\[\chi : K\otimes_{\mathbb{Z}} K_0(\mathcal{A}) \longrightarrow C^\infty(G, K)^{G \times \Gamma}\]
given by $\chi( \lambda \otimes [\rho] ) = \lambda \chi_\rho$.
\end{prop}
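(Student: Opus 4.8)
The plan is to realise both sides as filtered colimits over the open normal subgroups $U$ of $G$ and to reduce to the classical rationality theorem for finite groups. First I would check that $\chi$ is a well-defined homomorphism of $K$-algebras. Since $K$ has characteristic zero, each group algebra $K[G/U]$ is semisimple, so $\mathcal{A}$ is a semisimple abelian category and $K_0(\mathcal{A})$ is free abelian on the isomorphism classes of simple Artin representations; the character $\chi_\rho$ is additive on short exact sequences and depends only on the class of $\rho$, so $\rho \mapsto \chi_\rho$ extends to a well-defined $K$-linear map after base change to $K$. As observed before the statement, $\chi_\rho \in C^\infty(G,K)^{G\times\Gamma}$; moreover $C^\infty(G,K)^{G\times\Gamma}$ is a $K$-subalgebra of $C^\infty(G,K)$ under pointwise multiplication (products of $G$-invariant, respectively $\Gamma$-invariant, functions are again invariant, and $1$ is invariant), and $\chi_{\rho\otimes\rho'}=\chi_\rho\chi_{\rho'}$, $\chi_{\mathbf{1}}=1$. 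Hence $\chi$ is a $K$-algebra map, and it remains to prove it is bijective.

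Next I would pass to finite quotients. Every Artin representation factors through some $G/U$, so $K_0(\mathcal{A})=\varinjlim_U R_K(G/U)$, where $R_K(H)$ is the Grothendieck group of finite-dimensional $K[H]$-modules and the transition maps are inflations; thus $K\otimes_{\mathbb{Z}}K_0(\mathcal{A})=\varinjlim_U K\otimes_{\mathbb{Z}}R_K(G/U)$. On the target side, a locally constant $G$-invariant function $f$ on $G$ factors through some finite quotient $G/U$; using that $g^t$ is well-defined for $g\in G$ and $t\in\Zp$ (the closure of $\langle g\rangle$ is procyclic), that $g\mapsto g^t$ commutes with conjugation, and that $G$ is compact (so that $\sigma\cdot f$ is again locally constant), one checks that $\Gamma$-invariance of $f$ is equivalent to invariance of the induced class function $\bar f$ on $G/U$ under the Serre action $h\mapsto h^{\chi(\sigma)}$, an action which factors through $\Gamma_{p^k}:=\Gal(K(\mu_{p^k})/K)$ for $p^k=\exp(G/U)$ (and $\Gamma\twoheadrightarrow\Gamma_{p^k}$). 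These identifications are compatible with the inflation maps, so $C^\infty(G,K)^{G\times\Gamma}=\varinjlim_U\mathrm{Cl}(G/U,K)^{\Gamma}$, and under all these identifications $\chi$ is the colimit of the character maps $\chi_U\colon K\otimes_{\mathbb{Z}}R_K(G/U)\to\mathrm{Cl}(G/U,K)^{\Gamma}$.

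Finally I would invoke the finite-group rationality theorem. For the finite $p$-group $H=G/U$, the character map $R_K(H)\to\mathrm{Cl}(H,K)$ is injective, and since $K(\mu_{p^k})$ is a splitting field for $H$ (Brauer), the simple $K[H]$-modules correspond to $\Gamma_{p^k}$-orbits of absolutely irreducible characters, the character of such a module being the Schur index times the corresponding orbit sum; after tensoring with $K$ the Schur indices become invertible, so the image of $\chi_U$ is the $K$-span of the $\Gamma_{p^k}$-orbit sums, which equals $\mathrm{Cl}(H,K)^{\Gamma}$ (by Brauer's permutation lemma the number of $\Gamma_{p^k}$-orbits on conjugacy classes equals the number of simple $K[H]$-modules, so the dimensions match). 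This is exactly \cite[\S 12.4]{Serre}. A filtered colimit of isomorphisms is an isomorphism, so $\chi$ is an isomorphism of $K$-algebras. I expect the main obstacle to be bookkeeping rather than a serious difficulty: carefully matching the permutation action of $\Gamma$ on the set $G$ with the Galois action on characters at the finite level, and verifying that every identification respects the inflation maps so that the passage to the colimit is legitimate; the finite-group input itself is standard.
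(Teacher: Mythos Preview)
Your proposal is correct and follows essentially the same approach as the paper: realise both sides as filtered colimits over the open normal subgroups of $G$, reduce to the finite quotients $G/U$, and invoke the classical rationality theorem \cite[Chapter 12, Theorem 25]{Serre} at each finite level. You supply more detail than the paper does on the well-definedness of $\chi$ as a $K$-algebra map and on the finite-group input (Schur indices, Brauer's permutation lemma), but the architecture is identical.
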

\begin{proof} Let $\mathcal{U}$ denote the set of open normal subgroups of $G$ and for each $U \in \mathcal{U}$, let $G_0(K[G/U])$ denote the Grothendieck group of the category of all finitely generated (hence finite dimensional) $K[G/U]$ modules. For each pair $U, W \in \mathcal{U}$ such that $U \supseteq W$, there is a natural commutative diagram
\[\xymatrix{ K\otimes_{\mathbb{Z}} G_0(K[G/U]) \ar[r]^\chi \ar[d] & C^\infty(G/U, K)^{G \times \Gamma} \ar[d]\\
 K\otimes_{\mathbb{Z}} G_0(K[G/W]) \ar[r]^\chi \ar[d] & C^\infty(G/W, K)^{G \times \Gamma} \ar[d]\\
 K\otimes_{\mathbb{Z}} K_0(\mathcal{A}) \ar[r]^\chi & C^\infty(G, K)^{G \times \Gamma} }\]
where the vertical maps in the left-hand column are obtained by inflation, and those in the right-hand column are obtained by pull-back of functions.  Now $\mathcal{A}$ is the filtered limit of the categories of finite dimensional $K[G/U]$-modules with respect to the inflation functors as $U$ runs over $\mathcal{U}$, so
\[ K\otimes_{\mathbb{Z}} K_0(\mathcal{A}) \cong \lim\limits_{\longrightarrow} K\otimes_{\mathbb{Z}} G_0(K[G/W])\]
by \cite[Lemma II.6.2.7]{WeiK}. Since each locally constant function on $G$ must be constant on the cosets of at least one open normal subgroup $U$, we also have
\[ C^\infty(G, K)^{G \times \Gamma} \cong \lim\limits_{\longrightarrow} C^\infty(G/U, K)^{G \times \Gamma}.\]
Because the top two rows in the diagram are isomorphisms by \cite[Chapter 12,Theorem 25]{Serre}, the bottom row must therefore also be an isomorphism. Finally, direct sum and tensor product gives $K_0(\mathcal{A})$ the structure of a commutative ring, it is straightforward to check that $\chi$ is additive and multiplicative.
\end{proof}

We finally remark that in the case when $G$ is a uniform pro-$p$ group, Kirillov's orbit method provides an explicit bijection between $G$-orbits in the Pontryagin dual of the Lie algebra $L_G$, and isomorphism classes of irreducible complex Artin representations of $G$. This bijection is directly compatible with the above isomorphism $\chi$. See \cite{Jaikin} and \cite{BoSab} for more details.
\bibliography{references}

\def\cprime{$'$} \def\cprime{$'$}
  \def\cftil#1{\ifmmode\setbox7\hbox{$\accent"5E#1$}\else
  \setbox7\hbox{\accent"5E#1}\penalty 10000\relax\fi\raise 1\ht7
  \hbox{\lower1.15ex\hbox to 1\wd7{\hss\accent"7E\hss}}\penalty 10000
  \hskip-1\wd7\penalty 10000\box7}
\begin{thebibliography}{10}

\bibitem{ASZ2}
K.~Ajitabh, S.~P. Smith, and J.~J. Zhang.
\newblock Injective resolutions of some regular rings.
\newblock {\em J. Pure Appl. Algebra}, 140(1):1--21, 1999.

\bibitem{ArdBro2006}
K.~Ardakov and K.~A. Brown.
\newblock Ring-theoretic properties of {I}wasawa algebras: a survey.
\newblock {\em Doc. Math.}, (Extra Vol.):7--33 (electronic), 2006.

\bibitem{ArdThesis}
Konstantin Ardakov.
\newblock {\em Krull dimension of {I}wasawa algebras and some related topics}.
\newblock PhD thesis, University of Cambridge, 2004.

\bibitem{ArdBro2007}
Konstantin Ardakov and Kenneth~A. Brown.
\newblock Primeness, semiprimeness and localisation in {I}wasawa algebras.
\newblock {\em Trans. Amer. Math. Soc.}, 359(4):1499--1515 (electronic), 2007.

\bibitem{ArdWad2006}
Konstantin Ardakov and Simon Wadsley.
\newblock Characteristic elements for {$p$}-torsion {I}wasawa modules.
\newblock {\em J. Algebraic Geom.}, 15(2):339--377, 2006.

\bibitem{AMac}
M.~F. Atiyah and I.~G. Macdonald.
\newblock {\em Introduction to commutative algebra}.
\newblock Addison-Wesley Publishing Co., Reading, Mass.-London-Don Mills, Ont.,
  1969.

\bibitem{BaRi}
Peter Bardsley and R.~W. Richardson.
\newblock \'{E}tale slices for algebraic transformation groups in
  characteristic {$p$}.
\newblock {\em Proc. London Math. Soc. (3)}, 51(2):295--317, 1985.

\bibitem{BB}
A.~Beilinson and J.~Bernstein.
\newblock Localisation de $\mathfrak{g}$-modules.
\newblock {\em C. R. Acad. Sci. Paris. S\'er.I Math.}, 292(1):15--18, 1991.

\bibitem{BB2}
A.~Be{\u\i}linson and J.~Bernstein.
\newblock A proof of {J}antzen conjectures.
\newblock In {\em I. {M}. {G}el\cprime fand {S}eminar}, volume~16 of {\em Adv.
  Soviet Math.}, pages 1--50. Amer. Math. Soc., Providence, RI, 1993.

\bibitem{BeiGinz}
Alexander Beilinson and Victor Ginzburg.
\newblock Wall-crossing functors and {$\mathscr D$}-modules.
\newblock {\em Represent. Theory}, 3:1--31, 1999.

\bibitem{BellFarn}
Allen~D. Bell and Rolf Farnsteiner.
\newblock On the theory of {F}robenius extensions and its application to {L}ie
  superalgebras.
\newblock {\em Trans. Amer. Math. Soc.}, 335(1):407--424, 1993.

\bibitem{Berth}
Pierre Berthelot.
\newblock $\mathscr{D}$-modules arithm\'etiques {I}. op\`erateurs
  diff'erentiels de niveau fini.
\newblock {\em Ann. Sci. �cole Norm. Sup. (4)}, 29(2):185--272, 1996.

\bibitem{Berth2}
Pierre Berthelot.
\newblock Introduction \`a la th\'eorie arithm\'etique des {$\mathscr
  D$}-modules.
\newblock {\em Ast\'erisque}, (279):1--80, 2002.
\newblock Cohomologies $p$-adiques et applications arithm{\'e}tiques, II.

\bibitem{BMR1}
R.~Bezrukanvikov, I.~Mirkovic, and D.~Rumynin.
\newblock Localization of modules for a semisimple {L}ie algebra in prime
  characteristic.
\newblock {\em Ann. of Math.}, 167(3):945--991, 2008.

\bibitem{Bj89}
Jan-Erik Bj{\"o}rk.
\newblock The {A}uslander condition on {N}oetherian rings.
\newblock In {\em S\'eminaire d'{A}lg\`ebre {P}aul {D}ubreil et {M}arie-{P}aul
  {M}alliavin, 39\`eme {A}nn\'ee ({P}aris, 1987/1988)}, volume 1404 of {\em
  Lecture Notes in Math.}, pages 137--173. Springer, Berlin, 1989.

\bibitem{Borel}
Armand Borel.
\newblock {\em Linear Algebraic Groups}, volume 126 of {\em Graduate Texts in
  Mathematics}.
\newblock Springer, 1997.

\bibitem{BoBrIII}
W.~Borho and J.-L. Brylinski.
\newblock Differential operators on homogeneous spaces. {III}. {C}haracteristic
  varieties of {H}arish-{C}handra modules and of primitive ideals.
\newblock {\em Invent. Math.}, 80(1):1--68, 1985.

\bibitem{BoBrII}
Walter Borho and Jean-Luc Brylinski.
\newblock Differential operators on homogeneous spaces. {II}. {R}elative
  enveloping algebras.
\newblock {\em Bull. Soc. Math. France}, 117(2):167--210, 1989.

\bibitem{BGR}
S.~Bosch, U.~G\"untzer, and R.~Remmert.
\newblock {\em Non-Archimedean analysis}.
\newblock Springer-Verlag, Berlin, 1984.

\bibitem{BoSab}
Mitya Boyarchenko and Maria Sabitova.
\newblock The orbit method for profinite groups and a {$p$}-adic analogue of
  {B}rown's theorem.
\newblock {\em Israel J. Math.}, 165:67--91, 2008.

\bibitem{Caro}
Daniel Caro.
\newblock Holonomie sans structure de {F}robenius et crit\`eres d'holonomie.
\newblock To appear Ann. Inst. Fourier, Grenoble.

\bibitem{CFKSV}
John Coates, Takako Fukaya, Kazuya Kato, Ramdorai Sujatha, and Otmar Venjakob.
\newblock The {$\rm GL\sb 2$} main conjecture for elliptic curves without
  complex multiplication.
\newblock {\em Publ. Math. Inst. Hautes \'Etudes Sci.}, (101):163--208, 2005.

\bibitem{CoaSuj1999}
John Coates and Ramdorai Sujatha.
\newblock Euler-{P}oincar\'e characteristics of abelian varieties.
\newblock {\em C. R. Acad. Sci. Paris S\'er. I Math.}, 329(4):309--313, 1999.

\bibitem{CM}
David~H. Collingwood and William~M. McGovern.
\newblock {\em Nilpotent orbits in semisimple {L}ie algebras}.
\newblock Van Nostrand Reinhold Mathematics Series. Van Nostrand Reinhold Co.,
  New York, 1993.

\bibitem{Del}
P.~Deligne.
\newblock Cat\'egories tannakiennes.
\newblock In {\em The {G}rothendieck {F}estschrift, {V}ol.\ {II}}, volume~87 of
  {\em Progr. Math.}, pages 111--195. Birkh\"auser Boston, Boston, MA, 1990.

\bibitem{Dema}
Michel Demazure.
\newblock Invariants sym\'etriques entiers des groupes de {W}eyl et torsion.
\newblock {\em Invent. Math.}, 21:287--301, 1973.

\bibitem{Dix}
J.~Dixmier.
\newblock {\em Enveloping Algebras}, volume~11 of {\em Graduate Studies in
  Mathematics}.
\newblock American Mathematical Society, english edition, 1996.

\bibitem{DDMS}
J.~D. Dixon, M.~P.~F. du~Sautoy, A.~Mann, and D.~Segal.
\newblock {\em Analytic pro-{$p$} groups}, volume~61 of {\em Cambridge Studies
  in Advanced Mathematics}.
\newblock Cambridge University Press, Cambridge, second edition, 1999.

\bibitem{DouRoh}
J.~Matthew Douglass and Gerhard R{\"o}hrle.
\newblock The {S}teinberg variety and representations of reductive groups.
\newblock {\em J. Algebra}, 321(11):3158--3196, 2009.

\bibitem{Feit}
Walter Feit.
\newblock {\em The representation theory of finite groups}, volume~25 of {\em
  North-Holland Mathematical Library}.
\newblock North-Holland Publishing Co., Amsterdam, 1982.

\bibitem{FvdPut}
Jean Fresnel and Marius van~der Put.
\newblock {\em Rigid analytic geometry and its applications}, volume 218 of
  {\em Progress in Mathematics}.
\newblock Birkh\"auser Boston Inc., Boston, MA, 2004.

\bibitem{Frommer}
H.~Frommer.
\newblock {The locally analytic principal series of split reductive groups}.
\newblock {\em M\"{u}nster: SFB-preprint}, 265, 2003.

\bibitem{Gabber}
Ofer Gabber.
\newblock The integrability of the characteristic variety.
\newblock {\em Amer. J. Math.}, 103(3):445--468, 1981.

\bibitem{Gabriel}
Pierre Gabriel.
\newblock Des cat\'egories ab\'eliennes.
\newblock {\em Bull. Soc. Math. France}, 90:323--448, 1962.

\bibitem{GK}
I.~M. Gel{\cprime}fand and A.~A. Kirillov.
\newblock Structure of the {L}ie sfield connected with a semisimple
  decomposable {L}ie algebra.
\newblock {\em Funkcional. Anal. i Prilo\v zen.}, 3(1):7--26, 1969.

\bibitem{Ginz}
V.~Ginsburg.
\newblock Characteristic varieties and vanishing cycles.
\newblock {\em Invent. Math.}, 84(2):327--402, 1986.

\bibitem{GreuelPfister}
Gert-Martin Greuel and Gerhard Pfister.
\newblock {\em A {\bf {S}ingular} introduction to commutative algebra}.
\newblock Springer, Berlin, extended edition, 2008.
\newblock With contributions by Olaf Bachmann, Christoph Lossen and Hans
  Sch{\"o}nemann, With 1 CD-ROM (Windows, Macintosh and UNIX).

\bibitem{EGAI}
A.~Grothendieck.
\newblock \'{E}l\'ements de g\'eom\'etrie alg\'ebrique. {I}. {L}e langage des
  sch\'emas.
\newblock {\em Inst. Hautes \'Etudes Sci. Publ. Math.}, (4):228, 1960.

\bibitem{EGAII}
A.~Grothendieck.
\newblock \'{E}l\'ements de g\'eom\'etrie alg\'ebrique. {II}. \'{E}tude globale
  \'el\'ementaire de quelques classes de morphismes.
\newblock {\em Inst. Hautes \'Etudes Sci. Publ. Math.}, (8):222, 1961.

\bibitem{EGAIII}
A.~Grothendieck.
\newblock \'{E}l\'ements de g\'eom\'etrie alg\'ebrique. {III}. \'{E}tude
  cohomologique des faisceaux coh\'erents. {I}.
\newblock {\em Inst. Hautes \'Etudes Sci. Publ. Math.}, (11):167, 1961.

\bibitem{EGAIV2}
A.~Grothendieck.
\newblock \'{E}l\'ements de g\'eom\'etrie alg\'ebrique. {IV}. \'{E}tude locale
  des sch\'emas et des morphismes de sch\'emas. {II}.
\newblock {\em Inst. Hautes \'Etudes Sci. Publ. Math.}, (24):231, 1965.

\bibitem{Hart}
R.~Hartshorne.
\newblock {\em Algebraic Geometry}, volume~52 of {\em Graduate Texts in
  Mathematics}.
\newblock Springer-Verlag, New York, 1997.

\bibitem{HTT}
Ryoshi Hotta, Kiyoshi Takeuchi, and Toshiyuki Tanisaki.
\newblock {\em {$D$}-modules, perverse sheaves, and representation theory},
  volume 236 of {\em Progress in Mathematics}.
\newblock Birkh\"auser Boston Inc., Boston, MA, 2008.
\newblock Translated from the 1995 Japanese edition by Takeuchi.

\bibitem{Hum}
James~E. Humphreys.
\newblock {\em Introduction to {L}ie algebras and representation theory}.
\newblock Springer-Verlag, New York, 1972.
\newblock Graduate Texts in Mathematics, Vol. 9.

\bibitem{Hum2}
James~E. Humphreys.
\newblock {\em Conjugacy classes in semisimple algebraic groups}, volume~43 of
  {\em Mathematical Surveys and Monographs}.
\newblock American Mathematical Society, Providence, RI, 1995.

\bibitem{Noot1}
C.~Huyghe.
\newblock {${\mathscr D}^\dag$}-affinit\'e de l'espace projectif.
\newblock {\em Compositio Math.}, 108(3):277--318, 1997.
\newblock With an appendix by P. Berthelot.

\bibitem{Jaikin}
A.~Jaikin-Zapirain.
\newblock Zeta function of representations of compact {$p$}-adic analytic
  groups.
\newblock {\em J. Amer. Math. Soc.}, 19(1):91--118 (electronic), 2006.

\bibitem{Ja2}
Jens~Carsten Jantzen.
\newblock Representations of {L}ie algebras in prime characteristic.
\newblock In {\em Representation theories and algebraic geometry ({M}ontreal,
  {PQ}, 1997)}, volume 514 of {\em NATO Adv. Sci. Inst. Ser. C Math. Phys.
  Sci.}, pages 185--235. Kluwer Acad. Publ., Dordrecht, 1998.
\newblock Notes by Iain Gordon.

\bibitem{Jantzen}
Jens~Carsten Jantzen.
\newblock {\em Representations of algebraic groups}, volume 107 of {\em
  Mathematical Surveys and Monographs}.
\newblock American Mathematical Society, Providence, RI, second edition, 2003.

\bibitem{KW}
W.~Kac and B.~Weisfeiler.
\newblock Coadjoint action of a semi-simple algebraic group and the center of
  the enveloping algebra in characteristic $p$.
\newblock {\em Indag. Math.}, 38(2):136--151, 1976.

\bibitem{Lafforgue}
L.~Lafforgue.
\newblock {\em Chirurgie des grassmanniennes}, volume~19 of {\em CRM Monograph
  Series}.
\newblock American Mathematical Society, Providence, RI, 2003.

\bibitem{Lang58}
Serge Lang.
\newblock {\em Introduction to algebraic geometry}.
\newblock Interscience Publishers, Inc., New York-London, 1958.

\bibitem{Laz1965}
Michel Lazard.
\newblock Groupes analytiques {$p$}-adiques.
\newblock {\em Inst. Hautes \'Etudes Sci. Publ. Math.}, (26):389--603, 1965.

\bibitem{Lev91}
Thierry Levasseur.
\newblock Some properties of noncommutative regular graded rings.
\newblock {\em Glasgow Math. J.}, 34(3):277--300, 1992.

\bibitem{LVO}
H.~Li and F.~Van~Oystaeyen.
\newblock {\em Zariskian filtrations}.
\newblock Kluwer Academic Publishers, 1996.

\bibitem{Li}
Huishi Li.
\newblock Lifting {O}re sets of {N}oetherian filtered rings and applications.
\newblock {\em J. Algebra}, 179(3):686--703, 1996.

\bibitem{MacLane}
Saunders Mac~Lane.
\newblock {\em Categories for the working mathematician}, volume~5 of {\em
  Graduate Texts in Mathematics}.
\newblock Springer-Verlag, New York, second edition, 1998.

\bibitem{MO}
Qing~Liu (mathoverflow.net/users/3485).
\newblock How exotic can dvrs be in the ring of rational functions over a local
  field?
\newblock MathOverflow.
\newblock URL: http://mathoverflow.net/questions/49452 (version: 2010-12-15).

\bibitem{MCR}
J.~C. McConnell and J.~C. Robson.
\newblock {\em Noncommutative {N}oetherian rings}, volume~30 of {\em Graduate
  Studies in Mathematics}.
\newblock American Mathematical Society, Providence, RI, revised edition, 2001.
\newblock With the cooperation of L. W. Small.

\bibitem{Milne}
James~S. Milne.
\newblock {\em \'{E}tale cohomology}, volume~33 of {\em Princeton Mathematical
  Series}.
\newblock Princeton University Press, Princeton, N.J., 1980.

\bibitem{Noot2}
Christine Noot-Huyghe.
\newblock Un th\'eor\`eme de {B}eilinson-{B}ernstein pour les {$\mathscr
  D$}-modules arithm\'etiques.
\newblock {\em Bull. Soc. Math. France}, 137(2):159--183, 2009.

\bibitem{Pangalos}
Alexis Pangalos.
\newblock {\em $p$-adic {W}eyl algebras}.
\newblock {PhD} thesis, University of M\"unster, 2007.

\bibitem{Pass}
Donald~S. Passman.
\newblock {\em Infinite crossed products}, volume 135 of {\em Pure and Applied
  Mathematics}.
\newblock Academic Press Inc., Boston, MA, 1989.

\bibitem{Premet}
Alexander Premet.
\newblock Nilpotent orbits in good characteristic and the {K}empf-{R}ousseau
  theory.
\newblock {\em J. Algebra}, 260(1):338--366, 2003.
\newblock Special issue celebrating the 80th birthday of Robert Steinberg.

\bibitem{Quillen}
Daniel Quillen.
\newblock On the endomorphism ring of a simple module over an enveloping
  algebra.
\newblock {\em Proc. Amer. Math. Soc.}, 21:171--172, 1969.

\bibitem{Schmidt2}
Tobias Schmidt.
\newblock Verma modules over $p$-adic {A}rens--{M}ichael envelopes of reductive
  {L}ie algebras.
\newblock preprint.

\bibitem{Schmidt}
Tobias Schmidt.
\newblock Auslander regularity of {$p$}-adic distribution algebras.
\newblock {\em Represent. Theory}, 12:37--57, 2008.

\bibitem{ST1}
P.~Schneider and J.~Teitelbaum.
\newblock {$U({\mathfrak g})$}-finite locally analytic representations.
\newblock {\em Represent. Theory}, 5:111--128 (electronic), 2001.
\newblock With an appendix by Dipendra Prasad.

\bibitem{ST2}
P.~Schneider and J.~Teitelbaum.
\newblock Banach space representations and {I}wasawa theory.
\newblock {\em Israel J. Math.}, 127:359--380, 2002.

\bibitem{ST}
Peter Schneider and Jeremy Teitelbaum.
\newblock Algebras of {$p$}-adic distributions and admissible representations.
\newblock {\em Invent. Math.}, 153(1):145--196, 2003.

\bibitem{ST3}
Peter Schneider and Jeremy Teitelbaum.
\newblock Duality for admissible locally analytic representations.
\newblock {\em Represent. Theory}, 9:297--326 (electronic), 2005.

\bibitem{SchVen}
Peter Schneider and Otmar Venjakob.
\newblock ${SK}_1$ and {L}ie algebras.
\newblock http://www.math.uni-muenster.de/u/pschnei/publ/pre/$SK_1$-final.pdf,
  2011.

\bibitem{Serre}
Jean-Pierre Serre.
\newblock {\em Linear representations of finite groups}.
\newblock Springer-Verlag, New York, 1977.
\newblock Translated from the second French edition by Leonard L. Scott,
  Graduate Texts in Mathematics, Vol. 42.

\bibitem{Smith}
S.~P. Smith.
\newblock Krull dimension of factor rings of the enveloping algebra of a
  semisimple {L}ie algebra.
\newblock {\em Math. Proc. Cambridge Philos. Soc.}, 93(3):459--466, 1983.

\bibitem{Soi1}
Y.~Soibelman.
\newblock On non-commutative analytic spaces over non-{A}rchimedean fields.
\newblock In {\em Homological mirror symmetry}, volume 757 of {\em Lecture
  Notes in Phys.}, pages 221--247. Springer, Berlin, 2009.

\bibitem{Soi2}
Yan Soibelman.
\newblock Quantum {$p$}-adic spaces and quantum {$p$}-adic groups.
\newblock In {\em Geometry and dynamics of groups and spaces}, volume 265 of
  {\em Progr. Math.}, pages 697--719. Birkh\"auser, Basel, 2008.

\bibitem{SZ}
J.~T. Stafford and J.~J. Zhang.
\newblock Homological properties of (graded) {N}oetherian {${\rm PI}$} rings.
\newblock {\em J. Algebra}, 168(3):988--1026, 1994.

\bibitem{Stein}
Robert Steinberg.
\newblock On the desingularization of the unipotent variety.
\newblock {\em Invent. Math.}, 36:209--224, 1976.

\bibitem{V}
Otmar Venjakob.
\newblock On the structure theory of the {I}wasawa algebra of a {$p$}-adic
  {L}ie group.
\newblock {\em J. Eur. Math. Soc. (JEMS)}, 4(3):271--311, 2002.

\bibitem{WeiK}
Charles~A. Weibel.
\newblock The {K}-book: An introduction to algebraic {K}-theory.
\newblock http://www.math.rutgers.edu/$\sim$weibel/Kbook.html, 2011.

\bibitem{YZ}
Amnon Yekutieli and James~J. Zhang.
\newblock Rings with {A}uslander dualizing complexes.
\newblock {\em J. Algebra}, 213(1):1--51, 1999.

\end{thebibliography}
\bibliographystyle{plain}
\end{document}